\documentclass[a4paper, oneside]{amsbook}

\usepackage{etoolbox}

\makeatletter
\patchcmd\@part{\protect\enspace\protect\noindent}{\hspace{.5em}}{}{\ERROR}
\makeatother

\usepackage[original]{imakeidx}
\makeindex

\usepackage[pdftex, colorlinks, citecolor=black, linkcolor=black,
urlcolor=black, bookmarks=false, backref=page]{hyperref}

\usepackage{geometry}

\usepackage{graphicx} 
\usepackage{afterpage}

\usepackage{mathrsfs}
\usepackage{amssymb}
\usepackage{stmaryrd}
\usepackage{mathtools}

\usepackage{array}

\usepackage{hyphenat}
\renewcommand*{\backref}[1]{}
\renewcommand*{\backrefalt}[4]{%
  \ifcase #1 (Not cited.)%
  \or        (Cited on page~#2.)%
  \else      (Cited on pages~#2.)%
\fi}

\usepackage{bookmark}

\usepackage[table,cmyk]{xcolor}
\usepackage[mark={\rule{4em}{0.3pt}}, ]{sectionbreak}
\newcommand{\sectionbreakafterproof}{%
  \vspace{-\topsep-\partopsep}%
  \sectionbreak
}

\usepackage{multirow}
\usepackage{booktabs}

\makeatletter
\@removefromreset{figure}{chapter}
\@removefromreset{table}{chapter}
\renewcommand{\thefigure}{\@arabic\c@figure}
\renewcommand{\thetable}{\@arabic\c@table}
\makeatother

\usepackage{enumitem}

\makeatletter
\newcommand*{\centerfloat}{%
  \parindent\z@%
  \leftskip\z@\@plus 1fil\@minus\textwidth%
  \rightskip\leftskip%
\parfillskip\z@skip}
\makeatother

\newtheorem{theorem}{Theorem}[chapter]
\newtheorem{lemma}[theorem]{Lemma}

\newtheorem{corollary}[theorem]{Corollary}

\makeatletter
\renewenvironment{proof}[1][\proofname]{\par
  \pushQED{\qed}%
  \normalfont \topsep6\p@\@plus6\p@\relax
  \trivlist
  \itemindent\z@ 
\item[\hskip\labelsep
    \itshape
  #1\@addpunct{.}]\ignorespaces
}{%
  \popQED\endtrivlist\@endpefalse
}
\makeatother

\newtheoremstyle{example}%
{4pt}
{5pt}
{}
{}
{\itshape}
{.}
{.5em}
{\thmname{#1}\textup{\thmnumber{ #2}}}
\theoremstyle{example}
\newtheorem{exmp}[theorem]{Example}

\makeatletter
\let\sv@thm\@thm
\def\@thm{\let\indent\relax\sv@thm}
\makeatother

\newcommand{\mi}[1]{\mathit{#1}}

\newcommand{\defi}[1]{\textit{#1}}

\makeatletter
\newcommand*\smallbullet{\mathpalette\smallbullet@{0.7}}
\newcommand*\smallbullet@[2]{\mathbin{\vcenter{\hbox{\scalebox{#2}{$\m@th#1\bullet$}}}}}
\makeatother

\newcommand{\smallcirc}{\mathbin{\scalebox{.85}{$\circ$}}}

\newenvironment{optprob}
{
  \arraycolsep=0pt
  \begin{array}{r@{\ }l@{\quad}l}
  }%
  {
  \end{array}
}

\newcommand{\onerow}[1]{\multicolumn{2}{l}{#1}}

\DeclareMathOperator{\Aff}{Aff}
\DeclareMathOperator{\cl}{cl}

\DeclareMathOperator{\cone}{cone}
\DeclareMathOperator{\ccone}{\overline{\mathrm{cone}}}
\DeclareMathOperator{\conv}{conv}
\DeclareMathOperator{\cch}{\overline{\mathrm{conv}}}
\DeclareMathOperator{\lspan}{span}

\DeclareMathOperator{\algint}{\mathrm{algint}}

\newcommand{\Sub}[2]{\mathrm{Sub}_{#2}(#1)}
\newcommand{\ind}{\mathcal{I}}
\newcommand{\inds}[1]{\mathcal{I}_{#1}}
\newcommand{\clique}{\mathcal{K}}
\newcommand{\ud}{\overline{\delta}}

\newcommand{\Orb}{\mathrm{Orb}}
\newcommand{\Stab}{\mathrm{Stab}}
\newcommand{\ortho}{\mathrm{O}}
\newcommand{\torus}{\mathbb{T}}

\DeclareMathOperator{\dens}{dens}
\newcommand{\Id}{I}
\newcommand{\1}{\mathbf{1}}
\newcommand{\tr}{{\sf T}}
\newcommand{\Tr}{\mathrm{Tr}}
\DeclareMathOperator{\Aut}{Aut}
\DeclareMathOperator{\Diag}{Diag}

\DeclareMathOperator{\supp}{supp}

\newcommand{\B}{\mathcal{B}}
\newcommand{\K}{\mathcal{K}}
\renewcommand{\L}{\mathcal{L}}
\newcommand{\D}{\mathcal{D}}

\newcommand{\sym}{\mathrm{sym}}
\newcommand{\inv}{\mathrm{inv}}
\newcommand{\op}{\mathrm{op}}

\newcommand{\fin}{\mathrm{fin}}

\newcommand{\setof}[1]{\llbracket{#1}\rrbracket}
\newcommand{\sub}{\mathrm{sub}}
\newcommand{\lhs}{\mathrm{lhs}}
\newcommand{\smallpmatrix}[1]{\left(
    \begin{smallmatrix}#1
\end{smallmatrix}\right)}  
\DeclareMathOperator{\rank}{rank}

\newcommand{\cont}{\mathrm{c}}
\newcommand{\m}{\mathrm{m}}
\newcommand{\bt}{\mathrm{big}}
\newcommand{\st}{\mathrm{small}}
\newcommand{\SP}{\mathrm{SP}}

\newcommand{\floor}[1]{\left\lfloor #1\right\rfloor}

\newcommand{\MS}[1]{{\rm MS}(#1)}
\newcommand{\spindle}[2]{{\rm S}(#1, #2)}
\newcommand{\comp}[1]{{\rm K}_{#1}}

\newlength\claimlen%
\newcommand{\assert}[1]{%
  \begin{minipage}{\claimlen}
    #1
  \end{minipage}%
}

\newcommand{\R}{\mathbb{R}}
\newcommand{\N}{\mathbb{N}}
\newcommand{\Z}{\mathbb{Z}}

\newcommand{\cor}{\mathbin{\scalebox{.85}{$\star$}}}
\newcommand{\Av}{\mathrm{Av}}
\newcommand{\T}{\mathcal{T}}
\newcommand{\Res}{\mathrm{Res}}
\newcommand{\convol}{\mathrm{K}}
\newcommand{\symdif}{\mathbin{\scalebox{.9}{$\triangle$}}}

\newcommand{\Sym}{\mathrm{Sym}}

\newcommand{\PSD}{\mathrm{PSD}}
\newcommand{\swrtz}{\mathcal{S}}

\newcommand{\cC}{\mathcal{C}}
\newcommand{\CP}{\mathrm{CP}}
\newcommand{\COP}{\mathrm{COP}}
\newcommand{\bC}{\breve{C}}

\newcommand{\Q}{\mathcal{Q}}
\newcommand{\BQP}{\mathrm{BQP}}
\newcommand{\PT}{\mathrm{P}}

\newcommand{\lass}{\mathrm{M}}
\newcommand{\kpb}{\mathrm{blockM}}

\newcommand{\A}{\mathscr{A}}

\renewcommand{\P}{\mathcal{P}}

\makeatletter
\newcommand{\firstname}[1]{\gdef\@firstname{#1}}
\newcommand{\@firstname}{\@latex@warning@no@line{No \noexpand\firstname given}}%
\newcommand{\lastname}[1]{\gdef\@lastname{#1}}
\newcommand{\@lastname}{\@latex@warning@no@line{No \noexpand\lastname given}}%
\makeatother

\title{Optimization hierarchies for extremal geometry through complete positivity}

\firstname{Abraham Johannes Franciscus}
\lastname{Bekker}

\date{03-06-2026}

\begin{document}

\frontmatter

\begin{titlepage}

  \begin{center}

    \vspace*{2\bigskipamount}

    {\makeatletter
      \bfseries\LARGE\@title
    \makeatother}

    {\makeatletter
      \ifx\@subtitle\undefined\else
      \bigskip
      \Large\@subtitle
      \fi
    \makeatother}

  \end{center}

  \newpage
  \thispagestyle{empty}
  \mbox{}
  \newpage

  \thispagestyle{empty}

  \begin{center}


    \vspace*{2\bigskipamount}

    {\makeatletter
      \bfseries\LARGE\@title
    \makeatother}

    {\makeatletter
      \ifx\@subtitle\undefined\else
      \bigskip
      \Large\@subtitle
      \fi
    \makeatother}

    \vfill

    {\bfseries\LARGE Dissertation}

    \bigskip
    \bigskip

    for the purpose of obtaining the degree of doctor

    at Delft University of Technology

    by the authority of the Rector Magnificus Prof. dr. ir. H. Bijl;

    Chair of the Board for Doctorates

    to be defended publicly on
    
    Thursday, 22 October 2026 at 15:00

    \bigskip
    \bigskip

    by

    \bigskip
    \bigskip

    \makeatletter
    {\LARGE \@firstname\ \textsc{\@lastname}}
    \makeatother

    \vspace*{3\bigskipamount}

  \end{center}

  \clearpage
  \thispagestyle{empty}

  \noindent  This dissertation has been approved by the promotors and the external advisor.

  \bigskip
  \noindent Composition of the doctoral committee:

  \medskip\noindent
  \begin{tabular}{p{5cm}p{6.2cm}}
    Rector Magnificus & chairperson \\
    Dr.\ F.M.\ de Oliveira Filho  & Delft University of Technology, promotor \\
    Prof.\ dr.\ D.C.\ Gijswijt & Delft University of Technology, promotor \\
    Dr.\ P.\ Moustrou & University of Toulouse -- Jean Jaur\`{e}s, France, external advisor \\
    
    \noalign{\medskip}
    \emph{Independent members:} & \\

    Prof.\ dr.\ ir.\ M.C.\ Veraar & Delft University of Technology\\
    Prof.\ dr.\ F.\ Vallentin & University of Cologne, Germany\\
    Prof.\ dr.\ E.\ de Klerk & Tilburg University\\
    Prof.\ dr.\ A.\ Wiegele & University of Klagenfurt, Austria\\
    Prof.\ dr.\ J.M.A.M.\ van Neerven & Delft University of Technology, \emph{reserve member}
  \end{tabular}

  \medskip
  \noindent The research in this thesis is funded by the grant \textsc{ocenw.klein.024} of the Dutch Research Council (\textsc{nwo}).

  \vfill
  \begin{center}
      \includegraphics[height=80pt]{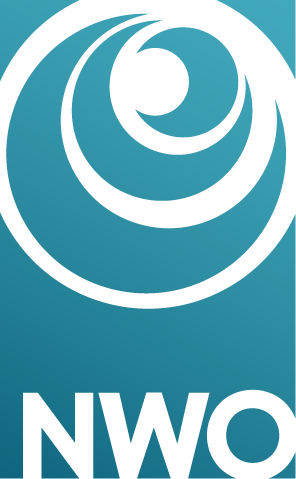}
      \hspace{2em}
      \includegraphics[height=60pt]{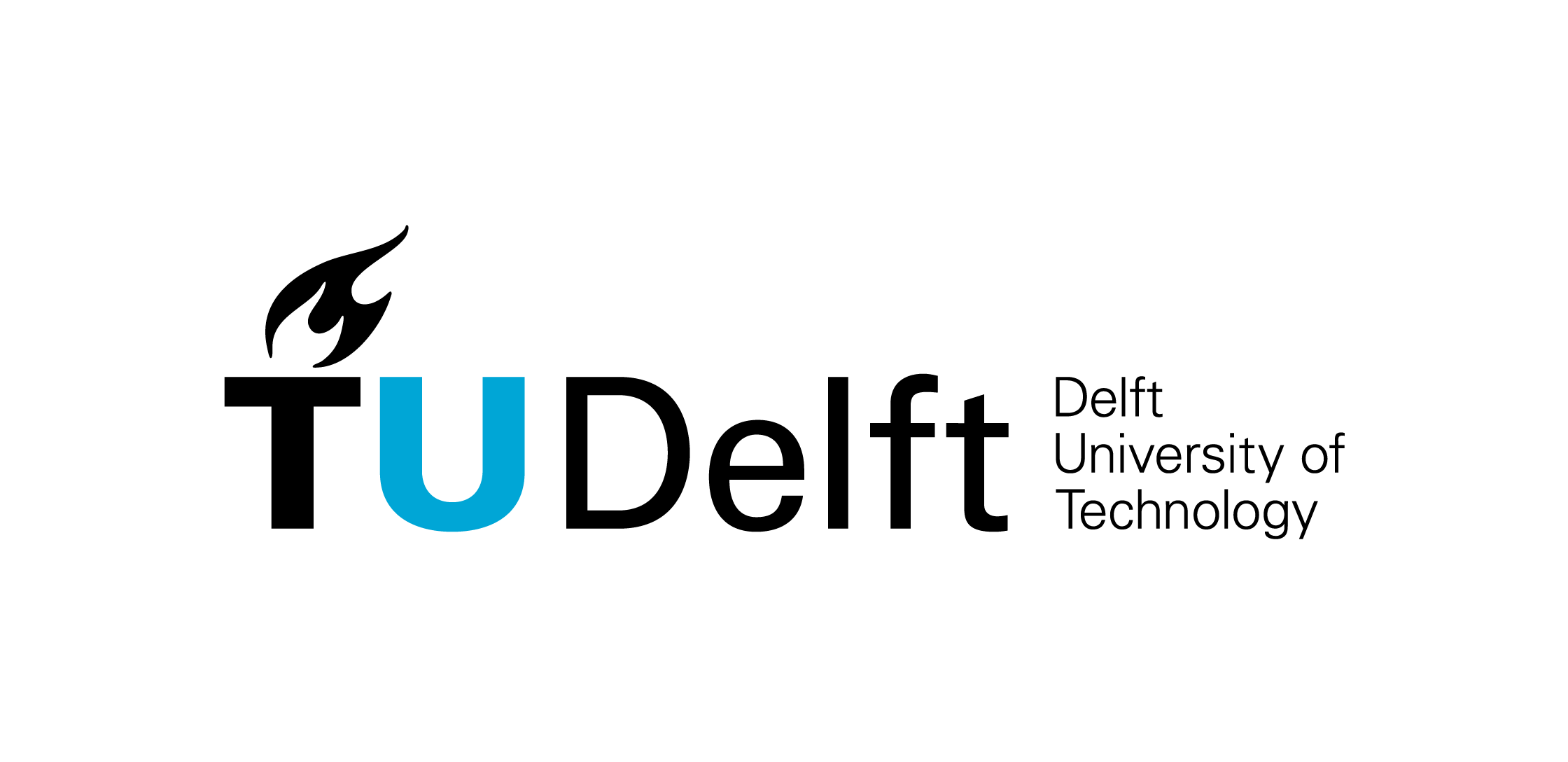}
  \end{center}
  \vfill

  \noindent Copyright \textcopyright\ 2026 by A.J.F.~Bekker

  \medskip



  \medskip

  \medskip

\end{titlepage}


\setcounter{page}{5}

\tableofcontents

\chapter*{Summary}

Completely positive functions are an extension of completely positive matrices. They are known to characterize maximal spherical codes and maximum-density distance-avoiding subsets of~$\R^n$ and certain compact metric spaces. This thesis expands this framework to related classes of problems in finite measure spaces and to the sphere-packing problem. For the latter, this is sharpened to show that the optimal sphere-packing density can be approximated using Schwartz functions.

Converging hierarchies of semidefinite programming bounds on the size of optimal spherical codes are known, based on approximations of completely positive functions and the Lovász theta number of a graph. This thesis extends these hierarchies to distance-avoiding sets and similar problems and to the sphere-packing problem, and proves their convergence to the maximum density. For distance-avoiding sets, additional hierarchies, such as the moment hierarchy, are introduced and shown to be stronger than the completely positive hierarchy, hence they also converge. These bounds are investigated for Witsenhausen's problem, which asks for the maximum fraction~$\alpha_n$ of the $n$-dimensional unit sphere that is coverable by a set avoiding orthogonal pairs, obtaining the best known upper bounds on~$\alpha_n$ in low dimensions.

The comparison of hierarchies for distance-avoiding sets moreover inspires a proof that the $k$-point bound for compact packing problems due to De Laat, Machado, Oliveira, and Vallentin is stronger than a converging completely positive hierarchy by Kuryatnikova and Vera. This proves convergence of the $k$-point bound. A related three-point bound is introduced for the $t$-almost-equiangular-set problem: finding the maximum size~$\alpha(n,t)$ of a subset of the $n$-dimensional unit sphere in which every triple contains a pair with inner product~$t \in [-1,1)$. An analytic solution to this bound yields an enumeration of optimal constructions for~$n = 2$ and~$3$ when~$t \geq 0$.

\chapter*{Samenvatting}
Volledig positieve functies zijn een uitbreiding van volledig positieve matrices. Het is bekend dat ze maximale sferische codes en afstandsvermijdende deelverzamelingen van $\R^n$ en bepaalde compacte metrische ruimten met maximale dichtheid karakteriseren. Dit proefschrift breidt dit raamwerk uit naar verwante klassen van problemen in eindige maatruimten en naar het bolstapelingsprobleem. Voor het laatste wordt dit verscherpt om aan te tonen dat de optimale bolstapelingsdichtheid benaderd kan worden met behulp van Schwartz-functies.

Convergerende hiërarchieën van bovengrenzen op de grootte van optimale sferische codes via semidefinietprogrammeren zijn bekend, gebaseerd op benaderingen van volledig positieve functies en het Lovász-thetagetal van een graaf. Dit proefschrift breidt deze hiërarchieën uit naar afstandsvermijdende verzamelingen en vergelijkbare problemen en naar het bolstapelingsprobleem, en bewijst hun convergentie naar de maximale dichtheid. Voor afstandsvermijdende verzamelingen worden aanvullende hiërarchieën, zoals de momentenhiërarchie, geïntroduceerd en aangetoond sterker te zijn dan de volledig positieve hiërarchie, en dus convergeren zij ook. Deze grenzen worden onderzocht voor het probleem van Witsenhausen, dat vraagt naar de maximale fractie $\alpha_n$ van de $n$-dimensionale eenheidssfeer die overdekt kan worden door een verzameling die orthogonale paren vermijdt. Dit leidt tot de best bekende bovengrenzen op $\alpha_n$ in lage dimensies.

De vergelijking van hiërarchieën voor afstandsvermijdende verzamelingen inspireert bovendien een bewijs dat de $k$-puntsgrens voor compacte stapelingsproblemen van De Laat, Machado, Oliveira en Vallentin sterker is dan een convergerende volledig positieve hiërarchie van Kuryatnikova en Vera. Dit bewijst de convergentie van de $k$-puntsgrens. Een verwante driepuntsgrens wordt geïntroduceerd voor het $t$-bijna-gelijkhoekige-verzamelingprobleem: het vinden van de maximale grootte $\alpha(n,t)$ van een deelverzameling van de $n$-dimensionale eenheidssfeer waarin elk drietal een paar bevat met inwendig product $t \in [-1,1)$. Een analytische oplossing van deze grens maakt een opsomming van de optimale constructies voor $n = 2$ en $3$ wanneer $t \geq 0$ mogelijk.


\mainmatter

\chapter{Introduction}%
\label{ch:introduction}
The interest in completely positive programming originates from the ubiquity of problems of the form
\begin{equation}%
  \label{eqn:SQO}
  \min_{x \in \Delta_S} x^{\tr} Q x.
\end{equation}
Here,~$S$ is a finite set,~$Q\in \R^{S \times S}$ is a real symmetric matrix, and~$\Delta_S$ is the standard simplex in~$\R^S$: the set of all nonnegative vectors whose coefficients sum to~$1$. By writing~$x^{\tr} Q x = \Tr(Q xx^{\tr})$, where~$\Tr$ denotes the matrix trace, and noting that~$x \in \Delta_S$ if and only if~$x \geq 0$ and~$\sum_{i, j} x_i x_j = 1$, it is apparent that~\eqref{eqn:SQO} is equivalent to an optimization problem over the intersection of the affine space~$\{ A \in \R^{S \times S} : \sum_{i, j} A_{i, j} = 1\, \}$ and the extreme rays of the completely positive cone, which is the convex cone generated by matrices of the form~$xx^{\tr}$, with~$x$ a nonnegative vector. The study of problems of this kind was initiated in the works by Bomze~\cite{Bomze1998OnProblems} and Bomze, Dür, De Klerk, Roos, Quist, and Terlaky~\cite{Bomze2000OnCopProblems}.

Calculating the independence number~$\alpha(G)$ of a finite graph~$G$ is an example of an NP-hard problem~\cite{Karp1972ReducibilityProblems} that can be expressed in the form~\eqref{eqn:SQO}. The independence number is the maximum cardinality a set of vertices can have without containing edges. Denoting the vertex set of~$G$ by~$V$, the adjacency matrix of~$G$ by~$A$, and the identity matrix by~$I$, Motzkin and Straus~\cite{Motzkin1965MaximaTuran} showed that
\begin{equation}%
  \label{eqn:cp-bound-motzkin-straus}
  \frac{1}{\alpha(G)} = \min_{x \in \Delta_{V}} x^{\tr}(A+I)x.
\end{equation}

This is not terribly interesting in itself: as computing the independence number is NP-hard, it just means that optimizing over the completely positive cone is difficult. However, in his thesis, Parrilo~\cite{Parrilo2000StructuredOptimization} introduced systems of linear matrix inequalities that approximate the copositive cone---the conic dual of the completely positive cone---in finitely many steps. De Klerk and Pasechnik~\cite{DeKlerk2002ApproximationProgramming,DeKlerk2007AProblem}, Bomze and De Klerk~\cite{Bomze2002SolvingProgramming}, and Peña, Vera, and Zuluaga~\cite{Pena2007ComputingProgramming} investigated this method closer, and used it to define linear and semidefinite optimization hierarchies that approximate the independence number of a finite graph, also in finitely many steps.

This thesis presents extensions of these ideas to four classes of questions from extremal geometry that are modelled as a kind of independence-number problem on an infinite hypergraph. The following are typical examples of a problem in each class. The dimension~$n$ is fixed.
\begin{enumerate}[label=\Roman*]
  \item \label{it:problem-1} What is the maximum fraction of the $(n-1)$-dimensional unit sphere that a set can cover without containing pairs of orthogonal vectors?
  \item \label{it:problem-2} How many unit balls can touch a central unit ball, if their interiors do not overlap?
  \item \label{it:problem-3} What is the maximum fraction of $n$-dimensional Euclidean space that a set can cover without containing pairs at distance~$1$?
  \item \label{it:problem-4} What is the maximum fraction of $n$-dimensional Euclidean space that a set of $(n-1)$-dimensional unit balls can cover, if their interiors do not overlap?
\end{enumerate}
Each of these has its own characteristics, but all can be formulated as an independence number of a graph~$G = (V, E)$, where the correct notion of ``size'' of an independent set is not necessarily its cardinality.

For example, for Problem~\ref{it:problem-1}, take~$V = S^{n-1}$ the unit sphere, and
\[
  E = \{\, (x,y) \in (S^{n-1})^2 : x^{\tr}y=0\, \}.
\]
In these terms, Problem~\ref{it:problem-1} asks for the largest fraction of the sphere that is covered by an independent set of~$G$. Here, ``fraction'' means the total surface measure of such a set, since such a set in general is not finite. Indeed, a spherical cap of nonzero angular radius less than~$\pi/4$ is a valid construction which contains uncountably many points. Hence, we take the uniform probability measure~$\mu$ on~$S^{n-1}$, with which the question can be formulated as: find
\[
  \sup \{\, \mu(I) : I\subseteq S^{n-1} \text{ measurable and independent}\, \}.
\]

This problem is called~\defi{Witsenhausen's problem}\index{problem!Witsenhausen's}, and was first posed in 1974 by Witsenhausen~\cite{Witsenhausen1974SphericalPairs}. It is conjectured that the optimal value is given by twice the measure of a spherical cap of angular radius~$\pi/4$~\cite[Conjecture 2.8]{Kalai2015SomeProblem}, which is~$(1/\sqrt{2} + o(1))^n$. This has only been confirmed for~$n=2$. Linear optimization bounds were introduced by Bachoc, Nebe, Oliveira, and Vallentin~\cite{Bachoc2009LowerNumbers} and Oliveira~\cite{Oliveira2009NewOptimization}. Parts~\ref{part:cones} and~\ref{part:measurable-setting} of this thesis give a theory of completely positive programming for this class of problems, which results in the best bounds known on Witsenhausen's problem in low dimensions. These results were partially published in~\cite{Bekker2026OptimizationSpaces}.

Problem~\ref{it:problem-2} is an example of a problem where the objective value is the cardinality of a set; it is called the~\defi{kissing-number problem}\index{problem!kissing number}. Here, again, we take~$V = S^{n-1}$, but now~$E = \{\, (x,y) \in (S^{n-1})^2 : x^{\tr}y \in (1/2, 1) \, \}$. The kissing number asks for the largest size~$|I|$, where~$I$ is an independent set of this graph. To see that this is the correct choice for the edge set, radially project a valid configuration of balls onto the central ball. Each ball touching the central ball corresponds to a spherical cap of angular radius~$\pi/3$ under this projection, explaining the maximally allowed inner product~$\cos(\pi/3) = 1/2$.

Delsarte, Goethals, and Seidel~\cite{Delsarte1977SphericalDesigns} introduced linear programming bounds to the kissing-number problem. They did this for a more general class of problems, where the inner product~$1/2$ is replaced by any inner product. After many improvements on this bound and adaptations to other problems, Bachoc and Vallentin~\cite{Bachoc2008NewProgramming} gave the first semidefinite programming bounds on the kissing number. More details on the history are presented in Chapter~\ref{ch:cop-prog-packing}.

A theory of copositive programming for problems of this type was initiated by Kuryatnikova and Vera~\cite{Kuryatnikova2017Approximating,Kuryatnikova2019TheProblems}. In Part~\ref{part:compact-packings} we complement their work by comparing it to other well-known optimization methods. We also explore a problem on a 3-uniform hypergraph that is similar to Problem~\ref{it:problem-2}. These results are based on the preprints~\cite{Bekker2023OnGraphs} and~\cite{Bachoc2025ObtuseSets}.

Problems~\ref{it:problem-3} and~\ref{it:problem-4} take place on a graph with vertex set~$V = \R^n$. These are examples of problems where even the Lebesgue measure of a typical set satisfying the requirements is infinite; instead, the objective value is a limit over local densities, which we make more precise in Part~\ref{part:Euclidean-space}. For now, we denote it by~$\ud(I)$ without definition.

Problem~\ref{it:problem-3} is known as the~\defi{1-avoiding-set problem}\index{problem!1-avoiding set}. The edge set associated to it is~$E = \{\, (x,y) \in (\R^n)^2 : \|x - y\| = 1\, \}$. The problem asks for the supremum~$m_1(\R^n)$ of~$\ud(I)$ ranging over all measurable independent sets of the graph~$(\R^n, E)$.  Recently, Ambrus, Csiszárik, Matolcsi, Varga, and Zsámboki~\cite{Ambrus2024TheDistances}, showed that~$m_1(\R^2) \leq 1/4$, settling a conjecture by Erd\H{o}s~\cite{ErdosProblemsGeometry}. Their method was based on convex programming bounds first introduced by Oliveira and Vallentin~\cite{Oliveira2010FourierRn}. Chapter~\ref{ch:distance-avoiding-sets-euclidean-space} contains more background on the problem.

Problem~\ref{it:problem-4} is called the~\defi{sphere packing problem}\index{problem!sphere packing}. It can be formulated as an independent set problem on the graph~$(\R^n, E)$, with~$E = \{\, (x,y) \in (\R^n)^2 : \|x - y\| \in (0, 2)\, \}$. Again, the objective is to maximize~$\ud(S)$ over some sets~$S$. However, the sets are not directly independent sets of the graph, but rather we are looking for
\[
  \sup\biggl\{\, \ud\biggl(\bigcup_{x \in I} B_1(x)\biggr) : I~\text{measurable and independent}\, \biggr\},
\]
where~$B_1(x)$ is the unit ball centered at~$x$. In other words, at every point of an independent set we attach a unit ball centered at that point; it is the density of such a set we are interested in.

The sphere packing problem is perhaps the most famous of these four problems. The most notable results in this topic are the linear programming bound by Cohn and Elkies~\cite{Cohn2003NewI} and the proof that the Cohn-Elkies bound is exact for~$n = 8$ by Viazovska~\cite{Viazovska2017The8}, and for~$n=24$ by and Cohn, Kumar, Radchenko, and Viazovska~\cite{Cohn2017The24}, including calculations of the optimal values. The introduction of Chapter~\ref{ch:sphere-packing} goes deeper into this history.

Chapter~\ref{ch:complete-positivity-under-symmetry} investigates the completely positive cone for problems like Problem~\ref{it:problem-3} and Problem~\ref{it:problem-4} on spaces similar to~$\R^n$. In Part~\ref{part:Euclidean-space} all the work in this thesis comes together to describe completely positive and copositive programming approaches to Problem~\ref{it:problem-3} and~\ref{it:problem-4}. These results are new and unpublished.

\sectionbreak

I have included extensive preliminaries. To keep the main text to the point, I have moved the most elementary of these to the appendix. Notions and theorems that cannot be found in the preliminary section of the relevant chapter can probably be found there.

\section{Some notation}
The natural numbers start at~$0$. For integer~$n \geq 1$,~$[n] = \{1, \ldots, n\}$. Given an index set~$I$, we will often abbreviate~$\{\, x_i : i \in I\, \}$ as~$\{x_i\}_{i \in I}$; when~$I$ is ordered, and we intend a collection of~$x_i$s to be ordered accordingly, we use the common notation~$(x_i)_{i \in I}$. If~$S$ is a set of real-valued vectors, and~$r \in \R$, then~$S_{\geq r}$ is the set of all vectors in~$S$ with all coordinates at least~$r$. In particular, for every~$r \in \R$,~$\N_{\geq r}$ is the set of natural numbers greater than or equal to~$r$.

Let~$k \in \N_{\geq 1}$, denote by~$\mathfrak{S}_k$\index{ Sk@$\mathfrak{S}_k$} the permutation group on~$k$ letters. Fix~$S$ a finite set. The space~$\Sym(S, k)$\index{ SymSk@$\Sym(S, k)$} is the vector space of~\defi{symmetric $k$-tensors}\index{k-tensor@$k$-tensor} on~$S$, that is, functions~$S^k \to \R$ such that for all elements~$s_1$,~$\ldots$,~$s_k \in S$ and permutations~$\pi \in \mathfrak{S}_k$:~$T(s_{\pi 1}, \ldots, s_{\pi k}) = T(s_1, \ldots s_k)$. The space~$\Sym(S, 2)$ is nothing more than the space of symmetric matrices indexed by~$S$, and we will often denote it by~$\Sym(S)$\index{ SymS@$\Sym(S)$}. If~$n \in \N$, then~$\Sym(n, k) = \Sym([n], k)$, and~$\Sym(n) = \Sym([n])$. We will use both the function notation~$T(s_1, \ldots, s_k)$ and the index notation~$T_{s_1, \ldots, s_k}$.

Denote the transpose of a vector~$x \in \R^S$ by~$x^{\tr}$. We always consider the inner product of~$x$ and~$y \in \R^S$ to be~$x^{\tr}y$, hence, unless indicated otherwise, the norm~$\|x\| = \sqrt{x^{\tr}x}$. The inner product on~$\Sym(S,k)$ is the usual Euclidean inner product, and is denoted~$\langle \cdot\, , \cdot\rangle$\index{ <>@$\langle \cdot\, , \cdot\rangle$}. Indicate the cone of positive semidefinite matrices by
\index{ SymS>0@$\Sym(S)_{\geq0}$}\[
  \Sym(S)_{\succeq 0} = \{\, M \in \Sym(S) : \langle M, xx^{\tr}\rangle \geq 0 \text{ for all } x \in \R^S\, \}.
\]
If~$M \in \Sym(S)_{\succeq 0}$, we write~$M \succeq 0$.

Let~$k \in \N_{\geq 1}$. A choice of~$k$ elements~$x_1$,~$\ldots$,~$x_k \in \R^S$ defines a $k$-tensor
\index{ 000@$\otimes$}\[
  (x_1 \otimes \cdots \otimes x_k)_{i_1, \ldots, i_k} =  (x_1)_{i_1} \cdots (x_k)_{i_k}.
\]
The vector~$\1 \in \R^S$ indicates the all-one vector---$\1_s = 1$ for all~$s \in S$---and~$J = \1\1^{\tr}$ is the all-one matrix. For a set~$I \subseteq S$, the vector~$\1_I$ is the indicator function of~$I$.

Given a set~$S$ and~$r \in \N$, the set~$\Sub{S}{r}$ is the family of all subsets of~$S$ of cardinality at most~$r$, including the empty set. Likewise,~$\Sub{S}{=r}$ is the family of subsets of cardinality exactly~$r$. Again, if~$n \in \N$, then~$\Sub{n}{r} = \Sub{[n]}{r}$ and~$\Sub{n}{=r} = \Sub{[n]}{=r}$. We will call a set of cardinality~$r$ an~\defi{$r$-set}\index{set@$r$-set}.

When we say that~$H = (V, E)$ is a hypergraph, we mean that~$V$ is its vertex set and~$E$ is its edge set, which is a collection of subsets of~$V$. For edges~$\{v_1, \ldots, v_k\} \in E$, we often omit the brackets, and write~$v_1 \cdots v_k \in E$. For an integer~$k \geq 2$, a hypergraph is~\defi{$k$-uniform}\index{hypergraph!k uniform hypergraph@$k$-uniform hypergraph} if~$E \subseteq \Sub{V}{=k}$. In this case, we will often interpret~$E$ as a symmetric subset of~$V^k$ by identifying it with the set of all $k$-tuples~$(v_1, \ldots, v_k)$ such that~$\{v_1, \ldots v_k\} \in E$. The~\defi{automorphism group}\index{automorphism group of a hypergraph} of~$H$ is the set of bijections~$\sigma: V \to V$ such that~$\sigma(v_1) \cdots \sigma(v_k) \in E$ if and only if~$v_1 \cdots v_k \in E$, and is denoted~$\Aut(H)$\index{ AutH@$\Aut(H)$}. We denote the action of an element~$\sigma \in \Aut(H)$ without brackets:~$\sigma v = \sigma(v)$ for all~$v \in V$. A hypergraph is called~\defi{vertex transitive} if the action of~$\Aut(V)$ is transitive on~$V$.

\section{The independence number of a finite graph}%
\label{sec:independence-number-finite-graph}
Let~$k \geq 2$ be an integer and~$H = (V, E)$ be a $k$-uniform hypergraph with vertex set~$V$ and edge set~$E$. An~\defi{independent set}\index{independent set} of~$H$ is a subset~$S \subseteq V$ such that no $k$-subset of~$S$ is an edge. The~\defi{independence number}\index{independence number}~$\alpha(H)$\index{ a H@$\alpha(H)$} of~$H$ is
\[
  \alpha(H) = \sup\{\, |I| : I \subseteq V \text{ independent}\, \}
\]
and is either attained or infinite. In this thesis we will discuss upper bounds on several extensions of the independence number to hypergraphs with a possibly infinite vertex set. The remainder of this chapter forms an overview of the finite-graph setting.

Fix a finite set~$V$ and a graph~$G = (V, E)$. A common starting point for optimization methods for the independence number is the~\defi{Lovász theta number}\index{Lovász theta number} of~$G$, introduced by Lovász~\cite{Lovasz1979OnGraph}. It is the semidefinite program
\index{ theta@$\vartheta(G)$}
\begin{equation}%
  \label{eqn:theta-number}
  \begin{optprob}
    \vartheta(G) = \sup &\onerow{\sum_{v, w \in V} X_{vw}}\\
    &\onerow{\sum_{v\in V} X_{vv} = 1,}\\
    &X_{vw} = 0 &\text{for all } vw \in E,\\
    &\onerow{X \in \Sym(V)_{\succeq 0}.}
  \end{optprob}
\end{equation}
We use the same name and symbol for an optimization problem and its optimal value; so, ``Lovász theta number'' refers to both the program displayed in~\eqref{eqn:theta-number} and the number it produces.

The Lovász theta number is an upper bound on the independence number of~$G$, that is~$\vartheta(G) \geq \alpha(G)$. Indeed, and take~$I \subseteq V$ an independent set. Then, the matrix~$\1_I\1_I^{\tr} / |I|$ is a feasible solution of~$\vartheta(G)$ with objective~$|I|$.

The Lovász theta number is appealing from a computational perspective, as the ellipsoid method offers a proof that it can be solved in polynomial time to any fixed precision~\cite{Groetschel1993GeometricOptimization}. In practice, interior point methods offer a polynomial-time implementation which is preferred. Moreover, the theta number is amenable to many well-known techniques, like restriction of its feasible region and sparsity arguments, to improve the bound and reduce the size of the program. Symmetries of the graph and of the constraints and objective can also often be exploited to the same end.

We will say that a bound on~$\alpha(G)$ is~\defi{sharp}\index{sharp bound} or~\defi{exact}\index{exact bound} if it is equal to~$\alpha(G)$. Rarely is~$\vartheta(G)$ a sharp bound on~$\alpha(G)$. However, there are many ways to add or modify constraints, which has produced many of the best upper bounds on~$\alpha(G)$.

One of the ways in which the theta number can be strengthened, is by replacing the positive-semidefinite cone by the~\defi{completely positive cone}\index{completely positive cone!in finite dimension}
\index{ CPV@$\CP(V)$!in finite dimension}\[
  \CP(V) = \biggl\{\, \sum_{i=1}^m x_ix_i^{\tr} : m \in \N \text{ and } x_i \in \R^V_{\geq 0}\text{ for }i \in [m]\, \biggr\}.
\]
Let for any convex cone~$\cC \subseteq \Sym(V)$,
\index{ theta@$\vartheta(G, \cC)$!finite graph}\[
  \begin{optprob}
    \vartheta(G, \cC) = \sup &\onerow{\sum_{v, w \in V} X_{vw}}\\
    &\onerow{\sum_{v \in V} X_{vv} = 1,}\\
    &X_{vw} = 0 &\text{for all } vw \in E,\\
    &\onerow{X \in \cC,}
  \end{optprob}
\]
so that~$\vartheta(G, \Sym(V)_{\succeq 0}) = \vartheta(G)$. Then,~$\vartheta(G, \CP(V))$ is a sharp bound on~$\alpha(G)$ which is closely related to the formulation~\eqref{eqn:cp-bound-motzkin-straus} of~$1/\alpha(G)$. That it is an upper bound on~$\alpha(G)$, follows from the same argument as for the theta number. It is moreover exact, since the optimal value is attained at an extreme point, and the extreme points of the feasible region are of the form~$xx^{\tr}$ with~$x \geq 0$,~$\|x\| = 1$, and the support of~$x$ is an independent set. Take an optimal solution~$xx^{\tr}$, with~$x \geq 0$ a vector with support~$I$. It has objective value~$(\1^{\tr}x)^2 \leq \|\1_I\|^2\|x\|^2 = |I|$ by the Cauchy-Schwarz inequality, hence the conclusion follows. We call~$\vartheta(G, \CP(V))$ a~\defi{completely positive formulation}\index{formulation!completely positive} of~$\alpha(G)$ for finite graphs~$G$. In Chapter~\ref{ch:completely-positive-formulations-meas-ind-num} we will see a proof of a much more general statement.

Instead of optimizing over~$\CP(V)$, upper bounds on the independence number are often obtained by optimizing over its conic dual
\index{ COPV@$\COP(V)$!in finite dimension}\[
  \COP(V) = \{\, M \in \Sym(V) : \langle M, xx^{\tr} \rangle \geq 0 \text{ for all } x \in \R^V_{\geq 0}\, \},
\]
which is called the~\defi{copositive cone}\index{copositive cone!in finite dimension}. The accompanying optimization problem is~$\vartheta^*(G, \COP(V))$, where for any convex cone~$\cC \subseteq \Sym(V)$,
\[
  \begin{optprob}
    \vartheta^*(G, \cC) = \inf &\onerow{t}\\
    &F_{vv} = t - 1 &\text{for all } v \in V,\\
    &F_{vw} = -1 &\text{for all $v \neq w$ and } vw \notin E,\\
    &F \in \cC.
\end{optprob}\]
If~$\cC$ is a closed convex cone, the program~$\vartheta^*(G, \cC)$ is the dual of~$\vartheta(G, \cC^*)$, and under mild conditions, strong duality holds; that is~$\vartheta(G, \cC) = \vartheta^*(G, \cC^*)$. In particular, we have~$\vartheta(G, \CP(V)) = \vartheta^*(G, \COP(V))$, so that the latter also is an exact bound on~$\alpha(G)$. We call~$\vartheta^*(G, \COP(V))$ the~\defi{copositive formulation}\index{formulation!copositive formulation} of~$\alpha(G)$ for finite graphs. Of course, that these programs return exactly the independence number must mean that they are hard to compute. In Section~\ref{sec:cop-cone-finite-dimension} of this introduction we go further into this.

Other well-studied ways to improve the Lovász theta number stem from the moment hierarchy, which was developed for general 0-1 programming by Lasserre~\cite{Lasserre2001AnPrograms,Lasserre2002AnPrograms} and described in more detail for the independence number by Laurent~\cite{Laurent2003AProgramming}. For~$r \in \N$, let~$M_r$ be the operator
\[
  M_r : \R^{\Sub{V}{2r}} \to \Sym(\Sub{V}{r}),\qquad (M_r \nu)_{S, T} = \nu_{S \cup T}.
\]
The~\defi{moment hierarchy}\index{moment hierarchy!finite graph}---also called the~\defi{Lasserre hierarchy}\index{Lasserre hierarchy}---is the sequence of programs
\index{ Mr@$\lass_r(H)$!finite graph}\[
  \begin{optprob}
    \lass_r(G) = \sup &\onerow{\sum_{v \in V} \nu_{\{v\}}}\\
    &\onerow{\nu_{\emptyset} = 1,}\\
    &\nu_S = 0 & \text{for all $S$ not independent,}\\
    &\onerow{\nu \in \R^{\Sub{V}{2r}}_{\geq 0},\ M_r\nu \in \Sym(\Sub{V}{r})_{\succeq 0}.}
\end{optprob}\]

For all~$r$,~$\lass_r(G)$ is an upper bound on~$\alpha(G)$, and the sequence is decreasing: for an integer~$r \geq 1$ and an independent set~$I$, define the vector~$\chi_I \in \R_{\geq 0}^{\Sub{V}{2r}}$, which is~$1$ on sets~$S \in \Sub{I}{r}$ and~$0$ otherwise. This defines a feasible solution to~$\lass_r(G)$. Moreover, extending a feasible solution~$\nu$ of~$\lass_r(G)$ by zeros gives a feasible solution of~$\lass_s(G)$ for all~$s \geq r$. In fact, Laurent~\cite{Laurent2003AProgramming} showed that
\[
  \lass_1(G) \geq \lass_2(G) \geq \cdots \geq \lass_{\alpha(G)}(G) = \alpha(G).
\]
We say that the hierarchy~\defi{converges}\index{converging hierarchy} to~$\alpha(G)$, since the sequence of numbers~$(\lass_1(G), \lass_2(G), \ldots)$ converges to~$\alpha(G)$.

The program~$\lass_1(G)$ is known as the~\defi{theta-prime number}\index{theta prime number@theta-prime number} and is equivalent to the theta number~\eqref{eqn:theta-number}, but with the additional constraint that the matrix~$X$ is nonnegative. One way to think of the higher levels~$\lass_r(G)$ is that they strengthen the theta number by including correlations between more than two points. We could thus call~$\lass_r(G)$ a~\defi{$(2r)$-point bound}\index{k point bound@$k$-point bound}.

The moment hierarchy is interesting because each level is a semidefinite program that can be solved in polynomial time, but on the other hand it converges to~$\alpha(G)$ in finitely many steps. However, the time it takes to compute~$\lass_r(G)$ goes up quickly with~$r$ for many graphs, even with efficient algorithms. Thus, already for small~$r$ the programs might not be tractable. For some graphs in this thesis, though, we can define a moment hierarchy for which low levels are tractable~\cite{deLaat2023TheAngle}.

That the moment hierarchy might be difficult to compute explains the interest in weaker versions of these programs, which might give worse bounds, but are easier to compute. Schrijver~\cite{Schrijver2005NewProgramming} described a three-point bound for a certain combinatorial problem, which Bachoc and Vallentin~\cite{Bachoc2008NewProgramming} then used as inspiration for a three-point bound for the kissing-number problem, which was a breakthrough result. Musin~\cite{Musin2014MultivariateSpheres} extended it to an $r$-point bound for the kissing-number problem for all~$r \geq 2$. Gvozdenović, Laurent, and Vallentin introduced an $r$-point bound for the independence number of finite graphs~\cite{Gvozdenovic2009BlockProgramming}, which De Laat, Machado, Oliveira, and Vallentin~\cite{deLaat2021K-PointLines} extended to an $r$-point bound for topological packing graphs, a class that includes all finite graphs.

In this thesis, we investigate the bound by De Laat, Machado, Oliveira, and Vallentin~\cite{deLaat2021K-PointLines}, defined as follows. For~$r \in \N_{\geq 2}$ and~$Q \in \Sub{V}{r-2}$, define
\[
  M_Q : \R^{\Sub{V}{r}} \to \Sym(\Sub{V}{1}),\qquad (M_Q\nu)_{S, T} = \nu_{Q \cup S \cup T}.
\]
The $r$th level of the~\defi{block moment hierarchy}\index{block moment hierarchy!finite graph} is
\index{ blockMr@$\kpb_r(H)$!finite graph}\[
  \begin{optprob}
    \kpb_r(G) = &\onerow{\sup \sum_{v \in V} \nu_{\{v\}}}\\
    &\onerow{\nu_{\emptyset} = 1,}\\
    &\nu_S = 0 &\text{for all $S$ not independent},\\
    &M_Q \nu \in \Sym(\Sub{V}{1})_{\succeq 0} &\text{for all } Q \in \Sub{V}{r-2},\\
    &\onerow{\nu \in \R^{\Sub{V}{r}}_{\geq 0}.}
\end{optprob}\]
In the literature, it is called ``the $k$-point bound'', but we will discuss many hierarchies that qualify for this name.

We may think of the matrix~$M_Q\nu$ as a principal submatrix of a matrix of the form~$M_r\mu$, where~$(M_Q\nu)_{S,T} = (M_r\mu)_{Q \cup S, Q \cup T}$. Hence, the operator~$M_Q$ selects blocks indexed by sets containing~$Q$. This shows that the block moment hierarchy is indeed weaker than the moment hierarchy, in the sense that there is a number~$n \in \N$ such that for all~$i$:~$\lass_{i+n}(G) \leq \kpb_i(G)$. We again have
\[
  \kpb_1(G) \geq \kpb_2(G) \geq \cdots \geq \alpha(G).
\]
We prove that~$\alpha(G) = \kpb_{\alpha(G)^2}(G)$ for all finite graphs~$G$ in Chapter~\ref{ch:Completely positive programming on compact spaces} and~\ref{ch:cop-prog-packing}.

\section{The copositive cone in finite dimensions}%
\label{sec:cop-cone-finite-dimension}
Let~$G = (V, E)$ be a finite graph. In the previous section, we saw the copositive formulation~$\vartheta^*(G, \COP(V))$ for the independence number of~$G$. Since computing~$\vartheta^*(G, \COP(V))$ is equivalent to computing~$\alpha(G)$, it cannot be easier. Parrilo~\cite{Parrilo2000StructuredOptimization}, and subsequently De Klerk and Pasechnik~\cite{DeKlerk2002ApproximationProgramming,DeKlerk2007AProblem}, and Peña, Vera, and Zuluaga~\cite{Pena2007ComputingProgramming}, introduced hierarchies of optimization problems based on the copositive formulation of~$\alpha(G)$. These problems are computationally much less expensive than~$\vartheta^*(G, \COP(V))$, and converge in finitely many steps to~$\alpha(G)$: the weakest of these hierarchies can be shown to converge to~$\alpha(G)$ in at most~$\alpha(G)^2$ steps, after rounding down to an integer~\cite[Theorem 4.1]{DeKlerk2002ApproximationProgramming}.

This section describes the background necessary to understand the weakest of these hierarchies better, which we will use to develop more general theory in Chapters~\ref{ch:completely-positive-cone} and~\ref{ch:complete-positivity-under-symmetry}. In particular, we define cones~$C_r(V)$ for finite~$V$ that approximate~$\COP(V)$ in the sense that~$\COP(V)$ is the closure of~$\bigcup_{r \in \N} C_r(V)$. This defines the hierarchy~$\vartheta^*(G, C_r(V))$, which converges to~$\vartheta^*(G, \COP(V))$---equal to~$\alpha(G)$---in finitely many steps. The theory will be presented more generally, and these concepts are defined for $k$-uniform hypergraphs, with~$k \in \N_{\geq 2}$. Convergence of this hierarchy is not proved here, as this thesis contains two generalizations of this statement, one in Chapter~\ref{ch:Completely positive programming on compact spaces} and another in Chapter~\ref{ch:cop-prog-packing}.

The copositive hierarchy departs from Pólya's theorem, a positivstellensatz for homogeneous polynomials. Let~$p \in \R[x]$ with~$x = (x_1, \ldots, x_n)$ be a homogeneous polynomial in~$n$ variables. If there is an~$r \in \N$ such that all coefficients of
\begin{equation}%
  \label{eqn:polya-ineq}
  (\1^{\tr}x)^rp(x) = (x_1 + \cdots + x_n)^r p(x),
\end{equation}
are nonnegative, then $p(x) \geq 0$ for all~$x \geq 0$. Pólya's theorem offers a partial converse: if~$p(x) > 0$ for all nonzero~$x \geq 0$, then there exists an~$r \in \N$ such that all coefficients of~\eqref{eqn:polya-ineq} are nonnegative~\cite{Polya1974UberPolynomen}.

Define for a finite set~$V$ and an integer~$k \geq 2$  the~\defi{completely positive cone}\index{completely positive cone!in finite dimensions}
\index{ CPVk@$\CP(V, k)$!in finite dimensions}\[
  \CP(V,k) = \biggl\{\, \sum_{i = 1}^m x_i^{\otimes k} : m \in \N \text{ and } x_i \in \R^V_{\geq 0} \text{ for all } i \in [m]\, \biggr\}
\] and the~\defi{copositive cone}\index{copositive cone!in finite dimensions}
\index{ COPVk@$\COP(V, k)$!in finite dimensions}\[
  \COP(V, k) = \{\, T \in \Sym(V,k): \langle T, x^{\otimes k} \rangle \geq 0 \text{ for all } x \in \R^V_{\geq 0}\, \},
\]
which is the conic dual of~$\CP(V,k)$.

To see the connection between Pólya's theorem and the copositive cone, let~$V$ be a finite set with~$n = |V|$, let~$k \geq 2$ be an integer, and let~$T$ be in~$\Sym(V,k)$. The function~$p: x \mapsto \langle T, x^{\otimes k}\rangle$ on~$\R^V$ defines a homogeneous polynomial of degree~$k$ in~$n$ variables over~$\R$. Conversely, for all integers~$n \geq 0$ and~$k \geq 2$, a homogeneous polynomial of degree~$k$ in~$n$ variables over~$\R$ is given in this way by a unique element of~$\Sym(n,k)$.

Let~$r \geq 0$ be an integer, and define for all~$v_1$,~$\ldots$,~$v_{k+r} \in V$,
\index{ Av S@$\Av_{\mathfrak{S}_{k+r}}$}\[
  \Av_{\mathfrak{S}_{k+r}} T (v_1, \ldots, v_{k+r}) = \frac{1}{(k + r)!} \sum_{\pi \in \mathfrak{S}_{k+r}} T(v_{\pi 1}, \ldots, v_{\pi (k+r)}).
\]
Rewrite~\eqref{eqn:polya-ineq} as
\begin{equation}%
  \label{eqn:polya-tensor-expression}
  (\1^{\tr}x)^r p(x) =  \langle T\otimes \1^{\otimes r}, x^{\otimes (k + r)} \rangle = \langle \Av_{\mathfrak{S}_{k+r}}(T \otimes \1^{\otimes r}), x^{\otimes (k+r)}\rangle,
\end{equation}
so~$\Av_{\mathfrak{S}_{k+r}}(T \otimes \1^{\otimes r})$ is the unique symmetric $(k+r)$-tensor that holds the coefficients of~$(\1^{\tr}x)^r p(x)$. With this, Pólya's theorem says that if~$T$ is in the algebraic interior of~$\COP(V, k)$, there exists an~$r \in \N$ such that
\[
  \Av_{\mathfrak{S}_{k+r}}(T \otimes \1^{\otimes r}) \geq 0.
\]

For~$r \in \N$ and~$V$ a finite set, define the~\defi{Pólya-type cone}\index{Polya type cone@Pólya-type cone!in finite dimensions} by
\[
  C_r(V, k) = \{\, T \in \Sym(V, k) : \Av_{\mathfrak{S}_{k+r}}(T \otimes \1^{\otimes r}) \geq 0\, \},
\]
which is a closed cone. This leads to the following conic formulation of Pólya's theorem, where~$\algint$\index{ algint@$\algint$} denotes the algebraic interior\index{algebraic interior}.
\index{Polyas theorem@Pólya's theorem}
\begin{theorem}[Pólya's theorem~{\cite{Polya1974UberPolynomen}}]%
  \label{thm:conic-polyas-theorem-finite-dimension}
  For a finite set~$V$ and integer and~$k \geq 2$,
  \[
    \algint{\COP(V, k)} \subseteq \bigcup_{r \in \N} C_r(V, k).
  \]
\end{theorem}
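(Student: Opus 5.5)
The plan is to reduce the conic statement to the classical polynomial form of Pólya's theorem and then prove that form by the standard Bernstein/multinomial coefficient argument. First I identify the algebraic interior. If $T \in \algint \COP(V,k)$, then for the tensor $E = \1_{\mathrm{diag}}$ (the $k$-tensor with $E(v,\ldots,v)=1$ and zero elsewhere) there is $\varepsilon>0$ with $T - \varepsilon E \in \COP(V,k)$. Hence for $x \geq 0$,
\[
  p(x) = \langle T, x^{\otimes k}\rangle \geq \varepsilon \sum_{v} x_v^k,
\]
which is strictly positive for $x \neq 0$. By homogeneity it suffices to consider the simplex $\Delta_V$, where $p \geq \lambda := \varepsilon |V|^{1-k} > 0$. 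By~\eqref{eqn:polya-tensor-expression}, the entries of $\Av_{\mathfrak{S}_{k+r}}(T\otimes \1^{\otimes r})$ are, up to positive multinomial factors, the coefficients of $(\1^{\tr}x)^r p(x)$. So it suffices to show that these coefficients are nonnegative for large $r$.

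Next I compute the coefficients explicitly. Write $p(x) = \sum_{|\beta|=k} a_\beta x^\beta$ and set $N = k+r$. For a multi-index $\gamma$ with $|\gamma| = N$, the coefficient of $x^\gamma$ in $(\1^{\tr}x)^r p(x)$ equals
\[
  \sum_{\beta \leq \gamma} a_\beta \frac{r!}{(\gamma-\beta)!}
  = \frac{r!\, N^k}{\gamma!}\sum_{\beta} a_\beta \prod_{v} \frac{\gamma_v(\gamma_v - 1)\cdots(\gamma_v-\beta_v+1)}{N^{\beta_v}}.
\]
The product is $\prod_v (y_v)(y_v - 1/N)\cdots(y_v - (\beta_v-1)/N)$ with $y = \gamma/N \in \Delta_V$. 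Define the polynomial $p_t(y) = \sum_\beta a_\beta \prod_v \prod_{j<\beta_v}(y_v - jt)$. Then the coefficient is a positive multiple of $p_{1/N}(\gamma/N)$, and $p_0 = p$.

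The key step, and the main obstacle, is a uniform estimate. The map $(t,y)\mapsto p_t(y)$ is a polynomial, hence uniformly continuous on $[0,1]\times\Delta_V$. It therefore satisfies $|p_t(y) - p(y)| \leq Ct$ for all $y\in\Delta_V$ and some constant $C$ depending only on $k$ and $\sum_\beta|a_\beta|$. Choosing $N > C/\lambda$ gives $p_{1/N}(\gamma/N) \geq \lambda - C/N > 0$ for every $\gamma$. This makes all coefficients positive, so $T \in C_r(V,k)$ with $r = N-k$. The care needed is only in bounding the expanded products of the shifts $jt$, which is routine since $0\le y_v\le1$ and $j<k$.
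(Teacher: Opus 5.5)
Your argument is correct and complete, and it takes a different route in the sense that it proves what the paper only cites. The paper gives no proof of its own here. It observes, via \eqref{eqn:polya-tensor-expression}, that $\Av_{\mathfrak{S}_{k+r}}(T\otimes\1^{\otimes r})$ stores the coefficients of $(\1^{\tr}x)^r p(x)$ up to positive multinomial factors. It then cites P\'olya's classical theorem as a black box, tacitly using that every $T\in\algint\COP(V,k)$ gives a form strictly positive on $\R^V_{\geq 0}\setminus\{0\}$.

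You share that reduction but supply both missing pieces:
\begin{itemize}
\item The perturbation $T-\varepsilon E$ by the diagonal tensor justifies the strict positivity, with the explicit margin $\lambda=\varepsilon|V|^{1-k}$ on the simplex.
\item The falling-factorial identity, expressing the coefficient of $x^\gamma$ as $\frac{r!\,N^k}{\gamma!}\,p_{1/N}(\gamma/N)$, is P\'olya's original argument and you reproduce it correctly. Terms with $\beta\not\leq\gamma$ drop out automatically, since the falling factorial then contains a zero factor.
\end{itemize}

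What your route buys is a self-contained proof with an explicit degree. Telescoping over the $k$ linear factors gives $|p_t(y)-p(y)|\leq k^k(k-1)\bigl(\sum_\beta|a_\beta|\bigr)t$ for $t\in[0,1]$, which substantiates your constant $C$. This is cruder than the bound $r>k(k-1)M/(2\lambda)-k$ that the paper later quotes in Theorem~\ref{thm:Polyas-theorem-for-continuous-tensors}, but it is fully sufficient for the qualitative inclusion stated here.

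The citation route, in turn, is shorter and is all the paper needs. It only uses the inclusion, together with the monotonicity $C_r(V,k)\subseteq C_{r+1}(V,k)$, to pass to the dual statement.
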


On the other hand, if~$p(x) = \langle T, x^{\otimes k}\rangle$ and there is an~$r \in \N$ such that~$(\1^{\tr}x)^rp(x) \geq 0$, then, by Equation~\eqref{eqn:polya-tensor-expression},~$\langle T, x^{\otimes k} \rangle \geq 0$ for all~$x \geq 0$ such that~$x \neq 0$. This proves~$C_r(V, k) \subseteq \COP(V, k)$ for all~$r$. If~$r' \geq r$, then~$(1^{\tr}x)^{r'} p(x) \geq 0$ for all~$x \geq 0$, so~$C_r(V, k) \subseteq C_{r'}(V, k)$. Denoting the dual cone of~$C_r(V,k)$ by~$C_r(V, k)^*$ and, by Theorem~\ref{thm:polar-identities}, taking the conic dual results in the following corollary of Pólya's theorem. We call a sequence of cones satisfying the conclusion of the Theorem an~\defi{outer approximation}\index{outer approximation} of~$\CP(V, k)$.
\begin{theorem}%
  \label{thm:completely-positive-polyas-finite-dimension}
  For a finite set~$V$ and an integer~$k \geq 2$,
  \[
    C_0(V, k)^* \supseteq C_1(V, k)^* \supseteq \cdots \supseteq \CP(V, k)\qquad \text{and}\qquad \CP(V, k) = \bigcap_r C_r(V, k)^*.
  \]
\end{theorem}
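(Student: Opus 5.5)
The plan is to dualize the chain $C_0(V,k) \subseteq C_1(V,k) \subseteq \cdots \subseteq \COP(V,k)$ established just before the statement, and to combine it with Pólya's theorem (Theorem~\ref{thm:conic-polyas-theorem-finite-dimension}) and the polar identities of Theorem~\ref{thm:polar-identities}. Throughout, $\Sym(V,k)$ is a finite-dimensional Euclidean space, and every cone involved is a convex cone in it.

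\textbf{The decreasing chain.} Conic duality reverses inclusions. Applying it to $C_r(V,k) \subseteq C_{r+1}(V,k) \subseteq \COP(V,k)$ gives
\[
  C_r(V,k)^* \supseteq C_{r+1}(V,k)^* \supseteq \COP(V,k)^*.
\]
It remains to identify $\COP(V,k)^*$ with $\CP(V,k)$. The cone $\COP(V,k)$ is by definition the dual of $\CP(V,k)$, so by the bipolar part of Theorem~\ref{thm:polar-identities} we get $\COP(V,k)^* = \cl \CP(V,k)$. We then show that $\CP(V,k)$ is closed. Every element of $\CP(V,k)$ is a conic combination of tensors $x^{\otimes k}$ with $x$ in the compact set $\Delta_V$. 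The set $\{x^{\otimes k} : x \in \Delta_V\}$ is compact and does not contain~$0$, since $\langle x^{\otimes k}, \1^{\otimes k}\rangle = 1$. A cone generated by a compact set avoiding the origin is closed: use Carathéodory to bound the number of generators, and use the functional $\langle\cdot,\1^{\otimes k}\rangle$ to bound the coefficients. This gives the first claim.

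\textbf{The intersection formula.} From the chain we already have $\CP(V,k) \subseteq \bigcap_r C_r(V,k)^*$. For the reverse inclusion, note that $\bigcap_r C_r(V,k)^* = \bigl(\bigcup_r C_r(V,k)\bigr)^*$, so it suffices to show that $\bigcup_r C_r(V,k)$ is dense in $\COP(V,k)$. Its dual then equals $\COP(V,k)^* = \CP(V,k)$.

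For density, Theorem~\ref{thm:conic-polyas-theorem-finite-dimension} gives $\algint \COP(V,k) \subseteq \bigcup_r C_r(V,k)$. The tensor $\1^{\otimes k}$ lies in the algebraic interior of $\COP(V,k)$. Indeed, $\langle \1^{\otimes k}, x^{\otimes k}\rangle = (\1^{\tr}x)^k$, which is bounded below by a positive constant on $\Delta_V$. Hence for any $T \in \Sym(V,k)$, the tensor $\1^{\otimes k} + \epsilon T$ is still strictly positive on $\Delta_V$ for small $\epsilon > 0$. Now take $T \in \COP(V,k)$ and $\epsilon > 0$. The tensor $T + \epsilon\1^{\otimes k}$ is strictly positive on $\Delta_V$, and the same perturbation argument shows it lies in $\algint \COP(V,k)$. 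It therefore lies in some $C_r(V,k)$, and it tends to $T$ as $\epsilon \to 0$. So $\COP(V,k)$ is contained in the closure of $\bigcup_r C_r(V,k)$, and the union is contained in $\COP(V,k)$. Since the dual of a set equals the dual of its closure, the second claim follows.

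\textbf{Main obstacle.} The only non-formal point is the closedness of $\CP(V,k)$, which is needed to replace $\cl \CP(V,k)$ by $\CP(V,k)$. I expect to handle it with the compact-generator argument above.
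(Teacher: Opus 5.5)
Your proposal is correct and follows the paper's route: dualize the chain $C_r(V,k)\subseteq C_{r+1}(V,k)\subseteq\COP(V,k)$ together with P\'olya's theorem, using Theorem~\ref{thm:polar-identities}. You also make explicit two points the paper passes over silently, and both of your arguments for them are sound: the closedness of $\CP(V,k)$, which gives $\COP(V,k)^*=\CP(V,k)$ rather than its closure, and the density of $\algint\COP(V,k)$ in $\COP(V,k)$, which lets the inclusion of P\'olya's theorem dualize.
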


\sectionbreakafterproof

It should be said that the cones~$C_r(V, k)$ give a weak approximation of the copositive cone. For example, for a finite graph~$G = (V, E)$, the program~$\vartheta^*(G, C_r(V))$ is infeasible if~$r < \alpha(G) - 1$~\cite[Theorem 4.2]{DeKlerk2002ApproximationProgramming}. It is therefore unsurprising that since the conception of the topic, tighter hierarchies were studied. However, this thesis is mostly concerned with whether certain hierarchies of optimization problems converge, not with the rate of convergence. From this perspective it is interesting that such a weak hierarchy suffices.

\part{The completely positive cone}\label{part:cones}
\chapter[The completely positive cone of a measure space]{The completely positive cone of a measure space}%
\label{ch:completely-positive-cone}
Problems~\ref{it:problem-1},~\ref{it:problem-3}, and~\ref{it:problem-4} from the introduction have in common that they do not ask to optimize a cardinality, but a kind of density. This density is defined by a measure, which leads us to study spaces of $\mi{p}$-integrable functions.

Adams~\cite{Adams2018CopositivityDimension} and DeCorte, Oliveira, and Vallentin~\cite{DeCorte2022CompleteSets} defined a completely positive cone of square-integrable functions on a finite measure space. An outer approximation of this cone by Pólya-type cones similar to Theorem~\ref{thm:completely-positive-polyas-finite-dimension} was introduced by Bekker, Kuryatnikova, Oliveira, and Vera~\cite{Bekker2026OptimizationSpaces}.

In this chapter, we reproduce the results from~\cite{Bekker2026OptimizationSpaces}, particularly Theorem 4.1 and Theorem 5.3 therein, by explicitly showing how the finite-dimensional Pólya-type cones lift to the square-integrable setting. The approach is new and comes with tools that are more generally applicable. As opposed to the approach in~\cite{Bekker2026OptimizationSpaces}, it does not lean on the action of a locally compact group. Rather, group actions are considered later, in Chapter~\ref{ch:complete-positivity-under-symmetry}.

Especially the projective approximation lemma, Lemma~\ref{lem:projective-approximation}, is new. It is an attempt at capturing and generalizing some folklore and intuition about infinite-dimensional analogues of finite-dimensional hierarchies.

The application of the martingale convergence theorem is put to the forefront, which is another difference with the original exposition. This replaces the continuity assumption and an application of a technical theorem by Powers and Reznick in the proof of~\cite[Theorem A.1]{Bekker2026OptimizationSpaces}. Whereas~\cite{Bekker2026OptimizationSpaces} restricts itself to continuous and invariant kernels on compact spaces, the treatment here regards square-integrable kernels on finite countably generated measure spaces, but the conclusions are weaker. This approach lends itself to extension to $\sigma$-finite countably generated measure spaces, although the hierarchy there is not a direct generalization of the Pólya-type cones.

\section{Notation and preliminaries}
\label{sec:preliminiaries-operator-spaces-and-duality}
See the appendix for more background on locally convex analysis and $\mi{p}$-integrable functions. Our main reference is Simon's book~\cite{Simon2011Convexity}. All topological vector spaces are Hausdorff, and all functions and measures are real-valued.

\subsection*{Convexity and duality}
Section~\ref{ch:Appendix}.\ref{sec:appendix-topological-vector-spaces} gives more details about duality and convexity.

Let~$X$ be a vector space. The~\defi{algebraic dual}\index{dual!algebraic dual}~$X'$\index{ X '@$X'$} of~$X$ is the space of all linear functionals of~$X$. If~$X$ is a topological vector space, its~\defi{continuous dual}\index{dual!continuous dual}~$X^*$\index{ X *@$X^*$} is the subspace of~$X'$ consisting of continuous linear functionals. Let~$A : X \to Y$ is a continuous linear map of topological vector spaces. If its continuous linear adjoint exists, denote it by~$A^*$\index{ A *@$A^*$}.

For us, all cones are convex. The convex hull of a subset~$S$ of a vector space~$X$ is denoted~$\conv{S}$\index{convex hull}\index{ conv@$\conv$}. The closed convex hull of~$S$ is~$\cch{S} = \cl \conv S$\index{ clconc@$\cch$}. The conic hull is~$\cone{S}$\index{conic hull}\index{ cone@$\cone$}, and the closed conic hull of~$S$ is~$\ccone S = \cl \cone S$\index{ clcone@$\ccone$}. If~$(X,Y)$ is a dual pair of vector spaces, denote the dual cone of a set~$S$ by~$S^*$.

\subsection*{Measures and integrable functions}
See Section~\ref{ch:Appendix}.\ref{subsec:spaces-of-meas-funcs} for more background on measures and integrable functions.

Given a set~$V$ and a subset~$\P$ of the power set of~$V$, let~$\sigma(\P)$\index{ s P@$\sigma(\P)$} be the $\sigma$-algebra generated by~$\P$. Let~$(V, \A, \mu)$ be a measure space with $\sigma$-algebra~$\A$ and measure~$\mu$. One way to interpret~$\mu$ is as a function from the set of measurable functions to~$\R \cup \{\infty\}$ defined by integration over~$V$, which leads to the notation~$\mu(f) = \int_V f(x)\, d\mu(x)$. Denote the usual $\mi{p}$-norms by~$\|\cdot\|_p$ for~$1 \leq p \leq \infty$.

The measure~$\mu^k$ is the product measure on~$V^k$ with the naturally induced $\sigma$-algebra~$\A^k$; we will sometimes write it as~$\mu^{\otimes k}$. Unless specified otherwise, we always understand the product~$V^k$ to be equipped with the product measure~$\mu^k$, and we denote the naturally induced $\mi{\sigma}$-algebra on~$V^k$ by~$\A^k$. A square-integrable function~$f$ on~$V^k$ is called a~\defi{$k$-tensor}\index{k tensor@$k$-tensor}. It is called~\defi{symmetric}\index{k tensor@$k$-tensor!symmetric $k$-tensor} if it is invariant under permutation of its coordinates. We denote the space of $\mu$-equivalence classes of symmetric $k$-tensors by~$L^2_{\sym}(V, k)$\index{ L 2symVk@$L^2_{\sym}(V, k)$}, and understand~$L^2_{\sym}(V) = L^2_{\sym}(V, 2)$\index{ L 2symV@$L^2_{\sym}(V)$}. If~$f_1, \ldots, f_k \in L^2(V)$, then~$f_1 \otimes \cdots \otimes f_k$ denotes the element of~$L^2(V^k)$ given by
\[
  f_1 \otimes \cdots \otimes f_k(v_1, \ldots, v_k) = f_1(v_1) \cdots f_1(v_k)\index{ 000@$\otimes$}
\]
for all~$v_i \in V$.

The notation~$\langle f, g\rangle$\index{ <>@$\langle \cdot\, , \cdot\rangle$} refers to the integral of the pointwise product~$\mu(fg)$. If~$f \in L^p(V)$ and~$g \in L^q(V)$ with ~$p = 1$ and~$q = \infty$, or if~$1 < p < \infty$ and~$1/p + 1/q = 1$, then the product~$fg$ is in~$L^1(V)$ and~$\langle f, g\rangle \in \R$. In this case the map~$\langle \cdot\, , \cdot\rangle$ defines a duality
\index{ <>@$\langle \cdot\, , \cdot\rangle$}\[
  \langle \cdot\, , \cdot\rangle: L^p(V) \times L^q(V) \to \R
\]
under the respective norm topologies, hence the notation.

For the following, see the appendix and~\cite{Simon2005TraceApplications} for more details on Hilbert-Schmidt and trace-class operators.

We isometrically identify the subspace of Hilbert-Schmidt operators with the space of kernel~$L^2(V^2)$. Recall that a kernel is called~\defi{positive semidefinite}\index{positive semidefinite!kernel} if for all~$f \in L^2(V)$ we have~$\langle K, f \otimes f\rangle \geq 0$. We denote the cone of positive-semidefinite kernels by~$\PSD(V)$\index{ PSDV@$\PSD(V)$}.

Let~$K \in L^2_{\sym}(V)$. If~$K$ is of trace class and~$\Phi$ an orthonormal basis, the quantity
\begin{equation}%
  \label{eqn:trace}
  \sum_{\phi \in  \Phi} \langle K, \phi \otimes \phi\rangle
\end{equation}
is independent of~$\Phi$ and finite. It is called the~\defi{trace}\index{trace} of~$K$, denoted~$\Tr K$\index{ Tr@$\Tr$}. A positive-semidefinite kernel~$K$ is of trace class if and only if~$\Tr(K) < \infty$; in this case,~$\Tr(K) = \|K\|_{\B^1}$, where the latter denotes the trace norm of~$K$ as defined in~\ref{ch:Appendix}.\ref{sec:operator-spaces}. If~$V$ is a compact Hausdorff topological space and~$K$ is positive semidefinite and continuous, then~$\Tr(K) = \int_V K(x,x)\, d\mu(x)$.

\subsection*{Continuous functions and Radon measures}
Let~$V$ be a locally compact topological space. The space~$C(V)$\index{ C V@$C(V)$} is the space of continuous functions on~$V$. The space~$C_c(V)$\index{ CcV@$C_c(V)$} is the space of continuous functions with compact support; if~$V$ is compact, then~$C_c(V) = C(V)$. The space~$C_0(V)$\index{ C0V@$C_0(V)$} is the space of continuous functions on~$V$ that vanish outside compact sets: if~$f \in C_0(V)$, then for every~$\epsilon > 0$ there is a compact set~$K \subseteq V$ such that~$|f(x)| \leq \epsilon$ for all~$x \in V \setminus K$. We equip~$C_c(V)$ and~$C_0(V)$ with the supremum norm. Then,~$C_0(V)$ is the norm closure of~$C_c(V)$ in~$L^{\infty}(V)$.

Similar to spaces of square-integrable functions, we let~$C_{\sym}(V, k)$ be the subspace of~$C_0(V^k)$ of functions that are invariant under permutation of their arguments. We will also write~$C_{\sym}(V) = C_{\sym}(V,2)$.

We denote the space of Radon measures by~$M(V)$. By identifying it with the dual of~$C_0(V)$, we equip it with the operator norm. Let~$\mu \in M(V)$. The~\defi{support}\index{support of measure} of~$\mu$ is
\index{ supp mu@$\supp \mu$}\[
  \supp \mu = \bigcap_{\substack{U \subseteq V \text{ open}\\ \mu(U) = 0}} (V \setminus  U).
\]
We say that a Radon measure has~\defi{full support}\index{full support} if~$\mu(U) > 0$ for all open sets~$U$, i.e. if~$\supp \mu = V$.

\section{Projective approximation and martingales}%
\label{sec:projective-approximation}

At the core of this chapter lie two ideas. The first is that under mild conditions an outer approximation of a cone on a collection of simpler spaces lifts to a larger locally convex vector space. In this chapter, this means that we will take, for fixed~$k \geq 2$, the outer approximation of~$\CP([n], k)$ for all~$n$ given in Theorem~\ref{thm:conic-polyas-theorem-finite-dimension}, and use it to give an outer approximation of the completely positive cone in~$L^2_{\sym}(V, k)$, which we define later for suitable measure spaces~$(V, \A, \mu)$. The abstract framework for lifting these approximations is given by the projective approximation lemma, Lemma~\ref{lem:projective-approximation}. The second idea that this chapter is built on, is that the theory of martingales gives rise to such a lifting. It offers the operators that figure in the projective approximation lemma applied to~$L^2(V, k)$.

If~$X$ is a topological vector space and~$\cC \subset X$ is a cone, a continuous linear map~$\sigma: X \to X$ is called a linear automorphism of~$\cC$ if it is an automorphism of~$X$ and~$\sigma(\cC) \subseteq \cC$. If~$X$ is part of a dual pair and equipped with a dual topology, and if~$\sigma$ is an automorphism of~$\cC$, then~$\sigma^*$ is an automorphism of~$\cC^*$. For details on dual pairs and related topics, see Section~\ref{sec:preliminiaries-operator-spaces-and-duality} of this chapter and the appendix.

\begin{lemma}[Projective approximation lemma]\index{projective approximation lemma}%
  \label{lem:projective-approximation}
  Let~$(X, Y)$ be a dual pair of vector spaces,~$I$ be a directed set, and~$\{(X_i, Y_i)\}_{i \in I}$ be a collection of dual pairs of vector spaces. Let for all~$i$ the maps~$A_i : X \to X_i$ and~$B_i : Y \to Y_i$ be linear and continuous under the weak topologies,~$\cC \subseteq X$ be a closed convex cone and~$\{\cC_r\}_{r \in R}$ be a family of closed convex cones in~$X$. If~$B_i^*A_i$ converges weakly to a linear automorphism of~$\cC$, and
  \begin{enumerate}
    \item[(i)] $B_i^*A_i\cC_r \subseteq \cC_r$ for all~$i$ and $r$,
    \item[(ii)] $B_i^*A_i\cC \subseteq \cC$ for all~$i$,
    \item[(iii)] $A_i \cC = \bigcap_r A_i \cC_r$ for all~$i$,
  \end{enumerate}
  then~$\cC = \bigcap_r \cC_r$.
\end{lemma}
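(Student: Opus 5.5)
The plan is to prove both inclusions directly, using only the weak limit. Write $P_i = B_i^*A_i$ and let $\sigma$ be the weak limit of $P_i$, so $P_i x \to \sigma x$ weakly for every $x \in X$. I read "linear automorphism of $\cC$" as a bijective continuous linear map with $\sigma(\cC) = \cC$. This reading matters: the argument only controls $\sigma$ applied to each side, so it needs $\sigma^{-1}\cC = \cC$ to undo $\sigma$ at the end.

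\emph{Inclusion $\cC \subseteq \bigcap_r \cC_r$.} Fix $x \in \cC$ and $r$. By (iii), $A_i x \in A_i\cC \subseteq A_i \cC_r$, so there is $y_i \in \cC_r$ with $A_i x = A_i y_i$. Then $P_i x = B_i^* A_i y_i = P_i y_i$, which lies in $\cC_r$ by (i). Passing to the weak limit along the directed set gives $\sigma x \in \cC_r$, provided $\cC_r$ is weakly closed (discussed below). Hence $\sigma\cC \subseteq \cC_r$, and since $\sigma\cC = \cC$ we get $\cC \subseteq \cC_r$ for every $r$.

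\emph{Inclusion $\bigcap_r \cC_r \subseteq \cC$.} Fix $x \in \bigcap_r \cC_r$. Then $A_i x \in \bigcap_r A_i\cC_r = A_i\cC$ by (iii), so $A_i x = A_i c_i$ for some $c_i \in \cC$. Hence $P_i x = P_i c_i \in \cC$ by (ii). Taking the weak limit gives $\sigma x \in \cC$, and therefore $x \in \sigma^{-1}\cC = \cC$.

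The main obstacle is justifying the weak limits: $\cC$ and each $\cC_r$ must be closed in the weak topology $\sigma(X,Y)$, not merely in the given topology. Since $X$ carries a topology compatible with the dual pair $(X,Y)$ and the cones are convex, Hahn--Banach separation (the Mackey--Arens fact that all dual topologies have the same closed convex sets) gives weak closedness. With that in hand, a weakly convergent net $P_i x$ whose terms eventually lie in a weakly closed set has its limit in that set.

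A secondary point is checking that the maps fit together: $B_i^*: Y_i \to X$ must exist as a weakly continuous adjoint, which follows from the weak continuity of $B_i$. The rest is bookkeeping.
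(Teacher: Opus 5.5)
Your proof is correct. It rests on the same two facts as the paper's proof: (iii) with (i) gives $B_i^*A_i\cC\subseteq\bigcap_r\cC_r$, and (iii) with (ii) gives $B_i^*A_i\bigl(\bigcap_r\cC_r\bigr)\subseteq\cC$. Where you differ is in how the limit is taken.

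The paper encodes these facts dually, as $A_i\cC\subseteq\bigcap_r{B_i^*}^{-1}\cC_r$ and $B_i\cC^*\subseteq\bigl(\bigcap_rA_i\cC_r\bigr)^*$. It then pairs $A_i\sigma^{-1}x$ with $B_iy$ for $y\in\bigl(\bigcap_r\cC_r\bigr)^*$, and $A_ix$ with $B_i{\sigma^*}^{-1}y$ for $y\in\cC^*$. It passes to the limit in the pairings $\langle\cdot\,,\cdot\rangle_i$ and concludes with the bipolar identity of Theorem~\ref{thm:polar-identities}(i).

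You take the weak limit directly in $X$ and use that closed convex cones are weakly closed. That is exactly the primal content of the paper's bipolar step. Your version drops the dual-cone bookkeeping and never needs to invert $\sigma^*$, so it is arguably the cleaner presentation of the same argument.

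Your insistence on $\sigma\cC=\cC$ is well placed. The paper's proof also uses $\sigma^{-1}\cC\subseteq\cC$, through $\sigma^{-1}x\in\cC$ and ${\sigma^*}^{-1}\cC^*\subseteq\cC^*$. Its stated definition of a linear automorphism of $\cC$ only demands $\sigma(\cC)\subseteq\cC$, but in the application $\sigma=\Id$, so this is harmless.

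One small slip: $B_i^*$ maps $X_i$ to $X$, not $Y_i$ to $X$.
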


Before we move to the proof of this rather technical lemma, let us first look at two examples that show how it can be applied.

\begin{exmp}
  Let~$V$ be finite set and~$k \geq 2$ be an integer, let~$X = \Sym(V, k)$ and let~$\cC = \CP(V, k)$. Theorem~\ref{thm:completely-positive-polyas-finite-dimension} says that~$\CP(V, k) = \bigcap_r C_r(V, k)^*$. By applying a linear automorphism~$\sigma$ of~$\CP(V, k)$ to both sides, we find that~$\CP(V, k) = \bigcap_r \sigma(C_r(V, k)^*)$. This conclusion also follows from the projective approximation theorem with ~$I = \{0\}$ and~$A_0 = \sigma$,~$B_0 = \Id$. If~$\sigma$ does not stabilize the~$C_r(V, k)^*$, the~$\sigma(C_r(V, k))^*$ form an outer approximation that is distinct from the original.
\end{exmp}

\begin{exmp}
  Let~$X = l^2(\N)$ and~$\cC = l^2(\N)_{\geq 0}$, the closed convex cone of nonnegative sequences. Let for~$r \in \N$ the set~$\cC_r$ be the closed convex cone of sequences such that~$x_i \geq 0$ for all~$i \leq r$. It is of course more than clear that~$\cC = \bigcap_r \cC_r$. We will use the projective approximation lemma to reach the same conclusion.

  The space~$l^2(\N)$ is a Hilbert space with inner product~$\langle x, y\rangle = \sum_{i \in \N} x_i y_i$. For every~$i \in \N$, let~$p_i : l^2(\N) \to \R^i$ be the projection onto the first~$i$ coordinates. Then,~$p_i^*p_i$ is the map that sets all coordinates with index larger than~$i$ to zero. We see that~$p_i^*p_i\cC \subseteq \cC$ and~$p_i^* p_i\cC_r = \cC_r$ for all~$r$ and~$i$. Moreover, the weak limit of~$p_i^*p_i$ is the identity on~$l^2(\N)$.

  For all~$i \in \N$,~$p_i \cC = \R^i_{\geq 0}$ and if~$r \geq i$,~$p_i \cC_r = \R^i_{\geq 0}$. For all~$r < i$,~$p_i\cC_r$ is the cone of vectors in~$\R^i$ with the first~$r$ coordinates nonnegative. Thus, for all~$i$,~$\bigcap_r p_i \cC_r = p_i\cC_i = \R^i_{\geq 0} = p_i\cC$, so that all conditions of the projective approximation lemma are met, and the conclusion follows.
\end{exmp}

From these examples one might guess that it is usually not necessary---if not pedantic---to use the lemma. One would be correct. The lemma is also generic: it does not say anything what a ``best'' way of lifting an outer approximation is. Its main purpose is to offer some additional rigor and direction to this exposition.

\begin{proof}[Proof of Lemma~\ref{lem:projective-approximation}]
  For all~$i \in I$
  \begin{equation}%
    \label{eqn:incl-A_nC}
    A_i \cC = \bigcap_{r \in R} A_i\cC_r \subseteq \bigcap_{r \in R} {B_i^*}^{-1} \cC_r.
  \end{equation}
  The first equality is implied by property~(iii) and the inclusion holds by property~(i). Moreover, by Theorem~\ref{thm:polar-identities}(i) and (iii) and properties~(ii) and~(iii) above, for all~$i \in I$
  \begin{equation}%
    \label{eqn:incl-B_nC*}
    B_i\cC^* \subseteq ({B_i^*}^{-1}\cC)^*\subseteq (A_i \cC)^* = \biggl(\bigcap_{r \in R} A_i \cC_r\biggr)^*.
  \end{equation}

  Denote the duality of the pair~$(X, Y)$ by~$\langle\cdot\, , \cdot\rangle$ and that of~$(X_i, Y_i)$ by~$\langle \cdot\, , \cdot\rangle_i$. Weak convergence of~$B_i^*A_i$ to a linear automorphism~$\sigma$ of~$\cC$ means that for all~$x \in X$ and~$y \in Y$
  \begin{equation}%
    \label{eqn:duality-under-weak-limit}
    \langle x, y \rangle = \lim_{i \in I} \langle A_i \sigma^{-1} x, B_i y \rangle_i = \lim_{i \in I} \langle A_i x, B_i {\sigma^*}^{-1} y \rangle_i.
  \end{equation}

  If~$x \in \cC$ and~$y \in (\bigcap_r \cC_r)^*$, then~$A_i \sigma^{-1}x \in \bigcap_r {B_i^*}^{-1} \cC_r$ by~\eqref{eqn:incl-A_nC}. Moreover,~$B_i y \in B_i (\bigcap_r \cC_r)^* \subseteq (\bigcap_r {B_i^*}^{-1} \cC_r)^*$ according to Theorem~\ref{thm:polar-identities}(iii). It follows from~\eqref{eqn:duality-under-weak-limit} that~$\langle x, y\rangle \geq 0$, and~$\cC \subseteq \bigcap_r \cC_r$.

  Furthermore, if~$x \in \bigcap_r \cC_r$ and~$y \in \cC^*$, then~$A_i x \in A_i \bigcap_r\cC_r \subseteq \bigcap_r A_i \cC_r$ for all~$i \in I$. The adjoint~$\sigma^*$ is an automorphism of~$\cC^*$, so~$B_i {\sigma^*}^{-1} y \in ( \bigcap_r A_i \cC_r )^*$ by~\eqref{eqn:incl-B_nC*}, so that~$\langle x, y \rangle \geq 0$ by~\eqref{eqn:duality-under-weak-limit}. This establishes~$\bigcap_r \cC_r \subseteq \cC$ by Theorem~\ref{thm:polar-identities}(i) and concludes the proof.
\end{proof}

This chapter not only presents a direct $L^2$ analogue of the tried-and-true Pólya-type cones on~$C(V^2)$ for compact~$V$, as introduced in~\cite{Bekker2026OptimizationSpaces, Kuryatnikova2019TheProblems,Kuryatnikova2017Approximating}, but also a stricter outer approximation of the completely positive cone that extends to a class of $\sigma$-finite measure spaces. Both can be understood from the perspective of the projective approximation lemma.

\sectionbreak

A special case of the theory of martingales\index{martingale} offers the operators~$A_i$ and~$B_i$ of the projective approximation lemma for $L^p$ spaces. For the purposes of this chapter and the next, it simply gives a specific way of describing a function as a limit of simple functions---finite linear combinations of step functions. Chapter~5 of the book by Edwards and Gaudry~\cite{Edwards1977Littlewood-PaleyTheory} contains a more complete presentation of the topic.

For a set~$V$, we say that a partition~$\mathcal{P}'$ of~$V$ \defi{refines}\index{refinement of partition} a partition~$\mathcal{P}$, if every element of~$\mathcal{P}'$ is contained in an element of~$\mathcal{P}$; notation~$\mathcal{P}' \preceq \mathcal{P}$\index{ <@$\preceq$}. Let~$(V, \A, \mu)$ be a finite measure space. Call a sequence of partitions~$(\mathcal{P}_n)_{n \in \N}$ of~$V$ with all~$\P_n \subseteq \A$ a~\defi{finite-rank approximation}\index{finite rank approximation@finite-rank approximation} of~$(V, \A, \mu)$ if:
\begin{itemize}
  \item $\mathcal{P}_0 = \{V\}$;
  \item $\mathcal{P}_{n+1} \preceq \mathcal{P}_n$ for all~$n \in \N$;
  \item each~$\mathcal{P}_n$ is finite;
  \item $\A = \sigma(\bigcup_{n}\mathcal{P}_n)$.
\end{itemize}

Given a partition~$\P$ of~$V$, call the subset of~$\P$ consisting of those sets that have nonzero measure~$\P^+$. Define for~$1 \leq p < \infty$ the operator
\index{ EP@$E_{\P}$}\[
  E_{\P} : L^p(V) \to L^p(V),\quad E_{\P} f = \sum_{P \in \P} \langle f, \1_P\rangle / \mu(P).
\]
For a sequence of partitions~$(\P_n)_{n \in \N}$, let~$E_n = E_{\P_n}$\index{ En@$E_n$}.

\begin{theorem}[Martingale convergence theorem]\index{martingale!convergence}%
  \label{thm:martingale-convergence}
  Let~$(V, \A, \mu)$ be a finite measure space equipped with a finite-rank approximation~$(\mathcal{P}_n)_{n \in \N}$. If~$1 \leq p < \infty$, the sequence~$(E_n)_{n \in \N}$
  converges under the strong operator topology to the identity.

  If~$f \in L^p(V)$ for~$1 \leq p < \infty$, the sequence~$(E_nf(v))_{n \in \N}$ converges to~$f(v)$ for almost every~$v \in V$.
\end{theorem}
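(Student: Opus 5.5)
We would prove the strong-operator convergence first and obtain the almost-everywhere statement from martingale theory. Note that $E_n f = \sum_{P \in \P_n^+} \frac{\langle f, \1_P\rangle}{\mu(P)} \1_P$ is the conditional expectation of $f$ with respect to the finite $\sigma$-algebra $\A_n = \sigma(\P_n)$. Write $\mu_0$ for the probability measure $\mu/\mu(V)$, defined when $\mu(V) > 0$ (the case $\mu(V)=0$ is trivial). The refinement property gives $\A_n \subseteq \A_{n+1}$, so $(E_n f)_n$ is a martingale with respect to $\mu_0$ and the filtration $(\A_n)$, for every $f \in L^1(V)$.

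\emph{Step 1: uniform bounds.} By Jensen's inequality applied on each atom $P \in \P_n^+$, we have $\|E_n f\|_p \le \|f\|_p$ for all $1 \le p < \infty$. Hence the $E_n$ are contractions on $L^p(V)$.

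\emph{Step 2: a dense set.} Let $D = \bigcup_n \lspan\{\1_P : P \in \P_n\}$. For $g \in D$, say $g$ is $\A_m$-measurable, we have $E_n g = g$ for all $n \ge m$. We claim $D$ is dense in $L^p(V)$. To see this, let $\mathcal{D}$ be the collection of sets $A \in \A$ such that $\1_A$ lies in the $L^p$-closure of $D$. The union $\bigcup_n \A_n$ is an algebra, since the $\A_n$ increase, and it lies in $\mathcal{D}$. Moreover $\mathcal{D}$ is a monotone class: since $\mu$ is finite, monotone limits of sets give $L^p$-convergent indicators by dominated convergence. By the monotone class theorem, $\mathcal{D} \supseteq \sigma(\bigcup_n \P_n) = \A$. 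Simple functions are dense in $L^p$, so $D$ is dense.

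\emph{Step 3: an $\varepsilon/3$ argument.} Given $f$ and $\varepsilon > 0$, choose $g \in D$ with $\|f-g\|_p < \varepsilon$. Then for $n$ large,
\[
\|E_n f - f\|_p \le \|E_n(f-g)\|_p + \|E_n g - g\|_p + \|g - f\|_p < 2\varepsilon.
\]
This gives convergence of $E_n$ to the identity in the strong operator topology.

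\emph{Step 4: almost-everywhere convergence.} This is the main obstacle, since it needs a maximal inequality. We would invoke Doob's maximal inequality for the martingale $(E_n h)$ with $h = f - g \in L^1 \supseteq L^p$ (using $\mu(V) < \infty$):
\[
\mu\bigl(\{\sup_n |E_n h| > \lambda\}\bigr) \le \|h\|_1/\lambda .
\]
Alternatively, we could prove this inequality directly with the usual stopping-time decomposition of the level set into disjoint atoms: the set where the supremum first exceeds $\lambda$ at step $n$ is a union of atoms of $\P_n$, and on each such atom the average of $|h|$ exceeds $\lambda$.

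Now $E_n g \to g$ everywhere, since the sequence is eventually constant. Therefore
\[
\limsup_n |E_n f - f| \le \sup_n |E_n h| + |h| .
\]
Both terms on the right have level sets of measure at most $C\|h\|_1/\lambda$: the first by the maximal inequality, the second by Markov's inequality. Since $\|h\|_1$ can be made arbitrarily small by Step 2, for each fixed $\lambda > 0$ the set where $\limsup_n |E_n f - f| > 2\lambda$ has measure zero. Taking a union over rational $\lambda$ shows that $E_n f \to f$ almost everywhere. Points in atoms of measure zero form a null set and can be ignored throughout.
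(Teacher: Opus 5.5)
Your proof is correct. For the strong-operator convergence you follow exactly the route the paper sketches. You use uniform boundedness of the $E_n$, which is your Jensen contraction bound. You combine it with density of $\bigcup_n L^p(V,\sigma(\P_n),\mu)$ in $L^p(V)$, which you verify by a monotone class argument where the paper only cites Edwards and Gaudry.

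For the almost-everywhere statement the routes differ. The paper gives no proof and defers to Doob's convergence theorem, whose key tool (the ``clever but technical trick'') is the upcrossing inequality. You instead use the weak-type $(1,1)$ maximal inequality together with the same dense class from Step~2, in the style of the proof of the Lebesgue differentiation theorem.

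Your route is more elementary and self-contained. It identifies the limit as $f$ directly, because the martingale is generated by $f$ itself and $\A = \sigma(\bigcup_n \P_n)$. It also fits the paper well: the inequality you need is the weak-type estimate behind the martingale maximal theorem (Theorem~\ref{thm:martingale-maximal-theorem}), which the paper quotes later from the same source.

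The upcrossing route buys generality: it gives almost-everywhere convergence of every $L^1$-bounded martingale, not only of those of the form $(E_n f)_n$. One then still has to identify the limit. This can be done with the $L^p$ convergence from the first part, since a subsequence of $(E_n f)_n$ converges almost everywhere to $f$.
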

The second statement of the theorem is quite strong, so it is not surprising that its proof extends beyond what fits in this thesis. It uses a clever but technical trick, which can be found in the book by its inventor~\cite[Theorem VII.4.1]{Doob1990StochasticProcesses}. The first statement is easier; it uses that~$\bigcup_n L^p(V, \sigma(\P_n), \mu)$ is dense in~$L^p(V)$ and that the operators~$E_n$ are uniformly bounded~\cite[Theorem 5.2.6]{Edwards1977Littlewood-PaleyTheory}.

The functions~$E_nf$ are called the~\defi{conditional expectations}\index{conditional expectations} of~$f$ with respect to~$(\sigma(\P_n))_{n \in \N}$. They are usually interpreted as functions on the measure space~$(V, \sigma(\mathcal{P}_n), \mu)$, in which case the sequence~$(E_n f)_{n \in \N}$ is called the \defi{martingale}\index{martingale} associated to~$f$. For us, there is no harm in thinking of~$E_nf$ as an element of~$L^p(V, \A, \mu)$.

Let us investigate how the martingale convergence theorem interacts with the projective approximation lemma. Let~$k \geq 2$ be an integer. For given sets~$P_1, \ldots, P_k \subseteq V$ and a permutation~$\pi \in \mathfrak{S}_k$, let~$ P_{\pi 1, \ldots, \pi k}$ denote the set~$P_{\pi 1} \times \cdots \times P_{\pi k}$\index{ P1k@$P_{1, \ldots, k}$}. We will be interested in symmetric square-integrable $\mi{k}$-tensors, that is, elements of~$L^2_{\sym}(V, k)$.

Let~$(V, \A, \mu)$ be a finite measure space with a finite partition~$\P$. Note that~$\P$ induces a partition on~$V^k$, namely
\[
  \{\, P_{1,\ldots, k} : P_i \in \P\text{ for all $i$}\, \}.
\]
When~$(\P_n)_{n \in \N}$ is a finite-rank approximation of~$(V, \A, \mu)$, this construction induces a finite-rank approximation of~$(V^k, \A^k, \mu^k)$. We denote the conditional expectation operators on~$L^2_{\sym}(V, k)$ with respect to this induced partition by~$E_{\P}$ as well.

Define the bounded operators~$A_{\P}, B_{\P}: L^2_{\sym}(V, k) \to \Sym(\P^+, k)$ by
\begin{multline*}
  A_{\P} K = \bigl(\langle K, \1_{P_{1, \ldots, k}} \rangle \bigr)_{P_1,\ldots, P_k \in \P^+} \text{ and}\\
  B_{\P} K = \biggl(\frac{\langle K, \1_{P_{1, \ldots, k}} \rangle}{\mu^k(P_{1,\ldots,k})} \biggr)_{P_1,\ldots, P_k \in \P^+},
\end{multline*}
with continuous adjoints
\begin{multline*}
  A_{\P}^* T = \sum_{P_1,\ldots, P_k \in \P^+} T_{P_1,\ldots, P_k} \1_{P_{1,\ldots, k}}\text{ and}\\
  B_{\P}^* T = \sum_{P_1,\ldots, P_k \in \P^+} T_{P_1,\ldots, P_k} \frac{\1_{P_{1,\ldots, k}}}{\mu^k(P_{1,\ldots, k})}.
\end{multline*}
If~$(\P_n)_{n}$ is a finite-rank approximation of~$V$, denote~$A_n = A_{\P_n}$ and~$B_n = B_{\P_n}$. Item~(i) of the following lemma together with the martingale convergence theorem imply that for a finite-rank approximation~$(\P_n)_{n \in \N}$ the weak limit of~$B_n^*A_n$ is the identity, which is the first requirement of the projective approximation lemma.
\begin{lemma}%
  \label{lem:properties-An-Bn}
  Let~$(V, \A, \mu)$ be a finite measure space with a partition~$\P$. For all integers~$k \geq 2$:
  \begin{enumerate}
    \item[(i)] $E_{\P} = B_{\P}^* A_{\P}$;
    \item[(ii)] $A_{\P} B_{\P}^* = I$, the identity on~$\Sym(\P^+, k)$;
    \item[(iii)] $A_{\P}(f^{\otimes k}) = (A_{\P} f)^{\otimes k}$ for all~$f \in L^2(V)$ and~$B_{\P}^*(w^{\otimes k}) = (B_{\P}^* w)^{\otimes k}$ for all~$w \in \R^{\P^+}$.
  \end{enumerate}
\end{lemma}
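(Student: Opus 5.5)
All three items are direct computations from the explicit formulas for $A_{\P}$, $B_{\P}$ and their adjoints. Throughout, write $\P^{+,k}$ for the set of tuples $(P_1,\ldots,P_k)$ with all $P_i \in \P^+$. The key facts I will use are:
\begin{itemize}
  \item the boxes $P_{1,\ldots,k}$ are pairwise disjoint;
  \item $\mu^k(P_{1,\ldots,k}) = \prod_i \mu(P_i)$;
  \item any box involving a null set of $\P$ is $\mu^k$-null.
\end{itemize}

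For (i), I compose the formulas:
\[
  B_{\P}^*A_{\P}K = \sum_{P_1,\ldots,P_k \in \P^+} \frac{\langle K, \1_{P_{1,\ldots,k}}\rangle}{\mu^k(P_{1,\ldots,k})}\1_{P_{1,\ldots,k}}.
\]
This is exactly the conditional expectation of $K$ with respect to the induced product partition of $V^k$. The cells omitted from the sum are those containing a factor of measure zero, so they are $\mu^k$-null and the two expressions agree as elements of $L^2$. This matches the (evidently intended) formula for $E_{\P}$, namely $E_{\P}f=\sum_{P}\langle f,\1_P\rangle \1_P/\mu(P)$ with the sum over $\P^+$. I should also check that the output is symmetric. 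This holds because permuting the coordinates permutes the boxes and leaves $\mu^k$ of each box unchanged.

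For (ii), let $T \in \Sym(\P^+,k)$ and evaluate $A_{\P}B_{\P}^*T$ at the index $(Q_1,\ldots,Q_k)$:
\[
  \sum_{P_1,\ldots,P_k \in \P^+} T_{P_1,\ldots,P_k}\frac{\langle \1_{P_{1,\ldots,k}}, \1_{Q_{1,\ldots,k}}\rangle}{\mu^k(P_{1,\ldots,k})}.
\]
By disjointness, $\langle \1_{P_{1,\ldots,k}},\1_{Q_{1,\ldots,k}}\rangle = \mu^k(P_{1,\ldots,k})$ if $P_i=Q_i$ for all $i$, and $0$ otherwise. The sum therefore collapses to $T_{Q_1,\ldots,Q_k}$.

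For (iii), I use Fubini. For $f\in L^2(V)$, which is integrable since $\mu$ is finite,
\[
  \langle f^{\otimes k},\1_{P_{1,\ldots,k}}\rangle = \prod_i \langle f,\1_{P_i}\rangle,
\]
which is the $(P_1,\ldots,P_k)$ entry of $(A_{\P}f)^{\otimes k}$. Here $A_{\P}$ on $L^2(V)$ is the $k=1$ version, $f\mapsto (\langle f,\1_P\rangle)_{P\in\P^+}$. Similarly, for $w\in\R^{\P^+}$,
\[
  (B_{\P}^*w)^{\otimes k} = \Bigl(\sum_{P} w_P\1_P/\mu(P)\Bigr)^{\otimes k}.
\]
Expanding the tensor product multilinearly and using $\1_{P_1}\otimes\cdots\otimes\1_{P_k}=\1_{P_{1,\ldots,k}}$ together with the product formula for $\mu^k$ gives $B_{\P}^*(w^{\otimes k})$.

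There is no real obstacle. The only points needing care are:
\begin{itemize}
  \item interpreting $E_{\P}$ correctly, with the sum over $\P^+$ and the factor $\1_P$ included;
  \item the null-cell bookkeeping in (i);
  \item reading $A_{\P}$ and $B_{\P}$ in (iii) as their $k=1$ versions.
\end{itemize}
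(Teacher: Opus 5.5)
Your proposal is correct and matches the paper's proof: (i) by composing the definitions, (ii) by the orthogonality of the disjoint boxes $\1_{P_{1,\ldots,k}}$ (the paper phrases this as orthonormality of the normalized indicators $\1_{P_{1,\ldots,k}}/\sqrt{\mu^k(P_{1,\ldots,k})}$), and (iii) by the same factorization of $\langle f^{\otimes k},\1_{P_{1,\ldots,k}}\rangle$ together with the multilinear expansion of $(B_{\P}^*w)^{\otimes k}$. Your reading of $E_{\P}$ is the intended one: the sum runs over $\P^+$ and the missing factor $\1_P$ is restored.
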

\begin{proof}
  \textup{(i)} This follows directly from the definition of the maps.

  \textup{(ii)} This is true because~$\{\, \1_{P_{1,\ldots, k}} / \sqrt{\mu^k(P_{1,\ldots,k})} : P_1, \ldots, P_k \in \P^+\, \}$ is an orthonormal set in~$L^2(V)$.

  \textup{(iii)} Indeed,
  \[
    (A_{\P}(f^{\otimes k}))_{P_1, \ldots,P_k} = \langle f^{\otimes k}, \1_{P_{1,\ldots, k}}\rangle = \prod_{i=1}^k \langle f, \1_{P_i}\rangle = \prod_{i = 1}^k (A_{\P}f)_{P_i},
  \]
  and
  \[
    B_{\P}^*(w^{\otimes k}) = \sum_{P_1, \ldots, P_k \in \P^+} w_{P_1} \cdots w_{P_k} \frac{\1_{P_{1,\ldots, k}}}{\mu^k(P_{1,\ldots, k})} = \left(\sum_{P \in \P^+} w_P \frac{\1_P}{\mu(P)}\right)^{\otimes k}.\qedhere
  \]
\end{proof}

We end this section with a sufficient condition for a measure space to locally admit finite-rank approximations, and for~$L^p(V)$ to be separable.
\begin{lemma}%
  \label{lem:sufficient-conition-finite-rank-approximation}
  If~$(V, \A, \mu)$ is countably generated and $\sigma$-finite, then every finite measure subspace admits a finite-rank approximation and~$L^p(V)$ is separable for all~$1 \leq p < \infty$.
\end{lemma}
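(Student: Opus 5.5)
Let $\A = \sigma(\mathcal{G})$ with $\mathcal{G} = \{G_1, G_2, \ldots\}$ a countable generating family. By $\sigma$-finiteness, fix a countable measurable partition $V = \bigcup_j V_j$ with $\mu(V_j) < \infty$. The plan is to prove the finite-rank statement first and then derive separability from it.

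\emph{Finite-rank approximations.} Let $W \in \A$ have $\mu(W) < \infty$, with the trace $\sigma$-algebra $\A_W = \{\, A \cap W : A \in \A\, \}$. The family $\{G_m \cap W\}_m$ generates $\A_W$. This holds because $\{\, A \in \A : A \cap W \in \sigma(\{G_m \cap W\}_m)\, \}$ is a $\sigma$-algebra containing every $G_m$, so it equals $\A$.

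Put $\P_0 = \{W\}$. Let $\P_n$ be the partition of $W$ into the nonempty atoms of the Boolean algebra generated by $G_1 \cap W, \ldots, G_n \cap W$, that is, the nonempty sets $W \cap \bigcap_{m \le n} G_m^{\pm}$. Then:
\begin{itemize}
  \item each $\P_n$ is finite, with at most $2^n$ sets, all measurable;
  \item $\P_{n+1} \preceq \P_n$;
  \item $\sigma(\bigcup_n \P_n) \supseteq \{G_m \cap W\}_m$, since each $G_m \cap W$ is a finite union of elements of $\P_m$, so this $\sigma$-algebra equals $\A_W$.
\end{itemize}
Hence $(\P_n)_n$ is a finite-rank approximation of $(W, \A_W, \mu|_W)$.

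\emph{Separability.} Fix $1 \le p < \infty$ and apply the above with $W = V_j$ to obtain partitions $\P^{(j)}_n$. Let $D$ be the set of finite rational linear combinations of $\1_P$ with $P \in \P^{(j)}_n$ over all $j, n$. This set is countable. To show it is dense, take $f \in L^p(V)$ and $\epsilon > 0$.
\begin{enumerate}
  \item By dominated convergence, $\|f - f\1_{V_1 \cup \cdots \cup V_J}\|_p < \epsilon$ for $J$ large.
  \item For each $j \le J$, Theorem~\ref{thm:martingale-convergence} applied on $V_j$ gives $E_{\P^{(j)}_n}(f|_{V_j}) \to f|_{V_j}$ in $L^p(V_j)$. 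Each $E_{\P^{(j)}_n}(f|_{V_j})$ is a finite linear combination of indicators.
  \item Rounding the coefficients to rationals costs arbitrarily little, because $\|\1_P\|_p = \mu(P)^{1/p} < \infty$.
\end{enumerate}
Summing over $j \le J$ gives an element of $D$ within $3\epsilon$ of $f$.

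\emph{Main obstacle.} The only subtle point is that the martingale theorem requires the partitions to generate the whole $\sigma$-algebra, which is exactly the trace-$\sigma$-algebra argument in the first step. If one prefers not to rely on the martingale theorem, there is an alternative route to density. The sets $A \in \A_{V_j}$ for which $\1_A$ lies in the $L^p$-closure of $\operatorname{span}\{\, \1_P : P \in \bigcup_n \P^{(j)}_n\, \}$ form a monotone class containing the algebra $\bigcup_n \sigma(\P^{(j)}_n)$. This uses finiteness of $\mu(V_j)$ for continuity of $\1_A$ in $L^p$ under monotone limits. So this class is all of $\A_{V_j}$, and simple functions are dense in $L^p$.
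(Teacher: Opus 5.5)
Your construction of the finite-rank approximation is the same as the paper's. The paper sets $\P_0=\{A\}$ and refines $\P_n$ by splitting each part along the $n$th generator and its complement, which yields exactly your atoms of the Boolean algebra generated by the first $n$ traces $G_m\cap W$. You go slightly further in two places. You justify that these traces generate the trace $\sigma$-algebra, whereas the paper asserts $\sigma(\bigcup_n\P_n)=\A\cap\mathscr{P}(A)$ without comment. You also discard empty atoms, so each $\P_n$ is a genuine partition.

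The real difference is the separability claim. The paper does not prove it; it cites Proposition~3.4.5 of~\cite{Cohn2013MeasureTheory}. You derive it from the first part: truncate to finitely many pieces $V_j$ of the $\sigma$-finite decomposition, approximate on each piece by conditional expectations, and round the coefficients to rationals. This makes the lemma self-contained, and your argument is correct (with the per-piece tolerances taken as $\epsilon/J$).

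One remark on your primary route. The paper's own sketch of the strong-convergence part of the martingale theorem relies on the density of $\bigcup_n L^p(V,\sigma(\P_n),\mu)$ in $L^p(V)$, which is essentially what you want to conclude. So invoking that theorem is not circular, since it is an external black box, but it is a detour. Your monotone-class alternative proves that density directly and elementarily. It uses the algebra $\bigcup_n\sigma(\P^{(j)}_n)$, with finiteness of $\mu(V_j)$ giving $L^p$-continuity of indicators along monotone limits. It is the more natural foundation, and it would let the martingale theorem be deduced from your lemma rather than the other way round.
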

\begin{proof}
  Let~$A \in \A$ have finite measure, and denote the power set of~$A$ by~$\mathscr{P}(A)$. Let~$\{ U_0, U_1, \ldots \}$ be a countable generator of~$\A$. Define~$\P_0 = \{A\}$, and define~$\P_n$ recursively by
  \[
    \P_{n + 1} = \{\, P \cap U \cap A: P \in \P_n, U\in \{U_{n}, V \setminus U_{n}\}\, \}.
  \]
  Then,~$\sigma(\bigcup_{n}\P_n) = \A \cap \mathscr{P}(A)$,~$\P_{n+1}$ refines~$\P_n$ for all~$n \in \N$, and each~$\P_n$ contains only finitely many sets, so~$(\P_n)_{n \in \N}$ is a finite-rank approximation of~$(A, \A \cap \P(A), \mu)$.

  That for all~$1 \leq p < \infty$ the space~$L^p(V)$ is separable is Proposition 3.4.5. in~\cite{Cohn2013MeasureTheory}.
\end{proof}

\section{A Pólya-type approximation on finite measure spaces}%
\label{sec:Polya-for-finite-measure-spaces}
For a measure space~$(V, \A, \mu)$, the cone of \defi{completely positive}\index{completely positive k tensor@completely positive $k$-tensor} $\mi{k}$-tensors is
\index{ CPVk@$\CP(V, k)$}\index{completely positive cone!in L2symVk@in~$L^2_{\sym}(V, k)$}\[
  \CP(V, k) = \ccone\{\, f^{\otimes k} \in L^2_{\sym}(V, k) : f \in L^2(V)_{\geq 0}\, \}.
\]
A~$\mi{k}$-tensor is called \defi{copositive}\index{copositive k tensor@copositive $k$-tensor} if it is in~$\CP(V, k)^*$, that is, when it is in
\index{ COPVk@$\COP(V, k)$}\index{copositive cone!in L2VK@in~$L^2_{\sym}(V, k)$}\[
  \COP(V, k) = \{\, T \in L^2_{\sym}(V, k) : \langle T, f^{\otimes k}\rangle \geq 0 \text{ for all } f \in L^2(V)_{\geq 0}\, \}.
\]
We derive a version of Theorem~\ref{thm:completely-positive-polyas-finite-dimension} for these cones when~$(V, \A, \mu)$ is countably generated and finite.

From here on, assume~$(V, \A, \mu)$ is countably generated and finite. We follow the efforts of Kuryatnikova and Vera~\cite{Kuryatnikova2019TheProblems,Kuryatnikova2017Approximating} and Bekker, Kuryatnikova, Oliveira and Vera~\cite{Bekker2026OptimizationSpaces}. For~$r \in \N$, define~$\T_r : L^2(V^k) \to L^{2}(V^{k+r})$ as
\index{ Tr@$\T_r$}\[
  \T_rK(v_1, \ldots, v_{k + r}) = \frac{1}{(k+r)!}\sum_{\pi \in \mathfrak{S}_{k+r}} K(v_{\pi(1)},\ldots, v_{\pi(k)}).
\]
This is a bounded operator, as~$\T_rK =  \Av_{\mathfrak{S}_{k+r}}(K\otimes \1^{\otimes r})$, and the maps~$\Av_{\mathfrak{S}_{k+r}}$ and~$K \mapsto K \otimes \1^{\otimes r}$ are bounded. The~\defi{Pólya-type cones}\index{Polya type cone@Pólya-type cone!on a finite measure space} of~$L^2_{\sym}(V, k)$ are
\index{ CrVk@$C_r(V, k)$}\index{ CrVk*@$C_r(V, k)^*$}\[
  C_r(V, k) = \T_r^{-1} L^2_{\sym}(V, k + r)_{\geq 0}\qquad \text{and}\qquad C_r(V, k)^* = \T_r^*L^2_{\sym}(V, k + r)_{\geq 0}.
\]
Continuity of~$\T_r$ implies that the cones are closed. The adjoint of~$\T_r$ is given by, for almost all $k$-tuples~$(v_1, \ldots, v_k)$,
\begin{equation}%
  \label{eqn:adjoint-Polya-type-operator}
  \T_r^*(F)(v_1,\ldots, v_k) = \frac{1}{(k+r)!} \int_{V^r} \sum_{\pi \in \mathfrak{S}_{k+r}} F(\pi (v_1, \ldots, v_k, v))\, d\mu^r(v);
\end{equation}
here~$v \in V^r$ while each~$v_i$ is in~$V$, and for~$\pi \in \mathfrak{S}_{k}$ and~$v \in V^{k+r}$, we define~$\pi(v) = \bigl(v_{\pi 1},\ldots, v_{\pi (k + r)}\bigr)$. If~$F$ is a symmetric tensor, the sum over the permutation group and the factor~$1/(k+r)!$ disappear.

The claim is of course that if~$(\P_n)_{n \in }$ is a finite-rank approximation of~$V$ with associated operators~$A_n$ and~$B_n$ as defined in Section~\ref{sec:projective-approximation}, the cones~$\CP(V, k)$ and~$C_r(V, k)^*$ with these operators satisfy conditions~\textup{(i)-(iii)} in Lemma~\ref{lem:projective-approximation}, the projective approximation lemma. The following theorem is a step in this direction.
\begin{theorem}%
  \label{thm:C_r(V)^*-given-by-projections}
  Let~$(V, \A, \mu)$ be a finite measure space with a finite-rank approximation~$(\P_n)_{n \in \N}$. For all integers~$k \geq 2$ and~$r \geq 0$,
  \[
    C_r(V, k)^* = \ccone \bigcup_{n \in \N} B_n^* C_r({\P_n^+}, k)^*.
  \]
\end{theorem}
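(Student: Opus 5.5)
We prove the two inclusions separately. The intertwining relation we aim for is
\[
  \T_r B_{\P}^* = B_{\P}^* \T_r^{\P},
\]
where on the left~$B_{\P}^*$ acts on~$(k+r)$-tensors and~$\T_r^{\P}$ is the finite-dimensional map~$T \mapsto \Av_{\mathfrak{S}_{k+r}}(T \otimes \1^{\otimes r})$ on~$\Sym(\P^+, k)$. We also use its adjoint form,
\[
  B_{\P}\T_r^* = (\T_r^{\P})^* B_{\P} \quad \text{on } L^2_{\sym}(V, k+r).
\]
The adjoint form needs some care with weights. The function~$\1 \in L^2(V)$ corresponds, under~$A_{\P}$, to the vector~$(\mu(P))_{P}$ and not to the all-one vector. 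Moreover~$\mu(\bigcup \P^+) = \mu(V)$. So the correct finite-dimensional objects may be Pólya cones with respect to a weight vector. I would first check whether~$A_{\P}$ or~$B_{\P}$ carries~$\T_r$ to the unweighted~$\T_r^{\P}$, and absorb any diagonal scaling into an automorphism. Diagonal scaling by positive weights preserves nonnegativity, so it maps~$C_r(\P^+,k)^*$ to itself after the corresponding rescaling.

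\emph{Inclusion~$\supseteq$.} Since~$C_r(V,k)^*$ is a closed convex cone, it suffices to show~$B_n^* C_r(\P_n^+,k)^* \subseteq C_r(V,k)^*$. An element of~$C_r(\P_n^+,k)^*$ has the form~$(\T_r^{\P_n})^* F$ with~$F \geq 0$ in~$\Sym(\P_n^+,k+r)$. Take~$G = B_n^* F$ on~$V^{k+r}$; it is nonnegative and symmetric, being a combination of indicators of product cells with nonnegative coefficients. Using formula~\eqref{eqn:adjoint-Polya-type-operator}, integrate~$G$ over the last~$r$ variables: each cell~$P$ contributes~$\mu(P)$, which cancels against the normalisation in~$B_n^*$. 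This gives~$\T_r^* G = B_n^* (\T_r^{\P_n})^* F$, with the weights handled as above. Hence~$B_n^*C_r(\P_n^+,k)^* \subseteq \T_r^* L^2_{\sym}(V,k+r)_{\geq0} = C_r(V,k)^*$.

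\emph{Inclusion~$\subseteq$.} Take~$K = \T_r^* F$ with~$F \geq 0$. By the martingale convergence theorem (Theorem~\ref{thm:martingale-convergence}) together with Lemma~\ref{lem:properties-An-Bn}(i),
\[
  K = \lim_n E_n K = \lim_n B_n^*A_n \T_r^* F
\]
in norm. The key step is to show~$E_n \T_r^* F = \T_r^* E_n F$. Conditional expectation on product cells commutes with integrating out variables, because the partition of~$V^{k+r}$ is a product partition and the integrated factor~$V$ is a union of cells. It also commutes with symmetrisation. Now~$E_nF = B_n^*A_nF$ with~$A_nF \geq 0$, so the previous paragraph gives
\[
  \T_r^* E_n F = B_n^*(\T_r^{\P_n})^* A_nF \in B_n^*C_r(\P_n^+,k)^*.
\]
Therefore~$K$ is a norm limit of elements of the union, which lies in the weak closure as well.

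The main obstacle is the bookkeeping of the weights~$\mu(P)$. We must verify that the finite-dimensional cone one lands in is exactly~$C_r(\P_n^+,k)^*$ as defined with the all-one vector~$\1$, rather than a weighted variant. If the direct computation yields the weighted cone, I would show that the weighted and unweighted Pólya dual cones coincide up to the diagonal automorphism~$w \mapsto \Diag(\mu(P))w$ applied in each tensor slot. I would then check that this automorphism is compensated by the choice of~$A_n$ versus~$B_n$, using Lemma~\ref{lem:properties-An-Bn}(ii)--(iii).
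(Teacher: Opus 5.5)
Your argument is correct, but it takes a different route from the paper: you work on the dual side throughout, while the paper goes through the primal cone.

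\textbf{The paper's route.} It first proves Theorem~\ref{thm:generating-set-L2-CrV*}. By Lemma~\ref{lem:Tr-compatible}(ii), $\langle K, B_n^*T_m\rangle$ is a positive multiple of the average of $\T_rK$ over a product cell. Hence $K\in C_r(V,k)$ if and only if $E_n\T_rK\geq 0$ for all $n$, where martingale convergence gives the nontrivial direction. This writes $C_r(V,k)$ as an intersection of countably many half-spaces. Dualizing with Theorem~\ref{thm:polar-identities}(iv) gives $C_r(V,k)^*=\ccone\bigcup_n B_n^*\{T_m : m\in I^{\P_n^+}(k+r)\}$. The statement then follows from the finite case $C_r(\P_n^+,k)^*=\ccone\{T_m\}$.

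\textbf{Your route.} You prove $\supseteq$ via $\T_r^*B_n^*=B_n^*(\T_r^{\P_n})^*$, which is the same computation as Lemma~\ref{lem:Tr-compatible}(ii). You prove $\subseteq$ by approximating each generator $\T_r^*F$ by $E_n\T_r^*F=\T_r^*E_nF=B_n^*(\T_r^{\P_n})^*A_nF$. The commutation $E_n\T_r^*=\T_r^*E_n$, which you justify directly, is also Lemma~\ref{lem:Tr-compatible}(ii) composed with the adjoint of Lemma~\ref{lem:Tr-compatible}(i), namely $A_n\T_r^*=(\T_r^{\P_n})^*A_n$.

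\textbf{Comparison.} Your route avoids the polar identities and the explicit generators, and it produces an explicit approximating sequence for each element. The paper's route additionally yields the countable generating set $\{B_n^*T_m\}$, the $k$-tensor analogue of the Bomze--De Klerk description. It also describes $C_r(V,k)$ by countably many linear inequalities.

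\textbf{A correction.} The two intertwinings you display at the start, $\T_rB_\P^*=B_\P^*\T_r^\P$ and $B_\P\T_r^*=(\T_r^\P)^*B_\P$, are false as written. Since $\1=B_\P^*\bigl((\mu(P))_P\bigr)$, one has
\[
  \T_rB_\P^*T=B_\P^*\Av_{\mathfrak{S}_{k+r}}(T\otimes w^{\otimes r}),\qquad w=(\mu(P))_{P\in\P^+},
\]
so these are the weighted versions. You never use them, however. The identities you actually compute and use are exactly unweighted:
\[
  \T_r^*B_\P^*=B_\P^*(\T_r^\P)^*\qquad\text{and}\qquad A_\P\T_r^*=(\T_r^\P)^*A_\P.
\]
The second is equivalent to $\T_rA_\P^*=A_\P^*\T_r^\P$, because $A_\P^*$ sends the all-one vector of $\R^{\P^+}$ to $\1$. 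In the first, the $\mu(P)$ cancel, as you yourself observe in the $\supseteq$ step. Replace the opening display with these two identities and delete the final paragraph about weighted Pólya cones and diagonal rescaling; no weight bookkeeping is needed.
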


The proof of this comes down to showing a degree of compatibility between~$\T_r$ and~$B_n$. In the process of proving this, we prove  Theorem~\ref{thm:generating-set-L2-CrV*}. This theorem extends a result by Bomze and De Klerk, who described an explicit generating set for the Pólya-type cones~$(C_r(V))^*$ on~$\Sym(V)$ for finite~$V$~\cite[Theorem 2.4]{Bomze2002SolvingProgramming}.

For a finite set~$S$ and~$r \in \N$, let~$I^S(r)$\index{ Isr@$I^S(r)$} denote the set of all~$m \in \N^S$ that have~$\|m\|_1 = r$. The vectors~$m$ correspond bijectively to multisets of cardinality~$r$ of elements of~$S$, thus we will say that~$m$ represents a choice~$s_1, \ldots, s_r \in S$. Bomze and De Klerk~\cite[Theorem 2.4]{Bomze2002SolvingProgramming} showed that, for all~$r \geq 0$ and finite~$S$,
\[
  C_r(S) = \{\, M \in \Sym(S) : \langle mm^{\tr} - \Diag m, M\rangle \geq 0 \text{ for all } m \in I^S(r+2)\, \},
\]
where~$\Diag m$ is the diagonal matrix with diagonal~$m$.

If~$m$ corresponds to a choice~$s_1, \ldots, s_{r+2} \in S$, then~$r!(mm^{\tr} - \Diag{m})_{s_i s_j}$ is the multiplicity of~$e_{s_i} \otimes e_{s_j}$ in~$\T_r^* (e_{s_1} \otimes \cdots \otimes e_{s_{r+2}})$, where~$e_{s_i}$ is the standard basis vector in~$\R^S$ corresponding to~$s_i$. Investigating expression~\eqref{eqn:adjoint-Polya-type-operator} shows that~$r!(mm^{\tr} - \Diag{m})_{s_i s_j}$ is the number of permutations~$\pi \in \mathfrak{S}_{r+2}$ such that~$(s_i, s_j) = (s_{\pi 1}, s_{\pi 2})$.

We define a tensor analogue of the matrices~$mm^\tr - \Diag(m)$. Let~$S$ be a finite set,~$k \geq 2$ and~$r \geq 0$ integers, and~$m \in I^{S}(k+r)$. If~$m$ represents~$t_1 ,\ldots, t_{k+r}$, we define~$T_m$ as the $\mi{k}$-tensor such that~$r! (T_m)_{s_1,\ldots,s_k}$ is the number of permutations~$\pi \in \mathfrak{S}_{k+r}$ for which~$(s_{1}, \ldots, s_{k})=(t_{\pi 1},\ldots,t_{\pi k})$. Although we do not need the exact values of~$T_m$, for completeness's sake, if~$m' \in I^{S}(k)$ represents~$s_1, \ldots, s_k \in S$, then
\[
  (T_m)_{s_1,\ldots, s_k} =
  \begin{cases}
    \prod_{s \in \{s_1,\ldots, s_k\}}\frac{m_s!}{(m_s - m'_s)!} & \text{if } m'_{s_i} \leq m_{s_i} \text{ for } 1 \leq i \leq k, \\
    0                                                         & \text{otherwise.}
  \end{cases}
\]

The notation for~$\T_r$ and~$A_{\P}$ does not specify the domains, meaning that the operators on~$\Sym(\P^+, k)$ get the same symbol as the corresponding operators on~$L^2_{\sym}(V, k)$. In Lemma~\ref{lem:Tr-compatible} below, which ones are which should be derived from context.
\begin{lemma}%
  \label{lem:Tr-compatible}
  Let~$(V, \A, \mu)$ be a finite measure space with finite partition~$\P$, and let $k \geq 2$ and~$r \geq 0$ be integers, then
  \begin{enumerate}
    \item[(i)] $\T_r A_{\P}^* = A_{\P}^* \T_r$, and
    \item[(ii)]  if~$P_1,\ldots,P_{k+r} \in \P^+$ and~$m \in I^{\P^+}(k+r)$ represents this choice, then
      \[
        \T_r^*\frac{\1_{P_{1,\ldots, k+r}}}{\mu^{k+r}(P_{1,\ldots,k+r})} = \frac{r!}{(k+r)!} B_{\P}^* T_m.
      \]
  \end{enumerate}
\end{lemma}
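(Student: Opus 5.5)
For (i), the plan is a direct computation on the spanning elements, using linearity. For (ii), the plan is to evaluate the adjoint formula~\eqref{eqn:adjoint-Polya-type-operator} on a normalized product indicator and then count permutations.

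\textup{(i)} Take $T \in \Sym(\P^+,k)$, so that $A_{\P}^*T = \sum T_{P_1,\ldots,P_k}\1_{P_{1,\ldots,k}}$.
\begin{itemize}
  \item Since $\P$ is a partition, $\1 = \sum_{P\in\P}\1_P$. The sets in $\P\setminus\P^+$ are null, so $\1 = \sum_{P \in \P^+}\1_P$ almost everywhere, and hence $\1 = A_{\P}^*\1$ as elements of $L^2(V)$, where the second $\1$ is the all-one vector in $\R^{\P^+}$.
  \item The map $A_{\P}^*$ sends tensor products to tensor products: $A_{\P}^*(T\otimes T') = A_{\P}^*T\otimes A_{\P}^*T'$, because $\1_{P_{1,\ldots,k}}\otimes\1_{Q_{1,\ldots,r}} = \1_{P_{1,\ldots,k}\times Q_{1,\ldots,r}}$. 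Together with the previous point this gives $A_{\P}^*T\otimes\1^{\otimes r} = A_{\P}^*(T\otimes\1^{\otimes r})$.
  \item $A_{\P}^*$ intertwines the permutation actions of $\mathfrak{S}_{k+r}$ on coordinates: permuting the factors of $\1_{P_{1,\ldots,k+r}}$ is the same as permuting the index tuple. Averaging over $\mathfrak{S}_{k+r}$ therefore commutes with $A_{\P}^*$.
\end{itemize}
Since $\T_r = \Av_{\mathfrak{S}_{k+r}}(\,\cdot\,\otimes\1^{\otimes r})$ on both sides, these three points give $\T_r A_{\P}^* = A_{\P}^*\T_r$.

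\textup{(ii)} Let $F = \1_{P_{1,\ldots,k+r}}/\mu^{k+r}(P_{1,\ldots,k+r})$ and write $u = (v_1,\ldots,v_k,w)$ with $w\in V^r$. Then
\[
F(\pi u) = \prod_{j=1}^{k+r}\frac{\1_{P_j}(u_{\pi j})}{\mu(P_j)} .
\]
For a fixed $\pi$, split the indices $j$ according to whether $\pi j > k$ or $\pi j \le k$.
\begin{itemize}
  \item If $\pi j > k$, the factor depends only on one $w$-coordinate. Integrating it over $V$ gives $\mu(P_j)/\mu(P_j) = 1$. Fubini applies since everything is a finite product of bounded functions on a finite measure space.
  \item The remaining factors give $\1_{Q_{1,\ldots,k}}/\mu^k(Q_{1,\ldots,k})$ evaluated at $(v_1,\ldots,v_k)$, where $Q_i = P_{\pi^{-1}i}$.
\end{itemize}
This is exactly $B_{\P}^*$ of the standard basis tensor $e_{Q_1}\otimes\cdots\otimes e_{Q_k}$. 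Hence
\[
\T_r^*F = \frac{1}{(k+r)!}\sum_{\pi\in\mathfrak{S}_{k+r}} B_{\P}^*\bigl(e_{P_{\pi^{-1}1}}\otimes\cdots\otimes e_{P_{\pi^{-1}k}}\bigr).
\]

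Next, collect terms. The coefficient of $e_{s_1}\otimes\cdots\otimes e_{s_k}$ is the number of $\pi$ with $(s_1,\ldots,s_k) = (P_{\pi^{-1}1},\ldots,P_{\pi^{-1}k})$. The map $\pi\mapsto\pi^{-1}$ is a bijection of $\mathfrak{S}_{k+r}$, so this equals the number of $\pi$ with $(s_1,\ldots,s_k)=(P_{\pi1},\ldots,P_{\pi k})$. By the definition of $T_m$, that number is $r!(T_m)_{s_1,\ldots,s_k}$. Linearity of $B_{\P}^*$ then yields $\T_r^*F = \frac{r!}{(k+r)!}B_{\P}^*T_m$.

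The main point needing care is this index bookkeeping: matching which slots of $u$ are integrated out against the convention $\pi(v) = (v_{\pi1},\ldots)$, and checking that the $\pi$ versus $\pi^{-1}$ discrepancy with the definition of $T_m$ is harmless. It is, because the count is over the whole symmetric group.
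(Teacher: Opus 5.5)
Your proof is correct and follows the same route as the paper. For (i) it uses the same three facts: averaging over $\mathfrak{S}_{k+r}$ commutes with $A_{\P}^*$, $A_{\P}^*$ respects tensor products, and $\sum_{P\in\P^+}\1_P=\1$ almost everywhere. For (ii) it integrates out the $r$ extra coordinates in the adjoint formula for $\T_r^*$ and then groups the $(k+r)!$ terms using the permutation count that defines $T_m$; your explicit handling of $\pi$ versus $\pi^{-1}$ and of the null parts of $\P$ is a little more careful than the paper, which passes over both silently.
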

\begin{proof}
  \textup{(i)} Take~$T \in (\R^{\P^+})^{k+r}$,~$P_1, \ldots, P_{k + r} \in \P^+$, and~$w \in P_{1, \ldots, k + r}$. Then
  \[
    \begin{split}
      (A_{\P}^*\Av_{\mathfrak{S}_{k+r}}T)(w) &= \sum_{Q_1,\ldots, Q_{k+r} \in \P^+}\frac{1}{(k+r)!}\sum_{\pi \in \mathfrak{S}_{k+r}}T_{Q_{\pi 1}, \ldots, Q_{\pi (k+r)}} \1_{Q_{1 ,\ldots, k+r}}(w)\\
      &= \frac{1}{(k+r)!}\sum_{\pi \in \mathfrak{S}_{k+r}}T_{P_{\pi 1}, \ldots, P_{\pi (k+r)}}\\
      &= \frac{1}{(k+r)!}\sum_{\pi \in \mathfrak{S}_{k+r}}\sum_{Q_1, \ldots, Q_{k+r} \in \P^+}T_{Q_{1}, \ldots, Q_{k+r}} \1_{Q_{1 ,\ldots, k + r}}(\pi w)\\
      &= (\Av_{\mathfrak{S}_{k+r}}A_{\P}^*T)(w).
  \end{split}\]

  Thus, for~$T \in \Sym(\P^+, k)$,~$A_{\P}^* \Av_{\mathfrak{S}_{k+r}}(T \otimes \1^{\otimes r}) = \Av_{\mathfrak{S}_{k+r}} A_{\P}^*(T \otimes \1^{\otimes r})$. Moreover,
  \[
    A_{\P}^* (T \otimes \1^{\otimes r}) = \sum_{P_1, \ldots, P_{k} \in \P^+} T_{P_1,\ldots, P_k} \1_{P_{1,\ldots,k}} \otimes \biggl(\sum_{P \in \P^+} \1_P \biggr)^{\otimes r} = (A_{\P}^* T) \otimes \1^{\otimes r},
  \]
  and the conclusion follows.

  \textup{(ii)} Let~$P_1, \ldots, P_{k+r} \in \P^+$, and let~$m \in I^{\P^+}(k+r)$ be the vector representing this choice. For all~$w \in V^k$
  \[
    \begin{split}
      \T_r^*\frac{\1_{P_{1, \ldots, k+r}}}{\mu^{k+r}(P_{1,\ldots, k+r})}(w)& = \frac{1}{(k+r)!} \int_{V^r} \sum_{\pi \in \mathfrak{S}_{k+r}} \frac{\1_{P_{\pi 1, \ldots, \pi(k+r)}}}{\mu^{k+r}(P_{1,\ldots, k+r})}(w, v)\, d\mu(v)\\
      &= \frac{1}{(k+r)!} \sum_{\pi \in \mathfrak{S}_{k+r}} \frac{\1_{P_{\pi 1},\ldots, \pi k}}{\mu^{k}(P_{\pi 1, \ldots, \pi k})}(w).
  \end{split}\]
  By definition of~$T_m$, for given~$\pi \in \mathfrak{S}_{k+r}$, the number of~$\sigma \in \mathfrak{S}_{k+r}$ such that~$(P_{\sigma 1}, \ldots, P_{\sigma k}) = (P_{\pi 1},\ldots,P_{\pi k})$ is ~$r! (T_m)_{P_{\pi 1},\ldots,P_{\pi k}}$. Group equal terms together to obtain
  \[
    \begin{split}
      \T_r^*\frac{\1_{P_{1, \ldots, k+r}}}{\mu^{k+r}(P_{1,\ldots, k+r})} &= \frac{r!}{(k+r)!}\sum_{Q_1, \ldots, Q_k \in \P^+}(T_m)_{Q_1,\ldots, Q_k}\frac{\1_{Q_{1,\ldots, k}}}{\mu^k(Q_{1,\ldots, k})}\\
      & = \frac{r!}{(k+r)!} B_{\P}^* T_m. \qedhere
  \end{split}\]
\end{proof}

The next theorem is interesting, even if~$V$ is a finite set with the counting measure. In this case, taking~$A_n = B_n = I$, it says that~$C_r(V, k)^*$ is the conic hull of the tensors~$T_m$ with~$m \in I^V(k+r)$. This is the $\mi{k}$-tensor analogue of~\cite[Theorem 2.4]{Bomze2002SolvingProgramming}.
\begin{theorem}%
  \label{thm:generating-set-L2-CrV*}
  Let~$(V, \A, \mu)$ be a finite measure space with a finite-rank approximation~$(\P_n)_{n \in \N}$. For all integers~$k \geq 2$ and~$r \geq 0$,
  \[
    C_r(V, k) = \bigcap_{n \in \N}\{\, K \in L^2_{\sym}(V, k) : \langle K, B_n^* T_m \rangle \geq 0 \text{ for all } m \in I^{\P_n^+}(k+r)\, \}
  \]
  and
  \[
    C_r(V, k)^* = \ccone \bigcup_{n \in \N} B_n^*\{\, T_m : m \in I^{\P_n^+}(k + r)\, \}.
  \]
\end{theorem}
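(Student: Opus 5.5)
The plan is to prove the first equality (the description of~$C_r(V, k)$) directly. The second equality then follows by conic duality, via Theorem~\ref{thm:polar-identities}. For the first equality, a symmetric tensor~$F \in L^2_{\sym}(V, k+r)$ is nonnegative almost everywhere if and only if~$\langle F, \1_{P_{1,\ldots,k+r}} \rangle \geq 0$ for all~$n$ and all~$P_1, \ldots, P_{k+r} \in \P_n^+$. The forward direction is trivial. For the converse, the nonnegativity of these pairings means exactly that~$E_n F \geq 0$ for every~$n$. By the martingale convergence theorem, Theorem~\ref{thm:martingale-convergence}, applied on~$V^{k+r}$ with the induced finite-rank approximation, we have~$E_nF \to F$ in~$L^2$. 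Since~$L^2(V^{k+r})_{\geq 0}$ is closed, it follows that~$F \geq 0$. It is enough to test against~$\P_n^+$ rather than all of~$\P_n$, because null cells contribute nothing.

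Next I apply this to~$F = \T_r K$ with~$K \in L^2_{\sym}(V, k)$. Then~$K \in C_r(V,k)$ if and only if~$\langle K, \T_r^* \1_{P_{1,\ldots,k+r}}\rangle \geq 0$ for all~$n$ and all choices~$P_1, \ldots, P_{k+r} \in \P_n^+$. Lemma~\ref{lem:Tr-compatible}(ii) gives
\[
\T_r^*\1_{P_{1,\ldots,k+r}} = \mu^{k+r}(P_{1,\ldots,k+r})\frac{r!}{(k+r)!}B_n^*T_m,
\]
where~$m \in I^{\P_n^+}(k+r)$ represents the choice~$P_1, \ldots, P_{k+r}$. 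The scalar factor in front is strictly positive. Every~$m \in I^{\P_n^+}(k+r)$ arises from some such choice, so the tested vectors, up to positive scaling, are exactly~$\{B_n^*T_m : m \in I^{\P_n^+}(k+r)\}$. This yields the first displayed identity.

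For the second identity, let~$G = \bigcup_n B_n^*\{T_m : m \in I^{\P_n^+}(k+r)\}$. The first part says precisely that~$C_r(V,k) = G^*$, taken within~$L^2_{\sym}(V,k)$. Each~$T_m$ is symmetric, and~$B_n^*$ maps symmetric tensors to symmetric kernels, so~$G \subseteq L^2_{\sym}(V,k)$. Hence by the bipolar theorem (Theorem~\ref{thm:polar-identities}),
\[
C_r(V,k)^* = G^{**} = \ccone G.
\]
In the Hilbert space~$L^2_{\sym}(V,k)$ the weak and norm closures of convex sets coincide, so it does not matter which closure is meant.

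The main obstacle I expect is the first step: justifying that nonnegativity can be tested only on product cells built from the partitions. This rests on the strong convergence~$E_n \to I$ on~$L^2(V^{k+r})$. That in turn requires that the induced product partitions generate~$\A^{k+r}$, which holds because the product~$\sigma$-algebra is generated by rectangles of generators. A second point to handle is that~$\T_r K$ must be treated as an element of~$L^2_{\sym}(V,k+r)$, so that testing against the non-symmetric indicators~$\1_{P_{1,\ldots,k+r}}$ is legitimate. This works because~$\langle F, \1_{P_{1,\ldots,k+r}} \rangle = \langle F, \Av\1_{P_{1,\ldots,k+r}}\rangle$ for symmetric~$F$, and this matches the symmetrization already built into formula~\eqref{eqn:adjoint-Polya-type-operator}.
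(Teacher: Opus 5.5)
Your proof is correct and follows the paper's argument essentially step for step. Necessity and sufficiency of the cell tests come from Lemma~\ref{lem:Tr-compatible}(ii) together with martingale convergence of $E_n\T_rK$ to $\T_rK$, and the second identity follows by conic duality via Theorem~\ref{thm:polar-identities}; your version is slightly more careful than the paper's, since it tracks the positive factor $\mu^{k+r}(P_{1,\ldots,k+r})$ and justifies that the product partitions generate $\A^{k+r}$.
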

\begin{proof}
  If~$K \in C_r(V, k)$, then for all sets~$P_1,\ldots, P_{k+r} \in \P_n^+$ the inequality~$\langle \T_r K, \1_{P_{1,\ldots k + r}}/\mu^k(P_{1,\ldots, k+r})\rangle \geq 0$ holds. So, using Lemma~\ref{lem:Tr-compatible}(ii), this implies~$\langle K, B_n^* T_m\rangle \geq 0$ for all~$m \in I^{\P_n^+}(k+r)$.

  The martingale convergence theorem, Theorem~\ref{thm:martingale-convergence}, says that if~$K$ is in~$L^2_{\sym}(V, k)$, then
  \[
    \T_rK = \lim_{n} \sum_{P_1, \ldots, P_{k+r} \in \P_n^+} \frac{\langle \T_rK, \1_{P_1,\ldots, k + r}\rangle}{\mu^k(P_{1,\ldots, k + r})}\1_{P_{1,\ldots, k + r}}
  \]
  under the norm. Lemma~\ref{lem:Tr-compatible}(ii) shows that the condition~$\langle K, B_n^* T_m\rangle \geq 0$ for all~$m \in I^{\P_n^+}(k+r)$ is also sufficient for~$K$ to be in~$C_r(V)$, proving the first statement of the theorem.

  The equality
  \[
    C_r(V, k)^* = \ccone \bigcup_{n \in \N} B_n^*\{\, T_m : m \in I^{\P_n^+}(k + r)\, \}
  \]
  follows by taking the dual on both sides, and Theorem~\ref{thm:polar-identities}(iv).
\end{proof}

\begin{proof}[Proof of Theorem~\textup{\ref{thm:C_r(V)^*-given-by-projections}}]
  This follows directly from Theorem~\ref{thm:generating-set-L2-CrV*}. Indeed,
  \[
    C_r(V, k)^* = \ccone \bigcup_{n \in \N} B_n^*\{\, T_m : m \in I^{\P_n^+}(k+r)\, \},
  \]
  and it follows from the same theorem that
  \[
    C_r(\P_n^+,k) = \ccone \{\, T_m : m \in I^{\P_n^+}(k+r)\, \}
  \]
  by taking a trivial finite-rank approximation.
\end{proof}

\begin{theorem}%
  \label{thm:Polyas-theorem-finite-measure}
  If~$(V, \A, \mu)$ is a countably generated finite measure space and~$k \geq 2$ is an integer, then
  \[
    C_0(V, k)^* \supseteq C_1(V, k)^* \supseteq \cdots \supseteq \CP(V, k)\qquad \text{and}\qquad \CP(V, k) = \bigcap_{r \in \N} C_r(V, k)^*.
  \]
\end{theorem}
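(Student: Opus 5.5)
The plan is to apply the projective approximation lemma, Lemma~\ref{lem:projective-approximation}, with $X = Y = L^2_{\sym}(V,k)$ under the weak topology, $\cC = \CP(V,k)$ and $\cC_r = C_r(V,k)^*$. Lemma~\ref{lem:sufficient-conition-finite-rank-approximation} provides a finite-rank approximation $(\P_n)_{n}$, and I take $X_n = Y_n = \Sym(\P_n^+,k)$ with $A_n$, $B_n$ as in Section~\ref{sec:projective-approximation}. By Lemma~\ref{lem:properties-An-Bn}(i), $B_n^*A_n = E_n$, and by the martingale convergence theorem this converges strongly, hence weakly, to the identity, which is an automorphism of $\cC$. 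So it remains to establish the chain of inclusions and conditions (i)--(iii).

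\emph{The chain of inclusions.} I will show $C_r(V,k) \subseteq C_{r+1}(V,k)$ and then dualize. The identity $\T_{r+1}K = \Av_{\mathfrak{S}_{k+r+1}}(\T_rK \otimes \1)$ shows that nonnegativity of $\T_rK$ gives nonnegativity of $\T_{r+1}K$.

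For the inclusion $\CP(V,k) \subseteq C_r(V,k)^*$, it suffices by closedness to consider generators $f^{\otimes k}$ with $f \ge 0$. For $K \in C_r(V,k)$,
\[
  \langle K, f^{\otimes k}\rangle \mu(f)^r = \langle \T_r K, f^{\otimes(k+r)}\rangle \ge 0 .
\]
If $\mu(f) = 0$, then $f = 0$ almost everywhere, so in either case $\langle K, f^{\otimes k}\rangle \geq 0$. This gives the chain and one half of the equality; the projective approximation lemma then yields the full equality.

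\emph{Condition (ii).} By Lemma~\ref{lem:properties-An-Bn}(iii), $E_n f^{\otimes k} = (B_n^*A_nf)^{\otimes k}$, and $E_nf \ge 0$ whenever $f \ge 0$. Since $E_n$ is continuous, it maps $\CP(V,k)$ into itself.

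\emph{Condition (i).} Using Theorem~\ref{thm:generating-set-L2-CrV*}, it suffices to show that $E_n$ maps each generator $B_j^*T_m$ into $C_r(V,k)^*$. The key step is the dual statement that $E_n$ maps $C_r(V,k)$ into itself: $E_n$ is self-adjoint, so $\langle E_nK', K\rangle = \langle K', E_nK\rangle$. By Lemma~\ref{lem:Tr-compatible}(i), and because $\T_r$ commutes with $E_n$ (apply it with the $A$ and $B$ normalizations; $\1 = \sum_P \1_P$ is fixed by $E_n$), we get $\T_r E_n K = E_n \T_r K \ge 0$ since $E_n$ is positivity preserving.

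\emph{Condition (iii).} I must show $A_n\CP(V,k) = \bigcap_r A_nC_r(V,k)^*$. The inclusion $\subseteq$ is immediate from the chain. For the image of $\CP$, Lemma~\ref{lem:properties-An-Bn}(iii) together with $A_nB_n^* = I$ gives $A_n\CP(V,k) = \CP(\P_n^+,k)$: it contains $A_nB_n^*w^{\otimes k} = w^{\otimes k}$, and it is contained in $\CP(\P_n^+,k)$ since $A_nf \ge 0$ for $f \geq 0$. Here closedness is automatic because the target is finite dimensional and $A_n$ maps a dense cone onto a closed cone. Then I show $A_nC_r(V,k)^* \subseteq C_r(\P_n^+,k)^*$ by duality: for $T \in C_r(\P_n^+,k)$, the function $A_n^*T$ lies in $C_r(V,k)$ because $\T_rA_n^*T = A_n^*\T_rT \ge 0$ by Lemma~\ref{lem:Tr-compatible}(i). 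Intersecting over $r$ and applying the finite-dimensional Theorem~\ref{thm:completely-positive-polyas-finite-dimension} gives
\[
  \bigcap_r A_nC_r(V,k)^* \subseteq \CP(\P_n^+,k) = A_n\CP(V,k).
\]

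The main obstacle I expect is the bookkeeping in (iii): identifying the image cones exactly, including their closedness. A second subtle point is the $\mu(f) = 0$ case in the chain of inclusions.
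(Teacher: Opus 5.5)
Your proof is correct. Its skeleton is the paper's: the projective approximation lemma with $\cC=\CP(V,k)$ and $\cC_r=C_r(V,k)^*$, the operators $A_n,B_n$, the same check of condition~(ii), and the same duality argument $\T_rA_n^*T=A_n^*\T_rT\ge 0$ giving $A_nC_r(V,k)^*\subseteq C_r(\P_n^+,k)^*$ in~(iii).

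Where you differ is that you never use Theorem~\ref{thm:C_r(V)^*-given-by-projections}. The paper derives three things from the description $C_r(V,k)^*=\ccone\bigcup_nB_n^*C_r(\P_n^+,k)^*$: the monotonicity of the cones, condition~(i), and the equality $A_nC_r(V,k)^*=C_r(\P_n^+,k)^*$. That description rests on the generators $B_n^*T_m$ of Theorem~\ref{thm:generating-set-L2-CrV*}, on Lemma~\ref{lem:Tr-compatible}(ii), and on a second use of martingale convergence. The paper obtains $\CP(V,k)\subseteq C_r(V,k)^*$ only as part of the lemma's conclusion.

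You instead prove each of these directly:
\begin{itemize}
\item monotonicity from $\T_{r+1}K=\Av_{\mathfrak{S}_{k+r+1}}(\T_rK\otimes\1)$;
\item the inclusion $\CP(V,k)\subseteq C_r(V,k)^*$ from $\langle\T_rK,f^{\otimes(k+r)}\rangle=\mu(f)^r\langle K,f^{\otimes k}\rangle$;
\item condition~(i) from $E_n$ being self-adjoint, positivity preserving, and commuting with $\T_r$.
\end{itemize}
The commutation identity is cleanest to verify directly: on $V^{k+r}$, $E_n$ commutes with coordinate permutations, and $E_n(K\otimes\1^{\otimes r})=(E_nK)\otimes\1^{\otimes r}$ because $\sum_{P\in\P_n^+}\1_P=\1$ almost everywhere. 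Your appeal to Theorem~\ref{thm:generating-set-L2-CrV*} at that point is superfluous, since self-adjointness already gives $E_nC_r(V,k)^*\subseteq C_r(V,k)^*$. In~(iii) you only need the one-sided inclusion, which suffices together with Theorem~\ref{thm:completely-positive-polyas-finite-dimension}.

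What each route buys: yours is shorter and self-contained, depending only on Lemmas~\ref{lem:projective-approximation} and~\ref{lem:properties-An-Bn}, Lemma~\ref{lem:Tr-compatible}(i), $E_n\to I$, and the finite-dimensional theorem. The paper's heavier route yields extra structure: an explicit generating set for $C_r(V,k)^*$, and the exact identification $A_nC_r(V,k)^*=C_r(\P_n^+,k)^*$. That identification says the $L^2$ P\'olya-type cones are precisely the lifts of the finite-dimensional ones, which is the chapter's stated aim.
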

\begin{proof}
  The inclusions~$C_r(V, k)^* \supseteq C_{r+1}(V, k)^*$ follow from Theorem~\ref{thm:completely-positive-polyas-finite-dimension} and Theorem~\ref{thm:C_r(V)^*-given-by-projections}.

  Martingale convergence---Theorem~\ref{thm:martingale-convergence}---together with Lemma~\ref{lem:properties-An-Bn}(i) says that~$\lim_n B_n^* A_n = \Id$ under the weak operator topology. It suffices to check conditions~(i)--(iii) of the projective approximation lemma for the cones~$C_r(V, k)^*$ and~$\CP(V, k)$ and the operators~$A_n$ and~$B_n$.

  To this end, first prove that
  \[
    A_n \CP(V, k) = \CP(\P_n^+,k)\quad \text{and}\quad A_n C_r(V, k)^* = C_r(\P_n^+,k)^*.
  \]
  By Lemma~\ref{lem:properties-An-Bn}(ii), for all~$n$,~$A_n B_n^* = I$ on~$\Sym(\P_n^+, k)$. Thus, it is enough to show that~$A_n$ maps the respective cones on~$V$ into their counterparts on~$\P_n^+$, and that~$B_n^*$ does the opposite.

  The maps~$A_n$ and~$B_n^*$ preserve pointwise nonnegativity. Hence, the equalities~$A_n \CP(V, k) = \CP(\P_n^+, k)$ for all~$n$ follow from Lemma~\ref{lem:properties-An-Bn}(iii): indeed, if~$f \geq 0$ and~$w \geq 0$, then~$A_nf \geq 0$ and~$B_n^*w \geq 0$, and~$A_n(f^{\otimes k}) = (A_n f)^{\otimes k}$ and~$B_n^*(w^{\otimes k}) = (B_n^* w)^{\otimes k}$.

  Theorem~\ref{thm:C_r(V)^*-given-by-projections} says that~$B_n^* C_r({\P_n^+},k)^* \subseteq C_r(V, k)^*$. To prove
  \[
    A_n C_r(V, k)^* \subseteq C_r({\P_n^+}, k)^*,
  \]
  by Theorem~\ref{thm:polar-identities}(i) and~(ii), it is equivalent to show that~$C_r({\P_n^+}, k)$ is contained in~${A_n^*}^{-1} C_r(V, k)$. Lemma~\ref{lem:Tr-compatible}(i) says that if~$T \in \Sym(\P_n^+, k)$, then it follows that~$\T_r A_n^* T = A_n^* \T_r T$. The latter is nonnegative if~$T \in C_r({\P_n^+}, k)$, whence it follows that~$T \in {A_n^*}^{-1} C_r(V, k)$. This proves~$A_n C_r(V, k)^* = C_r({\P_n^+}, k)^*$.

  Next, prove conditions~\textup{(i)--(iii)} of the projective approximation lemma.

  \textup{(i)} The inclusion~$B_n^*A_n C_r(V, k)^* \subseteq C_r(V, k)^*$ for all~$n, r \in \N$ follows from~$A_n C_r(V, k)^* = C_r(\P_n^+, k)^*$ and Theorem~\ref{thm:C_r(V)^*-given-by-projections}.

  \textup{(ii)} To prove~$B_n^* A_n \CP(V, k) \subseteq \CP(V, k)$, it is enough to consider the generators~$f^{\otimes k}$ with~$f \in L^2(V)_{\geq 0}$. Lemma~\ref{lem:properties-An-Bn}\textup(iii) shows that the equality~$B_n^* A_n (f^{\otimes k}) = (B_n^* A_n f)^{\otimes k}$ holds, and~$B_n^* A_n f$ is nonnegative, so property~\textup{(ii)} follows.

  \textup{(iii)} The property~$A_n \CP(V, k) = \bigcap_r A_n C_r(V, k)^*$ follows from the fact that~$A_n\CP(V, k) = \CP({\P_n^+},k)$,~$A_n C_r(V, k) = C_r({\P_n^+}, k)$, and Theorem~\ref{thm:completely-positive-polyas-finite-dimension}.  This concludes the proof.
\end{proof}

\section{\texorpdfstring{An outer approximation on $\sigma$-finite measure spaces}{An outer approximation on sigma-finite measure spaces}}
We would like to extend Theorem~\ref{thm:Polyas-theorem-finite-measure} to $\sigma$-finite measure spaces by restricting to finite-measure subspaces. Indeed, we can recognize a completely positive kernel on a $\sigma$-finite space by showing it is completely positive on every finite-measure subspace.  However, it is a well-known frustration that such a procedure is ineffective for the Pólya-type cones, even in finite dimensions; this is related to the obstruction~\cite[Theorem 3]{Laurent2023ExactnessGraph}.

\begin{exmp}%
  \label{exmp:restriction-breaks-Polya}
  Fix an integer~$n \geq 1$, and define~$\Res: \Sym(n) \to \Sym(n-1)$ as the restriction to the principal submatrix indexed by~$[n-1]$. Its adjoint lifts a matrix by appending a row and a column of zeros.

  Let~$r \in \N$ and~$A \in \Sym(n-1)$, and assume there exists~$A_{i,j} < 0$. Then,
  \[
    (\T_{r} \Res^* A)_{i, j, n, \ldots, n} = \bigl(2r!/(r+2)!\bigr)A_{i,j} < 0.
  \]
  So, the lifting~$\Res^*A$ is in~$C_r([n])$ if and only if~$A \geq 0$.

  A similar problem occurs on the dual side. Take~$A \in (C_r([n]))^*$ and~$B$ in~$\Sym(n-1)$. Then~$\langle \Res A, B\rangle = \langle A, \Res^* B\rangle$, so, by the above and Theorem~\ref{thm:polar-identities},~$\langle \Res A, B\rangle \geq 0$ if and only if~$B \geq 0$. It follows that~$\Res A \in (C_r([n-1]))^*$ if and only if~$r = 0$.
\end{exmp}

A greater plight renders the objection raised by Example~\ref{exmp:restriction-breaks-Polya} irrelevant. Applied to an infinite measure space~$(V, \A, \mu)$, except in degenerate cases, the Pólya-type cones fail to approximate anything more than the nonnegative orthant. If~$\P$ is a partition of~$V$ into sets of finite measure and~$|\P^+| = |\N|$, the codomain of the operator~$A_{\P}$ on~$L^2(V^2)$ is isomorphic to~$l^{2}(\N^2)$. Example~\ref{exmp:infinite-measure-breaks-Polya} shows we cannot simply copy-paste the definition of the Pólya-type cones to this setting.

\begin{exmp}%
  \label{exmp:infinite-measure-breaks-Polya}
  Consider~$l^2_{\sym}(\N^2)$ with the counting measure, and fix~$r \in \N$. Define the operator~$\T_r: l^2_{\sym}(\N^2) \to l^{\infty}_{\sym}(\N^{r+2})$ by
  \[
    (\T_ra)_{i_1, \ldots, i_{r+2}} = (1/(r+2)!)\sum_{\pi \in \mathfrak{S}_{r+2}}a_{i_{\pi 1}, i_{\pi 2}}.
  \]
  Take~$a$ such that~$\T_ra \geq 0$. Since~$a$ is square summable, for all~$\epsilon > 0$ and all~$j \in \N$ there is an~$n^{\epsilon}_j$ such that if~$i \geq n^{\epsilon}_j$, then~$a_{i, j} \leq \epsilon$. Take~$i, j \in \N$,~$\epsilon > 0$ and~$\epsilon' = 2\epsilon/\bigl(r(r+3)\bigr)$. Choose~$k_1$, ...,~$k_r \in \N$ such that
  \[
    \begin{split}
      k_1 &\geq \max \{n^{\epsilon'}_i, n^{\epsilon'}_j\},\\
      k_2 &\geq \max \{n^{\epsilon'}_i, n^{\epsilon'}_j, n^{\epsilon'}_{k_1}\},\\
      &\vdotswithin{\geq}\\
      k_r &\geq \max \{n^{\epsilon'}_i, n^{\epsilon'}_j, n^{\epsilon'}_{k_1}, \ldots, n^{\epsilon'}_{k_{r-1}}\}.
  \end{split}\]
  Then,
  \[
    \begin{split}
      0 &\leq \bigl((r+2)!/(2r!)\bigr)(\T_ra)_{i,j,k_1 ,\ldots, k_r}\\
      &= a_{i,j} + \sum_{l \in [r]}\biggl( a_{i,k_l} + a_{j,k_l} + \sum_{l < l' \leq r}a_{k_l,k_{l'}}\biggr)\\
      &\leq a_{i, j} + \bigl(r(r+3)/2\bigr)\epsilon' = a_{i,j} + \epsilon,
  \end{split}\]
  so~$a$ itself is nonnegative. Thus, for all~$r \geq 0$,~$\T_r^{-1}l^2_{\sym}(\N^{r+2})_{\geq 0} = l^2_{\sym}(\N^2)_{\geq 0}$.
\end{exmp}

To overcome these difficulties, we define a slightly different set of cones, which offers an outer approximation of the completely positive cone. Compared with the approximation by Pólya-type cones on a finite measure space, the approximation defined in this section is strictly stronger.

Let~$(V, \A, \mu)$ be a countably generated~$\sigma$-finite measure space, and fix an integer~$k \geq 2$. Denote the set of measurable subsets of nonzero and finite measure by~$\A_{\fin}$\index{ Afin@$\A_{\fin}$}. Write for all~$A \in \A_{\fin}$ the restriction to~$A$ as
\index{ ResA@$\Res_A$}\[
  \Res_A : L^2_{\sym}(V, k) \to L^2_{\sym}(A, k).
\]
For~$T \in L^2_{\sym}(A, k)$, the tensor~$\Res_A^*T$ agrees with~$T$ on~$A$ and is~$0$ everywhere else.

For~$r \in \N$, define the cone
\index{ CrVk@$\bC_r(V, k)$}\[
  \bC_r(V, k) = \ccone \bigcup_{A \in \A_{\fin}} \Res_A^*C_r(A, k);
\]
in words:~$\bC_r(V, k)$ is the cone generated by tensors~$T \in L^2_{\sym}(V, k)$ for which there is an~$A \in \A_{\fin}$ and a~$K \in C_r(A, k)$ such that~$T(v) = K(v)$ for all~$v \in A^k$, and~$T(v) = 0$ everywhere else. Theorem~\ref{thm:polar-identities} shows
\index{ CrVk*@$\bC_r(V, k)^*$}
\begin{equation}%
  \label{eqn:breve_C*}
  \bC_r(V, k)^* = \bigcap_{A \in \A_{\fin}} \Res_A^{-1} C_r(A, k)^*;
\end{equation}
that is,~$T \in \bC_r(V, k)^*$ if and only if~$\Res_A(T) \in C_r(A,k)^*$ for all~$A \in \A_{\fin}$. Since~$\bC_r(V, k)$ is contained in~$C_r(V, k)$ if~$\mu(V)< \infty$, if~$(V, \A, \mu)$ satisfies the conditions of Theorem~\ref{thm:Polyas-theorem-finite-measure}, then~$\bigcap_r \bC_r(V, k)^* = \CP(V, k)$. So, on countably generated finite measure spaces the~$\bC_r(V, k)$ give a tighter outer approximation of~$\CP(V, k)$ than the Pólya-type cones.

\begin{theorem}%
  \label{thm:breve-Polya-for-sigma-finite-measure-spaces}
  If~$(V, \A, \mu)$ is a countably generated $\sigma$-finite measure space, then, for every integer~$k \geq 2$,
  \[
    \bC_0(V, k)^* \supseteq \bC_1(V, k)^* \supseteq \cdots \supseteq \CP(V, k)\qquad \text{and}\qquad \CP(V, k) = \bigcap_{r \in \N} \bC_r(V, k)^*.
  \]
\end{theorem}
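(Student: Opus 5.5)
The plan is to reduce everything to the finite-measure case, Theorem~\ref{thm:Polyas-theorem-finite-measure}, via the description~\eqref{eqn:breve_C*} of the dual cones. For every~$A \in \A_{\fin}$ the subspace~$(A, \A \cap \mathscr{P}(A), \mu)$ is countably generated and finite; by Lemma~\ref{lem:sufficient-conition-finite-rank-approximation} it admits a finite-rank approximation. Theorem~\ref{thm:Polyas-theorem-finite-measure} therefore applies to~$A$ and gives~$C_r(A,k)^* \supseteq C_{r+1}(A,k)^* \supseteq \CP(A,k)$ and~$\CP(A,k) = \bigcap_r C_r(A,k)^*$.

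First, the chain of inclusions. By~\eqref{eqn:breve_C*}, $T \in \bC_{r+1}(V,k)^*$ means~$\Res_A T \in C_{r+1}(A,k)^* \subseteq C_r(A,k)^*$ for every~$A$, hence~$T \in \bC_r(V,k)^*$. For~$\CP(V,k) \subseteq \bC_r(V,k)^*$, note that~$\Res_A(f^{\otimes k}) = (f|_A)^{\otimes k}$ with~$f|_A \geq 0$, so~$\Res_A$ maps generators of~$\CP(V,k)$ into~$\CP(A,k)$. Since~$\Res_A$ is continuous and linear, $\Res_A \CP(V,k) \subseteq \CP(A,k) \subseteq C_r(A,k)^*$, and~\eqref{eqn:breve_C*} finishes this step.

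Next, the reverse inclusion~$\bigcap_r \bC_r(V,k)^* \subseteq \CP(V,k)$. Let~$T$ be in the intersection. Then for each~$A \in \A_{\fin}$, $\Res_A T \in \bigcap_r C_r(A,k)^* = \CP(A,k)$. It remains to show that a tensor whose restriction to every finite-measure subset is completely positive is itself completely positive. By $\sigma$-finiteness, choose an increasing sequence~$A_1 \subseteq A_2 \subseteq \cdots$ in~$\A_{\fin}$ with union~$V$ (discarding null pieces). Then~$\Res_{A_j}^*\Res_{A_j} T = T\1_{A_j^k} \to T$ in~$L^2(V^k)$ by dominated convergence. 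Moreover~$\Res_{A_j}^*$ maps~$\CP(A_j,k)$ into~$\CP(V,k)$: it sends~$g^{\otimes k}$ to~$(\Res_{A_j}^* g)^{\otimes k}$ with a nonnegative extension by zero, and it is continuous, so it maps the closed conic hull into the closed cone~$\CP(V,k)$. Each~$T\1_{A_j^k}$ therefore lies in~$\CP(V,k)$, and since this cone is closed, $T \in \CP(V,k)$.

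The only delicate point is the one I expect to be the main obstacle: checking that the subspace~$A$ is countably generated and that the finite-measure theorem is applied to it with the correct ambient spaces~$L^2_{\sym}(A,k)$. The trace $\sigma$-algebra of a countably generated one is generated by the traces of the generators, and Lemma~\ref{lem:sufficient-conition-finite-rank-approximation} already produces the needed finite-rank approximation. Beyond that, the argument is a straightforward duality bookkeeping via Theorem~\ref{thm:polar-identities}.
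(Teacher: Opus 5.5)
Your proposal is correct and follows the paper's proof. You apply Theorem~\ref{thm:Polyas-theorem-finite-measure} on each $A\in\A_{\fin}$ through \eqref{eqn:breve_C*}, and then show that a tensor whose restrictions all lie in $\CP(A,k)$ is in $\CP(V,k)$, using $\Res_A^*\CP(A,k)\subseteq\CP(V,k)$. The only cosmetic difference is in the last step: you use a norm-convergent exhausting sequence $T\1_{A_j^k}\to T$ together with closedness of $\CP(V,k)$, whereas the paper pairs against an arbitrary copositive tensor and takes a limit over the directed set $\A_{\fin}$; your primal version is, if anything, slightly more direct.
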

\begin{proof}
  The inclusions~$\bC_r(V, k)^* \supseteq \bC_{r+1}(V, k)^*$ for all~$r \in \N$ follow from the corresponding inclusions~$C_r(A,k)^* \supseteq C_{r+1}(A, k)^*$ for all~$A \in \A_{\fin}$.

  By Lemma~\ref{lem:sufficient-conition-finite-rank-approximation} and Theorem~\ref{thm:Polyas-theorem-finite-measure},
  \[
    \bigcap_{r \in \N} \bC_r(V, k)^* = \bigcap_{A \in \A_{\fin}} \Res_A^{-1} \bigcap_{r \in \N} C_r(A, k)^* = \bigcap_{A \in \A_{\fin}} \Res_A^{-1} \CP(A, k),
  \]
  so, if~$\CP(V, k) = \bigcap_{A} \Res_A^{-1} \CP(A, k)$, the theorem follows.

  If~$\phi \in L^2(V^k)_{\geq 0}$, then for all~$A \in \A_{\fin}$,~$\Res_A(\phi^{\otimes k}) = (\Res_A\phi)^{\otimes k}$. Since for all~$A$,~$\Res_A \phi \in L^2(A)_{\geq 0}$, this shows that~$\CP(V, k) \subseteq \bigcap_{A} \Res_A^{-1}\CP(A, k)$.

  On the other hand,~$\Res_A^*(\phi^{\otimes k}) = (\Res_A^*\phi)^{\otimes k}$ for all~$\phi \in L^2(A)_{\geq 0}$, so that~$\Res_A^* \CP(A, k) \subseteq \CP(V, k)$. Make~$\A_{\fin}$ into a directed set by equipping it with the opposite of the inclusion relation. Then, if~$T \in \COP(V, k)$ and if~$K \in \bigcap_{A} \Res_A^{-1} \CP(A, k)$, then,
  \[
    \langle K, T\rangle = \lim_{A \in \A_{\fin}} \langle \Res_A K, \Res_A T\rangle = \lim_{A \in \A_{\fin}} \langle \Res_A^*\Res_A K,  T\rangle \geq 0.
  \]
  Therefore,~$\bigcap_{A} \Res_A^{-1} \CP(A, k) \subseteq \CP(V, k)$, and thus these cones are equal, from which the theorem follows.
\end{proof}

\chapter[Complete positivity and symmetry]{Complete positivity and symmetry}%
\label{ch:complete-positivity-under-symmetry}
Problems~\ref{it:problem-1}--\ref{it:problem-4} in the introduction have a lot of symmetry: for example, take~$n \geq 1$ and integer and let~$S^{n-1} = \{\, x \in \R^n : \|x\| = 1\, \}$\index{ Sn1@$S^{n-1}$} be the unit sphere in~$\R^n$. If we take a set~$S \subseteq S^{n-1}$ that contains no orthogonal pairs, then neither does~$TS$ for any~$T \in \ortho(n)$, the orthogonal group on~$\R^n$. Moreover, this group action leaves the measure unchanged.

We expect therefore that the optimal value of such an optimization problem remains unchanged when we restrict the feasible region to feasible solutions that are invariant under a group action. Restricting the feasible region to invariant solutions decreases the size of such a problem significantly, and is essential for tractability. The goal of this section is to describe an outer approximation of the cone of group-invariant completely positive tensors.

This hinges on the existence of a linear averaging operator, one example of which we have already seen: the operator~$\Av_{\mathfrak{S}_{k+r}}$. Such operators deserve careful consideration. We start this chapter with some harmonic analysis and an in-depth investigation of these averaging operators.

\section{Harmonic analysis}%
\label{sec:prel-harmonic-analysis}
We largely follow Folland's book on harmonic analysis~\cite{Folland2016AAnalysis}. See Section~\ref{sec:app-invariant-measures} of the appendix for more details on invariant measures.

When we denote a group multiplicatively, we call its unit~$1$. When we denote it additively, we call its unit~$0$. All groups we study are locally compact and Hausdorff. To avoid technicalities, we also assume they are $\sigma$-compact: a \defi{$\sigma$-compact group}\index{ s compact@$\sigma$-compact} is a countable union of compact sets. Recall that any locally compact group has a left-invariant measure, the Haar measure, and is unimodular if and only if the Haar measure is also right-invariant. If~$\Gamma$ is a compact group, the Haar measure~$\mu(\Gamma)$ is finite, and we normalize~$\mu(\Gamma)=1$, unless stated otherwise.

\subsection*{Homogeneous spaces}%
\label{subsec:prel-hom-space}
Let~$\Gamma$ be a $\sigma$-compact unimodular locally compact group. An~\defi{action}\index{action} of~$\Gamma$ on a topological space~$V$ is a continuous function~$\Gamma \times V \to V$,~$(\gamma,v) \mapsto \gamma v$, such that~$v \mapsto \gamma v$ is a homeomorphism of~$V$ for all~$\gamma \in \Gamma$, and~$\gamma(\zeta v) = (\gamma \zeta)v$ for all~$\gamma, \zeta \in \Gamma$ and~$v \in V$. A space with an action of~$\Gamma$ is called a~\defi{$\Gamma$-space}\index{G space@$\Gamma$-space}. For~$V$ a $\Gamma$-space and~$v \in V$, call~$\Orb(v) = \{\, \gamma v : \gamma \in \Gamma\, \}$\index{ Orbv@$\Orb(v)$} the~\defi{orbit}\index{orbit} of~$v$ and~$\Stab(v) = \{\, \gamma \in \Gamma : \gamma v = v\, \}$\index{ Stabv@$\Stab(v)$} the~\defi{stabilizer}\index{stabilizer} of~$v$. A~$\Gamma$-space~$V$ is called~\defi{homogeneous}\index{homogeneous} if for all~$v, w \in V$ there exists~$\gamma \in \Gamma$ such that~$\gamma v = w$. In other words,~$V$ is a homogeneous space if and only if~$V$ has only one orbit.

A function~$f : V \to W$ with~$V$ a $\Gamma$-space is~\defi{$\Gamma$-invariant}\index{G invariant function@$\Gamma$-invariant function}, or just~\defi{invariant}, if for all~$\gamma \in \Gamma$,~$f(\gamma v) = f(v)$. If~$W$ is also a $\Gamma$-space, the function~$f$ is called~\defi{$\Gamma$-equivariant}\index{G equivariant function@$\Gamma$-equivariant function} if, for all~$\gamma \in \Gamma$ and~$v \in V$,~$f(\gamma v) = \gamma f(v)$.

Let~$V$ be a homogeneous $\Gamma$-space. A~$v_0 \in V$ defines a $\Gamma$-equivariant map~$\phi : \Gamma \to V$ through~$\gamma \mapsto \gamma v_0$, and a quotient map~$p: \Gamma \to \Gamma / \Stab(v_0)$ through~$\gamma \mapsto \gamma \Stab(v_0)$. The stabilizer of~$v_0$ is a closed subgroup of~$\Gamma$. Then,~$\phi$ induces a $\Gamma$-invariant homeomorphism~$\Phi : \Gamma / \Stab(v_0) \to V$ such that~$\Phi \smallcirc p = \phi$; this depends on~$\Gamma$ being $\sigma$-compact. Thus, for us, a homogeneous space is always the quotient of~$\Gamma$ by a closed subgroup~$H$.

Let~$V$ be identified with~$\Gamma / H$, with~$H$ closed and unimodular. Given a Haar measure~$\mu$ on~$\Gamma$ and a Haar measure~$\nu$ on~$H$, Section~\ref{sec:app-invariant-measures} of the appendix describes a $\Gamma$-invariant measure~$\omega$ on~$\Gamma / H$. If~$H$ is compact, we can assume it is equal to the pushforward of~$\mu$ under~$p$, and we call it the~\defi{quotient measure}\index{quotient measure} of~$\mu$ under~$p$. If~$H$ is not compact, we only have the equation
\begin{equation}%
  \label{eqn:quotient-measure-is-invariant-Radon-measure}
  \int_{\Gamma} f(\gamma)\, d\mu(\gamma) = \int_{\Gamma / H} \int_H f(v \zeta)\, d\nu(\zeta)d\omega(p(v)),
\end{equation}
for all~$f \in L^1(\Gamma)$.

If~$H$ is compact and~$\Gamma / H$ is equipped with the quotient measure, then, for all~$1 \leq p \leq \infty$ and integer~$k \geq 1$, let~$\gamma f(v_1, \ldots, v_k) = f(\gamma^{-1}v_1, \ldots, \gamma^{-1}v_k)$. This defines an action~$(\gamma, f) \mapsto \gamma f$ on~$L^p((\Gamma / H)^k)$, called the~\defi{diagonal action}\index{diagonal action} of~$\Gamma$ on~$L^2((\Gamma/H)^k)$. Indeed, for all~$1 \leq p \leq \infty$, all~$f \in L^p((\Gamma/H)^k)$, and all~$\gamma \in \Gamma$, we have~$\|f\|_p = \|\gamma f\|_p$; if~$p = \infty$ this follows from homogeneity of~$\Gamma / H$, and if~$p < \infty$ this follows from invariance of the measure. So, for all~$\gamma \in \Gamma$ the map~$f \mapsto \gamma f$ is isometric, and in particular continuous with continuous inverse~$f \mapsto \gamma^{-1}f$. Associativity,~$(\zeta \gamma) f = \zeta (\gamma f)$, follows from a direct calculation.

Finally, the quotient map~$p: \Gamma \to \Gamma / H$ is open. Indeed, let~$U \subseteq \Gamma$ be open, then~$p^{-1}(p(U)) = UH = \bigcup_{\zeta \in H} U \zeta$. Since~$U$ is open, each~$U \zeta$ is open, and their union is as well, so~$p(U)$ is open in~$\Gamma/H$. Since open and continuous surjections send compact sets to compact sets, bases to bases and local bases to local bases, this implies that if~$\Gamma$ is locally compact,~$\Gamma / H$ is also locally compact, if~$\Gamma$ is $\sigma$-compact,~$\Gamma/H$ is $\sigma$-compact, and if~$\Gamma$ is second countable,~$\Gamma / H$ is second countable.

\subsection*{Cross-correlations and functions of positive type}
Let~$\Gamma$ be a $\sigma$-compact unimodular locally compact group with Haar measure~$\mu$, and let~$V$ be a homogeneous $\Gamma$-space equipped with the quotient map~$p : \Gamma \to V$ such that~$p(1) = v_0$. Let~$f$ and~$g$ be two measurable functions on~$V$. Their~\defi{convolution}~$f*g$\index{convolution}\index{ 000@$*$} is given by
\[
  f * g (v) = \int_{\Gamma} f(\gamma v_0) g(\gamma^{-1} v)\, d\mu(\gamma).
\]
Although it is standard to work with the convolution of~$f$ and~$g$, in this exposition the~\defi{cross-correlation}\index{correlation!cross-correlation}\index{ 000@$\cor$} is more natural. It is given by
\[
  f \cor g (v) = \int_{\Gamma} f(\gamma^{-1} v_0) g(\gamma^{-1} v)\, d\mu(\gamma).
\]
By invariance of the measure---i.e. by unimodularity of the group---if clarity demands so, we may drop the~${}^{-1}$ in the formula. Call the cross-correlation~$f \cor f$ of a function~$f$ with itself the~\defi{auto-correlation}\index{correlation!auto-correlation} of~$f$.

For the convolution, we have the following inequalities.
\begin{itemize}
  \item If~$1 \leq p \leq \infty$,~$f \in L^1(V)$, and~$g \in L^p(V)$, then~$\|f * g\|_p \leq \|f\|_1 \|g\|_p$, hence~$f * g, g * f \in L^p(V)$.
  \item If~$1 \leq p, q \leq \infty$ such that~$1/p + 1/q = 1$,~$f \in L^p(V)$, and~$g \in L^q(V)$, then~$f * g \in C_0(V)$ and~$\|f * g\|_{\infty} \leq \|f\|_p \|g\|_q$. We will call this~\defi{Young's inequality}\index{inequality!Young's inequality}.
\end{itemize}
The convolution and the cross-correlation are related; let~$f$ and~$g$ be functions~$V \to \R$ such that~$f * g$ and~$f \cor g$ are well-defined. Denoting~$\overline{f}(\gamma) = f(\gamma^{-1})$, it follows immediately that~$f \cor g = \overline{f} * g$. This means that the inequalities above also hold for the correlation.

Given a $\sigma$-compact unimodular locally compact group~$\Gamma$ with Haar measure~$\mu$, an~\defi{approximate identity}\index{approximate identity} is a net~$(\psi_i)_{i \in I}$ of compactly supported and bounded functions such that
\begin{itemize}
  \item there exists a neighborhood basis~$\mathcal{U}$ of~$1$ such that for every~$U \in \mathcal{U}$ there is an~$i_0$ such that for all~$i \geq i_0$,~$\supp \psi_i \subseteq U$,
  \item $\psi_i \geq 0$ for all~$i$, and
  \item $\int_{\Gamma} \psi_i(\gamma) d\mu(\gamma) = 1$.
\end{itemize}
We moreover always assume that~$\psi_i(\gamma^{-1}) = \psi_i(\gamma)$ for all~$i\in I$ and~$\gamma\in \Gamma$. Then, for all~$f \in L^p(\Gamma)$ with~$1 \leq p < \infty$:
\[
  \lim_{i \in I} \|\psi_i * f - f\|_p = \lim_{i \in I} \|f * \psi_i - f\|_p  = 0,
\]
and likewise with the convolution replaced by the correlation. The same results hold if~$p =\infty$ and~$f$ is uniformly continuous.

A~\defi{function of positive type}\index{positive type} on~$\Gamma$ is a function~$\phi \in L^{\infty}(\Gamma)$ such that for all~$f \in L^1(\Gamma)$:
\[
  \int_{\Gamma} (f \cor f)(\gamma) \phi(\gamma)\, d\mu(\gamma) \geq 0.
\]
The set of positive-type functions is a closed convex cone in~$L^{\infty}(\R^n)$, which we denote by~$\PT(\Gamma)$. A function of positive type is continuous almost everywhere~\cite[Corollary 3.21]{Folland2016AAnalysis}, so we always assume them to be continuous. Moreover, if~$f \in L^2(\Gamma)$, then~$f \cor f \in \PT(\Gamma)$~\cite[Corollary 3.16]{Folland2016AAnalysis}.

\section{Averaging}%
\label{sec:averaging}
Fix a $\sigma$-compact unimodular locally compact group~$\Gamma$ with Haar measure~$\mu$. When~$\Gamma$ is compact, normalize~$\mu(\Gamma) = 1$. Let~$H$ be a compact subgroup of~$\Gamma$,~$p: \Gamma \to \Gamma/H$ be the quotient map,~$V = \Gamma / H$, and equip~$V$ with the quotient measure~$\nu = p_*\mu$.

We will study two different operators that send a $k$-tensor on~$V$ to an invariant counterpart. The first exists for compact~$\Gamma$ and integer~$k \geq 2$, and is an orthogonal projection of the Hilbert space~$L^2(V^k)$. The second exists only for trace-class kernel, but is defined for any $\sigma$-compact unimodular locally compact~$\Gamma$. We call these operators~\defi{averaging operators}\index{averaging operator}, and denote them by~$\Av_{\Gamma}$\index{ AvG@$\Av_{\Gamma}$}. In the literature, the term~\defi{Reynolds operator}\index{Reynolds operator} is also common.

To define such operators, for~$T \in L^2(V^k)$, the integral formula
\begin{equation}%
  \label{eqn:integral-formula-averaging}
  \Av_{\Gamma}T(v) = \int_{\Gamma} T(g^{-1} v)\, d\mu(g)
\end{equation}
is tempting, but it is a priori unclear whether this is well-defined. Even if~$\Gamma$ is compact, complications occur: the orbits of~$V^k$ under the diagonal action can have measure~$0$.

\begin{exmp}
  Let~$H$ be a compact subgroup of~$\Gamma$ and~$\mu(H)=0$. Then, the orbit~$\Orb(H, H) = \{\, (gH, gH) : g \in \Gamma\, \} \subseteq (\Gamma/H)^2$ also has measure~$0$. Indeed,
  \[
    \nu^2(\Orb(H,H)) = \mu^2\bigl(\{\, (g, g') \in \Gamma^2 : (p(g),p(g')) \in \Orb(H,H)\, \}\bigr)
  \]
  For~$g \in \Gamma$, let~$\xi^{g}(g') = 1$ if~$(p(g), p(g')) \in \Orb(H,H)$ and~$\xi^{g}(g')= 0$ otherwise. Then,~$\xi^g(g') = 1$ if and only if there are~$h_1, h_2 \in H$ such that~$g' = gh_1^{-1}h_2$, so its support is~$gH$, which has measure~$0$. By Tonelli's theorem,
  \[
    \mu^2\bigl( \{\, (g, g') \in \Gamma^2  : (p(g),p(g')) \in \Orb(H,H)\, \} \bigr) = \int_{\Gamma} \mu(\xi^{g})\, d\mu(g) = 0.
  \]
\end{exmp}

Let~$k \geq 2$ be an integer, and denote the space of $k$-tensors on~$V$ that are invariant under the diagonal action by~$L^2_{\sym}(V, k)^{\Gamma}$. The map~$T \mapsto gT$ is an isometry, so if~$T'$ is invariant and~$\|T' - T\|_2 \leq \epsilon$, then
\[
  \|T' - gT\|_2 = \|gT' - gT\|_2 = \|T' - T\|_2 \leq \epsilon.
\]
Thus, the subspace of invariant tensors is closed, so it comes with an orthogonal projection.

If~$\Gamma$ is compact, the continuous tensors lie dense in~$L^2(V^k)$, and on~$C(V^k)$ the integral formula~\eqref{eqn:integral-formula-averaging} defines a bounded linear operator. The continuous extension to~$L^2(V^k)$ of this map yields the first example of an averaging operator; it is the orthogonal projection onto~$L^2(V^k)^{\Gamma}$.

\begin{theorem}%
  \label{thm:averaging-on-compact-group}%
  \index{ AvG@$\Av_{\Gamma}$!for compact group}
  Let~$\Gamma$ be a compact group,~$H$ be a closed subgroup, and~$\Gamma/H$ be equipped with the quotient measure. The continuous extension of the operator on~$C(V^k)$ defined by~\eqref{eqn:integral-formula-averaging} to an operator on~$L^2((\Gamma/H)^k)$ is the orthogonal projection~$\Av_{\Gamma} : L^2((\Gamma/H)^k) \to L^2((\Gamma/H)^k)^{\Gamma}$.
\end{theorem}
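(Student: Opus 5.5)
I would first show that the integral formula~\eqref{eqn:integral-formula-averaging} defines an operator of norm at most~$1$ on~$C(V^k)$ with respect to the $L^2$-norm, so that the continuous extension to~$L^2(V^k)$ exists. For~$T \in C(V^k)$ the map~$(g,v) \mapsto T(g^{-1}v)$ is continuous on the compact space~$\Gamma \times V^k$, hence uniformly continuous. Therefore~$\Av_\Gamma T$ is again continuous, and it is invariant by left invariance of~$\mu$. To bound the norm, I would apply Minkowski's integral inequality (or Jensen, since~$\mu(\Gamma)=1$) together with the isometry property~$\|gT\|_2 = \|T\|_2$ of the diagonal action:
\[
  \|\Av_\Gamma T\|_2 \leq \int_\Gamma \|g T\|_2\, d\mu(g) = \|T\|_2.
\]
Since~$C(V^k)$ is dense in~$L^2(V^k)$, this gives a unique bounded extension~$P$ with~$\|P\| \leq 1$.

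Next I would show that~$P$ is the orthogonal projection onto~$L^2(V^k)^\Gamma$. Let~$Q$ denote that projection, which exists because the invariant subspace was shown to be closed. I would establish three facts.

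First,~$P$ maps into~$L^2(V^k)^\Gamma$. This holds on the dense set~$C(V^k)$, and the invariant subspace is closed, so it extends by continuity.

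Second,~$P$ is self-adjoint. For continuous~$S$ and~$T$, Fubini and the invariance of the measure~$\nu^k$ under the diagonal action give
\[
  \langle \Av_\Gamma T, S\rangle = \int_\Gamma \langle g T, S\rangle\, d\mu(g) = \int_\Gamma \langle T, g^{-1} S\rangle\, d\mu(g) = \langle T, \Av_\Gamma S\rangle.
\]
The last step uses that~$\mu$ is inversion invariant, which holds because compact groups are unimodular. Density then gives~$P^* = P$.

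Third,~$P$ is the identity on invariant tensors. This is the main obstacle, because an invariant~$T \in L^2$ need not be approximable by continuous invariant functions in any obvious way, and the integral formula cannot be applied to~$T$ pointwise. I would argue by duality instead. If~$T$ is invariant and~$S$ is continuous, then, since~$\langle T, g^{-1}S\rangle = \langle gT, S\rangle = \langle T, S\rangle$ for all~$g$, the Fubini computation above (with~$T$ now only in~$L^2$, which is legitimate because~$\langle T, g^{-1}S\rangle$ is continuous in~$g$ by strong continuity of the action on~$L^2$) yields
\[
  \langle T, PS\rangle = \int_\Gamma \langle T, g^{-1}S\rangle\, d\mu(g) = \langle T, S\rangle.
\]
Hence~$\langle PT, S\rangle = \langle T, PS\rangle = \langle T, S\rangle$ for all continuous~$S$, and density gives~$PT = T$. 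The strong continuity~$g \mapsto gT$ in~$L^2$ itself follows from approximating~$T$ by continuous functions, using uniform continuity on the compact space together with the isometry property.

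With these three facts,~$P$ is a self-adjoint idempotent whose range is exactly~$L^2(V^k)^\Gamma$: indeed~$P^2 = P$ because~$P$ fixes its own range, which consists of invariant tensors. Therefore~$P = Q$, the orthogonal projection onto~$L^2(V^k)^\Gamma$.
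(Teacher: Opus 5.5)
Your proof is correct, and it is more thorough than the paper's, which is a short sketch. The paper shows that $v\mapsto\int_\Gamma T(g^{-1}v)\,d\mu(g)$ lies in $L^2(V^k)$ by bounding its pairing with $K\in L^2(V^k)$ by $\|T\|_\infty\|K\|_2$ and invoking Riesz representation. It then notes that the image is invariant and that continuous invariant $T$ are fixed, and concludes directly that the extension is the orthogonal projection.

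Your argument fills in the three places where that sketch is thin:
\begin{enumerate}
  \item[(i)] Your Minkowski estimate $\|\Av_\Gamma T\|_2\le\|T\|_2$, using that the diagonal action is isometric, is the bound in the $L^2$ norm that a continuous extension from $C(V^k)$ to $L^2(V^k)$ needs. A bound in terms of $\|T\|_\infty$ alone does not supply it.
  \item[(ii)] Your Fubini computation, together with inversion invariance of $\mu$, gives self-adjointness, which the paper never addresses.
  \item[(iii)] Your duality argument shows that the extension fixes every invariant $L^2$ tensor, not only the continuous ones. The paper passes over this silently, and your argument also avoids having to know whether continuous invariant tensors are dense in $L^2(V^k)^\Gamma$.
\end{enumerate}
The paper's route buys brevity; yours buys a complete identification of the extension with the projection.

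One small correction to your third step: what licenses Fubini there is not strong continuity of the action. It is the joint measurability of $(g,v)\mapsto T(v)S(g^{-1}v)$ together with the bound $\|S\|_\infty|T(v)|$, which is integrable because $T\in L^2\subseteq L^1$ on a finite measure space. Moreover, the integrand $\langle T,g^{-1}S\rangle$ is constant in $g$ for invariant $T$, so the strong-continuity remark is not needed at all.
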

\begin{proof}
  Let~$V = \Gamma/H$ and denote the quotient measure on~$V$ by~$\nu$. It is enough to show that the two operators coincide on~$C(V^k)$. If~$T \in C(V^k)$ and~$K \in L^2(V^k)$, then, by Fubini-Tonelli,
  \[
    \begin{split}
      \int_{V^k}\int_{\Gamma}T(g^{-1} v)\, d\mu(g) K(v)\, d\nu^k(v) &= \int_{\Gamma} \int_{V^k} T(g^{-1} v) K(v)\, d\nu^k(v) d\mu(g)\\
      & \leq \|T\|_{\infty}\|K\|_2 < \infty.
    \end{split}
  \]
  Therefore, by the Riesz representation theorem,~$v \mapsto \int_{\Gamma} T(g^{-1} v)\, d\mu(g)$ is in~$L^2(V^k)$. The map~$C(V^k) \to L^2(V^k)$ thus defined is moreover bounded, because both measures are finite.

  The image of this map lies in~$L^2(V^k)^{\Gamma}$, and when~$T$ is already invariant, the integral only adds a factor~$\mu(\Gamma)=1$. So, the extension to~$L^2(V^k)$ is indeed the identity on~$L^2(V^k)^{\Gamma}$ and equal to the orthogonal projection onto~$L^2(V^k)^{\Gamma}$.
\end{proof}

If~$\Gamma$ is not compact, the integral formula applied to a continuous function on~$\Gamma$ is not necessarily square-integrable, even when the function has compact support. So, even though the orthogonal projection~$L^2(V^k) \to L^2(V^K)^{\Gamma}$ exists, it is not defined by~\eqref{eqn:integral-formula-averaging}.

In general, we will be mainly interested in trace-class kernels. On a rank-one operator~$\phi \otimes \psi \in \B^1(L^2(\Gamma))$, Equation~\eqref{eqn:integral-formula-averaging} produces the correlation~$\phi \cor \psi$ by
\[
  \phi \cor \psi(h) = \int_{\Gamma}(\phi \otimes \psi)(g^{-1}, g^{-1}h)\, d\mu(g).
\]
Young's inequality tells us that it is bounded:~$\|\phi \cor \psi\|_{\infty} \leq \|\phi\|_2\|\psi\|_2 < \infty$. The extension of this map to~$\B^1(L^2(V))$ is the second averaging operator we define. This operator is different from the projection~$L^2(\Gamma^2) \to L^2(\Gamma^2)^{\Gamma}$: its range is contained in~$C_0(\Gamma)$, the space of continuous functions that vanish outside compact sets, and the map~$(g, h) \mapsto (\phi \cor \psi)(g^{-1} h)$ is in general not in~$L^2(\Gamma^2)$.

\begin{theorem}%
  \label{thm:avering-on-trace-class}%
  \index{ AvG@$\Av_{\Gamma}$!for trace class}
  Let~$\Gamma$ be a $\sigma$-compact unimodular locally compact group,~$H$ be a compact subgroup, and~$\Gamma / H$ be equipped with the quotient measure. The continuous linear extension of the map~$\phi \otimes \psi \mapsto \phi \cor \psi$ induces a bounded linear operator~$\Av_{\Gamma} : \B^1(L^2(\Gamma/H)) \to C_0(\Gamma/H)$.
\end{theorem}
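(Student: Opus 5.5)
The plan is to define $\Av_{\Gamma}$ on finite-rank kernels and extend by continuity using the trace norm. Every $K \in \B^1(L^2(V))$, with $V = \Gamma/H$, has a singular value (Schmidt) decomposition
\[
  K = \sum_{j} s_j\, \phi_j \otimes \psi_j,
\]
with $(\phi_j)$ and $(\psi_j)$ orthonormal, $s_j \geq 0$, and $\sum_j s_j = \|K\|_{\B^1}$. The finite-rank operators are dense in $\B^1$. The natural candidate is therefore
\[
  \Av_{\Gamma}K = \sum_j s_j\, \phi_j \cor \psi_j.
\]
The work is to check three things: each term lies in $C_0(V)$, the series converges in the supremum norm, and the result does not depend on the representation.

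First I would verify that for $\phi, \psi \in L^2(V)$ the correlation
\[
  \phi \cor \psi(v) = \int_\Gamma \phi(\gamma^{-1}v_0)\psi(\gamma^{-1}v)\, d\mu(\gamma)
\]
is a function in $C_0(V)$ with $\|\phi\cor\psi\|_\infty \leq \|\phi\|_2\|\psi\|_2$. Lift $\phi$ and $\psi$ to $\Gamma$ by composing with $p$. Because $H$ is compact and $\nu = p_*\mu$, we have $\|\phi\smallcirc p\|_{L^2(\Gamma)} = \|\phi\|_{L^2(V)}$. The correlation on $V$ is then the correlation on $\Gamma$ evaluated at a representative, and it is right $H$-invariant. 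Young's inequality, as stated in the preliminaries with $p = q = 2$, gives that the correlation on $\Gamma$ lies in $C_0(\Gamma)$ with the stated bound. Since $p$ is open and continuous and $H$ is compact, an $H$-invariant function in $C_0(\Gamma)$ descends to $C_0(V)$: preimages of compact sets are compact, because $p$ is proper when $H$ is compact. I expect this descent to be routine.

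Next, the map $(\phi,\psi) \mapsto \phi\cor\psi$ is bilinear and bounded by the previous step. Hence on finite-rank operators $F = \sum_j s_j\phi_j\otimes\psi_j$ we obtain
\[
  \Bigl\|\sum_j s_j\, \phi_j\cor\psi_j\Bigr\|_\infty \leq \sum_j s_j = \|F\|_{\B^1}.
\]
Well-definedness, meaning independence of the chosen representation, is the point I expect to be the main obstacle. I would handle it by testing against $L^1$: for $g\in L^1(V)$ the pairing $\langle \phi\cor\psi, g\rangle$ equals $\langle \phi\otimes\psi, \widetilde g\rangle$, where $\widetilde g(v,w) = \int_\Gamma \1[\gamma^{-1}v_0 = v]\cdots$. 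It is cleaner to say that the pairing is a bounded bilinear form in $(\phi,\psi)$, so it is given by a bounded operator $M_g$ with $\langle \phi \cor \psi, g\rangle = \langle M_g\phi,\psi\rangle$, namely $M_g$ is convolution by $g$ lifted to $\Gamma$. The pairing $\langle \Av_\Gamma F, g\rangle = \Tr(M_g F)$ depends only on $F$. Since $L^1(V)$ separates points of $C_0(V)$, the map is well defined and linear on finite-rank operators.

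Finally, $C_0(V)$ is a Banach space under the supremum norm, and the finite-rank operators are dense in $\B^1(L^2(V))$. The bounded linear map therefore extends uniquely to a bounded operator $\B^1(L^2(V)) \to C_0(V)$ of norm at most $1$. It agrees with $\phi\otimes\psi\mapsto\phi\cor\psi$ on rank-one operators, which proves the theorem.
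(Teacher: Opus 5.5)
Your proposal is correct and follows essentially the same route as the paper: lift to $\Gamma$, define $\Av_{\Gamma}$ on finite-rank kernels, bound it by $\|\cdot\|_{\B^1}$ using the singular value decomposition and $\|\phi\cor\psi\|_\infty\leq\|\phi\|_2\|\psi\|_2$, and extend by density into the Banach space $C_0(\Gamma/H)$. The differences are cosmetic. You obtain $\phi\cor\psi\in C_0$ from Young's inequality with $p=q=2$, where the paper cites Lemma~\ref{lem:averaged-rank-one-is-continuous}. Your pairing $\langle\Av_{\Gamma}F,g\rangle=\Tr(M_gF)$ makes explicit the well-definedness that the paper only asserts; it also follows immediately from bilinearity, since the algebraic tensor product of $L^2$ with itself embeds injectively into the finite-rank operators.
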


The proof depends on the following Lemma, which will be of use to us more often.
\begin{lemma}%
  \label{lem:averaged-rank-one-is-continuous}
  Let~$\Gamma$ be a $\sigma$-compact unimodular locally compact group,~$H$ be a compact subgroup, and~$\Gamma / H$ be equipped with the quotient measure. For all~$\phi_1$,~$\ldots$,~$\phi_k \in L^k(\Gamma/H)$, the function
  \[
    (v_1, \ldots, v_k) \mapsto \int_{\Gamma} \phi_1(g^{-1} v_1) \cdots \phi_k(g^{-1} v_k)\, d\nu(g)
  \]
  is continuous and vanishes at infinity.
\end{lemma}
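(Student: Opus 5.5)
We would follow the standard ``dense subclass plus uniform limit'' strategy. Write $V = \Gamma/H$, and for $\phi_1,\ldots,\phi_k \in L^k(V)$ put
\[
  F_{\phi}(v_1,\ldots,v_k) = \int_{\Gamma} \phi_1(g^{-1}v_1)\cdots\phi_k(g^{-1}v_k)\, d\mu(g).
\]
The first step is a uniform bound. Lift each $\phi_i$ to $\tilde\phi_i = \phi_i\smallcirc p$ on $\Gamma$. Since $H$ is compact and $\nu = p_*\mu$, we have $\|\tilde\phi_i\|_k = \|\phi_i\|_k$. For fixed $v_i = p(\gamma_i)$, the substitution $g \mapsto$ translates, together with unimodularity, gives $\int_\Gamma |\phi_i(g^{-1}v_i)|^k\, d\mu(g) = \|\phi_i\|_k^k$. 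The generalized Hölder inequality with $k$ exponents all equal to $k$ then yields
\[
  |F_{\phi}(v)| \leq \prod_i \|\phi_i\|_k
\]
for every $v$. The map $(\phi_1,\ldots,\phi_k)\mapsto F_\phi$ is multilinear, so
\[
  \|F_\phi - F_\psi\|_\infty \leq \sum_j \|\psi_1\|_k\cdots\|\phi_j-\psi_j\|_k\cdots\|\phi_k\|_k .
\]
Hence it suffices to prove the claim for $\phi_i$ in a dense subset of $L^k(V)$. We use $k<\infty$ here, and the fact that a uniform limit of functions in $C_0(V^k)$ lies in $C_0(V^k)$.

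The second step is to choose $\phi_i \in C_c(V)$, which is dense in $L^k(V)$ because $\nu$ is a Radon measure on a locally compact space. We then prove continuity and decay.

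For continuity, we reduce to continuity in each variable at a point. By the same translation-invariance computation,
\[
  |F_\phi(v) - F_\phi(w)| \leq \sum_j \Bigl(\prod_{i\neq j}\|\phi_i\|_k\Bigr)\,\|\lambda(\gamma_j)\tilde\phi_j - \lambda(\zeta_j)\tilde\phi_j\|_k
\]
where $v_j = p(\gamma_j)$, $w_j = p(\zeta_j)$, and $\lambda$ is the translation action. Right-translation continuity of $L^k$ functions on $\Gamma$ (with $k<\infty$), together with openness of $p$, gives continuity. Alternatively, we can use uniform continuity of $C_c$ functions directly.

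For vanishing at infinity, let $K_i = \supp\phi_i$. The integrand is nonzero only if $g^{-1}v_i \in K_i$ for all $i$, so $v_i \in gK_i$ for all $i$. Lifting to $\Gamma$, take compact $\tilde K_i \subseteq \Gamma$ with $p(\tilde K_i)\supseteq K_i$. Then $F_\phi(v)\neq 0$ forces $\gamma_i^{-1}\gamma_j \in \tilde K_i H \tilde K_j^{-1}$ for all $i,j$. This describes a set that is compact modulo the diagonal action, but it is not compact in $V^k$ when $\Gamma$ is noncompact. So $F_\phi$ does not literally have compact support, and ``vanishes at infinity'' must be handled with care. This is the step we expect to be the main obstacle.

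Our plan for that step is as follows. The intended reading, consistent with Theorem~\ref{thm:avering-on-trace-class} and its target $C_0(\Gamma/H)$, is that the function depends only on relative positions. One fixes $v_1 = v_0$ via invariance, i.e.\ views $F_\phi$ as a function of the diagonal-orbit variables $(v_2,\ldots,v_k)$. On that set the support condition $v_j \in \tilde K_1^{-1}\tilde K_j H$ is compact, giving compact support.

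If instead genuine decay on all of $V^k$ is required, we would try to show that $F_\phi$ is small whenever two coordinates are far apart. We would use Hölder on the pair of factors $\phi_i(g^{-1}v_i)\phi_j(g^{-1}v_j)$, which is the case $k=2$ of Young's inequality: $\phi_i\cor\phi_j \in C_0$. We would bound the remaining factors in sup norm via $C_c$. We would then check whether ``far from compact sets in $V^k$ modulo the diagonal'' is the right notion. We would settle this interpretation first, then finish with the density argument of the first step.
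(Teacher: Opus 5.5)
Your proposal is correct and is essentially the paper's argument: a H\"older bound $|F_\phi(v)|\le\prod_i\|\phi_i\|_k$ that is uniform in $v$ by invariance of Haar measure, multilinear telescoping, density of compactly supported continuous functions, and the fact that a uniform limit of compactly supported continuous functions lies in $C_0$. The differences are minor: you stay on $V$ instead of reducing to $H=\{1\}$, and your translation-continuity estimate gives continuity directly for all $\phi_i\in L^k$. Your worry about ``vanishes at infinity'' is justified, so commit to your reduced reading: the function is invariant under the diagonal action and equals $\int_V\phi_1\cdots\phi_k\,d\nu$ at every diagonal point $(v,\ldots,v)$, so for noncompact $V$ it generally does not vanish at infinity on $V^k$. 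The paper's proof asserts compact support for the approximants and thus tacitly works in the variables $(v_2,\ldots,v_k)$ with $v_1=v_0$ fixed, as in Theorem~\ref{thm:avering-on-trace-class}; there the relevant support sets are built from the compact sets $H\tilde{K}_i^{-1}\tilde{K}_jH$ (not the ones you wrote, though compactness is unaffected), and you should drop the ``genuine decay on $V^k$'' branch, since that version is false.
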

\begin{proof}
  From the definition of the quotient measure it follows that~$\phi \in L^k(\Gamma/H)$ if and only if~$\phi \smallcirc p \in L^k(\Gamma)$. The function
  \[
    (g_1, \ldots, g_k) \mapsto \int_{\Gamma} \phi_1(p(g^{-1} g_1)) \cdots \phi_k(p(g^{-1} g_k))\, d\mu(g)
  \]
  is constant on the cosets of~$H$, so if it is in~$C_0(\Gamma)$, it defines an element of~$C_0(\Gamma/H)$. It is therefore sufficient to take~$H = \{1\}$ by replacing~$\phi_i$ by~$\phi_i \smallcirc p$ for all~$i$. For the sake of readability, we will integrate over~$g$ instead of~$g^{-1}$, as this is equivalent.

  If~$\phi_1, \ldots, \phi_k \in C_c(\Gamma)$, each~$\phi_i$ is left uniformly continuous by~\cite[Proposition 2.6]{Folland2016AAnalysis}, therefore~$(v_1, \ldots, v_k) \mapsto \int_{\Gamma} \phi_1(g v_1) \cdots \phi_k(g v_k)\, d\mu(g)$ is continuous.

  If~$\phi_1, \ldots, \phi_k \in L^k(\Gamma)$, each~$\phi_i$ is approximated in the norm topology by elements of~$C_c(\Gamma)$. The strategy is to prove that a choice of~$\epsilon > 0$ and~$\psi_1, \ldots, \psi_k \in C_c(\Gamma)$ for which~$\|\phi_i - \psi_i\|_k \leq \epsilon$, for all~$i$, give a bound on
  \begin{equation}%
    \label{eqn:difference-of-averages}
    \biggl| \int_{\Gamma} \phi_1(gv_1) \cdots \phi_k(g v_k) - \psi_1(g v_1) \cdots \psi_k(g v_k)\, d\mu(g) \biggr|
  \end{equation}
  that goes to~$0$ as~$\epsilon$ goes to~$0$ and is uniform in~$v_1, \ldots, v_k$. The conclusion then follows.

  Apply the triangle inequality to see that~\eqref{eqn:difference-of-averages} is at most
  \[
    \begin{split}
      \int_{\Gamma} |&\phi_1(g v_1) \cdots \phi_k(g v_k) - \phi_1(gv_1) \cdots \phi_{k-1}(g v_{k-1})\psi_k(g v_k)\\
      &+ \phi_1(gv_1) \cdots \phi_{k-1}(g v_{k-1})\psi_k(g v_k) - \psi_1(g v_1) \cdots \psi_k(g v_k)|\, d\mu(g)\\
      \leq &\int_{\Gamma} |\phi_1(g v_1) \cdots \phi_{k-1}(g v_{k-1})| |\phi_{k}(g v_k) - \psi_k(g v_k)|\, d\mu(g)\\
      &+ \int_{\Gamma} | \phi_1(g v_1) \cdots \phi_{k-1}(g v_{k-1}) - \psi_1(g v_1) \cdots \psi_{k-1}(g v_{k-1})| | \psi_k(g v_k)|\, d\mu(g).
    \end{split}
  \]
  Repeatedly applying this method shows that~\eqref{eqn:difference-of-averages} is bounded from above by a sum of~$k$ terms of the form
  \begin{equation}%
    \label{eqn:telescoping-terms}
    \int_{\Gamma}|\phi_i(g v_i) - \psi_i(g v_i)| |\xi_1(g)|\cdots | \xi_{k-1}(g)|\, d\mu(g),
  \end{equation}
  where each~$\xi_j$ is one of the functions~$g \mapsto \phi_l(g v_l)$ or~$g \mapsto \psi_l(g v_l)$.

  Recursive application of Hölder's inequality reveals that each term of the form~\eqref{eqn:telescoping-terms} is at most
  \[
    \|\phi_i - \psi_i\|_k \|\xi_1\|_k \cdots \|\xi_{k-1}\|_k \leq \epsilon M^{k-1},
  \]
  where~$M = \epsilon + \max\{\|f_1\|_k, \ldots, \|f_k\|_k\}$. Since there are~$k$ such terms,~\eqref{eqn:difference-of-averages} is bounded from above by~$\epsilon k M^{k-1}$. This proves
  \[
    (v_1, \ldots, v_k) \mapsto \int_{\Gamma} \phi_1(g^{-1} v_1) \cdots \phi_k(g^{-1} v_k)\, d\mu(g)
  \]
  is a uniform limit of continuous functions with compact support and hence lies in~$C_0(\Gamma)$.
\end{proof}

\begin{proof}[Proof of Theorem~\textup{\ref{thm:avering-on-trace-class}}]
  As in the proof of Lemma~\ref{lem:averaged-rank-one-is-continuous}, it is enough to prove the theorem for~$H = \{1\}$.

  Define~$\Av_{\Gamma}$ on finite-rank operators by~$\Av_{\Gamma}(\sum_{i = 1}^n \phi_i \otimes \psi_i) = \sum_{i = 1}^n \phi_i \cor \psi_i$. This is linear, since the correlation is linear in both its arguments. Young's inequality for convolutions shows it is separately continuous in the~$\phi_i$ and~$\psi_i$ under the supremum norm on the codomain. Then,~$\Av_{\Gamma}$ is independent of an expansion of the~$\phi_i$ and~$\psi_i$ over a basis, and so well-defined, and~$\Av_{\Gamma}$ extends linearly and continuously to~$\B^1(L^2(\Gamma))$.

  If~$K \in \B^1(L^2(\Gamma))$ with singular value decomposition~$K = \sum_{i\in\N} \lambda_i \phi_i \otimes \psi_i$, then
  \[
    \|\Av_{\Gamma}K\|_{\infty} \leq \sum_{i \in \N} |\lambda_i| \|\phi_i \cor \psi_i\|_{\infty} \leq \sum_{i \in \N} |\lambda_i| \|\phi_i\|_2\|\psi_i\|_2 = \sum_{i \in \N} |\lambda_i| < \infty.
  \]
  Applying the same argument to the tails~$\sum_{i \geq n} \lambda_i \phi_i \otimes \psi_i$ shows that the series~$\sum_{i \in \N} \lambda_i \phi_i \cor \psi_i$ converges absolutely and uniformly. Lemma~\ref{lem:averaged-rank-one-is-continuous} implies that~$\Av_{\Gamma}(K)$ is continuous and vanishes at infinity, which proves the theorem.
\end{proof}

When~$\Gamma$ is compact, the two averaging operators we defined coincide on the trace class. Every continuous positive semidefinite kernel on a compact space is of trace class, which is a corollary of Mercer's theorem. So, in this case, the image of the trace class under~$\Av_{\Gamma}$ contains all invariant continuous positive semidefinite kernels.

After doing all this work, it is perfectly fine to interpret~\eqref{eqn:integral-formula-averaging} in the almost-everywhere sense. Indeed, under the conditions of Theorem~\ref{thm:averaging-on-compact-group} or Theorem~\ref{thm:avering-on-trace-class}, it is valid to use this formula in expressions of the form~$\langle \Av_{\Gamma} T, F\rangle$.

\sectionbreak

My main objection to the integral formula~\eqref{eqn:integral-formula-averaging} was that~$T \in L^p(V^k)$ might not be well-defined on orbits, as they might have measure~$0$. However, even without the above discussion, there is some sense to the integral formula when~$T$ is a trace-class kernel and~$\Gamma$ is second countable. Picking points~$v, w \in V$ defines the kernel~$K(g, h) = T(gv, hw)$ almost everywhere on~$\Gamma^2$. It is trace class, for example because
\[
  \|K\|_{\B^1} = \sup_{(\phi_n)_{n\in\N}, (\psi_n)_{n \in \N}} \|(\langle K\phi_n, \psi_n\rangle)_{n\in\N}\|_1 = \|T\|_{\B^1},
\]
with the supremum over pairs of orthonormal bases of~$L^2(V)$~\cite[Proposition 2.6]{Simon2005TraceApplications}. Brislawn showed~\cite{Brislawn1991TraceableSpaces}---using martingales---that there exists a function~$\widetilde{K}$ almost-everywhere equal to~$K$ for which~$\Tr(K) = \int_{\Gamma}  \widetilde{K}(g,g)\, d\mu(g)$, which is as close to~\eqref{eqn:integral-formula-averaging} as we can wish for.

\section{A group-invariant outer approximation}%
\label{sec:appr-under-symmetry}
We have gathered all we need to discuss group-invariant analogues of the completely positive cone and its approximations. Our approach is by reduction to Theorems~\ref{thm:Polyas-theorem-finite-measure} and~\ref{thm:breve-Polya-for-sigma-finite-measure-spaces}. The Borel algebra is countably generated if and only if the topology is second countable, and together with local compactness this implies the space is $\sigma$-compact, so second countable unimodular locally compact groups satisfy the conditions of Theorems~\ref{thm:Polyas-theorem-finite-measure} and~\ref{thm:breve-Polya-for-sigma-finite-measure-spaces} and the results in Section~\ref{sec:averaging}. Parallel to Chapter~\ref{ch:completely-positive-cone}, we first discuss compact groups, then we extend to second countable unimodular locally compact groups.

Recall that if~$\Gamma$ is second countable and locally compact, then for every subgroup~$H$,~$\Gamma/H$ is second countable and locally compact, see Section~\ref{subsec:prel-hom-space}.
\begin{theorem}%
  \label{thm:invariant-polyas-thm-compact-groups}
  If~$\Gamma$ is a second countable compact group, if~$H$ is a compact subgroup, and if~$V = \Gamma/H$ is equipped with the quotient measure, then
  \[
    \Av_{\Gamma}\CP(V, k) = \bigcap_{r \in \N} \Av_{\Gamma} C_r(V, k)^*,
  \]
  and all cones in the expression are closed.
\end{theorem}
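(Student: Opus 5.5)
Since $\Gamma$ is compact, $\Av_\Gamma$ is the orthogonal projection of Theorem~\ref{thm:averaging-on-compact-group} on $L^2(V^k)$, restricted to $L^2_{\sym}(V,k)$. Since $\Av_\Gamma$ commutes with permutations of coordinates, it preserves $L^2_{\sym}(V,k)$, and it is self-adjoint and idempotent. The plan has two steps. First I show that $\Av_\Gamma$ maps each of the cones $\CP(V,k)$ and $C_r(V,k)^*$ into itself. Then the claimed equality follows from Theorem~\ref{thm:Polyas-theorem-finite-measure}, and closedness follows by identifying each image cone with an intersection of closed sets.

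\textbf{Invariance of the cones.} I claim that $\Av_\Gamma\cC \subseteq \cC$ for $\cC = \CP(V,k)$ and $\cC = C_r(V,k)^*$. By continuity of $\Av_\Gamma$ and closedness of $\cC$, it suffices to show that $\cC$ is invariant under each isometry $T\mapsto gT$. Then the Bochner integral $\int_\Gamma gT\,d\mu(g)$, which is a limit of convex combinations of elements of $\cC$, stays in $\cC$. Alternatively one can argue by duality: if $F \in \cC^*$ then $\langle \Av_\Gamma T,F\rangle = \int_\Gamma \langle gT,F\rangle\,d\mu(g)\ge 0$.

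For $\CP(V,k)$ the invariance is clear, since $g(f^{\otimes k}) = (gf)^{\otimes k}$ and $gf\ge 0$ whenever $f\ge 0$. For $C_r(V,k)^* = \T_r^* L^2_{\sym}(V,k+r)_{\ge 0}$, I will check that $\T_r$ commutes with the diagonal action. This holds because $\1$ is invariant and the coordinate permutations commute with the diagonal action. Taking adjoints, and using that $g^* = g^{-1}$ because the measure is invariant, $\T_r^*$ commutes with the action as well. The action preserves nonnegativity, so $C_r(V,k)^*$ is invariant.

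\textbf{The equality.} Because $\Av_\Gamma$ is idempotent and preserves these cones, $\Av_\Gamma\cC = \cC\cap L^2_{\sym}(V,k)^\Gamma$. Indeed, if $T\in\cC$ is invariant then $T=\Av_\Gamma T$, and conversely $\Av_\Gamma T$ lies in both $\cC$ and $L^2_{\sym}(V,k)^\Gamma$. The space $V=\Gamma/H$ is second countable, so its Borel algebra is countably generated, and the measure is finite. Theorem~\ref{thm:Polyas-theorem-finite-measure} therefore gives
\[
\bigcap_r \Av_\Gamma C_r(V,k)^* = L^2_{\sym}(V,k)^\Gamma\cap\bigcap_r C_r(V,k)^* = L^2_{\sym}(V,k)^\Gamma\cap\CP(V,k) = \Av_\Gamma\CP(V,k).
\]
For closedness, each cone $\cC\cap L^2_{\sym}(V,k)^\Gamma$ is the intersection of a closed cone with a closed subspace; the subspace of invariant tensors was shown above to be closed.

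The main obstacle I expect is the rigorous justification that $\Av_\Gamma$ preserves $\cC$, that is, interchanging the pairing with the integral for general $L^2$ tensors. This can be handled by first working with continuous $T$, where the integral formula~\eqref{eqn:integral-formula-averaging} is valid and Fubini applies as in the proof of Theorem~\ref{thm:averaging-on-compact-group}, and then passing to general $T$ by density and continuity.
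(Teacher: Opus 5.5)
Your argument is correct, but it takes a different route from the paper.

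\emph{The paper's route.} The paper argues on the copositive side. It asserts that for a closed convex cone $\cC$ the projection $\Av_\Gamma\cC$ is closed, with dual $\Av_\Gamma\cC^*$ inside $L^2_{\sym}(V,k)^{\Gamma}$. It then writes $\Av_\Gamma\COP(V,k)=\ccone\bigcup_r\Av_\Gamma C_r(V,k)$ using the dual form of Theorem~\ref{thm:Polyas-theorem-finite-measure}, and dualizes back with Theorem~\ref{thm:polar-identities}.

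\emph{Your route.} You first show that $\CP(V,k)$ and $C_r(V,k)^*$ are invariant under the diagonal action, so that $\Av_\Gamma\cC=\cC\cap L^2_{\sym}(V,k)^{\Gamma}$. The statement then reduces to the primal identity $\CP(V,k)=\bigcap_r C_r(V,k)^*$ intersected with a closed subspace, and closedness is immediate.

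\emph{What each buys.} Your route avoids the double dualization and any interchange of $\Av_\Gamma$ with closures. It also makes explicit the invariance that the paper's duality step needs but leaves unstated. The identity $\langle T,T'\rangle=\langle\Av_\Gamma T,\Av_\Gamma T'\rangle$ used there holds only when one argument is invariant. For a non-invariant cone, neither closedness of $\Av_\Gamma\cC$ nor the description of its dual as $\Av_\Gamma\cC^*$ holds in general. In return, the paper's route also produces the copositive statement $\Av_\Gamma\COP(V,k)=\ccone\bigcup_r\Av_\Gamma C_r(V,k)$, the form relevant for invariant copositive hierarchies. With your invariance lemma, this follows afterwards by dualizing inside $L^2_{\sym}(V,k)^{\Gamma}$.

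\emph{Two small points.}
\begin{itemize}
\item If $C_r(V,k)^*$ is read as the dual cone of $C_r(V,k)$, it is the closure of $\T_r^*L^2_{\sym}(V,k+r)_{\geq 0}$. Invariance passes to this closure because each map $T\mapsto gT$ is continuous.
\item The density argument should be applied to the identity $\langle\Av_\Gamma T,F\rangle=\int_\Gamma\langle gT,F\rangle\,d\mu(g)$, whose two sides are continuous in $T$. It should not be applied to membership in the cone, since continuous approximants of $T$ need not lie in $\cC$.
\end{itemize}
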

\begin{proof}
  Recall Theorem~\ref{thm:averaging-on-compact-group}: the operator~$\Av_{\Gamma} : L^2_{\sym}(V, k) \to L^2_{\sym}(V, k)^{\Gamma}$ is an orthogonal projection. If~$\cC \subseteq L^2_{\sym}(V, k)$ is a closed convex cone, then~$\Av_{\Gamma} \cC$ is a closed convex cone in~$L^2_{\sym}(V, k)^{\Gamma}$, and its dual in~$L^2_{\sym}(V, k)^{\Gamma}$ under the inner product is~$\Av_{\Gamma} \cC^*$. Indeed, for $k$-tensors~$T, T'$,~$\langle T, T' \rangle = \langle \Av_{\Gamma} T, \Av_{\Gamma} T'\rangle$, so if~$T \in \cC$ then~$\langle \Av_{\Gamma} T, \Av_{\Gamma}T'\rangle \geq 0$ if and only if~$T' \in \cC^*$.

  This shows that the dual of~$\Av_{\Gamma} \CP(V, k)$ is~$\Av_{\Gamma}\COP(V, k)$. Moreover, by Theorem~\ref{thm:Polyas-theorem-finite-measure} and~$\Av_{\Gamma}$ being continuous, closed and linear,
  \[
    \Av_{\Gamma} \COP(V, k) = \Av_{\Gamma} \ccone \bigcup_{r \in \N} C_r(V, k) =  \ccone \bigcup_{r \in \N} \Av_{\Gamma} C_r(V, k).
  \]
  Taking the dual, by the discussion above and Theorem~\ref{thm:polar-identities},
  \[
    \Av_{\Gamma} \CP(V, k) = (\Av_{\Gamma} \COP(V, k))^* = \bigcap_{r \in \N} \Av_{\Gamma} C_r(V, k)^*.\qedhere
  \]
\end{proof}

\sectionbreakafterproof

The situation is more nuanced when~$\Gamma$ is not compact. We restrict the discussion to trace-class kernels on~$\Gamma$, with~$\Gamma$ a second countable unimodular locally compact group.

By Theorem~\ref{thm:avering-on-trace-class},~$\Av_{\Gamma}$ is a bounded operator~$\B^1(L^2(\Gamma)) \to C_0(\Gamma)$. The space~$C_0(\Gamma)$ can be difficult to work with, as it has too few compact sets; we extend the codomain of~$\Av_{\Gamma}$ to~$L^{\infty}(\Gamma)$. Since~$\Av_{\Gamma}$ is bounded, it is continuous with respect to the weak topologies on~$\B^1(L^2(\Gamma))$ and~$L^{\infty}(\Gamma)$, and so also with respect to the weak* topology on~$L^{\infty}(\Gamma)$ under the duality with~$L^1(\Gamma)$, which is weaker.

A warning: although in the compact setting the operator~$\Av_{\Gamma}$ is an orthogonal projection, in the current setting, the range~$\Av_{\Gamma}(\B^1(L^2(\Gamma)))$ is not even closed in~$L^{\infty}(V)$. Indeed, by taking an approximate identity~$(\psi_i)_{i \in I}$ and a compact neighborhood basis~$(V_j)_{j \in J}$, it can be show that~$\Av_{\Gamma}(\psi_i \otimes f\1_{V_j})$ converges to~$f$ under the weak* topology.

Let~$\Av_{\Gamma} : \B^1(L^2(\Gamma)) \to L^{\infty}(\Gamma)$, and define the completely positive cone on~$L^{\infty}(\Gamma)$ by
\index{ CPGinv@$\CP(\Gamma)_{\inv}$}\[
  \CP(\Gamma)_{\inv} = \cl \Av_{\Gamma} (\CP(\Gamma,2) \cap\B^1(L^2(\Gamma))) = \ccone \{\, \phi \cor \phi : \phi \in L^2(\Gamma)_{\geq 0}\, \},
\]
with closure under the weak* topology. The latter equality follows directly from the definition of~$\CP(V, 2)$. Define the copositive cone on~$L^1(\Gamma)$ by
\index{ COPGinv@$\COP(\Gamma)_{\inv}$}\[
  \COP(\Gamma)_{\inv} = \{\, \rho \in L^1(\Gamma) : \langle \phi * \rho, \phi\rangle \geq 0 \text{ for all } \phi \in L^2(\Gamma)_{\geq 0}\, \}.
\]
By a direct calculation and Theorem~\ref{thm:polar-identities},~$\COP(\Gamma)_{\inv} = \CP(\Gamma)_{\inv}^*$. Let
\index{ CrGinv@$\bC_r(\Gamma)_{\inv}$}\[
  \bC_r(\Gamma)_{\inv}^* = \cl \Av_{\Gamma} \biggl(\bC_r(\Gamma, 2)^* \cap \B^1(L^2(\Gamma))\biggr),
\]
with closure under the weak* topology.

Denote the Borel algebra of~$\Gamma$ by~$\B$, and the set of Borel sets with finite and nonzero measure by~$\B_{\fin}$. For any function~$f : \Gamma \to \R$ and~$A \in \B_{\fin}$, let~$\convol_Af(g,h) = f(h^{-1}g)$ for all~$g,h\in A$. If~$f \in L^{\infty}(\Gamma)$, this is the kernel of the operator~$\phi \mapsto \phi * f$ restricted to~$A^2$. Its adjoint is~$\convol_A^* = \Av_{\Gamma} \smallcirc \Res_A^*$.

We need one more property to obtain the results we want:~\defi{amenability}\index{group!amenable group}. Amenable $\sigma$-compact locally compact groups are those $\sigma$-compact locally compact groups that have an~\defi{averaging sequence}\index{averaging sequence}~\cite[Proposition 16.14 and 16.16]{Pier1984AmenableGroups}: a sequence of sets~$(A_n)_{n \in \N}$ with~$A \in \B_{\fin}$ such that for every finite subset~$S \subseteq \Gamma$
\[
  \lim_{n \to \infty} \frac{\mu(\bigcap_{s \in S} sA_n)}{\mu(A_n)} = 1.
\]
The convergence is uniform in~$S$ when the sets~$S$ considered are subsets of a fixed compact set. The second statement of the following lemma will be useful later on.
\begin{lemma}%
  \label{lem:local-definition-bC}
  If~$\Gamma$ is a second countable unimodular locally compact group and~$r \in \N$, then
  \[
    \bC_r(\Gamma)_{\inv}^* \subseteq \bigcap_{A \in \B_{\fin}} \convol_A^{-1}C_r(A, 2)^*.
  \]
  If~$\Gamma$ is moreover amenable, then
  \[
    \bC_r(\Gamma)_{\inv}^* \cap \PT(\Gamma) = \bigcap_{A \in \B_{\fin}} \convol_A^{-1}C_r(A, 2)^* \cap \PT(\Gamma),
  \]
  that is, the cones contain the same positive-type functions.
\end{lemma}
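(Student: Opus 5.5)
The plan is a duality argument for the first inclusion and a Følner-type approximation for the second. Throughout I use that $\convol_A^* = \Av_{\Gamma} \smallcirc \Res_A^*$ and that, for $T \in L^2(A^2)$, the function $\convol_A^*T$ lies in $L^1(\Gamma)$. The latter holds because $\Res_A^*T$ is Hilbert--Schmidt with $\|\Res_A^* T\|_{\B^1}$ controlled on the finite-measure set $A$, and Young's inequality then applies. Consequently $\convol_A : L^\infty(\Gamma) \to L^2_{\sym}(A,2)$ is continuous from the weak* topology to the weak topology. Since $C_r(A,2)^*$ is closed and convex, hence weakly closed, each $\convol_A^{-1}C_r(A,2)^*$ is weak* closed. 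For both statements it therefore suffices to work with generators and then pass to weak* closures.

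\emph{First inclusion.} Take $K \in \bC_r(\Gamma,2)^* \cap \B^1(L^2(\Gamma))$ and $A \in \B_{\fin}$. The aim is to write
\[
  \convol_A \Av_{\Gamma} K = \int_{\Gamma} \Res_A(\gamma K)\, d\mu(\gamma)
\]
as a Bochner integral in $L^2(A^2)$. For a rank-one $K = \phi \otimes \psi$ this is just the pointwise formula $(\phi \cor \psi)(h^{-1}g) = \int_\Gamma \phi(\gamma^{-1}g)\psi(\gamma^{-1}h)\,d\mu(\gamma)$. Absolute integrability comes from Cauchy--Schwarz:
\[
  \int_\Gamma \|\1_A\gamma\phi\|_2\|\1_A\gamma\psi\|_2\,d\mu(\gamma) \leq \mu(A)\|\phi\|_2\|\psi\|_2 .
\]
I then extend by linearity and continuity in the trace norm.

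The cone $\bC_r(\Gamma,2)^*$ is invariant under the diagonal action. By~\eqref{eqn:breve_C*} it is $\bigcap_B \Res_B^{-1}C_r(B,2)^*$, and left translation transports $C_r(B,2)^*$ onto $C_r(\gamma B,2)^*$ because it is a measure-preserving bijection commuting with $\T_r$. Hence every integrand $\Res_A(\gamma K)$ lies in the closed convex cone $C_r(A,2)^*$, and so does the integral. Taking weak* closures gives the first inclusion.

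\emph{Second statement.} The inclusion "$\subseteq$" follows from the first part. For "$\supseteq$", let $\rho \in \PT(\Gamma)$ with $\convol_A\rho \in C_r(A,2)^*$ for all $A$, and fix an averaging sequence $(A_n)$. The natural candidates are
\[
  K_n = \mu(A_n)^{-1}\Res_{A_n}^*\convol_{A_n}\rho .
\]
Each $K_n$ is trace class: it is positive semidefinite because $\rho$ is of positive type, and its trace is $\rho(1)$. A direct computation gives
\[
  \Av_\Gamma K_n(x) = \rho(x)\,\frac{\mu(A_n \cap xA_n)}{\mu(A_n)} .
\]
By the averaging-sequence property this tends to $\rho(x)$, uniformly on compact sets, and it is bounded by $\rho(1)$. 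It therefore converges weak* to $\rho$.

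The step I expect to be the main obstacle is showing $K_n \in \bC_r(\Gamma,2)^*$, i.e.\ $\langle \convol_{A_n}\rho, \Res_{A_n}\Res_B^*T\rangle \geq 0$ for every $T \in C_r(B,2)$. Example~\ref{exmp:restriction-breaks-Polya} warns that restriction and zero-extension do not preserve Pólya-type cones. What I would try first is to exploit that the condition on $\rho$ holds for \emph{every} finite set, in particular for $A_n \cup B$. One then estimates the defect coming from the mismatch between $\Res_{A_n}\Res_B^*T$ and a genuine element of $C_r(A_n\cup B,2)$. This defect is supported on pairs meeting $B \setminus A_n$, and its contribution after normalising by $\mu(A_n)$ should vanish as $n \to \infty$, since $B$ is fixed and $\mu(A_n) \to \infty$ in the noncompact case (the compact case is Theorem~\ref{thm:invariant-polyas-thm-compact-groups}). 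If this direct estimate fails, the fallback is to replace $K_n$ by the translation average $\mu(A_n)^{-1}\int_{A_n}\gamma\, (\Res_{B'}^*\convol_{B'}\rho)\,d\mu(\gamma)$ for a fixed large $B'$. Each summand is then a zero-extension of an element of $C_r(B',2)^*$ translated by $\gamma$. Membership in $\bC_r(\Gamma,2)^*$ would then have to be checked against the generators $\Res_B^*C_r(B,2)$, which is again the same restriction issue, so the $\mu(A_n) \to \infty$ normalisation remains the crux.
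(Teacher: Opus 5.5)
Your first inclusion is essentially the paper's argument: you use the translation invariance of $\bC_r(\Gamma,2)^*$, write $\convol_A\Av_\Gamma K$ as an average over $\gamma$ of $\Res_A(\gamma K)$, and then pass to the weak* closure.

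One justification there is wrong. A Hilbert--Schmidt kernel on $A$ is not trace class in general, so you cannot appeal to its trace norm. The claim you need is still true by a direct computation. We have $\convol_A^*T(x)=\int_\Gamma \Res_A^*T(hx,h)\,d\mu(h)$, and Fubini together with Cauchy--Schwarz gives $\|\convol_A^*T\|_1\le\|T\|_{L^1(A^2)}\le\mu(A)\|T\|_2$. This weak*-to-weak continuity is exactly what makes $\convol_A^{-1}C_r(A,2)^*$ weak* closed.

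The genuine gap is in the second statement. You leave $K_n\in\bC_r(\Gamma,2)^*$ unproved, and the route you sketch cannot close it. Cone membership is a statement for each fixed $n$ and does not change under positive scaling. A defect that only becomes small after dividing by $\mu(A_n)$ therefore proves nothing. Moreover, $\langle\convol_{A_n}\rho,\Res_{A_n}\Res_B^*T\rangle$ only involves $A_n\cap B$, so nothing in it grows with $\mu(A_n)$.

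The missing observation is elementary. Since $\T_r$ acts pointwise, $(\T_rT)|_{A'^{2+r}}=\T_r(\Res_{A'}T)$. Hence restriction maps $C_r(B,2)$ into $C_r(A',2)$ for $A'\subseteq B$, and dually zero-extension maps $C_r(A',2)^*$ into $C_r(B,2)^*$. Example~\ref{exmp:restriction-breaks-Polya} concerns the two opposite directions, which you never need.

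Take $A'=A_n\cap B$; the case $\mu(A')=0$ is trivial. Apply the hypothesis on $\rho$ to the finite-measure set $A'$, not to $A_n\cup B$. This gives, for every $T\in C_r(B,2)$,
\[
  \langle \convol_{A_n}\rho, \Res_{A_n}\Res_B^*T\rangle = \langle \convol_{A'}\rho, \Res_{A'}T\rangle \geq 0 .
\]
Equivalently, $\Res_BK_n$ is the zero-extension to $B$ of $\mu(A_n)^{-1}\convol_{A_n\cap B}\rho$. This is the paper's argument: restriction to subsets of $A_n$, then zero-extension to supersets.

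With this step in place, the rest of your second part coincides with the paper's proof. That covers positive semidefiniteness, trace $\rho(1)$, and weak* convergence of $\Av_\Gamma K_n$ to $\rho$.
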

\begin{proof}
  Let~$f  = \Av_{\Gamma}K$ with~$K = \sum_{\phi \in \Phi}\lambda_{\phi}\phi \otimes \phi$ trace class, and such that for all~$A \in \B_{\fin}$,~$\Res_AK \in C_r(A, 2)^*$. If~$B \in C_r(A, 2)$ and~$\gamma \in \Gamma$, then~$\gamma B$ is in~$C_r(\gamma A, 2)$. By Fubini's theorem and invariance of the Haar measure, and a change of variables,
  \[
    \begin{split}
      \langle \convol_Af, B\rangle &= \int_{A^2} \Av_{\Gamma}K(h^{-1} g)B(g,h)\, d\mu(g)d\mu(h)\\
      &= \int_{A^2}\sum_{\phi \in \Phi}\lambda_{\phi} \int_{\Gamma}\phi(\gamma^{-1})\phi(\gamma^{-1}h^{-1}g) B(g,h)\, d\mu(\gamma)d\mu(g)d\mu(h)\\
      &= \int_{\Gamma}\int_{A^2}\sum_{\phi \in \Phi}\lambda_{\phi} \phi(\gamma^{-1})\phi(\gamma^{-1}h^{-1}g) B(g,h)\, d\mu(g)d\mu(h)d\mu(\gamma)\\
      &= \int_{\Gamma} \langle  \Res_{\gamma A} K, \gamma B\rangle\, d\mu(\gamma) \geq 0,
    \end{split}
  \]
  and~$\gamma A \in \B_{\fin}$. So, indeed,~$\convol_A f \in C_r(A, 2)^*$ for all~$A \in \B_{\fin}$.

  Since~$\Av_{\Gamma}$ is continuous, it follows that~$\bC_r(\Gamma)^*_{\inv} \subseteq \bigcap_A \convol_A^{-1}C_r(A, 2)^*$ if the latter is weak* closed. For this, it is enough that the maps~$\convol_A$ are bounded linear operators from~$L^{\infty}(\Gamma)$ to~$L^2(A^2)$, since then, for all~$A$,~$\convol_A^{-1}C_r(A, 2)^*$ is closed. For~$f \in L^{\infty}(\Gamma)$,~$\|\convol_A f\|_2 \leq \|f\|_{\infty}\mu(A)^2$, and the result follows.

  To prove the second statement of the lemma, it is enough to prove that any~$f \in \bigcap_{A} \convol_A^{-1}C_r(A,2)^*$ is the limit of a sequence of averages of trace-class kernels in~$\bC(\Gamma,2)^*$.

  Let~$f \in \bigcap_A \convol_A^{-1} C_r(V,2)^* \cap \PT(\Gamma)$; we may assume that~$f$ is continuous. Since we assume that~$\Gamma$ is amenable, take an averaging sequence~$(A_n)_n$, and define for all~$n \in \N$:~$K_n = \Res_{A_n}^*\convol_{A_n}f$, so~$K_n$ is the extension by zeros of~$\convol_{A_n}f$ to~$\Gamma$. First show that~$K_n \in C_r(\Gamma, 2)$ for all~$n$ and that it is trace class.

  To see that~$K_n \in \bC(V,2)^*$, note that for every~$A \subseteq A_n$,
  \[
    \Res_A K_n = \convol_A f \in C_r(A, 2)^*
  \]
  by the first part of this proof. Moreover, by a direct calculation, the extension by zeros of an element of~$C_r(A_n,2)^*$ to a set~$A \supseteq A_n$ is in~$C_r(A,2)^*$, so indeed~$K_n \in \bC(\Gamma,2)^*$.

  Since~$f$ is continuous,~$\convol_{A_n}f$ is so as well. Since~$f$ is positive type,~$K_n$ is positive semidefinite, see for example~\cite[Proposition 3.35]{Folland2016AAnalysis}. Thus,
  \[
    \|K_n\|_{\B^1} = \Tr K_n = \Tr \convol_{A_n}f = \mu(A_n)f(0) < \infty,
  \]
  and~$K_n$ is trace class, so~$K_n \in \bC(\Gamma,2)^* \cap \B^1(L^2(\Gamma))$.

  Define~$f_n = \Av_{\Gamma} K_n / \mu(A_n)$. An application of the Fubini-Tonelli theorem shows that if~$K$ is trace class and positive semidefinite, then~$\Av_{\Gamma} K$ is positive type, so~$f_n$ is positive type.

  It is left to show that the weak* limit~$\lim_n f_n = f$. Take~$\rho \in C_c(\Gamma)$ and let~$C_{\rho} = \supp \rho$, then
  \[
    \begin{split}
      |\langle f_n - f, \rho\rangle| &\leq \int_{\Gamma}|\Av_{\Gamma}K_n(g)/\mu(A_n) - f(g)||\rho(g)|\, d\mu(g)\\
      &\leq \int_{\Gamma} f(g)\biggl|\int_{\Gamma} \frac{\1_{A_n}(\gamma^{-1})\1_{A_n}(\gamma^{-1} g)d\mu(\gamma)}{\mu(A_n)} - 1\biggr| |\rho(g)|\, d\mu(g)\\
      &\leq \|f\|_{\infty}\|\rho\|_1 \int_{C_{\rho}} \biggl| \frac{\mu(A_n \cap g A_n)}{\mu(A_n)} - 1\biggr|\, d\mu(g).
  \end{split}\]
  Since~$\lim_n \mu(A_n \cap gA_n)/\mu(A_n)= 1$ uniformly for~$g \in C_{\rho}$, and~$|\langle f_n -f, \rho\rangle| \to 0$ follows. Since~$C_c(\Gamma)$ is dense in~$L^1(\Gamma)$, this proves the theorem.
\end{proof}

\begin{theorem}%
  \label{thm:Polyas-theorem-for-invariant-cones}
  If~$\Gamma$ is an amenable unimodular second countable locally compact group, then
  \[
    \CP(\Gamma)_{\inv} = \bigcap_{A\in \B_{\fin}} \convol_A^{-1}\CP(A, 2).
  \]
  Moreover,
  \[
    \bC_0(\Gamma)_{\inv} \supseteq \bC_1(\Gamma)_{\inv} \supseteq \cdots \supseteq \CP(\Gamma)_{\inv}\qquad \text{and}\qquad \CP(\Gamma)_{\inv} = \bigcap_r \bC_r(\Gamma)_{\inv}.
  \]
\end{theorem}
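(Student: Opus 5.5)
The plan is to prove the first identity $\CP(\Gamma)_{\inv} = \bigcap_{A} \convol_A^{-1}\CP(A,2)$ first, by mirroring the two halves of the proof of Lemma~\ref{lem:local-definition-bC} with the pair $(C_r(A,2)^*, C_r(A,2))$ replaced by $(\CP(A,2), \COP(A,2))$. The second statement then follows from this identity together with Lemma~\ref{lem:local-definition-bC} and Theorem~\ref{thm:Polyas-theorem-finite-measure}. I read the cones $\bC_r(\Gamma)_{\inv}$ in the statement as the cones $\bC_r(\Gamma)_{\inv}^*$ defined above.

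\emph{Inclusion $\subseteq$.} For a generator $f = \phi \cor \phi$ with $\phi \in L^2(\Gamma)_{\geq 0}$, take $A \in \B_{\fin}$ and $B \in \COP(A,2)$. The Fubini computation in Lemma~\ref{lem:local-definition-bC} gives
\[
\langle \convol_A f, B\rangle = \int_\Gamma \langle \Res_{\gamma A}(\phi\otimes\phi), \gamma B\rangle\, d\mu(\gamma).
\]
Each integrand is nonnegative, since $\gamma B \in \COP(\gamma A,2)$ and $\Res_{\gamma A}\phi \geq 0$. Hence $\convol_A f \in \CP(A,2)$. To pass to the weak* closed conic hull, I will check that $\convol_A$ is weak*-continuous from $L^\infty(\Gamma)$ into $L^2(A^2)$ with its weak topology. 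This holds because
\[
\langle \convol_A f, B\rangle = \langle f, \rho_B\rangle, \qquad \rho_B(x) = \int_A B(hx, h)\, d\mu(h),
\]
and $\rho_B \in L^1(\Gamma)$ because $B \in L^2(A^2) \subseteq L^1(A^2)$, as $A$ has finite measure. Therefore $\convol_A^{-1}\CP(A,2)$ is weak* closed, and the inclusion follows.

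\emph{Inclusion $\supseteq$.} Let $f$ lie in the intersection. Since $\CP(A,2) \subseteq \PSD(A)$, every $\convol_A f$ is positive semidefinite. Testing against $\rho \cor \rho$ for $\rho \in C_c(\Gamma)$, with $A$ containing the support of $\rho$, and then using density in $L^1(\Gamma)$, shows that $f \in \PT(\Gamma)$; in particular we may take $f$ continuous. Next take an averaging sequence $(A_n)_n$ and set $K_n = \Res_{A_n}^*\convol_{A_n} f$. Then $K_n \in \CP(\Gamma,2)$, because $\Res_{A}^*$ maps $\CP(A,2)$ into $\CP(\Gamma,2)$, and $K_n$ is positive semidefinite and trace class with $\Tr K_n = \mu(A_n) f(1)$. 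Put $f_n = \Av_\Gamma K_n/\mu(A_n)$. The amenability estimate at the end of Lemma~\ref{lem:local-definition-bC} gives $f_n \to f$ weak*. It remains to see that each $f_n$ lies in $\CP(\Gamma)_{\inv}$, and \textbf{this is the step I expect to be the main obstacle}. The kernel $K_n$ is a limit in $L^2$ norm, not in trace norm, of sums $\sum \phi_i \otimes \phi_i$ with $\phi_i \geq 0$ supported in $A_n$, whereas $\Av_\Gamma$ is only known to be continuous in trace norm. My first attempt is to bypass trace norm altogether. For kernels supported in $A_n^2$ and $\rho \in C_c(\Gamma)$, we have $\langle \Av_\Gamma K, \rho\rangle = \langle K, \convol_{A_n}\rho\rangle$, and $\convol_{A_n}\rho \in L^2(A_n^2)$. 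So $K \mapsto \Av_\Gamma K$ is continuous from $L^2(A_n^2)$ to the topology of pointwise convergence against $C_c(\Gamma)$. For $L^2$-approximants that are bounded in $L^\infty$-norm after averaging, this topology agrees with the weak* topology, since $C_c(\Gamma)$ is dense in $L^1(\Gamma)$. If uniform boundedness is not automatic, I will instead approximate $K_n$ by the conditional expectations $E_m K_n$ of Section~\ref{sec:projective-approximation}. These have finite rank, and $E_m K_n$ still lies in $\CP$ by Lemma~\ref{lem:properties-An-Bn}. Their trace is $\Tr(E_m K_n) \leq \Tr K_n$, because $E_m$ compresses by an orthogonal projection. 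This gives the uniform bound $\|\Av_\Gamma E_m K_n\|_\infty \leq \mu(A_n) f(1)$.

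\emph{The hierarchy.} The inclusions $\bC_r(\Gamma,2)^* \supseteq \bC_{r+1}(\Gamma,2)^* \supseteq \CP(\Gamma,2)$ from Theorem~\ref{thm:breve-Polya-for-sigma-finite-measure-spaces} are preserved when we intersect with $\B^1(L^2(\Gamma))$, apply $\Av_\Gamma$ and take weak* closures. This gives the chain of inclusions and $\CP(\Gamma)_{\inv} \subseteq \bigcap_r \bC_r(\Gamma)_{\inv}^*$. For the converse, let $f \in \bigcap_r \bC_r(\Gamma)_{\inv}^*$. By the first statement of Lemma~\ref{lem:local-definition-bC}, for every $A \in \B_{\fin}$ we have $\convol_A f \in \bigcap_r C_r(A,2)^*$. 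This intersection equals $\CP(A,2)$ by Theorem~\ref{thm:Polyas-theorem-finite-measure}, which applies because $A$ is countably generated and of finite measure. The first identity then yields $f \in \CP(\Gamma)_{\inv}$.
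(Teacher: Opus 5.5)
Your proof is correct, and you are right to read $\bC_r(\Gamma)_{\inv}$ in the statement as $\bC_r(\Gamma)_{\inv}^*$. Two parts match the paper exactly: the inclusion $\CP(\Gamma)_{\inv}\subseteq\bigcap_A\convol_A^{-1}\CP(A,2)$, and the deduction of the hierarchy statement from the first identity (first part of Lemma~\ref{lem:local-definition-bC} plus Theorem~\ref{thm:Polyas-theorem-finite-measure}). The genuine difference is the reverse inclusion.

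The paper argues dually. With $\cC=\bigcap_A\convol_A^{-1}\CP(A,2)$, it proves $\COP(\Gamma)_{\inv}\subseteq\bigcap_A\convol_A^{-1}\COP(A,2)\subseteq\cC^*$ and finishes with the bipolar theorem. The averaging sequence enters only through the dominated-convergence identity $\langle\rho,f\rangle=\lim_n\mu(A_n)^{-1}\langle\convol_{A_n}\rho,\convol_{A_n}f\rangle$. You argue primally. You first note that $\cC\subseteq\PT(\Gamma)$ because $\CP(A,2)\subseteq\PSD(A)$, which makes the positive-type restriction in Lemma~\ref{lem:local-definition-bC} automatic. You then rerun the second half of that lemma's proof with $\CP(A,2)$ in place of $C_r(A,2)^*$.

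The paper's route is shorter and needs neither the positive-type property nor the trace identity $\Tr\convol_{A_n}f=\mu(A_n)f(1)$. Yours never touches $\COP(\Gamma)_{\inv}$, so it never has to pair $\convol_{A_n}\rho$, for a merely integrable $\rho$, with elements of $\CP(A_n,2)$. Combined with your hierarchy chain, it also shows that the paper's two descriptions of $\CP(\Gamma)_{\inv}$ really coincide: the weak* closure of $\Av_\Gamma(\CP(\Gamma,2)\cap\B^1(L^2(\Gamma)))$, and $\ccone\{\phi\cor\phi:\phi\in L^2(\Gamma)_{\geq0}\}$. The paper asserts this as immediate.

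Two small remarks.
\begin{itemize}
\item The obstacle you flag disappears if you use the first description. Then $f_n\in\CP(\Gamma)_{\inv}$ holds by definition, since $K_n\in\CP(\Gamma,2)\cap\B^1(L^2(\Gamma))$. The price is that your $\subseteq$ computation must be run for trace-class $K\in\CP(\Gamma,2)$, which is the Fubini computation already in Lemma~\ref{lem:local-definition-bC}.
\item If you keep your route, $E_mK_n$ is the compression of $K_n$ by conditional expectations that converge strongly to the identity on $L^2(A_n)$ (Theorem~\ref{thm:martingale-convergence}). Hence $E_mK_n\to K_n$ in trace norm, and $\Av_\Gamma E_mK_n\to\Av_\Gamma K_n$ uniformly by Theorem~\ref{thm:avering-on-trace-class}. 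The density argument against $C_c(\Gamma)$ is then unnecessary.
\end{itemize}
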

\begin{proof}
  By Lemma~\ref{lem:local-definition-bC} it is enough to prove~$\CP(\Gamma)_{\inv} = \bigcap_{r} \bigcap_{A} \convol_A^{-1}  C_r(A, 2)^*$. Since
  \[
    \bigcap_{r \in \N} \bigcap_{A \in \B_{\fin}} \convol_A^{-1}  C_r(A, 2)^* = \bigcap_{A \in \B_{\fin}} \convol_A^{-1} \bigcap_{r \in \N} C_r(A, 2)^* = \bigcap_{A\in \B_{\fin}} \convol_A^{-1} \CP(A, 2),
  \]
  the result follows from~$\CP(\Gamma)_{\inv} = \bigcap_{A} \convol_A^{-1} \CP(A, 2)$, which is the subject of the remainder of this proof. In what follows, denote~$\cC = \bigcap_{A} \convol_A^{-1} \CP(A, 2)$.

  Let~$\phi \in L^2(\Gamma)_{\geq 0}$,~$A \in \B_{\fin}$, and~$K \in \COP(A, 2)$. Then, by a change of variables and Fubini-Tonelli,
  \[
    \begin{split}
      \langle \convol_A(\phi \cor \phi), K\rangle &= \int_{A^2} \int_{\Gamma} \phi(\gamma^{-1}) \phi(\gamma^{-1} g^{-1} h)\, d\mu(\gamma) K(g, h)\, d\mu(g)d\mu(h)\\
      &= \int_{A^2} \int_{\Gamma} \phi(\gamma^{-1}g) \phi(\gamma^{-1} h)\, d\mu(\gamma) K(g, h)\, d\mu(g)d\mu(h)\\
      &= \int_{\Gamma} \int_{A^2} \phi(\gamma^{-1}g) \phi(\gamma^{-1} h) K(g, h)\, d\mu(g)d\mu(h)d\mu(\gamma)\\
      &\geq 0.
    \end{split}
  \]
  Thus,~$\phi \cor \phi$ lies in~$\cC$, which is closed, so~$\CP(\Gamma)_{\inv} \subseteq \cC$ follows.

  To prove the other inclusion, show
  \begin{equation}%
    \label{eqn:inclusion-COP-in-C*}
    \COP(\Gamma)_{\inv} \subseteq \bigcap_{A \in \B_{\fin}} \convol_A^{-1} \COP(A, 2) \subseteq \cC^*.
  \end{equation}
  Indeed, the result then follows from~$\cC^* \subseteq \CP(\Gamma)^*_{\inv}$ and Theorem~\ref{thm:polar-identities}.

  For the first inclusion of~\eqref{eqn:inclusion-COP-in-C*}, take~$\rho \in \COP(\Gamma)_{\inv}$, take~$A \in \B_{\fin}$ and take~$\phi \in L^2(A)_{\geq 0}$. Then, by a change of variables and Fubini-Tonelli,
  \[
    \begin{split}
      \langle \convol_A \rho, \phi \otimes \phi \rangle &= \int_{\Gamma^2} \rho(h^{-1} g)\Res_A^*\phi(g) \Res_A^*\phi(h)\, d\mu(g)d\mu(h)\\
      &= \int_{\Gamma} \int_{\Gamma} \rho(g) \Res_A^*\phi(hg) \Res_A^*\phi(h)\, d\mu(g)d\mu(h)\\
      &= \langle \rho, \Res_A^*\phi\cor \Res_A^*\phi \rangle \geq 0,
    \end{split}
  \]
  and~$\convol_A \rho \in \COP(A, 2)$.

  For the second inclusion in~\eqref{eqn:inclusion-COP-in-C*}, note that for~$\rho \in L^1(\Gamma)$ and~$A \in \B_{\fin}$,
  \[
    \int_{A^2} \convol_A \rho(g,h)\, d\mu(g)d\mu(h) = \int_{\Gamma} \rho(g) (\1_A \cor \1_A)(g)d\mu(g).
  \]
  Take an averaging sequence~$(A_n)_{n \in \N}$ of~$\Gamma$, and let~$\rho_n = ((\1_{A_n} \cor \1_{A_n})/\mu(A_n)) \rho $ almost everywhere. Then,~$\lim_n \rho_n(g) = \rho(g)$ almost everywhere, and for all~$n$,~$|\rho_n| \leq |\rho|$. So, by the Lebesgue dominated convergence theorem
  \[
    \lim_{n \to \infty}\frac{1}{\mu(A_n)} \int_{A_n^2} \convol_{A_n}\rho(g,h)\, d\mu(g)d\mu(h) = \int_{\Gamma} \rho(g)\, d\mu(g).
  \]

  Take~$\rho \in \bigcap_A \convol_A^{-1}\COP(A, 2)$ and~$f \in \cC$. For all~$A$,~$\convol_A (\rho f) = (\convol_A \rho) (\convol_A f)$, so apply the above to~$\rho f$ to obtain
  \[
    \langle \rho, f\rangle = \lim_{n \to \infty} \frac{1}{\mu(A_n)}\langle \convol_{A_n} \rho, \convol_{A_n} f\rangle \geq 0.
  \]

  In summary,
  \[
    \CP(\Gamma)_{\inv}^* = \COP(\Gamma)_{\inv} \subseteq \cC^* \subseteq \CP(\Gamma)_{\inv}^*,
  \]
  and since all cones are closed,~$\cC = \CP(\Gamma)_{\inv}$.
\end{proof}

\sectionbreakafterproof

The work by Pier~\cite{Pier1984AmenableGroups} contains many characterizations of amenable groups. All locally compact Abelian groups and all compact groups are amenable, and so are products of amenable groups.

The special orthogonal group~$\mathrm{SO}(2)$ is also amenable under the discrete topology, which Matolcsi, Ruzsa, Varga, and Zsámboki recently exploited to prove that the fractional chromatic number of the plane is at least~4~\cite{matolcsi2025The4}. They could not extend their method to dimensions larger than~$2$, because the group~$\mathrm{SO}(n)$ is not amenable under the discrete topology if~$n > 2$. This is closely related to the Banach-Tarski paradox, which prompted the study of amenability.

\part{Finite measure spaces}\label{part:measurable-setting}
\chapter[Completely positive formulations]{Completely positive formulations}%
\label{ch:completely-positive-formulations-meas-ind-num}
In Part~\ref{part:measurable-setting} we will study optimization problems of Class~\ref{it:problem-1} from the introduction. They all have the following form: given a finite measure space~$(V, \A, \mu)$, an integer~$k \geq 2$, and a symmetric set~$E \subseteq V^k$, find
\begin{equation}%
  \label{eqn:extremal-problem-finite-measure}
  \sup\{\, \mu(I) : I \text{ measurable and } I^k \cap E = \emptyset\, \}.
\end{equation}
For general~$V$ and~$E$, there is little we can say about such a problem. We treat two special---but still quite broad---subcategories. In this chapter, for these subcategories, we present exact completely positive formulations of~\eqref{eqn:extremal-problem-finite-measure}. These results build heavily on the arguments developed by DeCorte, Oliveira, and Vallentin~\cite{DeCorte2022CompleteSets}.

We will call the first subcategory the~\defi{thick setting}\index{thick setting}. A problem is in the thick setting if~$E$ is a thick set; as the definition is a bit technical, we postpone it until Section~\ref{sec:density-and-thick-sets}. Informally, a set in~$V^k$ is thick if it adequately described by sets with positive $\mu^k$-measure.

\begin{exmp}%
  \label{exmp:thick-sets-1}
  For finite~$V$ equipped with the counting measure, any~$E \subseteq V^k$ is thick, and we find ourselves practicing combinatorics. Thus, the thick setting extends the finite-graph setting.
\end{exmp}

\begin{exmp}%
  \label{exmp:thick-sets-2}
  An example of a graph in the thick setting that has an infinite vertex set is as follows. If~$V = [0,1]$ with the Lebesgue measure~$\lambda$, then the set
  \[
    E_{\geq} = \{\, (x, y) \in [0,1]^2 : |x - y| \geq 1/2\, \}
  \]
  is a thick set. It is thick because to prove that a point is not in~$E_{\geq}$, it is enough to find an open neighborhood whose intersection with~$E_{\geq}$ has $\lambda^2$-measure~$0$. Figure~\ref{fig:thick-R2} demonstrates that this is in contrast with a set like
  \[
    E_{=} = \{\, (x, y) \in [0,1]^2 : |x-y| = 1/2\, \}.
  \]
  Indeed,~$\mu^2(E_{=}) = 0$, so there is no hope of a similar argument.
  \begin{figure}[tb]
    \centering
    \includegraphics[width=0.95\textwidth]{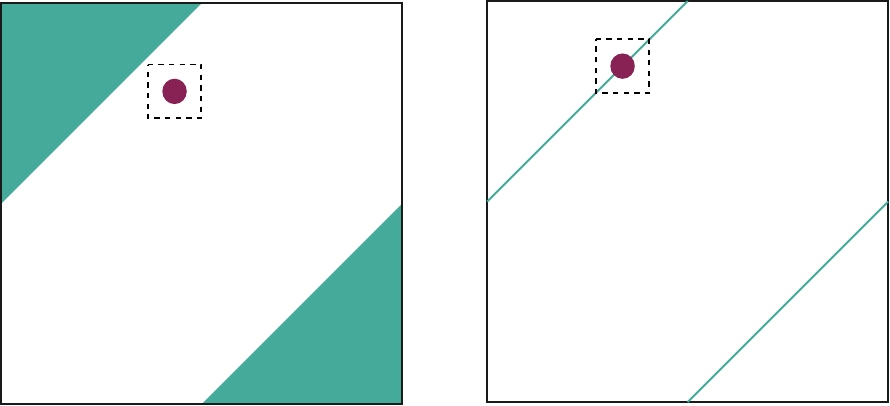}
    \caption{On the left, the square~$[0,1]^2$ with~$E_{\geq}$ shaded. To see that the point is not in~$E$, it suffices to find an open neighborhood such that its intersection with~$E$ has measure~$0$. On the right,~$E_{=}$ is shown, and we see that although the point is in~$E$, the intersection of any open neighborhood with~$E$ has measure~$0$.}
    \label{fig:thick-R2}
  \end{figure}
\end{exmp}

A problem is in the second subcategory if~$(V, E)$ displays sufficient symmetry; we will call this the~\defi{homogeneous setting}\index{homogeneous setting}. More precisely,~$V$ is a compact space that is homogeneous under a compact metrizable group~$\Gamma$,~$\Gamma E \subseteq E$ under the naturally induced action on~$V^k$, and~$V$ is equipped with the quotient of the Haar measure. See Section~\ref{ch:complete-positivity-under-symmetry}.\ref{sec:prel-harmonic-analysis} and the appendix for more background. The metric on~$\Gamma$ should furthermore meet technical assumptions which will be explained as we encounter them.

Our main example of an optimization problem in the homogeneous setting is~\defi{Witsenhausen's problem}: what is the largest surface measure of a set on the unit sphere~$S^{n-1} \subseteq \R^n$ that does not contain a pair of orthogonal points? This is exactly~\eqref{eqn:extremal-problem-finite-measure} with~$V$ the unit sphere~$S^{n-1} \subseteq \R^n$,~$\mu$ the standard uniform surface measure,~$E = \{\, (x, y) \in S^{n-1}: x^{\tr}y = 0\, \}$, and~$\Gamma = \ortho(n)$---the group of orthogonal $(n \times n)$-matrices.

Since the orthogonal group preserves inner products, indeed~$\ortho(n) E \subseteq E$. Moreover,~$S^{n-1}$ is homeomorphic to~$\ortho(n) / \Stab(e)$ with~$\Stab(e)$ the stabilizer subgroup of an arbitrary point~$e \in S^{n-1}$, thus the $(n-1)$-dimensional unit sphere is~$\ortho(n)$-homogeneous. Witsenhausen's problem is the subject of the paper~\cite{Bekker2026OptimizationSpaces}, and will be treated in depth in Chapter~\ref{ch:witsenhausen}.

We can modify Witsenhausen's problem to produce homogeneous examples with~$k > 2$. Take~$V$ and~$\mu$ the same, but consider~$E$ to be the set of all $k$-tuples~$\{v_1 ,\ldots, v_k\}$ such that~$v_i^{\tr} v_j = 0$ for all~$i \neq j$. Expression~\eqref{eqn:extremal-problem-finite-measure} then gives the maximal measure of a set of unit vectors not containing an orthonormal set of size~$k$. Castro-Silva, Oliveira, Slot, and Vallentin~\cite{Castro-Silva2022ASets} present an upper bound to~\eqref{eqn:extremal-problem-finite-measure} for such problems, which is a recursive version of the Lovász $\vartheta$-number. These questions are relevant as they are compact versions of questions in Euclidean Ramsey theory; see~\cite{Castro-Silva2022ASets} and references therein.

The example with the thick edge set~$\{\, (x,y) \in [0,1]^2 : |x-y| \geq 1/2\, \}$ is not homogeneous, and the examples in the homogeneous setting given above are not thick. However, the two settings have much overlap. For instance, the latter example with the condition~$v_i^{\tr} v_j = 0$ replaced by~$v_i^{\tr} v_j \in [-1/2, 1/2]$ displays the required symmetry, but also has a thick edge set.

We will use the language of hypergraphs to present completely positive formulations of~\eqref{eqn:extremal-problem-finite-measure}. The proofs of exactness stick closely to the proof for~$k=2$ by DeCorte, Oliveira, and Vallentin, but hold in greater generality: Theorem 5.1 of~\cite{DeCorte2022CompleteSets} is the special case of Theorem~\ref{thm:completely-positive-is-exact-homogeneous-graph} in Section~\ref{sec:exact-cp-bound-homogeneous-hypergraph} where~$k = 2$ and~$E$ is a closed set. We reproduce their proof for~$k \geq 2$ and without the requirement that~$E$ is closed.

\section{Upper bounds on the measurable independence number}%
\label{sec:upper-bounds-measurable-independence-number}
Let~$k \geq 2$ an integer. We say~$H = ((V, \A, \mu), E)$ is a \defi{$k$-uniform measurable hypergraph}\index{hypergraph!k uniform measurable@$k$-uniform measurable hypergraph} if~$(V, E)$ is a $k$-uniform hypergraph,~$(V, \A, \mu)$ is a measure space, and~$E \subseteq V^k$ is measurable. A~\defi{uniform measurable hypergraph}\index{hypergraph!uniform measurable hypergraph} is a $k$-uniform hypergraph for some integer~$k \geq 2$. If~$H$ is a uniform measurable hypergraph with measure~$\mu$, the~\defi{measurable independence number}\index{independence number!measurable independence number} of~$H$ is
\index{ aH@$\alpha(H)$}\[
  \alpha(H) = \sup \{\, \mu(I) : I \subseteq V\text{ measurable and independent}\, \}.
\]
A set~$I \subseteq V$ is independent if and only if~$I^k \cap E = \emptyset$, thus~$\alpha(H)$ is exactly~\eqref{eqn:extremal-problem-finite-measure}.

The measure is part of the data of~$H$, so there can be no confusion with the traditional independence number, and we will often just call~$\alpha(H)$ the independence number of~$H$. Having said that, we usually abuse notation, and say~$H = (V, E)$ is a uniform measurable hypergraph if the measure on~$V$ is clear.

Similarly, we say~$H = (V, E, \Gamma)$ is a~\defi{$k$-uniform homogeneous hypergraph under~$\Gamma$}\index{hypergraph!k uniform homogenenous hypergraph@$k$-uniform homogeneous hypergraph} if~$(V, E)$ is a $k$-uniform hypergraph,~$\Gamma \subseteq \Aut((V, E))$ is a $\sigma$-compact unimodular locally compact group,~$V$ is a homogeneous~$\Gamma$-space that admits a $\Gamma$-invariant Radon measure, and~$E \subseteq V^k$ is Borel. A~\defi{uniform homogeneous hypergraph under~$\Gamma$}\index{hypergraph!uniform homogeneous hypergraph} is a $k$-uniform homogeneous hypergraph under~$\Gamma$ for some~$k$. We again abuse notation and just say that~$(V, E)$ is a uniform homogeneous hypergraph under a group~$\Gamma$.

In particular, we consider a uniform homogeneous hypergraph to be a vertex-transitive hypergraph and a uniform measurable hypergraph with a~$\Gamma$-invariant measure. When~$\Gamma$ is in addition compact, the $\Gamma$-invariant Radon measure on~$V$ is always the pushforward of a Haar measure of~$\Gamma$ under the quotient map; recall that we named it the quotient measure. See Sections~\ref{ch:complete-positivity-under-symmetry}.\ref{sec:prel-harmonic-analysis} and~\ref{ch:Appendix}.\ref{sec:app-invariant-measures} for more details on harmonic analysis and invariant measures.

Finally, if there is no group action, we need another condition that ensures compatibility between a topology, a measure, and an edge set. We say~$H = (V, E)$ is a~\defi{$k$-uniform locally independent hypergraph}\index{hypergraph!k uniform locally independent hypergraph@$k$-uniform locally independent hypergraph} if~$V$ is a Hausdorff space and under the induced topology on~$V^k$,~$E$ is Borel and every compact independent set is contained in an open independent set. We say~$H = ((V, \B, \mu), E)$ is a~\defi{$k$-uniform measurable locally independent hypergraph}\index{hypergraph!k uniform measurable locally independent hypergraph@$k$-uniform measurable locally independent hypergraph} if~$((V, \B, \mu), E)$ is a $k$-uniform measurable hypergraph,~$(V, E)$ is locally independent,~$\B$ is the Borel algebra of~$V$, and~$\mu$ is a Borel measure. Again, denote such hypergraphs just by~$(V, E)$ if all other data is clear.

In all the definitions above, we replace ``hypergraph'' by ``graph'' and drop the adjective~$k$-uniform when~$k=2$.

\sectionbreak

We first describe a family of programs that give upper bounds on the independence number of a uniform measurable hypergraph. Let~$k \geq 2$ be an integer and~$H = (V, E)$ be a $k$-uniform measurable hypergraph with measure~$\mu$. Recall the operators
\[
  \T_{k-2}^*: L^2_{\sym}(V, k) \to L^2_{\sym}(V, 2),\qquad \T_{k-2}^*T(x,y) = \int_{V^{k-2}} T(x, y, v)\, d\mu^{k-2}(v).
\]
Say that a symmetric $k$-tensor~$T \in L^2_{\sym}(V, k)$ is \defi{slice positive}\index{slice positive tensor} if and only if for all~$G \in L^2(V^{k-2})_{\geq 0}$ and~$f \in L^2(V)$
\[
  \int_{V^{k-2}} \int_{V} \int_V T(x, y, v) G(v) f(x) f(y)\, d\mu^{k-2}(v)d\mu(x)d\mu(y) \geq 0.
\]
When~$V$ is additionally a topological space,~$T$ is continuous, and~$\mu$ is regular and has full support, slice positivity is equivalent to~$T(\, \cdot\, ,\, \cdot\, , v)$ being a positive-semidefinite kernel for all~$v \in V^{k-2}$. In particular, completely positive tensors are slice positive.

For a convex cone~$\cC \subseteq L^2_{\sym}(V, k)$, let
\index{ thetabig@$\vartheta_{\bt}(H, \cC)$}
\begin{equation}%
  \label{eqn:big-theta-number}
  \begin{optprob}
    \vartheta_{\bt}(H, \cC) = \sup &\onerow{\langle A, \1^{\otimes k} \rangle}\\
    &\onerow{\Tr(\T_{k-2}^*A)= 1,}\\
    &A(v_1, \ldots, v_k) = 0 & \text{for all } v_1\cdots v_k \in E,\\
    &\onerow{A \in \cC \text{ is slice positive}.}
  \end{optprob}
\end{equation}
The normalization constraint should be interpreted as: ``the trace of~$\T_{k-2}^*A$ exists and is equal to~$1$''. The edge constraints on~$A$ are pointwise; this means that only certain combinations of cones and edge sets produce useful optimization problems, as the support of~$A$ as an $L^2$ class is only defined up to a set of measure~$0$.

On the other hand, the requirements for~$\vartheta_{\bt}(H, \cC) \geq \alpha(H)$ to hold are mild. For an integer~$k \geq 2$ and~$H = (V, E)$ a $k$-uniform measurable hypergraph with~$0 < \alpha(H) < \infty$, it is not so hard to prove that, if~$I$ is a measurable independent set with~$\mu(I) > 0$, then~$A = \1_I^{\otimes k} / \mu(I)^{k-1}$ has~$\Tr(\T_{k-2}^*A) = 1$ and~$\langle A, \1^{\otimes k}\rangle = \mu(I)$. Thus, any~$\cC$ containing these tensors results in an upper bound.

However, in applications, we usually ask tensors to be continuous. One of the reasons is that the most practical way to obtain feasible solutions is by use of polynomials. Another reason is that the edge set might have measure~$0$, in which case the constraint on edges does not affect the optimal value, and the upper bound given by~$\vartheta_{\bt}$ is trivial.

DeCorte, Oliveira and Vallentin~\cite{DeCorte2022CompleteSets} showed that when~$G = ((V, \B, \mu), E)$ is a measurable locally independent graph equipped with~$\mu$ an inner regular Borel measure and~$V$ compact such that~$0 < \alpha(G) < \infty$, then Urysohn's Lemma gives continuous kernels~$f^{\otimes k}$ with~$0 \leq f \leq 1$  and objective value arbitrarily close to~$\alpha(G)$. Thus, in this case, the independence number is indeed bounded from above by continuous kernels satisfying the constraints of~$\vartheta_{\bt}(H, \cC)$ for a suitable cone~$\cC$. Their proof goes through without issue in the $k$-uniform setting.
\begin{lemma}%
  \label{lem:upper-bound-thick-edges}
  If~$k \geq 2$ is an integer,~$H = ((V, \B, \mu), E)$ is a $k$-uniform measurable locally independent hypergraph with~$\mu$ a finite inner regular Borel measure such that~$\alpha(H) > 0$,~$V$ is compact, and~$\cC \subseteq C_{\sym}(V,k)$ is a cone containing all continuous tensors of the form~$f^{\otimes k}$ with~$f \in L^2(V)_{\geq 0}$, then
  \[
    \vartheta_{\bt}(H, \cC) \geq \alpha(H).
  \]
\end{lemma}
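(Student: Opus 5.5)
The plan is to exhibit, for every $\epsilon>0$, a feasible solution of $\vartheta_{\bt}(H,\cC)$ of the form $A = f^{\otimes k}/c$ with $f$ continuous, $0 \leq f \leq 1$, and objective value at least $\alpha(H)-\epsilon$. Such an $A$ lies in $\cC$ by assumption and is slice positive, since $\langle A, G\otimes f'\otimes f'\rangle$ factors as a product of $\langle f^{\otimes(k-2)},G\rangle/c \geq 0$ and $\langle f,f'\rangle^2\geq 0$. If $\alpha(H)=\infty$ the argument below gives arbitrarily large values. So it remains to handle the edge constraints, the normalization, and the objective.

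First I pick a measurable independent set $I$ with $\mu(I) \geq \alpha(H)-\epsilon$ and $\mu(I)>0$. By inner regularity of $\mu$ I choose a compact $K\subseteq I$ with $\mu(K)\geq \mu(I)-\epsilon$. Subsets of independent sets are independent, so $K$ is a compact independent set. Local independence then gives an open independent set $U\supseteq K$. Since $V$ is compact Hausdorff, it is normal, and Urysohn's lemma provides a continuous $f:V\to[0,1]$ with $f=1$ on $K$ and $\supp f\subseteq U$. (If needed I will shrink $U$ to an open set whose closure lies in the original $U$ and apply Urysohn to $K$ and $V\setminus U$.) Then $f(v_1)\cdots f(v_k)\neq 0$ forces all $v_i\in U$, and because $U$ is independent, $(v_1,\dots,v_k)\notin E$. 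Hence $f^{\otimes k}$ vanishes pointwise on $E$. This pointwise statement is exactly why continuity is used: we never need to argue about a representative modulo null sets.

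Next comes the normalization. Here $\T_{k-2}^*(f^{\otimes k}) = \mu(f)^{k-2}\, f\otimes f$. This is a rank-one positive semidefinite kernel with trace $\mu(f)^{k-2}\|f\|_2^2$, which equals $\mu(f)^{k-2}\int f^2\,d\mu$ by the continuous trace formula. The quantity is positive because $f\geq \1_K$ and $\mu(K)>0$ (after taking $\epsilon$ small). So I set $A = f^{\otimes k}/(\mu(f)^{k-2}\mu(f^2))$. Its objective is $\langle A,\1^{\otimes k}\rangle = \mu(f)^k/(\mu(f)^{k-2}\mu(f^2)) = \mu(f)^2/\mu(f^2)$. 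Since $0\leq f\leq 1$ gives $f^2\leq f$, we get $\mu(f^2)\leq\mu(f)$, so the objective is at least $\mu(f)\geq \mu(K)\geq \alpha(H)-2\epsilon$. Letting $\epsilon\to 0$ yields $\vartheta_{\bt}(H,\cC)\geq\alpha(H)$.

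The main obstacle is the topological step producing an open independent set containing a compact subset of nearly full measure, together with a continuous bump supported in it. This is precisely where local independence, inner regularity and compactness of $V$ (hence normality) must all be used together. A subtle point is that Urysohn requires $K$ and $V\setminus U$ to be disjoint closed sets. Since $U$ is open, $V\setminus U$ is closed, so this is fine, and $f$ vanishing on $V\setminus U$ already gives the pointwise vanishing on $E$ that we need. The remaining computations are routine.
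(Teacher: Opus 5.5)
Your proof is correct and follows the paper's route: the paper defers to the Urysohn-lemma argument of DeCorte, Oliveira, and Vallentin. That argument uses inner regularity to get a compact independent subset, local independence to get an open independent superset, and Urysohn to get a continuous bump $0 \le f \le 1$ whose normalized $k$th tensor power is feasible with objective $\mu(f)^2/\mu(f^2) \ge \mu(f)$. The case $\alpha(H)=\infty$ you mention cannot occur, since $\mu$ is finite.
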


Lemma~\ref{lem:averaged-rank-one-is-continuous} offers a second way of obtaining continuous tensors. It says that when~$H = (V, E)$ is a $k$-uniform homogeneous hypergraph under a compact group~$\Gamma$, then for every measurable independent set~$I$ with~$\mu(I) > 0$, the map~$A = \Av_{\Gamma} (\1_I^{\otimes k})/\mu(I)^{k-1}$ is continuous, and we can repeat the arguments above for the tensor~$A$, obtaining an upper bound on~$\alpha(H)$ if~$\cC$ contains all tensors of this form. However, we will see that, in the homogeneous setting, for a proof of exactness of the completely positive formulation we introduce later, it is necessary to replace the condition~$\Tr(\T_{k-2}^*A) = 1$ in~\eqref{eqn:big-theta-number} by~$\int_V A(v, \cdots, v)\, d\mu(v) = 1$, obtaining an upper bound on~$\alpha(H)^{k-1}$. For a convex cone~$\cC \subseteq C_{\sym}(V, k)$, let
\index{ thetasmall@$\vartheta_{\st}(H, \cC)$}
\begin{equation}%
  \label{eqn:small-theta-number}
  \begin{optprob}
    \vartheta_{\st}(H, \cC) = \sup &\onerow{\langle A, \1^{\otimes k} \rangle}\\
    &\onerow{\int_V A(v, \ldots, v)\, d\mu(v)= 1,}\\
    &A(v_1, \ldots, v_k) = 0 & \text{for all } v_1\cdots v_k \in E,\\
    &\onerow{A \in \cC \text{ is slice positive}.}
  \end{optprob}
\end{equation}
For~$k = 2$ and~$\cC \subseteq C_{\sym}(V, k)$, we have~$
\vartheta_{\bt}(H, \cC) = \vartheta_{\st}(H, \cC)$.
\begin{lemma}%
  \label{lem:upper-bound-homogeneous}
  If~$k \geq 2$ is an integer,~$H = (V, E, \Gamma)$ is a $k$-uniform homogeneous hypergraph with~$0 < \alpha(H) < \infty$, and~$\cC \subseteq C_{\rm sym}(V, k)$ is a cone containing all tensors of the form~$\Av_{\Gamma} (f^{\otimes k})$ with~$f\in L^k(V)_{\geq 0}$, then
  \[
    \vartheta_{\st}(H, \cC) \geq \alpha(H)^{k-1}.
  \]
\end{lemma}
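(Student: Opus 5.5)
Given a measurable independent set~$I$ with~$\mu(I) > 0$, I would build a feasible solution of~$\vartheta_{\st}(H, \cC)$ by averaging. Since~$\alpha(H) < \infty$, we can take~$I$ with~$\mu(I)$ arbitrarily close to~$\alpha(H)$, so it suffices to produce a feasible solution with objective~$\mu(I)^{k-1}$. Set
\[
  A = \frac{\Av_{\Gamma}(\1_I^{\otimes k})}{\mu(I)}, \qquad A(v_1, \ldots, v_k) = \frac{1}{\mu(I)} \int_{\Gamma} \1_I(g^{-1}v_1) \cdots \1_I(g^{-1}v_k)\, d\nu(g),
\]
where the normalization will be fixed by the trace constraint. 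Here I use the Haar measure on~$\Gamma$ as in Lemma~\ref{lem:averaged-rank-one-is-continuous}. That lemma, applied with~$\phi_i = \1_I$, shows~$A$ is continuous. Since~$\1_I \in L^k(V)_{\geq 0}$, $A$ lies in~$\cC$ by hypothesis.

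Next I check the constraints one by one. The edge constraint holds pointwise: if~$v_1 \cdots v_k \in E$, then for every~$g$ we have~$g^{-1}v_1 \cdots g^{-1}v_k \in E$, because~$\Gamma \subseteq \Aut(H)$. Independence of~$I$ then forces~$(g^{-1}v_1, \ldots, g^{-1}v_k) \notin I^k$, so the integrand vanishes identically. Because~$A$ is continuous, this pointwise statement is meaningful and needs no almost-everywhere caveat.

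Slice positivity follows from Fubini: the integral in the definition equals
\[
  \int_{\Gamma} \int_{V^{k-2}} \prod_{i} \1_I(g^{-1}v_i)\, G(v) \biggl(\int_V \1_I(g^{-1}x) f(x)\, d\mu(x)\biggr)^2 \, d\mu^{k-2}(v)\, d\nu(g)
\]
up to the positive normalization, and this is nonnegative.

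For the diagonal constraint, the diagonal value is~$\int_\Gamma \1_I(g^{-1}v)\, d\nu(g)$, the Haar measure of~$\{g : g^{-1}v \in I\}$. By invariance of~$\mu$ and the quotient-measure relation~\eqref{eqn:quotient-measure-is-invariant-Radon-measure}, integrating this over~$v$ gives~$\mu(I)\,\mu(V)$ under the chosen normalization. I would therefore normalize~$A$ so that~$\int_V A(v, \ldots, v)\, d\mu(v) = 1$; the precise constant depends on the Haar and quotient measure conventions.

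For the objective, Fubini together with invariance of~$\mu$ gives
\[
  \langle \Av_\Gamma(\1_I^{\otimes k}), \1^{\otimes k}\rangle = \int_\Gamma \mu(gI)^k\, d\nu(g) = \mu(I)^k \cdot \text{const},
\]
with the same constant that appeared in the diagonal computation. After normalization the objective is therefore~$\mu(I)^k/\mu(I) = \mu(I)^{k-1}$. Letting~$\mu(I) \to \alpha(H)$ gives~$\vartheta_{\st}(H, \cC) \geq \alpha(H)^{k-1}$.

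The main obstacle is the measure bookkeeping when~$\Gamma$ is not compact. There, $\nu(\Gamma)$ may be infinite, and the objective integral~$\int_\Gamma \mu(gI)^k\, d\nu$ could diverge unless it is handled via~\eqref{eqn:quotient-measure-is-invariant-Radon-measure} so that the common constant cancels between the objective and the diagonal. I would first treat the compact case, where the Haar measure is normalized and the constant is~$\mu(V)$, which cancels. In the general case I would verify that both the objective and the diagonal integral scale by the same factor, taking~$V$ of finite measure as the setting of this part implicitly assumes.
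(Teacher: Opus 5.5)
Your proposal is correct and is essentially the paper's argument: average $\1_I^{\otimes k}$ over $\Gamma$, get continuity from Lemma~\ref{lem:averaged-rank-one-is-continuous}, verify the edge constraint pointwise using $\Gamma \subseteq \Aut(H)$, and divide by the diagonal integral $\mu(I)$ so the objective becomes $\mu(I)^{k-1}$. Dividing by $\mu(I)$ rather than by $\mu(I)^{k-1}$ (the $\vartheta_{\bt}$ normalization used in the paper's preceding discussion) is the right choice here. You can drop the closing ``same factor'' plan for noncompact $\Gamma$ and simply assume $\Gamma$ compact, as the paper's discussion does. For noncompact $\Gamma$ the averaged indicator tensor has either an infinite diagonal integral (compact stabilizer, so $\mu(V)=\infty$) or infinite pointwise values (noncompact stabilizer), so no rescaling can rescue it.
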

\begin{proof}
  See the discussion above.
\end{proof}

\section{Density and thick sets}%
\label{sec:density-and-thick-sets}
The strategies for proving sharpness of a completely positive bound on the independence number for the thick and the homogeneous setting are similar: given a feasible solution~$A$, find a function~$f \geq 0$ such that its support is independent and whose measure gives an upper bound on the integral of~$A$ over the space. Since the suitable space for the optimization variables is a space of square-integrable functions, by functional-analytic arguments we can only describe values of integrals; we have little control over pointwise properties. We introduce the concept of density to deal with this.

In the thick setting, although the definitions are technical, this approach is rather straightforward. We obtain a zero-measure-removal lemma, Lemma~\ref{lem:removal-lemma-thick-edge-set}. It implies that if we find a function~$f \geq 0$ such that~$\mu^k((\supp{f})^k \cap E) = 0$, we can remove a subset of~$\supp{f}$ of measure~$0$ to obtain an independent set. In the homogeneous setting, things are a little more involved. We also obtain a removal lemma there, though we do not explicitly state it.

Let~$(V, \A, \mu)$ be a measure space, and denote the family of all measurable sets with nonzero measure by~$\A^+$\index{ A+@$A^+$}. Given a subfamily~$\D \subseteq \A^+$ that is a directed set under the opposite of the inclusion relation, define for measurable~$A \in \A^+$ and~$v \in V$,
\index{ densvA@$\dens_v(A)$}\[
  \dens_v(A) = \lim_{v \in D \in \D} \frac{\mu(D \cap A)}{\mu(D)},
\]
when the limit exists. We call~$\dens_v(A)$ the~\defi{density}\index{density} of~$v$ in~$A$ with respect to~$\D$. Denote the set of points~$v \in V$ such that~$\dens_v(A) = 1$ by~$D(A)$\index{ DA@$D(A)$}. Note that~$D(A)$ is not necessarily contained in~$A$.

Such a directed set~$\D \subseteq \A^+$ is called a~\defi{density system}\index{density system} if~$\mu(A \symdif D(A)) = 0$ for all~$A \in \A^+$, where~$\symdif$ is the symmetric difference of sets\index{ 000@$\symdif$}. If~$\D$ is a density system, we call a point~$v \in D(A)$ a~\defi{density point}\index{density point} of~$A$. In particular, for every~$A \in \A^+$, almost every~$v \in A$ is a density point of~$A$; on the other hand, almost every density point of~$A$ is in~$A$, from which we conclude~$\mu(A) = \mu(D(A)) = \mu(A \cap D(A))$.

For us, there are three main examples of a measure space with a  density system. The first is that of a set with a discrete measure, that is, combinations of point measures. The set of singletons forms a density system for such a measure.

The second example is~$\R^n$ with the Lebesgue measure. The Lebesgue density theorem states that the set of open balls forms a density system of~$\R^n$.

The third example is when the measure space has a finite-rank approximation. Let~$(V, \A, \mu)$ be a measure space with a finite-rank approximation~$(\P_n)_n$ with associated conditional expectation operators~$E_n$. Let~$\D = \bigcup_n \P_n^+$, the set of the parts with positive measure of all partitions in~$\P_n$. For all~$v \in V$, denote the set in~$P_n$ containing~$v$ by~$P_n(v)$. Let~$A \in \A^+$. We have
\[
  \lim_n E_n \1_A(v) = \lim_n \sum_{P \in \P_n} \frac{\langle \1_P, \1_A\rangle }{\mu(P)} \1_P(v) = \lim_n \frac{\mu(P_n(v) \cap A)}{\mu(P_n(v))} = \dens_v(A).
\]
The martingale convergence theorem, Theorem~\ref{thm:martingale-convergence}, then implies that, indeed,~$\mu(D(A) \symdif A) = 0$.

Let~$k \geq 1$ be an integer. A set~$E \subseteq V^k$ is called~\defi{$\D$-thick}\index{D-thick set@$D$-thick set} if and only if for all~$(v_1, \ldots, v_k) \in E$ we have
\begin{equation}%
  \label{eqn:density-in-E}
  \lim_{\substack{D_1, \ldots, D_k \in \D \\ (v_1, \ldots, v_k) \in D_1 \times \cdots \times D_k}} \frac{\mu^k((D_1 \times \cdots \times D_k) \cap E)}{\mu(D_1)\cdots \mu(D_k)} > 0.
\end{equation}
We simply call~$E$ a~\defi{thick set}\index{thick set} if there exists a density system~$\D$ for which~$E$ is $D$-thick. We also call the quantity~\eqref{eqn:density-in-E} the~\defi{density}\index{density} of~$(v_1, \ldots, v_k)$ in~$E$ with respect to~$\D$.

A measure space may have many density systems~$\D$; which sets are $\D$-thick depends on the choice of~$\D$. This means that, although a finite-rank approximation defines a density system, the density system thus obtained is not necessarily the one we want to use, as the next example shows.
\begin{exmp}%
  \label{exmp:thick-notthick}
  Take the set~$E_{\geq} = \{\, (x,y) : |x-y| \geq 1/2\, \}$ in the square~$[0,1]^2$ from the introduction. By the Lebesgue density theorem, the set of open intervals~$\D_I$ forms a density system of~$[0,1]$. Under this system,~$E_{\geq}$ is~$\D_I$-thick. On the other hand, the dyadic decomposition of~$[0,1]$ provides a finite-rank approximation of~$[0,1]$, that is,~$(\P_n)_n$ with
  \[
    P_n = \{\, [2^{-n} i, 2^{-n}(i+1)) : 0 \leq i \leq 2^n-1\, \} \cup \{\{1\}\}.
  \]
  The set~$\D_D = \bigcup_n \P_n \setminus \{\{1\}\}$ is a density system, but~$E$ is not~$\D_D$-thick. For example, the point in Figure~\ref{fig:thick-notthick} has density~$0$ under~$\D_D$, because it is unlucky enough to fall on the corner of a dyadic square that contains no other points of~$E_{\geq}$.
  \begin{figure}[tb]
    \centering
    \includegraphics[width=0.95\textwidth]{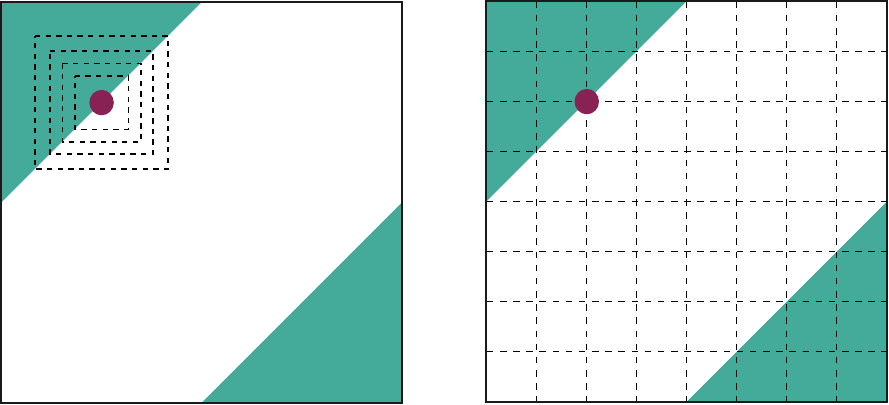}
    \caption{The square~$[0,1]^2$ with the set~$E_{\geq}$ shaded. The figure on the left shows that if we take~$\D_I$ for the density system, the density in~$E_{\geq}$ of the point is~$1/2$. On the right, the point coincides with the corner of a dyadic square that contains no other points of~$E_{\geq}$. This remains true for any subdivision of the grid, so the point has density~$0$ in~$E_{\geq}$ under~$\D_D$.}
    \label{fig:thick-notthick}
  \end{figure}
\end{exmp}

The usefulness of density systems and thick sets stems from the following removal lemma, which is reminiscent of the graph removal lemma from combinatorics~\cite{Erdos1986TheExponent}. It implies that if~$S$ is a set of vertices such that the set of edges in~$S^k$ has measure~$0$, then~$D(S)$ is independent.
\begin{lemma}[Removal lemma]%
  \label{lem:removal-lemma-thick-edge-set}
  Let~$k \geq 1$ be an integer,~$(V, \A, \mu)$ be a measure space with a density system~$\D$, and~$E \subseteq V^k$ be a $\D$-thick set. If~$S \in \A^+$, then~$D(S)^k \cap E \neq \emptyset$ if and only if~$\mu^k(S^k \cap E) > 0$.
\end{lemma}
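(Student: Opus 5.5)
We prove the two implications separately. The direction ``$D(S)^k \cap E \neq \emptyset \Rightarrow \mu^k(S^k\cap E)>0$'' is where thickness is used. The direction ``$\mu^k(S^k \cap E) > 0 \Rightarrow D(S)^k \cap E \neq \emptyset$'' only uses that~$\D$ is a density system.

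\emph{Second direction.} Since~$\D$ is a density system, $\mu(S \symdif D(S)) = 0$. Hence $S^k \symdif D(S)^k$ is contained in the union of the sets $V^{i-1}\times (S\symdif D(S)) \times V^{k-i}$, each of which is $\mu^k$-null; this needs either $\sigma$-finiteness or the convention that a product with a null factor is null. Therefore $\mu^k(D(S)^k \cap E) = \mu^k(S^k\cap E) > 0$, so $D(S)^k\cap E$ is nonempty. One point must be settled before this works: $D(S)$ should be measurable. I would handle it by noting that $D(S)$ differs from the measurable set $S$ only by a null set, taking completions if necessary. Alternatively, we can avoid the issue altogether: $S\cap D(S)$ equals $S$ up to a null set, and we work with $(S\cap D(S))^k\cap E$, which lies inside $D(S)^k \cap E$.

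\emph{First direction.} Take $(v_1,\ldots,v_k)\in D(S)^k\cap E$ and put $\delta>0$ equal to the density~\eqref{eqn:density-in-E} of this point in $E$. For $D_1,\ldots,D_k\in\D$ with $v_i \in D_i$, set $R = D_1\times\cdots\times D_k$. We have
\[
  \mu^k(R\cap S^k\cap E) \geq \mu^k(R\cap E) - \sum_{i=1}^k \mu^k\bigl(D_1\times\cdots\times (D_i\setminus S)\times\cdots\times D_k\bigr),
\]
and after dividing by $\mu(D_1)\cdots\mu(D_k)$ this becomes
\[
  \frac{\mu^k(R\cap S^k\cap E)}{\mu(D_1)\cdots\mu(D_k)} \geq \frac{\mu^k(R\cap E)}{\mu(D_1)\cdots\mu(D_k)} - \sum_{i=1}^k\Bigl(1-\frac{\mu(D_i\cap S)}{\mu(D_i)}\Bigr).
\]
Each $v_i$ is a density point of $S$, so each term in the sum tends to $0$ along the directed system. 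The first term tends to $\delta$. Consequently, for $D_1,\ldots,D_k$ far enough along, the left-hand side is at least $\delta/2>0$, which gives $\mu^k(S^k\cap E)>0$.

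\emph{Main obstacle.} The obstacle is making the simultaneous limit precise. The limit in~\eqref{eqn:density-in-E} is taken over the product directed set of $k$-tuples, while $\dens_{v_i}(S)$ is a limit over the single system $\D$ restricted to sets containing $v_i$. I would order $k$-tuples coordinatewise and take, for each $i$, an index $D_i^0$ beyond which $\mu(D\cap S)/\mu(D) > 1-\delta/(4k)$. I would also take a tuple beyond which the $E$-density exceeds $\delta/2$. A common upper bound of these in the product order then works. This requires the sets of $\D$ containing a given point to form a directed subfamily, which is implicit in the definition of $\dens_v$. The finiteness of $\mu(D_i)$, needed for the ratios to make sense, is implicit in the same definition.
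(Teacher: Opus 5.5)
Your proposal is correct and follows the paper's proof. The easy direction rests on $\mu^k(S^k \symdif D(S)^k)=0$. The thick direction picks, for each $i$, a set of $\D$ around $v_i$ that lies deep enough in both the $E$-density limit and the density limit of $S$ at $v_i$, and then bounds $\mu^k(R\cap S^k\cap E)$ from below on the resulting box $R$.

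The differences are only bookkeeping. You use a union bound over the slabs $D_1\times\cdots\times(D_i\setminus S)\times\cdots\times D_k$, whereas the paper uses $\nu(S_1\cap S_2\cap S_3)\geq \nu(S_1\cap S_2)+\nu(S_1\cap S_3)-\nu(S_1)$ together with the product $\prod_i \mu(D_i\cap S)/\mu(D_i)$. Your slab containment for the null symmetric difference also avoids the paper's subtraction of possibly infinite measures.
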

\begin{proof}
  Let~$S \in \A^+$ and~$E \subseteq V^k$ be $\D$-thick.  Note first that if~$S_1, S_2 \subseteq V$ such that~$\mu(S_1 \symdif S_2) = 0$, then~$\mu^k(S_1^k \symdif S_2^k) = 0$. Indeed, to see this, note that~$S_1^k \symdif S_2^k \subseteq (S_1 \cup S_2)^k \setminus (S_1 \cap S_2)^k$, so
  \begin{multline*}
    \mu^k(S_1^k \symdif S_2^k) \leq \mu^k((S_1 \cup S_2)^k \setminus (S_1 \cap S_2)^k)\\
    = \mu^k((S_1 \cup S_2)^k) - \mu^k((S_1 \cap S_2)^k) = \mu(S_1)^k - \mu(S_1)^k = 0.
  \end{multline*}
  Thus, if~$D(S)^k \cap E = \emptyset$, then~$\mu^k(D(S)^k \cap E) = 0$, and since~$\mu^k(S^k \symdif D(S)^k) = 0$, we have~$\mu^k(S^k \cap E) = 0$.

  On the other hand, suppose there is a~$(v_1, \ldots, v_k) \in D(S)^k \cap E$. Since~$E$ is $\D$-thick, there exists a constant~$1 \geq c > 0$ and for each~$i$ a set~$D_{i,1} \in \D$ such that for all choices~$D_1, \ldots, D_k \in \D$ with~$v_i \in D_i \subseteq D_{i,1}$,
  \[
    \frac{\mu^k((D_1 \times \ldots \times D_k) \cap E)}{\mu(D_1) \cdots \mu(D_k)} \geq c.
  \]
  Moreover, each~$v_i$ is a density point of~$S$, so for each~$i$ there exists a set~$D_{i,2} \in \D$ such that for all~$D$ with~$v_i \in D \subseteq D_{i,2}$,
  \[
    \frac{\mu(D \cap S)}{\mu(D)} > (1-c/2)^{1/k}.
  \]
  The set~$\D$ is directed, so take for each~$i$ a set~$D_{i, 0} \in \D$ contained in~$D_{i,1} \cap D_{i,2}$.

  For each~$i$, choose~$D_i \in \D$ such that~$D_i \subseteq D_{i,0}$. Use the following version of the inclusion-exclusion principle. For a measure~$\nu$ and $\nu$-measurable sets~$S_1$,~$S_2$, and~$S_3$,
  \begin{multline*}
    \nu(S_1 \cap S_2 \cap S_3) = \nu\bigl((S_1 \cap S_2) \cap (S_1 \cap S_3)\bigr) = \nu(S_1 \cap S_2) + \nu(S_1 \cap S_3) -\\
    \nu\bigl((S_1 \cap S_2) \cup (S_1 \cap S_3)\bigr) \geq \nu(S_1 \cap S_2) + \nu(S_1 \cap S_3) - \nu(S_1).
  \end{multline*}
  Apply this with~$S_1 = D_1 \times \cdots \times D_k$,~$S_2 = S^k$ and~$S_3 = E$, to obtain
  \[
    \begin{split}
      \frac{\mu^k(S^k \cap E)}{\mu(D_1) \cdots \mu(D_k)}  &\geq \frac{\mu^k\bigl((D_1 \times \cdots \times D_k) \cap S^k \cap E\bigr)}{\mu(D_1) \cdots \mu(D_k)}\\
      &\geq \frac{\mu(D_1 \cap S) \cdots \mu(D_k \cap S)}{\mu(D_1) \cdots \mu(D_k)} +\\
      &\mathrel{\phantom{\geq}} \frac{\mu^k((D_1 \times \cdots \times D_k) \cap E)}{\mu(D_1) \cdots \mu(D_k)} - \frac{\mu(D_1) \cdots \mu(D_k)}{\mu(D_1) \cdots \mu(D_k)}\\
      &>(1-c/2) + c - 1\\
      &= c/2.
  \end{split}\]
  It follows that~$\mu^k(S^k \cap E) > 0$.
\end{proof}

\section{The thick setting}%
\label{sec:extactness-completely-positive-bound}
We first prove~$\vartheta_{\bt}(H, \CP(V, k)) = \alpha(H)$ for uniform measurable hypergraphs~$H$ with a thick edge set. In computational implementations of upper bounds,~$V$ is usually equipped with a topology and a Borel measure, and we only consider the continuous tensors in each cone. However, in our treatment of the thick setting, leaving out the continuity condition simplifies matters a little. This suffices, because the programs~$\vartheta_{\bt}(H, \cC)$ defined in~\eqref{eqn:big-theta-number} are maximization problems, so restricting the feasible region decreases the optimal value, and Lemma~\ref{lem:upper-bound-thick-edges} ensures that restricting to continuous tensors still produces a valid bound, which is also sharp.

A key tool in the proof of Theorem 5.1 in~\cite{DeCorte2022CompleteSets} is the identification of a compact region of the completely positive cone, its tip, whose extreme points have a simple description. We extend this definition here to $k$-tensors in a weak sense. An oddity of this extension is that the tip is not necessarily a part of the cone, but once we accept this, everything works as it should.

Let
\index{ Ebig@$\mathcal{E}_{\bt}$}\[
  \mathcal{E}_{\bt} = \{0\} \cup \{\, f^{\otimes k} : f \in L^{k/(k-1)}(V)_{\geq 0} \text{ and } \|f\|_{k/(k-1)} = 1\, \}
\]
and define the~\defi{big tip}\index{big tip@big tip of~$\CP(V,k)$} of~$\CP(V,k)$ by~$ \T(\CP(V, k)) = \cch \mathcal{E}_{\bt}$, with closure in the weak topology on~$L^{k/(k-1)}$\index{ TCPVk@$\T(\CP(V,k))$}. The appearance of the exponent~$k / (k-1)$---the $L^p$ exponent conjugate to~$k$---is a bit mysterious. As we will see in the proof of the lemma below, it comes from the following interpolation trick. Let~$m$ be a natural number, let~$\{k_i\}_{i \in [m]}$ be positive real numbers such that~$\sum_i k_i = 1$, let~$1 \leq p_1,\ldots, p_m \leq \infty$ be real numbers, and define~$p$ by~$1/p = \sum_i (k_i / p_i)$. For all measurable~$f$,
\begin{equation}%
  \label{eqn:interpolation-Holders-ineq}
  \|f\|_p \leq \prod_{i = 1}^m \|f\|_{p_i}^{k_i},
\end{equation}
which follows from repeated application of Hölder's inequality.

The proof of the lemma makes use of the conditional expectation operators associated to a finite-rank approximation. Recall Lemma~\ref{lem:sufficient-conition-finite-rank-approximation}, which says in particular that a finite countably generated measure space has a finite-rank approximation.
\begin{lemma}%
  \label{lem:trace-1-is-in-big-tip}
  If~$(V, \A, \mu)$ is a finite countably generated measure space, ~$k\geq 2$ is an integer, and~$A \in \CP(V, k)$ with~$\Tr(\T_{k-2}^*A) \leq 1$, then~$A \in \T(\CP(V, k))$.
\end{lemma}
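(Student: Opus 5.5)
The plan is to treat finite sums of rank-one tensors first, where a single interpolation inequality does all the work. A general~$A$ is then reached through the conditional expectations~$E_n$ of a finite-rank approximation, which exists by Lemma~\ref{lem:sufficient-conition-finite-rank-approximation}. This route avoids approximating~$A$ by arbitrary finite sums, for which the trace constraint need not survive the limit.

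\emph{Step 1 (rank one).} Let~$f \in L^2(V)_{\geq 0}$. Then~$\T_{k-2}^*(f^{\otimes k}) = \|f\|_1^{k-2} f\otimes f$, so~$\Tr(\T_{k-2}^* f^{\otimes k}) = \|f\|_1^{k-2}\|f\|_2^2$. Apply~\eqref{eqn:interpolation-Holders-ineq} with~$p_1 = 1$, $p_2 = 2$, weights~$k_1 = (k-2)/k$ and~$k_2 = 2/k$. This gives~$1/p = (k-2)/k + 1/k = (k-1)/k$, hence
\[
  \|f\|_{k/(k-1)}^k \leq \|f\|_1^{k-2}\|f\|_2^2 = \Tr(\T_{k-2}^* f^{\otimes k}).
\]
Consequently, for a finite sum~$A = \sum_i f_i^{\otimes k}$ with all~$f_i \geq 0$, nonzero, and~$\Tr(\T_{k-2}^*A) \leq 1$, set~$\lambda_i = \|f_i\|_{k/(k-1)}^k$ and~$g_i = f_i/\|f_i\|_{k/(k-1)}$. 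Then~$A = \sum_i \lambda_i g_i^{\otimes k}$ with~$\sum_i \lambda_i \leq \sum_i \Tr(\T_{k-2}^*f_i^{\otimes k}) \leq 1$. Adding the weight~$1-\sum_i\lambda_i$ on~$0$ writes~$A$ as a convex combination of elements of~$\mathcal{E}_{\bt}$.

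\emph{Step 2 (reduction via martingales).} For general~$A \in \CP(V,k)$, consider~$E_n A = B_n^*A_nA$. The proof of Theorem~\ref{thm:Polyas-theorem-finite-measure} shows~$A_n A \in \CP(\P_n^+,k)$, which is a finite sum~$\sum_i w_i^{\otimes k}$ with~$w_i \geq 0$. By Lemma~\ref{lem:properties-An-Bn}(iii), $E_nA = \sum_i (B_n^*w_i)^{\otimes k}$ is a finite sum of nonnegative rank-one tensors. Two facts are then needed.
\begin{enumerate}
  \item[(a)] $\T_{k-2}^*$ commutes with~$E_n$, because integrating out the last~$k-2$ coordinates commutes with averaging over product cells, so~$\T_{k-2}^*E_nA = E_{\P_n}\T_{k-2}^*A$ on~$V^2$.
  \item[(b)] $E_{\P_n}K = PKP$ with~$P$ the orthogonal projection onto~$\P_n$-step functions. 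So for the positive-semidefinite kernel~$K = \T_{k-2}^*A$ (slice positivity of completely positive tensors), $\Tr(PKP) \leq \Tr K \leq 1$.
\end{enumerate}
Hence Step~1 applies and~$E_nA \in \T(\CP(V,k))$ for every~$n$.

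\emph{Step 3 (passage to the limit).} By Theorem~\ref{thm:martingale-convergence}, $E_nA \to A$ in~$L^2(V^k)$. Since~$\mu$ is finite and~$k/(k-1) \leq 2 \leq k$, we have continuous inclusions~$L^2 \subseteq L^{k/(k-1)}$ and~$L^k \subseteq L^2$. So for every~$g \in L^k(V^k)$, $\langle E_nA - A, g\rangle \to 0$, which is weak convergence in~$L^{k/(k-1)}$. Because the big tip is weakly closed, $A \in \T(\CP(V,k))$.

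I expect the main obstacle to be Step~2, specifically verifying~(a) and~(b) with care. In~(a), the product partition must be exactly the one induced on~$V^k$. In~(b), one must justify that~$\T_{k-2}^*A$ is positive semidefinite and trace class, and that compression by~$P$ does not increase the trace; the latter follows from the orthonormal-basis formula~\eqref{eqn:trace} by choosing a basis adapted to~$P$. The remaining steps are routine by comparison.
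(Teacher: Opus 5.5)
Your proposal is correct and follows essentially the same route as the paper. You approximate $A$ by the conditional expectations $E_nA$, expand these as finite nonnegative combinations of rank-one tensors via $A_nA \in \CP(\P_n^+,k)$, and use $\T_{k-2}^*E_n = E_n\T_{k-2}^*$ together with $\Tr(E_nK) \leq \Tr K$ and the interpolation inequality with $p_1 = 1$, $p_2 = 2$ to bound the total weight by~$1$. The differences are cosmetic: you isolate the rank-one computation as its own step, justify the trace contraction by the compression $PKP$ instead of Parseval, and pass to the limit via $L^2$ convergence plus weak closedness of the big tip.
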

\begin{proof}
  Let~$(\P_n)_{n \in \N}$ be a finite-rank approximation of~$V$ with the conditional expectation operators~$E_n$ and the associated operators~$A_n$ as in Section~\ref{ch:completely-positive-cone}.\ref{sec:projective-approximation}. Since~$\mu$ is finite, using Hölder's inequality,~$\|A\|_{k/(k-1)}$ is finite. Indeed, if~$k \geq 2$ then~$k/(k-1) \leq 2$, and for any~$1 \leq p \leq q \leq \infty$,
  \[
    \|A\|_p = \langle |A|^p, \1\rangle^{1/p} \leq c\||A|^p\|_{q/p}^{1/p} = c\|A\|_q < \infty,
  \]
  where~$c = \mu^k(V^k)^{(q/p)/((q/p)-1)}$. By taking~$p = k/(k-1)$ and~$q = 2$, it follows that~$A \in L^{k/(k-1)}(V^k)$, and~$A = \lim_n E_n A$ in the~$L^{k/(k-1)}$ norm. Hence, it is enough to prove that each~$E_nA \in \T(\CP(V, k))$.

  Recall that~$A_n A \in \CP(\P_n^+, k)$. It has finite expansion~$A_n A = \sum_m \lambda_{m} x_{m}^{\otimes k}$ with~$\lambda_{m} \in \R_{\geq 0}$ and~$x_{m} \in \R^{\P_n^+}_{\geq 0}$. Denote~$f_m = \sum_{P} ({x_m})_{P} \1_{P} / \mu(P)$ so that the equality~$E_n A = \sum_{m} \lambda_{m}f_m^{\otimes k}$ holds, and rescale each~$\lambda_m$ and~$f_m$ such that~$\|f_m\|_{k/(k-1)}  = 1$. Because~$0 \in \T(\CP(V, k))$, if~$\sum_m \lambda_m \leq 1$ the lemma follows.

  By Parseval's identity, for any orthonormal basis~$\Phi$,
  \[
    \Tr E_n K = \sum_{P_1, P_2 \in \P_n^+} \frac{\langle K\1_{P_1}, \1_{P_{2}}\rangle}{\sqrt{\mu^2(P_{1,2})}} \sum_{\phi \in \Phi} \frac{\langle \1_{P_1}, \phi\rangle \langle \phi, \1_{P_2} \rangle}{\sqrt{\mu(P_1)\mu(P_2)}} \leq \Tr(K).
  \]
  The inequality uses that the set of functions~$\1_{P}/\sqrt{\mu(P)}$ with~$P \in \P_n^+$ is also orthonormal.

  Since the sum over~$m$ is finite and again using Parseval's identity,
  \begin{multline*}
    1 \geq \Tr(E_n \T_{k-2}^* A) = \Tr(\T_{k-2}^*E_n A) \\
    = \sum_{m} \lambda_{m} \|f_m\|_1^{k-2} \sum_{\phi \in \Phi} \langle f_m, \phi\rangle^2 =  \sum_{m} \lambda_{m} \|f_m\|_1^{k-2} \|f_m\|_2^2.
  \end{multline*}
  Now use the interpolation inequality~\eqref{eqn:interpolation-Holders-ineq} with~$p_1 = 1$,~$p_2 = 2$,~$k_1 = (k-2)/k$ and~$k_2 = 2/k$. Then~$1/p = (k-1)/k$, so that
  \[
    \sum_{m} \lambda_{m} \bigl(\|f_m\|_1^{(k-2)/k} \|f_m\|_2^{2/k}\bigr)^k \geq \sum_m \lambda_m \|f_m\|_{k/(k-1)}^k = \sum_m \lambda_m.
  \]
  The  conclusion~$A = \lim_n E_n A \in \T(\CP(V, k))$ follows.
\end{proof}

\begin{lemma}%
  \label{lem:extreme-points-big-tip}
  If~$(V, \A, \mu)$ is a finite countably generated measure space and~$k \geq 2$ is an integer, the extreme points of~$\T(\CP(V, k))$ lie in~$\mathcal{E}_{\bt}$.
\end{lemma}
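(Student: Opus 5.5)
We want to show that every extreme point of $\T(\CP(V,k)) = \cch \mathcal{E}_{\bt}$ lies in $\mathcal{E}_{\bt}$. The standard tool is Milman's converse to the Krein--Milman theorem. If $K$ is a compact convex subset of a locally convex space and $K = \cch S$, then every extreme point of $K$ lies in the closure of $S$. So the plan is to fix a locally convex topology in which $\T(\CP(V,k))$ is compact and $\mathcal{E}_{\bt}$ is closed, and then invoke Milman's theorem.

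First I fix the topology. Write $q = k/(k-1)$, so that $1 < q \le 2$. Then $L^q(V^k)$ is reflexive, being the dual of $L^k(V^k)$. On norm-bounded sets the weak topology coincides with the weak* topology $\sigma(L^q, L^k)$, so bounded closed convex sets are weakly compact by Banach--Alaoglu. Next I check boundedness. For $f \ge 0$ with $\|f\|_q = 1$ we have $\|f^{\otimes k}\|_{L^q(V^k)} = \|f\|_q^k = 1$, so $\mathcal{E}_{\bt}$ lies in the closed unit ball. Hence $\T(\CP(V,k))$, the weak closed convex hull, is a weakly closed subset of the unit ball and therefore weakly compact.

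The main step is to show that $\mathcal{E}_{\bt}$ is weakly closed. Since it lies in the unit ball, which is weakly compact and metrizable when $L^k$ is separable (Lemma~\ref{lem:sufficient-conition-finite-rank-approximation}), it suffices to take a sequence $f_n^{\otimes k} \to A$ weakly with $f_n \ge 0$ and $\|f_n\|_q = 1$, and show that $A \in \mathcal{E}_{\bt}$. Passing to a subsequence, $f_n \to f$ weakly in $L^q(V)$ with $f \ge 0$ and $\|f\|_q \le 1$. Testing against products $g_1 \otimes \cdots \otimes g_k$ with $g_i \in L^k(V)$ gives $\langle f_n^{\otimes k}, g_1\otimes\cdots\otimes g_k\rangle = \prod_i \langle f_n, g_i\rangle \to \prod_i\langle f, g_i\rangle$. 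These products span a dense subspace of $L^k(V^k)$ and the sequence is bounded, so $A = f^{\otimes k}$.

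It remains to show that $\|f\|_q \in \{0, 1\}$, and this is where I expect the real obstacle. Weak limits can lose norm, for example through concentration or oscillation, and a limit $f^{\otimes k}$ with $0 < \|f\|_q < 1$ is not in $\mathcal{E}_{\bt}$. So $\mathcal{E}_{\bt}$ is probably not weakly closed, and the argument above does not finish on its own. The fix I would try is to argue directly about extremality rather than closedness.

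1. Milman's theorem still places every extreme point in the weak closure of $\mathcal{E}_{\bt}$, which by the computation above is contained in $\{\, f^{\otimes k} : f \ge 0,\ \|f\|_q \le 1\, \}$.
2. Suppose $f^{\otimes k}$ with $0 < \|f\|_q = t < 1$ is an extreme point. Write $f^{\otimes k} = t^k (f/t)^{\otimes k} + (1 - t^k)\cdot 0$. Both $(f/t)^{\otimes k}$ and $0$ lie in $\mathcal{E}_{\bt} \subseteq \T(\CP(V,k))$, and since $0 < t^k < 1$ this is a proper convex combination of two distinct points of $\T(\CP(V,k))$.
3. This contradicts extremality, so every extreme point has $\|f\|_q \in \{0,1\}$, that is, it lies in $\mathcal{E}_{\bt}$.

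This sidesteps the closedness issue entirely. The technical points that need care are Milman's theorem in the weak topology, the density of finite sums of product functions in $L^k(V^k)$, and metrizability of the ball, or else working with nets throughout, in which case separability is not needed.
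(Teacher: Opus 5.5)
Your proposal is correct and follows the paper's proof essentially step for step. Milman's theorem places the extreme points in the weak closure of $\mathcal{E}_{\bt}$. Weak compactness of the unit ball of $L^{k/(k-1)}(V)$ and density of product functions in $L^k(V^k)$ show that this closure consists of tensors $f^{\otimes k}$ with $f\geq 0$ and $\|f\|_{k/(k-1)}\leq 1$. Writing $f^{\otimes k}$ as a proper convex combination of $(f/\|f\|_{k/(k-1)})^{\otimes k}$ and $0$ then excludes $0<\|f\|_{k/(k-1)}<1$; the paper works with nets rather than sequences, so it does not need the metrizability you invoke, as you yourself note.
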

\begin{proof}
  Since every element of~$\T(\CP(V, k))$ is nonnegative,~$0$ certainly is an extreme point.

  Milman's theorem~\cite[Theorem 9.4]{Simon2011Convexity} says that the extreme points of the big tip~$\T(\CP(V, k))$ lie in the weak closure~$\cl\mathcal{E}_{\bt}$. Suppose that~$(f_i^{\otimes k})_{i \in I}$ is a convergent net in~$\mathcal{E}_{\bt}$. The space~$L^{k/(k-1)}(V)$ is reflexive, so by the Banach-Alaoglu theorem its unit ball is compact~\cite[Theorem V.4.2]{Conway2010AAnalysis}. So, since~$\|f_i\|_{k/(k-1)} = 1$ for all~$i$, the sequence~$(f_i)_i$ has a weakly converging subnet; without loss of generality assume it converges with limit~$f$. It follows that~$\|f\|_{k/(k-1)} \leq 1$.

  The weak limit of~$(f_i^{\otimes k})_i$ is~$f^{\otimes k}$. Indeed, the linear span of the set of functions~$h_1 \otimes \cdots \otimes h_k \in L^k(V^k)$ is dense in~$L^k(V^k)$---for example, use the martingale convergence theorem. So it is enough to prove that
  \[
    |\langle f_i^{\otimes k} - f^{\otimes k}, h_1 \otimes \cdots \otimes h_k\rangle| \to 0
  \]
  with~$h_j \in L^k(V^k)$ arbitrary. For~$i \in I$ and~$h_1,  \ldots, h_k \in L^k(V)$, use the triangle inequality to obtain
  \begin{multline*}
    |\langle f_i^{\otimes k} - f^{\otimes k}, h_1 \otimes \cdots \otimes h_k\rangle|
    \leq |\langle f_i^{\otimes (k-1)} \otimes (f_i - f), h_1 \otimes \cdots \otimes h_k\rangle| +\\
    |\langle \bigl(f_i^{\otimes (k-1)} - f^{\otimes (k-1)}\bigr)\otimes f, h_1 \otimes \cdots \otimes h_k\rangle|.
  \end{multline*}
  By repeated application of this inequality,~$|\langle f_i^{\otimes k} - f^{\otimes k}, h_1 \otimes \cdots \otimes h_k\rangle|$ is bounded from above by~$k$ terms of the form
  \[
    |\langle f_i - f, h_j\rangle| \prod_{l \neq j}|\langle \xi_l, h_l \rangle| \leq |\langle f_i - f, h_j\rangle| M^{k-1},
  \]
  where each~$\xi_l$ is either~$f_i$ or~$f$, and the final inequality is Hölder's inequality with~$\|\xi_l\|_{k/(k-1)} \leq 1$ and~$M = \max_{1 \leq i \leq k}\|h_i\|_k$. Then,~$\lim_i f_i^{\otimes k} = f^{\otimes k}$ with~$\|f\|_{k/(k-1)} \leq 1$. Because~$0 \in \T(\CP(V, k))$, it follows that~$f^{\otimes k}$ is an extreme point if and only if~$\|f\|_{k/(k-1)} = 1$ or~$f = 0$; conclude that~$\mathcal{E}_{\bt}$ contains all extreme points of~$\T(\CP(V, k))$.
\end{proof}

We have now gathered everything we need to state and prove the main theorem of this section. The following result is independent on the choice of the density system~$\D$.
\begin{theorem}%
  \label{thm:completely-positive-theta-is-exact-thick-edges}
  Let~$k \geq 2$ an integer,~$H = ((V, \A, \mu), E)$ be a $k$-uniform measurable hypergraph with~$(V,\A,\mu)$ finite and countably generated measure space. If~$\alpha(H) > 0$ and if~$E$ is thick, then
  \[
    \alpha(H) = \vartheta_{\bt}(H, \CP(V,k)).
  \]
\end{theorem}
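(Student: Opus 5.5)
The inequality $\vartheta_{\bt}(H, \CP(V,k)) \geq \alpha(H)$ is the easy half. I will show the reverse by proving that every feasible $A$ has objective at most $\alpha(H)$.

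\emph{Lower bound.} For a measurable independent set $I$ with $\mu(I)>0$, the tensor $A = \1_I^{\otimes k}/\mu(I)^{k-1}$ is feasible.
\begin{itemize}
  \item It lies in $\CP(V,k)$ and is slice positive.
  \item It vanishes pointwise on $E$, because $I^k \cap E = \emptyset$.
  \item $\T_{k-2}^* A = \mu(I)^{k-2}\1_I\otimes \1_I/\mu(I)^{k-1}$ has trace $1$.
  \item $\langle A, \1^{\otimes k}\rangle = \mu(I)$.
\end{itemize}
Taking the supremum over $I$ gives $\vartheta_{\bt}(H,\CP(V,k)) \geq \alpha(H)$.

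\emph{Upper bound.} Fix a feasible $A$. By Lemma~\ref{lem:trace-1-is-in-big-tip}, $A$ lies in the big tip $\T(\CP(V,k))$. This set is weakly compact: it is the closed convex hull of a bounded set in the reflexive space $L^{k/(k-1)}(V^k)$, since $\|f^{\otimes k}\|_{k/(k-1)} = \|f\|_{k/(k-1)}^k$. By Krein–Milman and Lemma~\ref{lem:extreme-points-big-tip}, $A$ is therefore a weak limit of convex combinations of tensors $f^{\otimes k}$ with $f \geq 0$ and $\|f\|_{k/(k-1)}=1$.

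I would rather get an honest representation. Choquet's theorem gives a probability measure $\rho$ on $\cl \mathcal{E}_{\bt}$ with barycenter $A$, so $A = \int f^{\otimes k}\, d\rho(f)$ weakly. The functionals $\langle \cdot, \1^{\otimes k}\rangle$ and $\langle \cdot, \1_{E'}\rangle$ are continuous, because $\1^{\otimes k}$ and $\1_{E'}$ lie in $L^k(V^k)$; here $E'$ is $E$ or a measurable subset of it.

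\emph{Edge constraint.} Since $A = 0$ on $E$, we get $0 = \langle A, \1_E\rangle = \int \langle f^{\otimes k}, \1_E\rangle\, d\rho(f)$. Each integrand is nonnegative, so for $\rho$-almost every $f$ we have $\mu^k((\supp f)^k \cap E) = 0$. The removal lemma, Lemma~\ref{lem:removal-lemma-thick-edge-set}, applied with a density system for which $E$ is thick, then shows that $D(\supp f)$ is independent. It differs from $\supp f$ by a null set, so $\mu(\supp f) \leq \alpha(H)$.

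\emph{Objective bound.} For such $f$, Hölder's inequality on $S = \supp f$ gives
\[
\langle f^{\otimes k},\1^{\otimes k}\rangle = \|f\|_1^k \leq \|f\|_{k/(k-1)}^k \mu(S) = \mu(S) \leq \alpha(H).
\]
Integrating against $\rho$ gives $\langle A, \1^{\otimes k}\rangle \leq \alpha(H)\,\rho(\mathcal{E}_{\bt}\setminus\{0\}) \leq \alpha(H)$.

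\emph{Main obstacle.} The hard step is justifying the Choquet representation. It needs metrizability of the compact convex set; separability of $L^{k/(k-1)}$ from countable generation, via Lemma~\ref{lem:sufficient-conition-finite-rank-approximation}, should give that. It also needs the map $f^{\otimes k} \mapsto \langle f^{\otimes k}, \1_E\rangle$ to be Borel on the closure.

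If this proves too delicate, I would fall back on an exposed-face argument. The set $F = \{B \in \T(\CP(V,k)) : \langle B,\1_E\rangle = 0\}$ is a closed face, since $\langle\cdot,\1_E\rangle \geq 0$ on the tip. A face of a compact convex set has its own extreme points among those of the whole set, so the extreme points of $F$ lie in $\mathcal{E}_{\bt}$ and are of the form $f^{\otimes k}$ with edge-free support. The linear functional $\langle\cdot,\1^{\otimes k}\rangle$ attains its maximum on $F$ at such an extreme point, and the Hölder bound above shows that maximum is at most $\alpha(H)$.
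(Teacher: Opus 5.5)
Your proof is correct and follows the paper's argument essentially step by step. The lower bound uses the feasible tensor $\1_I^{\otimes k}/\mu(I)^{k-1}$. The upper bound uses Lemma~\ref{lem:trace-1-is-in-big-tip}, then Choquet's theorem on the weakly compact metrizable big tip, then the functional $\langle\cdot,\1_E\rangle$ to force $\mu^k((\supp f)^k\cap E)=0$, then Hölder and Lemma~\ref{lem:removal-lemma-thick-edge-set}. You average over $\rho$ where the paper selects a single good $f$, which is equivalent. Your fallback through the exposed face $\{B : \langle B,\1_E\rangle=0\}$ and Bauer's maximum principle is also valid, and it would let you bypass the Choquet/metrizability step entirely.
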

\begin{proof}
  The inequality~$\vartheta_{\bt}(H, \CP(V, k)) \geq \alpha(H)$ is established in Section~\ref{sec:upper-bounds-measurable-independence-number}. The remaining objective of this proof is to show that given a feasible solution~$A$ for~$\vartheta_{\bt}(H, \CP(V, k))$, there exists a measurable independent set~$I$ such that~$\langle A, \1^{\otimes k}\rangle \leq \mu(I)$.

  By Lemma~\ref{lem:trace-1-is-in-big-tip}, a feasible solution~$A$ of~$\vartheta_{\bt}(H, \CP(V, k))$ is in the big tip of~$\CP(V, k)$. As~$L^{k/(k-1)}(V)$ is reflexive,~$L^k(V^k)$ is separable, and~$\T(\CP(V, k))$ is closed and bounded, the big tip is weakly metrizable and compact~\cite[Theorem V.5.1 and Theorem V.4.2]{Conway2010AAnalysis}, thus Choquet's theorem~\cite[Theorem 10.7]{Simon2011Convexity} says that there exists a probability measure~$\rho$ on~$\mathcal{E}_{\bt}$ so that for all~$T$ in~$L^k(V^k)$ the equality~$\langle A, T\rangle = \int_{\mathcal{E}_{\bt}} \langle f^{\otimes k}, T\rangle\, d\rho(f^{\otimes k})$ holds.

  Since~$A(v) = 0$ for all~$v \in E$ and~$E$ is measurable,
  \[
    0 = \langle A, \1_E\rangle = \int_{\mathcal{E}_{\bt}} \langle f^{\otimes k}, \1_E\rangle\, d\rho(f^{\otimes k}).
  \]
  Hence,
  \[
    \rho(\{\, f^{\otimes k} \in \mathcal{E}_{\bt} : \langle f^{\otimes k},\1_E \rangle \neq 0\, \}) = 0.
  \]
  Furthermore,~$\langle A, \1^{\otimes k}\rangle = \int_{\mathcal{E}_{\bt}} \langle f^{\otimes k}, \1^{\otimes k}\rangle \, d\rho(f^{\otimes k})$ and~$\rho$ is a probability measure, so there exists a function~$f \in L^{k/(k-1)}(V)_{\geq 0}$ with~$\|f\|_{k/(k-1)} = 1$ such that~$\langle f^{\otimes k}, \1_E\rangle = 0$ and
  \[
    \langle f,  \1\rangle^k = \langle f^{\otimes k}, \1^{\otimes k}\rangle \geq \langle A, \1^{\otimes k}\rangle.
  \]

  Take a density system~$\D$ on~$(V, \A, \mu)$ with respect to which~$E$ is $\D$-thick. Denote the set of density points of~$\supp{f}$ by~$D(\supp{f})$. Then
  \[
    \langle A, \1^{\otimes k} \rangle \leq \langle f, \1\rangle^k = \langle f, \1_{\supp f}\rangle^k \leq \|f\|_{k/(k-1)}^k \|\1_{\supp f}\|_k^k = \mu(D(\supp{f})).
  \]
  It remains to prove that~$D(\supp{f})$ is independent. Since~$\int_E f^{\otimes k}(v)\, d\mu^k(v) = 0$ and~$f \geq 0$,~$\mu^k((\supp{f})^k \cap E)=0$. The removal lemma, Lemma~\ref{lem:removal-lemma-thick-edge-set}, says that~$D(\supp{f})^k \cap E = \emptyset$. Conclude that~$D(\supp{f})$ is an independent set with~$\mu(D(\supp{f})) \geq \langle A, \1^{\otimes k}\rangle$.
\end{proof}

\section{The homogeneous setting}%
\label{sec:exact-cp-bound-homogeneous-hypergraph}
We move on to the homogeneous setting. These results rely on a compact Hausdorff space being metrizable if and only if it is second countable.

We need a final result on martingales. For a finite measure space~$(V, \A, \mu)$ with finite-rank approximation~$(\P_n)_n$ and conditional expectations~$E_n$, define the~\defi{martingale maximal function}\index{martingale maximal!function} of~$f \in L^p(V)$ with~$1 \leq p < \infty$ by
\index{ M@$M$}\[
  Mf(v) = \sup_n E_nf(v),
\]
and call~$M$ the~\defi{martingale maximal operator}\index{martingale maximal!operator}. A proof of the following can be found in~\cite[Theorem 5.2.7]{Edwards1977Littlewood-PaleyTheory}, the important implication being that the maximal function of a $\mi{p}$-integrable function is again $\mi{p}$-integrable.
\begin{theorem}[Martingale maximal theorem]%
  \label{thm:martingale-maximal-theorem}%
  \index{martingale maximal!theorem}
  Let~$(V, \A, \mu)$ be a finite measure space with finite-rank approximation~$(\P_n)_{n \in \N}$. The martingale maximal
  function~$M$ defines a bounded operator~$L^p(V) \to L^p(V)$ for~$1 < p < \infty$, and a continuous operator~$L^1(V) \to L^1(V)$ under the weak topology.
\end{theorem}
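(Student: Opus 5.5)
We prove the martingale maximal theorem as Doob's maximal inequality for the martingale $(E_nf)_n$. Since $|E_nf|\le E_n|f|$ pointwise, it suffices to treat $f\ge 0$; then $Mf$ is measurable, being a countable supremum of simple functions.

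The first step is the weak-type estimate
\[
  \lambda\,\mu(\{Mf>\lambda\})\le \int_{\{Mf>\lambda\}} f\,d\mu \qquad (\lambda>0).
\]
This uses a stopping-time decomposition. Let $F_n$ be the set of $v$ with $E_nf(v)>\lambda$ but $E_jf(v)\le\lambda$ for all $j<n$. Because $E_nf$ is constant on the parts of $\P_n$, each $F_n$ is a union of parts $P\in\P_n^+$, up to null sets. The sets $F_n$ are disjoint, and their union is $\{Mf>\lambda\}$. On each part $P\subseteq F_n$ we have $\lambda\mu(P)<\int_P E_nf=\int_P f$, directly from the definition of $E_{\P}$. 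Summing over all parts and all $n$ gives the estimate; in particular $\mu(\{Mf>\lambda\})\le\|f\|_1/\lambda$.

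The second step obtains the $L^p$ bound for $1<p<\infty$ by the layer-cake formula together with Fubini:
\[
  \|Mf\|_p^p=p\int_0^\infty\lambda^{p-1}\mu(\{Mf>\lambda\})\,d\lambda\le p\int_V f\,(Mf)^{p-1}/(p-1)\,d\mu .
\]
Hölder's inequality then bounds the right-hand side by $\tfrac{p}{p-1}\|f\|_p\|Mf\|_p^{p-1}$. To divide by $\|Mf\|_p^{p-1}$, I would first replace $M$ by the truncated operator $M_Nf=\max_{n\le N}E_nf$. This operator is bounded on $L^p$ because each $E_n$ is, so $\|M_Nf\|_p<\infty$. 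The argument gives $\|M_Nf\|_p\le \tfrac{p}{p-1}\|f\|_p$ uniformly in $N$, and monotone convergence passes this to $M$. Although $M$ is sublinear rather than linear, this inequality is the sense in which it is bounded on $L^p$.

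The main obstacle is the $L^1$ statement. $M$ is not bounded $L^1\to L^1$ in general, and the estimate of the first step only gives weak type $(1,1)$. I would therefore read the claim as the weak-type continuity just proved: $M$ maps $L^1$ into the measurable functions, continuously in measure, since $|Mf-Mg|\le M|f-g|$ and $\mu(\{M|f-g|>\lambda\})\le\|f-g\|_1/\lambda$. This is the version in~\cite[Theorem 5.2.7]{Edwards1977Littlewood-PaleyTheory}. If genuine $L^1$-integrability is required, I would fall back on the $L\log L$ refinement of the same layer-cake argument, using that $\mu$ is finite.
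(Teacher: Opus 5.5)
Your argument is correct and is the standard proof of Doob's maximal inequality: a stopping-time weak-type estimate, then the layer-cake formula, with the truncation $M_N$ justifying the division by $\|M_Nf\|_p^{p-1}$. This is precisely what the paper defers to through its citation of \cite[Theorem 5.2.7]{Edwards1977Littlewood-PaleyTheory}, since it gives no argument of its own. Your reading of the $L^1$ clause as weak type $(1,1)$ is the right one, because taken literally the clause is false: on $[0,1]$ with the dyadic partitions, $f(x)=1/\bigl(x\log^2(2/x)\bigr)$ is integrable, but $E_nf=2^n/\bigl((n+1)\log 2\bigr)$ on $[0,2^{-n})$, so $\int_{2^{-n-1}}^{2^{-n}}Mf\geq 1/\bigl(2(n+1)\log 2\bigr)$ for every $n$ and $Mf\notin L^1$.
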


For~$V$ a compact metric space with finite Borel measure~$\mu$ and~$k \geq 2$ an integer, let
\index{ E small@$\mathcal{E}_{\st}$}\[
  \mathcal{E}_{\st} = \{0\} \cup \{\, f^{\otimes k} : f \in L^k(V)_{\geq 0} \text{ and } \|f\|_k = 1\, \},
\]
and define the~\defi{small tip} of~$\CP(V, k)$ by~$\tau(\CP(V, k)) = \cch \mathcal{E}_{\st}$\index{small tip@small tip of~$\CP(V, k)$}\index{ tCPVk@$\tau(\CP(V, k))$} with closure under the weak topology of $L^k$. With a proof similar to that for the big tip, the extreme points of the small tip are contained in~$\mathcal{E}_{\st}$. The proof of an analogue of Lemma~\ref{lem:trace-1-is-in-big-tip} is slightly different.

For a set~$S \subseteq L^2(V^k)$, let~$S_{\cont} = S \cap C(V^k)$\index{ Sc@$S_{\cont}$}.
\begin{lemma}%
  \label{lem:trace-1-is-in-small-tip}
  If~$V$ is a compact metrizable space with a finite Borel measure~$\mu$,~$k \geq 2$ is an integer,~$T \in \CP(V, k)_{\cont}$, and~$\int_V T(x, \ldots, x)\, d\mu(x) \leq 1$, then~$T \in \tau(\CP(V,k))$.
\end{lemma}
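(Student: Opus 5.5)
The plan mirrors the proof of Lemma~\ref{lem:trace-1-is-in-big-tip}: approximate~$T$ by conditional expectations~$E_nT$, write each~$E_nT$ as a finite nonnegative combination of tensors~$f_m^{\otimes k}$ with~$f_m$ a step function, and control the total weight. The difference is that the normalization now involves the diagonal~$\int_V T(x,\ldots,x)\,d\mu(x)$. This quantity is not monotone under~$E_n$, so I would use continuity of~$T$ instead of a Parseval-type inequality. The finite-rank approximation~$(\P_n)_n$ therefore has to be chosen with mesh tending to zero.

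\emph{Step 1: a partition sequence with small parts.} Since~$V$ is compact and metrizable, fix a metric. For every~$j \geq 1$ take a finite cover of~$V$ by open balls of radius~$1/j$. Enumerate all these balls into a single sequence~$U_0, U_1, \ldots$ and run the construction of Lemma~\ref{lem:sufficient-conition-finite-rank-approximation}. The generators form a countable base, so they generate the Borel algebra and~$(\P_n)_n$ is a finite-rank approximation. Once all balls of radius~$1/j$ have been processed, every part of~$\P_n$ lies inside one such ball. Hence~$\delta_n = \max_{P \in \P_n} \operatorname{diam} P \to 0$.

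\emph{Step 2: decomposition and the diagonal identity.} As in the proof of Theorem~\ref{thm:Polyas-theorem-finite-measure}, $A_nT \in \CP(\P_n^+,k)$, so $A_nT = \sum_m \lambda_m x_m^{\otimes k}$ with~$\lambda_m \geq 0$ and~$x_m \geq 0$. Put~$f_m = \sum_P (x_m)_P \1_P/\mu(P)$, so that $E_nT = B_n^*A_nT = \sum_m \lambda_m f_m^{\otimes k}$. For a step function~$f$ of this form, both~$\|f\|_k^k$ and~$\int_V f^{\otimes k}(x,\ldots,x)\,d\mu(x)$ equal~$\sum_P x_P^k \mu(P)^{1-k}$. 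After rescaling so that~$\|f_m\|_k = 1$, this gives
\[
  \sum_m \lambda_m = \int_V (E_nT)(x,\ldots,x)\,d\mu(x) = \sum_{P \in \P_n^+} \mu(P)^{1-k} \langle T, \1_{P^k}\rangle .
\]
Each term~$\mu(P)^{1-k}\langle T,\1_{P^k}\rangle$ is~$\mu(P)$ times the average of~$T$ over~$P^k$. Uniform continuity of~$T$ on the compact space~$V^k$ makes that average within~$\omega(\delta_n)$ of~$T(x,\ldots,x)$ for every~$x \in P$, where~$\omega$ is the modulus of continuity of~$T$. Summing over~$P$,
\[
  \sum_m \lambda_m \leq \int_V T(x,\ldots,x)\,d\mu(x) + \mu(V)\,\omega(\delta_n) \leq 1+\epsilon_n, \qquad \epsilon_n \to 0 .
\]

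\emph{Step 3: conclusion.} The small tip contains~$0$ and is convex, so~$E_nT/(1+\epsilon_n) \in \tau(\CP(V,k))$. The same uniform continuity gives~$\|E_nT - T\|_\infty \leq \omega(\delta_n)$, and since~$\mu$ is finite, $E_nT/(1+\epsilon_n) \to T$ in~$L^k(V^k)$, hence weakly. The small tip is weakly closed, so~$T \in \tau(\CP(V,k))$.

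I expect Step~2 to be the main obstacle: the diagonal of a general~$L^2$ tensor is meaningless, and~$E_n$ does not decrease the diagonal integral. Both issues are resolved only by combining continuity of~$T$ with partitions of vanishing mesh, which is why Step~1 constructs those partitions carefully rather than taking an arbitrary finite-rank approximation.
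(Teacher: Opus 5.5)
Your proof is correct, and its overall structure matches the paper's: approximate $T$ by $E_nT$, decompose $E_nT=\sum_m\lambda_{m,n}f_{m,n}^{\otimes k}$ with $\|f_{m,n}\|_k=1$, show $\sum_m\lambda_{m,n}\le 1+\epsilon_n$ with $\epsilon_n\to0$, rescale by $1/(1+\epsilon_n)$, and pass to the limit. The difference lies in how you get the key estimate.

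The paper takes an arbitrary finite-rank approximation and obtains $\int_V(E_nT)(x,\ldots,x)\,d\mu(x)\to\int_VT(x,\ldots,x)\,d\mu(x)$ from the martingale maximal theorem and dominated convergence. However, the diagonal is in general $\mu^k$-null, so a.e.\ convergence on $V^k$ says nothing about it directly; domination is trivial anyway, since $T$ is bounded. The pointwise statement does hold for $\mu$-a.e.\ $x$, but it needs an extra step. One option is martingale convergence on $V$ for a countable dense subset of $C(V)$, which shows $\mu|_{P_n(x)}/\mu(P_n(x))\to\delta_x$ weak*, followed by Stone--Weierstrass on $V^k$.

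Your partitions with mesh $\delta_n\to0$ replace all of this by uniform continuity. You get the explicit bound $\sum_m\lambda_{m,n}\le\int_VT(x,\ldots,x)\,d\mu(x)+\mu(V)\,\omega(\delta_n)$, and even $\|E_nT-T\|_\infty\le\omega(\delta_n)$, with no martingale theory at all. The only cost is constructing a special finite-rank approximation, and your Step~1 does that correctly.

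One remark in your write-up is inaccurate, although nothing depends on it: for completely positive $T$, the diagonal integral of $E_nT$ is monotone. By Jensen's inequality, $\sum_{P\in\P_n^+}\mu(P)^{1-k}\langle T,\1_{P^k}\rangle$ is nondecreasing under refinement for $T=f^{\otimes k}$ with $f\ge0$. Since these sums are continuous linear functionals, the same holds for every $T\in\CP(V,k)$. Together with the convergence in your Step~2 (your estimate there is in fact two-sided), this gives $\sum_m\lambda_{m,n}\le\int_VT(x,\ldots,x)\,d\mu(x)\le1$ for every $n$. So the factor $1+\epsilon_n$ is not even needed.
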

\begin{proof}
  The space~$V$ is compact, second countable, and Hausdorff, so take  a finite-rank approximation of~$V$ with conditional expectation operators~$E_n$. Let~$T \in \CP(V, k)_{\cont}$. Since~$T$ is continuous and~$V$ is compact,~$T$ is a bounded function, thus~$\lim_n E_n T = T$ under the $L^k$ norm. Similar to the proof of Lemma~\ref{lem:trace-1-is-in-big-tip}, write~$E_n T = \sum_m \lambda_{m, n} f_{m, n}^{\otimes k}$, which is a finite sum with every~$\lambda_{m, n} \in \R_{\geq 0}$ and~$f_{m, n} \in L^k(V)_{\geq 0}$ such that~$\|f_{m, n}\|_k = 1$. As opposed to the proof of Lemma~\ref{lem:trace-1-is-in-big-tip}, it is not necessarily true that~$\sum_m \lambda_m \leq 1$, but it can be renormalized such that this follows.

  Indeed, by the martingale maximal theorem---Theorem~\ref{thm:martingale-maximal-theorem}---combined with the dominated convergence theorem,
  \[
    1 \geq \int_V T(x, \ldots, x)\, d\mu(x) = \int_V \lim_n (E_nT(x, \ldots, x))\, d\mu(x) = \lim_n \sum_m \lambda_{m, n}.
  \]
  Hence, for all~$\epsilon > 0$ there is an~$n_{\epsilon} \in \N$ such that~$\sum_m \lambda_{m, n} \leq 1 + \epsilon$ for all~$n \geq n_{\epsilon}$. Vice versa, there is a sequence~$(\epsilon_n)_n$ with limit~$0$ such that each~$\epsilon_n \geq 0$ and~$(1/(1+\epsilon_n))\sum_m \lambda_{m,n} \leq 1$. Thus,~$(1 / (1 + \epsilon_n)) E_n T \in \tau(\CP(V, k))$ for all~$n \in \N$ large enough, and since~$\lim_n 1/(1+\epsilon_n) = 1$,~$\lim_n (1/(1+\epsilon_n))E_nT = T$ under the $L^k$ norm, which concludes the proof.
\end{proof}

As in the thick setting, we make use of a density system; see Section~\ref{sec:density-and-thick-sets} for more details. The difference is that the edge set is not necessarily thick with respect to this density system. This is replaced by the metric being compatible with the group action.

Let~$V$ be a metric space with a metric~$d$,~$\B$ be its family of Borel sets, and~$\mu$ be a Borel measure with full support. Then~$d$ is called a~\defi{density metric}\index{metric!density metric} if the set of open balls forms a density system of~$(V, \B, \mu)$. In particular, for all~$A \in \B^+$ and almost all~$v \in A$ we have
\[
  \dens_v(A) = \lim_{\delta \to 0} \frac{\mu(A \cap B_{\delta}(v))}{\mu(B_{\delta}(v))} = 1,
\]
were~$B_{\delta}(v) = \{\, u \in V: d(u,v) \leq \delta\, \}$. The most important example of this is the Euclidean metric on~$\R^n$, which is a density metric by the Lebesgue density theorem.

Call a metric on a locally compact group~$\Gamma$~\defi{right-invariant}\index{metric!right-invariant metric} if for all~$g \in \Gamma$ and~$v_1$,~$v_2 \in V$ we have~$d(v_1, v_2) = d(v_1 g, v_2 g)$. This implies that for all~$v \in V$ and~$\delta \geq 0$,~$B_{\delta}(v g) = B_{\delta}(v)$. The metric on the orthogonal group~$\ortho(n) \subseteq \R^{n \times n}$ inherited from the Euclidean metric on~$\R^{n \times n}$ is an example of a right-invariant density metric on a compact group.

Recall that if a group~$\Gamma$ is compact and second countable, then every Hausdorff $\Gamma$-homogeneous space is compact and second countable as well.
\begin{theorem}%
  \label{thm:completely-positive-is-exact-homogeneous-graph}
  If~$k \geq 2$ is an integer, if~$H = (V, E, \Gamma)$ is a $k$-uniform homogeneous hypergraph with~$\Gamma$ a compact group that is metrizable by a right-invariant density metric, and if~$\alpha(H) > 0$, then
  \[
    \alpha(H)^{k-1} = \vartheta_{\st}(H, \CP(V, k)_{\cont}).
  \]
\end{theorem}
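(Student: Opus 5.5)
The lower bound $\vartheta_{\st}(H, \CP(V,k)_{\cont}) \geq \alpha(H)^{k-1}$ comes from Lemma~\ref{lem:upper-bound-homogeneous}. For an independent set $I$ with $\mu(I)>0$, the tensor $A=\Av_\Gamma(\1_I^{\otimes k})/\mu(I)^{k-1}$ is continuous by Lemma~\ref{lem:averaged-rank-one-is-continuous}. It is completely positive, since it is the weak limit of Riemann-type averages of the tensors $(\gamma\1_I)^{\otimes k}$. It vanishes on $E$ because $\Gamma E\subseteq E$ and each $\gamma I$ is independent. Its diagonal integral is $\mu(I)/\mu(I)^{k-1}$, so dividing by $\mu(I)^{k-1}$ normalises it correctly, and its objective value is $\mu(I)^k/\mu(I)^{k-1}\cdot$ (adjusted) $=\mu(I)^{k-1}$ after normalisation. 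The real work is the reverse inequality.

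For the upper bound I would mimic the proof of Theorem~\ref{thm:completely-positive-theta-is-exact-thick-edges}, using the small tip in place of the big tip. Let $A$ be feasible. Since $A$ is continuous and $\Gamma$-invariant in effect, I first replace $A$ by $\Av_\Gamma A$. This is still feasible: the objective, the diagonal integral and the vanishing on $E$ are all preserved by invariance of $E$ and of $\mu$. By Lemma~\ref{lem:trace-1-is-in-small-tip}, $A\in\tau(\CP(V,k))$. The small tip is weakly compact and metrizable, because $L^k$ is reflexive and $V$ is second countable. Choquet's theorem, together with the small-tip analogue of Lemma~\ref{lem:extreme-points-big-tip}, then gives a probability measure $\rho$ on $\mathcal{E}_{\st}$ representing $A$. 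The constraint $A=0$ on $E$ does not give $\langle A,\1_E\rangle=0$ usefully, since $E$ may be null. So instead I would test against thickened edge sets. For $\delta>0$ set $E_\delta=\{\,(g_1v,\dots,g_kv'):\ldots\,\}$, or more concretely $\Gamma_\delta E$, where $\Gamma_\delta=B_\delta(1)$. Continuity of $A$ and compactness give $\langle A,\1_{E_\delta}\rangle\to0$. At the same time, the objective and normalisation yield, via $\rho$, some $f\in L^k(V)_{\ge0}$ with $\|f\|_k=1$, $\langle f,\1\rangle^k\ge\langle A,\1^{\otimes k}\rangle$, and $\langle f^{\otimes k},\1_{E_\delta}\rangle$ small. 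Alternatively, a diagonal argument over $\delta\to0$ picks functions $f_\delta$.

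The key step, and the main obstacle, is turning ``$f^{\otimes k}$ puts little mass on the $\delta$-thickening of $E$'' into an actual independent set $I$ with $\mu(I)^{k-1}\ge\langle f,\1\rangle^k$. The Hölder step $\langle f,\1\rangle^k\le\|f\|_k^k\mu(\supp f)^{k-1}$ is what produces the exponent $k-1$. Here the right-invariant density metric on $\Gamma$ takes over the role of thickness. Lift $f$ to $\Gamma$ via $p$. Take density points of $S=\supp(f\smallcirc p)$ with respect to balls $B_\delta(g)=\Gamma_\delta g$; right-invariance makes these balls translates of a single neighbourhood of $1$. Suppose density points $g_1,\dots,g_k$ had $(g_1v_0,\dots,g_kv_0)\in E$. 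Then for small $\delta$ each $\Gamma_\delta g_i$ is mostly contained in $S$, and the invariance $\gamma E\subseteq E$ shows that $(\gamma g_1 v_0,\dots,\gamma g_k v_0)\in E$ for $\gamma\in\Gamma_\delta$. Integrating over $\gamma$, using an inclusion--exclusion estimate as in Lemma~\ref{lem:removal-lemma-thick-edge-set}, this gives positive mass of $(f\smallcirc p)^{\otimes k}$-type products along the orbit. To make it rigorous I would apply the argument to the averaged tensor $\Av_\Gamma(f^{\otimes k})$, which is continuous by Lemma~\ref{lem:averaged-rank-one-is-continuous} and must vanish on $E$ when $\rho$-almost all $f$ are used. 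That contradicts $\Av_\Gamma(f^{\otimes k})=0$ on $E$.

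So the precise plan is as follows. First show that $\rho$-almost every $f$ satisfies $\Av_\Gamma(f^{\otimes k})(e)=0$ for all $e\in E$. This uses the invariance of $A$, continuity, and the fact that a nonnegative combination of nonnegative continuous functions vanishing at a point forces each to vanish there, via a countable dense subset of $E$ and pointwise evaluation through Lemma~\ref{lem:averaged-rank-one-is-continuous}. Then show that $D(\supp f)$, with density points taken via the metric balls on $\Gamma$ projected to $V$, is independent. Its measure equals $\mu(\supp f)$, which finishes the proof via the Hölder bound.
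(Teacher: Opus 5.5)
Your final plan follows the paper's proof in outline: average $A$, place it in the small tip via Lemma~\ref{lem:trace-1-is-in-small-tip}, represent it by Choquet, show that $\rho$-almost every $\Av_\Gamma(f^{\otimes k})$ vanishes on a countable dense subset of $E$ (hence on $E$, by continuity from Lemma~\ref{lem:averaged-rank-one-is-continuous}), take density points of $S=p^{-1}(\supp f)$ for the right-invariant metric, use right-invariance and inclusion--exclusion to get $\nu(\bigcap_i S g_i^{-1})>0$, and finish with Hölder.

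There is one genuine gap, in the step you call ``pointwise evaluation''. Choquet's theorem only gives $\langle A,F\rangle=\int\langle f^{\otimes k},F\rangle\,d\rho(f^{\otimes k})$ for $F\in L^{k/(k-1)}(V^k)$. Evaluation at a point $e\in E$ is not of this form, so the representation says nothing directly about values at points. Continuity of each $\Av_\Gamma(f^{\otimes k})$ does not by itself give $0=A(e)=\int\Av_\Gamma(f^{\otimes k})(e)\,d\rho$. The principle that a nonnegative combination vanishing at a point forces each summand to vanish is only usable once such a relation is established.

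The missing device is the one the paper uses. Replace point masses by bounded, normalized indicators of shrinking product neighbourhoods around a countable dense set $\{s_i\}\subseteq E$. Concretely, take $T_n=\sum_i 2^{-i}\1_{U_n(s_{i,1})\times\cdots\times U_n(s_{i,k})}/\prod_j\mu(U_n(s_{i,j}))$, where $U_n(s_{i,j})$ is a ball of radius $1/n$ from a fixed finite cover of $V$ containing $s_{i,j}$. Then $\langle B,T_n\rangle\to\sum_i2^{-i}B(s_i)$ for every continuous $B$. Use $A=\Av_\Gamma A$ to move the averaging onto $f^{\otimes k}$; this is essential, since $f^{\otimes k}$ itself is not continuous. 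Then apply Fatou's lemma to the nonnegative integrands:
\[
0=\lim_n\langle A,\Av_\Gamma T_n\rangle\geq\int_{\mathcal{E}_{\st}}\liminf_n\langle\Av_\Gamma(f^{\otimes k}),T_n\rangle\,d\rho(f^{\otimes k})=\int_{\mathcal{E}_{\st}}\sum_i 2^{-i}\Av_\Gamma(f^{\otimes k})(s_i)\,d\rho(f^{\otimes k}).
\]
Only this inequality is available, but together with nonnegativity it is exactly what you need.

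Two smaller points. First, drop the thickening detour rather than trying to repair it as written. Under the diagonal action, $\Gamma_\delta E=E$ because $E$ is $\Gamma$-invariant, so that thickening does nothing. If you instead perturb the coordinates independently, then $\langle A,\1_{E_\delta}\rangle\to0$ holds automatically whenever $\overline{E}$ is null, as in Witsenhausen's problem, so it carries no information. Choosing a different $f_\delta$ for each $\delta$ also leaves you without a single support to which the density argument applies. The useful form of that idea is the normalized thickening $T_n$ above, applied to one fixed $f$.

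Second, fix the normalization in the lower bound. With $\mu(V)=1$, the tensor $\Av_\Gamma(\1_I^{\otimes k})$ has diagonal integral $\mu(I)$ and objective value $\mu(I)^k$. The feasible tensor is therefore $\Av_\Gamma(\1_I^{\otimes k})/\mu(I)$, not division by $\mu(I)^{k-1}$, and it has value $\mu(I)^{k-1}$ as required.
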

\begin{proof}
  The inequality~$\alpha(H)^{k-1} \leq\vartheta_{\st}(H, \CP(V, k)_{\cont})$ holds by Lemma~\ref{lem:upper-bound-homogeneous}.

  For the other inequality, the strategy is the same as that of the proof of Theorem~\ref{thm:completely-positive-theta-is-exact-thick-edges}, closely following the steps in~\cite{DeCorte2022CompleteSets}: denoting the quotient measure on~$V$ by~$\mu$, and given a feasible solution~$A$ of~$\vartheta_{\st}(H, \CP(V, k)_{\cont})$, prove that there is a measurable independent set~$I$ such that~$\mu(I)^{k-1} \geq \langle A, \1^{\otimes k}\rangle$.

  As opposed to when~$E$ is thick, the linear functional~$\langle \cdot, \1_E\rangle$ is identically 0. The first objective is to find a workaround for this problem, which goes as follows. The set~$E$ has a countable dense subset under the relative topology: for a countable basis~$\mathcal{U}$ of the topology of~$V^k$, let~$S$ be a set that contains for every~$U \in \mathcal{U}$ such that~$U \cap E \neq \emptyset$ exactly one point~$v \in U \cap E$. Since~$\mathcal{U}$ is a countable basis for the topology,~$S$ is a countable dense subset of~$E$. Order it by~$S = \{(s_{1,1}, \ldots, s_{1,k}), (s_{2,1}, \ldots, s_{2,k}), \ldots\}$ with each~$s_{i,j} \in V$.

  Since~$V$ is metrizable, fix a metric on~$V$ producing the topology, and, for fixed~$n$, using compactness, let~$\mathcal{U}_n$ be a finite cover of~$V$ by open balls of radius~$1/n$ with respect to this metric. Choose for all~$n \in \N$ and~$v \in V$ a set in~$\mathcal{U}_n$ that contains~$v$, and denote it~$U_n(v)$.

  Define for~$n \in \N$ the tensor
  \[
    T_n = \sum_{i \in \N} 2^{-i} \frac{\1_{U_n(s_{i,1})\times \cdots \times U_n(s_{i,k})}}{\mu(U_n(s_{i,1}))\cdots  \mu(U_n(s_{i,k}))}.
  \]
  It is $k/(k-1)$-integrable, for if~$M_n = \max\{\, \mu(U) : U \in \mathcal{U}_n\, \}$, then by Minkowski's inequality
  \[
    \begin{split}
      \|T_n\|_{k/(k-1)} &\leq \sum_{i \in \N} \biggl(\int_{V^k} \biggl( 2^{-i} \frac{\1_{U_n(s_{i,1})\times \cdots \times U_n(s_{i,k})}(v)}{\mu(U_n(s_{i,1}))\cdots \mu(U_n(s_{i,k}))}\biggr)^{\frac{k}{k-1}}\, d\mu^k(v)\biggr)^{\frac{k-1}{k}}\\
      &= \sum_{i \in \N} 2^{-i} \frac{\bigl(\mu(U_n(s_{i,1}))\cdots \mu(U_n(s_{i,k}))\bigr)^{(k-1)/k}}{\mu(U_n(s_{i,1}))\cdots \mu(U_n(s_{i,k}))}\\
      &= \sum_{i \in \N} 2^{-i} \bigl(\mu(U_n(s_{i,1}))\cdots\mu(U_n(s_{i,k}))\bigr)^{-1/k}\\
      &\leq 2/M_n.
  \end{split}\]

  Since any continuous~$A: V^k \to \R$ is uniformly continuous, for every~$\epsilon > 0$ there is an~$n_{\epsilon} \in \N$ so that for all~$n \geq n_{\epsilon}$
  \[
    |A(v_1, \ldots, v_k) - A(s_{i,1}, \ldots, s_{i,k})| \leq \epsilon\qquad \text{for all } i \in \N \text{ and }v_j \in U_n(s_{i,j}).
  \]
  Whence, if~$A : V^k \to \R$ is continuous, then
  \begin{equation}%
    \label{eqn:limit-T_n-gives-pointwise-values}
    \lim_n \langle T_n, A\rangle = \sum_{i \in \N} 2^{-i} A(s_{i,1}, \ldots, s_{i, k}).
  \end{equation}

  Let~$A$ be a feasible solution of~$\vartheta_{\st}(H, \CP(V,k)_{\cont})$. Since~$H$ is homogeneous and all constraints and the objective are invariant under the action of~$\Gamma$ on~$A$,~$A$ can be assumed invariant under the diagonal action of~$\Gamma$, that is,~$\Av_{\Gamma} A = A$.

  Since~$\int_V A(x, \ldots, x)\, d\mu(x) = 1$, by Lemma~\ref{lem:trace-1-is-in-small-tip},~$A \in \tau(\CP(V, k))$. Because the Banach space~$L^k(V^k)$ is reflexive,~$L^{k/(k-1)}(V^k)$ is separable, and the small tip is closed and bounded in~$L^k(V^k)$, it is compact, and the weak topology on the small tip is metrizable~\cite[Theorem V.4.2 and Theorem V.5.1]{Conway2010AAnalysis}. Then, Choquet's theorem~\cite[Theorem 10.7]{Simon2011Convexity} says that there exists a probability measure~$\rho$ on~$\mathcal{E}_{\st}$ such that for all~$F \in L^{k/(k-1)}(V^k)$,
  \begin{equation}%
    \label{eqn:extreme-point-decomposition}
    \langle A, F\rangle = \int_{\mathcal{E}_{\st}} \langle f^{\otimes k},  F\rangle\, d\rho(f^{\otimes k}).
  \end{equation}

  Since~$A$ is~$0$ on~$E$, by $\Gamma$-invariance of~$A$ and Fatou's lemma,
  \[
    \begin{split}
      0 &= \lim_{n \in \N} \langle \Av_{\Gamma} A, T_n \rangle\\
      &= \lim_{n \in \N} \langle A, \Av_{\Gamma} T_n \rangle\\
      &\geq \int_{\mathcal{E}_{\st}} \liminf_{n \in \N} \langle f^{\otimes k}, \Av_{\Gamma} T_n\rangle\, d\rho(f^{\otimes k})\\
      &= \int_{\mathcal{E}_{\st}} \liminf_{n\in \N} \langle  \Av_{\Gamma} f^{\otimes k}, T_n\rangle\, d\rho(f^{\otimes k}).
    \end{split}
  \]
  Thus, since in the above~$T_n$ and all~$f$s are nonnegative, the set
  \[
    \{\, f^{\otimes k} : \liminf_{n \in \N} \langle \Av_{\Gamma} f^{\otimes k}, T_n\rangle \neq 0\, \}
  \]
  has measure~$0$ under~$\rho$.

  This, together with Equation~\eqref{eqn:extreme-point-decomposition} applied to~$\langle A, \1^{\otimes k}\rangle$, shows that there exists an~$f \in L^k(V)_{\geq 0}$ with~$\|f\|_k = 1$ such that~$\langle f^{\otimes k}, \1^{\otimes k}\rangle \geq \langle A, \1^{\otimes k}\rangle$ and
  \[
    \liminf_{n \in \N} \langle\Av_{\Gamma} f^{\otimes k}, T_n \rangle = 0.
  \]
  Since~$\Av_{\Gamma}f^{\otimes k}$ is continuous by Lemma~\ref{lem:averaged-rank-one-is-continuous}, Equation~\eqref{eqn:limit-T_n-gives-pointwise-values}, and density of~$S$ in~$E$ imply that~$\Av_{\Gamma}f^{\otimes k}(v_1, \ldots, v_k) = 0$ if~$v_1 \cdots v_k \in E$.

  Identify~$V$ with a quotient of~$\Gamma$ and let~$p : \Gamma \to V$ be the quotient map. Let~$I = D(p^{-1}(\supp{f})) \cap p^{-1}(\supp{f})$, where~$D(p^{-1}(\supp{f}))$ is the set of density points of~$p^{-1}(\supp f)$ with respect to the right-invariant density metric on~$\Gamma$. Denote the Haar measure on~$\Gamma$ by~$\nu$. Then, since~$p(I) \subseteq \supp{f}$,
  \[
    \mu(\supp{f}) = \nu(p^{-1}(\supp{f})) = \nu(I) \leq \nu(p^{-1}(p(I))) = \mu(p(I)) \leq \mu(\supp{f}),
  \]
  so that the inequalities are equalities, and
  \[
    \langle A, \1^{\otimes k}\rangle \leq \langle f, \1\rangle^k \leq \|f\|_k^k \|\1_{\supp{f}}\|_{k/(k-1)}^k = \mu(p(I))^{k-1}.
  \]
  If~$p(I)$ is independent, the theorem follows.

  Let~$h_1$,~$\ldots$,~$h_k \in I$ be distinct points. Because~$\Av_{\Gamma} f^{\otimes k}$ is~$0$ on edges, it suffices to show~$\Av_{\Gamma} (f \smallcirc p)^{\otimes k}(h_1, \ldots, h_k) > 0$. The function~$f \smallcirc p$ is strictly positive on~$I$, and
  \[
    \Av_{\Gamma} (f \smallcirc p)^{\otimes k}(h_1, \ldots, h_k) = \int_{\bigcap_{i=1}^k I h_i^{-1}} (f \smallcirc p)^{\otimes k}(g h_1, \ldots, g h_k)\, d\nu(g),
  \]
  so that it suffices to show that~$\nu(\bigcap_{i=1}^k I h_i^{-1}) > 0$.

  Since all~$h_i$ are density points of~$p^{-1}(\supp{f})$, and so also of~$I$, there exists a~$\delta > 0$ such that for all~$i$
  \[
    \frac{\nu(I \cap B_{\delta}(h_i))}{\nu(B_{\delta}(h_i))} \geq \frac{k}{k+1}.
  \]
  Let~$N_i = I h_i^{-1}$, then
  \[
    \nu\biggl( \bigcap_{i=1}^k N_i \cap B_{\delta}(1)\biggr) \geq \sum_{i=1}^k \nu(N_i \cap B_{\delta}(1)) - (k-1)\nu(B_{\delta}(1)).
  \]
  Indeed, for two subsets~$S_1$,~$S_2 \subseteq V$,~$\nu(S_1 \cap S_2) = \nu(S_1) + \nu(S_2) - \nu(S_1 \cup S_2)$ by the inclusion-exclusion principle. For~$\{S_i\}_{i \in [l]}$ with~$l > 2$, apply induction by
  \[
    \nu\biggl(S_l \cap \bigcap_{i < l} S_i \biggr) = \nu(S_l) + \nu\biggl(\bigcap_{i < l} S_i\biggr) - \nu\biggl(S_l \cup \bigcap_{i < l} S_i\biggr),
  \]
  and the claim follows by taking~$S_i = N_i \cap B_{\delta}(1)$ and~$N_i \cap B_{\delta}(1) \subseteq B_{\delta}(1)$ for all~$i$. Since the metric and the Haar measure are right-invariant, it follows that
  \begin{multline*}
    \nu\biggl(\bigcap_{i=1}^k N_i \cap B_{\delta}(h_i)\biggr) \geq \sum_{i=1}^k \nu( I \cap B_{\delta}(h_i)) - (k-1)\nu(B_{\delta}(1))\\
    \geq \nu(B_{\delta}(1))\biggl(\frac{k^2}{k+1} - (k-1)\biggr) = \frac{\nu(B_{\delta}(1))}{k + 1} > 0,
  \end{multline*}
  which completes the proof.
\end{proof}

\chapter[Convergence through completely positive programming]{Convergence through completely positive programming}%
\label{ch:Completely positive programming on compact spaces}
We work from the general to the specific in this chapter. First, we define a completely positive hierarchy and versions of the moment hierarchy and the block moment hierarchy for the measurable independence number of certain uniform measurable hypergraphs, which we then compare  to each other. The proofs are algebraic in nature, and we can state them in great generality.

We also prove that the moment and block moment hierarchies give an upper bound on the measurable independence number, but only for uniform homogeneous hypergraphs. It is unclear whether a similar result holds without the presence of a group.

After that, we investigate convergence of these hierarchies to the measurable independence number, based on the exactness results of the previous chapter. These proofs depend on functional-analytic specifics, and in the homogeneous setting even on details of the representation theory of the group, limiting our results to only some measurable graphs. Most results in this chapter are based on~\cite{Bekker2026OptimizationSpaces}, but the extension to hypergraphs is new.

\section{Spaces of subsets}%
\label{sec:spaces-of-subsets}
We use the convention~$V^0 = \{\emptyset\}$. If~$v \in V^r$, denote by~$\setof{v}$ the set of its coefficients; that is,~$\setof{\emptyset} = \emptyset$, and~$\setof{(v_1, \ldots, v_r)} = \{v_1, \ldots, v_r\}$. This defines a map~$\setof{\, \cdot\, } : V^r \to \Sub{V}{r}$. We omit the index~$r$ from the notation, but to avoid ambiguity, we do include it in the notation for the inverse image:
\[
  \setof{S}^{-1}_r = \{\, v \in V^r : \setof{v} = S\, \},
\]
for all~$S \in \Sub{V}{r}$.

If~$(V, \A, \mu)$ is a measure space, we can turn~$\Sub{V}{r}\setminus\{\emptyset\}$ into a measure space with measure~$\mu^r_{\sub}$ by taking the pushforward of~$\mu^r$ under~$\setof{\, \cdot\, }$. That is,~$S \subseteq \Sub{V}{r} \setminus \{\emptyset\}$ is measurable if and only if~$\setof{S}^{-1}_r \in \A^r$, and its measure is~$\mu^r_{\sub}(S) = \mu(\setof{S}^{-1}_r)$. Define furthermore~$\mu_{\sub}^r(\{\emptyset\})=1$. In other words, if~$f: \Sub{V}{r} \to \R$ is a measurable function, then
\[
  \int_{\Sub{V}{r}} f(S)\, d\mu^r_{\sub}(S) = f(\emptyset) + \int_{V^r} f(\setof{v})\, d\mu(v).
\]
Therefore,~$f$ is integrable if and only if~$f \smallcirc \setof{\, \cdot\, }$ is.

Let~$V$ be a Hausdorff space. For a topology on~$V^0 = \Sub{V}{0} = \{\emptyset\}$ there is no choice. Using the map~$\setof{\, \cdot\, }$, a topology on~$V$ induces a topology on~$\Sub{V}{r}$, called the~\defi{standard topology}\index{topology!standard topology} of~$\Sub{V}{r}$; indeed, for~$r \geq 1$,~$\setof{\, \cdot\, }$ is surjective onto~$\Sub{V}{r}\setminus\{\emptyset\}$, so we equip it with the quotient topology: a collection~$S \subseteq \Sub{V}{r}$ is open if and only if~$\setof{S}^{-1}_r$ is open in~$V^r$. We then take the disjoint union with~$\{\emptyset\}$ to obtain a topology on all of~$\Sub{V}{r}$. The spaces~$\Sub{V}{r}$ are Hausdorff, and if~$V$ is compact, so are all~$\Sub{V}{r}$~\cite{Handel2000SomeSpaces}.

If~$1 \leq r \leq k$ and~$U_1, \ldots, U_r \subseteq V$ are open sets, then the set
\[
  (U_1, \ldots, U_r)_k = \{\, A \in \Sub{V}{k} : A \cap U_i \neq \emptyset \text{ for all } i \text{ and } A \subseteq U_1 \cup \cdots \cup U_r\, \}
\]
is open in~$\Sub{V}{k}$. If the~$U_i$ are pairwise disjoint, call such a set~\defi{basic open}\index{basic open set of SubVk@basic open set of~$\Sub{V}{k}$}. Handel showed \cite[Proposition 2.11]{Handel2000SomeSpaces} that the collection of all basic open sets forms a basis for the topology on $\Sub{V}{k}$. For more background on this topology, see Handel~\cite{Handel2000SomeSpaces}. The spaces~$\Sub{V}{=r}$ inherit measures and topologies from~$\Sub{V}{r}$ by restriction.

If~$V$ is equipped with a topology, the union map
\[
  \cup : \Sub{V}{r} \times \Sub{V}{s} \to \Sub{V}{r + s}, \qquad (S, T) \mapsto S \cup T
\]
is continuous. So, it induces a bounded linear operator
\[
  M: C(\Sub{V}{r + s}) \to C(\Sub{V}{r} \times \Sub{V}{s}), \qquad Mf(S, T) = f(S \cup T)
\]
for all~$r$,~$s \geq 0$~\cite[Proposition 2.14]{Handel2000SomeSpaces}.

Let~$k \geq 2$ be an integer and~$H = (V, E)$ be a $k$-uniform measurable, or locally independent, or measurable locally independent hypergraph. Denote the set of all independent subsets of~$H$ with cardinality at most~$r$ by~$\ind_r$\index{ I r@$\ind_r$}. A subset~$K$ of~$V$ is called a~\defi{clique}\index{clique} of~$H$ if every $k$-subset of~$K$ is an edge. Denote the set of all cliques of cardinality at most~$r$ by~$\clique_r$\index{ K r@$\clique_r$}. If~$|K| < k$, then~$K$ vacuously satisfies this definition, so if~$r < k$,~$\clique_r = \Sub{V}{r}$. Define the spaces~$\ind_{=r} = \ind_r \cap \Sub{V}{=r}$\index{ I =r@$\ind_{=r}$} and~$\clique_{=r} = \clique_r \cap \Sub{V}{=r}$\index{ K =r@$\clique_{=r}$}.

We will often switch between the identification~$E = \clique_{=k}$ and the identification of~$E$ with a subset of~$V^k$. Call~$\overline{E} = \Sub{V}{=k} \setminus E = \ind_{=k}$ the set of~\defi{nonedges} of~$H$, and define the~\defi{complement hypergraph}\index{complement hypergraph}~$\overline{H}$\index{ Hbar@$\overline{H}$} of~$H$ by~$\overline{H} = (V, \overline{E})$.

\section{Three hierarchies and how they compare}
We now formulate analogues of the moment hierarchy and the block moment hierarchy from Section~\ref{sec:independence-number-finite-graph} of the introduction for the independence number of a uniform measurable hypergraph, and show that for uniform homogeneous hypergraphs, these hierarchies are stronger than the completely positive hierarchy.

Let~$k \geq 2$ be an integer,~$V$ be a compact Hausdorff space, and~$H = (V, E)$ a $k$-uniform measurable hypergraph with Borel measure~$\mu$. For an integer~$r \geq 1$, let~$M : C(\Sub{V}{2r}) \to C_{\sym}(\Sub{V}{r})$ be the operator induced by the union map:
\[
  (M\phi)(S, T) = \phi(S \cup T).
\]
The~\defi{$r$th level} of the~\defi{moment hierarchy}\index{moment hierarchy!locally independent hypergraph} for~$H$ is
\index{ Mr@$\lass_r(H)$!measurable hypergraph}\[
  \begin{optprob}
    \lass_r(H) = \sup &\onerow{\int_{\Sub{V}{=1}} \phi(S)\, d\mu^1_{\sub}(S)}\\
    &\phi(\emptyset) = 1,\\
    &\phi(S) = 0  &\text{ if }S \in \Sub{V}{2r} \text{ not independent},\\
    &\onerow{\phi \in C(\Sub{V}{2r}),\quad M\phi \in C_{\sym}(\Sub{V}{r})_{\succeq 0}.}
  \end{optprob}
\]

For~$r \geq 2$ and~$Q \in \Sub{V}{r-2}$, let~$M_{Q} : C(\Sub{V}{r}) \to C_{\sym}(\Sub{V}{1})$ be the operator
\[
  M_{Q}K(S, T) = K(Q \cup S \cup T).
\]
The~\defi{$r$th level} of the~\defi{block moment hierarchy}\index{block moment hierarchy!measurable hypergraph} is the program
\index{ blockMr@$\kpb_r(H)$!measurable hypergraph}\[
  \begin{optprob}
    \kpb_r(H) = \sup &\onerow{\int_{\Sub{V}{=1}} \phi(S)\, d\mu^1_{\sub}(S)}\\
    &\phi(\emptyset) = 1,\\
    &\phi(S) = 0  &\text{if }S \in \Sub{V}{r} \\
    &&\text{not independent},\\
    &\onerow{\phi \in C(\Sub{V}{r}),}\\
    &M_Q\phi \in C_{\sym}(\Sub{V}{1})_{\succeq 0} & \text{for all } Q \in \ind_{r-2}.
  \end{optprob}
\]

The restriction of a feasible solution of~$\lass_{r+1}(H)$ to~$\Sub{H}{2r}$ is continuous~\cite[Proposition 2.4]{Handel2000SomeSpaces}, hence is a feasible solution of~$\lass_r(H)$. A similar statement holds for~$\kpb_{r+1}(H)$ and~$\kpb_r(H)$. Thus,
\[
  \lass_1(H) \geq \lass_2(H) \geq \cdots \qquad \text{ and }\qquad \kpb_1(H) \geq \kpb_2(H) \geq \cdots.
\]
Moreover, if we have~$M\phi \in C_{\sym}(\Sub{V}{r})_{\succeq 0}$, then for all sets~$Q \in \ind_{r-1}$ and~$S$,~$T \in \Sub{V}{1}$,
\[
  M_Q\phi(S, T) = \phi(Q \cup S \cup T) = M\phi(Q \cup S, Q \cup T),
\]
so~$M_Q \phi \succeq 0$ for all~$Q \in \ind_{r-1}$, and~$\lass_{r}(H) \leq \kpb_{r+1}(H)$.
\begin{theorem}%
  \label{thm:lasser-and-kpn-upper-bound}
  Let~$k \geq 2$ be an integer, and~$H = (V, E, \Gamma)$ be a $k$-uniform homogeneous hypergraph with~$\Gamma$ a compact group. If~$\alpha(H) > 0$ and~$r \geq k - 1$, then~$\lass_r(H) \geq \alpha(H)$ and~$\kpb_{r+1}(H) \geq \alpha(H)$.
\end{theorem}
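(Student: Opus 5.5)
The plan is to mimic the finite-graph argument, where an independent set~$I$ gives the feasible solution~$\chi_I$. Here~$\chi_I$ is replaced by its average over~$\Gamma$, which turns indicator functions into continuous functions. Fix a measurable independent set~$I \subseteq V$ with~$\mu(I) > 0$. Normalize the Haar measure~$\nu$ on~$\Gamma$ so that~$\nu(\Gamma) = 1$, and note that then~$\mu(V) = 1$ for the quotient measure. Define~$\phi$ on~$\Sub{V}{m}$, with~$m = 2r$ for~$\lass_r(H)$ and~$m = r+1$ for~$\kpb_{r+1}(H)$, by
\[
  \phi(S) = \nu\bigl(\{\, g \in \Gamma : S \subseteq gI\, \}\bigr) = \int_{\Gamma} \prod_{s \in S} \1_I(g^{-1}s)\, d\nu(g),
\]
so that~$\phi(\emptyset) = 1$. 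The objective is~$\int_V \phi(\{v\})\, d\mu(v) = \int_\Gamma \mu(gI)\, d\nu(g) = \mu(I)$, by Fubini and invariance of~$\mu$. Once the remaining constraints are checked, taking the supremum over~$I$ gives~$\lass_r(H) \geq \alpha(H)$ and~$\kpb_{r+1}(H) \geq \alpha(H)$.

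The edge constraints and the positivity constraints hold pointwise and are routine. If~$S \subseteq gI$ for some~$g$, then~$S$ is independent, because~$g^{-1}S \subseteq I$ is independent and~$\Gamma$ acts by automorphisms. So~$\phi(S) = 0$ for every~$S$ that is not independent, for all~$S$ and not just almost everywhere. For the moment condition we write
\[
  M\phi(S,T) = \int_\Gamma \1[S \subseteq gI]\,\1[T \subseteq gI]\, d\nu(g),
\]
which is an average of the rank-one kernels~$\psi_g \otimes \psi_g$ with~$\psi_g(S) = \1[S \subseteq gI]$. Hence~$\langle M\phi, F \otimes F\rangle = \int_\Gamma \langle \psi_g, F\rangle^2\, d\nu(g) \geq 0$ for every~$F$, using the measure~$\mu^r_{\sub}$ on~$\Sub{V}{r}$ and Fubini. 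The same computation, with an extra factor~$\1[Q \subseteq gI]$, shows~$M_Q\phi \succeq 0$ for all~$Q$.

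The step I expect to be the main obstacle is continuity of~$\phi$ on~$\Sub{V}{m}$ in the standard topology. The key observation is that~$\1_I^2 = \1_I$. Therefore, for~$v = (v_1, \ldots, v_m) \in V^m$ we have
\[
  \phi(\setof{v}) = \int_\Gamma \1_I(g^{-1}v_1) \cdots \1_I(g^{-1}v_m)\, d\nu(g),
\]
even when coordinates repeat. By Lemma~\ref{lem:averaged-rank-one-is-continuous}, applied with~$\phi_i = \1_I \in L^m(V)$, the right-hand side is continuous on~$V^m$. Since~$\Sub{V}{m} \setminus \{\emptyset\}$ carries the quotient topology of~$\setof{\,\cdot\,}$ and~$\{\emptyset\}$ is an isolated point, it follows that~$\phi \in C(\Sub{V}{m})$. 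I would double-check here that composing with~$\setof{\,\cdot\,}$ on the lower strata~$V^j$ with~$j < m$ is consistent; this follows again from idempotence by padding with repeated coordinates.

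The hypothesis~$r \geq k-1$ ensures that the programs involve sets of size at least~$k$, so that the edge constraint is not vacuous. The construction above, however, does not seem to need it beyond this.
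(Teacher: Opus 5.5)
Your proposal is correct and follows essentially the same route as the paper: average $\1_I^{\otimes m}$ over $\Gamma$, use $\1_I^2=\1_I$ so that the result depends only on $\setof{v}$, get continuity from Lemma~\ref{lem:averaged-rank-one-is-continuous} together with the quotient topology, and then check the edge, normalization, positivity, and objective conditions. The paper proves only the $\lass_r$ bound and deduces the $\kpb_{r+1}$ bound from $\lass_r(H)\le\kpb_{r+1}(H)$, whereas you verify both directly and spell out positive semidefiniteness as an average of rank-one kernels; the paper only gestures at the latter via finite principal submatrices.
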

\begin{proof}
  In light of the inequality~$\lass_r(H) \leq \kpb_{r+1}(H)$, it suffices to prove that~$\lass_r(H) \geq \alpha(H)$ for all~$r \geq k-1$.

  Suppose~$H$ is homogeneous under a compact group~$\Gamma$ and~$\mu$ is the quotient of the Haar measure of~$\Gamma$. Fix~$r \geq k - 1$ an integer and~$I \subseteq V$ an independent set with nonzero measure. Define~$F(v) = \Av_{\Gamma}\1_I^{\otimes 2r}$, which by Lemma~\ref{lem:averaged-rank-one-is-continuous} is continuous. Since~$\1_I$ only takes values in~$\{0, 1\}$, the function~$F$ only depends on~$\setof{v}$, that is, if~$v$,~$w \in V^{2r}$ such that~$\setof{v} = \setof{w}$, then~$F(v) = F(w)$.

  Define~$\phi \in C(\Sub{V}{2r})$ by~$\phi(\emptyset) = 1$ and~$\phi(\setof{v}) = F(v)$ for all~$v \in V^r$. To show that~$\phi$ is indeed continuous, it is enough to show it is continuous on~$\Sub{V}{2r}\setminus \{\emptyset\}$. This is true, because~$F$ is continuous and~$\Sub{V}{2r}$ has the quotient topology under~$\setof{\, \cdot\, }$. It then follows that~$\phi \in M^{-1} C_{\sym}(\Sub{V}{r})_{\succeq 0}$, for example by looking at the restrictions of~$M\phi$ to finite principle submatrices.

  Now,~$\int_{\Sub{V}{=1}} \phi(S)\, d\mu^1_{\sub}(S) = \int_V F(\setof{v})\, d\mu(v) = \mu(I)$. Moreover,~$\Gamma$ preserves edges, so~$\phi(S) = 0$ if~$S$ is not independent, which concludes the proof.
\end{proof}

Recall definitions~\eqref{eqn:big-theta-number} of~$\vartheta_{\bt}(H, \cC)$ and~\eqref{eqn:small-theta-number} of~$\vartheta_{\st}(H, \cC)$, and recall that~$S_{\cont} = S \cap C(V, k)$ for any~$S \subseteq L^2(V, k)$.
\begin{theorem}%
  \label{thm:comparison-lass-kpb-crv-locally-independent-graphs}
  Suppose~$k \geq 2$ is an integer and~$H = ((V, \A, \mu), E)$ is a $k$-uniform measurable hypergraph. If~$V$ is a compact Hausdorff space and~$\mu$ is a finite Borel measure on~$V$, then, for every~$r \geq 1$,
  \[
    \lass_{k+r}(H) \leq \kpb_{k+r+1}(H) \leq \vartheta_{\st}(H, C_r(V, k)^*_{\cont})^{1/(k-1)}
  \]
  and
  \[
    \lass_{k+r}(H) \leq \kpb_{k+r+1}(H) \leq \vartheta_{\bt}(H, C_r(V, k)^*_{\cont}).
  \]
\end{theorem}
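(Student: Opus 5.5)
The plan is to start from a feasible solution~$\phi$ of~$\kpb_{k+r+1}(H)$ and build from it a feasible tensor for~$\vartheta_{\st}(H, C_r(V,k)^*_{\cont})$ and for~$\vartheta_{\bt}(H, C_r(V,k)^*_{\cont})$ whose objective is large enough. The inequality~$\lass_{k+r}(H) \leq \kpb_{k+r+1}(H)$ was already shown before Theorem~\ref{thm:lasser-and-kpn-upper-bound}, so only the second inequality in each chain needs proof. Membership in~$C_r(V,k)^*$ is easiest to obtain from the description~$C_r(V,k)^* = \T_r^* L^2_{\sym}(V,k+r)_{\geq 0}$. I therefore propose
\[
  F(v_1,\ldots,v_{k+r}) = \phi(\setof{(v_1,\ldots,v_{k+r})}), \qquad A_0 = \T_r^* F,
\]
so that, by~\eqref{eqn:adjoint-Polya-type-operator},
\[
  A_0(x_1,\ldots,x_k) = \int_{V^r} \phi\bigl(\{x_1,\ldots,x_k\} \cup \setof{w}\bigr)\, d\mu^r(w).
\]

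The routine properties of~$A_0$ go as follows.
\begin{enumerate}
  \item[(a)] The sets involved have at most~$k+r \leq k+r+1$ elements, so~$\phi$ is defined on them.
  \item[(b)] $F$ is continuous, since~$\phi$ is continuous on~$\Sub{V}{k+r}$ and that space carries the quotient topology under~$\setof{\,\cdot\,}$. As~$V$ is compact, dominated convergence makes~$A_0$ continuous.
  \item[(c)] $F$ is nonnegative: if~$S \in \ind_{k+r-1}$, then~$M_S\phi \succeq 0$, and its diagonal gives~$\phi(S \cup \{x\}) \geq 0$. Hence~$A_0 \in C_r(V,k)^*_{\cont}$.
  \item[(d)] If~$x_1 \cdots x_k \in E$, every superset of~$\{x_1,\ldots,x_k\}$ is non-independent, so~$A_0(x) = 0$.
  \item[(e)] Slice positivity: for fixed~$v \in V^{k-2}$ and~$w \in V^r$, the kernel~$(x,y) \mapsto \phi(\setof{v} \cup \setof{w} \cup \{x,y\})$ is~$M_Q\phi$ with~$Q = \setof{v}\cup\setof{w}$ and~$|Q| \leq k+r-2 \leq k+r-1$. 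This kernel is positive semidefinite when~$Q$ is independent and identically zero otherwise. Integrating over~$w$ and then against~$G(v) \geq 0$ keeps positivity.
\end{enumerate}

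It remains to normalise and compare objectives; this is the step I expect to be the main obstacle. Write~$\beta = \int_V \phi(\{v\})\, d\mu(v)$, the objective of~$\phi$. The key quantities are the iterated integrals
\[
  a_j = \int_{V^j} \phi(\setof{v})\, d\mu^j(v), \qquad a_0 = 1,\ a_1 = \beta.
\]
I would prove log-convexity,~$a_{j+1} a_{j-1} \geq a_j^2$, from the positive semidefiniteness of the blocks~$M_Q\phi$ on~$\Sub{V}{1} = \{\emptyset\} \cup V$. For fixed~$Q$ with~$|Q| = j-1$, the~$2\times 2$ compression of~$M_Q\phi$ against~$\1_{\{\emptyset\}}$ and~$\1_V$ gives
\[
  \phi(Q)\int\!\!\int \phi(Q \cup \{x,y\}) \geq \Bigl(\int \phi(Q\cup\{x\})\Bigr)^2 .
\]
Integrating over~$Q$ and applying Cauchy--Schwarz (in the form~$\int g^2/h \geq (\int g)^2/\int h$) yields the log-convexity. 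From it,~$a_j \geq \beta^j$ for all~$j$.

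Several points need care here. The collapse of coordinates (repeated points) has measure zero only when~$\mu$ is atomless; otherwise diagonals must be tracked, and I expect the same Cauchy--Schwarz argument still works because~$\phi$ is evaluated on sets. The objective~$\langle A_0, \1^{\otimes k}\rangle$ equals~$a_{k+r}$. The diagonal integral~$\int_V A_0(v,\ldots,v)\, d\mu(v)$ equals~$a_{r+1}$. For the big version,~$\Tr(\T_{k-2}^*A_0) = a_{k+r-1}$, where the trace of the continuous positive semidefinite kernel~$\T_{k-2}^*A_0$ is the integral of its diagonal.

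With these, the two bounds follow by rescaling.
\begin{enumerate}
  \item[(i)] Set~$A = A_0/a_{r+1}$. Log-convexity gives~$a_{k+r}/a_{r+1} \geq (a_{r+1}/a_r)^{k-1} \geq \beta^{k-1}$, so~$\vartheta_{\st}(H, C_r(V,k)^*_{\cont}) \geq \beta^{k-1}$.
  \item[(ii)] Set~$A = A_0/a_{k+r-1}$. Its objective is~$a_{k+r}/a_{k+r-1} \geq \beta$, since the ratios~$a_{j+1}/a_j$ are nondecreasing and start at~$a_1/a_0 = \beta$. This gives~$\vartheta_{\bt}(H, C_r(V,k)^*_{\cont}) \geq \beta$.
\end{enumerate}
Taking the supremum over feasible~$\phi$ gives both chains. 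If~$a_j = 0$ for some~$j$, then log-convexity forces~$\beta = 0$ and the bound is trivial.
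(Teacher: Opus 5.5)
Your proposal is correct and is essentially the paper's proof: the same tensor $\T_r^*F$ with $F = \phi\circ\setof{\,\cdot\,}$, the same iterated integrals (the paper's $\Phi_t$ are your $a_j$), log-convexity from $2\times 2$ compressions of the blocks $M_Q\phi$, and normalization by $a_{r+1}$, respectively $a_{k+r-1}$, which you spell out where the paper only says the last step is ``suitably adjusted''. The differences are minor: the paper integrates $M_{\setof{v}}\phi$ over $v \in V^t$ before compressing, so it needs no Cauchy--Schwarz step; your worry about atoms is unnecessary, because every identity holds exactly once it is written through $\setof{\,\cdot\,}$; and your count $\langle A_0, \1^{\otimes k}\rangle = a_{k+r}$ is the correct one (the paper's $\Phi_{k+r+1}$ at that point is an index slip).
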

\begin{proof}
  Fix an integer~$r \geq 1$. First show the inequality
  \[
    \kpb_{k+r + 1}(H) \leq \vartheta_{\st}(H, C_r(V)^*_c)^{1/(k-1)}.
  \]

  Let~$\phi \in C(\Sub{V}{k+r+1})$ be a feasible solution of~$\kpb_{k+r+1}(H)$ with nonzero objective value. Define the function~$F : V^{k+r} \to \R$ by
  \[
    F(v, w) = \phi(\setof{v} \cup \setof{w}) = M_{\{v_3, \ldots, v_k\} \cup \setof{w}}(v_1, v_2),
  \]
  where~$v = (v_1, \ldots, v_k) \in V^k$ and~$w \in V^r$. Then,~$F$ is a continuous and symmetric~$(k + r)$-tensor, so~$\T_r^*F$ is continuous and for~$v = (v_1, \ldots, v_k)\in V^k$,
  \[
    \begin{split}
      (\T_r^*F)(v) &= \int_{V^r} F(v, w)\, d\mu^r(w)\\
      &=\int_{V^r} \phi(\setof{v} \cup \setof{w})\, d\mu^r(w)\\
      &=\int_{V^r} M_{\{v_3, \ldots, v_k\} \cup \setof{w}}\phi(v_1, v_2)\, d\mu(w).
    \end{split}
  \]

  Since~$M_{\{v_3, \ldots, v_k\} \cup \setof{w}}\phi$ is positive semidefinite for every~$w \in V^r$ and~$v_i \in V$ for~$3 \leq i \leq k$,~$\T_r^*F$ is slice positive. To see that~$\T_r^*F \in C_r(V,k)^*_c$, it is enough to see that~$F \geq 0$. This is true, because if~$\setof{v} \cup \setof{w}$ is not independent,~$F(v, w) = \phi(\setof{v} \cup \setof{w}) = 0$; otherwise,~$F(v, w) = M_{\{v_2, \ldots, v_k\} \cup \setof{w}}(v_1, v_1)$ is a diagonal entry of a continuous positive-semidefinite kernel, thus nonnegative.

  Let~$A = \T_r^*F$. The above also shows that~$A(v) = 0$ for all~$v \in E$. Let~$\tau = \int_{V}A(v, \cdots, v)\,d\mu(v)$. The objective of the remainder of the proof is to show that~$\tau > 0$, and that~$\tau^{-1}A$ is a feasible solution of~$\vartheta(H, C_r(V,k)^*)$ with objective value at least~$\int_{\Sub{v}{=1}} \phi(S)\, d\mu^1_{\sub}(S)$.

  For an integer~$0 \leq t \leq k + r + 1$, write
  \[
    \Phi_t = \int_{V^t} \phi(\setof{v})\, d\mu^t(v).
  \]
  First, show that the matrix
  \begin{equation}%
    \label{eqn:normalization-matrix}
    \begin{pmatrix}
      \Phi_t & \Phi_{t+1}\\
      \Phi_{t+1} & \Phi_{t+2}
    \end{pmatrix}
  \end{equation}
  is positive semidefinite for all integers~$0 \leq t \leq k+r-1$.

  Indeed, fix an integer~$t$ so that~$0 \leq t \leq k + r - 1$ and let~$B: \Sub{V}{1}^2 \to \R$ be such that
  \[
    B(S, T) = \int_{V^t} \phi(\setof{v} \cup S \cup T)\, d\mu^t(v) = \int_{V^t} (M_{\setof{v}})(S, T)\, d\mu^t(v).
  \]
  Then~$B$ is a positive-semidefinite kernel, for example by Fubini's theorem, and~$B(S, T)$ only depends on~$S \cup T$. Moreover,
  \[
    \begin{split}
      B(\emptyset, \emptyset) &= \int_{V^t} \phi(\setof{v})\, d\mu(v) = \Phi_t,\\
      \int_{V} B(\emptyset, \{x\})\, d\mu(x) &= \int_V\int_{V^t} \phi(\setof{v} \cup \{x\})\, d\mu^t(v)d\mu(x) = \Phi_{t+1},\text{ and}\\
      \int_{V^2} B(\{x\}, \{y\})\, d\mu^2(x,y) &= \int_{V^2} \int_{V^t} \phi(\setof{v} \cup \{x, y\})\, d\mu^t(v) d\mu^2(x, y)= \Phi_{t+2},
    \end{split}
  \]
  so the matrix in~\eqref{eqn:normalization-matrix} is positive semidefinite. Indeed, for all~$x_1$ and~$x_2 \in \R$,
  \[
    (x_1 , x_2)
    \begin{pmatrix}
      \Phi_t & \Phi_{t+1}\\
      \Phi_{t+1} & \Phi_{t+2}
    \end{pmatrix}
    \begin{pmatrix}
      x_1\\
      x_2
    \end{pmatrix} = \langle B, (x_1 \1_{\Sub{V}{=0}} + x_2 \1_{\Sub{V}{=1}})^{\otimes 2}\rangle \geq 0.
  \]

  Since~$\Phi_0 = 1$ and since~$\phi$ has objective value~$\Phi_1 > 0$, it follows that~$\Phi_2 > 0$ as well. Repeating the argument, it follows that~$\Phi_t > 0$ for all~$t$. Hence, for fixed~$t$,~$\Phi_t \Phi_{t+2} - \Phi_{t+1}^2 \geq 0$, whence~$\Phi_{t+2}\Phi_{t+1}^{-1} \geq \Phi_{t+1}\Phi_t^{-1}$. Apply the inequality repeatedly to get
  \[
    \Phi_{k + r + 1} \Phi_{r+1}^{-1} = \Phi_{k + r + 1} \Phi_{k+ r}^{-1} \Phi_{k+r}\Phi_{k+r-1}^{-1} \Phi_{k+r-1}\cdots \Phi_{r+2} \Phi_{r+1}^{-1} \geq (\Phi_1 \Phi_0^{-1})^{k - 1}.
  \]
  Moreover,~$\Phi_{r+1} = \tau$ and~$\Phi_{k+r+1} = \langle A, \1^{\otimes k}\rangle$, hence~$\tau^{-1}A$ is a feasible solution of~$\vartheta_{\st}(H, C_r(V, k)^*_{\cont})$ with objective value at least~$\Phi_1^{k-1}$, proving the inequality~$\kpb_{k+r+1}(H) \leq \vartheta_{\st}(H, C_r(V, k)^*_{\cont})^{1/(k-1)}$.

  The proof of the inequality~$\kpb_{k+r+1}(H) \leq \vartheta_{\bt}(H, C_r(V,k)_{\cont})^*$ is entirely the same, but with the final step suitably adjusted. This completes the proof.
\end{proof}

\section{Convergence of the completely positive hierarchy}%
\label{sec:conv-cp-hierarchy}
The main objective of this section is to show that for certain measurable graphs~$G$ the completely positive hierarchy converges to~$\alpha(G)$. These results are based on Bekker, Kuryatnikova, Oliveira, and Vera~\cite[Theorems 5.5 and 5.6]{Bekker2026OptimizationSpaces}, with the difference that the result for thick edge sets no longer depends on a group action, and is complemented by Theorem~\ref{thm:completely-positive-theta-is-exact-thick-edges} to show the hierarchy indeed converges to the independence number. Another small difference with~\cite{Bekker2026OptimizationSpaces} is that convergence in the $\ortho(n)$-homogeneous setting on~$S^{n-1}$ no longer requires the edge set to be closed.

The ideal situation for convergence of the hierarchy is when the feasible regions of each level lie in a mutual compact set and the constraints and objective are continuous. These kinds of problems are rare, as compactness of the feasible region and continuity of the constraints are competing properties: the more continuous linear functionals a topological vector space has, the fewer compact sets there are. It turns out, however, that we get a little lucky in this regard. The situation is particularly favorable when the edge set is thick.

As everything in this section happens on a graph, write~$\CP(V) = \CP(V, 2)$\index{ CPV@$\CP(V)$} and~$C_r(V) = C_r(V,2)$\index{ CrV@$C_r(V)$}. A slice-positive symmetric kernel is just a positive-semidefinite kernel. Moreover,~$\vartheta_{\st}(G, \cC)$ and~$\vartheta_{\bt}(G, \cC)$ coincide for~$k=2$, so we denote both by~$\vartheta(G, \cC)$.

\begin{theorem}%
  \label{thm:convergence-cp-hierarchy-thick-edges}
  Let~$G = ((V, \A, \mu), E)$ be a measurable graph with~$(V, \A, \mu)$ a finite countably generated measure space. If~$\alpha(G) > 0$ and~$E$ is thick, then~$\alpha(G) = \lim_r \vartheta(G, C_r(V)^*)$.
\end{theorem}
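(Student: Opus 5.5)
The plan is a compactness argument: take near-optimal solutions at each level, extract a weak limit, show the limit is feasible for $\vartheta(G,\CP(V))$, and then invoke exactness. For $k=2$ the normalization is $\Tr A=1$ and the cones are $C_r(V)^*=C_r(V,2)^*$.

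\emph{Monotonicity and the lower bound.} By Theorem~\ref{thm:Polyas-theorem-finite-measure}, $C_r(V)^*\supseteq C_{r+1}(V)^*\supseteq \CP(V)$. Hence the values $\vartheta(G,C_r(V)^*)$ are nonincreasing in $r$. They are also bounded below by $\vartheta(G,\CP(V))$, which equals $\alpha(G)$ by Theorem~\ref{thm:completely-positive-theta-is-exact-thick-edges}. So the limit $L$ exists and $L\ge\alpha(G)$. It remains to show $L\le\alpha(G)$.

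\emph{Compactness and passage to the limit.} For each $r$, pick a feasible $A_r$ of $\vartheta(G,C_r(V)^*)$ with objective at least $\vartheta(G,C_r(V)^*)-1/r$. Each $A_r$ is positive semidefinite with $\Tr A_r=1$, so $\|A_r\|_2\le\|A_r\|_{\B^1}=1$. The sequence therefore lies in the unit ball of $L^2_{\sym}(V)$, which is weakly compact and, by Lemma~\ref{lem:sufficient-conition-finite-rank-approximation}, weakly metrizable. Pass to a weakly convergent subsequence with limit $A$. Then:
\begin{enumerate}
\item[(a)] $\langle A,\1\otimes\1\rangle=\lim\langle A_r,\1\otimes\1\rangle=L$, since $\1\otimes\1\in L^2$ as $\mu$ is finite.
\item[(b)] For each fixed $s$, the tail $A_r$ with $r\ge s$ lies in the weakly closed cone $C_s(V)^*$, so $A\in\bigcap_s C_s(V)^*=\CP(V)$.
\item[(c)] $A$ is positive semidefinite, since $\PSD(V)$ is weakly closed.
\end{enumerate}

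\emph{The obstacles: normalization and edge constraints.} Two constraints do not obviously survive the weak limit.

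First, the trace. Trace is only weakly lower semicontinuous on positive semidefinite kernels: testing against finitely many elements of an orthonormal basis gives $\Tr A\le\liminf\Tr A_r=1$, but mass may escape. I therefore avoid equality and use Lemma~\ref{lem:trace-1-is-in-big-tip}, which only needs $\Tr(\T_0^*A)=\Tr A\le 1$. It yields $A\in\T(\CP(V,2))$, and the proof of Theorem~\ref{thm:completely-positive-theta-is-exact-thick-edges} runs verbatim from membership in the big tip. Concretely, Choquet's theorem produces an $f\ge0$ with $\|f\|_2\le 1$ (the $f=0$ extreme point is excluded whenever $L>0$), $\langle f\otimes f,\1_E\rangle=0$ and $\langle f,\1\rangle^2\ge L$. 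The removal lemma then gives an independent set of measure at least $L$.

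Second, the edge constraint $A=0$ on $E$, where $E$ is only measurable. Since $\mu$ is finite, $\1_E\in L^2(V^2)$. The pointwise condition $A_r|_E=0$ gives $\langle A_r,\1_E\rangle=0$, and weak convergence gives $\langle A,\1_E\rangle=0$. This is exactly what the exactness proof uses: the integrated identity $\langle A,\1_E\rangle=0$, not pointwise vanishing. So $A$ need not literally be feasible in the pointwise sense. I instead phrase the last step as rerunning the argument of Theorem~\ref{thm:completely-positive-theta-is-exact-thick-edges} with the hypotheses $A\in\CP(V)$, $\Tr A\le1$ and $\langle A,\1_E\rangle=0$. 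This yields $\alpha(G)\ge\langle A,\1\otimes\1\rangle=L$, completing the proof.

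I expect the main care to be in this last point: checking that the thickness and removal-lemma argument truly uses only $\langle A,\1_E\rangle=0$ and $\Tr A\le1$.
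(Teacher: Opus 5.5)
Your proposal is correct and follows essentially the same route as the paper: near-optimal solutions, weak compactness of the unit ball of $L^2_{\sym}(V)$, weak closedness of the nested cones $C_r(V)^*$ together with Theorem~\ref{thm:Polyas-theorem-finite-measure} to put the limit in $\CP(V)$, and then exactness via Theorem~\ref{thm:completely-positive-theta-is-exact-thick-edges}. The only difference is that the paper rescales the limit by $\Tr(A)^{-1}$ and chooses a representative vanishing on $E$ so it can cite the exactness theorem as a black box, whereas you rerun its proof directly from $\Tr A\le 1$ (via Lemma~\ref{lem:trace-1-is-in-big-tip}) and $\langle A,\1_E\rangle=0$; your choice of $A_r$ within $1/r$ of optimal is also what the argument actually needs, rather than merely objective at least $\alpha(G)/2$.
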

\begin{proof}
  That~$\lim_r \vartheta(G, C_r(V)^*) \geq \alpha(G)$ follows from the discussion in \S~\ref{ch:completely-positive-formulations-meas-ind-num}.\ref{sec:upper-bounds-measurable-independence-number}.

  For the other inequality, take for all~$r$ a feasible solution~$A_r \in L^2_{\sym}(V)$ of~$\vartheta(G, C_r(V)^*)$ with objective value at least~$\alpha(G)/2$. Each~$A_r$ is positive semidefinite, so that~$\|A_r\|_2 \leq \|A_r\|_{\B^1} = \Tr(A_r) = 1$ for all~$r$, hence the sequence~$(A_r)_r$ lies in the unit ball of~$L^2_{\sym}(V)$. The unit ball in~$L^2_{\sym}(V)$ is compact since~$L^2_{\sym}(V)$ is reflexive~\cite[Theorem V.4.2]{Conway2010AAnalysis}. By Lemma~\ref{lem:sufficient-conition-finite-rank-approximation},~$L^2_{\sym}(V)$ is separable, and its unit ball is closed and bounded, so it is metrizable~\cite[Theorem V.5.1]{Conway2010AAnalysis}. Thus,~$(A_r)_r$ has a weakly converging subsequence; assume the sequence itself has weak limit~$A$.

  Each~$C_r(V)^*$ is weakly closed, thus Theorem~\ref{thm:Polyas-theorem-finite-measure} says that~$A \in  \bigcap_r C_r(V)^*$ and~$\bigcap_r C_r(V)^* = \CP(V)$. In particular, since~$\langle A, \1_E\rangle = \lim_r\langle A_r, \1_E\rangle = 0$ and~$A \geq 0$, we can consider~$A$ to be the representant of its $L^2$-equivalence class such that~$A(x,y) = 0$ for all~$xy \in E$.

  Finally,~$A$ is nonzero:~$\langle A, J\rangle = \lim_r \langle A_r, J\rangle = \alpha(G)/2 > 0$. Take the spectral decomposition~$A = \sum_n \lambda_n f_n \otimes f_n$, then for all~$N \in \N$
  \[
    \sum_{n=1}^N \lambda_n = \langle A, \sum_{n=1}^N f_n\otimes f_n\rangle = \lim_{r \to \infty} \langle A_r, \sum_{n=1}^{N} f_n \otimes f_n\rangle \leq \lim_r \Tr(A_r) = 1,
  \]
  so~$0 \neq \Tr(A) \leq 1$. Therefore,~$\Tr(A)^{-1}A$ is a feasible solution of~$\vartheta(G, \CP(V))$, with~$\langle \Tr(A)^{-1}A, J\rangle \geq \lim_r \langle A_r, J\rangle$, as required.  Conclude by Theorem~\ref{thm:completely-positive-theta-is-exact-thick-edges}.
\end{proof}

Combining Theorem~\ref{thm:convergence-cp-hierarchy-thick-edges} above with Lemma~\ref{lem:upper-bound-thick-edges}, we obtain the following theorem which restricts the feasible region to only continuous kernels.
\begin{theorem}%
  \label{thm:convergence-cp-hierarchy-locally-independent-graphs}
  Let~$G = ((V, \B, \mu), E)$ be a measurable locally independent graph with~$V$ compact and second countable with~$\mu$ a finite inner regular Borel measure. If~$\alpha(G) > 0$ and~$E$ is thick, then
  \[
    \alpha(G) = \lim_{r \to \infty} \vartheta(G, C_r(V)^*_{\cont}).
  \]
\end{theorem}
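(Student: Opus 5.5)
We sandwich the limit between $\alpha(G)$ and the value of the non-continuous hierarchy.

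For the lower bound we use monotonicity. Since $C_r(V)^*$ contains $\CP(V)$ by Theorem~\ref{thm:Polyas-theorem-finite-measure}, we have $C_r(V)^*_{\cont} \supseteq \CP(V)_{\cont}$. The latter cone contains every continuous kernel $f \otimes f$ with $f \geq 0$. Because $G$ is measurable and locally independent, $V$ is compact, $\mu$ is finite and inner regular, and $\alpha(G) > 0$, Lemma~\ref{lem:upper-bound-thick-edges} applies with $\cC = C_r(V)^*_{\cont}$ and gives $\vartheta(G, C_r(V)^*_{\cont}) \geq \alpha(G)$ for every $r$. The cones $C_r(V)^*$ decrease in $r$ and the programs are maximizations, so the sequence $\vartheta(G, C_r(V)^*_{\cont})$ is nonincreasing and bounded below by $\alpha(G)$. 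Hence its limit exists and is at least $\alpha(G)$.

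For the upper bound we compare with Theorem~\ref{thm:convergence-cp-hierarchy-thick-edges}. Every feasible solution of $\vartheta(G, C_r(V)^*_{\cont})$ is also feasible for $\vartheta(G, C_r(V)^*)$. It is a continuous positive semidefinite kernel in $C_r(V)^*$, it vanishes pointwise on $E$, and its trace is $1$; for continuous positive semidefinite kernels the trace is $\int_V A(x,x)\,d\mu(x)$, so the two normalizations agree. Therefore $\vartheta(G, C_r(V)^*_{\cont}) \leq \vartheta(G, C_r(V)^*)$ for every $r$. Theorem~\ref{thm:convergence-cp-hierarchy-thick-edges} needs $(V, \B, \mu)$ to be finite and countably generated. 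Finiteness is assumed, and a second countable space has a countably generated Borel algebra, obtained from a countable base. That theorem then gives $\lim_r \vartheta(G, C_r(V)^*) = \alpha(G)$, and combining the two bounds proves the statement.

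The main obstacle is checking that Lemma~\ref{lem:upper-bound-thick-edges} really yields continuous feasible kernels inside $C_r(V)^*$, and not merely in $\CP(V)$. This means confirming that $\CP(V)_{\cont} \subseteq C_r(V)^*_{\cont}$ holds at the level of continuous representatives; it does, because $C_r(V)^*$ is a closed cone containing all $f \otimes f$ with $f \geq 0$. A further subtlety is whether the pointwise edge constraint is stated consistently for the continuous and the $L^2$ programs. A continuous representative vanishing on $E$ is a legitimate representative of its $L^2$ class, so feasibility transfers without change.
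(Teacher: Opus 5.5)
Your proposal is correct and follows the paper's argument: the paper proves this theorem in one line, using Theorem~\ref{thm:convergence-cp-hierarchy-thick-edges} for the upper bound and Lemma~\ref{lem:upper-bound-thick-edges} for the lower bound. You have made explicit the checks the paper leaves implicit: second countability gives a countably generated Borel algebra, $\CP(V)_{\cont} \subseteq C_r(V)^*_{\cont}$, the two normalizations agree for continuous positive-semidefinite kernels, and the values are monotone in~$r$.
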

\begin{proof}
  This follows immediately from Theorem~\ref{thm:convergence-cp-hierarchy-thick-edges} and Lemma~\ref{lem:upper-bound-thick-edges}.
\end{proof}

\sectionbreakafterproof

For nonthick edge sets it is difficult to make a general statement, even under the assumption that~$V$ is homogeneous under a compact subgroup of~$\Aut(G)$. Indeed, it seems that all we have at our disposal is weak compactness of the unit ball in~$L^2_{\sym}(V)$, but weak limits do not necessarily preserve zeros on sets that are not thick.

In the following case this can be salvaged through Schoenberg's theorem. For~$D \subseteq [-1,1)$, define~$E(D) \subseteq \Sub{S^{n-1}}{=2}$ by~$xy \in E(D)$  if and only if~$x^{\tr}y \in D$. We always understand~$S^{n-1}$ to be equipped with the uniform surface measure~$\omega$, which is the quotient of a Haar measure of the orthogonal group~$\ortho(n)$ on~$\R^n$. That this graph satisfies all conditions of Theorem~\ref{thm:completely-positive-is-exact-homogeneous-graph} is explained in~\cite[\S 5]{DeCorte2022CompleteSets} and references therein.

\begin{theorem}%
  \label{thm:convergence-cp-hierarchy-sphere}
  If~$n \geq 3$ is integer,~$D \subseteq (-1, 1)$, and~$G = (S^{n-1}, E(D), \ortho(n))$ is a measurable homogeneous graph with the uniform surface measure~$\omega$ such that~$\alpha(G) > 0$, then
  \[
    \alpha(G) = \lim_r\vartheta(G, C_r(V)_c).
  \]
\end{theorem}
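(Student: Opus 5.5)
The plan follows the proof of Theorem~\ref{thm:convergence-cp-hierarchy-thick-edges}, with Schoenberg's theorem replacing thickness. The cone in the statement is meant to be $C_r(V)^*_{\cont}$; the dual cone is what makes the programs upper bounds. The lower bound $\lim_r \vartheta(G, C_r(V)^*_{\cont}) \geq \alpha(G)$ comes from Lemma~\ref{lem:upper-bound-homogeneous}. That lemma applies because the tensors $\Av_{\ortho(n)}(f\otimes f)$ with $f \geq 0$ are continuous by Lemma~\ref{lem:averaged-rank-one-is-continuous}, and they lie in $\CP(V) \subseteq C_r(V)^*$ by Theorem~\ref{thm:Polyas-theorem-finite-measure}. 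Since the hierarchy is monotone in $r$, the limit exists. For the upper bound I fix, for each $r$, a feasible solution $A_r$ of $\vartheta(G, C_r(V)^*_{\cont})$ with objective value close to the value of level $r$. Because all constraints and the objective are $\ortho(n)$-invariant and $\Av_{\ortho(n)}$ preserves $C_r(V)^*$ (Theorem~\ref{thm:invariant-polyas-thm-compact-groups}), I may average and assume each $A_r$ is invariant. Then $A_r(x,y) = a_r(x^{\tr}y)$ for a continuous function $a_r$ on $[-1,1]$ with $a_r(1) = \int A_r(x,x)\,d\omega = 1$ and $a_r(t) = 0$ for all $t \in D$.

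The key step is the passage to a limit that keeps the zeros on $D$ pointwise. By Schoenberg's theorem, each continuous invariant positive semidefinite kernel has an expansion $a_r(t) = \sum_{\ell \geq 0} c_{r,\ell}\, P^n_\ell(t)$, where the $P^n_\ell$ are Gegenbauer polynomials normalized by $P^n_\ell(1) = 1$, the coefficients satisfy $c_{r,\ell} \geq 0$, and $\sum_\ell c_{r,\ell} = a_r(1) = 1$. So each $a_r$ corresponds to a probability measure on $\N$. I would pass to a subsequence along which $c_{r,\ell} \to c_\ell$ for every $\ell$ by a diagonal argument. Fatou's lemma gives $\sum_\ell c_\ell \leq 1$. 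The delicate point is convergence on $D$. Since $n \geq 3$, we have $|P^n_\ell(t)| \to 0$ uniformly on compact subsets of $(-1,1)$. Hence for fixed $t \in (-1,1)$, tightness is not needed: mass escaping to $\ell = \infty$ contributes nothing in the limit. Therefore $a(t) := \sum_\ell c_\ell P^n_\ell(t)$ equals $\lim_r a_r(t)$ for every $t \in (-1,1)$, and in particular $a(t) = 0$ on $D \subseteq (-1,1)$. This is exactly where the hypotheses $n \geq 3$ and $D \subseteq (-1,1)$ enter. I expect this step to be the main obstacle, since it requires controlling the tail $\sum_{\ell > L} c_{r,\ell} P^n_\ell(t)$ uniformly in $r$. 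The bound $\sup_{\ell > L} |P^n_\ell(t)| \cdot 1$ should handle it.

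It remains to identify the limit with something on which Theorem~\ref{thm:completely-positive-is-exact-homogeneous-graph} applies. The kernel $A(x,y) = a(x^{\tr}y)$ is continuous, because the series converges absolutely and uniformly. It is positive semidefinite, and $A_r \to A$ weakly in $L^2$: this follows from the pointwise convergence off the diagonal together with $|a_r| \leq 1$, by dominated convergence. Each $C_s(V)^*$ is weakly closed and contains $A_r$ for $r \geq s$, because the cones decrease with $r$. Hence $A \in \bigcap_s C_s(V)^* = \CP(V)$ by Theorem~\ref{thm:Polyas-theorem-finite-measure}. For the objective, $\langle A_r, J\rangle = c_{r,0} \to c_0 = \langle A, J\rangle$. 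This value is positive since it is at least $\alpha(G) > 0$ in the limit. Finally, $\int A(x,x)\,d\omega = \sum_\ell c_\ell \in (0,1]$. Rescaling by this trace gives a feasible solution of $\vartheta(G, \CP(V)_{\cont})$ with objective at least $\lim_r \vartheta(G, C_r(V)^*_{\cont})$. Theorem~\ref{thm:completely-positive-is-exact-homogeneous-graph} with $k = 2$ then bounds this by $\alpha(G)$.
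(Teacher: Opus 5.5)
Your proposal is correct and follows the paper's proof essentially step by step: averaging to invariant solutions, Schoenberg's expansion, extracting a limit of the coefficient sequences, preserving the zeros on $D$ via $P^n_\ell(t) \to 0$ for $t \in (-1,1)$ and $n \geq 3$, and concluding with Theorem~\ref{thm:Polyas-theorem-finite-measure} and Theorem~\ref{thm:completely-positive-is-exact-homogeneous-graph} (your diagonal argument with the tail bound $\sup_{\ell > L}|P^n_\ell(t)|$ is exactly weak* compactness of the unit ball of $\ell^1 = c_0^*$). The only slips are cosmetic: with the unnormalized surface measure one has $a_r(1) = 1/\omega_n$ and $\langle A_r, J\rangle = \omega_n^2 c_{r,0}$, and $\Av_{\ortho(n)} C_r(V)^* \subseteq C_r(V)^*$ comes from $\T_r$ commuting with the diagonal action (so $\Av_{\ortho(n)} C_r(V) \subseteq C_r(V)$), not from Theorem~\ref{thm:invariant-polyas-thm-compact-groups}.
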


The following details are found in Andrews, Askey, and Roy~\cite{Andrews1999SpecialFunctions}. Denote by~$\omega_n$ the total measure~$\omega(S^{n-1})$.

For~$k \geq 0$, let~$H^n_k \subseteq C(S^{n-1})$ be the space of $n$-variable spherical harmonics of degree~$k$; denote the dimension of~$H^n_k$ by~$h^n_k$, and let~$S^{n}_{k,1}, \ldots S^n_{k,h^n_k}$ be a complete orthogonal system of~$H^n_k$. The set~$\{S^n_{k,i}\}_{k, i}$ forms an orthonormal basis of~$L^2(S^{n-1})$.

Let~$P^n_k$ be the Jacobi polynomial of degree~$k$ with the parameters~$\alpha = \beta = (n-3)/2$ normalized by~$P^n_k(1) = 1$. The addition formula~\cite[Theorem 9.6.3]{Andrews1999SpecialFunctions} states that
\[
  P^n_k(x^{\tr} y) = \frac{\omega_n}{h^n_k}\sum_{i=1}^{h^n_k} S^n_{k,i}(x) S^n_{k,i}(y)
\]
for all~$x, y \in S^{n-1}$.

Write~$E^n_k(x, y) = P^n_k(x^{\tr}y)$. The kernels~$E^n_k$ are $\ortho(n)$-invariant and form a complete orthogonal system of the space~$L^2_{\sym}(S^{n-1})^{\ortho(n)}$ of $\ortho(n)$-invariant kernels. We have~$\langle E^n_k, E^n_l\rangle = 0$ if~$k \neq l$ and~$\langle E^n_k, E^n_k \rangle = \omega_n^2 / h^n_k$. So if we write~$A = \sum_k f(k) E^n_k$ and~$B = \sum_k g(k) E^n_k$, then
\begin{equation}%
  \label{eqn:inner-product-formula-unit-sphere}
  \langle A, B\rangle = \sum_{k=0}^{\infty}f(k)g(k)\omega^2_n/h^n_k.
\end{equation}

By the addition formula, each of the kernels~$E^n_k$ is positive semidefinite, hence~$\sum_kf(k)E_k^n$ is positive semidefinite if and only if~$f(k) \geq 0$ for all~$k$. Schoenberg's theorem~\cite{Schoenberg1942PositiveSpheres} states that if a kernel~$K \in L^2_{\sym}(S^{n-1})$ is continuous, $\ortho(n)$-invariant, and positive semidefinite, then there is a nonnegative sequence~$f \in \ell^1$ such that
\[
  K(x, y) = \sum_{k \in \N} f(k) E^n_k(x, y) = \sum_{k \in \N} f(k)P^n_k(x^{\tr}y)
\]
with absolute and uniform convergence on~$S^{n-1}\times S^{n-1}$.

\begin{proof}[Proof of Theorem~\textup{\ref{thm:convergence-cp-hierarchy-sphere}}]
  By Lemma~\ref{lem:upper-bound-homogeneous}, for each~$r$,~$\vartheta(G, C_r(S^{n-1})^*_c) \geq \alpha(G)$, hence~$\lim_r \vartheta(G, C_r(S^{n-1})^*_c) \geq \alpha(G)$.

  To prove the other inequality, see that~$\Av_{\ortho(n)} C_r(S^{n-1})^* \subseteq C_r(S^{n-1})^*$ and that each feasible solution~$A$ of~$\vartheta(G, C_r(S^{n-1})^*_c)$ is trace class, so by Theorem~\ref{thm:avering-on-trace-class},~$\Av_{\ortho(n)}A \in  C_r(S^{n-1})_c^*$. Hence, we may restrict ourselves to $\ortho(n)$-invariant feasible solutions, which are automatically continuous. We show that~$\lim_r\vartheta(G, C_r(S^{n-1})^*_{\cont}) \leq \vartheta(G, \CP(S^{n-1})_{\cont})$.

  For every integer~$r \geq 1$, let~$A_r$ be an $\ortho(n)$-invariant feasible solution of~$\vartheta(G, C_r(S^{n-1})^*_c)$ with objective value at least~$\alpha(G) / 2 > 0$. Use Schoenberg's theorem to obtain a nonnegative sequence~$f_r \in \ell^1$ such that~$A_r = \sum_k f_r(k) E^n_k$. Then,
  \begin{equation}%
    \label{eqn:trace-invariant-polynomial}
    \omega_n \sum_{k \in \N} f_r(k) = \int_{S^{n-1}} A_r(x,x)\, d\omega(x) = 1.
  \end{equation}

  Let~$c_0$ be the space of all real-valued sequences that vanish at infinity. Under the supremum norm,~$c_0$ is a Banach space and its dual is~$\ell^1$ by the duality~$(f, g) = \sum_k f(k)g(k)$.

  By~\eqref{eqn:trace-invariant-polynomial} it follows that~$\omega_nf_r$ is in the unit ball of~$\ell^1$ for all~$r$. By Banach-Alaoglu~\cite[Theorem V.4.2]{Conway2010AAnalysis}, the unit ball of~$\ell^1$ is weak* compact, and the weak* topology on~$\ell^1$ is separable, thus the unit ball is weak* metrizable~\cite[Theorem V.5.1]{Conway2010AAnalysis}. Therefore, the sequence~$(f_r)_r$ has a weak*-converging subsequence; without loss of generality, assume the sequence itself converges with limit~$f$.

  It follows that~$f$ is nonnegative and~$\omega_n\|f\|_1 \leq 1$. By Schoenberg's theorem, the kernel~$A = \sum_k f(k) E^n_k$ is continuous, $\ortho(n)$-invariant and positive semidefinite. Moreover, if~$B \in L^2_{\sym}(S^{n-1})^{\ortho(n)}$, then~$B = \sum_k g(k)E^n_k$ for some real-valued sequence~$g$, and since the sequence~$(g(k)\omega_n^2/h^n_k)_k$ vanishes at infinity, from~\eqref{eqn:inner-product-formula-unit-sphere}~$\lim_r \langle A_r, B\rangle = \langle A, B\rangle$ follows.

  Thus,~$\langle A, J\rangle = \lim_r\langle A_r, J\rangle > 0$, so that~$A$ is nonzero and
  \[
    0 < \tau = \int_V A(x,x)\, d\omega(x) = \omega_n\|f\|_1 \leq 1.
  \]
  It is left to show that~$A$ is completely positive and vanishes on the edges, thus showing that~$\tau^{-1}A$ is a feasible solution of~$\vartheta(G, \CP(V)_c)$ and that the inequality~$\vartheta(G, \CP(V)) \geq
  \lim_r \vartheta(G, C_r(V)^*_c)$ holds, finishing the proof by~\cite[Theorem 5.1]{DeCorte2022CompleteSets}.

  Since the sequence~$A_r$ converges under the weak-$L^2$ topology to~$A$, and each~$C_r(V)^*$ is weakly closed, Theorem~\ref{thm:Polyas-theorem-finite-measure} states that~$A \in \CP(V)_c$. Finally, for~$n \geq 3$ the asymptotic formula for the Jacobi polynomials~\cite[Theorem 8.21.8]{Szego1975OrthogonalPolynomials} implies that~$(P_k^n(t))_k$ vanishes at infinity for all~$t \in (-1, 1)$. So, for all~$xy \in E(D)$ with~$D \subseteq (-1, 1)$,
  \[
    A(x,y) = \sum_{k \in \N} f(k) P^n_k(x^{\tr}y) = \lim_{r \to \infty} \sum_{k \in \N} f_r(k)P^n_k(x^{\tr}y) = 0,
  \]
  which concludes the proof by Theorem~\ref{thm:completely-positive-is-exact-homogeneous-graph}.
\end{proof}

\section{Discussion and future directions}%
\label{sec:discussion-finite-measure-spaces}
The results on convergence of the completely positive hierarchy are very incomplete. In the thick setting, it is clear that the square-integrable setting is insufficient for $k$-uniform hypergraphs with~$k > 2$, as the normalization~$\Tr\T_{k-2}^*A = 1$ only bounds the $\mi{L^{k/(k-1)}}$-norm, and not the $L^2$-norm. It is also clear that the space~$L^{k/(k-1)}_{\sym}(V, k)$ is too large; even for~$k = 2$ we really only work in the trace class. Thus, a well-structured study of~$\vartheta_{\bt}$ comprises a study of $k$-tensor analogues of the trace norm. To extend the convergence of the completely positive hierarchy to $k$-uniform homogeneous hypergraphs on the unit sphere, we have to look for a $k$-tensor version of Schoenberg's theorem. Tensor analogues of the necessary arguments are known; in particular, Castro-Silva~\cite{Castro-Silva2023GeometricalConfigurations} seems to have produced the important details.  This is discussed in more detail in the concluding chapter of this thesis.

The most challenging problem in the homogeneous setting is extending the convergence result to other groups. There are several compact groups whose representation theory is similar to that of the sphere, so that the proof of Theorem~\ref{thm:convergence-cp-hierarchy-sphere} goes through with minimal changes. These are the continuous, compact, two-point homogeneous spaces with real dimension at least~$2$: the sphere, the real, complex and quaternionic projective spaces, and the octonionic projective plane. For these spaces, a theorem like Schoenberg's theorem is available~\cite[Theorem 3.1]{Oliveira2013ASpaces}.

Extension beyond these spaces requires a different argument. For example, on the real circle~$S^1$ the sequence of continuous, $O(2)$-invariant, and positive-semidefinite kernels
\[
  \bigl((x, y) \mapsto e^{inx^{\tr}y}\bigr)_{n \in \N}
\] has weak* limit~$0$, but for orthogonal~$x$ and~$y$ the sequence~$(e^{in x^{\tr}y})_{n \in \N}$ is the constant-1 sequence. Since the proof of Theorem~\ref{thm:convergence-cp-hierarchy-sphere} hinges on weak* convergence, even though it is known that~$\vartheta((S^1, E(0)), C_1(V)_c) = \alpha((S^1, E(0)))$, the method fails!

It is therefore necessary to study positive type functions of compact groups more systematically. This requires moving away from the generic setting of $L^k$ functions. It seems that the Fourier algebra offers a suitable setting, as it is spanned by the positive type functions. Again, see the concluding chapter of this thesis for more details.

\chapter[Witsenhausen's problem]{Application: Witsenhausen's problem}%
\label{ch:witsenhausen}
In this chapter, we apply completely positive-programming methods to find upper bounds on the maximum fraction of a unit sphere that can be covered by a set containing no orthogonal pairs---Problem~\ref{it:problem-1} from the introduction. It is an example of an independent-set problem on a homogeneous graph under a compact but infinite group. We will see how the completely positive-programming bounds can be implemented using semidefinite programming. This offers the first bounds for this class of problems that improve on the linear programming bounds introduced by Bachoc, Nebe, Oliveira, and Vallentin~\cite{Bachoc2009LowerNumbers}, Oliveira~\cite{Oliveira2009NewOptimization}, DeCorte, Oliveira, and Vallentin~\cite{DeCorte2022CompleteSets}, and DeCorte~\cite{DeCorte2015TheGraphs}, and results in the best bounds known. The results and exposition are taken from Bekker, Kuryatnikova, Oliveira, and Vera~\cite{Bekker2026OptimizationSpaces}.

\sectionbreak

A subset of the unit sphere~$S^{n-1} = \{\, x\in \R^n : \|x\|=1\, \}$~\defi{avoids orthogonal pairs} if it does not contain pairs of orthogonal vectors. \defi{Witsenhausen's problem}\index{problem!Witsenhausen's}~\cite{Witsenhausen1974SphericalPairs} asks for the maximum density that a measurable subset of~$S^{n-1}$ can have if it avoids orthogonal pairs. That is, we want to know the value of
\index{ a n@$\alpha_n$}\[
  \alpha_n = \{\, \omega(I)/\omega_n : I \subseteq S^{n-1} \text{ is measurable and avoids orthogonal pairs}\, \},
\]
where~$\omega$ denotes the standard surface measure of~$S^{n-1}$, and~$\omega_n = \omega(S^{n-1})$. We refer to the quantity~$\omega(I) / \omega_n$ for a measurable~$I \subseteq S^{n-1}$ as the~\defi{density} of~$I$.

Fix~$d \in S^{n-1}$. Witsenhausen~\cite{Witsenhausen1974SphericalPairs} observed that the union of two open antipodal spherical caps of spherical radius~$\pi/4$, i.e. the set
\[
  \{\, x \in S^{n-1} : |e^{\tr} x| > \cos{\pi/4}\, \},
\]
avoids orthogonal pairs, hence~$\alpha_n$ is at least the density of this set, which is~$O(n^{-1/2}2^{-n/2})$. Kalai~\cite[Conjecture 2.8]{Kalai2015SomeProblem} conjectured that this construction is optimal, that is, that~$\alpha_n$ is exactly the density of these two caps; this is known as the~\defi{double-cap conjecture}\index{double cap conjecture@double-cap conjecture}. A version of the double-cap conjecture for the complex unit sphere has an interpretation in quantum information theory~\cite{Montina2011CommunicationMeasurement}.

The canonical basis vectors of~$\R^n$ are~$n$ pairwise-orthogonal unit vectors. Any set that avoids orthogonal pairs can contain at most one of them. It then follows from a simple averaging argument that~$\alpha_n \leq 1/n$. This upper bound was also given by Witsenhausen~\cite{Witsenhausen1974SphericalPairs}; it is quite far from the lower bound of the double-cap conjecture for all~$n \geq 3$. For~$n = 2$, the lower and upper bounds coincide. Frankl and Wilson~\cite{Frankl1981IntersectionConsequences} were the first to give an asymptotic upper bound for~$\alpha_n$ that decreases exponentially with the dimension~$n$.

On the unit sphere, distance and inner product are related; a set of points on the sphere avoids orthogonal pairs if it avoids pairs of points at distance~$\sqrt{2}$. More generally, let~$V$ be metric space with metric~$d$ and let~$D \subseteq (0, \infty)$ be a set of~\defi{forbidden} distances. We say that a set~$I \subseteq  V$~\defi{avoids} the distances in~$D$ or that it is a~\defi{$D$-avoiding set}\index{distance avoiding set@distance-avoiding set} if~$d(x,y) \notin D$ for all~$x, y \in I$. In these terms, Witsenhausen's problem asks for the maximum density of a~$\sqrt{2}$-avoiding set on the sphere equipped with the Euclidean distance.

Distance-avoiding sets can be modeled as independent sets of graphs. Given a metric space~$V$ with metric~$d$ and a set~$D$ of forbidden distances, let~$G$ be the graph with vertex set~$V$ in which~$x$,~$y \in V$ are adjacent if and only if~$d(x, y) \in D$; call such a graph~$G$ a~\defi{distance graph}\index{distance graph}. The independent sets of~$G$ are exactly the~$D$-avoiding sets.

Denote the distance graph with vertex set~$S^{n-1}$ and forbidden distance~$\sqrt{2}$ by~$G_n$. Witsenhausen's problem can be seen as an independent-set problem on~$G_n$ with the measure~$\omega / \omega_n$, i.e.~$\alpha_n = \alpha(G_n)$. We will use the hierarchies derived in Chapter~\ref{ch:Completely positive programming on compact spaces} to bound~$\alpha_n$ from above.

There is a special role for the completely positive hierarchy: although Theorem~\ref{thm:comparison-lass-kpb-crv-locally-independent-graphs} states that~$\kpb_3(G_n)$ should give a bound at least as good as~$\vartheta(G_n, C_1(V,2)^*_c)$, the usual way of implementing this three-point bound as a polynomial optimization problem does not result in a rigorous upper bound. However, a particular restriction of the completely positive hierarchies does, and this leads to the first use of optimization hierarchies for Witsenhausen's problem, giving the best upper bounds known.

\section{Invariant positive-semidefinite kernels}%
\label{sec:inv-kernels-and-tensor}
The orthogonal group on~$\R^n$ is denoted~$\ortho(n)$\index{ O n@$\ortho(n)$}. Witsenhausen's problem is invariant under the action of~$\ortho(n)$, in the sense that it is defined on a graph that is vertex transitive under the action of~$\ortho(n)$. We exploit this to significantly reduce the size of the optimization problems. The thesis of De Muinck Keizer~\cite{DeMuinckKeizer2025OnGeometry} gives a more structured and detailed account of the contents of this section. For us, though, a straight-forward ad-hoc approach suffices.

We have already seen that averaging a feasible solution of a program in the block moment hierarchy and the completely positive hierarchy preserves feasibility and the objective value. We may thus assume all tensors to be invariant under~$\ortho(n)$. As we will see, we can describe $\ortho(n)$-invariant 3-tensors by $\ortho(n)$- and $\Stab(e)$-invariant kernels, where~$e \in S^{n-1}$ is an arbitrary point.

Positive-semidefinite kernels on~$S^{n-1}$ invariant under~$\ortho(n)$ are easily described by Schoenberg's theorem~\cite{Schoenberg1942PositiveSpheres}; also see the discussion in~\S\ref{ch:Completely positive programming on compact spaces}.\ref{sec:conv-cp-hierarchy}. Let~$n \geq 1$ and~$k \geq 0$. If~$P^n_k$ is the Jacobi polynomial of degree~$k$ with parameters~$\alpha = \beta = (n-3)/2$ normalized by~$P^n_k(1) = 1$, write~$E^n_k(x, y) = P^n_k(x^{\tr}y)$. Schoenberg's theorem~\cite{Schoenberg1942PositiveSpheres} states that a kernel~$K \in L^2_{\sym}(V, 2)$ is continuous, $\ortho(n)$-invariant, and positive semidefinite, if and only if there is a nonnegative sequence~$f \in \ell^1$ such that
\begin{equation}%
  \label{eqn:O(n)-invariant-kernels}
  K(x, y) = \sum_{k \in \N} f(k) E^n_k(x, y) = \sum_{k \in \N} f(k)P^n_k(x^{\tr}y)
\end{equation}
with absolute and uniform convergence on~$S^{n-1}\times S^{n-1}$.

Fix~$e \in S^{n-1}$ and let~$\Stab(e)$ be its stabilizer under the action of~$\ortho(n)$; see~\S\ref{ch:complete-positivity-under-symmetry}.\ref{sec:prel-harmonic-analysis} for more. The kernels invariant under this action were described by Musin~\cite{Musin2014MultivariateSpheres} and Bachoc and Vallentin~\cite{Bachoc2008NewProgramming} as follows.

With~$P^n_k$ as in~\eqref{eqn:O(n)-invariant-kernels}, for integers~$n \geq 2$ and~$k \geq 0$ consider the polynomial
\[
  Q_k^n(u, v, t) = (1-u^2)^{k/2}(1-v^2)^{k/2}P^n_k\biggl(\frac{t - uv}{(1-u^2)^{1/2}(1-v^2)^{1/2}}\biggr)
\]
and let~$Y^n_{k,d}$ be the $(d-k+1)\times(d-k+1)$ matrix given by
\begin{equation}%
  \label{eqn:bachoc-vallentin-kernel}
  (Y^n_{k,d})_{i,j}(u,v,t) = u^iv^jQ^{n-1}_k(u,v,t)
\end{equation}
for~$0 \leq i,j \leq d-k$. If~$K \in C((S^{n-1})^2)$ is~$\Stab(e)$-invariant, then~$K(x, y)$ depends only on the inner products~$e^{\tr}x$,~$e^{\tr}y$, and~$x^{\tr}y$. Bachoc and Vallentin showed that, for any~$d$ and any choice of positive semidefinite matrices~$F_k \in \R^{(d-k+1) \times (d-k+1)}$, the kernel
\begin{equation}%
  \label{eqn:Stab(e)-inv-kernels}
  K(x, y) = \sum_{k=0}^d \langle F_k, Y^n_{k,d}(e^{\tr}x, e^{\tr}y, x^{\tr}y)\rangle
\end{equation}
is~$\Stab(e)$-invariant and positive semidefinite. It is continuous by construction, since it is a polynomial on the three inner products.

Although the above decomposition looks similar to that in Schoenberg's theorem, there is no guarantee that every kernel~$K$ is a pointwise converging sum of the form~\eqref{eqn:Stab(e)-inv-kernels} with~$d = \infty$ and the~$F_k$ infinite matrices. However, we may uniformly approximate a continuous, positive-semidefinite, and~$\Stab(e)$-invariant kernel by kernels of the form~\eqref{eqn:Stab(e)-inv-kernels}.

\section{The failure of the block moment hierarchy}%
\label{sec:failure-3-point-bound}
The three-point bound~$\kpb_3(G_n)$ fails to give an implementable bound. For simplicity, we use an alternative normalization, i.e.
\[
  \begin{optprob}
    \sup & \onerow{\int_{\Sub{S^{n-1}}{=2}}\phi(S)\, d\omega^2_{\sub}(S)}\\
    &\onerow{\int_{\Sub{S^{n-1}}{=1}}\phi(S)\, d\omega^1_{\sub}(S) = 1,}\\
    &\phi(S) = 0 & \text{if $S \in \Sub{S^{n-1}}{3}$ is not independent},\\
    &M_{\{u\}}\phi \succeq 0 & \text{ for all~$u \in S^{n-1}$},\\
    &\onerow{M_{\emptyset}\phi \succeq 0,}\\
    &\onerow{\phi \in C(\Sub{S^{n-1}}{3}).}
  \end{optprob}
\]
In the proof of Theorem~\ref{thm:comparison-lass-kpb-crv-locally-independent-graphs} we showed that the matrix
\[
  \begin{pmatrix}
    \phi(\emptyset) & \int_{\Sub{S^{n-1}}{=1}} \phi(S)\, d\omega^1_{\sub}(S)\\
    \int_{\Sub{S^{n-1}}{=1}} \phi(S)\, d\omega^1_{\sub}(S) & \int_{\Sub{S^{n-1}}{=2}} \phi(S)\, d\omega^2_{\sub}(S)
  \end{pmatrix}
\]
is positive semidefinite, which implies that~$\kpb_3(G_n)$ is bounded from above by this program. We further relax the bound by changing the operators~$M_Q$ for~$|Q| \leq 1$ so that it sends~$\phi$ to the kernel~$K \in C((S^{n-1})^2)$ such that~$K(x,y) = \phi(Q \cup \{x, y\})$, i.e., we disregard the empty set.

We may restrict to~$\ortho(n)$-invariant functions~$\phi$ by applying an averaging operator, because $\Sub{V}{3}$ comes with a natural group action and an invariant measure.

Let~$e \in S^{n-1}$. Given any~$u \in S^{n-1}$, there is an orthogonal transformation~$T$ such that~$Tu = e$, hence if~$\phi$ is an~$\ortho(n)$-invariant feasible solution, then
\[
  (M_{\{u\}}\phi)(x,y) = \phi(\{u, x, y\}) = \phi(\{e, Tx, Ty\}) = (M_{\{e\}} \phi)(Tx, Ty).
\]
It follows that if~$M_{\{e\}}\phi$ is positive semidefinite, so is~$M_{\{u\}}\phi$ for every~$u \in S^{n-1}$. Since~$\phi$ is~$\ortho(n)$-invariant,~$M_{\{e\}}\phi$ is~$\Stab(e)$-invariant. This allows us to rewrite the problem by considering two kernels~$A = M_{\emptyset}\phi$ and~$K = M_{\{e\}}\phi$:
\begin{equation}%
  \label{eqn:invariant-3pb}
  \begin{optprob}
    \sup & \onerow{\int_{\Sub{S^{n-1}}{=2}}\phi(S)\, d\omega^2_{\sub}(S)}\\
    &\onerow{\int_{\Sub{S^{n-1}}{=1}}\phi(S)\, d\omega^1_{\sub}(S) = 1,}\\
    &A(x, y) = K(e, Ty) &\text{for all $x$, $y \in S^{n-1}$}\\
    &&\text{and $T \in \ortho(n)$ with $Tx = e$},\\
    &K(e, x) = K(x, x) &\text{for all $x \in S^{n-1}$},\\
    &K(x, y) = 0 &\text{if $\{e, x, y\}$ is not independent},\\
    &\onerow{A \in C_{\sym}((S^{n-1}))_{\succeq 0}\text{ is $\ortho(n)$-invariant},}\\
    &\onerow{K \in C_{\sym}((S^{n-1}))_{\succeq 0}\text{ is $\Stab(e)$-invariant}.}
  \end{optprob}
\end{equation}

Since~$A$ is $\ortho(n)$-invariant, Schoenberg's theorem can be used to express~$A$ in terms of Jacobi polynomials as in~\eqref{eqn:O(n)-invariant-kernels}. The kernel~$K$ is invariant under~$\Stab(e)$, so the expansion~\eqref{eqn:Stab(e)-inv-kernels} parametrizes a large class of the required kernels.

However, as pointed out, even though the kernels of the form~\eqref{eqn:Stab(e)-inv-kernels} approximate the $\Stab(e)$-invariant kernels uniformly, a pointwise converging sum of the same form with~$d = \infty$ is in general not guaranteed. The constraint ``$K(x, y) = 0$ if~$\{e, x, y\}$ is not independent'' can therefore not be written equivalently in terms of such an expansion, hence it is unclear that the resulting problem would give an upper bound to~$\alpha_n$.

Even when we replace this constraint by a relaxation, for example requiring that~$K(x, y) \in [-\epsilon, \epsilon]$ for some fixed~$\epsilon > 0$, it remains difficult to get a rigorous upper bound on~$\alpha_n$.

Indeed, to solve the modified problem~\eqref{eqn:invariant-3pb} we have to fix the degrees of the polynomials at some point. To get an upper bound, we have to solve a problem of this form to optimality. To do so rigorously we have to use polynomials of high degree, and since~$K$ is parametrized by $3$-variable polynomials, the variable matrices become prohibitively large.

\section{The fix: another hierarchy}%
\label{sec:the-fix}
Recall the definition of slice positive tensors from Section~\ref{ch:completely-positive-formulations-meas-ind-num}.\ref{sec:upper-bounds-measurable-independence-number}. If~$V$ is a compact Hausdorff space equipped with a Radon measure with full support, a function~$F \in C(V^{r})$ with~$r \geq 2$ integer is slice positive if and only if for all~$v \in (S^{n-1})^{r-2}$ the kernel~$(x, y) \mapsto F(x, y, v)$ is a positive-semidefinite kernel.

For an integer~$r \geq 1$, let
\index{ QrVk@$\Q_r(V, k)$}
\begin{multline*}
  \Q_r(V, k) = \{\, A \in L^2_{\sym}(V, k) : \Av_{\mathfrak{S}_{k + r}}(A \otimes \1^{\otimes r} - F) \geq 0\\
  \text{for some slice-positive } F \in L^2(V^{k+r})\, \}.
\end{multline*}

Immediately we see that~$\Q_r(V, k) \subseteq \cC_r(V, k)$ for all~$k$ and~$r$. Moreover,~$\Q_r(V, k) \subseteq \COP(V, k)$ for all~$r$ and~$k$. Indeed, take~$A \in \Q_r(V, k)$ and~$F \in L^2(V^{k+r})$ with~$\Av_{\mathfrak{S}_{k+r}}(A \otimes \1^{\otimes r} - F) \geq 0$. Given a nonnegative~$f \in L^2(V)$ with~$\langle \1, f\rangle \geq 0$ we have
\[
  \begin{split}
    0 &\leq \langle \Av_{\mathfrak{S}_{k+r}}(A \otimes \1^{\otimes r} - F), f^{\otimes (k+r)}\rangle\\
    &= \langle A \otimes \1^{\otimes r}, \Av_{\mathfrak{S}_{k+r}} f^{\otimes (k+r)}\rangle - \langle F, \Av_{\mathfrak{S}_{k+r}}f^{\otimes (k+r)}\rangle\\
    &= \langle A, f^{\otimes k}\rangle\langle\1, f\rangle^r - \langle F, f^{\otimes (k+r)}\rangle.
  \end{split}
\]
Since~$F$ is slice positive,
\[
  \begin{split}
    \langle F, f^{\otimes (k+r)}\rangle &= \int_{V^{k+r-2}} \int_{V^2} F(v, w) f^{\otimes 2}(v)\, d\omega^2(v) f^{\otimes k + r - 2}(w)\, d\omega^{k+r-2}(w)\\
    &\geq 0,
  \end{split}
\]
and we see that~$\langle A, f^{\otimes k}\rangle \geq 0$, so~$A$ is copositive.

One shows, as for example in the proof of~\cite[Theorem 4.1]{Bekker2026OptimizationSpaces} that
\[
  \Q_1(V, k) \subseteq \Q_2(V, k) \subseteq \cdots \subseteq \COP(V, k),
\]
is a hierarchy of inner approximation of~$\COP(V, k)$ stronger than the~$\cC_r(V, k)$ hierarchy; it was proposed by Peña, Vera, and Zuluaga~\cite{Pena2007ComputingProgramming} and extended to the infinite-dimensional setting by Kuryatnikova and Vera~\cite{Kuryatnikova2019TheProblems}.

Given a graph~$G = (V, E)$ such that~$V$ is compact and Hausdorff and equipped with a Radon measure, write~$\Q_r(V) = \Q_r(V,2)$\index{ QrV@$\Q_r(V)$}, and consider the programs~$\vartheta(G, \Q_r(V)^*_c)$. Under the conditions of Chapters~\ref{ch:completely-positive-formulations-meas-ind-num} and~\ref{ch:Completely positive programming on compact spaces}, this gives a hierarchy of bounds for the measurable independence number, namely
\[
  \vartheta(G, \Q_1(V)^*_c) \geq \vartheta(G, \Q_2(V)^*_c)\geq \cdots \geq \alpha(G),
\]
that is at least as strong as the hierarchy~$\vartheta(G, C_r(V,2)^*_c)$. In particular, the convergence results from Chapter~\ref{ch:Completely positive programming on compact spaces} hold.

\sectionbreak

We will implement a version of the bound~$\vartheta(G_n, \Q_1(V)^*)$. For this, we first have to figure out how to describe slice-positive 3-tensors effectively.

Let~$V$ be a~$\Gamma$-space with~$\Gamma$ a compact group. Let~$p$ be the quotient map onto the set of orbits~$p: V^k \to V^k/\Gamma$ and~$R : V^k / \Gamma \to V^k$ be a section of~$p$.

Suppose~$F \in C(V^{k+2})$ is slice-positive and~$\Gamma$-invariant. Consider the function~$K : (V^k/\Gamma)\times V^2 \to \R$ such that
\[
  K(\xi, x, y) = F(x, y, R(\xi))
\]
and for every orbit~$\xi$ let~$K_{\xi}(x, y) = K(\xi, x, y)$; note that~$K$ depends on the choice of~$R$. The kernel~$K_{\xi}$ is continuous and positive semidefinite for every~$\xi$. Moreover, since~$F$ is~$\Gamma$-invariant,~$K_{\xi}$ is~$\Stab({R(\xi)})$-invariant. If we equip~$V^k / \Gamma$ with the quotient topology,~$K$ is continuous if~$R$ is.

Conversely, say~$K : (V^k / \Gamma) \times V^2 \to \R$ is a continuous function such that~$K_{\xi}$ is a positive-semidefinite $\Stab(R(\xi))$-invariant kernel for all~$\xi$. Then we may define a function~$F : V^{k+2} \to \R$ by
\[
  F(x, y, v) = K(p(v), \sigma x, \sigma y)
\]
for all~$\sigma \in \Gamma$ such that~$\sigma v = R(v)$. Such~$F$ is well-defined: if~$\tau v = R(\xi)$, then~$\sigma\tau^{-1}R(\xi) = R(\xi)$, so from the $\Stab(R(\xi))$-invariance of~$K_{\xi}$ we obtain~$K(\xi, \tau x, \tau y) = K(\xi, \sigma x, \sigma y)$.

By construction,~$F$ is slice-positive and~$\Gamma$-invariant. To see the latter, given~$\sigma \in \Gamma$, let~$\tau \in \Gamma$ be such that~$\tau\sigma v = R(p(v))$. Then,
\[
  F(\sigma x, \sigma y, \sigma v) = K(p(v), \tau\sigma x, \tau \sigma y) = F(x, y, v).
\]
If there exists a continuous function~$s: V^k \to \Gamma$ such that~$s(v)v = R(p(v))$ for all~$v \in V^k$, then~$F$ is continuous.

We will now see how~$\vartheta(G_n, \Q_1(S^{n-1})_c^*)$ behaves better than~$\kpb_3(G_n)$ with respect to approximation by the kernels from~\eqref{eqn:Stab(e)-inv-kernels}; indeed, we will see that restricting to kernels of this form gives a subset of~$\Q_1(S^{n-1})$ and therefore a superset of~$\Q_1(S^{n-1})^*$, thus relaxing the program.

If~$Z \in \Q_1(S^{n-1})$, then there is a continuous slice positive~$F : (S^{n-1})^3 \to \R$ such that~$\Av_{\mathfrak{S}_3}(Z \otimes \1 - F) \geq 0$. If~$Z$ is~$\ortho(n)$-invariant, we can assume that~$F$ is $\ortho(n)$-invariant as well, otherwise we simply take~$\Av_{\ortho(n)}F$, which is continuous and slice positive.

There is only one orbit for the action of~$\ortho(n)$ on~$S^{n-1}$; pick~$e \in S^{n-1}$ as its representative. The invariant function~$F$ is continuous and slice positive if and only if there is a continuous, positive-semidefinite, and~$\Stab(e)$-invariant kernel~$K : (S^{n-1})^2 \to \R$ such that~$F(x, y, z) = K(Tx, Ty)$, where~$T$ is any orthogonal matrix such that~$Tz = e$. So the value of~$F(x, y, z)$ depends only on~$e^{\tr}Tx = x^{\tr}z$,~$e^{\tr}Ty = y^{\tr}z$, and~$(Tx)^{\tr}Ty = x^{\tr}y$.

The kernels~\eqref{eqn:Stab(e)-inv-kernels} are positive semidefinite and~$\Stab(e)$-invariant. Fix an integer~$d \geq 1$. Say~$Z$ is the $\ortho(n)$-invariant kernel given by
\begin{equation}%
  \label{eqn:expansion-Z}
  Z(x,y) = \sum_{k=0}^{2d} f(k) P^n_k(x^{\tr}y).
\end{equation}
Let~$\overline{Y}^n_{k,d} = \Av_{\mathfrak{S}_3}Y^n_{k,d}$ be the matrix obtained from~$Y^n_{k,d}$ of~\eqref{eqn:bachoc-vallentin-kernel} by averaging over all permutations of~$(u, v, t)$.

If there are positive-semidefinite matrices~$F \in \R^{(d-k+1) \times (d-k+1)}$ for~$k = 0$,~$\ldots$,~$d$, such that
\begin{equation}%
  \label{eqn:symmetric-condition-Q1}
  \sum_{k=0}^{2d}f(k)(1/3)(P^n_k(u) + P^n(k)(v) + P^n_k(t)) - \sum_{k=0}^d\langle F_k, \overline{Y}^n_{k,d}(u,v,t)\rangle \geq 0
\end{equation}
for all~$(u, v, t) \in \Delta = \{\, (x^{\tr}z, y^{\tr}z, x^{\tr} y) : x,\ y,\ z \in S^{n-1}\, \}$, then~$Z \in \Q_1(S^{n-1})$.

Indeed, for~$x,\ y,\ z \in S^{n-1}$ with~$u = x^{\tr}z$,~$v = y^{\tr}z$, and~$t = x^{\tr}y$ we have
\[
  \Av_{\mathfrak{S}_r}(Z \otimes 1)(x, y, z) = \sum_{k=0}^{2d} f(k)(1/3)(P^n_k(u) + P^n_k(v) + P^n_k(t)).
\]
The function~$F$ given by
\[
  F(x, y, z) = \sum_{k=0}^d\langle F_k, Y^n_{k,d}(u,v,t)\rangle
\]
is slice positive and continuous  and
\[
  (\Av_{\mathfrak{S}_3}F)(x,y,z) = \sum_{k=0}^d \langle F_k, \overline{Y}^n_{k,d}(u,v,t)\rangle.
\]
Putting it all together,~$Z \in \Q_1(S^{n-1})$.

The left-hand side of~\eqref{eqn:symmetric-condition-Q1} is a polynomial~$p \in \R[u,v,t]$ of degree at most~$2d$ that should be nonnegative on~$\Delta$. The polynomial~$p$ is invariant under the permutation action of~$\mathfrak{S}_3$ on the variables. The domain~$\Delta$ is also invariant under~$\mathfrak{S}_3$; it is a semi-algebraic set:
\[
  \Delta = \{\, (u,v,t) : g_i(u,v,t) \geq 0\text{ for } i=1,\ \ldots,\ 4\, \},
\]
where
\begin{align*}
  &g_1 = g(u) + g(v) + g(t), & &g_2 = g(u)g(v) + g(u)g(t) + g(v) g(t),\\
  &g_3 = g(u)g(v)g(t),       & &g_4 = 1 + 2uvt - u^2 - v^2 - t^2,
\end{align*}
where~$g(w) = 1-w^2$. So, if there are sums-of-squares polynomials~$q_0,\ \ldots,\ q_4$ in~$\R[u,v,t]$ such that
\begin{equation}%
  \label{eqn:sos-expression-p}
  p = q_0 + g_1q_1 + g_2q_2 + g_3q_3 + g_4q_4,
\end{equation}
then~$p$ is nonnegative on~$\Delta$. Moreover, since~$p$ and the~$g_i$ are all invariant under~$\mathfrak{S}_3$, we may assume without loss of generality that the~$q_i$ are also invariant.

Let~$V_r$ be the matrix indexed by the monomials on~$u$,~$v$, and~$t$ of degree at most~$\lfloor r/2\rfloor$ such that~$V_r(m_1,m_2) = m_1m_2$ for any two such monomials. Every entry of~$V_r$ is a polynomial of degree at most~$r$. A polynomial~$q$ of degree~$2k$ is a sum of squares if and only if there is a positive-semidefinite matrix~$Q$ such that~$q = \langle Q, V_{2k}\rangle$.

Using this equivalence and restricting the degrees of the polynomials~$q_i$ appearing in~\eqref{eqn:sos-expression-p}, we can write a sufficient condition for~$p$ to be nonnegative on~$\Delta$ in terms of positive-semidefinite matrices. Namely, if there are positive-semidefinite matrices~$F_k$ and ~$Q_i$ such that
\begin{multline}%
  \label{eqn:explicit-sos-condition-p}
  \sum_{k=0}^{2d} f(k)(1/3)(P^n_k(u) + P^n_k(v) + P^n_k(t)) - \sum_{k=0}^d \langle F_k, \overline{Y}^n_{k,d}(u,v,t)\rangle\\
  = \langle Q_0, V_{2d}\rangle + \langle Q_1, g_1V_{2d-2}\rangle + \langle Q_2, g_2V_{2d-4}\rangle\\
  + \langle Q_3, g_3V_{2d-6}\rangle + \langle Q_4, g_4 V_{2d-3}\rangle,
\end{multline}
then~$Z$ given in~\eqref{eqn:expansion-Z} is in~$\Q_1(S^{n-1})$. This leads us to the definition of the following cone for every fixed~$d$:
\begin{multline*}
  \Q^d_1 = \{\, (f(0), \ldots, f(2d),0 , \ldots) \in \R^{\N} : \text{there are positive-semidefinite}\\
  \text{matrices $F_k$ and $Q_i$ such that~\eqref{eqn:explicit-sos-condition-p} holds}\, \}
\end{multline*}

We were careful to describe the domain~$\Delta$ with invariant polynomials so that we could assume that all polynomials~$q_i$ are likewise invariant. This can be used to simplify~\eqref{eqn:explicit-sos-condition-p}, so that we can work with block-diagonal positive-semidefinite matrices~$Q_i$. The original idea was presented by Gatermann and Parrilo~\cite{Gatermann2004SymmetrySquares}; see also Machado and Oliveira~\cite{Machado2018ImprovingSymmetry} and Leijenhorst and De Laat~\cite[\S4]{Leijenhorst2024SolvingOptimization} for more recent descriptions of the method and an application to this exact situation. This use of symmetry to reduce the problem's size is essential to reach high degrees.

\section{Implementation and verification of the bound}%
\label{sec:implementation-of-the-bound}
To make our bound on~$\alpha_n$ as good as possible, we combine the cone~$\Q^d_1$ with constraints from the Boolean quadratic polytope, which for a finite set~$V$ is defined as
\[
  \BQP(V) = \conv\{\, xx^{\tr} : x \in \{0, 1\}^V\, \}.
\]
Such constraints were used before by DeCorte, Oliveira, and Vallentin~\cite{DeCorte2022CompleteSets}.

Given a measurable independent set~$I \subseteq S^{n-1}$ of~$G_n$, define the kernel~$A = \Av_{\ortho(n)}(\1_I \otimes \1_I)$. Then,
\begin{enumerate}
  \item[(i)] $A$ is an~$\ortho(n)$-invariant continuous kernel (by Lemma~\ref{lem:averaged-rank-one-is-continuous}),
  \item[(ii)] $A(x, y) = 0$ for all orthogonal~$x$,~$y \in S^{n-1}$,
  \item[(iii)] $A$ is positive semidefinite and~$A \in \Q_r(S^{n-1})^*$ for all~$r \geq 1$,
  \item[(iv)] $\bigl(A(x,y)\bigr)_{x,y \in U} \in \BQP(U)$ for every finite~$U \subseteq S^{n-1}$, and
  \item[(v)] $\int_{S^{n-1}} A(x,x)\, d\omega(x) = \omega(I)$ and~$\langle A, J\rangle = \omega(I)^2$.
\end{enumerate}

We use Schoenberg's theorem to express~$A$ in terms of Jacobi polynomials as in~\eqref{eqn:O(n)-invariant-kernels}, so
\[
  A(x,y) = \sum_{k=0}^{\infty} a(k) P^n_k(x^{\tr}y)
\]
for some sequence~$a \geq 0$. Recall that we normalize the polynomials so~$P^n_k(1)=1$; together with the addition formula, see~\ref{sec:conv-cp-hierarchy}, this gives
\begin{equation}%
  \label{eqn:normalization-objective-by-schoenberg}
  \int_{S^{n-1}} A(x,x)\, d\omega(x) = \omega_n \sum_{k=0}^{\infty} a(k)\qquad \text{and}\qquad \langle A, J\rangle = \omega_n^2 a(0),
\end{equation}
where~$\omega_n = \omega(S^{n-1})$.

Let~$U \subseteq S^{n-1}$ be a finite set and let~$L \in \R^{U \times U}$ and~$\beta \in \R$ such that~$\langle L, X\rangle \leq \beta$ for all~$X \in \BQP(U)$. Then, defining~$r : \N \to \R$ by
\begin{equation}%
  \label{eqn:BQP-kernel}
  r(k) = \sum_{x, y \in U} L(x, y) P^n_k(x^{\tr}y)
\end{equation}
we have
\begin{equation}%
  \label{eqn:BQP-inequality}
  \sum_{k=0}^{\infty} a(k)r(k) \leq \beta.
\end{equation}
We call~$(r, \beta)$ a \defi{$\BQP(S^{n-1})$-inequality}\index{BQPSn1 inequality@$\BQP(S^{n-1})$-inequality}, and we call the points in~$U$ the~\defi{support points}\index{support points of BQPSn1 inequality@support points of $\BQP(S^{n-1})$-inequality} of the inequality.

Finally, if~$(f(0), \ldots, f(2d), 0, \ldots) \in \Q^d_1$ and~$Z(x, y) = \sum_{k=0}^{2d} f(k) P^n_k(x^{\tr}y)$, then~$Z \in \Q_1(S^{n-1})$, and from~\eqref{eqn:inner-product-formula-unit-sphere} we get
\[
  \sum_{k=0}^{2d}(a(k) / h^n_k)f(k) = \omega_n^{-2}\langle A, Z\rangle \geq 0,
\]
that is,~$k \mapsto a(k) / h^n_k$ belongs to~$(\Q^d_1)^*$.

Let~$(r_1, \beta_1),\ \ldots,\ (r_N, \beta_N)$ be any~$\BQP(S^{n-1})$-inequalities and fix some integer~$d \geq 1$. Put together, our developments lead us to the following optimization problem, whose optimal value gives an upper bound on~$\alpha_n$:
\begin{equation}%
  \label{eqn:explicit-problem-Q1-plus-BQP}
  \begin{optprob}
    \sup &\onerow{\sum_{k=0}^{\infty} a(k)}\\
    &\onerow{\sum_{k=0}^{\infty}a(k)P^n_k(0) = 0,}\\
    &\sum_{k=0}^{\infty} a(k)r_i(k) \leq \beta_i &\text{for } 1 \leq i \leq N,\\[3pt]
    &\onerow{
      \begin{pmatrix} 1 & \omega_n\sum_{k=0}^{\infty}a(k)\\ \omega_n\sum_{k=0}^{\infty}a(k) & \omega_n^2a(0)
    \end{pmatrix} \text{ is positive semidefinite},}\\[7pt]
    &\onerow{a \geq 0 \text{ and } k \mapsto a(k) / h^n_k \in (\Q^d_1)^*.}
  \end{optprob}
\end{equation}

The $2 \times 2$ matrix comes from~\eqref{eqn:normalization-objective-by-schoenberg} and~(v) and is used to normalize the problem. The objective function is divided by~$\omega_n$, ensuring that we get a bound for~$\alpha_n = \alpha(G_n)$ under the measure~$\omega/\omega_n$. Finally, our problem has infinitely many variables~$a$, but only the first~$2d+1$ of them appear in the cone constraint with~$(\Q^d_1)^*$. Contrast this with the situation of the block moment hierarchy from~\S\ref{sec:failure-3-point-bound}.

The dual of this problem is
\begin{equation}%
  \label{eqn:dual-Q1-plus-BQP}
  \begin{optprob}
    \inf &\onerow{z_{11} + \sum_{i=1}^N y_i \beta_i}\\
    &\onerow{\lambda + \sum_{i=1}^N y_i r_i(0) - \omega_nz_{12} - \omega_n^2 z_{22} - f(0) \geq 1,}\\
    &\lambda P^n_k(0) + \sum_{i=1}^N y_i r_i(k) - \omega_n z_{12} - f(k) \geq 1 & \text{for all } 1 \leq k \leq 2d,\\
    &\lambda P^n_k(0) + \sum_{i=1}^N y_i r_i(k) - \omega_nz_{12} \geq 1 & \text{for all } k \geq 2d+1,\\[3pt]
    &\onerow{
      \begin{pmatrix} z_{11} & z_{12}/2 \\ z_{12}/2 & z_{22}
    \end{pmatrix} \text{ is positive semidefinite},}\\[7pt]
    &y \geq 0 \text{ and } k \mapsto h^n_kf(k) \in \Q^d_1.
  \end{optprob}
\end{equation}
The objective value of any feasible solution of~\eqref{eqn:dual-Q1-plus-BQP} is greater than or equal to the objective value of any feasible solution of~\eqref{eqn:explicit-problem-Q1-plus-BQP}. So, any feasible solution of~\eqref{eqn:dual-Q1-plus-BQP} gives an upper bound on~$\alpha_n$. Furthermore, the constraint given by~``$k \mapsto h^n_kf(k) \in \Q^d_1$'' is expressed in terms of~\eqref{eqn:explicit-sos-condition-p}, namely we require there to be positive-semidefinite matrices~$F_k$ and~$Q_i$ such that
\begin{multline}%
  \label{eqn:explicit-sos-for-dual-problem}
  \sum_{k=0}^{2d} f(k)(h^n_k/3)(P^n_k(u) + P^n_k(v) + P^n_k(t)) - \sum_{k=0}^d \langle F_k, \overline{Y}^n_{k,d}(u, v, t)\rangle\\
  -\langle Q_0, V_{2d}\rangle - \langle Q_1, g_1 V_{2d-2}\rangle - \langle Q_2, g_2V_{2d-4} \rangle\\
  -\langle Q_3, g_3 V_{2d-6}\rangle - \langle Q_4, g_4 V_{2d-3}\rangle = 0.
\end{multline}
So~\eqref{eqn:dual-Q1-plus-BQP} is a semidefinite programming problem with finitely many variables but infinitely many constraints.

\sectionbreak

To solve~\eqref{eqn:dual-Q1-plus-BQP} we use the package \texttt{ClusteredLowRankSolver.jl} of Leijenhorst and De Laat~\cite{Leijenhorst2024SolvingOptimization}; the input for the solver is generated by a Julia program. The program and all data files used are available in the Harvard Dataverse repository~\cite{Oliveira2023DataParameter}.

To find good~$\BQP(S^{n-1})$-inequalities, we use a separation heuristic described by DeCorte, Oliveira, and Vallentin~\cite{DeCorte2022CompleteSets}. The inequalities used are also included in the repository and need not be recomputed.

Table~\ref{tab:detailed} contains a detailed account of all the bounds computed from~\eqref{eqn:dual-Q1-plus-BQP}.  Solving the problem for~$d = 14$ and~$18$ takes time and memory, so files with the corresponding solutions are also available in the repository.

\begin{table}[p]
  \centering
  \begin{tabular}{@{}lllllll@{}} \toprule
    \multicolumn{2}{c}{} & \multicolumn{2}{c}{\textsl{Old upper bound}} & & \multicolumn{2}{c}{\textsl{New upper bound}}\\
    $n$ & \textsl{Lower bound} & \textsl{Simple} & \textsl{Best} & $d$ & \textsl{No BQP} & \textsl{With BQP}\\ \midrule
    3 & 0.2928\ldots & 0.3333\ldots & 0.30153 & 6 & 0.316925 & 0.300708\\
    &&&& 10 & 0.309298 & 0.298998\\
    &&&& 14 & 0.305627 & 0.298341\\
    &&&& 18 & 0.303294 & 0.297742\\ \midrule
    4 & 0.1816\ldots & 0.25 & 0.21676 & 6 & 0.223633 & 0.207617\\
    &&&& 10 & 0.211825 & 0.199402\\
    &&&& 14 & 0.205479 & 0.196162\\
    &&&& 18 & 0.201445 & 0.194297\\ \midrule
    5 & 0.1161\ldots & 0.2 & 0.16765 & 6 & 0.167357 & 0.151541\\
    &&&& 10 & 0.153819 & 0.141539\\
    &&&& 14 & 0.146612 & 0.137142\\
    &&&& 18 & 0.142349 & 0.134588\\ \midrule
    6 & 0.0755\ldots & 0.1666\ldots & 0.13382 & 6 & 0.130829 & 0.116599\\
    &&&& 10 & 0.116509 & 0.105200\\
    &&&& 14 & 0.109989 & 0.100374\\
    &&&& 18 & 0.106727 & 0.098095\\ \midrule
    7 & 0.0498\ldots & 0.1428\ldots & 0.11739 & 6 & 0.106059 & 0.093031\\
    &&&& 10 & 0.091477 & 0.081221\\
    &&&& 14 & 0.086656 & 0.077278\\
    &&&& 18 & 0.084787 & 0.075751\\ \midrule
    8 & 0.0331\ldots & 0.125 & 0.09981 & 6 & 0.088750 & 0.076801\\
    &&&& 10 & 0.074309 & 0.064919\\
    &&&& 14 & 0.071676 & 0.063287\\
    &&&& 18 & 0.070607 & 0.061178\\ \bottomrule
  \end{tabular}
  \bigskip

  \caption{Low and upper bounds for Witsenhausen's parameter~$\alpha_n$. The lower bound is given by the double-cap conjecture. The simple upper bound was given by Witsenhausen~\cite{Witsenhausen1974SphericalPairs} and is just~$1/n$. The best previous upper bounds are by DeCorte, Oliveira, and Vallentin~\cite{DeCorte2022CompleteSets}. The table gives upper bounds obtained from solving~\eqref{eqn:dual-Q1-plus-BQP} with and without~$\BQP(S^{n-1})$-inequalities and for several values of the maximum degree~$d$.}%
  \label{tab:detailed}
\end{table}

Since~\eqref{eqn:dual-Q1-plus-BQP} has infinitely many linear constraints, to solve it, we select some finite set~$S \subseteq \{0, 1, \ldots\}$ and consider only the constraints for~$k \in S$.  After a solution is found it has to be verified, that is, we need to check that all constraints are indeed satisfied.

Let~$(\lambda, y, z, f, F, Q)$ be a candidate solution to~\eqref{eqn:dual-Q1-plus-BQP}, where~$F$ and~$Q$ are as in~\eqref{eqn:explicit-sos-for-dual-problem}, returned by the solver.  The
first step is to certify ourselves that~$f$, $F$, and~$Q$ indeed
satisfy~\eqref{eqn:explicit-sos-for-dual-problem}.

This is certainly not true: the solver uses floating-point arithmetic,
so~\eqref{eqn:explicit-sos-for-dual-problem} will not hold.  Rather, the left-hand side of~\eqref{eqn:explicit-sos-for-dual-problem} will be a polynomial with coefficients close to~0. Since the \texttt{ClusteredLowRankSolver.jl} uses high-precision floating-point
arithmetic, the coefficients will be quite small; let~$\eta$ be the largest
absolute value of any such coefficient.

It is always possible to perturb the matrices~$Q_i$ in order to satisfy the
constraint; the order of the perturbation depends on~$\eta$.  We want to do so
and keep the~$Q_i$ positive semidefinite; as long as the minimum eigenvalues
of the matrices~$Q_i$ are large enough compared to~$\eta$, this is always
possible.  The solutions stored in the repository have large minimum
eigenvalues, several orders of magnitude larger than~$\eta$, so this
perturbation of the~$Q_i$ can always be carried out.  We do not have
to actually change the~$Q_i$; it suffices to know that such a perturbation is
possible, since then we know we can get a feasible solution if we want to. This
procedure was used before by De Laat, Oliveira, and Vallentin~\cite{DeLaat2014UpperRadii}.

Checking that the linear constraints for all~$k \geq 0$ are satisfied is more
difficult; we use the approach outlined in DeCorte, Oliveira, and
Vallentin~\cite{DeCorte2022CompleteSets}.

The idea is as follows.  Let~$\lhs(k)$ be the left side of the~$k$th linear
constraint in~\eqref{eqn:dual-Q1-plus-BQP} and
write~$\lhs(\infty) = \lim_{k \to \infty} \lhs(k)$; we will see that this limit
exists.  We then take the following steps.
\begin{enumerate}
  \item[(i)] As long as~$z_{22} > 0$, we can change~$z_{11}$ and~$z_{12}$ to
    get~$\lhs(\infty) \geq 1 + \eta$ for some~$\eta > 0$.  The more we
    change~$z_{12}$, the more we have to change~$z_{11}$, and the worse the bound
    gets.

  \item[(ii)] Next, for some~$\epsilon < \eta$ we find a~$k_0$ such
    that~$|\lhs(k) - \lhs(\infty)| \leq \epsilon$ for all~$k \geq k_0$.
    Then~$\lhs(k) \geq \lhs(\infty) - \epsilon \geq 1 + \eta - \epsilon > 1$, and
    so all constraints are satisfied for~$k \geq k_0$.

  \item[(iii)] Finally, we check the constraints for~$k = 0$, \dots,~$k_0$, and by
    changing~$z$ again we can make all these constraints satisfied.
\end{enumerate}

If we choose our initial sample~$S$ well, then all constraints will be almost
satisfied, and we will not have to change~$z$ too much in order to get a
feasible solution.  This is the procedure implemented by the
\texttt{fix\_linear\_constraints} function in the Julia program in the
repository~\cite{Oliveira2023DataParameter}.

Let us see the details of the procedure.  The asymptotic formula for the Jacobi
polynomials~\cite[Theorem~8.21.8]{Szego1975OrthogonalPolynomials} implies that~$P_k^n(t) \to 0$
as~$k \to \infty$ for all~$t \in (-1, 1)$.  We make sure that all the
$\BQP(S^{n-1})$-inequalities~\eqref{eqn:BQP-inequality} we use have support points~$U$
such that distinct~$x$, $y \in U$ have inner product~$x^{\tr} y$ bounded away
from~$\pm 1$.  So if~$r$ is given as in~\eqref{eqn:BQP-kernel}, then
\[
  r(\infty) = \lim_{k\to\infty} r(k) = \Tr L
\]
and
\begin{equation}%
  \label{eq:rk-infty-diff}
  |r(k) - r(\infty)| \leq \sum_{\substack{x,y \in U\\x \neq y}} |L(x, y)|
  |P_k^n(x^{\tr} y)|.
\end{equation}

We also have
\[
  \lhs(\infty) = \sum_{i=1}^N y_i r_i(\infty) - \omega_n z_{12}.
\]
Given~$\epsilon > 0$ we want to get~$k_0$ as in~(ii).  Note that
\[
  |\lhs(k) - \lhs(\infty)| \leq |\lambda| |P_k^n(0)| + \sum_{i=1}^N y_i |r_i(k)
  - r_i(\infty)|.
\]
Fix~$k_0$.  Using~\eqref{eq:rk-infty-diff}, we see that to find an upper bound
for the left side above for all~$k \geq k_0$, it suffices to find for
all~$k \geq k_0$ an upper bound on~$|P_k^n(t)|$ for~$t = 0$ and all
other~$t \in (-1, 1)$ that occur as inner products between distinct support
points of the $\BQP(S^{n-1})$-inequalities we use.

To do so rigorously, we use an integral representation for the ultraspherical polynomials due to Gegenbauer (take~$\lambda = (n - 2)/2$ in Theorem~6.7.4 from
Andrews, Askey, and Roy~\cite{Andrews1999SpecialFunctions}):
\[
  P_k^n(\cos\theta) = R(n)^{-1} \int_0^\pi F(\phi)^k
  \sin^{n-3}\phi\, d\phi,
\]
where
\[
  F(\phi) = \cos\theta + i\sin\theta\cos\phi\qquad\text{and}\qquad
  R(n) = \int_0^\pi \sin^{n-3} \phi\, d\phi.
\]

Then,~$|F(\phi)|^2 = \cos^2\theta + \sin^2\theta \cos^2\phi$, so
\[
  |P_k^n(\cos\theta)| \leq R(n)^{-1} \int_0^\pi (\cos^2\theta + \sin^2\theta
  \cos^2\phi)^{k/2} \sin^{n-3}\phi\, d\phi.
\]
The right side is decreasing in~$k$, and we can estimate the integrals rigorously
using interval arithmetic.

For~(iii) we need to compute~$\lhs(k)$ for all~$k \leq k_0$.  We would like to
do this rigorously, using for instance interval arithmetic.  The most
time-consuming step here is to compute the~$r_i$ functions.  In practice, this
step involves evaluating the polynomials~$P_k^n$ for values of~$k$ that can
exceed $100{,}000$.

The Jacobi polynomials~$P_k^n$ are given by a simple recurrence, namely
\[
  P_k^n(u) = a_k^n(u) P_{k-1}^n(u) + b_k^n P_{k-2}^n(u)
\]
for~$k \geq 2$ with~$P_1^n(u) = u$ and~$P_0^n(u) = 1$, where
\[
  a_k^n(u) = \frac{2k + 2\alpha - 1}{k + 2\alpha} u\qquad\text{and}\qquad b_k =
  -\frac{k-1}{k + 2\alpha}
\]
with~$\alpha = (n-3) / 2$.  This recurrence comes from formula~(4.5.1) in
Szeg\H{o}~\cite{Szego1975OrthogonalPolynomials}, adapted to our normalization of~$P_k^n(1) = 1$.

The recurrence is very stable: even using double-precision floating-point
arithmetic it is possible to accurately evaluate the polynomial for very high
degrees for points in~$[-1, 1]$.  If we use this recurrence with interval
arithmetic though, the error estimation quickly gets out of hand: if
$\lim_{k\to\infty} a_k(u) > 1$, the error bound grows exponentially.

Using interval arithmetic then requires very high precision and is very slow,
though not prohibitively so.  In any case, we can trust floating-point
computations.  Using the recurrence amounts to solving
a linear system with a triangular matrix whose entries are the numbers
$a_k(u)$,~$b_k$, and~$1$ by backward substitution, and this matrix is well conditioned, so the error we
make in solving the system is very small.  The error was analyzed for instance
by Barrio~\cite{Barrio2002RoundingSeries}.  The Julia program that performs the verification
uses high-precision floating-point arithmetic.

\section*{Acknowledgments}
We would like to thank David de Laat, Nando Leijenhorst, Fabrício Caluza
Machado, and Willem de Muinck Keizer for fruitful discussions.  David de Laat
and Nando Leijenhorst also gave some much-needed technical support regarding the
\texttt{ClusteredLowRankSolver}.  The optimization problems were solved in a
computational cluster at TU Delft maintained by Joffrey Wallaart.

\part{Packings in compact spaces}\label{part:compact-packings}
\chapter[Copositive programming for compact packing graphs]{Copositive programming for compact packing graphs}%
\label{ch:cop-prog-packing}
Part~\ref{part:compact-packings} concerns problems in Class~\ref{it:problem-2} from the introduction. These are counting problems on compact, but infinite, spaces. Prime examples of these are the spherical-codes problems, of which Problem~\ref{it:problem-2}---the kissing number problem---is a special case.

A spherical-codes problem is as follows. For~$\theta \in (0, \pi)$, a~\defi{spherical code}\index{spherical code} with angle~$\theta$ is a subset~$S \subseteq S^{n-1}$ such that the angle between two distinct points~$x$,~$y \in S$ is at least~$\theta$. The~\defi{spherical-codes problem}\index{problem!spherical codes} with angle~$\theta$ asks what is the largest cardinality~$A(n, \theta)$ that a spherical code with angle~$\theta$ can have. This is the same as asking how many spherical caps of angular radius~$\theta/2$ fit on~$S^{n-1}$ without overlapping, from which it follows that this maximum cardinality is indeed finite and attained.

We model such problems as independent-set problems on graphs with the following properties. A~\defi{packing graph}\index{packing graph} is a graph~$G = (V, E)$ with~$V$ a topological space such that every finite clique of~$G$ is contained in an open clique. It is called a~\defi{compact packing graph}\index{packing graph!compact packing graph} if~$V$ is compact. A compact packing graph has a finite independence number, i.e.
\[
  \alpha(G) = \sup\{\, |I| : I \subseteq V \text{ is independent}\, \} < \infty.
\]

For a spherical-codes problem, the corresponding packing graph is the graph~$G_{\theta} = (S^{n-1}, E(\theta))$, where~$xy \in E(\theta)$ if and only if the angle between~$x$ and~$y$ is in the interval~$(0,\theta)$. A set~$S \subseteq S^{n-1}$ is an independent set of~$G_{\theta}$ if and only if it is a spherical code with angle~$\theta$. Hence,~$\alpha(G_{\theta})$ is indeed~$A(n, \theta)$.

Linear programming methods for spherical codes and other compact packing problems were introduced to the topic early on. Delsarte~\cite{Delsarte1972BoundsProgramming} introduced linear programming methods to the \defi{binary-codes problem}\index{binary-codes problem} in~1972; this is the spherical-codes problem, but with the sphere replaced by the Hamming cube~$(\Z/2\Z)^n$, and the angle between two points replaced by the~\defi{Hamming distance}\index{Hamming distance}: the number of distinct entries between vectors. His method was adapted to the spherical-codes problem in~1977 by Delsarte, Goethals, and Seidel~\cite{Delsarte1977SphericalDesigns}. McEliece, Rodemich, and Rumsey~\cite{McEliece1978TheGeneralizations}, and independently Schrijver~\cite{Schrijver1979ABounds},  observed that these types of linear programming bounds are symmetry-reduced versions of the Lovász $\vartheta$-number, respectively in 1978 and 1979. By today's measures, it is a fairly simple method, but already gave rise to sharp bounds in many cases~\cite{Levenshtein1979BoundariesSpace,Odlyzko1979NewDimenions}.

Schrijver~\cite{Schrijver2005NewProgramming} introduced semidefinite programming to the topic in~2005 by describing a three-point bound that gives upper bounds on the binary codes-problem. In their landmark paper, Bachoc and Vallentin~\cite{Bachoc2008NewProgramming} extended the three-point bound to spherical codes in~2008.

The moment hierarchy was studied in depth for the independence number of finite graphs in~2003 by Laurent~\cite{Laurent2003AProgramming}, and based on this, Gvozdeninovi\'c, Laurent, and Vallentin~\cite{Gvozdenovic2009BlockProgramming} introduced a $k$-point bound for~$k \geq 2$. This enabled the extension of the three-point bound to general compact packing graphs, and for those graphs, a moment hierarchy was introduced by De Laat and Vallentin~\cite{deLaat2015AGeometry} in~2015 and a block moment hierarchy by De Laat, Machado, Oliveira, and Vallentin~\cite{deLaat2021K-PointLines} in~2022. Low levels of these hierarchies were implemented successfully for several problems on the unit sphere by De Laat, Machado, and De Muinck Keizer~\cite{deLaat2023TheAngle} in 2023 and Cohn, De Laat, and Leijenhorst~\cite{Cohn2024OptimalityBounds} in~2024.

Dobre, Dür, Frerick, and Vallentin introduced copositive optimization to the topic in~2016, by describing an exact copositive formulation of the independence number of a compact metrizable packing graph. Based on this result, Kuryatnikova and Vera~\cite{Kuryatnikova2017Approximating} defined a copositive hierarchy for compact packing graphs with metrizable vertex set in~2017, and they showed it converges to the independence number. These results are analogous to the approach for finite graphs Section~\ref{ch:introduction}.\ref{sec:cop-cone-finite-dimension} of this thesis; the hierarchy is based on an extension of Pólya's theorem to continuous kernels on compact spaces. See also the PhD thesis of Kuryatnikova~\cite{Kuryatnikova2019TheProblems}.

In this chapter we study the relation between the moment hierarchy, the block moment hierarchy, and the copositive programming hierarchy for compact packing graphs, and show that they converge to the independence number if the vertex set is in addition metrizable. To be precise, the novel contribution in this chapter is a proof that the copositive hierarchy introduced by Kuryatnikova and Vera is weaker than the block moment hierarchy by De Laat, Machado, Oliveira and Vallentin. The convergence of the copositive hierarchy thus implies convergence of the block moment hierarchy. The exposition is based on the preprint~\cite{Bekker2023OnGraphs}.

\section{Spaces of measures}
Let~$V$ be a compact Hausdorff space. We say that~$\mu \in M(V^k)$ is~\defi{symmetric}\index{measure!symmetric measure} if for all~$E \subseteq V^k$ and all~$\pi \in \mathfrak{S}_k$ we have~$\mu(\pi E) = \mu(E)$, where
\[
  \pi E = \{\, (v_{\pi 1}, \ldots, v_{\pi k}) : (v_1, \ldots, v_k) \in E\, \}.
\]
Denote the space of symmetric signed Radon measures on~$V^k$ by~$M_{\sym}(V,k)$\index{ MsymVk@$M_{\sym}(V,k)$}.

The pair~$(C(V), M(V))$ is a dual pair with the duality
\[
  \langle f, \mu\rangle = \mu(f) = \int_V f(v)\, d\mu(v).
\]
This defines a duality between~$C_{\sym}(V,k)$ and~$M_{\sym}(V,k)$. All asterisks in this chapter refer to the dual with respect to this duality, i.e. if~$\cC \subset C_{\sym}(V,k)$ a cone, then~$\cC^* \subset M_{\sym}(V,k)$. The notation~$C_{\sym}(V)$ and~$M_{\sym}(V)$ denotes~$C_{\sym}(V,2)$ and~$M_{\sym}(V,2)$ respectively.

We define cones of Radon measures on a compact Hausdorff space~$V$ through this duality. Thus, the cone of nonnegative Radon measures~$M(V)_{\geq 0}$ is the cone of signed Radon measures for which~$\langle f, \mu\rangle \geq 0$ for all~$f \in C(V)_{\geq 0}$. Likewise, the cone~$M_{\sym}(V)_{\succeq 0}$ of~\defi{positive-semidefinite}\index{positive semidefinite!measure} measures on~$V^2$ consists of signed Radon measures~$\mu$ such that~$\langle F, \mu\rangle \geq 0$ for each positive-semidefinite kernel~$F \in C_{\sym}(V)$.

Let~$k \geq 0$ be an integer and~$G = (V, E)$ be a graph with~$V$ a compact Hausdorff space. Recall from Section~\ref{ch:Completely positive programming on compact spaces}.\ref{sec:spaces-of-subsets} that~$\ind_k$ denotes the subset of~$\Sub{V}{k}$ of independent sets. Let~$C_{\sym}(V^2 \times \ind_k)$ be the space of functions in~$C(V^2\times \ind_k)$ that are invariant under permutation of the first two arguments. Define the space~$M_{\sym}(V^2\times \ind_k)$ as the space of all signed Radon measures on~$V^2 \times \ind_k$ such that~$\mu(F^{\tr}) = \mu(F)$ for all measurable~$F \subseteq V^2 \times \ind_k$, where
\[
  F^{\tr} = \{\, (u, v, w) : (v, u, w) \in F\, \}.
\]
Let~$C_{\sym}(V^2\times \ind_k)_{\succeq 0}$ be the cone of continuous functions~$F$ such that the function~$(u, v) \mapsto F(u, v, w)$ is a positive semidefinite kernel for all~$w \in \ind_k$, and denote its dual by~$M_{\sym}(V^2\times \ind_k)_{\succeq 0}$.

\section{Compact packing graphs}
Recall the preliminaries on the standard topology on~$\Sub{V}{r}$ for a topological space~$V$ from Section~\ref{sec:spaces-of-subsets} of Chapter~\ref{ch:Completely positive programming on compact spaces}, in particular that a basis of the standard topology is given by sets of the form
\[
  (U_1, \ldots, U_t)_r = \{\, A \in \Sub{V}{r} : A \cap U_i \neq \emptyset \text{ for all } i \text{ and } A \subseteq U_1 \cup \cdots \cup U_t\, \},
\]
with~$t$ an integer such that~$0 \leq t \leq r$ and~$U_1$,~$\ldots$,~$U_r \subseteq V$ disjoint open sets. Moreover, recall that, for a given graph~$G$, the notation~$\clique_r$ denotes the set of cliques of size at most~$r$.

The following characterization of packing graphs was not found in the literature yet. It will be of use later.
\begin{theorem}%
  \label{thm:packing-graph-if-and-only-if-Kk-open}
  If~$H = (V, E)$ is a graph and~$V$ is a Hausdorff space, the following are equivalent:
  \begin{enumerate}
    \item[(i)] $G$ is a packing graph;
    \item[(ii)] $\clique_r$ is open in~$\Sub{V}{r}$ for every~$r \geq 0$;
    \item[(iii)] $\clique_2$ is open in~$\Sub{V}{2}$.
  \end{enumerate}
\end{theorem}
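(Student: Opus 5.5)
I will prove the cycle of implications (i)$\Rightarrow$(ii)$\Rightarrow$(iii)$\Rightarrow$(i). The step (ii)$\Rightarrow$(iii) is immediate. Throughout I use two facts. First, the basic open sets $(U_1,\ldots,U_t)_r$, with the $U_i$ pairwise disjoint, form a basis of $\Sub{V}{r}$ (Handel). Second, subsets of size less than $2$ are vacuously cliques, so $\clique_r \supseteq \Sub{V}{1}$.

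\textbf{(i)$\Rightarrow$(ii).} Let $K \in \clique_r$ with $K = \{v_1,\ldots,v_t\}$ distinct and $t \leq r$. If $t = 0$, then $\{\emptyset\}$ is open by construction, so assume $t \geq 1$. Since $G$ is a packing graph, there is an open clique $U \supseteq K$. Because $V$ is Hausdorff, I can choose pairwise disjoint open sets $U_i \ni v_i$, and after shrinking I may take $U_i \subseteq U$. Then $(U_1,\ldots,U_t)_r$ is a basic open neighbourhood of $K$. Every $A$ in it satisfies $A \subseteq U$, and subsets of a clique are cliques, so $A \in \clique_r$. Hence $\clique_r$ is open.

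\textbf{(iii)$\Rightarrow$(i).} Let $K = \{v_1,\ldots,v_t\}$ be a finite clique with the $v_i$ distinct. For each pair $i \neq j$ we have $\{v_i,v_j\} \in \clique_2$. Since $\clique_2$ is open, there is a basic neighbourhood $(U^{ij}_i, U^{ij}_j)_2 \subseteq \clique_2$ with $U^{ij}_i \ni v_i$ and $U^{ij}_j \ni v_j$ disjoint. Now define
\[
  U_i = \bigcap_{j \neq i} \bigl(U^{ij}_i \cap U^{ji}_i\bigr),
\]
then shrink further so that $U_1,\ldots,U_t$ are pairwise disjoint; this uses the Hausdorff property and finiteness. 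Put $U = U_1 \cup \cdots \cup U_t$, which is open and contains $K$.

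To see that $U$ is a clique, take distinct $x, y \in U$.
\begin{itemize}
  \item If $x \in U_i$ and $y \in U_j$ with $i \neq j$, then $\{x,y\} \in (U^{ij}_i, U^{ij}_j)_2 \subseteq \clique_2$, so $xy \in E$.
  \item If both $x, y$ lie in the same $U_i$, more care is needed. For $t \geq 2$, pick any $j \neq i$. The set $\{x,y\}$ is contained in $U^{ij}_i \cup U^{ij}_j$, but it does not meet $U^{ij}_j$, so it does not lie in the basic set $(U^{ij}_i,U^{ij}_j)_2$.
\end{itemize}

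This same-component case is the main obstacle. The remedy is to use the one-set neighbourhoods $(W)_2$ of a singleton. Since $\{v_i\} \in \clique_2$ and $\clique_2$ is open, there is an open $W_i \ni v_i$ with $(W_i)_2 \subseteq \clique_2$. The set $(W_i)_2$ consists of all nonempty subsets of $W_i$ of size at most $2$, so every pair in $W_i$ is an edge. I then replace each $U_i$ by $U_i \cap W_i$. After this replacement, pairs within one component are edges because of $W_i$, and pairs across components are edges by the first case.

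The case $t = 1$ is covered by $W_1$ alone, and the empty clique is trivially contained in an open clique. Hence every finite clique lies in an open clique, so $G$ is a packing graph.
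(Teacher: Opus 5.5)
Your proof is correct and follows the paper's argument. Step (i)$\Rightarrow$(ii) is the same basic-open-neighbourhood argument, and in (iii)$\Rightarrow$(i) your $W_i$ are exactly the paper's neighbourhoods indexed by the singletons $\mathbf{x}=\{x\}\in\Sub{C}{2}$, so your $U=\bigcup_i (U_i\cap W_i)$ is the paper's open clique $K$. The only difference is your extra shrinking to make the $U_i$ pairwise disjoint, which is harmless but unnecessary: the case split works for any choice of component containing each point.
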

\begin{proof}
  To see that~(i) implies~(ii), let~$C = \{x_1, \ldots, x_s\}$ be a clique of cardinality~$s \leq r$. It is contained in an open clique~$K$. Since~$V$ is a Hausdorff space, there are disjoint open sets~$U_1, \ldots, U_s$ such that~$x_i \in U_i$ for all~$i$. By taking the intersection of each~$U_i$ with~$K$, assume~$\bigcup_i U_i$ is a clique. The set~$(U_1, \ldots, U_s)_r$  is an open set of~$\Sub{V}{r}$ containing~$C$ and consisting only of cliques. This proves that~$\clique_r$ is open in~$\Sub{V}{r}$ for all~$r \geq 0$.

  That~(ii) implies~(iii) is immediate.

  Assume~(iii) holds. Let~$C$ be a finite clique and~$\mathbf{x} = \{x_1, x_2\}$ be a subset of size~$\leq 2$. Then there exists a basic open set~$(U_{\bf{x}}^{x_1}, U_{\bf{x}}^{x_2})_2 \subseteq \clique_2$, where~$U_{\mathbf{x}}^{x_i}$ is an open neighborhood of~$x_i$, because sets of this form produce a basis of the topology. Choose such a basic open set for each~$\mathbf{x} \subseteq C$ of size at most~$2$, and define
  \[
    K = \bigcup_{x \in C} \bigcap_{\substack{\mathbf{x} \in \Sub{C}{2} \\ \text{with } x \in \mathbf{x}}} U_{\mathbf{x}}^x.
  \]

  The set~$K$ is a union of finite intersections of open sets that contains~$C$, thus it is an open neighborhood of~$C$. Moreover,~$K$ is a clique: let~$\mathbf{y} \subseteq K$ be a set of cardinality~$2$, say~$\mathbf{y} = \{y_1, y_2\}$. For~$i \in \{1,2\}$ there exists an~$x_i \in C$ such that~$y_i \in U_{\mathbf{x}}^{x_i}$ for every~$\mathbf{x} \subseteq C$ containing~$x_i$. Choose such~$x_1$ and~$x_2$, and take~$\mathbf{x} = \{ x_1, x_2 \}$. Then, per definition of the~$U_{\bf{x}}^{x_i}$,~$\mathbf{y} \in (U_{\bf{x}}^{x_1},U_{\bf{x}}^{x_2})_2 \subseteq \clique_2$, so~$\mathbf{y}$ is an edge. Thus, we found an open clique containing~$C$, and~$G$ is a packing graph.
\end{proof}

The following lemma is essential for much of the analysis in this chapter.
\begin{lemma}[{\cite[Lemma 2]{deLaat2015AGeometry}}]%
  \label{lem:space-of-ind-subset-disjoint-union}
  If~$G = (V, E)$ is a compact packing graph, then~$\ind_{=r}$ is both open and closed in~$\ind_k$ for all~$r \leq k$. Hence, if~$Y$ is a topological space, then~$f: \ind_k \to Y$ is continuous if and only if the restriction of~$f$ to~$\ind_{=r}$ is continuous for all~$r \leq k$.

  In particular, if~$G = (V, E)$ is a compact packing graph, then
  \[
    C(\ind_k) \cong \bigoplus_{r = 0}^kC(\ind_{=r}).
  \]
\end{lemma}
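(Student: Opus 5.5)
The plan is to show that for a compact packing graph, independence of sets of size exactly~$r$ is both an open and a closed condition inside~$\ind_k$. The other two assertions then follow formally. Throughout, $V$ is a compact Hausdorff space and $\Sub{V}{k}$ carries the standard (quotient) topology from Section~\ref{ch:Completely positive programming on compact spaces}.\ref{sec:spaces-of-subsets}.

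\emph{Openness.} Take $S=\{x_1,\ldots,x_r\}\in\ind_{=r}$. If $r=0$, the singleton $\{\emptyset\}$ is open by construction of the topology, so assume $r\geq 1$. By the Hausdorff property, choose pairwise disjoint open neighbourhoods $U_i\ni x_i$. Each singleton $\{x_i\}$ is a clique, so the packing property gives an open clique $K_i\ni x_i$; replace $U_i$ by $U_i\cap K_i$. The basic open set $(U_1,\ldots,U_r)_k$ contains $S$. Now let $A\in(U_1,\ldots,U_r)_k\cap\ind_k$. Then $A$ meets every $U_i$, so $|A|\geq r$. Suppose $A$ contained two distinct points of the same $U_i$. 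Those two points lie in the clique $K_i$, so they would form an edge, contradicting independence of $A$. Hence $|A\cap U_i|=1$ for each $i$, and since $A\subseteq\bigcup_i U_i$ we get $|A|=r$. So this neighbourhood of $S$ in $\ind_k$ lies inside $\ind_{=r}$, and $\ind_{=r}$ is open in $\ind_k$.

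\emph{Closedness.} The sets $\ind_{=0},\ldots,\ind_{=k}$ partition $\ind_k$. Each is open, so the complement of any one of them is a union of open sets. Therefore each $\ind_{=r}$ is also closed in $\ind_k$. I considered an alternative route through lower semicontinuity of $A\mapsto|A|$ on $\Sub{V}{k}$, but the partition argument makes it unnecessary.

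\emph{Consequences.} The family $\{\ind_{=r}\}_{r\leq k}$ is a finite cover of $\ind_k$ by open sets, so continuity of a map $f:\ind_k\to Y$ is local with respect to it. The pasting lemma then says $f$ is continuous if and only if each restriction $f|_{\ind_{=r}}$ is continuous. The map $f\mapsto(f|_{\ind_{=r}})_{r\leq k}$ is linear and injective, and by pasting it is surjective onto $\bigoplus_{r\le k}C(\ind_{=r})$. Both sides carry the supremum norm, taking the maximum over the finitely many components on the right. The map preserves this norm, which gives the isomorphism $C(\ind_k)\cong\bigoplus_{r=0}^kC(\ind_{=r})$.

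The main obstacle is the openness step. Exactly there, the packing hypothesis is needed to stop nearby sets in $\Sub{V}{k}$ from gaining extra points through ``splitting'' a point of $S$ into two nearby points, which is what happens in $\Sub{V}{k}$ for general graphs. Compactness is not needed for this argument, beyond making $V$ and hence the spaces $C(\ind_{=r})$ well behaved.
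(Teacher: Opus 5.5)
Your proof is correct and complete. The paper gives no proof of its own and simply cites \cite[Lemma 2]{deLaat2015AGeometry}; your argument is the natural self-contained proof of that lemma, built on the same ``every vertex lies in an open clique'' device the paper uses elsewhere (for instance in the proof of Theorem~\ref{thm:comparison-cp-hierarchy-and-kpb}). You shrink disjoint neighbourhoods of the points of~$S$ into open cliques, so that every independent set in~$(U_1,\ldots,U_r)_k$ picks exactly one point from each~$U_i$, and you then get closedness and the pasting statement from the finite open partition.
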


\section{The moment and block moment hierarchies}
The definitions of the moment hierarchy and the block moment hierarchy are similar to those of Chapter~\ref{ch:Completely positive programming on compact spaces}, but there are notable differences. For compact packing graphs, we may formulate the problem over the space of Radon measures. If we do this, it turns out to be favorable to realize the edge constraint by only considering elements of~$M(\ind_r)$ rather than~$M(\Sub{V}{r})$. We use the same notation~$\lass_r$ and~$\kpb_r$ as in Chapter~\ref{ch:Completely positive programming on compact spaces}; because the programs there were defined for measurable graphs, this should not lead to confusion.

Recall the operator on~$\R^V$ for finite~$V$
\[
  M_r : \R^{\Sub{V}{2r}} \to \Sym(\Sub{V}{r}),\qquad (M_r\nu)_{S, T} = \nu_{S \cup T}.
\]
To define the analogue of this operator on a space of measures, it is easier to first extend its adjoint to function spaces. A direct calculation shows that for all~$A \in \Sym(\Sub{V}{r})$,
\[
  (M_r^* A)_S = \sum_{\substack{J, J'\in \Sub{V}{r} \\ J \cup J' = S}} A_{J, J'}.
\]
To understand this operator, consider that for generic~$V$, the only additional structure given is the union map~$\Sub{V}{r} \times \Sub{V}{r} \to \Sub{V}{2r}$. We want a map~$\Sym(\Sub{V}{r}) \to \R^{\Sub{V}{2r}}$ that treats all elements in a fixed fiber of the union map the same. This is achieved by giving all elements the same weight, weight~1, and summing the values together; i.e., we average the function values in a fiber of the union map under a uniform distribution. A similar interpretation holds for the block moment hierarchy, but with the union maps~$\Sub{V}{1}^2 \times \Sub{V}{r-2} \to \Sub{V}{r}$, which results in the operators described later.

Let us proceed with defining the hierarchies. Let~$G = (V, E)$ be a compact packing graph and let~$r \geq 1$ be an integer. Equip the spaces~$\ind_r$ with the topology induced by the standard topology of~$\Sub{V}{r}$. Define the operator
\[
  A_r : C_{\sym}(\ind_r) \to C(\ind_{2r}),\qquad A_rF(I) = \sum_{\substack{J, J' \in \ind_r \\ J \cup J' = I}} F(J, J').
\]
It is a priori not clear that the codomain of this operator is correct:~$A_rF$ might not be a continuous function. We postpone the proof that the functions~$A_rF$ are indeed continuous for now. Theorem~\ref{thm:well-definedness-hierarchy-operators} gives a related result. Assuming this for now,~$A_r$ is a bounded linear operator under the supremum norm, thus its continuous adjoint~$A_r^* : M(\ind_{2r}) \to M_{\sym}(\ind_r)$ exists. Define the $r$th level of the~\defi{moment hierarchy}\index{moment hierarchy!packing graph} as the optimization problem
\[
  \begin{optprob}
    \lass_r(G) = \sup &\onerow{\nu(\ind_{=1})}\\
    &\onerow{\nu(\{\emptyset\}) = 1,}\\
    &\onerow{A_r^*\nu \in M_{\sym}(\ind_r)_{\succeq 0}},\\
    &\nu \in M(\ind_{2r})_{\geq 0}.
  \end{optprob}
\]

Let~$r \geq 2$ be an integer, and define the operator
\index{ Mr@$\lass_r(H)$!packing graph}
\begin{multline*}
  B_r : C_{\sym}(\ind_1^2 \times \ind_{r-2}) \to C_{\sym}(\ind_r),\\
  B_rF(I) = \sum_{Q \in \Sub{V}{r-2}} \sum_{\substack{J, J' \in \Sub{I}{1} \\ Q \cup J \cup J' = I}} F(Q, J, J').
\end{multline*}
By Theorem~\ref{thm:well-definedness-hierarchy-operators},~$B_r$ is a bounded linear operator which has a continuous adjoint~$B_r^* : M_{\sym}(\ind_r) \to M_{\sym}(\ind_1^2 \times \ind_{r-1})$. Define the~\defi{block moment hierarchy}\index{block moment hierarchy!packing graph} as the optimization problem
\index{ blockMr@$\kpb_r(H)$!packing graph}\[
  \begin{optprob}
    \kpb_r(G) = \sup &\onerow{\nu(\ind_{=1})}\\
    &\onerow{\nu(\{\emptyset\}) = 1,}\\
    &\onerow{B_r^*\nu \in M_{\sym}(\ind_1^2\times \ind_{r-2})_{\succeq 0},}\\
    &\onerow{\nu \in M(\ind_r)_{\geq 0}.}
  \end{optprob}
\]

The definition of the block moment hierarchy deviates in two ways from the one by De Laat, Machado, Oliveira, and Vallentin~\cite{deLaat2021K-PointLines}, resulting in a bound that is at least as strong as theirs. First, the original formulation excludes the empty set from~$\ind_r$, and second, a different normalization is used. Including the empty set is necessary for our proof of convergence; it seems to give stronger, nonequivalent problems. Changing the normalization does not affect convergence, and in fact, the proof of convergence hinges on this fact.

Like for the measurable independence number, we can restrict a feasible solution of~$\lass_{r+1}(G)$ to one of~$\lass_r(G)$, and the same is true for~$\kpb_r(G)$, which shows that
\[
  \lass_1(G) \geq \lass_2(G) \geq  \cdots\qquad \text{ and }\qquad \kpb_1(G) \geq \kpb_2(G) \geq \cdots.
\]
As was the case in Chapter~\ref{ch:Completely positive programming on compact spaces},~$\lass_r(G) \leq \kpb_{r+1}(G)$.

Moreover, if~$I \subseteq V$ is an independent set of~$G$, then
\[
  \nu = \sum_{R \in \ind_r,\, R \subseteq I} \delta_R,
\]
where~$\delta_R$ is the Dirac measure at~$R$, is a feasible solution to~$\kpb_r(G)$ with objective value~$|I|$, so~$\kpb_r(G) \geq \alpha(G)$ for all~$r$. Similarly,~$\lass_r(G) \geq \alpha(G)$ for all~$r$.

\sectionbreak

We now show that the operator~$B_r : C_{\sym}(\ind_1^2 \times \ind_{r-2}) \to C(\ind_r)$ is well-defined, that is, if~$F \in C_{\sym}(\ind_1^2 \times \ind_{r-2})$, then~$B_rF$ is continuous. The proof for~$A_r$ is similar. Lemma~\ref{lem:space-of-ind-subset-disjoint-union} is essential here: it implies we only have to prove continuity on~$\ind_{=r}$ for all~$r$.
\begin{theorem}%
  \label{thm:well-definedness-hierarchy-operators}
  If~$G = (V, E)$ is a graph with~$V$ a compact Hausdorff space, if~$r \geq 2$ is an integer, and if ~$F \in C(\ind_1^2 \times \ind_{r-2})$, then~$B_rF$ is continuous on~$\ind_{=s}$ for all~$s \leq r$.
\end{theorem}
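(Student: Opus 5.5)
We argue locally on the stratum $\ind_{=s}$, using the basic open sets of Handel's topology. Fix $I_0 = \{x_1, \ldots, x_s\} \in \ind_{=s}$ and $\epsilon > 0$. Since $V$ is Hausdorff, choose pairwise disjoint open neighbourhoods $U_1, \ldots, U_s$ of $x_1, \ldots, x_s$. The basic open set $\mathcal{N} = (U_1, \ldots, U_s)_s \cap \ind_{=s}$ contains exactly the independent $s$-sets that have precisely one point in each $U_i$. So every $I \in \mathcal{N}$ is written uniquely as $I = \{y_1, \ldots, y_s\}$ with $y_i \in U_i$. This gives a canonical bijection between $\mathcal{N}$ and an open subset of $U_1 \times \cdots \times U_s$, and it is a homeomorphism because the quotient map $\setof{\,\cdot\,}$ restricted to $U_1 \times \cdots \times U_s$ is injective and open onto $(U_1,\ldots,U_s)_s$.

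Next, expand $B_rF(I)$ combinatorially. The sum ranges over triples $(Q, J, J')$ with $Q \in \Sub{I}{r-2}$, $J, J' \in \Sub{I}{1}$ and $Q \cup J \cup J' = I$. Which triples occur depends only on $|I| = s$, not on the points themselves. Label the elements of $I$ by the index $i$ of the $U_i$ containing them. Then the index set becomes a fixed finite family $\mathcal{T}$ of triples $(\mathcal{Q}, \mathcal{J}, \mathcal{J}')$ of subsets of $[s]$, the same for every $I \in \mathcal{N}$. Hence on $\mathcal{N}$
\[
  B_rF(\{y_1,\ldots,y_s\}) = \sum_{(\mathcal{Q},\mathcal{J},\mathcal{J}') \in \mathcal{T}} F\bigl(\{y_j\}_{j\in\mathcal{J}}, \{y_j\}_{j \in \mathcal{J}'}, \{y_j\}_{j\in\mathcal{Q}}\bigr),
\]
a finite sum whose number of terms does not depend on $I$.

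Each summand is a composition of continuous maps. The map $(y_1, \ldots, y_s) \mapsto \{y_j\}_{j \in \mathcal{Q}}$ into $\Sub{V}{r-2}$ is continuous because it is $\setof{\,\cdot\,}$ applied to a coordinate projection. Its image lies in $\ind_{r-2}$, since subsets of independent sets are independent, and $\ind_{r-2}$ carries the subspace topology. The same holds for $\mathcal{J}$ and $\mathcal{J}'$ into $\ind_1$. Composing with the continuous $F$ and using the homeomorphism of the first step, each summand is continuous on $\mathcal{N}$, so $B_rF$ is continuous at $I_0$ within $\ind_{=s}$.

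The main obstacle is the first step: checking that the basic open set genuinely separates points, so that the labelling $I \mapsto (y_1,\ldots,y_s)$ is well defined and continuous. Cardinality must not drop nearby, which is guaranteed by restricting to $\ind_{=s}$ together with disjointness of the $U_i$. Continuity of the labelling must also be justified, for which we invoke Handel's description of the quotient topology via basic open sets~\cite[Proposition 2.11]{Handel2000SomeSpaces}. The remaining steps are bookkeeping. Continuity on each $\ind_{=s}$ then yields continuity on $\ind_r$ via Lemma~\ref{lem:space-of-ind-subset-disjoint-union} when $G$ is a compact packing graph, though the statement here asks only for the stratumwise claim.
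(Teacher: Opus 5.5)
Your proof is correct and follows essentially the same route as the paper: separate the points of $I_0$ by disjoint open sets $U_1,\ldots,U_s$, use them to label nearby elements of $\ind_{=s}$ coordinatewise so that the index set of the sum in $B_rF$ is fixed, and conclude from continuity of $F$. Your local homeomorphism with a subset of $U_1\times\cdots\times U_s$ is a slightly more careful version of the paper's bare claim that each labelled net $(x_{\alpha,i})_\alpha$ converges.

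One small slip: the image of $\mathcal{N}$ in $U_1\times\cdots\times U_s$ need not be open, because $\ind_{=s}$ need not be open in $\Sub{V}{=s}$ (for a packing graph it is closed). Your argument only uses that image as a subset with the subspace topology, so nothing breaks.
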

\begin{proof}
  Fix an integer~$0 \leq s \leq r$, and let~$(I_{\alpha})_{\alpha}$ be a net in~$\ind_{=s}$ that converges to~$I \in \ind_{=s}$; the task is to show that~$B_rF(I_{\alpha})$ converges to~$B_rF(I)$.

  Say~$I = \{x_1, \ldots, x_s\}$ and take disjoint open neighborhoods~$U_1,  \ldots, U_s$ of~$x_1, \ldots, x_s$ respectively. The set~$W = (U_1,\ldots, U_s)_r$ is an open neighborhood of~$I$, so there exists an~$\alpha_0$ such that~$S_{\alpha} \in W$ for all~$\alpha \geq \alpha_0$. This implies that for each~$i$,~$S_{\alpha} \cap U_i \neq \emptyset$. Hence, since the~$U_i$ are disjoint, write the set~$I_{\alpha} = \{x_{\alpha,1}, \ldots, x_{\alpha,s}\}$, where~$x_{\alpha, i} \in U_i$.

  This shows that for every~$\alpha \geq \alpha_0$, the double sum appearing in~$B_rF(I_{\alpha})$ can be ordered such that the elements~$x_{\alpha,i}$ in a fixed place. The net~$(x_{\alpha, i})_{\alpha}$ converges in~$V$ for all~$i$, and so each of the sets being summed over converges in its respective space. Since~$F$ is continuous, the theorem follows.
\end{proof}

\section{The copositive hierarchy}
In the remainder of the chapter we assume that the vertex sets are compact and metrizable. The reason that we assume that they are metrizable is that exactness of the copositive formulation for compact packing graphs was only proved for this case, see~\cite{Dobre2016AGraphs}.

Let~$V$ be a compact metrizable space and~$k \geq 2$ an integer. Define the~\defi{completely positive cone}\index{completely positive cone!in MsymVk@in~$M_{\sym}(V,k)$} on~$M_{\sym}(V, k)$ by
\index{ CPVkm@$\CP(V, k)_{\m}$}\[
  \CP(V, k)_{\m} = \ccone\{\, \mu^{\otimes k} : \mu \in M(V)_{\geq 0}\, \}
\]
with closure under the weak* topology on~$M(V^k)$. Define the~\defi{copositive cone} on~$C_{\sym}(V, k)$\index{copositive cone!in CsymVk@in~$C_{\sym}(V,k)$} as~$\COP(V, k)_{\cont} = \CP(V, k)_{\m}^*$\index{ COPVkc@$\COP(V, k)_{\cont}$}. So, a continuous $k$-tensor~$T$ is copositive if and only if for every nonnegative measure~$\mu \in M(V)$ we have~$\int_{V^k} T(v)\, d\mu^k(v) \geq 0$.

When~$V$ is compact and metrizable, there exists a Radon measure~$\omega$ with full support on~$V$. With~$\COP(V, k) \subseteq L^2(V^k, \omega)$ as in Chapter~\ref{ch:completely-positive-cone} we have~$\COP(V, k)_{\cont} = \COP(V, k) \cap C_{\sym}(V, k)$. This is independent of the choice of such~$\omega$. This was argued by Dobre, Dür, Frerick, and Vallentin~\cite{Dobre2016AGraphs} for~$k = 2$, but also holds for~$k \geq 2$.

It is well known that a continuous tensor~$T$ is in~$\COP(V, k)_{\cont}$ if and only if~$T[U] \in \COP(U, k)$ for all finite~$U \subseteq V$, where
\[
  T[U] = (T(v_1, \ldots, v_k))_{v_1, \ldots, v_k \in U};
\] see~\cite[Lemma 2.1]{Dobre2016AGraphs} for a proof for~$k=2$, which holds for general~$k$. This is equivalent to the identity
\[
  \CP(V, k) = \ccone\{\, \chi^{\otimes k} : \chi \in M(V)_{\geq 0} \text{ discrete and has finite support}\, \}.
\]

An important distinction with the $L^2$ setting is that~$\COP(V, k)_{\cont}$ has an algebraic interior.
\begin{lemma}%
  \label{lem:COPc-has-algebraic-interior}
  If~$k \geq 2$ is an integer and~$V$ is a compact metrizable space, then~$\1^{\otimes k} \in \algint \COP(V, k)_{\cont}$.
\end{lemma}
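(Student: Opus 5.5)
We must show that for every $T \in C_{\sym}(V,k)$ there is $\epsilon > 0$ with $\1^{\otimes k} + tT \in \COP(V,k)_{\cont}$ for all $|t| \le \epsilon$. By the characterization recalled just before the lemma, copositivity of a continuous tensor $S$ means $\int_{V^k} S\, d\mu^{k} \ge 0$ for every nonnegative Radon measure $\mu$ on $V$. Because the cone is invariant under positive scaling, it suffices to check this for probability measures $\mu$ (the case $\mu = 0$ is trivial).

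The key steps are as follows.

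\emph{Step 1.} For a probability measure $\mu$,
\[
  \int_{V^k} \1^{\otimes k}\, d\mu^k = \mu(V)^k = 1.
\]

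\emph{Step 2.} Since $V$ is compact, $V^k$ is compact, so the continuous tensor $T$ is bounded: $\|T\|_\infty < \infty$. Hence
\[
  \biggl|\int_{V^k} T\, d\mu^k\biggr| \le \|T\|_\infty\, \mu(V)^k = \|T\|_\infty .
\]

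\emph{Step 3.} Take $\epsilon = 1/(\|T\|_\infty + 1)$. Then for $|t| \le \epsilon$,
\[
  \int_{V^k} (\1^{\otimes k} + tT)\, d\mu^k \ge 1 - \epsilon\|T\|_\infty > 0 .
\]
By homogeneity this gives $\int (\1^{\otimes k} + tT)\, d\mu^k \ge 0$ for all $\mu \in M(V)_{\ge 0}$, since for $\mu \neq 0$ the integral equals $\mu(V)^k$ times the value computed for $\mu/\mu(V)$. This is exactly the algebraic interior condition.

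There is no real obstacle. The only points to verify are that the dual pairing uses the product measure $\mu^{\otimes k}$, and that the cone is generated by such measures; both follow from the weak* closure in the definition of $\CP(V,k)_{\m}$ and continuity of the pairing with a fixed $T$. Metrizability is not needed for this argument; compactness, which gives boundedness of $T$, is the essential ingredient. The contrast with the $L^2$ setting is that there an $L^2$ tensor can be unbounded, so the estimate in Step 2 fails.
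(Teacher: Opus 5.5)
Your proof is correct and is essentially the paper's argument: you compare $\langle \1^{\otimes k}, \mu^{\otimes k}\rangle = \mu(V)^k$ with the compactness bound $|\langle T, \mu^{\otimes k}\rangle| \leq \|T\|_\infty\, \mu(V)^k$. The only difference is that the paper tests against finitely supported discrete measures $\chi$ rather than all of $M(V)_{\geq 0}$, and it writes $|\supp \chi|^k$ where $\chi(V)^k$ is the correct quantity, so your normalization to probability measures is the cleaner version.
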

\begin{proof}
  It suffices to check that for all~$F \in C_{\sym}(V,k)$ there exists~$\lambda > 0$ such that for every nonzero~$\chi \in M(V)_{\geq 0}$ that is discrete and has finite support,~$\langle \1^{\otimes k} - \lambda F, \chi^{\otimes k}\rangle \geq 0$. Take~$F \in C_{\sym}(V, k)$ nonzero and~$\lambda = \|F\|_{\infty}^{-1}$, then, for all~$\chi \in M(V)_{\geq 0}$ discrete and with finite support,
  \[
    \langle \1^{\otimes k} - \lambda F, \chi^{\otimes k}\rangle = |\supp \chi|^k - \lambda \langle F, \chi^{\otimes k}\rangle \geq |\supp \chi|^k(1 - \lambda \|F\|_{\infty}) = 0,
  \]
  and the conclusion follows.
\end{proof}

For~$r \in \N$, define the operators~$\T_r: C(V^k) \to C_{\sym}(V, k+r)$ similar to the~$\T_r$ operators of Chapter~\ref{ch:completely-positive-cone}:
\[
  \T_rF(v_1, \ldots, v_{k+r}) = \frac{1}{(k+r)!} \sum_{\pi \in \mathfrak{S}_{k+r}} (F \otimes \1^{\otimes r})(v_{\pi 1}, \ldots, v_{\pi (k+r)}).
\]
They are bounded and linear. Define the cones
\[
  C_r(V, k)_{\cont} = \T_r^{-1} C_{\sym}(V, k + r)_{\geq 0},
\]
which are weakly closed and convex. Again, if~$V$ is metrizable, then these cones are the intersections~$C_r(V, k) \cap C_{\sym}(V, k)$ with~$C_r(V,k) \subseteq L^2(V^k, \omega)$ with~$\omega$ a measure with full support.

Kuryatnikova and Vera~\cite[Theorem 2.9]{Kuryatnikova2019TheProblems} showed that for~$k=2$
\[
  C_1(V,k)_{\cont} \subseteq C_2(V, k)_{\cont} \subseteq \cdots \subseteq \COP(V, k)_{\cont},
\]
which we confirmed in Chapter~\ref{ch:completely-positive-cone} for all~$k \geq 2$. They also proved, again for~$k=2$, that~$\algint \COP(V, k)_{\cont} \subseteq \bigcup_r C_r(V, k)_{\cont}$; their proof goes through for all~$k \geq 2$, which was shown in the appendix of~\cite{Bekker2026OptimizationSpaces}. This results in a version of Pólya's theorem for continuous tensors, but under the presence of a Radon measure with full support. Since our definition of~$\COP(V, k)_{\cont}$ does not depend on this measure, we also do not need it for the conclusion to hold; in fact, the proof goes through with little change. It is really just the second half of that of~\cite[Theorem A.1]{Bekker2026OptimizationSpaces}; see also~\cite[Theorem 2.9]{Kuryatnikova2017Approximating}.

\begin{theorem}[Pólya's theorem for continuous tensors]%
  \label{thm:Polyas-theorem-for-continuous-tensors}
  Let~$V$ be a compact metrizable space. Given~$T \in C_{\sym}(V, k)$, let~$M = \|T\|_{\infty}$ and
  \[
    \lambda = \inf \{\, \langle T[U], x^{\otimes k} \rangle : U \subseteq V \text{ finite },\ x \in \R^U_{\geq 0},\text{ and } \1^{\tr}x = 1\, \}.
  \]
  If~$\lambda > 0$ then, for every~$r > k(k-1)M/(2\lambda) - k$,~$\Av_{\mathfrak{S}_{k+r}}(T \otimes \1^{\otimes r}) \geq 0$.
\end{theorem}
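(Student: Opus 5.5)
The plan is to evaluate $\Av_{\mathfrak{S}_{k+r}}(T \otimes \1^{\otimes r})$ pointwise and compare it with a discrete quadratic-form-type expression that is controlled by $\lambda$. This follows the Powers--Reznick style argument behind Pólya's theorem, which is also the second half of the proof of~\cite[Theorem A.1]{Bekker2026OptimizationSpaces}. Throughout, fix an arbitrary point $(v_1, \ldots, v_{k+r}) \in V^{k+r}$ and write $N = k+r$.

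First I would rewrite the average as a sum over injective index tuples. By definition,
\[
  \Av_{\mathfrak{S}_{N}}(T \otimes \1^{\otimes r})(v_1,\ldots,v_N) = \frac{1}{N!}\sum_{\pi \in \mathfrak{S}_N} T(v_{\pi 1}, \ldots, v_{\pi k}).
\]
Each injective map $\iota : [k] \to [N]$ arises from exactly $r!$ permutations. So the expression equals $\frac{r!}{N!} S_{\mathrm{inj}}$, where $S_{\mathrm{inj}} = \sum_{\iota \text{ injective}} T(v_{\iota 1}, \ldots, v_{\iota k})$. It therefore suffices to show $S_{\mathrm{inj}} \geq 0$.

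Second, I would compare $S_{\mathrm{inj}}$ with the full sum $S_{\mathrm{all}} = \sum_{(i_1,\ldots,i_k)\in[N]^k} T(v_{i_1}, \ldots, v_{i_k})$. Let $U = \{v_1,\ldots,v_N\}$ be the finite set of distinct points. Let $m_u$ be the number of indices $i$ with $v_i = u$, and set $x = m/N \in \R^U_{\geq 0}$, so that $\1^{\tr}x = 1$. Grouping equal points gives
\[
  S_{\mathrm{all}} = \sum_{u_1,\ldots,u_k \in U} m_{u_1}\cdots m_{u_k} T(u_1,\ldots,u_k) = N^k \langle T[U], x^{\otimes k}\rangle \geq N^k \lambda,
\]
by the definition of $\lambda$.

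Third, I would bound the non-injective tuples. A tuple in $[N]^k$ that is not injective has $i_a = i_b$ for some pair $a<b$. For a fixed pair there are $N^{k-1}$ such tuples, so there are at most $\binom{k}{2}N^{k-1}$ of them in total. Each term has absolute value at most $M$. Hence
\[
  S_{\mathrm{inj}} \geq N^k\lambda - \tfrac{k(k-1)}{2} M N^{k-1} = N^{k-1}\bigl(N\lambda - \tfrac{k(k-1)}{2}M\bigr).
\]
This is positive precisely when $k + r > k(k-1)M/(2\lambda)$, which is the stated condition on $r$. Since the point $(v_1,\ldots,v_N)$ was arbitrary, the averaged tensor is nonnegative everywhere.

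I do not expect a serious obstacle. The only delicate points are bookkeeping: the factor $r!/N!$ relating permutations to injective maps, and correctly allowing repeated points among the $v_i$. The latter is handled by passing to the multiplicity vector $m$, which is why $\lambda$ is defined as an infimum over all finite $U$ and probability vectors $x$. Neither the metrizability assumption nor any measure on $V$ plays a role here, beyond $T$ being bounded, which follows from continuity on the compact space $V^k$.
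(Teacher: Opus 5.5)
Your proof is correct. It is more self-contained than what the thesis does: the thesis does not reprove the statement but defers to the second half of the proof of Theorem~A.1 in~\cite{Bekker2026OptimizationSpaces}. As remarked in Chapter~\ref{ch:completely-positive-cone}, that proof invokes the quantitative Pólya theorem of Powers and Reznick, and the threshold $k(k-1)M/(2\lambda)-k$ is exactly their bound. In that route one applies Powers--Reznick to each form $p_U(x)=\langle T[U],x^{\otimes k}\rangle$ with $U$ finite. This form has minimum at least $\lambda$ on the simplex, and its normalized coefficients, namely the entries of $T[U]$, are at most $M$ in absolute value. One then uses~\eqref{eqn:polya-tensor-expression} to read the nonnegative coefficients of $(\1^{\tr}x)^r p_U(x)$ as positive multiples of the entries of $\Av_{\mathfrak{S}_{k+r}}(T[U]\otimes\1^{\otimes r})$. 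Your injective-versus-all count is the mechanism inside the Powers--Reznick proof, written in tensor language. Their falling-factorial deformation of $p_U$, evaluated at the lattice point $m/(k+r)$, equals your $S_{\mathrm{inj}}/N^k$. Their control of the deformation error corresponds to your count of at most $\binom{k}{2}N^{k-1}$ non-injective tuples, which is why you land on the same threshold. Your version buys transparency: there is no bookkeeping between monomial coefficients with multinomial weights and tensor entries. It also makes evident why only $\|T\|_\infty$ and the infimum over all finite $U$ enter, and why metrizability plays no role. The citation is shorter and places the result squarely in the classical quantitative Pólya framework.
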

\begin{proof}
  See the proof of Theorem A.1 of~\cite{Bekker2026OptimizationSpaces}.
\end{proof}

This leads to the following approximation result for the copositive cone.
\begin{theorem}%
  \label{thm:conic-polyas-theorem-for-continuous-tensors}
  If~$V$ is a compact metrizable space, then
  \begin{multline*}
    C_0(V, k)_{\cont} \subseteq C_1(V, k)_{\cont} \subseteq \cdots \subseteq \COP(V, k)_{\cont} \text{ and}\\
    \algint \COP(V, k)_{\cont} \subseteq \bigcup_{r \in \N } C_r(V, k)_{\cont}.
  \end{multline*}
\end{theorem}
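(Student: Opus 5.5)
The statement has two parts. The first is that the cones $C_r(V,k)_{\cont}$ are nested and contained in $\COP(V,k)_{\cont}$. The second is that the algebraic interior of $\COP(V,k)_{\cont}$ is covered by their union. I will treat these separately.

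\textbf{The chain of inclusions.} First I show $C_r(V,k)_{\cont}\subseteq C_{r+1}(V,k)_{\cont}$ pointwise. Take $T$ with $\T_rT\geq 0$ and points $v_1,\ldots,v_{k+r+1}\in V$. Averaging over $\mathfrak{S}_{k+r+1}$ can be done in two stages: first pick which coordinate is dropped (the one paired with the extra factor $\1$), then average over the remaining $k+r$ coordinates. This gives
\[
  \T_{r+1}T(v_1,\ldots,v_{k+r+1})=\frac{1}{k+r+1}\sum_{j=1}^{k+r+1}\T_rT(v_1,\ldots,\widehat{v_j},\ldots,v_{k+r+1}),
\]
where the hat means $v_j$ is omitted. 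The right-hand side is a sum of nonnegative terms, so $\T_{r+1}T\geq 0$.

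For containment in $\COP(V,k)_{\cont}$, I use the characterization recalled above: a continuous $T$ is copositive if and only if $T[U]\in\COP(U,k)$ for every finite $U\subseteq V$. Restricting $\T_rT\geq0$ to $U^{k+r}$ gives $T[U]\in C_r(U,k)$. By the finite-dimensional discussion preceding Theorem~\ref{thm:conic-polyas-theorem-finite-dimension}, $C_r(U,k)\subseteq\COP(U,k)$. Hence $T$ is copositive.

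\textbf{The interior statement.} Let $T\in\algint\COP(V,k)_{\cont}$. By Lemma~\ref{lem:COPc-has-algebraic-interior}, $\1^{\otimes k}$ lies in $C_{\sym}(V,k)$, so there is $\epsilon>0$ with $T-\epsilon\1^{\otimes k}\in\COP(V,k)_{\cont}$. For any finite $U$ and any $x\in\R^U_{\geq0}$ with $\1^{\tr}x=1$ this gives
\[
  \langle T[U],x^{\otimes k}\rangle\geq \epsilon\,\langle \1^{\otimes k},x^{\otimes k}\rangle=\epsilon(\1^{\tr}x)^k=\epsilon .
\]
So the quantity $\lambda$ of Theorem~\ref{thm:Polyas-theorem-for-continuous-tensors} satisfies $\lambda\geq\epsilon>0$. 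Since $M=\|T\|_\infty<\infty$ by compactness, that theorem provides an $r$ with $\Av_{\mathfrak{S}_{k+r}}(T\otimes\1^{\otimes r})=\T_rT\geq0$, so $T\in C_r(V,k)_{\cont}$.

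\textbf{Main point to check.} The only delicate step is extracting a uniform positive $\lambda$ from algebraic interiority. This works because the direction $-\1^{\otimes k}$ is admissible in $C_{\sym}(V,k)$, and because $\langle\1^{\otimes k},x^{\otimes k}\rangle$ is identically $1$ on the simplex. The rest is bookkeeping with the averaging identity above.
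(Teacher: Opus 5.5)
Your proof is correct. The paper's own proof only refers to the appendix of~\cite{Bekker2026OptimizationSpaces}, so the comparison is with that reference and the surrounding text.

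\textbf{Interior inclusion.} Here you follow the intended route. Algebraic interiority in the direction $-\1^{\otimes k}$ gives a bound $\lambda\geq\epsilon>0$ that is uniform over all finite $U$. Theorem~\ref{thm:Polyas-theorem-for-continuous-tensors} then supplies the level $r$, exactly as Theorem~\ref{thm:conic-polyas-theorem-finite-dimension} follows from P\'olya's theorem. Your appeal to Lemma~\ref{lem:COPc-has-algebraic-interior} at that step is superfluous: you only need the trivial fact that $\1^{\otimes k}\in C_{\sym}(V,k)$.

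\textbf{Chain of inclusions.} Here your route differs. The paper attributes the chain to Kuryatnikova--Vera and to the $L^2$ results of Chapter~\ref{ch:completely-positive-cone} (Theorem~\ref{thm:Polyas-theorem-finite-measure}). That route requires fixing a full-support Radon measure to identify $C_r(V,k)_{\cont}$ with $C_r(V,k)\cap C_{\sym}(V,k)$, and it passes through the martingale and projective-approximation machinery. You instead use the identity $\T_{r+1}T=\frac{1}{k+r+1}\sum_j \T_rT(\ldots,\widehat{v_j},\ldots)$. This checks out: each injective $k$-tuple of indices is counted once for each of the $r+1$ indices outside its image. The identity is the tensor analogue of ``multiplying by $\1^{\tr}x$ preserves nonnegative coefficients''. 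It is purely combinatorial and pointwise, and needs neither a measure nor metrizability.

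Your containment in $\COP(V,k)_{\cont}$ via finite restrictions is likewise self-contained. One could even bypass finite sets entirely using $\langle \T_rT,\mu^{\otimes(k+r)}\rangle=\mu(V)^r\langle T,\mu^{\otimes k}\rangle$. Overall, your argument is more elementary and self-contained; the paper's is shorter only because it reuses theory developed earlier.
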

\begin{proof}
  Again, see the appendix of~\cite{Bekker2026OptimizationSpaces}.
\end{proof}

\sectionbreakafterproof

Let us now discuss the completely positive and copositive hierarchies for packing graphs. From now on~$k = 2$ everywhere, and~$G = (V, E)$ is a packing graph with~$V$ a compact and metrizable space. Denote the diagonal of~$V^2$ by~$\Delta$. Let~$C_r(V)_{\cont} = C_r(V, 2)_{\cont}$, and similarly for~$\COP(V,2)_{\cont}$ and~$\CP(V,2)_{\m}$.

For a convex cone~$\cC \subseteq M_{\sym}(V)$, consider optimization problems of the form
\index{ theta@$\vartheta(G, \cC)$!packing graph}
\begin{equation}%
  \label{eqn:theta-number-compact-packing-graphs}
  \begin{optprob}
    \vartheta(G, \cC) = \sup & \onerow{\alpha(V^2)}\\
    &\onerow{\alpha_{E} = 0,}\\
    &\onerow{\alpha(\Delta) = 1,}\\
    &\onerow{\alpha \in \cC,}
  \end{optprob}
\end{equation}
where~$\alpha_E$ is the restriction of~$\alpha$ to the edge set. For~$\cC$ a convex cone in~$C_{\sym}(V)$,
\index{ theta*@$\vartheta^*(G, \cC$)}\[
  \begin{optprob}
    \vartheta^*(G, \cC) = \inf & \onerow{t}\\
    &T(v,v) \leq t - 1 &\text{for all } v \in V\\
    &T(v_1,v_2) \leq -1 &\text{for all }v_1v_2 \in E\\
    &\onerow{T \in \cC.}
  \end{optprob}
\]
As before, there should be no confusion with~$\vartheta$ from Part~\ref{part:measurable-setting}, since there we only considered measurable graphs.

Given a closed convex cone~$\cC \subseteq M_{\sym}(V)_{\geq 0}$,~$\vartheta(G, \cC)$ and~$\vartheta^*(G, \cC^*)$ are dual in the sense of~\cite[Ch. IV]{Barvinok2002AConvexity}. Weak duality~$\vartheta(H, \cC) \leq \vartheta^*(H, \cC^*)$ follows in the usual way, namely by taking feasible solutions~$\alpha$ and~$(t, T)$ of the respective programs, and evaluating~$\langle T, \alpha\rangle$ to obtain~$\alpha(V^2) \leq t$. Dobre, Dür, Frerick and Vallentin~\cite{Dobre2016AGraphs} showed that if~$G = (V, E)$ is a packing graph with~$V$ compact and metrizable, then we also have strong duality and exactness.

\begin{theorem}[\cite{Dobre2016AGraphs}]%
  \label{thm:cop-theta-exact-packing-graphs}
  If~$G = (V, E)$ is a packing graph with~$V$ compact and metrizable, then
  \[
    \alpha(G) = \vartheta(G, \CP(V)_{\m}) = \vartheta^*(G, \COP(V)_{\cont}).
  \]
\end{theorem}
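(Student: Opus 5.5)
We would prove the chain $\alpha(G) \leq \vartheta(G, \CP(V)_{\m}) \leq \vartheta^*(G, \COP(V)_{\cont}) \leq \alpha(G)$. The middle inequality is the weak duality noted just before the statement: pair a feasible $\alpha$ with a feasible $(t,T)$ and evaluate $\langle T, \alpha\rangle$. The first inequality comes from an explicit construction. Let $I$ be a maximum independent set; it is finite, since $G$ is a compact packing graph. Put $\chi = \sum_{v\in I}\delta_v$ and take $\alpha = \chi^{\otimes 2}/|I|$. This lies in $\CP(V)_{\m}$ and vanishes on $E$, because $I\times I$ meets $E$ only if $I$ contains an edge. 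It satisfies $\alpha(\Delta) = |I|/|I| = 1$ and $\alpha(V^2) = |I|$.

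The substantial step is $\vartheta^*(G, \COP(V)_{\cont}) \leq \alpha(G)$. For this we construct, for every $t > \alpha(G)$, a continuous copositive $T$ with $T(v,v) \leq t-1$ and $T(v_1,v_2) \leq -1$ on edges. The plan follows the finite-dimensional Motzkin--Straus picture. By the finite-subset criterion (Lemma 2.1 of Dobre et al., quoted above), $T$ is copositive if and only if every restriction $T[U]$ with $U$ finite is copositive.

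The natural candidate is $T = t(\Id + A_G) - J$, but it is not continuous. We therefore build a continuous substitute. Because $G$ is a packing graph, Theorem~\ref{thm:packing-graph-if-and-only-if-Kk-open} shows that $\clique_2$ is open in $\Sub{V}{2}$. Equivalently, the set $E\cup\Delta$ is an open neighbourhood of the diagonal in $V^2$. Metrizability then lets us choose, via Urysohn, a continuous symmetric $h\colon V^2\to[0,1]$ with $h=1$ on $\Delta$ and $\supp h \subseteq E\cup\Delta$. We would then take $T = t\,h' - J$ with a suitable continuous $h'$: it should equal $1$ on $\Delta$ and be large on edges, after first truncating to a compact subset of $E$ away from $\Delta$ where needed.

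To verify copositivity on each finite $U$, write $T[U] = t\,H - J$ and take $x\geq 0$ with $\1^{\tr}x = 1$. Following the proof of Motzkin--Straus, the inequality $x^{\tr}Hx \geq 1/\alpha(G)$ should hold. The reason is that the minimizer of the quadratic form can be pushed onto an independent set: mass on two adjacent vertices can be shifted without increasing the form, since the off-diagonal weight is at least the diagonal weight. This is where the precise choice of $h'$ (off-diagonal value $\geq 1$ on edges) matters.

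The main obstacle is reconciling continuity with this requirement. Edges may accumulate at non-edges, for instance at the boundary of the angle $\theta$ for spherical codes, and there $h'$ must drop from $\geq 1$ to $0$. On such ``near-edges'' the Motzkin--Straus shifting argument breaks down. To handle this we would use the packing property quantitatively. By compactness, Lebesgue-number arguments on $\clique_2$ let us choose the interpolation region so thin that any configuration using it could be perturbed into an independent set of the same size. Combined with the slack $t > \alpha(G)$, this should absorb the error, and letting $t\downarrow\alpha(G)$ concludes the proof.

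If this direct construction proves too delicate, the fallback is the route of Dobre et al.: show directly that any feasible $\alpha \in \CP(V)_{\m}$ has $\alpha(V^2)\leq\alpha(G)$ via Choquet decomposition into $\chi^{\otimes 2}$ with finitely supported $\chi$. Then obtain strong duality from the interior point $\1^{\otimes 2}$ given by Lemma~\ref{lem:COPc-has-algebraic-interior}.
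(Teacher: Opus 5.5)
Your first two inequalities are fine, provided the dual constraint $T(v_1,v_2)\le -1$ is read on non-edges $\overline{E}$, as in the finite program of the introduction and in the proof of Theorem~\ref{thm:convergence-copositive-hierarchy-packing-graphs}. If it is imposed on $E$, weak duality fails: for three isolated vertices, $T=0$, $t=1$ would be dual feasible. Your own candidate $T=th'-J$ is large on $E$ and equals $-1$ where $h'=0$, so it only fits the non-edge reading.

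The genuine gap is the step you leave at ``should absorb the error''. Perturbation plus the slack $t>\alpha(G)$ is not an argument. A Lebesgue number for a cover of $\clique_2$ controls one pair at a time, whereas the obstruction is a whole configuration of $\alpha(G)+1$ points, every pair of which is a near-edge or a non-edge. The missing idea is that near-edges need no error estimate at all. Treat them like non-edges, and apply the finite Motzkin--Straus theorem to the subgraph of \emph{deep} edges, whose independence number you control by compactness of $V^{\alpha(G)+1}$.

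Concretely, let $N=V^2\setminus(E\cup\Delta)$; it is closed because $G$ is a packing graph. Fix a metric $d$ on $V^2$ invariant under swapping coordinates, and for $0<\epsilon\le d(\Delta,N)$ put $h_\epsilon(p)=\min\{1,d(p,N)/\epsilon\}$. This function is continuous and symmetric, equals $1$ on $\Delta$, equals $0$ on $N$, and takes values in $[0,1]$. Let $G_\epsilon\subseteq G$ be the graph whose edges are the pairs $p\notin\Delta$ with $d(p,N)\ge\epsilon$.

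For finite $U\subseteq V$ and $x\in\R^U_{\geq 0}$ you have, entrywise, $h_\epsilon[U]\ge \Id+A_{G_\epsilon[U]}$. Finite Motzkin--Straus then gives $x^{\tr}h_\epsilon[U]x\ge(\1^{\tr}x)^2/\alpha(G_\epsilon)$.

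Now suppose $\alpha(G_\epsilon)>\alpha(G)$ for every $\epsilon>0$. Choose $\epsilon_m\to 0$ and $\alpha(G)+1$ distinct points, pairwise non-adjacent in $G_{\epsilon_m}$, and pass to a convergent subsequence in $V^{\alpha(G)+1}$. Every limit pair lies in the closed set $N$, which misses $\Delta$. So the limit points are distinct and form an independent set of $G$ of size $\alpha(G)+1$, a contradiction.

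Hence, for small $\epsilon$, the kernel $T=\alpha(G)h_\epsilon-J$ is copositive on every finite $U$, so $T\in\COP(V)_{\cont}$ by the finite-subset criterion. It satisfies $T=\alpha(G)-1$ on $\Delta$ and $T=-1$ on $N$, so $(\alpha(G),T)$ is dual feasible. The dual infimum is therefore attained at $\alpha(G)$, and no limit $t\downarrow\alpha(G)$ is needed.

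Completed this way, your direct route closes the chain without Choquet theory or any strong-duality theorem. It is more elementary than your fallback, whose strong-duality step would still require a precise infinite-dimensional conic duality theorem. The thesis itself gives no proof here and only cites Dobre, D\"ur, Frerick, and Vallentin.
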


We shall refer to the sequence~$(\vartheta(G, C_r(V)_{\cont}^*))_r$ as the completely positive hierarchy of~$G$, and to~$(\vartheta(G, C_r(V)_{\cont}))_r$ as its copositive hierarchy. Kuryatnikova and Vera proved~\cite[Theorem 2.17]{Kuryatnikova2019TheProblems} that for certain compact packing graphs, the copositive hierarchy converges to the independence number. Their proof depends on the existence of a point~$T_0 \in \algint \COP(V)_{\cont}$ such that for all~$xy \in \overline{E}$,~$Z(x, y) \leq -1$. The following theorem extends this result to all packing graphs with a compact and metrizable vertex set, by the fact that such a point exists for every compact Hausdorff space~$V$.
\begin{theorem}%
  \label{thm:convergence-copositive-hierarchy-packing-graphs}
  If~$G = (V, E)$ is a packing graph with~$V$ compact and metrizable, then~$\alpha(G) = \lim_r \vartheta(G, C_r(V)_{\cont}^*) = \lim_r \vartheta^*(G, C_r(V)_{\cont})$.
\end{theorem}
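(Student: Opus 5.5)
Since $C_r(V)_{\cont}\subseteq C_{r+1}(V)_{\cont}\subseteq\COP(V)_{\cont}$ by Theorem~\ref{thm:conic-polyas-theorem-for-continuous-tensors}, the values $\vartheta^*(G, C_r(V)_{\cont})$ form a nonincreasing sequence bounded below by $\vartheta^*(G,\COP(V)_{\cont})=\alpha(G)$ (Theorem~\ref{thm:cop-theta-exact-packing-graphs}). By weak duality, $\vartheta(G,C_r(V)^*_{\cont})\le\vartheta^*(G,C_r(V)_{\cont})$. In the other direction, $C_r(V)^*_{\cont}\supseteq\CP(V)_{\m}$, so $\vartheta(G,C_r(V)^*_{\cont})\ge\vartheta(G,\CP(V)_{\m})=\alpha(G)$. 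It therefore suffices to show that for every $\epsilon>0$ some $r$ admits a feasible $(t,T)$ for $\vartheta^*(G,C_r(V)_{\cont})$ with $t\le\alpha(G)+\epsilon$.

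The plan is to perturb a near-optimal copositive solution into the algebraic interior and then invoke Pólya. Take $(t_0,T_0)$ feasible for $\vartheta^*(G,\COP(V)_{\cont})$ with $t_0\le\alpha(G)+\epsilon/2$; it exists by exactness and strong duality. Next, find an interior point that also satisfies the constraints: we want $Z\in\algint\COP(V)_{\cont}$ and some $s$ such that $Z(v,v)\le s-1$ and $Z(x,y)\le -1$ for all $xy\in E$. Then for $\delta\in(0,1)$ the pair $\bigl((1-\delta)t_0+\delta s,\ (1-\delta)T_0+\delta Z\bigr)$ is feasible, since the constraints are convex. Its tensor lies in $\algint\COP(V)_{\cont}$ because a convex combination of a point of the cone with a positive weight on an interior point is interior. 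Theorem~\ref{thm:conic-polyas-theorem-for-continuous-tensors} then places it in some $C_r(V)_{\cont}$, and choosing $\delta$ small makes the objective at most $\alpha(G)+\epsilon$.

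The main obstacle is constructing $Z$, i.e.\ the point claimed to exist for every compact $V$. The constraint $Z\le -1$ on $E$ must be reconciled with strict copositivity, which means $\langle Z[U],x^{\otimes 2}\rangle$ must stay uniformly positive on the simplex. My first attempt is $Z = cJ - (c+1)\1_{\overline{\clique_2}}$-type kernels, but these fail to be continuous, so I will use the packing structure instead. By Theorem~\ref{thm:packing-graph-if-and-only-if-Kk-open}, $\clique_2$ is open in $\Sub{V}{2}$. Compactness then gives a finite cover of $V$ by open cliques $K_1,\dots,K_m$; $m\ge\alpha(G)$ is finite, and every independent set meets each $K_i$ at most once. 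Choose a continuous partition of unity $\phi_1,\dots,\phi_m$ subordinate to this cover and set
\[
  Z = \lambda\sum_{i}\phi_i\otimes\phi_i - \1^{\otimes 2}\cdot c
\]
with suitable constants, or, closer to the finite-graph construction, $Z=\sum_i (a\,\phi_i\otimes\phi_i) - J$.

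For $Z\le -1$ on $E$, the intended approach is to pick the cover so that edges are exactly where some $\phi_i(x)\phi_i(y)$ is large. This is the delicate part: edges need not lie inside a single $K_i$, so I would instead use nonedges. Since independent pairs form a closed subset of $\Sub{V}{=2}$ disjoint from the diagonal region, the plan is to build $Z$ as $M(J - \text{something vanishing near edges})$ with a continuous Urysohn function separating the closed set $\ind_{=2}$ from the diagonal, using Lemma~\ref{lem:space-of-ind-subset-disjoint-union}. Strict copositivity would then be checked via $\lambda>0$ in Theorem~\ref{thm:Polyas-theorem-for-continuous-tensors}: on the simplex of finite $U$, mass concentrated in cliques makes the quadratic form large, while spread-out mass on at most $m$ pieces is bounded below by $1/m$ via Cauchy--Schwarz, as in Motzkin--Straus. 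Getting the signs right on $E$ versus the diagonal while keeping this uniform positivity is where I expect the real work to be.
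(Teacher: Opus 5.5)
Your overall reduction is the same as the paper's: sandwich $\alpha(G)\le\vartheta(G,C_r(V)^*_{\cont})\le\vartheta^*(G,C_r(V)_{\cont})$, mix a near-optimal copositive solution with a feasible point of $\algint\COP(V)_{\cont}$, and apply Theorem~\ref{thm:conic-polyas-theorem-for-continuous-tensors}. The gap is that the one nontrivial ingredient, the interior point $Z$, is never constructed, and the target you set for it is wrong. No continuous copositive $Z$ can satisfy $Z(x,y)\le -1$ for all $xy\in E$ once $V$ has a non-isolated point $v$. Indeed, $v$ lies in an open clique, so edges $vw$ with $w\to v$ accumulate at $(v,v)$, and continuity forces $Z(v,v)\le -1$. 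But copositivity gives $Z(v,v)=\langle Z,\delta_v\otimes\delta_v\rangle\ge 0$. The $-1$ constraint must live on the independent pairs $\overline{E}$. This is what dualizing $\alpha_E=0$, $\alpha(\Delta)=1$ produces, it matches the finite-graph $\vartheta^*$, and it is how the paper's proof uses the constraint. Your back-and-forth between $E$ and nonedges is exactly where the proposal stalls.

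With the correct target, the candidate you set aside already works. Cover $V$ by open cliques $K_1,\dots,K_m$, take a partition of unity $\phi_1,\dots,\phi_m$ with $\supp\phi_i\subseteq K_i$, and put $Z=a\sum_i\phi_i\otimes\phi_i-J$ with $a\ge 2m$. Then:
\begin{itemize}
\item If $xy\in\overline{E}$, no clique $K_i$ contains both $x$ and $y$. So every $\phi_i(x)\phi_i(y)$ vanishes and $Z(x,y)=-1$.
\item On the diagonal, $Z(v,v)\le a-1$.
\item For every finitely supported $\chi\ge 0$, using $\sum_i\phi_i=\1$ and Cauchy--Schwarz, $\langle Z,\chi^{\otimes 2}\rangle=a\sum_i\chi(\phi_i)^2-\chi(V)^2\ge\chi(V)^2$.
\end{itemize}
Hence $Z-J\in\COP(V)_{\cont}$, and $Z=(Z-J)+J\in\algint\COP(V)_{\cont}$ by Lemma~\ref{lem:COPc-has-algebraic-interior}. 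The paper instead imports from De Laat and Vallentin a positive-semidefinite kernel $F$ with $F\le -1$ on $\overline{E}$, and takes $Z=J+2F$. Your partition-of-unity kernel is a self-contained copositive substitute for that lemma, and copositivity is all the Pólya step needs.
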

\begin{proof}
  It is enough to prove the inequality~$\leq$ for each~$\vartheta(G, C_r(V)^*)$, since weak duality ensures that then~$\vartheta^*(G, C_r(V))$ for all~$r$ follows as well. Since for all~$r$,~$\vartheta(G, C_r(V)^*)$ is a relaxation of~$\vartheta(G, \CP(V)_{\m})$, the inequality indeed holds.

  For the other inequality, first prove that there exists~$T_0 \in \algint\COP(V)_{\cont}$ such that~$T_0(x,y) \leq -1$ for all~$ xy \in \overline{E}$. De Laat and Vallentin~\cite[Lemma 7]{deLaat2015AGeometry} showed that there is a positive-semidefinite kernel~$F$ such that~$F(x, y) \leq -1$ for all~$xy \in \overline{E}$. Then~$2F$ is positive semidefinite, and~$2F(x, y) \leq -2$ for~$xy \in \overline{E}$. Now take a suitable convex combination of~$2F$ and~$\1^{\otimes 2}$ to obtain the required kernel~$T_0$.

  Dobre, Dür, Frerick, and Vallentin~\cite[Theorem 1.2]{Dobre2016AGraphs} showed that for~$V$ compact and metrizable,~$\vartheta^*(G, \COP(V)_{\cont}) = \alpha(G)$. So, take a feasible solution~$(t, T)$ of~$\vartheta^*(G, \COP(V)_{\cont})$. For every~$0 < \epsilon \leq 1$ the kernel~$\epsilon T_0 + (1-\epsilon)T$ is in the algebraic interior of~$\COP(V)_{\cont}$. So by Theorem~\ref{thm:conic-polyas-theorem-for-continuous-tensors} it belongs to~$C_r(V)_{\cont}$ for some~$r \geq 0$. Thus, by taking~$\epsilon \to 0$, it follows that~$\lim_r \vartheta^*(G, C_r(V)_{\cont}) \leq t$, and the conclusion follows.
\end{proof}

\section{Comparing the hierarchies}
Throughout this section, fix~$G = (V, E)$ a packing graph with~$V$ compact and metrizable. This section contains the main subject of this chapter, namely the proof of the following theorem.
\begin{theorem}%
  \label{thm:comparison-cp-hierarchy-and-kpb}
  If~$G = (V, E)$ is a packing graph with~$V$ compact and metrizable, then for every~$r \geq 1$,~$\lass_{r + 1}(G) \leq \kpb_{r+2}(G) \leq \vartheta(G, C_r(V)^*_{\cont})$. In particular~$\alpha(G) = \kpb_r(G) = \lass_{r-1}(G)$ for all~$r \geq \alpha(G) + 2$.
\end{theorem}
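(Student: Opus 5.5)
The first inequality $\lass_{r+1}(G) \leq \kpb_{r+2}(G)$ was already observed when the hierarchies were defined. The last statement will follow from the main inequality together with Theorem~\ref{thm:convergence-copositive-hierarchy-packing-graphs} and the finite convergence of the finite-dimensional Pólya hierarchy. For $r \geq \alpha(G)$ I would argue directly that $\vartheta(G, C_r(V)^*_{\cont}) = \alpha(G)$, with $\lfloor\cdot\rfloor$ as in De Klerk--Pasechnik if needed. Combined with the monotonicity $\kpb_r \geq \alpha(G)$, this gives the stated equalities. So the core is $\kpb_{r+2}(G) \leq \vartheta(G, C_r(V)^*_{\cont})$, and I would mirror the proof of Theorem~\ref{thm:comparison-lass-kpb-crv-locally-independent-graphs}, replacing functions by measures.

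Given a feasible $\nu$ for $\kpb_{r+2}(G)$ with $\nu(\ind_{=1}) > 0$, I construct a measure $\alpha \in M_{\sym}(V)$ by pushing forward the part of $\nu$ on $\ind_{=s}$ for $s \leq r+2$. Concretely, for $T \in C_{\sym}(V)$ define
\[
  \langle T, \beta\rangle = \int_{\ind_{r+2}} \sum_{\substack{(v_1,\ldots,v_{r+2}) \\ \setof{v} = I}} T(v_1, v_2)\, d\nu(I),
\]
with the sum over all tuples whose set of coefficients is $I$. This is the measure analogue of $\T_r^*F$ with $F(v) = \phi(\setof{v})$. Continuity of the integrand in $I$ follows from Lemma~\ref{lem:space-of-ind-subset-disjoint-union} and the argument of Theorem~\ref{thm:well-definedness-hierarchy-operators}.

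I then check three things. First, $\beta \in C_r(V)^*_{\cont}$: for $T \in C_r(V)_{\cont}$, $\langle T, \beta\rangle$ is, up to positive combinatorial factors, $\int \T_rT$ over tuples, which is nonnegative since $\nu \geq 0$. Second, $\beta_E = 0$: a tuple with $\setof{v}$ independent never has $(v_1,v_2) \in E$. Third, $\beta \in M_{\sym}(V)_{\succeq 0}$ and in fact is built from the positivity of $B_{r+2}^*\nu$. Since $C_r(V)^*_{\cont}$ is what $\vartheta$ uses, the first and second are the essential points.

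The normalization step is the main obstacle. I need $\beta(\Delta) > 0$ and $\beta(V^2)/\beta(\Delta) \geq \nu(\ind_{=1})$. As in the $\Phi_t$ argument, I would define moments $\Phi_t$ by integrating $\nu$ over $\ind_{=t}$ with weights counting tuples of length $t$. I would then show that the $2\times 2$ matrices $\bigl(\begin{smallmatrix}\Phi_t & \Phi_{t+1}\\ \Phi_{t+1} & \Phi_{t+2}\end{smallmatrix}\bigr)$ are positive semidefinite by testing $B_{r+2}^*\nu$ against $F(Q,J,J') = x_1^2[J=J'=\emptyset] + \dots$ restricted to $|Q| = t$. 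This requires care with the multiplicities in $B_r$ and with including the empty set (hence the remark that the empty set is needed). Log-convexity of $(\Phi_t)$ then gives $\Phi_{r+2}/\Phi_{r+1} \geq \Phi_1/\Phi_0 = \nu(\ind_{=1})$. Here $\beta(V^2)$ is proportional to $\Phi_{r+2}$ and $\beta(\Delta)$ to $\Phi_{r+1}$, because diagonal tuples drop one distinct element. Rescaling $\beta$ by $\beta(\Delta)^{-1}$ then yields a feasible solution of $\vartheta(G, C_r(V)^*_{\cont})$ with value at least $\nu(\ind_{=1})$. Matching the tuple-count factors so that exactly these ratios appear is the delicate bookkeeping I expect to spend most effort on.
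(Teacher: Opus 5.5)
Your argument for the main inequality $\kpb_{r+2}(G)\le\vartheta(G,C_r(V)^*_{\cont})$ is essentially the paper's: your $\beta$ is its $\T_r^*Q_{r+2}^*\nu$, and it is followed by the same log-convexity argument. Your route to the ``in particular'' part, however, does not work.

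\textbf{The gap.} It is false that $\vartheta(G,C_r(V)^*_{\cont})=\alpha(G)$ for $r\ge\alpha(G)$. The Pólya-type hierarchy converges only in the limit. The De Klerk--Pasechnik finite result holds only after rounding down, and only for $r$ of order $\alpha(G)^2$. A counterexample already occurs for the discrete graph on two vertices without edges, which is a compact metrizable packing graph with $\alpha(G)=2$. There the generators $T_m=mm^{\tr}-\Diag m$ of $C_r(V)^*$, with $m\in\N^2$ and $\|m\|_1=r+2$, give $\vartheta(G,C_r(V)^*_{\cont})=\max_m\langle T_m,J\rangle/\Tr T_m=2(r+1)/r>2$ for every even $r\ge 2$. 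A rounded-down statement would not rescue the exact equality $\kpb_r(G)=\alpha(G)$ either, since nothing forces $\kpb_r(G)$ to be an integer.

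\textbf{The missing idea: stabilization.} If $r\ge\alpha(G)+2$, every independent set has at most $\alpha(G)\le r-2$ elements. Hence $\ind_r=\ind_{r-2}$ is the space of all independent sets, and the program $\kpb_r(G)$ (variable space, operator $B_r$, constraints) is literally the same for all such $r$. So for $r\ge\alpha(G)+2$ and every $s\ge r-2$ we have $\alpha(G)\le\kpb_r(G)=\kpb_{s+2}(G)\le\vartheta(G,C_s(V)^*_{\cont})$. Letting $s\to\infty$ and using Theorem~\ref{thm:convergence-copositive-hierarchy-packing-graphs} gives $\kpb_r(G)=\alpha(G)$. Then $\alpha(G)\le\lass_{r-1}(G)\le\kpb_r(G)$ finishes the statement.

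\textbf{The normalization bookkeeping.} The choices you sketch would not produce the needed identities. Let $N_t(I)$ denote the number of $v\in V^t$ with $\setof{v}=I$. Then $\beta(V^2)=\int_{\ind_{r+2}}N_{r+2}\,d\nu$, so every $I$ with $|I|\le r+2$ contributes. Consequently:
\begin{itemize}
\item $\Phi_t$ must be $\langle N_t,\nu\rangle$ over all of $\ind_{r+2}$, not an integral over $\ind_{=t}$;
\item $B_{r+2}^*\nu$ must be tested against $(x_0\1_{\ind_{=0}}+x_1\1_{\ind_{=1}})^{\otimes 2}\otimes N_t$, not against a function restricted to $|Q|=t$.
\end{itemize}
With this weight, concatenating $v$ (where $\setof{v}=Q$) with the elements of $J$ and $J'$ is a bijection onto the tuples of length $t+|J|+|J'|$ with coefficient set $Q\cup J\cup J'$. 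The matrix entries are therefore exactly $\Phi_t,\Phi_{t+1},\Phi_{t+2}$. Likewise $\beta(V^2)=\Phi_{r+2}$ and $\beta(\Delta)=\Phi_{r+1}$, with no stray constants.

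For the identity $\beta(\Delta)=\Phi_{r+1}$, note that $\1_\Delta$ is not a continuous test function. The paper takes an Urysohn function that equals $1$ on $\Delta$ and vanishes outside an open $U\supseteq\Delta$ with $U\setminus\Delta\subseteq E$; such a $U$ comes from covering $V$ by finitely many open cliques. It then uses $\beta_E=0$.
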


That~$\kpb_r(G) = \alpha(G)$ for all~$r \geq \alpha(G) + 2$ follows from the first part of the theorem and that if~$r \geq \alpha(G) + 2$, then~$\ind_{r - 2}$ is the space of all independent sets, thus the sequence~$(\kpb_r(G))_r$ stabilizes. We will prove Theorem~\ref{thm:comparison-cp-hierarchy-and-kpb} by taking a feasible solution of~$\kpb_{r+2}(H)$ and making a feasible solution of~$\vartheta(H, C_r(V)_{\cont}^*)$ with at least the same objective value.

Let~$r \geq  0$ be an integer and let~$S \subseteq V$. Denote by~$N_r(S)$ the number of tuples~$v \in V^r$ such that~$\setof{v} = S$. Recall the convention~$V^0 = \{\emptyset\}$, so the definition includes~$r = 0$. We do not specify a domain for~$N_r$; we will use it as a function on~$\ind_t$ for any~$t$ we need. Note that~$N_r(S)$ depends only on the cardinality of~$S$, so the function~$N_r : \ind_t \to \R$ is continuous.

Fix integers~$r \geq 0$ and~$0 \leq t \leq s$. Let~$Q_{s, t} : C(V^t) \to C(\ind_{r})$ be the map such that
\[
  Q_{s,t}F(I) = \sum_{\substack{v \in V^s \\ \setof{v} = I}} F(v_1, \ldots, v_t).
\]
A proof similar to that of Theorem~\ref{thm:well-definedness-hierarchy-operators} shows that~$Q_{s,t}F$ is indeed continuous for every continuous~$F$, and each~$Q_{s,t}$ is a bounded linear operator. Of course, these maps also depend on~$r$, but we omit it from notation.
\begin{lemma}%
  \label{lem:properties-Qst}
  Let~$r$,~$s$, and~$t$ be integers such that~$r \geq 0$,~$s \geq 1$ and~$0 \leq t \leq s$, and let~$G = (V, E)$ be a compact packing graph. Then,
  \begin{enumerate}
    \item[(i)]  for all~$\nu \in M(\ind_r)$,~$Q_{s,t}^*\nu(V^t) = \langle N_s, \nu\rangle$, and
    \item[(ii)] if~$t+2 \leq s$, then~$Q_{s, t+2} \mathcal{T}_t  =Q_{s, 2}$, hence~$\T_t^* Q_{s, t+2}^* = Q_{s, 2}^*$.
  \end{enumerate}
\end{lemma}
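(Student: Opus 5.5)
Both parts are proved by testing against the appropriate functions and exchanging sums. Throughout, $Q_{s,t}$ is read as a map into $C(\ind_r)$ with $s \leq r$ implicitly. Then $Q_{s,t}F$ is the function on $\ind_r$ that, at $I$, sums $F$ over the first $t$ coordinates of all $s$-tuples $v$ with $\setof{v}=I$. In particular it vanishes on sets $I$ with $|I|>s$.

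For (i), the plan is to apply the defining duality $Q_{s,t}^*\nu(V^t)=\langle \1_{V^t}, Q_{s,t}^*\nu\rangle = \langle Q_{s,t}\1, \nu\rangle$, using that $\1$ is continuous on $V^t$. It then remains to compute $Q_{s,t}\1(I)=\#\{\, v\in V^s : \setof{v}=I\,\} = N_s(I)$ directly from the definitions. This gives $Q_{s,t}^*\nu(V^t)=\langle N_s,\nu\rangle$. The only thing to check is that $N_s$ is a legitimate continuous function on $\ind_r$, and this is already noted before the lemma: it depends only on $|I|$, and $\ind_{=j}$ is clopen by Lemma~\ref{lem:space-of-ind-subset-disjoint-union}.

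For (ii), fix $F\in C(V^2)$ and $I\in\ind_r$. Writing out the definition,
\[
Q_{s,t+2}\T_tF(I)=\sum_{\substack{v\in V^s\\ \setof{v}=I}} \frac{1}{(t+2)!}\sum_{\pi\in\mathfrak{S}_{t+2}} F(v_{\pi 1},v_{\pi 2}).
\]
The key observation is that the index set $\{\, v\in V^s:\setof{v}=I\,\}$ is invariant under permuting the first $t+2$ coordinates of $v$, since the set of coordinates does not change. Therefore, for each fixed $\pi$, the substitution $v\mapsto (v_{\pi 1},\ldots,v_{\pi(t+2)},v_{t+3},\ldots,v_s)$ is a bijection of the index set. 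Under it, $\sum_v F(v_{\pi1},v_{\pi2})$ becomes $\sum_v F(v_1,v_2)$, which is exactly $Q_{s,2}F(I)$. Averaging over $\pi$ then gives $Q_{s,t+2}\T_t = Q_{s,2}$. This step needs $t+2\leq s$, so that the permuted coordinates all lie among the $s$ coordinates of $v$.

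The adjoint identity $\T_t^*Q_{s,t+2}^*=Q_{s,2}^*$ follows by taking adjoints of bounded operators between $C$-spaces and their duals of Radon measures. Concretely, for every $F$ and $\nu$ we have $\langle F,\T_t^*Q_{s,t+2}^*\nu\rangle=\langle Q_{s,t+2}\T_tF,\nu\rangle=\langle Q_{s,2}F,\nu\rangle$. For this, $\T_t$ is taken here as the map $C(V^2)\to C(V^{t+2})$ (symmetrized), which is how it appears above.

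The main obstacle is only bookkeeping. One must make sure the domain of $\T_t$ matches $C(V^t)$-type inputs of $Q_{s,t+2}$, i.e.\ that the symmetrization over $\mathfrak{S}_{t+2}$ acts only on coordinates that $Q_{s,t+2}$ reads. One must also confirm that no continuity or measurability issue arises in the exchange of finite sums. There is none, because all sums are finite: for each $I$ the index set is finite.
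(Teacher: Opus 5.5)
Your proposal is correct and takes essentially the paper's route. For (i) you pair $Q_{s,t}^*\nu$ with the constant function and observe that $Q_{s,t}\1 = N_s$. For (ii) you expand $\T_t$, exchange the finite sums and reindex, which is exactly the paper's computation. You also make explicit the bijection $v \mapsto (v_{\pi 1},\ldots,v_{\pi(t+2)},v_{t+3},\ldots,v_s)$ of $\{\, v \in V^s : \setof{v} = I\,\}$, which the paper uses without stating it. Your standing assumption $s \leq r$ is never used and can be dropped.
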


\begin{proof}
  That~$Q_{s,t}$ is linear and bounded is clear, so its continuous adjoint is well-defined. If~$\nu \in M(\ind_r)$, then
  \[
    Q_{s,t}^*\nu(V^t) = \langle Q_{s,t}\1_{V^t}, \nu\rangle =\int_{\ind_k} \sum_{\substack{v \in V^s \\ \setof{v} = I}} 1\, d\nu(I) = \langle N_s, \nu\rangle,
  \]
  proving~\textup{(i)}.

  To see~(ii), let~$K \in C(V^2)$. For every~$I \in \ind_r$,
  \[
    \begin{split}
      (Q_{s, t+2}\T_t F)(I) &= \sum_{\substack{v \in V^s \\ \setof{v} = I}} \frac{1}{(t+2)!} \sum_{\pi \in \mathfrak{S}_{t+2}} F(v_{\pi 1}, v_{\pi 2})\\
      &=\frac{1}{(t+2)!}\sum_{\pi \in \mathfrak{S}_{t+2}} \sum_{\substack{v \in V^s \\ \setof{v} = I}} F(v_{\pi 1}, v_{\pi 2})\\
      &= \sum_{\substack{v \in V^s \\ \setof{v} = I}} F(v_1, v_2)\\
      &= Q_{s, 2} F (I),
  \end{split}\]
  and~(ii) follows.
\end{proof}

\begin{proof}[Proof of Theorem~\textup{\ref{thm:comparison-cp-hierarchy-and-kpb}}]
  Fix an integer~$r \geq 1$ and a feasible solution~$\nu$ of the problem~$\kpb_{r+2}(H)$ with positive objective value. Write~$Q_{t} = Q_{r+2, t}$ for short, and take~$C(\ind_{r+2})$ for its codomain.

  Write~$\Phi_t = \langle N_t, \nu\rangle$ for~$t \in \N$. Assume for now that~$\Phi_t > 0$ for all~$t$; the proof of this will follow later. Write
  \[
    \beta =  Q^*_{r+2} \nu \in M(V^{r+2})\qquad \text{and}\qquad \alpha = \T_r^*\beta \in M_{\sym}(V).
  \]
  The first objective is to show that~$\Phi_{r+1}^{-1} \alpha$ is feasible for~$\vartheta(H, C_r(V)_{\cont}^*)$.

  To begin, if~$F \in C(V^{r+2})$ is nonnegative, then so is~$Q_{r+2}F$, and it follows that~$\langle F, Q_{r+2}^*\nu\rangle = \langle Q_{r+2}F, \nu\rangle \geq 0$, hence~$\beta$ is nonnegative. If~$F \in C_{\sym}(V)$ is nonnegative, then so is~$\T_{r}F$, whence~$\langle F, \alpha\rangle = \langle F, \T_r^*\beta\rangle = \langle \T_rF, \beta\rangle \geq 0$, and~$\alpha \in C_r(V)_{\cont}^*$.

  Next, if~$F \in C_{\sym}(V)$ is a function with support in~$E$, using Lemma~\ref{lem:properties-Qst}
  \[
    \langle F, \alpha\rangle = \langle F, \T_r^*Q_{r+2}^* \nu\rangle\ = \langle F, Q_2^*\nu\rangle = \int_{\ind_{r+2}} \sum_{\substack{v \in V^{r+2} \\ \setof{v} = I}} F(v_1, v_2)\, d\nu(I) = 0.
  \]
  Since~$E$ is open in~$V^2$, it is itself a locally compact Hausdorff space, and~$\alpha_E$ is a signed Radon measure on~$E$. It then follows from the Riesz representation theorem that~$\alpha_E = 0$.

  Next, calculate~$\alpha(\Delta)$. Every vertex is contained in an open clique, so by compactness there are open cliques~$C_1$, $\ldots$, $C_m$ such that their union is~$V$. The set~$U = \bigcup_i C_i^2$ is an open set in~$V^2$ whose union contains~$\Delta$. Moreover, if~$(v_1 , v_2) \in U$ such that~$v_1 \neq v_2$, then~$v_1v_2 \in E$; thus,~$U \setminus \Delta \subseteq E$.

  Since~$\Delta$ is closed, Urysohn's lemma gives a continuous~$F : V^2 \to [0,1]$ such that~$F(v) = 1$ for all~$v \in \Delta$ and~$F(v) = 0$ for all~$v \notin U$. Since~$\alpha_E = 0$ and using Lemma~\ref{lem:properties-Qst}
  \begin{multline*}
    \alpha(\Delta) = \langle F, \alpha\rangle = \langle F, \T_r^* Q_{r+2}^* \nu\rangle = \int_{\ind_{r+2}}\sum_{\substack{v \in V^{r+2} \\ \setof{v} = I}} F(v_1, v_2)\, d\nu(I)\\
    =
    \int_{\ind_{r+2}} \sum_{\substack{v \in V^{r+1} \\ \setof{v} = I}} 1\, d\nu(I) = Q_{r+1, r+2}^*\nu(V^{r+2}) = \Phi_{r+1}
  \end{multline*}

  To finish, it suffices to show that~$\Phi_t > 0$ for all~$t$ and~$\Phi_{r+2}\Phi_{r+1}^{-1} \geq \nu(\ind_{=1})$, as the latter shows that~$\alpha(\Delta)^{-1} \alpha$ is a feasible solution with objective at least~$\nu(\ind_{=1})$. This will follow from the following claim: if~$t \leq r$, the matrix
  \begin{equation}%
    \label{eqn:Phi-matrix}
    \begin{pmatrix}
      \Phi_t & \Phi_{t+1}\\
      \Phi_{t+1} & \Phi_{t+2}
    \end{pmatrix}
  \end{equation}
  is positive semidefinite.

  Indeed, assume the claim. Since~$\Phi_0 = 1$ and~$\Phi_1 = \nu(\ind_{=1}) > 0$,~$\Phi_2 > 0$ immediately follows. Repeating this argument results in~$\Phi_t > 0$ for all~$t$.

  For the objective value, if the matrix~\eqref{eqn:Phi-matrix} is positive semidefinite, the inequalities~$\Phi_{t+2}\Phi_{t+1}^{-1} \geq \Phi_{t+1}\Phi_t^{-1}$ follow for all~$t$. Repeated application of these inequalities yields
  \[
    \alpha(V^2)\alpha(\Delta)^{-1} = \Phi_{r+2}\Phi_{r+1}^{-1} \geq \Phi_1\Phi_0^{-1} = \nu(\ind_{=1}),
  \] as required.

  Recall that a measure~$\mu \in M_{\sym}(V)$ is positive semidefinite if~$\langle F, \mu\rangle \geq 0$ for every positive semidefinite kernel~$F\in C_{\sym}(V)$. To prove that~\eqref{eqn:Phi-matrix} is positive semidefinite, we will prove that for every~$t$ there  is a positive semidefinite measure~$\mu$ such that~$\mu(\ind_{=i} \times \ind_{=j}) = \Phi_{t + i + j}$ for~$i$,~$j \in \{0,1\}$.

  Fix an integer~$t$ such that~$0 \leq t \leq r$. Employ the Riesz representation theorem to define~$\mu$ as the measure such that~$\langle F, \mu\rangle = \langle F \otimes N_t, B_{r+2}^*\nu\rangle$ for every~$F \in C(\ind_1^2)$, where~$\ind_{r}$ is the domain of~$N_t$, and~$(F \otimes N_t)(S, T) = F(S)N_t(T)$ for all~$S \in \ind_1^2$ and~$T \in \ind_r$.

  To see that~$\mu$ is positive semidefinite, let~$F \in C_{\sym}(\ind_1)$ be a positive semidefinite kernel. Since~$N_t \geq 0$, the kernel~$(S, T) \mapsto F(S, T)N_t(Q)$ is positive semidefinite for every~$Q \in \ind_{r}$. Therefore,~$F \otimes N_t \in C(\ind_1^2\times \ind_{r})_{\succeq 0}$, and~$\langle F, \mu\rangle \geq 0$ since~$B_{r+2}^*\nu \in C(\ind_1^2 \times \ind_{r})_{\succeq 0}$.

  Next, calculate~$\mu(\ind_{=i} \times \ind_{=j})$ for~$i$,~$j \in \{0,1\}$. For every~$I \in \ind_{r+2}$ and for~$i = j = 0$,
  \[
    \bigl(B_{r+2}(\1_{\{\emptyset\}}^{\otimes 2} \otimes N_t)\bigr)(I) = \sum_{Q \in \Sub{I}{r}} \sum_{\substack{ J, J'\in \Sub{I}{1} \\  Q \cup J \cup J' = I}} \1_{\{\emptyset\}}(J) \1_{\emptyset}(J') N_t(Q) = N_t(I).
  \]

  For~$i = 0$ and~$j = 1$,
  \[
    \begin{split}
      \bigl(B_{k+r}(\1_{\{\emptyset\}}\otimes \1_{\ind_{=1}} \otimes N_t)\bigr)(I) &= \sum_{Q \in \Sub{I}{r}} \sum_{\substack{x \in I \\ Q \cup \{x\} = I}} N_t(Q)\\
      &= \sum_{Q \in \Sub{I}{t}} \sum_{\substack{ x \in I \\ Q \cup \{x\} = I}} \sum_{\substack{v \in V^t \\ \setof{v} = Q}} 1.
    \end{split}
  \]
  The map~$(Q, x, v) \leftrightarrow (x, v)$ is a bijection between the set of triples~$(Q, x, v)$ in~$\Sub{I}{t} \times I \times V^t$ such that~$Q \cup \{x\} = I$ and~$\setof{v} = Q$ and the set of tuples~$v \in V^{t+1}$ such that~$\setof{v} = I$. Hence,
  \[
    \bigl(B_{k+r}(\1_{\{\emptyset\}} \otimes \1_{\ind_{=1}} \otimes N_t)\bigr)(I) = \sum_{\substack{v \in V^{t+1} \\ \setof{v} = I}} 1 = N_{t+1}(I).
  \]

  Similarly, for~$i = j = 1$,
  \[
    \begin{split}
      \bigl(B_{k+r}(\1_{\ind_{=1}}^{\otimes 2} \otimes N_t)\bigr)(I) &= \sum_{Q \in \Sub{I}{r}} \sum_{\substack{ x, y \in I \\ Q \cup \{x, y\}  = I}} N_t(Q)\\
      &= \sum_{Q \in \Sub{I}{t}} \sum_{\substack{x, y \in I \\ Q \cup \{x, y\} = I}} \sum_{\substack{ v \in V^t \\ \setof{v} = Q}} 1\\
      &= \sum_{\substack{v \in V^{t+2} \\ \setof{v} = I}} 1\\
      &= N_{t+2}(I).
    \end{split}
  \]

  Putting it all together,~\eqref{eqn:Phi-matrix} is equal to
  \[
    A =
    \begin{pmatrix}
      \mu(\ind_{=0}^2) & \mu(\ind_{=0} \times \ind_{=1})\\
      \mu(\ind_{=0} \times \ind_{=1}) & \mu(\ind_{=1}^2)
    \end{pmatrix}.
  \]
  This matrix is positive semidefinite, since for~$x_0$,~$x_1 \in \R$
  \[
    (x_0, x_1)^{\tr}A(x_0, x_1) = \int_{\ind_1^2}(x_0 \1_{\ind_{=0}} + x_1 \1_{\ind_{=1}})^{\otimes 2}(S, T)\, d\mu(S, T) \geq 0,
  \]
  from which the theorem follows.
\end{proof}

\section{What about packing hypergraphs?}%
\label{sec:what-about-hypergraphs}
Much of the analysis in this chapter works because of Lemma~\ref{lem:space-of-ind-subset-disjoint-union}. On the side of functions, it implies that continuity on~$\ind_k$ is ensured by separate continuity on each~$\ind_{=r}$ for all~$r \leq k$. On the side of the measures, together with Urysohn's lemma, it gives a degree of control over the measure of lower dimensional sets---like the diagonal in~$V^2$---that is otherwise out of reach. Not only are these properties important for Theorem~\ref{thm:comparison-cp-hierarchy-and-kpb}, they are also necessary for the definition of the copositive formulation, the moment hierarchy, and the block moment hierarchy.

For the conclusion of Lemma~\ref{lem:space-of-ind-subset-disjoint-union} to hold for $k$-uniform packing hypergraphs, it is sufficient to ask that again every finite clique is contained in an open clique, which is equivalent to the properties of Theorem~\ref{thm:packing-graph-if-and-only-if-Kk-open} with~$\clique_2$ replaced by~$\clique_k$. In this setting, the copositive formulation and all other definitions and results seem to go through unchanged.

However, in the next chapter we will see a 3-uniform hypergraph that reasonably can be called a packing hypergraph, but for which there are pairs---which are vacuously cliques---that are not contained in an open clique; they are not even contained in any clique. Thus, there are independent triples that converge to a pair without every passing through~$\clique_{=3}$. As a result, it can be shown that there are~$k$ for which~$\ind_k$ is not open in~$\ind_{k+1}$.

A simple solution to this is to replace the topology of~$\ind_k$ by the disjoint-union topology~$\bigsqcup_{r \leq k} \ind_{=r}$. This seems correct for the formulation of a moment and a block moment hierarchy, but it comes with other complications. Most importantly, the space~$\bigsqcup_{r \leq k} \ind_{=r}$ is not compact, which means that many steps we took might not make sense anymore.

\chapter[Application: obtuse almost-equiangular sets]{Application: obtuse almost-equiangular sets}%
\label{ch:almost-equiangular}

This chapter presents a geometric question that can be formulated as the independence number of a 3-uniform hypergraph. This hypergraph behaves like a compact packing graph and can therefore be thought of as an example of a ``packing hypergraph''. What follows is a lightly edited version of the preprint by Bachoc, Bekker, Moustrou, and Oliveira~\cite{Bachoc2025ObtuseSets}.

\sectionbreak

Given~$t \in [-1, 1)$, a set~$S \subseteq S^{n-1}$ is \defi{$t$-almost-equiangular}\index{almost equiangular set@almost-equiangular set} if every $3$-subset~$\{x, y, z\}$ of~$S$ is such that~$t \in \{x^{\tr}y, x^{\tr}z, y^{\tr}z\}$.  In the literature, the word ``almost'' is often replaced by ``nearly''.  An \defi{obtuse almost-equiangular set}\index{almost equiangular set@almost-equiangular set!obtuse} is a $t$-almost equiangular set with~$t \leq 0$.  A $0$-almost-equiangular set is also often called \defi{almost-orthogonal}\index{almost equiangular set@almost-equiangular set!almost-orthogonal set}.  Similarly, one may define \defi{almost-equidistant} subsets of a metric space, of which almost-equiangular sets are a special case.

Denote the maximum cardinality of a $t$-almost-equiangular set in~$S^{n-1}$ by~$\alpha(n, t)$\index{ a n t@$\alpha(n, t)$}.  The problem of finding~$\alpha(n, t)$ is called the \defi{$t$-almost-equiangular-set problem}\index{problem!almost equiangular set@almost-equiangular set}.  For~$t = 0$, this problem  first appears in a paper by Rosenfeld~\cite{Rosenfeld1991AlmostEd}, who attributes the question to Erdős. Rosenfeld showed that~$\alpha(n, 0) = 2n$; a lower bound is given by the union of two disjoint orthogonal bases and an upper bound is given through an interesting argument involving the spectrum of a matrix associated to an almost-equiangular set. Pudlák~\cite{Pudlak2002CyclesMatrices} and Deaett~\cite{Deaett2011TheGraph} reproved this result by simpler methods.

Later, Bezdek and Lángi~\cite{Bezdek1999AlmostSd-1} extended Rosenfeld's spectral bound to all~$t \in [-1, \varepsilon]$, where~$\varepsilon > 0$ is a number close to~$0$ that depends on the dimension. In particular, they proved that~$\alpha(n, t) \leq 2(n+1)$ on this interval with equality at~$t = -1/n$. An example of an optimal construction at this inner product is the union of two disjoint regular $n$-simplices. Polyanskii~\cite{Polyanskii2017OnII} mentioned a simple lifting argument to obtain~$\alpha(n,t) \leq 2(n+1)$ for~$t \leq 0$ directly from Rosenfeld's original result.

The goal of the current work is two-fold.  First, to obtain better upper bounds on the number~$\alpha(n, t)$, which is done through semidefinite programming and a closer investigation of the spectral bound of Bezdek and Lángi. Both methods reproduce known bounds, and improve many others.  Second, to list all $t$-almost-equiangular subsets of~$S^{n-1}$ of size~$\alpha(n, t)$ for small~$n$. The spectral bound of Bezdek and Lángi again plays an important role; it is used to derive characterizing properties of those $t$-almost-equidistant sets in~$S^{n-1}$ that are maximum for all~$t \in [-1,0]$.

\subsection*{Upper bounds through semidefinite programming} For~$t \in [-1, 1)$, the \defi{equiangular-lines problem}\index{problem!equiangular lines@equiangular-lines} asks for the maximum number of vectors in~$S^{n-1}$ such that any two distinct vectors have inner product~$\pm t$. This problem can be rephrased as an independence-number problem on a compact packing graph, similar to the spherical-codes problem. The methods discussed in Chapter~\ref{ch:cop-prog-packing} are thus applicable, and in fact result in the best known bounds on the equiangular-lines problem, see also De Laat, Machado, and De Muinck Keizer~\cite{deLaat2023TheAngle}.

The $t$-almost-equiangular-set problem can be rephrased as a question on independent sets of a hypergraph. Given~$n \geq 2$ and~$t \in [-1, 1)$, let~$H(n, t)$ be the $3$-uniform hypergraph whose vertex set is~$S^{n-1}$ and in which a 3-set~$\{x, y, z\}$ of points is an edge if~$t \notin \{x^{\tr} y, x^{\tr} z, y^{\tr} z\}$.  Then independent sets of~$H$ correspond to $t$-almost-equiangular sets and vice versa.  It follows that~$\alpha(n, t) = \alpha(H(n, t))$.

This connection  again opens the door to the development of optimization upper bounds for~$\alpha(n, t)$.  Castro-Silva, Oliveira, Slot and Vallentin~\cite{Castro-Silva2023AHypergraphs} proposed an extension of the Lovász theta number to finite hypergraphs.  A further extension to infinite hypergraphs by the same authors~\cite{Castro-Silva2022ASets} has applications in Euclidean Ramsey theory. The underlying hypergraphs are unlike packing graphs, and the setting is more in the spirit of what we studied in Part~\ref{part:measurable-setting} of this thesis.  The current chapter proposes an alternative extension of the theta number to infinite hypergraphs like~$H(n, t)$ based on the moment hierarchy and the block moment hierarchy~\cite{deLaat2015AGeometry, deLaat2021K-PointLines}. This bound is strongly related to the semidefinite programming methods developed in~\cite{Bilyk2023OptimalPotentials, Bilyk2024OptimizersSets}, where similar techniques were used to reprove Rosenfeld's original bound, and further apply them to energy minimization questions on hypergraphs.

This allows for the computation of upper bounds for~$\alpha(n, t)$ through the use of sums of squares and semidefinite programming.  Analytic bounds can be obtained by interpolating solutions of the resulting semidefinite programming problems, leading to the following theorem proved in Section~\ref{sec:Bounds}.
\begin{theorem}%
  \label{thm:interpolation}
  If~$t \in [-1, 0]$ and~$n \geq 3$, and if
  \[
    f(n,t) = p^2n(1-t)^2/(2(nt^2+1)),
  \]
  where
  \[
    p = \frac{8n^2t^4(2n-1) - 9n^2t^3(n-1) + (2nt^2 -3t + 4)(7n+1)}{2(1-t)(1+7n - 2n^2t^3(2n-1))},
  \]
  then $\alpha(n,t) \leq \floor{f(n,t)} \leq \floor{(16t-9)^2/(128t^2)}$.
\end{theorem}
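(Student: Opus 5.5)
The proof will rest on an explicit dual certificate for a three-point semidefinite bound on the 3-uniform hypergraph $H(n,t)$: a Delsarte-type bound on the finite set $\alpha(n,t)$, built from one kernel that is positive semidefinite on $S^{n-1}$ and a penalty polynomial on triples. Concretely, for $x_1,\dots,x_N \in S^{n-1}$ forming a $t$-almost-equiangular set, set $u=x^{\tr}y$, $v=x^{\tr}z$, $w=y^{\tr}z$. I will look for a polynomial $F(u,v,w)$, symmetric in its variables, with three properties.
\begin{enumerate}
  \item[(a)] For each fixed $z$, the kernel $(x,y)\mapsto F(x^{\tr}y,x^{\tr}z,y^{\tr}z)$ is positive semidefinite. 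The simplest choice is a degree-two Bachoc--Vallentin form $F=\langle M, (1,u,v,\ldots)(1,u,v,\ldots)^{\tr}\rangle + c\,(w-uv)$, using $Q^{n-1}_1(u,v,w)=w-uv$ together with $Q^{n-1}_0=1$.
  \item[(b)] $F(u,v,w)\le 0$ whenever $t\in\{u,v,w\}$ and the triple is distinct.
  \item[(c)] The diagonal values of $F$ (two or three coinciding points) are controlled explicitly.
\end{enumerate}

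Summing $F$ over all ordered triples $(x_i,x_j,x_k)$ gives $\sum_k \sum_{i,j} F \ge 0$ by (a). Splitting the sum into the diagonal triples (all equal, or exactly two equal) and the distinct triples, and using (b), yields an inequality of the form $aN + bN(N-1) + (\text{pair terms}) \ge 0$. The pair terms involve $x_i^{\tr}x_j$ and are controlled by the fact that the pairs must themselves satisfy a two-point positivity, for instance $\sum_{i,j}(x_i^{\tr}x_j)^2 \ge N^2/n$ and $\sum_{i,j} x_i^{\tr}x_j \ge 0$. This is exactly the role of the factor $nt^2+1$ in $f(n,t)$: a Gram/trace estimate $\|\sum x_ix_i^{\tr}\|_F^2 \ge N^2/n$, combined with the fact that roughly a fixed fraction of pairs have inner product $t$. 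Solving the resulting quadratic inequality in $N$, with the free parameter $p$ appearing as the coefficient of $w-uv$ in the certificate, gives $N\le p^2n(1-t)^2/(2(nt^2+1))$ before optimization in $p$.

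I then substitute the stated rational value of $p$, which I take to be the interpolation of numerically optimal SDP solutions. The work here is to verify that, for all $n\ge 3$ and $t\in[-1,0]$, the certificate is genuinely feasible: the matrix $M$ is positive semidefinite and the sign condition (b) holds on the semialgebraic set $\{(u,v,w)\in\Delta : t\in\{u,v,w\}\}$. I expect to reduce (b) to a univariate or bivariate polynomial inequality on $[-1,1]$, after restricting to $u=t$ and using the $\mathfrak S_3$-symmetry. That inequality I would prove by writing it as a sum of squares plus multiples of $1-v^2$ and $1-w^2$, as in \eqref{eqn:sos-expression-p}, with coefficients depending polynomially on $n$ and $t$. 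Since $\alpha(n,t)$ is an integer, this gives $\alpha(n,t)\le\lfloor f(n,t)\rfloor$.

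For the second inequality, $f(n,t)\le (16t-9)^2/(128t^2)$, I will show that $f(n,t)$ is nondecreasing in $n$ for fixed $t\in[-1,0)$ and compute $\lim_{n\to\infty} f(n,t)$. As $n\to\infty$, $p\sim (16t^4n^3-9t^3n^2)/(2(1-t)(-4n^3t^3))=(9-16t)/(8t(1-t))$ after simplification, and $n/(nt^2+1)\to 1/t^2$. Hence $f\to (16t-9)^2/(128t^2)$. Monotonicity should follow by clearing denominators and checking the sign of a polynomial in $n$ with coefficients in $t$. At $t=0$ the right-hand side is infinite, so that case is trivial.

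The main obstacle is the feasibility verification (b) uniformly in both parameters $n$ and $t$. The certificate was found by interpolation, so positivity may be tight along curves. If a direct sum-of-squares identity is unwieldy, I would first fix the worst case $u=t$ and check that the resulting quadratic form in $(v,w)$ on the elliptope slice has a nonpositive maximum by examining its boundary $g_4=0$ and its vertices.
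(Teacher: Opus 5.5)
The proposal does not yet prove the first inequality. The paper proves it by writing down one explicit feasible solution of the dual three-point program~\eqref{opt:sphere-kpb-dual} and checking it, and you never produce such a certificate. Knowing $p$ does not determine one, and interpolating numerical SDP solutions is how one finds it, not a proof that it is feasible.

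Your account of how $f$ arises from $p$ is also not a derivation, and it is internally inconsistent. If $N\le p^2n(1-t)^2/(2(nt^2+1))$ held with $p$ a free coefficient of $w-uv$, taking $p=0$ would give $N\le 0$.

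In the paper the certificate is an $\ortho(n)$-invariant slice-positive $A$ on $\ind_1^3$ with $\ind_1=\{\emptyset\}\cup S^{n-1}$, and the empty-set index is essential. It supplies the constant and linear terms that turn $\sum_{S,T,Q\in\{\emptyset\}\cup I}A(S,T,Q)\ge 0$ into $f(n,t)-|I|\ge 0$. If you sum only over $I^3$, as you propose, this is lost. For instance, restricting the paper's certificate to $I^3$ gives an inequality $\sum_{x,y,z\in I}A(x,y,z)\ge 0$ that the Gram-type lower bounds you cite already imply once $|I|\ge 2n(1-t)^2/(3(nt^2+1))$, so it bounds nothing.

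The roles of the pieces in the paper's certificate are as follows.
\begin{itemize}
\item $p=A(\emptyset,\emptyset,z)$ is the $(\emptyset,\emptyset)$ entry of the block $A_0^e$, not a coefficient of $w-uv$, and it is constrained only through $A_0^e\succeq 0$.
\item $f=A(\emptyset,\emptyset,\emptyset)$ is forced because $A_0^\emptyset$ is the rank-one $2\times 2$ matrix with off-diagonal entry $-p/2$ and corner $(nt^2+1)/(2n(1-t)^2)$.
\item The other entries have degree $4$, not the degree two you suggest. They are chosen so that $B_3A(\{x\})=-1$, $B_3A(\{x,y\})=0$ and $B_3A(\{x,y,z\})=3(t-x^{\tr}z)(t-y^{\tr}z)(t-x^{\tr}y)/(t-1)^3$.
\end{itemize}
Hence your condition (b) holds identically and needs no sum-of-squares argument on the constraint set. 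The step that actually requires work, and that your plan does not address, is $A_0^e\succeq 0$ for all $n\ge 3$ and $t\in[-1,0]$. The paper verifies this through an $LDL^{\tr}$ factorization whose diagonal entries have sum-of-squares certificates.

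Your route to the second inequality fails, because $f(n,t)$ is not nondecreasing in $n$. At $t=-1$ one has $f(n,-1)=2np^2/(n+1)$ with $p=(25n^3-3n^2+51n+7)/\bigl(4(4n^3-2n^2+7n+1)\bigr)$. This gives $f(3,-1)=4.879\ldots$ and $f(10,-1)=4.822\ldots$, while the limit is $625/128=4.882\ldots$. The function dips and then climbs back, so the limit alone does not bound it from above. The paper instead proves $f(n,t)\le(16t-9)^2/(128t^2)$ directly with a sum-of-squares certificate in $n$ and $t$. Also, the limit of $p$ is $(9-16t)/(8(1-t))$; with the extra factor $t$ in your denominator, the limit of $f$ would come out as $(16t-9)^2/(128t^4)$.
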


The bound~$(16t-9)^2/(128t^2)$ in this theorem is an asymptotic bound; it is the limit of~$f(n,t)$ as~$n$ goes to infinity. That there exists an upper bound that does not depend on the dimension~$n$ is consistent with the existence of the constructions considered in this chapter not explicitly depending on the embedding dimension~$n$ if~$t$ is far enough removed from~$-1/n$.


\subsection*{Lower bounds through constructions}

If~$S$ is a $t$-almost-equiangular set, then its \defi{distance-$t$ graph}\index{distance graph!distance t graph@distance-$t$ graph}, namely the graph with vertex set~$S$ in which~$x$ and~$y$ are adjacent if~$x^{\tr}y = t$, is \defi{anti-triangle free}\index{anti triangle free graph@anti-triangle-free graph}, that is, its complement does not contain triangles.

Necessary and sufficient conditions for some anti-triangle-free graphs to be the distance graph of an almost-equiangular set are given in Section~\ref{sec:realizability}.  Together with the optimization bound of Theorem~\ref{thm:interpolation} and the results of Section~\ref{sec:properties-and-uniqueness}, this leads to constraints for the existence of $t$-almost-equiangular sets of certain sizes, making it possible to list all optimal such sets for dimensions~$n = 2$ and~$3$.  This search leads to the optimal constructions listed in Section~\ref{sec:low-dim} and summarized in Figure~\ref{fig:dim2and3}.

\begin{figure}[t]%
  \centerfloat
  \includegraphics[width=1.1\textwidth]{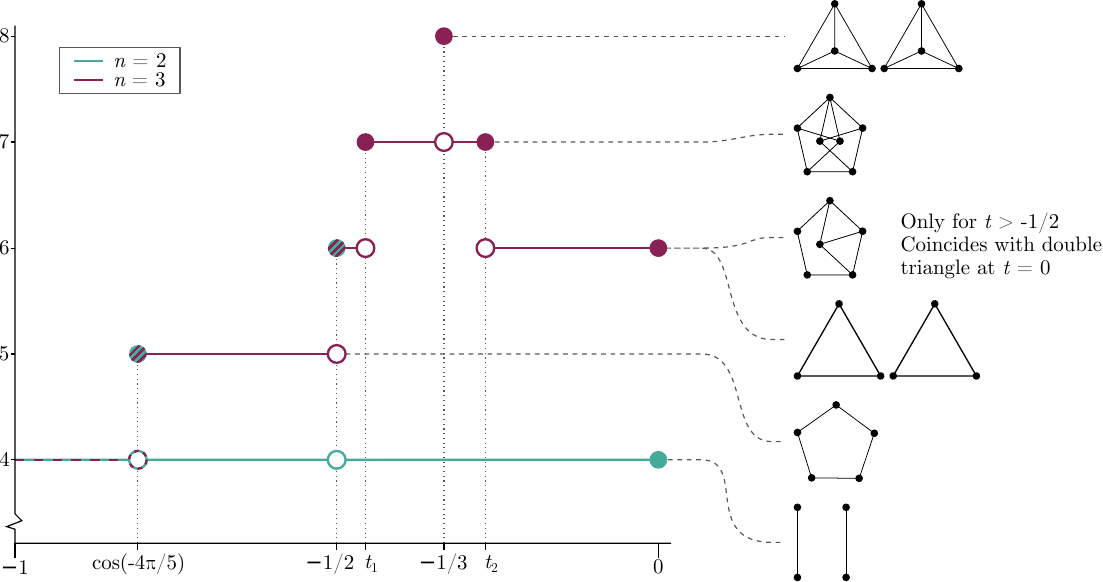}
  \caption{The classification of maximum-cardinality $t$-almost-equiangular sets in~$S^1$ and~$S^2$ for~$t \in [-1, 0]$. The vertical axis is the cardinality of the set, the horizontal axis is the inner product~$t$. Open bullets indicate that a point is excluded from the interval while closed bullets indicate that the point is included. Stripes indicate that in both dimensions the same maximum cardinality is attained. The numbers~$t_1$ and~$t_2$ are the first two roots of~\eqref{eq:MoserSpindlePolynomial} with~$k=2$.}
  \label{fig:dim2and3}
\end{figure}


\subsection*{Maximum obtuse almost-equiangular sets}
Both the semidefinite programming bound of Theorem~\ref{thm:interpolation} and the spectral bound of Rosenfeld~\cite{Rosenfeld1991AlmostEd} and Bezdek and Lángi~\cite{Bezdek1999AlmostSd-1} show that~$\alpha(n,t) \leq 2(n+1)$ for all~$n$ and~$t \leq 0$. In Section~\ref{sec:properties-and-uniqueness} the spectral bound is investigated further to show that equality for nonpositive~$t$ is only attained at~$t = -1/n$. In light of this, call the maximum $(-1/n)$-almost-equidistant sets on~$S^{n-1}$ \defi{maximum obtuse almost-equiangular sets}\index{almost equiangular set@almost-equiangular set!maximum obtuse}. Inspection of the matrices that are associated to the maximum obtuse almost-equiangular sets in the proof of the spectral bound reveals several interesting properties of these sets, like the following result.

\begin{theorem}%
  \label{cor:optimal-construction-are-2-designs}
  A maximum obtuse almost-equiangular set on $S^{n-1}$ is a spherical $2$-design.
\end{theorem}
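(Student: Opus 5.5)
The approach is to extract from equality in the spectral bound an exact description of the Gram matrix of a maximum obtuse almost-equiangular set, and then read off the $2$-design property from moment identities.

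Let $S=\{x_1,\dots,x_N\}\subseteq S^{n-1}$ with $N=2(n+1)$ be $t$-almost-equiangular with $t=-1/n$. Write $X$ for the $n\times N$ matrix with columns $x_i$ and $G=X^{\tr}X$ for its Gram matrix, which is positive semidefinite of rank at most $n$. Set $A=G-tJ-(1-t)I$. Its off-diagonal entries are $x_i^{\tr}x_j-t$ and its diagonal is zero. Almost-equiangularity says the support graph of $A$ (pairs with $x_i^{\tr}x_j\neq t$) is triangle-free.

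The spectral argument of Rosenfeld and Bezdek--Lángi runs as follows. The matrix $G-tJ$ has rank at most $n+1$, so $A+(1-t)I$ has nullity at least $N-n-1$. Hence $-(1-t)$ is an eigenvalue of $A$ of multiplicity at least $N-n-1$. For a matrix whose support is triangle-free, the trace of $A^3$ vanishes. This is combined with $\Tr A=0$ and the positivity of $G$ to force $N\le 2(n+1)$.

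In the equality case $N=2(n+1)$, I will trace where each inequality in that argument becomes tight. I expect this to yield the following.

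\begin{enumerate}
\item[(a)] $G-tJ$ has rank exactly $n+1$ and $\1$ lies in its column space appropriately. This means $\sum_i x_i=0$ is forced: the $t$-shift corresponds to the Gram matrix of the lifted vectors $(x_i,\sqrt{-t})$, and equality should make them an optimal almost-orthogonal configuration in $\R^{n+1}$ of size $2(n+1)$, i.e.\ the Rosenfeld equality case.
\item[(b)] The lifted vectors form two orthonormal bases, or more generally the lifted Gram matrix $\tilde G=G-tJ$ (diagonal $1-t$) satisfies $\tilde G^2=c\,\tilde G$ for a constant $c$.
\end{enumerate}

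The step I expect to be the main obstacle is deriving (b) rigorously. From equality, the spectrum of $\tilde G$ consists of a single nonzero eigenvalue with multiplicity $n+1$. That is the tight-frame condition $\sum_i \tilde x_i\tilde x_i^{\tr}=\frac{N(1-t)}{n+1}I_{n+1}$. I would first try to prove it directly: equality in the eigenvalue-counting/Cauchy--Schwarz step should force all nonzero eigenvalues of $A+(1-t)I$ to be equal.

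Given (b), project back to $\R^n$. The last coordinate block of the frame identity gives $\sum_i x_i=0$, which is the degree-$1$ condition. The top-left block gives $\sum_i x_ix_i^{\tr}=\frac{N(1-t)}{n+1}I_n$. Taking the trace and using $\|x_i\|=1$ shows this equals $\frac{N}{n}I_n$, which is consistent with $t=-1/n$.

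These two identities say that $\sum_i p(x_i)$ vanishes for every homogeneous harmonic polynomial $p$ of degree $1$ and $2$: degree $1$ from $\sum_i x_i=0$, and degree $2$ from $\sum_i x_ix_i^{\tr}$ being a multiple of $I_n$. By the standard characterization of spherical designs, $S$ is therefore a spherical $2$-design.
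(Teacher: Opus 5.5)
Your proposal is correct and is essentially the paper's proof. Step (b) is the equality case of Theorem~\ref{thm:maximal-almost-equiangular-sets}, where the nonzero eigenvalues of $G-tJ$ are forced to equal $2(1-t)$ not by a Cauchy--Schwarz step but by $\tfrac32y-\tfrac12y^3\le 1$ for $y\ge-\sqrt3$, with equality only at $y=1$; also drop the ``two orthonormal bases'' alternative, which would make $S$ a double-regular simplex, a question open in general. The only difference is cosmetic: the paper reads off $\sum_i x_i=0$ and $\sum_k(u^{\tr}x_k)^2=2(1+1/n)$ from $Ue=0$ and $U^2=2(1+1/n)U$ for the unlifted Gram matrix, while your block decomposition of the lifted tight-frame identity $\sum_i\tilde x_i\tilde x_i^{\tr}=2(1-t)I_{n+1}$ yields both conditions at once.
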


Deaett proved~\cite{Deaett2011TheGraph} that there is a bijection between the maximum almost-orthogonal sets in~$S^{n-1}$ and certain~$2n \times 2n$ symmetric orthogonal matrices. Any $t$-almost-equidistant set in~$S^{n-1}$ with~$t \leq 0$ can be lifted to an almost-orthogonal set on~$S^n$~\cite{Polyanskii2017OnII}, and so it is expected that there is a version of this bijection for maximum nonpositive almost-equidistant sets as well. The bijection is made precise in the following theorem, which  is Deaett's correspondence with the addition of the eigenvector condition~(i).  Here,~$e$ is the all-ones vector.

\begin{theorem}%
  \label{thm:O-matrix}
  There exists a bijection between the maximum obtuse almost-equiangular subsets of $S^{n-1}$ up to orthogonal transformations and symmetric, orthogonal matrices~$O \in \Sym(2(n+1))$ such that
  \begin{enumerate}
    \item[(i)] $Oe=e$;

    \item[(ii)] $O_{ii}=0$ for all~$i$;

    \item[(iii)] $O_{ij}O_{jk}O_{ki}=0$ for all $i$, $j$, and~$k$.
  \end{enumerate}
\end{theorem}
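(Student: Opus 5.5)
We need to prove Theorem~\ref{thm:O-matrix}: a bijection between maximum obtuse almost-equiangular sets in $S^{n-1}$ (modulo orthogonal maps) and symmetric orthogonal matrices $O$ of size $2(n+1)$ with zero diagonal, $Oe=e$, and $O_{ij}O_{jk}O_{ki}=0$.

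The strategy is to reduce to Deaett's correspondence for almost-orthogonal sets via the lifting of Polyanskii. Let $S=\{x_1,\dots,x_{2(n+1)}\}\subseteq S^{n-1}$ be a maximum obtuse almost-equiangular set. We use the fact, stated earlier, that equality $\alpha(n,t)=2(n+1)$ for $t\le 0$ forces $t=-1/n$. We then lift $S$ to $S^n$ by
\[
y_i=\bigl(\sqrt{n/(n+1)}\,x_i,\ 1/\sqrt{n+1}\bigr).
\]
With this choice $y_i^{\tr}y_j=(n\,x_i^{\tr}x_j+1)/(n+1)$, which vanishes exactly when $x_i^{\tr}x_j=-1/n$. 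So $\{y_i\}$ is an almost-orthogonal set of size $2(n+1)=2\dim$ in $S^{n}$, hence maximum by Rosenfeld.

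Deaett's theorem then gives a symmetric orthogonal $O$ with zero diagonal satisfying the triangle condition~(iii). I would reconstruct his map concretely rather than cite it as a black box. Let $Y$ be the $(n+1)\times 2(n+1)$ matrix with columns $y_i$, and write the Gram matrix as $G=Y^{\tr}Y=I+B$. The anti-triangle-free structure together with the spectral argument (Rosenfeld, Bezdek--Lángi) forces equality in the rank/trace bound. This means $G$ has eigenvalues $2$ and $0$, each with multiplicity $n+1$, i.e.\ $G=I+O$ with $O^2=I$. Then $O$ is symmetric orthogonal with $O_{ii}=0$. Since $O_{ij}\neq 0$ exactly when $y_i^{\tr}y_j\ne 0$, condition~(iii) is the statement that no three points pairwise avoid inner product $0$, which is precisely almost-orthogonality.

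The new ingredient is condition~(i). All $y_i$ have last coordinate $1/\sqrt{n+1}$, so the vector $Y^{\tr}e_{n+1}=e/\sqrt{n+1}$ lies in the range of $Y^{\tr}$, which is the $2$-eigenspace of $G$. Hence $Ge=2e$, i.e.\ $Oe=e$.

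For the converse, take $O$ satisfying (i)--(iii). Then $G=I+O$ is positive semidefinite of rank $n+1$ with unit diagonal, so $G$ is the Gram matrix of vectors $y_i\in S^n$, unique up to $\ortho(n+1)$. Condition~(iii) makes $\{y_i\}$ almost-orthogonal. Condition~(i) says $e$ lies in the range of $G$, so there is a unit vector $u=Ye/\|Ye\|$ with $\|Ye\|^2=e^{\tr}Ge=4(n+1)$. This gives $u^{\tr}y_i=(Ge)_i/\|Ye\|=2/(2\sqrt{n+1})=1/\sqrt{n+1}$ for every $i$. Projecting the $y_i$ onto $u^\perp$ and rescaling by $\sqrt{(n+1)/n}$ gives unit vectors $x_i$ in an $n$-dimensional space whose zero inner products become exactly $-1/n$. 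This recovers a maximum obtuse set.

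The two constructions are mutually inverse up to orthogonal maps, because both are determined by the Gram matrix. Relabelings of points correspond to simultaneous permutations of $O$, and I would state that the bijection is with such permutation classes if needed.

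The main obstacle is the spectral step: showing that a maximum almost-orthogonal set of size $2(n+1)$ in $S^n$ has $G^2=2G$. I would first try the trace argument. From $\operatorname{rank}G\le n+1$ and $\operatorname{tr}G=2(n+1)$, together with Rosenfeld's bound on the multiplicity of eigenvalues of the associated matrix, we get that the nonzero eigenvalues are all equal to $2$. The equality case of Cauchy--Schwarz on the eigenvalues pins this down.
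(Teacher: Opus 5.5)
Apart from the two issues below, this is the paper's proof written in lifted coordinates. The Gram matrix of your $y_i$ is $G=\frac{n}{n+1}U+\frac{1}{n+1}J$, so your $O=G-I$ equals the paper's $(1+1/n)^{-1}B$ with $B=U+J/n-(1+1/n)I$. Your projection onto $u^{\perp}$ followed by rescaling reproduces the paper's formula $U=(1+1/n)O-(1/n)J+(1+1/n)I$. The one real variation is condition (i). You read $Ge=2e$ off the constant last coordinate, using $e\in\operatorname{range}Y^{\tr}=\operatorname{range}G$. The paper instead uses $Ue=0$ from Theorem~\ref{thm:maximal-almost-equiangular-sets}, and your route even recovers $Ue=0$ as a byproduct.

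\textbf{The spectral step.} The step you flag as the main obstacle is already done in the paper. Theorem~\ref{thm:maximal-almost-equiangular-sets} gives $\rank U=n$, sole nonzero eigenvalue $2(1+1/n)$, and $Ue=0$. From this, $G$ has eigenvalues $2$ and $0$, each of multiplicity $n+1$, immediately. Your proposed proof of it would not work as written. Rank at most $n+1$, trace $2(n+1)$ and Cauchy--Schwarz only give $\Tr G^2\ge 4(n+1)$, and you have no matching upper bound. The indispensable input is the triangle condition, entering through $\Tr B^3=0$ for $B=G-I$. The rank bound forces $n+1$ eigenvalues of $B$ to equal $-1$. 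The remaining eigenvalues satisfy $\lambda_i\ge -1$ and $\sum_i\lambda_i=\sum_i\lambda_i^3=n+1$. Hence $\sum_i(\lambda_i-1)^2(\lambda_i+2)=0$, and every $\lambda_i=1$.

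\textbf{Distinctness in the converse.} Here there is a genuine gap, which the paper's proof shares: conditions (i)--(iii) do not force the recovered vectors to be distinct. Take the direct sum of $n+1$ copies of $\left(\begin{smallmatrix}0&1\\1&0\end{smallmatrix}\right)$. It is symmetric and orthogonal, has zero diagonal, and fixes $e$. It satisfies (iii) because its support is a perfect matching, which contains no triangle. Yet $G_{12}=1$, so $y_1=y_2$, and your construction produces a regular $n$-simplex with every vertex doubled, not a set of $2(n+1)$ points. Distinct unit vectors have inner product less than $1$, so this $O$ is not in the image of your forward map, and the correspondence is not onto the matrices described in the statement.

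The fix is to add the hypothesis $O_{ij}\neq 1$ for $i\neq j$. This is exactly $y_i\neq y_j$, and hence $x_i\neq x_j$, since the projection is injective on the hyperplane $\{u^{\tr}y=1/\sqrt{n+1}\}$. Alternatively, work with labeled families. Combined with your remark that matrices should be taken up to simultaneous permutation, this gives a correct statement. Also say explicitly that $\rank G=n+1$ uses $\Tr O=0$, i.e.\ condition (ii).
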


The union of two disjoint regular $n$-simplices is called a~\defi{double-regular $n$-simplex}\index{double regular simplex@double-regular simplex}. It remains an open question whether a maximum obtuse almost-equiangular set is always a double-regular $n$-simplex. However, with the help of Theorem~\ref{thm:O-matrix} the question is settled for $2 \leq n \leq  5$.

\begin{theorem}%
  \label{thm:double-simplex-uniqueness}
  If~$2 \leq n \leq 5$, then any maximum obtuse almost-equiangular set in~$S^{n-1}$ is a double-regular $n$-simplex.
\end{theorem}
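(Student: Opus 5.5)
Since Theorem~\ref{thm:O-matrix} is available, the plan is to reduce the statement to a finite combinatorial classification of the matrices~$O$ for~$N = 2(n+1) \in \{6, 8, 10, 12\}$. Write~$O$ as the weighted adjacency matrix of a graph~$\Gamma$ on~$[N]$, with~$ij \in \Gamma$ if and only if~$O_{ij} \neq 0$. By~(ii) and~(iii),~$\Gamma$ is a triangle-free graph without loops. Orthogonality together with symmetry gives~$O^2 = I$, so the eigenvalues lie in~$\{\pm 1\}$, and~(i) says~$e$ is a~$+1$-eigenvector. Moreover, for nonadjacent~$i \neq k$,
\[
  (O^2)_{ik} = \sum_j O_{ij} O_{jk} = 0,
\]
which is a sum over common neighbours of~$i$ and~$k$. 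So~$i$ and~$k$ cannot have exactly one common neighbour. For adjacent~$i$ and~$k$ there are no common neighbours, by triangle-freeness. Every row has unit norm and row sum~$1$ by~(i), so every vertex has degree at least~$1$, and degree~$1$ forces the single entry to be~$1$.

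The next step is to pin down the double-regular simplex in these terms. It should correspond to~$O = \frac{2}{N}J_{\text{bip}}$-type matrices, that is, the complete bipartite graph~$K_{n+1,n+1}$ with off-diagonal-block entries~$1/(n+1)$. This block has norm~$1$ and symmetric rank one blocks, so~$O^2 = I$ fails. I would therefore first recompute exactly which~$O$ the double simplex maps to through the bijection's construction, which presumably uses Gram matrices: the Gram matrix~$G$ equals~$tJ + \ldots$ plus something built from~$O$, and PSD of rank~$n$ corresponds to the~$-1$-eigenspace of~$O$ having dimension~$n+1$ or similar. The aim is then to show that in each small case the only admissible~$O$, up to permutation, is that one.

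The classification runs by degree sequence. The trace of~$O$ is~$0$ by~(ii), so the~$\pm1$ eigenspaces both have dimension~$N/2$. Triangle-freeness plus the ``no single common neighbour'' condition strongly restricts~$\Gamma$: each connected component is a triangle-free graph in which any two vertices at distance~$2$ share at least two neighbours. For small~$N$ one can enumerate such graphs, including perfect matchings, cycles~$C_4$, complete bipartite graphs,~$C_4$-blowups and the cube~$Q_3$. For each graph, one solves the sign and weight system~$O^2 = I$,~$Oe = e$ on its support. Components with a vertex of degree~$1$ force a matching edge with weight~$1$. I would then check which resulting~$O$ give genuine~$(-1/n)$-almost-equiangular configurations, and show that they all coincide with the double simplex up to orthogonal transformation, possibly after noting that different supports may give isometric sets.

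The main obstacle is the enumeration for~$N = 10$ and~$12$. There the graph families are numerous and the weight equations are nonlinear, so a computer-assisted enumeration of triangle-free graphs with the common-neighbour property, followed by Gröbner-basis or hand solving of~$O^2 = I$, is what I would try first. A secondary difficulty is making the bijection of Theorem~\ref{thm:O-matrix} explicit enough to recognise the double simplex from~$O$. I would also invoke Theorem~\ref{cor:optimal-construction-are-2-designs} to prune candidates.
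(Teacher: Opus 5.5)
Your reduction to Theorem~\ref{thm:O-matrix} is a reasonable starting point, and the combinatorial facts you extract (triangle-free support $\Gamma$, no pair with exactly one common neighbour, minimum degree at least $1$) are exactly Lemma~\ref{lem:props-maximum-sets}. However, your target is misidentified. Under $O=\tfrac{n}{n+1}U+\tfrac{1}{n+1}J-I$, a double simplex gives $O=\smallpmatrix{0&B\\B^{\tr}&0}$ with $B_{ij}=(n\,x_i^{\tr}y_j+1)/(n+1)$. The conditions $O^2=I$ and $Oe=e$ then say exactly that $B$ is orthogonal with $Be=e$. This is a positive-dimensional family, reflecting the free relative rotation of the two simplices. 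So there is no unique admissible $O$ to find, and solving the weight system on a support yields varieties rather than a finite list to compare. Your guess $B=J/(n+1)$ is simply not orthogonal.

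The correct criterion is that the set is a double simplex if and only if $\Gamma$ is bipartite. The colour classes are cliques of the distance graph, hence regular simplices with inner product $-1/n$, hence of size at most $n+1$, hence exactly $n+1$ each. Note that every example you list (matchings, $C_4$, complete bipartite graphs, $Q_3$) is bipartite. The theorem is therefore equivalent to the claim that for $N\le 12$ no $O$ as in Theorem~\ref{thm:O-matrix} has non-bipartite support.

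That claim is the whole content of the theorem, and you leave it to an unspecified computer enumeration plus Gr\"obner bases with no argument; this is the gap. The missing idea is a shortest-odd-cycle argument. Take a shortest odd cycle $p_0,\dots,p_{2k}$ in $\Gamma$; here $k\ge 2$ because $\Gamma$ is triangle free. The no-single-common-neighbour property gives a second common neighbour $q_{i+1}$ of $p_i$ and $p_{i+2}$. Minimality of the cycle forces all $p_i,q_i$ to be distinct, so $2(n+1)\ge 2(2k+1)$, i.e.\ $n\ge 2k\ge 4$.

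This disposes of $n=2,3$ with no computation. For $n=4$ it forces $\Gamma\cong W_5$. For $n=5$, using Lemma~\ref{lem:props-maximum-sets}(i) and (iii), it forces the distance graph to contain $H_{12}$. These two cases are then excluded geometrically, as in Section~\ref{sec:realizability}: in a realization, the two circles through $p_0,q_0$ determined by $\Sigma_0,\Sigma_2$ and by $\Sigma_1,\Sigma_3$ lie on the circumsphere of a regular tetrahedron and are forced to share four points.

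If you want to stay purely algebraic, you still need this combinatorial reduction plus an explicit infeasibility certificate for $O^2=I$, $Oe=e$ with zero diagonal on those supports. That certificate must use condition~(i) essentially: without it the $W_5$ support is feasible, by Deaett's $(5,0)$-realization of $\overline{W_5}$.
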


\section{Preliminaries}%
\label{sec:almost-equiangular-sets-prelims}

\subsection*{Hypergraphs}
Given a hypergraph~$H$, we will denote its vertex set by~$V(H)$\index{ V H@$V(H)$} and its edge set by~$E(H)$\index{ E H@$E(H)$}. Given~$S \subseteq V(H)$, the subgraph of~$H$~\defi{induced}\index{induced subgraph} by~$S$, denoted by~$H[S]$, is the hypergraph with vertex set~$S$ whose edges are all edges of~$H$ contained in~$S$. For all~$S \subseteq V(H)$, write~$H - S = H[V \setminus S]$.

\subsection*{Geometry}
Given~$V \subseteq S^{n-1}$ and~$t \in [-1, 1)$, the \defi{distance-$t$ graph}\index{distance graph!distance t graph@distance-$t$ graph} of~$V$ is the graph whose vertex set is~$V$ and in which~$x$ and~$y$ are adjacent if~$x^{\tr}y = t$.  A graph~$G = (V, E)$ is \defi{$(n, t)$-realizable}\index{n t realizable@$(n, t)$-realizable}\index{realizable} if there is an injection~$f\colon V \to S^{n-1}$ such that~$f(x)^{\tr} f(y) = t$ for every~$xy \in E$.  If~$xy \notin E$, then there is no constraint on~$f(x)^{\tr} f(y)$.

An~\defi{$(n - 1)$-sphere}\index{n 1 sphere@$(n-1)$-sphere} is a translated and scaled copy of~$S^{n-1}$. Let~$S$ be an $(n-1)$-sphere~$S$ with radius~$r$, and let~$k \leq n - 1$. A \defi{great $k$-sphere of~$S$}\index{great sphere} is a $k$-sphere with radius~$r$ contained in~$S$. A great $k$-sphere of~$S^{n-1}$ is then the intersection of~$S^{n-1}$ with a~$(k + 1)$-dimensional linear subspace of~$\R^n$. A great $1$-sphere is called a \defi{great circle}\index{great circle}.

An~\defi{$n$-simplex}\index{simplex!n simplex@$n$-simplex} is the convex hull of~$n + 1$ affinely independent points in Euclidean space.  An $n$-simplex is often identified with its set of~$n+1$ vertices.
A~\defi{regular $n$-simplex}\index{simplex!regular n simplex@regular $n$-simplex} with~\defi{inner product~$t$} is a regular simplex whose vertices all lie on a unit sphere and have pairwise inner product~$t$. The $t$-distance graph of a regular~$n$-simplex with inner product~$t$ is isomorphic to~$\comp{n+1}$, the complete graph on~$n+1$ vertices.  Conversely, for all~$n \geq k$ and~$d > 0$, the graph~$\comp{k+1}$ is $(n,t)$-realizable, and its realization is a regular $k$-simplex with inner product~$t$.

The~\defi{circumsphere}\index{circumsphere} of an~$n$-simplex in~$\R^n$ is the unique sphere that goes through all the vertices of the simplex~\cite[Section 1.4]{Fiedler2011MatricesGeometry}. For~$S \subseteq \R^n$, let~$\Aff{S}$ denote  the affine span of~$S$. In general, if~$S$ is a~$k$-simplex contained in~$\R^{n}$, define its circumsphere as the circumsphere of~$S$ in~$\Aff{S}$.  With this definition, the circumsphere of a $k$-simplex in~$\R^n$ is unique.


\section{The block moment hierarchy for almost-equiangular sets}
\label{sec:optimization-bounds}


For integer~$n \geq 2$ and~$t \in [-1, 1)$, let~$H = H(n, t)$ be the $3$-uniform hypergraph whose vertex set is~$S^{n-1}$
and in which three distinct points~$x$, $y$, and~$z$ form an edge if $t
\notin \{x^{\tr} y, x^{\tr} z, y^{\tr} z\}$.  Then the $t$-almost-equiangular sets are exactly the independent sets of~$H$, and so the goal is to compute
the independence number~$\alpha(H) = \alpha(n, t)$ of~$H$.

Recall the three-point bound from the block moment hierarchy for graphs from Chapter~\ref{ch:cop-prog-packing}. In Section~\ref{ch:cop-prog-packing}.\ref{sec:what-about-hypergraphs} we saw that the standard topology puts unnecessary restrictions on continuous functions on~$\ind_3$. The solution suggested there is to replace the standard topology by the disjoint union topology~$\bigsqcup_{i=0}^3 \ind_{=i}$ where each~$\ind_{=i}$ has the standard topology. This suffices for the definition of a block moment hierarchy. Thus, instead of optimizing over~$C(\ind_3)$, we optimize over~$\bigoplus_{i=0}^3C(\ind_{=3})$.

Let
\[
  B_3 : C(\ind_1^3) \to \bigoplus_{k=0}^3 C(\ind_{=3}),
\]
be defined by
\[
  (B_3A)(I) = \sum_{\substack{Q,J, J' \in \ind_1 \\ J \cup J' \cup Q = I}} A(J, J', Q)
\]
for all~$A \in C(\ind_3)$ and~$I \in \ind_3$. Then,~$B_rA$ is continuous on each~$\ind_{=i}$ by exactly the same argument with which we proved Theorem~\ref{thm:well-definedness-hierarchy-operators}. Define the optimization problem~$\kpb_3(H(n,t))$ as
\[
  \begin{optprob}
    \kpb_3(H(n, t)) = \sup &\onerow{\nu(\ind_{=1})}\\
    &\onerow{\nu(\{\emptyset\}) = 1,}\\
    &\onerow{B_3^*\nu \in M_{\sym}(\ind_1^3)_{\succeq 0},}\\
    &\onerow{\nu \in M(\ind_3)_{\geq 0}.}
  \end{optprob}
\]
we will in fact implement its dual, which is
\begin{equation}%
  \label{opt:sphere-kpb-dual}
  \begin{optprob}
    \min&(B_3 A)(\emptyset)\\
    &(B_3 A)(\{x\}) \leq -1&\text{for all~$x \in S^{n-1}$,}\\
    &(B_3 A)(S) \leq 0&\text{for all~$S \in \inds{3}$ with~$|S| \geq 2$,}\\
    &\onerow{A \in C(\inds{1}^3)\text{ is slice positive.}}
  \end{optprob}
\end{equation}
Recall that~$A$ being slice positive means that for every~$Q \in \ind_1$ the kernel~$(S, T) \mapsto A(S, T, Q)$ is positive semidefinite.

One benefit of using the minimization formulation~\eqref{opt:sphere-kpb-dual} is that every feasible solution gives an upper bound on~$\alpha(n, t)$. Contrast this with the situation for Witsenhausen's problem in Chapter~\ref{ch:witsenhausen}, where we had to go to great lengths to obtain and verify an upper bound.
\begin{theorem}
  If~$A$ is a feasible solution of~\eqref{opt:sphere-kpb-dual},
  then~$\alpha(H(n, t)) \leq (B_3 A)(\emptyset)$.
\end{theorem}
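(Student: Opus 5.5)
The proof is weak duality, tested against an independent set itself. Take a finite independent set $I$ of $H(n,t)$, which is a $t$-almost-equiangular set. Let $\nu = \sum_{R \in \ind_3,\, R\subseteq I}\delta_R$ be the sum of Dirac measures on the subsets of $I$ of size at most $3$, including the empty set. The aim is to show $|I| \leq (B_3A)(\emptyset)$. Since $I$ is arbitrary and $\alpha(n,t)$ is a supremum over such sets, this suffices. If $\alpha(n,t)$ were infinite, the argument would give a contradiction by taking $|I|$ large.

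The key step is to evaluate the pairing $\langle B_3 A,\nu\rangle$ in two ways. First, directly:
\[
  \langle B_3A,\nu\rangle=\sum_{R\subseteq I,\,|R|\le 3}(B_3A)(R).
\]
The constraints of~\eqref{opt:sphere-kpb-dual} bound each term: $R=\emptyset$ contributes $(B_3A)(\emptyset)$, each singleton contributes at most $-1$, and each $R$ with $|R|\ge2$ contributes at most $0$. Every such $R$ is independent, so the constraints apply. Hence
\[
  \langle B_3A,\nu\rangle\le (B_3A)(\emptyset)-|I|.
\]

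Second, unfold the definition of $B_3$. Because the sum over $R$ ranges over all subsets of $I$ of size at most $3$, and each triple $(J,J',Q)\in\Sub{I}{1}^3$ has a union $R$ of size at most $3$ that is automatically independent, every such triple is counted exactly once. This gives
\[
  \langle B_3A,\nu\rangle=\sum_{Q\in\Sub{I}{1}}\ \sum_{J,J'\in\Sub{I}{1}}A(J,J',Q).
\]
For each fixed $Q$, the inner sum is $\langle A(\cdot,\cdot,Q),\1_{\Sub{I}{1}}^{\otimes 2}\rangle$, where $\1_{\Sub{I}{1}}$ is the indicator of the finite subset $\Sub{I}{1}$ of $\ind_1$. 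Slice positivity says $A(\cdot,\cdot,Q)$ is a continuous positive-semidefinite kernel, so every finite principal submatrix of it is positive semidefinite. The inner sum is the all-ones quadratic form of such a submatrix and is therefore nonnegative. Summing over $Q$ gives $\langle B_3A,\nu\rangle\ge 0$, and combining the two evaluations yields $|I|\le (B_3A)(\emptyset)$.

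The only delicate point is the bookkeeping in the second evaluation. Under the disjoint-union topology of Section~\ref{ch:cop-prog-packing}.\ref{sec:what-about-hypergraphs}, $\nu$ is a finite sum of point masses, so the pairing reduces to a finite sum. One must then check that the fibres of the union map $(J,J',Q)\mapsto J\cup J'\cup Q$ over the subsets $R\subseteq I$ partition $\Sub{I}{1}^3$ exactly, with no triple counted twice and none omitted. Since $B_3$ is defined as a plain sum over these fibres, this holds. The remaining requirement, that continuous positive-semidefinite kernels have positive-semidefinite finite principal submatrices, is standard and was already used in the proof of Theorem~\ref{thm:lasser-and-kpn-upper-bound}.
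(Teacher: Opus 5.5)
Your proof is correct and follows the same argument as the paper. Both sum $B_3A$ over all subsets of $I$ of size at most $3$. The constraints bound this sum above by $(B_3A)(\emptyset)-|I|$, and regrouping the fibres of the union map into $\Sub{I}{1}^3$ bounds it below by $0$; pairing with $\nu$ is just this finite sum. Your lower-bound step, which fixes $Q$ and uses positive semidefiniteness of the slice $A(\cdot,\cdot,Q)$ on the finite set $\Sub{I}{1}$, is slightly more precise than the paper's bare appeal to ``$A$ being positive semidefinite.''
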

\begin{proof}
  Let~$I \subseteq S^{n-1}$ be an independent set of~$H(n, t)$.  On the one
  hand,
  \[
    \begin{split}
      \sum_{\substack{J \subseteq I\\|J| \leq 3}} (B_3 A)(J) &=
      \sum_{\substack{J \subseteq I\\|J| \leq 3}} \sum_{\substack{S, T, Q \in
      \inds{1}\\S \cup T \cup Q = J}} A(S, T, Q)\\
      &=\sum_{\substack{S, T, Q \subseteq I\\|S|, |T|, |Q| \leq 1}} A(S, T, Q)\\
      &\geq 0,
    \end{split}
  \]
  where the last inequality follows from~$A$ being positive semidefinite.

  On the other hand,
  \[
    \sum_{\substack{J \subseteq I\\|J| \leq 3}} (B_3 A)(J) =
    (B_3 A)(\emptyset) + \sum_{x \in I} (B_3 A)(\{x\}) + \sum_{\substack{J
    \subseteq I\\|J| \geq 2}} (B_3)(J)
    \leq (B_3 A)(\emptyset) - |I|,
  \]
  whence~$|I| \leq (B_3 A)(\emptyset)$.
\end{proof}

We will now see how to use a semidefinite programming solver to obtain feasible solutions to~\eqref{opt:sphere-kpb-dual}. As in Chapter~\ref{ch:witsenhausen}, we exploit the action of~$\ortho(n)$ to greatly reduce the size of the program. Denote the Haar measure on~$\ortho(n)$ by~$\mu$, and assume~$\mu(\ortho(n)) = 1$. The orthogonal group~$\ortho(n)$ acts on~$S^{n-1}$ by rotation.  Extend this
action to~$\inds{1}$ by acting trivially on~$\emptyset$. The induced action on tensors is the diagonal action. See Section~\ref{ch:complete-positivity-under-symmetry}.\ref{sec:prel-harmonic-analysis} and the appendix for more details on group actions. Simplify notation by identifying~$\inds{1}$ with~$\{\emptyset\} \cup S^{n-1}$, so below~$x \in \inds{1}$ is either~$\emptyset$ or an element
of~$S^{n-1}$.

Any feasible solution of~\eqref{opt:sphere-kpb-dual}, and in particular any $\ortho(n)$-invariant feasible solution, gives an upper bound
for~$\alpha(H)$, where~$H = H(n, t)$.  Moreover, nothing is lost by
restricting~\eqref{opt:sphere-kpb-dual} to invariant solutions.  Indeed, every
rotation in~$\ortho(n)$ is an automorphism of~$H$, and the objective of~\eqref{opt:sphere-kpb-dual} is preserved under this action.  It follows that, if~$A$ is a
feasible solution of~\eqref{opt:sphere-kpb-dual}, then
\[
  \Av_{\ortho(n)}(x, y, z) = \int_{\ortho(n)} A(Tx, Ty, Tz)\, d\mu(T),
\]
is an $\ortho(n)$-invariant feasible solution providing the same bound as~$A$.

In Chapter~\ref{ch:witsenhausen} we saw how to parametrize continuous slice-positive 3-tensors on~$S^{n-1}$ by spherical harmonics. Since we are working on~$\ind_1$, we need to slightly extend this parametrization to account for the empty set. For a full discussion of the parametrization of such tensors on~$\ind_k$, see the thesis~\cite{DeMuinckKeizer2025OnGeometry}.

Consider an $\ortho(n)$-invariant slice-positive
function~$A\colon \inds{1}^3 \to \R$. The kernel
$K_\emptyset\colon \inds{1}^2 \to \R$ defined by
\[
  K_\emptyset(x, y) = A(x, y, \emptyset)
\]
for~$x$,~$y \in \inds{1}$ is positive semidefinite and $\ortho(n)$-invariant.

Fix~$e \in S^{n-1}$, then for all~$z \in S^{n-1}$ there is a~$T \in \ortho(n)$ such
that~$Tz = e$, so~$A(x, y, z) = A(Tx, Ty, e)$.  Let~$K_e\colon \inds{1}^2 \to
\R$ be the kernel such that
\[
  K_e(x, y) = A(x, y, e).
\]
This kernel is positive semidefinite and invariant under the
\defi{stabilizer subgroup of~$e$}, namely the subgroup~$\Stab(e)$ of~$\ortho(n)$
that fixes~$e$.

It follows that an $\ortho(n)$-invariant slice-positive function~$A \in
C(\inds{1}^3)$ can be represented by two positive-semidefinite kernels
in~$C(\inds{1}^2)$, namely~$K_\emptyset$ and~$K_e$, the kernel~$K_\emptyset$
being $\ortho(n)$-invariant and the kernel~$K_e$ being $\Stab(e)$-invariant.  The
correspondence is simply
\begin{align*}
  A(x, y, \emptyset) &= K_\emptyset(x, y)\quad\text{and}\\
  A(x, y, z) &= K_e(Tx, Ty),
\end{align*}
where~$T$ is any element of~$\ortho(n)$ such that~$Tz = e$.  It follows from the
invariance of~$K_e$ that~$A$ is well-defined, since if~$T_1 z = T_2 z = e$,
then~$T_2 T_1^{-1} \in \Stab(e)$ and $K_e(T_1 x, T_1 y) = K_e(T_2 x, T_2 y)$.

In Section~\ref{sec:inv-kernels-and-tensor} of Chapter~\ref{ch:witsenhausen}, we saw that Schoenberg's theorem~\cite{Schoenberg1942PositiveSpheres} characterizes $\ortho(n)$-invariant positive-semidefinite kernels on~$S^{n-1}$ in terms of Gegenbauer polynomials, and that a theorem of Bachoc and Vallentin~\cite{Bachoc2008NewProgramming} characterizes $\Stab(e)$-invariant positive-semidefinite kernels on~$S^{n-1}$ using multivariate Gegenbauer polynomials.  Both characterizations can be easily adapted to kernels
on~$\inds{1}$.  For this the following lemma is useful.

\begin{lemma}%
  \label{lem:psd-exp}
  Let~$V$ be a topological space,~$f_1$, \dots,~$f_N\colon V \to \R$ be
  continuous functions, and for~$x$, $y \in V$ consider the~$N \times N$ matrix
  such that
  \[
    Z(x, y)_{ij} = f_i(x) f_j(y).
  \]
  If~$A \in \R^{N \times N}$ is positive semidefinite, then the kernel~$K\colon
  V^2 \to \R$ such that
  \[
    K(x, y) = \langle A, Z(x, y)\rangle
  \]
  is positive semidefinite.
\end{lemma}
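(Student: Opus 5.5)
The plan is to reduce positive semidefiniteness of~$K$ to that of the matrix~$A$ by factoring~$A$ as a sum of rank-one terms. The kernel~$K$ is defined pointwise on an arbitrary topological space, so the natural notion of positive semidefiniteness here is the finite one: for every finite set~$U \subseteq V$, the matrix~$\bigl(K(x,y)\bigr)_{x,y \in U}$ is positive semidefinite. When~$V$ carries a Radon measure, this implies the integral version~$\langle K, g\otimes g\rangle \geq 0$ for~$K$ continuous, and the same computation also proves that version directly. I will therefore verify the quadratic-form inequality, in a form that covers both sums over finite sets and integrals.

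First, since~$A \succeq 0$, I write~$A = \sum_{l=1}^N a_l a_l^{\tr}$ with~$a_l \in \R^N$, using a spectral decomposition with the square roots of the nonnegative eigenvalues absorbed into the~$a_l$. Then
\[
  K(x,y) = \sum_{i,j} A_{ij} f_i(x) f_j(y) = \sum_{l=1}^N g_l(x) g_l(y), \qquad \text{where } g_l = \sum_{i=1}^N (a_l)_i f_i.
\]
Each~$g_l$ is continuous, being a finite linear combination of continuous functions. Hence~$K$ is continuous and symmetric, and it is a finite sum of rank-one kernels~$g_l \otimes g_l$.

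Second, each rank-one kernel~$g \otimes g$ is positive semidefinite. For a finite set~$U$ and~$c \in \R^U$ we have~$\sum_{x,y \in U} c_x c_y g(x)g(y) = \bigl(\sum_{x} c_x g(x)\bigr)^2 \geq 0$. Likewise, for a Radon measure~$\mu$ and a suitable~$h$, we have~$\langle g\otimes g, h \otimes h\rangle = \langle g, h\rangle^2 \geq 0$. Sums of positive semidefinite kernels are positive semidefinite, and this gives the claim.

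I expect no real obstacle. The only care needed is to fix which notion of positive semidefiniteness is intended for a kernel on a general topological space, and the rank-one factorization handles all the standard notions at once.
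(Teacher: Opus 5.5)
Your proof is correct, but it takes a different route from the paper's. The paper fixes finitely many points $x_1,\dots,x_k\in V$ and $u\in\R^k$. It notes that $A\otimes uu^{\tr}$, indexed by $[N]\times[k]$, is positive semidefinite, and evaluates its quadratic form at the vector $g(i,k)=f_i(x_k)$, which gives exactly $\sum_{k,l}K(x_k,x_l)u_ku_l\geq 0$. So the paper checks only the finite-submatrix notion of positive semidefiniteness, using that Kronecker products of positive semidefinite matrices are positive semidefinite, and never factors $A$.

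You instead factor $A=\sum_l a_la_l^{\tr}$ and obtain the explicit representation $K=\sum_l g_l\otimes g_l$ with each $g_l$ continuous. This needs a spectral decomposition, but it gives more in one step: $K$ is continuous, and it is positive semidefinite both in the finite-matrix sense and in the integral sense $\langle K,h\otimes h\rangle\geq 0$. That is convenient, because the paper uses both notions in different chapters.

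The two arguments rest on the same identity. With $w_i=\sum_k u_kf_i(x_k)$, the paper's sum is simply $w^{\tr}Aw\geq 0$, and your factorization is this same quadratic form written as $\sum_l (a_l^{\tr}w)^2$.
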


\begin{proof}
  Let~$x_1$, \dots,~$x_k \in V$ and take~$u \in \R^k$.  Since~$A$ is positive semidefinite, the matrix~$A \otimes u u^{\tr}$ is also positive semidefinite; its rows and columns are indexed by~$I = \{1, \ldots, N\} \times \{1, \ldots, k\}$.  Setting~$g(i, k) = f_i(x_k)$, it follows that
  \[
    \begin{split}
      \sum_{k,l=1}^k K(x_k, x_l) u_k u_l &= \sum_{k, l=1}^k u_k u_l \sum_{i,j=1}^N A_{ij} f_i(x_k) f_j(x_l)\\
      &\sum_{(i,k), (j, l) \in I} (A \otimes uu^{\tr})_{(i,k), (j,l)} g(i, k) g(j, l)\\
      &\geq 0,
    \end{split}
  \]
  as wanted.
\end{proof}

Start with~$K_\emptyset$.  Let~$P_k^n$ denote the Jacobi polynomial of
degree~$k$ with parameters~$\alpha = \beta = (n-3)/2$ normalized so~$P_k^n(1) =
1$.  For~$k \geq 1$, let~$Z^\emptyset_k\colon \inds{1}^2 \to \R$ be such that
\[
  Z^\emptyset_k(x, y) =
  \begin{cases}
    P_k^n(x^{\tr} y)&\text{if~$x$, $y \in S^{n-1}$;}\\
    0&\text{otherwise.}
  \end{cases}
\]
Let~$Z^\emptyset_0\colon \inds{1}^2 \to \R^{2\times 2}$ be such that, for~$x$,
$y \in S^{n-1}$,
\begin{align*}
  Z^\emptyset_0(\emptyset, \emptyset)& = \smallpmatrix{1&0\\0&0},&
  Z^\emptyset_0(x, \emptyset)& = \smallpmatrix{0&0\\1&0},\\
  Z^\emptyset_0(\emptyset, x)& = \smallpmatrix{0&1\\0&0},&
  Z^\emptyset_0(x, y)& = \smallpmatrix{0&0\\0&1}.
\end{align*}

It follows from the addition formula for Gegenbauer polynomials~\cite[\S9.6]{Andrews1999SpecialFunctions} that
for every~$k > 0$ the kernel~$(x, y) \mapsto Z^\emptyset_k(x, y)$ is positive
semidefinite.  From Lemma~\ref{lem:psd-exp} it follows that if~$A \in \R^{2\times 2}$ is positive semidefinite, then the kernel $(x, y) \mapsto \langle
A, Z^\emptyset_0(x, y)\rangle$ is positive semidefinite.  So, for every~$d \geq
0$, any kernel of the form
\begin{equation}%
  \label{eq:kempty-kernel}
  (x, y) \mapsto \langle A_0^\emptyset, Z^\emptyset_0(x, y)\rangle + \sum_{k=1}^d a_k
  Z^\emptyset_k(x, y)
\end{equation}
for positive-semidefinite~$A_0^\emptyset \in \R^{2 \times 2}$ and nonnegative numbers~$a_k$
is $\ortho(n)$-invariant and positive semidefinite.
The only difference with Schoenberg's theorem is that~$Z^\emptyset_0$ is then a single number.

Next consider~$K_e$.  Recall, for~$k \geq 0$, the polynomials
\[
  Q_k^n(u, v, t) = (1 - u^2)^{k/2} (1 - v^2)^{k/2} P_k^{n-1}\biggl(\frac{t - uv}{(1
  - u^2)^{1/2} (1 - v^2)^{1/2}}\biggr);
\]
this is a polynomial on~$u$, $v$, and~$t$ of degree~$2k$, as defined in \S\ref{ch:witsenhausen}.\ref{sec:inv-kernels-and-tensor}. Also recall the matrices
\[
  (Y^n_{k, d})_{i, j}(u,v,t) = u^i v^j Q^{n-1}_k(u, v, t);
\]
here, we allow~$d = \infty$, so that the above is defined for any integers~$i$ and~$j$, and we denote the corresponding infinite matrix by~$Y^n_k$.

For~$k > 0$ and~$x$, $y \in S^{n-1}$, let~$Z^e_k(x, y)$ be the infinite matrix
indexed by integers~$i$, $j \geq 0$ such that
\[
  Z^e_k(x, y)_{i, j} = (Y^n_{k})_{i,j}(e^{\tr} x, e^{\tr} y, x^{\tr} y).
\]
Note that this is a polynomial on~$e^{\tr} x$, $e^{\tr} y$, and~$x^{\tr} y$ of degree~$i + j + 2k$.  If~$x = \emptyset$ or~$y = \emptyset$, set~$Z^e_k(x, y)_{ij} = 0$.

For integer~$i \geq 0$, let~$f_i\colon \inds{1} \to \R$ be such that
\[
  f_i(x) =
  \begin{cases}
    0&\text{if~$x = \emptyset$;}\\
    (e^{\tr} x)^i&\text{otherwise.}
  \end{cases}
\]
Let~$f_\emptyset\colon \inds{1} \to \R$ be such that~$f_\emptyset(\emptyset) = 1$
and~$f_\emptyset(x) = 0$ if~$x \in S^{n-1}$. Define the infinite
matrix~$Z^e_0(x, y)$, indexed by~$U = \{\emptyset\} \cup \{\, i \in \Z : i \geq
0\,\}$, by setting
\[
  Z^e_0(x, y)_{\alpha\beta} = f_\alpha(x) f_\beta(y)
\]
for~$\alpha$, $\beta \in U$.

Let~$A$ be a positive-semidefinite matrix indexed by a finite set of nonnegative
integers.  For~$k > 0$, Bachoc and Vallentin~\cite{Bachoc2008NewProgramming} showed that the
kernel
\begin{equation}%
  \label{eq:bachoc-v-kernel}
  (x, y) \mapsto \langle A, Z^e_k(x, y)\rangle
\end{equation}
on~$S^{n-1}$ is positive semidefinite.  In the trace inner product in~\eqref{eq:bachoc-v-kernel}, the
matrix~$Z_k^e$ is truncated, that is, only the finite submatrix
corresponding to the rows and columns of~$A$ is considered.  From this it
immediately follows that the kernel~\eqref{eq:bachoc-v-kernel} is positive
semidefinite as a kernel over~$\inds{1}$ as well.

As for~$k = 0$, if~$A$ is a positive-semidefinite matrix indexed by a finite
subset of the index set~$U$, then the kernel $(x, y) \mapsto \langle A, Z^e_k(x,
y)\rangle$ is positive semidefinite, as follows directly from
Lemma~\ref{lem:psd-exp}.  So, if~$A^e_0$, \dots,~$A^e_d$ are positive-semidefinite
matrices, with~$A^e_0$ indexed by a subset of~$U$ and~$A^e_k$ indexed by a subset of
the nonnegative integers for~$k > 0$, then
\begin{equation}%
  \label{eq:ke-kernel}
  K_e(x, y) = \sum_{k=0}^d \langle A^e_k, Z^e_k(x, y)\rangle
\end{equation}
is positive semidefinite and, by construction, $\Stab(e)$-invariant.  Every
$\Stab(e)$-invariant positive-semidefinite continuous kernel~$K_e$ can be
uniformly approximated by kernels with the above expression, see for example the appendix of~\cite{deLaat2019MomentProblems}.

With this, it is possible to express the function~$A \in C(\inds{1}^3)$
of~\eqref{opt:sphere-kpb-dual} in terms of polynomials.  Here,~$d$
in~\eqref{eq:kempty-kernel} and~\eqref{eq:ke-kernel} is fixed and the
matrices~$A_k$ in~\eqref{eq:ke-kernel} are truncated appropriately to bound the
total degree of the polynomials used.  The constraints
of~\eqref{opt:sphere-kpb-dual} are modeled as polynomial constraints using sums
of squares.  In this way,~\eqref{opt:sphere-kpb-dual} can be solved numerically
with the computer, and solutions can even be found analytically.  Both approaches are discussed in Section~\ref{sec:Bounds}.


\section{Upper bounds from the three-point bound}%
\label{sec:Bounds}

As shown in Section~\ref{sec:optimization-bounds}, the bound~\eqref{opt:sphere-kpb-dual} can be expressed in terms of a polynomial optimization problem once~$d$ is fixed in~\eqref{eq:kempty-kernel} and~\eqref{eq:ke-kernel} and the~$Z_k^\emptyset$ and~$Z_k^e$ matrices are truncated to finite matrices.
\afterpage{
  \begin{figure}[b]%
    \centerfloat
    \vbox to\textheight{
      \vfill
      \includegraphics[scale = 0.85]{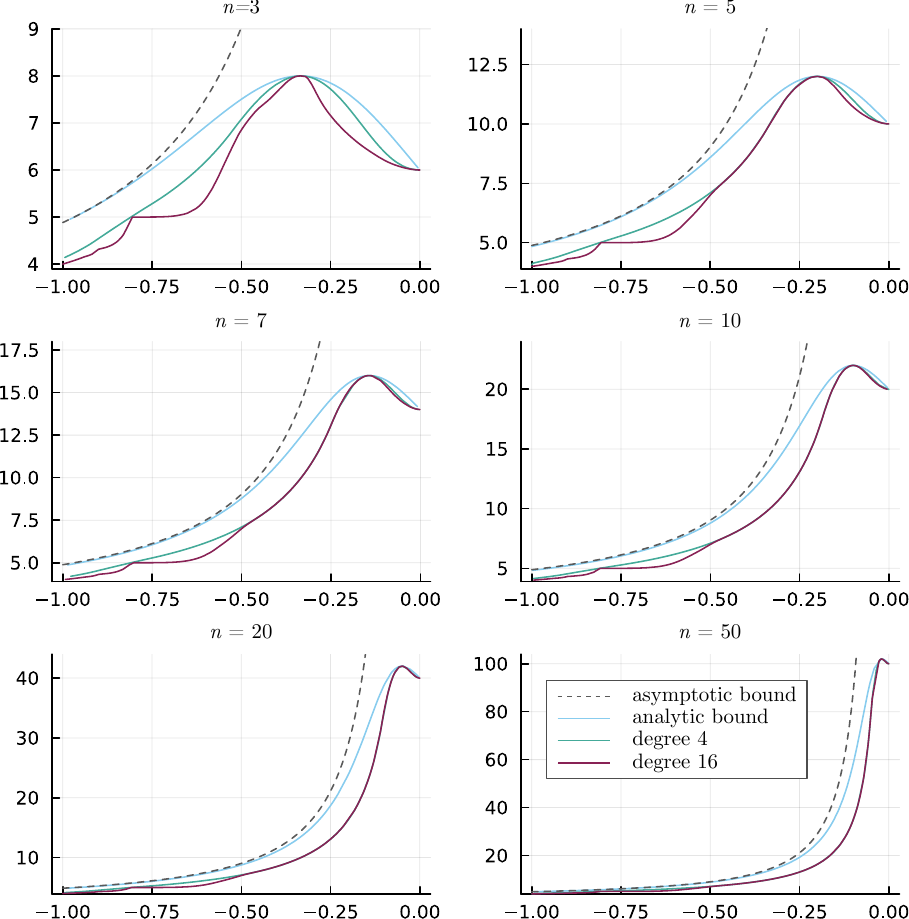}
      \caption{The numeric solutions of degree~$4$ and~$16$, and the analytic (Theorem~\ref{thm:interpolation}) and asymptotic solutions to the bound~\eqref{opt:sphere-kpb-dual}. The asymptotic bound is the limit as $n \to \infty$ of the analytic bound.}
      \label{fig:SolutionToBound}
    }
  \end{figure}
  \clearpage
}

So implemented, the three-point bound~\eqref{opt:sphere-kpb-dual} gives particularly good results for~$t \leq 0$.  Figure~\ref{fig:SolutionToBound} shows a plot of this bound as a function of~$t \in [-1, 0]$; it was computed by a Julia program using the package \texttt{ClusteredLowRankSolver.jl} \cite{Leijenhorst2024SolvingOptimization}.  These are numerical results of very high precision that can be turned into rigorous results with some effort.  The Julia package \texttt{AlmostEquiangular.jl}, contained in the arXiv supplement to~\cite{Bachoc2025ObtuseSets}, includes a function to compute the three-point bound.

Using \texttt{ClusteredLowRankSolver.jl}~\cite{Leijenhorst2024SolvingOptimization} and its rounding routine~\cite{Cohn2024OptimalityBounds}, it is possible to obtain a rational analytic solution for fixed dimension~$n \geq 3$ and for inner products~$0$ and~$-1 / n$. At these points the bound is exactly equal to the maximum size of an almost-equiangular set. These solutions can then be interpolated to obtain a rational function in~$n$ and~$t$ that gives an upper bound for the size of a $t$-almost-equiangular set in~$S^{n-1}$ for~$t \in [-1, 0]$, leading to Theorem~\ref{thm:interpolation}.

A union of two disjoint regular $(n-1)$-simplices in~$S^{n-1}$ gives a $0$-almost-equiangular set with~$2n$ points; Rosenfeld~\cite{Rosenfeld1991AlmostEd} showed that this construction is optimal.  A union of two disjoint regular $n$-simplices in~$S^{n-1}$ gives a $(-1/n)$-almost-equiangular set with~$2(n+1)$ points; Bezdek and Lángi~\cite{Bezdek1999AlmostSd-1} showed that this construction is optimal.  The bound of Theorem~\ref{thm:interpolation} is sharp in both cases, providing a new proof of the optimality of these constructions.

\begin{proof}[Proof of Theorem~\textup{\ref{thm:interpolation}}]
  The proof of the theorem is by exhibiting a solution to the three-point bound that has the objective value in the statement. To keep the solution as simple as possible, use a degree-$0$ kernel~$K_{\emptyset}$ and a degree-$4$ kernel~$K_e$. Thus, the set of positive-semidefinite variables is~$A_0^{\emptyset}$ and~$A_k^e$ with~$0 \leq k \leq 2$.

  Let
  \[
    p = \frac{8n^2t^4(2n-1) - 9n^2t^3(n-1) + (2nt^2 -3t + 4)(7n+1)}{2(1-t)(1+7n - 2n^2t^3(2n-1))}
  \]
  and
  \begin{align*}
    &A_0^{\emptyset} =
    \begin{pmatrix}
      \frac{n(1 - t)^2}{2(nt^2 + 1)} p^2  & -\frac{1}{2}p \\ \mathbf{*} & \frac{nt^2 + 1}{2n(1 - t)^2}
    \end{pmatrix},\\
    &A_0^e =
    \begin{pmatrix}
      p & -\frac{1}{4 n (1 - t)^2} -\frac{t^2}{(1 - t)^2} & \frac{3 t}{2 (1 - t)^2} & -\frac{3}{4 (1 - t)^2}\\
      \mathbf{*} & \frac{1}{4 n (n - 1) (1 - t)^3} - \frac{t^3}{2 (1 - t)^3} & \frac{3 t^2}{4 (1 - t)^3} & -\frac{n + 1}{8 n (n - 1) (1 - t)^3}\\
      \mathbf{*} & \mathbf{*} & -\frac{3 t}{2 (1 - t)^3} & 0\\
      \mathbf{*} & \mathbf{*} & \mathbf{*} & \frac{2 n - 1}{4 (n - 1) (1 - t)^3}
    \end{pmatrix},\\
    &A_1^e =
    \begin{pmatrix}
      0 & 0\\ \mathbf{*} & \frac{n + 1}{2 n (1 - t)^3}
    \end{pmatrix},\\
    &A_2^e =
    \begin{pmatrix}\frac{n-2}{4 n (n - 1) (1 - t)^3}
    \end{pmatrix}.
  \end{align*}
  The~$\mathbf{*}$s indicate that the entries are determined by the symmetry of the matrices.

  All matrices above, except for~$A_0^e$, can be checked by hand to be positive semidefinite in the domain given by~$n \geq 3$ and~$t \in [-1, 0]$.  To check that~$A_0^e$ is positive semidefinite in the required domain, first decompose it as~$A_0^e = L D L^{\tr}$, where~$L$ and~$D$ are matrices of rational functions on~$n$ and~$t$ and~$D$ is diagonal, and then check that the diagonal entries of~$D$ are nonnegative in the domain.

  These diagonal entries are rational functions, which can be rigorously checked to be nonnegative by a sum-of-squares approach.  The arXiv supplement to~\cite{Bachoc2025ObtuseSets} contains the Julia package \texttt{AlmostEquiangular.jl}, which provides sum-of-squares certificates for the nonnegativity of the diagonal entries of~$D$.  The same package also provides a sum-of-squares certificate for the inequality~$f(n, t) \leq (16t-9)^2 / (128t^2)$.

  The Julia package also checks if, for the corresponding function~$A \in C(\inds{1}^3)$,
  \begin{align*}
    &B_3A(\{x\}) = -1,\\
    &B_3A(\{x, y\}) = 0 &&\text{ for all~$x \not= y$, and}\\
    &B_3A(\{x, y, z\}) = \frac{3(t - x^{\tr} z)(t - y^{\tr} z)(t - x^{\tr} y)}{(t - 1)^3} &&\text{ for all~$x$,~$y$, and~$z$ distinct.}
  \end{align*}
  In particular, if~$t \in \{x^{\tr} z, y^{\tr} z, x^{\tr} y\}$, then~$B_3A(\{x, y, z\}) = 0$.
\end{proof}

The solution constructed in the proof above can in principle be improved; the issue is to get a good compromise between simplicity and quality.  For instance, by forcing some matrix entries to be zero as done above, it becomes possible to find a simple rational expression as given in the theorem.

\section{Realizability of anti-triangle-free graphs}
\label{sec:realizability}

A graph is \defi{anti-triangle free}\index{anti triangle free graph@anti-triangle-free graph} if its complement is triangle free. This is equivalent to saying that every triple of vertices contains an edge.  The distance graphs of almost-equiangular sets are anti-triangle free and, conversely, realizable anti-triangle-free graphs give almost-equiangular sets.  Hence, to construct good almost-equiangular sets, one has to show that given anti-triangle-free graphs are realizable.

Recall the definition of realizability from Section~\ref{sec:almost-equiangular-sets-prelims}.  The goal of this section is to determine whether certain anti-triangle-free graphs are~$(n, t)$-realizable. A construction of interest is the $(k,l)$-spindle, denoted by~$\spindle{k}{l}$ with~$k,l \geq 1$, defined later in this section, of which the Moser spindle is a special case.  In order to bound the inner products at which~$\spindle{k}{l}$ is realizable, and to offer some tools for other calculations, it is useful to derive realizability of some commonly appearing subgraphs of the spindle, namely the simplex and the rhombus.

\subsection*{The simplex}
A nice reference for simplex geometry is Fiedler~\cite{Fiedler2011MatricesGeometry}; see in particular Theorem~4.5.1 of this book for the following facts. The inner products of distinct vertices of a regular $n$-simplex inscribed in~$S^{n - 1}$ is~$-1 / n$. So~$\comp{n+1}$ is~$(n,t)$-realizable if and only if~$t = - 1/ n$.

If~$k < n$, then~$\comp{k + 1}$ is $(n, t)$-realizable if and only if~$t \geq - 1 / k$. Indeed, the circumradius of a regular $k$-simplex with inner product~$t$ is
\[
  r_k(t) = \sqrt{\frac{(1 - t) k}{k + 1}}
\]
and the circumsphere of a $k$-simplex is a $(k - 1)$-sphere. For~$k < n$, the sphere~$S^{n-1}$ contains a $(k - 1)$-sphere of every radius less than or equal to~1, so~$K_{k+1}$ is $(n, t)$-realizable if and only if~$r_k(t) \leq 1$. This happens if and only if~$t \geq -1 / k$.

The~$k+1$ vertices of a regular $k$-simplex on~$S^{n-1}$ are by definition affinely independent, and so a regular~$k$-simplex contains at least~$k$ linearly independent points. If~$t = -1/k$, then~$r_k(t) = 1$, and the circumsphere is a great sphere, which lies on a linear subspace of dimension~$k$. However, if~$k < n$ and~$t > -1/k$, then~$r_k(t) < 1$, and so the linear span of the~$k$-simplex has dimension~$k+1$. In this case, the vertices of the $k$-simplex are linearly independent.

\subsection*{The rhombus}

A useful subgraph of a spindle is the union of two complete graphs on~$k+1$ vertices that have exactly~$k$ vertices in common.  This is the distance graph of a pair of regular $k$-simplices that share exactly one facet.  Alternatively, it is the complete graph~$\comp{k+2}$ with one edge removed.  Call this graph a \defi{$k$-rhombus}\index{rhombus}\index{rhombus!k rhombus@$k$-rhombus}. By the previous paragraph, necessary conditions for realizability are~$k \leq n$,~$t = -1 / k$ if~$k = n$, and~$t \geq - 1 / k$ otherwise.

In what follows, let~$R$ be a~$k$-rhombus that is the union of two instances of~$\comp{k+1}$, denoted by~$\Sigma_1$ and~$\Sigma_2$, let~$e$ be the unique vertex of~$\Sigma_1 - V(\Sigma_2)$, let~$p$ be the unique vertex in~$\Sigma_2 - V(\Sigma_1)$, and let~$B = V(\Sigma_1) \cap V(\Sigma_2)$. Refer to~$R[B]$ as the \defi{base}\index{rhombus!base of rhombus} of the rhombus. It is an instance of~$\comp{k}$. Up to orthogonal transformations, an $(n, t)$-realization of~$R[B]$ is uniquely determined, so assume its vectors are known and denote the realization by~$B$ as well. The following lemma is comparable to~\cite[Lemma 7]{Balko2020Almost-EquidistantSets}.

\begin{lemma}%
  \label{lem:kRhombusRealizable}
  With~$e$, $p$ as above, the~$k$-rhombus is~$(n,t)$-realizable if and only if~$k \leq n-1$ and~$t > -1 / k$. If these conditions hold, then~$e$ and~$p$ lie on an~$(n - k - 1)$-sphere of radius~$\sqrt{1 - 2kt^2/((k - 1)t + 1)}$. In particular, let~$k$ be an integer such that~$k \leq n-1$ and~$t > -1 / k$, and
  \[
    \tau = \frac{2 k t^2}{(k - 1) t + 1} - 1.
  \]
  If~$k < n - 1$, then~$e^{\tr} p \geq \tau$, and if~$k = n - 1$, then~$e^{\tr} p = \tau$.

  Conversely, if~$k < n-1$ and~$t' \in [\tau, 1)$, then there exists an $(n, t)$-realization of the~$k$-rhombus in which~$e^{\tr} p = t'$.  If~$k = n - 1$, then the points~$e$ and~$p$ are uniquely determined.
\end{lemma}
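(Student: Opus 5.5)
The plan is to reduce everything to the geometry of the base simplex $B$ and its orthogonal complement. Realize $B$ as a regular $(k-1)$-simplex with inner product $t$; this requires $t \geq -1/(k-1)$, which is weaker than $t > -1/k$. Let $c$ be the centroid of $B$. For any unit vector $x$ with $x^{\tr}b = t$ for all $b \in B$, the orthogonal decomposition $x = \lambda c + w$ with $w \perp \operatorname{span} B$ pins down $\lambda$ by linear equations. Then $x$ ranges over a sphere in the affine space $\lambda c + (\operatorname{span} B)^{\perp}$ of radius $\sqrt{1-\lambda^2\|c\|^2}$.

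First compute $\|c\|^2 = (1+(k-1)t)/k$ and $c^{\tr}b = \|c\|^2$ for every $b \in B$. The condition $x^{\tr}b = t$ gives $\lambda\|c\|^2 = t$, so $\lambda = kt/(1+(k-1)t)$. Hence
\[
  \|\lambda c\|^2 = \frac{kt^2}{1+(k-1)t}, \qquad \rho^2 = 1-\frac{kt^2}{1+(k-1)t},
\]
where $\rho$ is the radius. In the nondegenerate case $t > -1/(k-1)$ the vectors of $B$ are linearly independent and $\operatorname{span} B$ has dimension $k$, so the admissible $x$ form a $(n-k-1)$-sphere. The degenerate case $t = -1/(k-1)$ forces $\lambda c$ to be undefined or $t$ to be out of range, and I would dispatch it directly. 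The step I expect to need most care is matching the radius to the statement: the statement's radius $\sqrt{1-2kt^2/((k-1)t+1)}$ does not agree with this $\rho$. I would recheck it by reading the "sphere of radius" claim as the distance from $e$ to $p$, or else follow $\rho$ and verify the formula for $\tau$ from it.

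For $e^{\tr}p$, write $e = \lambda c + w_e$ and $p = \lambda c + w_p$ with $\|w_e\| = \|w_p\| = \rho$. Then
\[
  e^{\tr}p = \lambda^2\|c\|^2 + w_e^{\tr}w_p \geq \lambda^2\|c\|^2 - \rho^2 = \frac{2kt^2}{1+(k-1)t} - 1 = \tau,
\]
which matches $\tau$. This confirms the computation and indicates that the statement's radius expression refers to $\|e-p\|/2$-type data.

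Realizability conditions: $e \neq p$ must both exist, so we need $\rho > 0$ and $\dim (\operatorname{span} B)^{\perp} = n-k \geq 1$. The condition $\rho > 0$ is equivalent to $1+(k-1)t - kt^2 > 0$, that is, $(1-t)(1+kt) > 0$, which holds exactly when $t > -1/k$ (with $t<1$). If $\rho = 0$ then $e = p$, contradicting injectivity; this also excludes $k = n$. If $n - k = 1$, the sphere is $S^0$, so $e$ and $p$ are the two antipodal points $w_e = -w_p$, which forces $e^{\tr}p = \tau$ and makes the pair unique. If $n-k \geq 2$, then $w_e^{\tr}w_p$ takes every value in $[-\rho^2, \rho^2)$ with $w_e \neq w_p$. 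This gives every $t' \in [\tau, 1)$, and the construction is realizable: the rhombus places no constraint on $e^{\tr}p$ beyond distinctness.
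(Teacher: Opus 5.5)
Your proof is correct and is essentially the paper's argument: your $\lambda c$ is the paper's shortest vector $a_0$ of the affine space $\{x : x^{\tr}b = t \text{ for all } b \in B\}$, the admissible $e$ and $p$ form the $(n-k-1)$-sphere of radius $\rho$ around it, and $\tau = 1-2\rho^2$ comes from antipodal points (the paper phrases this as $2r = \sqrt{2(1-\tau)}$). On the radius, trust your own computation rather than reinterpreting the statement: the paper's proof also derives $r = \sqrt{1 - kt^2/((k-1)t+1)}$, so the factor $2$ in the stated radius is a misprint. The printed expression equals $\sqrt{-\tau}$, which is not even real when $\tau > 0$, and it is not $\|e-p\|/2$-type data, since $\|e-p\|/2$ never exceeds $\rho$.
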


\begin{proof}
  The $k$-rhombus with base~$B$ has a subgraph isomorphic to~$\comp{k+1}$, and so necessary conditions for realizability are~$t \geq - 1/k$ and~$k \leq n$.  Assume that these hold.  If~$k = n$, the vectors in~$B$ already determine a full rank system, so then~$p$ will coincide with~$e$. Consequently, another necessary condition is~$k \leq n-1$.

  Since~$t \geq -1 / k$, the~$k$-rhombus is realizable if and only if the affine space
  \[
    A = \{\, x \in \R^n : x^{\tr} b = t \text{ for all } b \in B\, \}
  \]
  intersects~$S^{n-1}$ in more than one point, that is, if and only if
  \[
    \inf{\{\, \|a\| : a \in A\, \}} < 1;
  \]
  this infimum is attained in~$A$.

  Let~$U$ be the linear span of~$B$ and let~$W$ be its orthogonal complement. Then the shortest vector~$a_0$ in~$A$ is in~$U$. Indeed, if~$a_0 = \sum_{b \in B} \lambda_b b + w \in A$ with~$w \in W$, then by orthogonality~$\|a_0\|^2 = \|\sum_{b \in B} \lambda_b b\|^2 + \|w\|^2$. Translating by a vector orthogonal to~$U$ does not change the inner product with any of the elements in~$B$. So, if~$\|a_0\|$ is minimal, then~$w = 0$.

  All that is left is to calculate the coefficients~$\lambda_b$. Since~$A$ is convex,~$a_0$ is the unique shortest vector. Because~$b^{\tr} b' = t$ for all distinct~$b, b' \in B$, applying a permutation to the coefficients gives another vector in~$A$ with the same norm. By uniqueness, this forces all~$\lambda_b$ to have the same value~$\lambda$. For every~$b \in B$,
  \[
    t = a_0^{\tr} b = \lambda \sum_{b' \in B} b'^{\tr} b = \lambda((k - 1) t + 1),
  \]
  so~$\lambda = t / ((k - 1)t + 1)$ and
  \[
    \|a_0\|^2 = \frac{kt^2}{(k - 1)t + 1}.
  \]
  Since~$t \geq -1 / k$, it follows that~$\|a_0\| < 1$ if and only if~$kt^2 - (k - 1)t - 1 < 0$. As a polynomial in~$t$ it has roots~$1$ and~$-1 / k$, so the~$k$-rhombus is realizable if and only if~$-1 / k < t < 1$.

  The intersection of~$A$ with~$S^{n-1}$ gives an $(n - k - 1)$-sphere~$S$ whose radius~$r$ is~$\sqrt{1 - \|a_0\|^2} = \sqrt{1 - kt^2 / ((k - 1)t + 1)}$.  Any two distinct points on~$S$ are valid realizations of~$p$ and~$e$.  If~$\tau$ is the minimum possible inner product between points on~$S$, then~$2r = \sqrt{2(1 - \tau)}$, so
  \[
    \tau = 1 - 2r^2 = \frac{2kt^2}{(k - 1)t + 1} - 1.\qedhere
  \]
\end{proof}

In Lemma~\ref{lem:kRhombusRealizable}, the inner product~$t$ does not depend on the embedding dimension, something that often happens for these types of constructions.

\subsection*{The spindle}

The \defi{$(k,l)$-spindle}\index{spindle}\index{spindle!k l spindle@$(k,l)$-spindle}, notation~$\spindle{k}{l}$\index{ S k l@$\spindle{k}{l}$}, is described as follows: let~$R_1$ be a $k$-rhombus; say~$e$ and~$p_1$ are the vertices of its unique nonedge. Attach at~$e$ an $l$-rhombus~$R_2$ with nonedge~$e p_2$, so~$V(R_1) \cap V(R_2) = \{e\}$. Finally, add the edge~$p_1p_2$. Figure~\ref{fig:spindles} shows several spindles. For~$k \geq 1$, the spindle~$\spindle{k}{k}$ is called the \defi{$k$-Moser spindle}\index{spindle!Moser spindle}\index{spindle!k Moser spindle@$k$-Moser spindle}, denoted by~$\MS{k}$\index{ MSk@$\MS{k}$}. If~$k \leq l$, then~$\spindle{k}{l} \subseteq \MS{l}$.

\begin{figure}[tb]
  \centerfloat
  \includegraphics{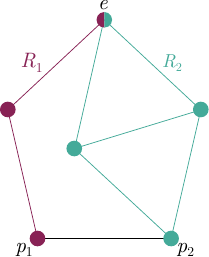}\qquad
  \includegraphics{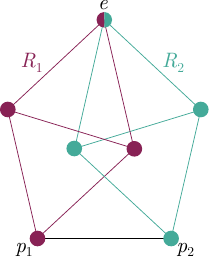}\qquad
  \includegraphics{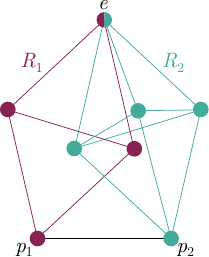}
  \caption{From left to right:~$\spindle{1}{2}$,~$\MS{2}$, and~$\spindle{2}{3}$. The~$R_i$ are indicated with their respective colors.}
  \label{fig:spindles}
\end{figure}

The~$(k,l)$-spindle is an anti-triangle-free graph of order~$k + l + 3$. The Moser spindle in particular is well studied. For example, the spindle~$\MS{n-1}$ was already pointed out by Bezdek and Lángi~\cite{Bezdek1999AlmostSd-1} as a $t$-almost-equiangular set for~$t$ close to~1. However, they did not attempt to calculate for which~$t$ the graph is realizable, and did not consider the case of negative~$t$ or~$k \neq l$.  This is done in the following theorem.

\begin{theorem}%
  \label{thm:RealizabilityOfSpindles}
  If~$k \geq 1$ and~$i \in \{1,2,3\}$, then all roots~$t_{k,i}$ of the polynomial
  \[
    8k^2t^3 - (k^2 -10k + 1)t^2 - 2(k - 1)t - 1
  \]
  with respect to~$t$ are real and can be ordered such that~$t_{k, 1} \leq t_{k, 2} < 0 < t_{k, 3}$.  The~$(k,l)$-spindle is~$(n,t)$-realizable if and only if~$k, l \leq n - 1$ and~$t \in [-1, 1)$ satisfy
  \begin{align}
    \label{eq:spindle-a} &t = (-1/4)(1 \pm \sqrt{5}) &&\text{ if } n = 2\text{ and }k = l = 1,\\
    \label{eq:spindle-b} &t_{k, 1} \leq t \leq t_{k, 2}\text{\quad or\quad} t_{k , 3} \leq t &&\text{ if } n > 2 \text{ and }k = l = n - 1,\\
    \label{eq:spindle-c} &t_{k,1} \leq t&&\text{ if }n > 2\text{ and } k = l < n - 1,\text{ or}\\
    \label{eq:spindle-d} &-1/l < t &&\text{ if } n > 2\text{ and }k < l \leq n - 1.
  \end{align}
\end{theorem}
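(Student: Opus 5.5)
The plan is to reduce realizability of the spindle to a single scalar condition. This condition says that two points chosen on the spheres from Lemma~\ref{lem:kRhombusRealizable} can be at inner product exactly~$t$. Work with the common vertex~$e$ fixed. By Lemma~\ref{lem:kRhombusRealizable}, the $k$-rhombus $R_1$ is realizable iff $k\le n-1$ and $t>-1/k$, and similarly for $R_2$ with $l$. So necessity of $k,l\le n-1$ and $t>-1/\max(k,l)$ (with the degenerate case $n=2$, $k=l=1$ treated separately) is immediate. Given~$e$, the admissible positions of $p_1$ form the set of points with $e^{\tr}p_1=\tau_k$ when $k=n-1$, and with $e^{\tr}p_1\ge\tau_k$ when $k<n-1$; each value is attained by some realization. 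The same holds for $p_2$ with $\tau_l$. I also need to check injectivity, i.e.\ that $p_1,p_2$ avoid the other vertices, which is generic.

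The remaining constraint is $p_1^{\tr}p_2=t$. Fix $e^{\tr}p_1=a$ and $e^{\tr}p_2=b$, with $p_1,p_2$ otherwise free on the caps' boundary circles. Then $p_1^{\tr}p_2$ ranges over $[ab-\sqrt{(1-a^2)(1-b^2)},\,ab+\sqrt{(1-a^2)(1-b^2)}]$ when $n\ge3$, and equals one of the two endpoints when $n=2$. One must also check that the freedom in placing the rest of $R_2$ around~$e$ is independent of $R_1$. This is fine, since only $e$, $p_1$, $p_2$ interact through the one new edge, and rotations in $\Stab(e)$ move $R_2$ freely. In the case $k=l=n-1$ we have $a=b=\tau$ forced, so realizability holds iff $|t-\tau^2|\le 1-\tau^2$, i.e.\ $t\ge 2\tau^2-1$ together with $t\le 1$ (the upper bound is automatic). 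Substituting $\tau=2kt^2/((k-1)t+1)-1$ and clearing the positive denominator $((k-1)t+1)^2$ should produce exactly the cubic $8k^2t^3-(k^2-10k+1)t^2-2(k-1)t-1$ times a harmless factor such as $(1-t)$. The sign condition then gives~\eqref{eq:spindle-b}. For $n=2$, $k=l=1$, we have $\tau=2t^2-1$ and equality $t=\cos(2\theta\pm2\theta)$ type relations. Reducing gives $t=2\tau^2-1$ or $t=1$, which factors to $4t^2+2t-1=0$, yielding~\eqref{eq:spindle-a}.

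For $k=l<n-1$, both $a$ and $b$ range over $[\tau,1)$. The reachable set of $p_1^{\tr}p_2$ is then the union of the intervals above, which is $[\min(2\tau^2-1,\ldots),1)$. Since the intervals for all $a,b\ge\tau$ sweep continuously, the reachable set contains $t$ iff $t\ge$ the minimum of $\cos(\alpha+\beta)$ over angles $\alpha,\beta\le\arccos\tau$, namely $\cos(2\arccos\tau)=2\tau^2-1$ when $\arccos\tau\le\pi/2$, and $-1$ otherwise. This gives the single condition $t\ge t_{k,1}$, once we check that for $t\in[t_{k,2},t_{k,3}]$ the case $\tau<0$ makes everything reachable, consistent with~\eqref{eq:spindle-c}. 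For $k<l$, the $k$-rhombus has $k<n-1$, so $a$ ranges over $[\tau_k,1)$. Taking $a$ near $1$-ish and $b=\tau_l$ or free shows that any $t$ in $(-1/l,1)$ is reachable, giving~\eqref{eq:spindle-d}.

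The main obstacle is the algebra together with a careful root analysis. I need to verify that the cubic has three real roots with the claimed sign pattern. I would do this by evaluating at $t=-1$, $t=-1/k$, $t=0$ (value $-1<0$) and $t=1$ (value $8k^2-k^2+10k-1-2k+2-1=7k^2+8k>0$), and checking a sign change on the negative side, e.g.\ a positive value at $t=-1/(2k)$ or similar. I also need to match which branches correspond to $\tau\ge0$ versus $\tau<0$ in the $k<n-1$ case. I would do all of this explicitly with the factorization $2\tau^2-1-t$.
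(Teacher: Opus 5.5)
Your reduction is the paper's. You fix $e$, describe the admissible positions of $p_1,p_2$ through Lemma~\ref{lem:kRhombusRealizable}, and ask whether $t$ lies in the range of $p_1^{\tr}p_2$. Your parametrization $p_i=a_ie+\sqrt{1-a_i^2}\,u_i$ is a fine substitute for Lemma~\ref{lem:MaxDistanceWhenOpposite}. For $k=l$ you reach the paper's conditions: $t\ge 2\tau^2-1$, or $\tau\le 0$ when $k<n-1$. Writing $Q(t)=8k^2t^3-(k^2-10k+1)t^2-2(k-1)t-1$, one has $2\tau^2-1-t=(t-1)Q(t)/((k-1)t+1)^2$, so the first condition is $Q(t)\ge 0$.

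The genuine gap is~\eqref{eq:spindle-d}. With $l=n-1$, $b=\tau_l$ is forced and $a\in[\tau_k,1)$. Put $A=\arccos\tau_k$ and $\beta=\arccos\tau_l$. The reachable set is then $\{\cos\theta:\max(0,\beta-A)\le\theta\le\min(\pi,A+\beta)\}$, and sending $a$ to $1$ only gives values near $\tau_l$. Two things must be shown:
\begin{itemize}
\item $\tau_k\le\tau_l$, which is true since $\partial_k\tau=2t^2(1-t)/((k-1)t+1)^2\ge 0$; this handles values near $1$.
\item The real content: for \emph{every} $t\in(-1/l,1)$, either $E=\tau_k\tau_l-\sqrt{(1-\tau_k^2)(1-\tau_l^2)}\le t$ or $\tau_k+\tau_l\le 0$.
\end{itemize}
This inequality is tight. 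For $k=l-1$ and $t\downarrow -1/l$ one has $\tau_l\to 1$ and $\tau_{l-1}\to -1/l$, so $E\to t$ while $\tau_k+\tau_l>0$; no soft argument will give it. The paper first reduces to $l=k+1$ via $\spindle{k}{l}\subseteq\spindle{l-1}{l}$. It then rewrites $E\le t$ as $f\ge 0$ for an explicit quartic $f$, and $\tau_k+\tau_{k+1}\le 0$ as $g\le 0$ for a cubic $g$. Using the remainder of $f$ modulo $g$, it shows the roots interlace as $g_2\le f_1<0<f_2\le g_3$, treating $k=1$ by hand. Your proposal has nothing in place of this step.

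The check you defer in~\eqref{eq:spindle-c} is done the same way. $Q$ is positive at both roots of $2kt^2-(k-1)t-1$, hence $\tau<0$ on $[t_{k,2},t_{k,3}]$ and $\tau>0$ below $t_{k,1}$.

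Smaller points. Injectivity is not ``generic'' in the rigid cases.
\begin{itemize}
\item In~\eqref{eq:spindle-a}, $t=2\tau^2-1$ with $\tau=2t^2-1$ becomes $(t-1)(2t+1)(4t^2+2t-1)=0$, not $4t^2+2t-1=0$. The root $t=-1/2$ passes the inner-product test and is excluded only because $p_1,p_2$ then coincide with other vertices, as the paper notes.
\item In~\eqref{eq:spindle-b} with $n=3$, the realization is unique up to isometry, and coincidences are forced inside the claimed range. At $t=-1/3$ one has $\tau=t$, so both rhombi are regular tetrahedra through $e$, and $p_1^{\tr}p_2=-1/3$ forces $p_2$ into the base of $R_1$. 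At $t=\pm 1/\sqrt5$ (icosahedral configurations), the two bases share a vertex.
\end{itemize}
The paper's proof checks injectivity only for $n=2$, so the printed statement overlooks these points too. A complete argument has to exclude them.

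Finally, your test point $-1/(2k)$ fails, since $Q(-1/(2k))=-\tfrac14(1-1/k)^2\le 0$. Use instead $Q(-1/k)=-1/k^2$, $Q(-1/(k+1))=4(k-1)/(k+1)^3$, $Q(0)=-1$ and $Q(1)=7k^2+8k$; for $k=1$, $Q$ factors explicitly. For $k\ge 2$ these values give the root pattern. They also give $t_{k,1}>-1/k$, which~\eqref{eq:spindle-b} and~\eqref{eq:spindle-c} need to be consistent with the rhombus condition $t>-1/k$.
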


The following simple lemma does a lot of the work in the proof of Theorem~\ref{thm:RealizabilityOfSpindles}.
\begin{lemma}
  If~$S_1$ and~$S_2$ are subsets of~$S^{n-1}$ that are invariant under the subgroup of~$\ortho(n)$ that stabilizes a point~$e$ and if $\inf{\{\, x^{\tr} y : x \in S_1, y \in S_2\, \}}$ is attained by points~$p_1 \in S_1$ and~$p_2 \in S_2$, then~$e$, $p_1$, and~$p_2$ lie on a great circle~$C$. Moreover, if~$f \in C$ is orthogonal to~$e$ and if~$p_1$, $p_2 \neq \pm e$, then~$f^{\tr} p_1$ and~$f^{\tr} p_2$ have opposite signs.
  \label{lem:MaxDistanceWhenOpposite}
\end{lemma}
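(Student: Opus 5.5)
The plan is to reduce the problem to a two-dimensional one by using the symmetry of~$S_1$ and~$S_2$ under~$\Stab(e)$. Write every~$x \in S^{n-1}$ as~$x = (e^{\tr}x)e + x_\perp$, with~$x_\perp$ orthogonal to~$e$ and~$\|x_\perp\| = \sqrt{1-(e^{\tr}x)^2}$. Then for~$x \in S_1$ and~$y \in S_2$,
\[
  x^{\tr}y = (e^{\tr}x)(e^{\tr}y) + x_\perp^{\tr}y_\perp \geq (e^{\tr}x)(e^{\tr}y) - \|x_\perp\|\|y_\perp\|,
\]
with equality if and only if~$x_\perp$ and~$y_\perp$ are antiparallel, or one of them is zero. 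The subgroup~$\Stab(e)$ acts on~$e^\perp$ as the full orthogonal group of that subspace. By invariance, for fixed~$x \in S_1$ and any~$y \in S_2$, we may therefore replace~$y$ by some~$Ty \in S_2$ with~$T \in \Stab(e)$ so that~$(Ty)_\perp$ is a nonpositive multiple of~$x_\perp$. This replacement keeps~$e^{\tr}y$ unchanged and attains the lower bound above.

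The first step uses this to show that at a minimizing pair~$(p_1,p_2)$ the lower bound must be attained; otherwise rotating~$p_2$ strictly decreases the inner product, contradicting minimality. This holds provided the bound is strict whenever~$x_\perp$ and~$y_\perp$ are not antiparallel. Strictness can fail only when~$n = 2$. In that case~$\Stab(e)$ contains only the identity and the reflection fixing~$e$, which still sends~$y_\perp$ to~$-y_\perp$, and in dimension~$2$ that suffices. So equality holds in Cauchy--Schwarz, and~$(p_1)_\perp$ and~$(p_2)_\perp$ are antiparallel or one of them vanishes.

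The second step concludes that~$e$,~$p_1$ and~$p_2$ lie in a plane through the origin, namely the span of~$e$ and whichever of~$(p_1)_\perp$,~$(p_2)_\perp$ is nonzero; its intersection with the sphere is the great circle~$C$. If both perpendicular parts vanish, any great circle through~$e$ works.

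For the second claim, suppose~$p_1, p_2 \neq \pm e$. Then both perpendicular parts are nonzero, and~$C$ is uniquely the circle in the plane spanned by~$e$ and~$(p_1)_\perp$. The vector~$f \in C$ orthogonal to~$e$ is~$\pm (p_1)_\perp/\|(p_1)_\perp\|$. Hence~$f^{\tr}p_1 = \pm\|(p_1)_\perp\|$ and~$f^{\tr}p_2 = \mp\|(p_2)_\perp\|$, which are nonzero and of opposite signs by antiparallelism.

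The only delicate point is making sure that the strict-improvement argument is not vacuous in low dimension. This is handled by the reflection in~$\Stab(e)$, so I expect no real obstacle.
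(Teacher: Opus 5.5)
Your proof is correct and is essentially the paper's argument. Both decompose $p_2$ relative to $e$ and the direction of $(p_1)_\perp$, use $\Stab(e)$-invariance to replace $p_2$ by the point of the same latitude whose perpendicular part is antiparallel to $(p_1)_\perp$, and let minimality force equality; this removes any component outside $\lspan\{e,p_1\}$ and gives the sign condition. Your $n=2$ remark is muddled but harmless: strictness in Cauchy--Schwarz never depends on dimension, and what you actually use is transitivity of $\Stab(e)$ on each set $\{p : e^{\tr}p = c\}$, which already holds for $n=2$ through the reflection, so no special case is needed.
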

\begin{proof}
  If~$p_1$ or~$p_2$ is~$\pm e$, then the result is clear.  So assume~$p_1$, $p_2 \neq \pm e$.

  Let~$U = \lspan\{ e, p_1 \}$ and let~$f$ be a unit vector in~$U$ orthogonal to~$e$ such that~$f^{\tr} p_1 > 0$. Write~$p_1 = \alpha e + \beta f$ and~$p_2 = \lambda e  + \kappa f + w$ with~$w \in U^{\perp}$, so~$|\kappa| \leq \sqrt{1 - \lambda^2}$.  By invariance under the stabilizer of~$e$, any point~$p_2'$ on the sphere with~$p_2^{\tr} e = p_2'^{\tr} e$ is also in~$S_2$. Let~$p_2' = \lambda e - \sqrt{1-\lambda^2}f \in S_2$. Then
  \[
    p_1^{\tr} p_2' = \alpha \lambda - \beta \sqrt{1 - \lambda^2} \leq \alpha \lambda + \beta \kappa = p_1^{\tr} p_2.
  \]
  It follows that~$w = 0$ and that~$f^{\tr} p_2 = \kappa < 0$, as wanted.
\end{proof}

\begin{proof}[Proof of Theorem~\textup{\ref{thm:RealizabilityOfSpindles}}]
  Let~$n \geq 2$ and~$1 \leq k \leq l \leq n - 1$ be integers and let~$t$ be in~$(- 1 / l, 1)$. A~$(k, l)$-spindle contains the union of a~$k$- and an~$l$-rhombus that intersect in a single point. Let~$R_1$ be the~$k$-rhombus with unique nonedge~$e p_1$ and~$R_2$ the~$l$-rhombus with unique nonedge~$e p_2$, so~$V(R_1) \cap V(R_2) = \{ e \}$. A necessary and sufficient condition for realizability is that there are realizations of~$R_1$ and~$R_2$ such that~$p_1^{\tr} p_2 = t$.

  Let~$S_i$ be the set of all possible images of~$p_i$ under $(n, t)$-realizations of~$R_i$ that map~$e$ to the north pole~$(1, 0, \ldots, 0)$, that is,
  \[
    S_i = \{\, f(p_i) : f \text{ is an $(n, t)$-realization of } R_i\text{ such that }f(e) = (1, 0, \ldots, 0)\, \}.
  \]
  Let
  \begin{equation}%
    \label{eq:tau-formula}
    \tau_1 = \frac{2 k t^2}{(k - 1) t + 1} - 1\quad\text{and}\quad \tau_2 = \frac{2 l t^2}{(l - 1) t + 1} - 1.
  \end{equation}
  If~$n > 2$, then Lemma~\ref{lem:kRhombusRealizable} guarantees the existence of an $(n, t)$-realization of~$R_1$ with~$e^{\tr} p_1 = \tau_1$. By rotating the realization,~$e$ can be placed at the north pole. If~$k < n - 1$, the lemma similarly guarantees the existence of an $(n,t)$-realization of~$R_1$ with~$e^{\tr} p_1 = t'$ for all~$t'\in [\tau_1, 1)$ with~$e$ at the north pole. This goes through analogously for~$R_2$. Since the action of the stabilizer of~$e$ in~$\ortho(n)$ is transitive on the set of points~$p$ that have inner product~$t'$ with~$e$ for all~$t' \in [-1, 1]$, this shows that if~$k = n - 1$ or~$l = n - 1$, the corresponding~$S_i$ is
  \[
    S_i = \{\, p \in S^{n-1} : e^{\tr} p = \tau_i\}
  \]
  and if~$k < n - 1$ or~$l < n - 1$, the corresponding~$S_i$ is
  \[
    S_i = \{\, p \in S^{n-1} : e^{\tr} p \in [\tau_i, 1)\, \}.
  \]
  In particular, they are invariant under the stabilizer of~$e$ in~$\ortho(n)$.

  Furthermore, if~$k \leq l \leq n-1$ and~$t > -1/l$, then~$\tau_1 \leq \tau_2$ for fixed~$t$, so that~$S_2 \subseteq S_1$.  It follows that there is~$\xi$ such that
  \[
    \{\, p^{\tr} q : p \in S_1,\ q \in S_2\,\} = [\xi, 1].
  \]

  Note that~$\xi$ is a function of~$k$, $l$, and~$t$.  Given~$n > 2$ and~$k$ and~$l$, it is then enough to find the values of~$t$ for which~$\xi \leq t$.  Let~$q_1 \in S_1$ and~$q_2 \in S_2$ be such that~$\xi = q_1^{\tr} q_2$.  The goal is then to have~$q_1^{\tr} q_2 \leq t$.  The following simple fact will be useful:
  \begin{equation}%
    \label{ass:MaximumDistanceOnUnitCircle}
    \assert{If~$S_1$ and~$S_2$ are arcs of the unit circle~$S^1$ such that the infimum $\inf{\{\, x^{\tr} y : x \in S_1, y \in S_2\, \}}$ is attained, then the infimum is attained by an antipodal pair or by endpoints of the arcs.}
  \end{equation}
  By Lemma~\ref{lem:MaxDistanceWhenOpposite} it can be assumed that~$q_1$,~$q_2$, and~$e$ all lie on the same great circle~$C$.  By~\eqref{ass:MaximumDistanceOnUnitCircle}, either the~$q_i$ are endpoints of~$S_i \cap C$ or they are antipodal.

  If the~$q_i$ are endpoints, then~$e^{\tr} q_i = \tau_i$.  Using Lemma~\ref{lem:MaxDistanceWhenOpposite} again gives
  \[
    E = q_1^{\tr} q_2 = \tau_1 \tau_2 - \sqrt{1 - \tau_{\smash{1}}^{\smash{2}}} \sqrt{1 - \smash{\tau_2}^{\smash{2}}}.
  \]
  Hence, in this case the spindle is $(n, t)$-realizable if and only if~$E \leq t$.

  The~$q_i$ are antipodal only if~$\tau_1 \leq -\tau_2$.  In this case, the spindle is $(n, t)$-realizable.  This gives necessary and sufficient conditions for realizability in the~$n > 2$ case.

  If~$n = 2$, then~$k = l = 1$.  The sets~$S_i$ then each contain only two choices for~$q_i$ such that~$e^{\tr} q_i = \tau_i$. A necessary and sufficient condition for realizability is then that~$q_1^{\tr} q_2 = t$.

  To summarize, necessary and sufficient conditions for $(n, t)$-realizability of the $(k,l)$-spindle are:
  \begin{enumerate}
    \item[(i)] $E = t$\quad if~$n = 2$;

    \item[(ii)] $E \leq t$\quad if~$n > 2$ and~$k = l = n - 1$;

    \item[(iii)] $E \leq t$\quad or\quad $\tau_1 \leq -\tau_2$\quad otherwise.
  \end{enumerate}

  Recall from~\eqref{eq:tau-formula} that the~$\tau_i$ are functions of~$k$, $l$, and~$t$, and hence so is~$E$.  The goal is now to determine, for each case above, the values of~$t$ for which the conditions hold.

  In most of the cases below, one has~$k = l$.  Then~$\tau_1 = \tau_2 \eqqcolon \tau$, and so
  \[
    E  = 2\tau^2 - 1.
  \]
  Plug~\eqref{eq:tau-formula} into the right-hand side above to see that~$E \leq t$ if and only if
  \begin{equation}
    8k^2t^3 - (k^2 -10k + 1)t^2 - 2(k - 1)t - 1
    \label{eq:MoserSpindlePolynomial}
  \end{equation}
  is nonnegative, with equality when~$t$ is a root of the polynomial.  In what follows, this and other polynomials considered are seen as polynomials on~$t$ only, that is,~$k$ is fixed.
  \medbreak

  \noindent
  {\sc Case}~(i).  If~$n = 2$, then~$k = l = 1$, and there are only two values of~$t$ for which~$\MS{1}$ is realizable. To see this, factor the polynomial~\eqref{eq:MoserSpindlePolynomial} as
  \[
    8t^3 + 8 t^2 - 1 = (2t + 1) (4t^2 + 2t -1).
  \]
  For the root~$t = -1 / 2$, the points~$p_1$ and~$p_2$ coincide with other points in the spindle. The other roots are~$t = -(1/4)(1 \pm \sqrt{5})$.  These inner products correspond to the pentagon and pentagram.  This gives~\eqref{eq:spindle-a}.
  \medbreak

  \noindent
  {\sc Case}~(ii). If~$n > 2$ and~$k = l = n - 1$, then~(ii) is satisfied if and only if the polynomial~\eqref{eq:MoserSpindlePolynomial} has a nonnegative value at~$t$.  Its discriminant is positive, so it only has real roots. Denote them by~$t_{k,1} \leq t_{k,2} \leq t_{k,3}$.  The constant and linear terms are negative, so~$t_{k,1} \leq t_{k,2} < 0 < t_{k,3}$. At~$t = 0$ the polynomial is negative, thus the polynomial must be nonnegative for~$t_{k,1} \leq t \leq t_{k,2}$ and~$t \geq t_{k,3}$. So~$\MS{n - 1}$ is realizable if and only if~$t_{k,1} \leq t \leq t_{k,2}$ or~$t \geq t_{k,3}$.  This establishes~\eqref{eq:spindle-b}.
  \medbreak

  \noindent
  {\sc Case}~(iii).  It remains to consider~$n > 2$ and~$l < n - 1$.  The discussion splits into two cases: (a).~$k = l$ and (b).~$k < l$.
  \medbreak

  \noindent
  {\sl Case (a).} If~$k = l < n - 1$, either one of the conditions in~(iii) has to be satisfied. The first one is again equivalent to finding~$t$ such that the polynomial~\eqref{eq:MoserSpindlePolynomial} is nonnegative, and so a sufficient condition for realizability is~$t_{k,1} \leq t \leq t_{k,2}$ or $t \geq t_{k,3}$.

  The second condition is~$\tau_1 \leq -\tau_2$.  Since~$\tau_1 = \tau_2 \eqqcolon \tau$ one has~$\tau \leq 0$.  From~\eqref{eq:tau-formula}, this happens if and only if~$g = 2kt^2 - (k - 1)t - 1 \leq 0$. This polynomial has a positive and a negative root and is negative at~$0$. At both roots,~\eqref{eq:MoserSpindlePolynomial} is positive. This can be seen by taking the remainder of~\eqref{eq:MoserSpindlePolynomial} after division by~$g$, and testing it at a convenient value smaller than the smallest root of~$g$ (for example~$t = -1/k$), since the remainder is linear and increasing in~$t$. So~$\MS{k}$ with~$k < n - 1$ is realizable if and only if~$t_{k,1} \leq t$.  This establishes~\eqref{eq:spindle-c}.
  \medbreak

  \noindent
  {\sl Case (b).} The final case is~$n > 2$ and~$k < l \leq n - 1$.  We will see later that it suffices to consider the case~$l = k + 1$.

  So assume~$l = k + 1$.  Let
  \[
    f = 8k^2 \left( k - 1 \right) t^4 - \left( k^3 - 19k^2 + 8k + 4 \right) t^3 - k \left( 3k - 14 \right) t^2 - 3 \left( k - 1 \right) t - 1.
  \]
  The inequality~$E \leq t$ is satisfied if and only if~$f \geq 0$.

  If~$k=1$, then~$f$ is of degree~$3$.  Computing its roots, one gets conditions for the inequality above to be satisfied, obtaining a set of values of~$t$ for which the spindle is realizable.  Similarly, the condition~$\tau_1 \leq -\tau_2$ is satisfied if and only if~$t^3 + 3t^2 - t - 1 \leq 0$.  This gives another set of values of~$t$ for which the spindle is realizable.  Taking the union of both sets, one gets the condition~$t > -1/2 = -1/l$ for realizability.

  If~$k > 1$, then~$f$ has degree~$4$ and its discriminant is negative, so it has exactly two real roots~$f_1 \leq f_2$. At~$t = 0$ it is negative and at~$t = 1$ and at~$t = - 1 / (k + 1)$ it is positive, hence~$-1/(k+1) < f_1 < 0 < f_2 < 1$ and~$f \geq 0$ for~$-1/(k + 1) < t \leq f_1$ and~$f_2 \leq t < 1$.

  The condition~$\tau_1 \leq -\tau_2$ is equivalent to the condition
  \[
    g = \left(2 k^2-1\right) t^3 - \left(k^2 - 3k - 1\right) t^2 - (2k - 1)t - 1 \leq 0.
  \]
  By an analysis similar as before, this polynomial has three real roots given as~$g_1 \leq g_2 < 0 < g_3$. It is negative at~$0$, so it is nonpositive for all~$t$ such that~$t \leq g_1$ or~$g_2 \leq t \leq g_3$. The next objective is to show~$g_2 \leq f_1 \leq f_2 \leq g_3$, so that the result follows; see Figure~\ref{fig:order-of-roots}.
  \begin{figure}[t]
    \centering
    \includegraphics[width=\textwidth]{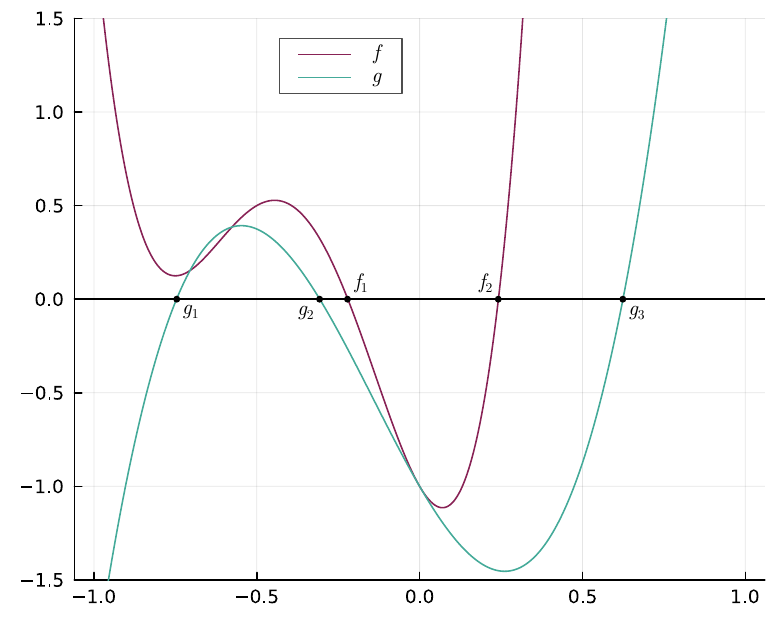}
    \caption{A plot of~$f$ and~$g$ for~$k = 2$. The horizontal axis is the inner product~$t$. Clearly~$-1/2 < g_2 \leq f_1 < 0 < f_2 \leq g_3$. If~$n > 2$, then~$\spindle{1}{2}$ is~$(n, t)$-realizable if and only if~$t > -1/2$, and~$f \geq 0$ or~$g \leq 0$. This plot shows that it is~$(n, t)$-realizable if and only if~$t > -1/2$.}
    \label{fig:order-of-roots}
  \end{figure}

  To determine the order of the roots~$f_1$,~$f_2$,~$g_1$,~$g_2$, and~$g_3$, take the remainder~$r$ of~$f$ after division by~$g$.  The remainder has degree~2 and has two real roots; denote the roots of~$r$ by~$r_1 \leq r_2$. Then~$f$ is nonnegative at a~$g_i$ if and only if~$r$ is. Both roots of~$r$ are negative for any~$k \geq 2$. Moreover,~$g$ is positive at~$r_1$ and~$r_2$, so they lie between~$g_1$ and~$g_2$. The coefficient of the quadratic term of~$r$ is positive, so it has a global minimum, meaning it is positive for all~$t > r_2 > g_1$, so~$f$ is positive at~$g_2$ and~$g_3$. This determines the order of the roots~$g_2 \leq f_1 < 0 < f_2 \leq g_3$. The spindle is realizable if~$-1/(k + 1) < t \leq f_1$,~$f_2 \leq t \leq 1$ and~$g_2 \leq t \leq g_3$, so putting all of this together,~$\spindle{k}{k+1}$ is realizable if and only if~$-1/(k + 1) < t < 1$.

  From~$l = k + 1$ all other cases follow. Indeed,~$\spindle{k}{l}$ with~$k < l - 1$ is a subgraph of~$\spindle{l - 1}{l}$, and so a sufficient condition for realizability is~$t > - 1/l$, which was already seen to be necessary. This settles~\eqref{eq:spindle-d}.
\end{proof}

\subsection*{Some results on non-realizability}

To classify almost-equiangular sets in low dimension, it is necessary to show that given anti-triangle-free graphs are not $(n, t)$-realizable for certain~$n$ and~$t$.

\subsubsection*{The extended rhombus}

Let~$t = - 1/ n$ and take two~$(n - 1)$-rhombi,~$R_1$ and~$R_2$, that intersect in an induced subgraph~$\Sigma$ isomorphic to~$\comp{n}$ (see Figure~\ref{fig:ExtendedRhombus}). Call this graph an \defi{extended~$(n-1)$-rhombus}\index{extended n 1 rhombus@extended~$(n-1)$-rhombus}. Let~$e$ and~$p$ be the endpoints of the unique nonedge of~$R_1$ with~$p \in \Sigma$. Let~$f$ be the endpoint of the nonedge of~$R_2$ not contained in~$\Sigma$.

If~$t = -1/n$ and~$k =  n - 1$, then by Lemma~\ref{lem:kRhombusRealizable},~$e^{\tr} p = -1 / n$ in any realization of~$R_1$. So a realization of~$R_1$ actually forms an~$n$-simplex, and analogously the same holds for~$R_2$. But then~$e$ and~$f$ are uniquely determined by~$\Sigma$, and must coincide, hence the extended $(n-1)$-rhombus is not $(n, -1/n)$-realizable.

\begin{figure}[t]
  \centering
  \includegraphics{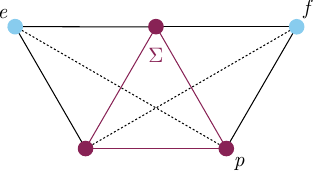}\qquad
  \includegraphics{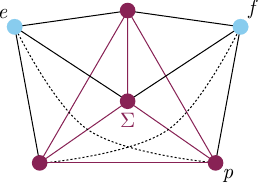}
  \caption{The extended~2-rhombus on the left and the extended~3-rhombus on the right. The dotted lines are edges that follow from Lemma~\ref{lem:kRhombusRealizable}, forcing~$f$ to coincide with~$e$.}
  \label{fig:ExtendedRhombus}
\end{figure}

\subsubsection*{The complement of the split $k$-cycle}

\begin{figure}[t]
  \centerfloat
  \includegraphics[width=0.5\textwidth]{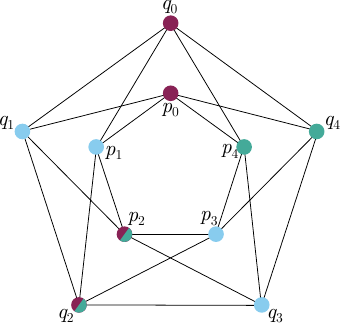}\qquad
  \includegraphics[width=0.5\textwidth]{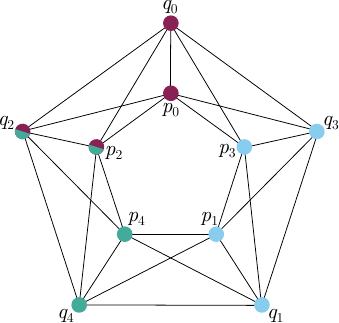}
  \caption{On the left the graph~$W_5$ with~$\Sigma_0$ and~$\Sigma_2$ indicated by color. In this case,~$T_2 = \{p_2, q_2\}$. On the right~$\overline{W_5}$ after rearranging the vertices, with~$\Sigma_0$ and~$\Sigma_2$ colored as well.  The similarity between the two graphs is incidental for~$k = 5$.}
  \label{fig:W5}
\end{figure}

Let~$k \geq 4$. The \defi{split $k$-cycle}\index{split k cycle@split $k$-cycle} is the graph~$W_k$\index{ Wk@$W_k$} on vertices~$p_0,\ldots, p_{k-1}, q_0,\ldots, q_{k-1}$ in which the neighborhood of both~$p_i$ and~$q_i$ is~$\{p_{i-1},q_{i-1},p_{i+1},q_{i+1}\}$ with all indices modulo~$k$ (see Figure~\ref{fig:W5}). It is obtained from a~$k$-cycle by splitting each vertex. Deaett proved~\cite[Theorem 4.11]{Deaett2011TheGraph} that the graph~$\overline{W_n}$, the complement of~$W_n$, is~$(n,0)$-realizable.

For even~$k$, the graph~$W_k$ is bipartite with parts of size~$k$, since the set of all even-indexed points is independent and so is its complement. This means that~$\overline{W_k}$ is a union of two~$(k - 1)$-simplices with some extra edges and therefore does not give a new construction.

For~$k = 5$, the graph~$\overline{W_5}$ is~$(5,0)$-realizable (see Figure~\ref{fig:W5}).  It is the smallest example of an optimal~$(n, 0)$-realizable anti-triangle-free graph that is not a union of two~$(n - 1)$-simplices~\cite{Deaett2011TheGraph}. Balko, Pór, Scheucher, Swanepoel, and Valtr showed~\cite[Theorem 2]{Balko2020Almost-EquidistantSets} that~$\overline{W_5}$ cannot be embedded in~$\R^3$ such that adjacent vertices are at distance~$1$.  Since there are $(4, -1/4)$-realizable graphs of order~$10$, a priori~$\overline{W_5}$ could be $(4, -1 / 4)$-realizable. It turns out, however, that~$\overline{W_k}$ with odd~$k \geq 5$ is not $(k-1, t)$-realizable for any negative~$t$.

Indeed, take~$W_{k}$ with odd~$k \geq 5$. The optimization bound (Theorem~\ref{thm:interpolation}) shows that if~$n < k$ and~$t \in [-1, 0]$, then the maximum cardinality of a $t$-almost-equiangular set on~$S^{n-1}$ is~$\leq 2(n+1)$, with equality only at~$t = -1/n$. Since~$\overline{W_k}$ has order~$2k \geq 2(n+1)$, it can only be $(n,t)$-realizable for~$n < k$ when~$n = k - 1$ and~$t = -1/n$.

Hence, the goal is to show~$\overline{W_k}$ is not $(n, -1/n)$-realizable with~$n = k - 1$.  So assume that~$\overline{W_k}$ is realizable.

In what follows, indices are taken modulo~$k$.  Let~$\Sigma_i$ be the set of all vertices~$p_{i+2j}$ and~$q_{i+2j}$ for~$0 \leq j \leq (k-3)/2$ and set~$T_i= \Sigma_{i-2} \cap \Sigma_i$ (see Figure~\ref{fig:W5}).

The~$\Sigma_i$ are independent in~$W_k$ and so form~$(k - 2)$-simplices in a realization of~$\overline{W_k}$. Take the sets~$\Sigma_0$ and~$\Sigma_2$. Then~$\Sigma_0 \setminus T_2$ and~$\Sigma_2 \setminus T_2$ both consist of two points that lie in the intersection of hyperplanes defined by the equations~$l^{\tr} x = t$ for all~$l \in T_2$. The realization of~$T_2$ is a~$(k - 4)$-simplex, so by Section~\ref{sec:realizability},~$T_2$ consists of~$k - 3$ linearly independent vectors and the dimension of the intersection of these hyperplanes is~2. Therefore,~$p_0$,~$q_0$,~$p_{k-1}$, and~$q_{k-1}$ are coplanar and lie on a circle~$C_1$. Repeat this for~$\Sigma_1$ and~$\Sigma_3$ to see that~$p_0$,~$q_0$,~$p_1$, and~$q_1$ are also coplanar and lie on a circle~$C_2$.

Since~$K = \{p_1, q_1, p_{k-1}, q_{k-1}\}$ is a clique in~$\overline{W_k}$, it defines a regular tetrahedron, hence its affine span is 3-dimensional, and the circles~$C_1$ and~$C_2$ are distinct. Denote the circumsphere of~$K$ by~$S$, which is a 2-sphere. The affine span of~$\{p_0, q_0, p_1, q_1, p_{k-1}, q_{k-1}\}$ is also 3-dimensional, since these points lie on two distinct planes intersecting on a line. Then~$p_0$, $q_0 \in \Aff K$. By uniqueness of the circumsphere of a simplex this means~$p_0$ and~$q_0$ also lie on~$S$.

Since~$\Sigma_0$ can be completed to a regular~$(k - 1)$-simplex for~$t = -1/(k - 1)$ by adding a point on~$z \in C_1$, it follows that~$C_1$ is a circumcircle of a regular triangle on~$S$ whose vertices are~$z$, $p_{k-1}$, and~$q_{k-1}$. However, there are only two such regular triangles on~$S$, namely~$\{p_{k-1}, q_{k-1}, p_1\}$ and~$\{p_{k-1}, q_{k-1}, q_1\}$. So~$C_1$ contains~$p_0$,~$q_0$,~$p_{k-1}$,~$q_{k-1}$ and either~$p_1$ or~$q_1$. By a similar argument,~$C_2$ contains~$p_0$,~$q_0$,~$p_1$,~$q_1$ and either~$p_{k - 1}$ or~$q_{k - 1}$. Then~$C_1$ intersects~$C_2$ in at least four points, a contradiction.

\section{Maximum obtuse almost-equiangular sets}
\label{sec:properties-and-uniqueness}

Theorem~\ref{thm:maximal-almost-equiangular-sets} below establishes that~$\alpha(n, t) \leq 2(n+1)$ for all~$t \leq 0$, with equality only for~$t = -1/n$.  This motivates calling a $(-1/n)$-almost-equiangular set with~$2(n+1)$ points a \defi{maximum obtuse almost-equiangular set}\index{almost equiangular set@almost-equiangular set!maximum obtuse}.

The proof of Theorem~\ref{thm:maximal-almost-equiangular-sets} follows a spectral analysis of matrices associated to the Gram matrix of such a set, done by Rosenfeld~\cite{Rosenfeld1991AlmostEd} and Bezdek and Lángi~\cite{Bezdek1999AlmostSd-1}. Further analysis of these matrices gives useful properties of maximum obtuse almost-equiangular sets; they turn out to be spherical $2$-designs, and are in bijection with certain symmetric orthogonal matrices.

Finally, this leads to a proof that the only maximum obtuse almost-equiangular set is the double-regular $n$-simplex for~$n = 2$, \dots,~$5$.

\subsection*{The spectral analysis}

Bezdek and Lángi prove in~\cite{Bezdek1999AlmostSd-1} that a $t$-almost-equiangular subset of~$S^{n-1}$ with~$t \leq 0$ cannot have more that~$2(n+1)$ points by analyzing the eigenvalues of a certain matrix related to the  Gram matrix of the set. Their method is revisited here to strengthen their result as
follows.

\begin{theorem}\label{thm:maximal-almost-equiangular-sets}
  If~$t \in [-1, 0]$, then~$\alpha(n, t) \leq 2(n+1)$, with equality only at~$t = -1/n$. The Gram matrix of a maximum obtuse almost-equiangular set has rank~$n$, its only nonzero eigenvalue is $2(1+1/n)$, and the all-ones vector~$e$ is in its kernel. In particular, the barycenter of a maximum obtuse almost-equiangular set is~$0$.
\end{theorem}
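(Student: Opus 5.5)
Let $S=\{x_1,\dots,x_N\}\subseteq S^{n-1}$ be $t$-almost-equiangular with $t\in[-1,0]$, and let $G$ be its Gram matrix, so $G\succeq 0$ and $\rank G\le n$. Following Rosenfeld and Bezdek--Lángi, set $M=(G-tJ)/(1-t)$. Then $M$ has unit diagonal and off-diagonal entries $(x_i^{\tr}x_j-t)/(1-t)$, which vanish exactly on the edges of the distance-$t$ graph. Anti-triangle-freeness gives $M_{ij}M_{jk}M_{ki}=0$ for distinct $i,j,k$. Let $B=M-I$. The trace of $B^3$ collects the products $B_{ij}B_{jk}B_{ki}$ with $i,j,k$ distinct, since $B$ has zero diagonal. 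Hence $\Tr B^3=0$, and in the same way $\Tr M^3=\Tr(I+3B+3B^2)=N+3\Tr B^2$. Also $\Tr M^2=N+\Tr B^2$.

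The plan is to play these trace identities against a rank bound. Because $t\le 0$, $-tJ/(1-t)\succeq 0$, so $M\succeq 0$ and $\rank M\le n+1$. Write the nonzero eigenvalues as $\lambda_1,\dots,\lambda_m\ge 0$ with $m\le n+1$, and put $s=\Tr B^2=\sum\lambda_i^2-N$. The constraints are then
\[
\sum\lambda_i=N,\qquad \sum\lambda_i^2=N+s,\qquad \sum\lambda_i^3=N+3s.
\]
Nonnegativity of $\sum\lambda_i(\lambda_i-2)^2=\sum\lambda_i^3-4\sum\lambda_i^2+4\sum\lambda_i$ gives $N+3s-4N-4s+4N=N-s\ge0$, so $s\le N$. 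Cauchy--Schwarz gives $N^2\le m(N+s)\le 2mN$, hence $N\le 2m\le 2(n+1)$.

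For the equality case $N=2(n+1)$, every inequality must be tight. Then $m=n+1$ and all $\lambda_i=2$, since they are equal by Cauchy--Schwarz and lie in $\{0,2\}$ by the first inequality. So $M$ is twice a projection of rank $n+1$. Now $G=(1-t)M+tJ$ has rank at most $n$, while $M$ has rank $n+1$. I would use this as follows. $M$ is a multiple of a projection and $J=ee^{\tr}$, so $G$ can drop rank only if $e$ lies in the range of $M$ and $(1-t)\cdot 2\cdot\|e\|^2$ balances against $t$ in the right way. On the range of $M$, $G$ acts as $2(1-t)I+tJ$. Its eigenvalue on $e$ is $2(1-t)+tN=2(1-t)+2t(n+1)$, and this must vanish, which forces $t=-1/n$. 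If instead $e$ had a component outside the range of $M$, the rank would go up, not down.

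For $t=-1/n$, $e$ lies in the kernel of $G$, and $G$ equals $2(1+1/n)$ times the projection onto the complement of $e$ within the range of $M$. This gives rank $n$, the single nonzero eigenvalue $2(1+1/n)$, and $Ge=0$. Finally, $\|\sum x_i\|^2=e^{\tr}Ge=0$, so the barycenter is $0$.

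The main obstacle is the careful rank argument with $J$: showing that $e$ must lie in the range of $M$. A clean way is to write $e=u+w$ with $u\in\operatorname{range}M$ and $w\in\ker M$. Then $G$ restricted to $\ker M$ is $t\,ww^{\tr}$ plus cross terms. If $w\neq0$ and $t\neq0$, the rank of $G$ is at least $n+1$, which is a contradiction. The case $t=0$ is excluded directly, since then $G=M$ has rank $n+1>n$.
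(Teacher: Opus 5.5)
Your proof is correct, and its skeleton is the paper's. Your $M=(G-tJ)/(1-t)$ is the paper's $C/(1-t)$. You use the same facts $\Tr B=\Tr B^3=0$, $M\succeq 0$ and $\rank M\le n+1$. Your equality analysis is also the same: $M$ is twice a projection of rank $n+1$, $e$ must lie in its range, and $Ge=(2(1-t)+2t(n+1))e=0$ forces $t=-1/n$.

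Where you differ is the eigenvalue inequality.
\begin{itemize}
\item The paper uses only $\Tr B$ and $\Tr B^3$. It bounds $\sum y_i$ over the $n+1$ eigenvalues of $M-I$ not forced to equal $-1$ by a Lagrangian relaxation. This amounts to summing the one-variable inequality $\tfrac32y-\tfrac12y^3\le 1$, i.e.\ $(y-1)^2(y+2)\ge 0$, over exactly $n+1$ slots. The dimension enters through the constant term, and the equality case comes from uniqueness of the maximizer $y_i=1$.
\item You keep the second moment $s$ as an auxiliary quantity and eliminate it with $\sum\lambda_i(\lambda_i-2)^2=N-s\ge 0$. The dimension then enters through Cauchy--Schwarz over the nonzero eigenvalues.
\end{itemize}
Your route is shorter and makes the equality case ($\lambda_i=2$ for all $i$) immediate. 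The paper's cubic has slack: it stays valid when $M$ has slightly negative eigenvalues. That is the kind of slack needed for the extension of the bound to small $t>0$ mentioned after the proof. Your inequality $\lambda(\lambda-2)^2\ge0$ needs $M\succeq 0$ exactly.

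One step should be tightened. Knowing that the compression of $G$ to $\ker M$ is $t\,ww^{\tr}\neq 0$ does not by itself give $\rank G\ge n+1$, because rank is not additive over compressions. Argue instead as follows. Suppose $t\neq 0$ and $e$ is not in the range of $M$. Then $Gx=(1-t)Mx+t(e^{\tr}x)e=0$ forces $Mx=0$ and $e^{\tr}x=0$, since the range of $M$ meets $\R e$ only in $0$. So $\ker G=\ker M\cap e^{\perp}$. This has codimension one in $\ker M$, because $w\in\ker M$ and $e^{\tr}w=\|w\|^2\neq 0$. Hence $\rank G=n+2>n$, a contradiction.
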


\begin{proof}
  Following~\cite{Bezdek1999AlmostSd-1}, let $U$ be the Gram matrix of a $t$-almost-equiangular subset of $S^{n-1}$ of cardinality $N$,  let
  $C=U-t J$,  and $B=U-t J-(1-t)I$,  where $J$ is the all-ones matrix and~$I$ is the identity matrix. The diagonal coefficients of~$B$ are~$0$,  hence~$\Tr B = 0$.  The coefficients of~$B$ corresponding to pairs of points with inner product~$t$  are equal to~$0$, hence the set being almost equiangular translates to $B_{ij}B_{jk}B_{ki}=0$ for all $1\leq i,j,k\leq N$, whence~$\Tr(B^3)=0$.

  These two properties give rise to equations for the eigenvalues of $B$.  Because $\rank C \leq n+1$,  the matrix $B$ has at least  $N-(n+1)$ eigenvalues equal to $-(1-t)$.  If $\lambda_1$, \dots,~$\lambda_{n+1}$ denote the remaining ones, then
  \[
    \sum_{i=1}^{n+1} \lambda_i = (N-n-1)(1-t)\quad\text{and}\quad
    \sum_{i=1}^ {n+1} \lambda_i^3 =(N-n-1)(1-t)^3.
  \]

  Since~$t < 0$, the matrix~$C$ is positive semidefinite, and so the smallest eigenvalue of~$B$ is~$-(1-t)$.  Hence, if $y_i=\lambda_i/(1-t)$, then~$y_i \geq -1 > -\sqrt{3}$ and the problem
  \begin{equation}%
    \label{opt:eigs}
    \begin{optprob}
      z^* = \max&\sum_{i=1}^{n+1} y_i\\
      &\sum_{i=1}^{n+1} y_i - y_i^3 = 0,\\
      &y_i \geq -\sqrt{3}\quad\text{for~$i = 1$, \dots,~$n + 1$}
    \end{optprob}
  \end{equation}
  gives an upper bound for~$N - n - 1$.

  Let
  \[
    L(y) = \sum_{i=1}^{n+1} y_i + (1/2)\biggl(\sum_{i=1}^{n+1} y_i - y_i^3\biggr)
    = \sum_{i=1}^{n+1} (3/2) y_i - (1/2) y_i^3
  \]
  and
  \begin{equation}%
    \label{opt:lag}
    d^* = \max\{\, L(y) : \text{$y_i \geq -\sqrt{3}$ for all~$i$}\,\}.
  \end{equation}
  If~\eqref{opt:lag} has an optimal solution~$y^*$ that is feasible for~\eqref{opt:eigs}, then it is also optimal for~\eqref{opt:eigs}. Conversely, if~$z^* = d^*$ and~$y^*$ is optimal for~\eqref{opt:eigs} then it is optimal for~\eqref{opt:lag}.

  In an optimal solution of~\eqref{opt:lag} all the~$y_i$ have the same value, namely
  \[
    \max\{\, p(y) : y \geq -\sqrt{3}\, \},
  \]
  where~$p(y) = (3/2)y - (1/2)y^3$.  A boundary and critical point analysis on~$p$ shows it has a unique maximum for~$y \geq -\sqrt{3}$ given by~$p(1) = 1$.

  Therefore, the problem~\eqref{opt:lag} has a unique optimal solution~$y^*$ with~$y^*_i = 1$ for all~$i$, and its optimal value is~$n + 1$. Since~$y^*$ is also feasible for~\eqref{opt:eigs}, it is its unique optimal solution with optimal value~$n + 1$. So, $N\leq 2(n+1)$, and equality holds if and only if the matrix $B$ has exactly  $n+1$ eigenvalues equal to $1-t$ and $n+1$ eigenvalues equal to $-(1-t)$.  It then follows that if~$N = 2(n+1)$, then~$C$ has exactly one nonzero eigenvalue, namely~$2(1-t)$ with multiplicity~$n+1$.

  Assume that~$N = 2(n+1)$, so the set attains the maximum cardinality. Then, the all-ones vector~$e$ is in the kernel of~$U$, and~$U$ has rank~$n$. Indeed,~$\rank U \leq n < n+1 = \rank C$, and since~$C = U - tJ$ it follows that~$e$ is not in the column space of~$U$, so~$e$ is in the column space of~$C$. The column space~$E$ of~$C$ is the eigenspace of~$C$ with eigenvalue~$2(1-t)$. Let~$S \subseteq E$ be the orthogonal complement to the span of~$e$ in~$E$. If~$x \in S$, then
  \[
    Ux = Cx + tJx = Cx = 2(1-t)x,
  \]
  hence~$x$ is an eigenvector of~$U$. Since~$\rank U < \rank C$ it follows that~$S$ is the only eigenspace of~$U$ with nonzero eigenvalue. Hence,~$Ue = 0$ and~$U$ has rank~$n$.

  The equation~$Ue = 0$ means that the barycenter of the set is~$0$. Moreover, from~$0 = Ue = (C + tJ)e = (2(1-t) + 2(n+1)t)e$ it follows that~$t = -1/n$.
\end{proof}

By the continuous dependence of eigenvalues on the coordinates of a matrix, the bound~$\alpha(n, t) \leq 2(n+1)$ can be extended to~$[-1, \varepsilon(n))$, where~$\varepsilon(n)$ is some (small) positive number depending on~$n$, something Bezdek and Lángi already showed. However, it is not true that this bound is global on~$t \in [-1,1)$, as a construction of Larman and Rogers~\cite{Larman1972TheSpace} shows. Namely, let~$n=5$ and~$S$ be the set of vertices of the cube~$[-1, 1]^5$ that have an odd number of positive signs. Then~$|S| = 16$ with vectors of norm~$\sqrt{5}$.  Rescaling by~$\sqrt{5}$ gives a $(1/5)$-almost-equiangular set on~$S^4$ of cardinality~$16$.

The proof of Theorem~\ref{thm:maximal-almost-equiangular-sets} moreover links the maximum obtuse almost-equiangular sets to the theory of spherical designs; see the survey by Bannai and Bannai~\cite{Bannai2009ASpheres} for more on spherical designs.

\begin{proof}[Proof of Theorem~\textup{\ref{cor:optimal-construction-are-2-designs}}]
  According to Theorem~\ref{thm:maximal-almost-equiangular-sets}, $\sum_{i=1}^{2(n+1)} x_i=0$, the Gram matrix~$U$ of~$S$ satisfies $U^2=2(1+1/n)U$, and~$U$ has rank~$n$.  Moreover, the identity $U^2=2(1+1/n)U$ translates to
  \begin{equation*}
    \sum_{k=1}^{2(n+1)}  (x_i^{\tr} x_k)(x_k^{\tr} x_j) =2(1+1/n)(x_i^{\tr} x_j) \quad\text{for all }1\leq i,j\leq 2(n+1).
  \end{equation*}
  By linearity, $x_i$ and $x_j$ can be replaced by any vector of $\R^n$. In particular, for all $u\in S^{n-1}$,
  \begin{equation*}
    \sum_{k=1}^{2(n+1)}  (u^{\tr} x_k)^2 =2(1+1/n).
  \end{equation*}
  This identity, together with $\sum_{i=1}^{2(n+1)}x_i=0$, characterizes the spherical designs of strength $2$.  For a proof of the latter, see~\cite[Theorem~2.2]{Bannai2009ASpheres}, but note that in property~(6) of this theorem the first appearance of the exponent~$k$ is wrong and should be~$2k$.
\end{proof}

\subsection*{Relation to orthogonal matrices}

The union of two vertex-disjoint regular $n$-simplices, called a double regular
$n$-simplex, is an example of a maximum obtuse almost-equiangular set.  A
natural question is whether this construction is unique.  The affirmative answer
for~$n \leq 5$ is established in Theorem~\ref{thm:double-simplex-uniqueness}.
Theorem~\ref{thm:O-matrix} works towards this proof, and is interesting by
itself.

\begin{proof}[Proof of Theorem~\textup{\ref{thm:O-matrix}}]
  With similar notation as in the proof of Theorem~\ref{thm:maximal-almost-equiangular-sets}, let~$B$ denote the matrix associated to a maximum obtuse almost-equiangular set of unit vectors.  The matrix~$B$ has only two eigenvalues, namely~$\pm (1+1/n)$, and hence satisfies $B^2=(1+1/n)^2I$.  Moreover, $Be=(1+1/n)e$.  Let $O = (1+1/n)^{-1}B$; it is clear from the properties of $B$ that~$O$ is symmetric and orthogonal and that it satisfies the conditions~(i)--(iii).

  Conversely, given a symmetric and orthogonal matrix~$O$ satisfying (i)--(iii), let
  \[
    U = (1+1/n)O-(1/n)J+(1+1/n)I
  \]
  and let $E_{\pm1}$ be the eigenspaces of $O$ associated with the two eigenvalues $\pm 1$. Both of them have dimension $n+1$ because $\Tr(O)=0$ due to (ii).
  The kernel of $U$ is the subspace $E_{-1}\oplus \R e$ of dimension $n+2$; its orthogonal complement is the eigenspace of $U$ associated to the eigenvalue $2(1+1/n)$. So $U$ is the Gram matrix of a set of $2(n+1)$ unit vectors in $\R^n$. Condition (iii) ensures that this set is $(-1/n)$-almost equiangular.
\end{proof}

Any $t$-almost-equidistant set in~$S^{n-1}$ with~$t \leq 0$ can be lifted to an almost-orthogonal set on~$S^n$~\cite{Polyanskii2017OnII}. Since~$\alpha(n+1,0) = 2(n+1)$, every maximum obtuse almost-equidistant set gives a maximum almost-orthogonal set in this way. Since~$\overline{W_5}$ is $(5,0)$-realizable but not $(4,-1/4)$-realizable (see the last subsection of Section~\ref{sec:realizability}), the converse is not the case. Deaett characterized the maximum almost-orthogonal sets by a statement similar to Theorem~\ref{thm:O-matrix}; it differs only by the eigenvector condition~(i). Hence, the eigenvector condition distinguishes between those maximum almost-orthogonal sets that show this form of descent, and those that do not.

\subsection*{The distance graph of a maximum obtuse almost-equiangular set}

A graph is \defi{quadrangular}\index{quadrangular graph} if no two vertices have exactly one neighbor in common.

\begin{lemma}\label{lem:props-maximum-sets}
  The following properties hold for the distance graph $G$ of a maximum obtuse almost-equiangular subset~$S$ of $S^{n-1}$.
  \begin{enumerate}
    \item[(i)] If $G$ contains a $\comp{n+1}$, then~$S$ is a double-regular $n$-simplex.

    \item[(ii)] The graph~$\overline{G}$ is quadrangular.

    \item[(iii)] The degree of a vertex in~$\overline{G}$ lies between $1$ and $n+1$.
      If there is a vertex of degree~$n+1$ in~$\overline{G}$, then~$S$ is a double-regular $n$-simplex.
      If there is a vertex~$x$ with exactly one neighbor~$y$ in~$\overline{G}$, then~$G[S \setminus \{x, y\}]$ is the distance graph of a maximum obtuse almost-equiangular subset of~$S^{n-2}$.
  \end{enumerate}
\end{lemma}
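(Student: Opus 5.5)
I would work throughout with the matrix $O=(1+1/n)^{-1}B$ from the proof of Theorem~\ref{thm:O-matrix}. Its nonzero off-diagonal entries sit exactly on the nonedges of $G$, that is, on the edges of $\overline{G}$. It is symmetric and orthogonal with zero diagonal, satisfies $Oe=e$, and kills triangles of $\overline{G}$. Since $U=(1+1/n)O-(1/n)J+(1+1/n)I$, we get $O_{ij}=(U_{ij}+1/n)/(1+1/n)$ for $i\neq j$.

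For (ii), orthogonality gives $(O^2)_{ij}=\sum_k O_{ik}O_{kj}=0$ for $i\neq j$. If $i,j$ had exactly one common neighbour $k$ in $\overline{G}$, this sum would reduce to the single nonzero term $O_{ik}O_{kj}\neq 0$, which is a contradiction. So (ii) is immediate.

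For (iii), the diagonal of $O^2=I$ gives $\sum_k O_{ik}^2=1$, so every vertex has degree at least $1$ in $\overline{G}$. For the upper bound, the neighbourhood $N(i)$ in $\overline{G}$ is independent in $\overline{G}$ because $\overline{G}$ has no triangles. Hence $N(i)$ is a clique in $G$, i.e.\ a regular simplex with inner product $-1/n$. A clique of $G$ with inner product $-1/n$ in $S^{n-1}$ has at most $n+1$ points (Section~\ref{sec:realizability}), so $\deg i\le n+1$. If equality holds, $G$ contains a $\comp{n+1}$ and (i) applies.

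For (i), let $K$ be a $\comp{n+1}$ in $G$. Its realization is a regular $n$-simplex with barycenter $0$, so the remaining $n+1$ points also have barycenter $0$, by Theorem~\ref{thm:maximal-almost-equiangular-sets}. I then want to show these remaining points pairwise have inner product $-1/n$. Take two of them, $x,y$ with $x^{\tr}y\neq t$. Every $z\in K$ must be adjacent to $x$ or $y$ in $G$, by anti-triangle-freeness. A point $x\notin K$ cannot be adjacent to all of $K$, since an $n$-rhombus is not realizable (Lemma~\ref{lem:kRhombusRealizable}). I would like to use the 2-design identity $\sum_k (u^{\tr}x_k)^2=2(1+1/n)$ from Theorem~\ref{cor:optimal-construction-are-2-designs}. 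The simplex $K$ alone contributes $(1+1/n)$ for every $u$, so the complement $S\setminus K$ is also a 2-design with barycenter $0$ and $n+1$ points spanning $\R^n$. An $(n+1)$-point spherical 2-design in $S^{n-1}$ is a regular simplex (a tight-design fact: its Gram matrix $W$ satisfies $W^2=\frac{n+1}{n}W$ with rank $n$ and unit diagonal, forcing all off-diagonal entries to equal $-1/n$). I expect this final Gram-matrix argument to be the main obstacle, and I would carry it out explicitly: $W=\frac{n+1}{n}P$ with $P$ the projection onto $e^\perp$, which gives $W_{ij}=-1/n$.

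Finally, the degree-one reduction in (iii). If $x$ has unique $\overline{G}$-neighbour $y$, then $O_{xy}=\pm1$ and the rest of row $x$ vanishes. By orthogonality the rest of row $y$ vanishes as well. Thus $O$ splits as the $2\times2$ block $\smallpmatrix{0&1\\1&0}$ (the sign is forced by $Oe=e$) plus an orthogonal block $O'$ of size $2n$. This block still satisfies $O'e=e$ and conditions (ii) and (iii). By Theorem~\ref{thm:O-matrix} with $n-1$ in place of $n$, $O'$ corresponds to a maximum obtuse almost-equiangular subset of $S^{n-2}$. Its nonedge pattern, and hence its distance graph, coincides with $G[S\setminus\{x,y\}]$.
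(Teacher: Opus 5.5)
Your proof is correct. On (ii), and on the bound $n+1$ in (iii), it coincides with the paper: neighbourhoods in the triangle-free graph $\overline{G}$ are cliques of $G$, and degree $n+1$ is reduced to (i). Elsewhere you take different routes.

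\emph{Part (i).} The paper never leaves $O$. Ordering the $\comp{n+1}$ first, the corresponding block of $O$ vanishes, so $O=\smallpmatrix{0&B\\B^{\tr}&D}$. Then $O^2=I$ gives $BB^{\tr}=I$ and $BD=0$, hence $D=0$. You instead return to the points. You combine the barycenter statement of Theorem~\ref{thm:maximal-almost-equiangular-sets}, the $2$-design identity of Theorem~\ref{cor:optimal-construction-are-2-designs}, and the tight-frame property of the regular simplex. You then identify the complement's Gram matrix as $\frac{n+1}{n}$ times the projection onto $e^{\perp}$. This is the same linear algebra in another guise, since $O^2=I$ and $Oe=e$ are equivalent to $U^2=2(1+1/n)U$ and $Ue=0$. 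The paper's version is shorter and needs no further theorems; yours makes visible why the second half is forced to be a simplex. The sentences about anti-triangle-freeness and the $n$-rhombus in your (i) are never used and can be dropped.

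\emph{Part (iii).} For the lower bound, your argument from $(O^2)_{xx}=1$ and $O_{xx}=0$ is more direct than the paper's. The paper maps the $G$-neighbourhood $N_x$ affinely onto $S^{n-2}$ and applies Theorem~\ref{thm:maximal-almost-equiangular-sets} there to get $|N_x|\le 2n$. For the degree-one reduction, the paper uses the same affine map together with the equality case $|N_x|=2n$. You instead split off a $2\times 2$ block of $O$ and apply Theorem~\ref{thm:O-matrix} with $n-1$ in place of $n$.

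You should, however, push your row-norm observation one step further. For $i\neq j$ one has $O_{ij}=(n\,x_i^{\tr}x_j+1)/(n+1)$. Since the points of $S$ are distinct, $x_i^{\tr}x_j\in[-1,1)$, so $|O_{ij}|<1$. A row of unit norm with zero diagonal therefore has at least two nonzero entries. In particular, your conclusion $O_{xy}=1$ says exactly that $x^{\tr}y=1$, i.e.\ $x=y$, a contradiction.

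So every vertex has degree at least $2$ in $\overline{G}$, and the last assertion of (iii) holds only vacuously. The paper's proof does not notice this either. There, the maximum set $N_x=S\setminus\{x,y\}$ would have barycenter $0$ after the affine map, i.e.\ $\sum_{z\in N_x}z=-2x$, and then $0=\sum_{s\in S}s=y-x$.

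This also bears on your appeal to Theorem~\ref{thm:O-matrix}. Its converse direction does not by itself guarantee distinct points: the permutation matrix of a fixed-point-free involution satisfies conditions (i)--(iii) of that theorem but yields a doubled simplex. So one must check $O'_{ij}\neq 1$ off the diagonal. In your setting this holds, because $O'$ inherits its entries from $O$.
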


\begin{proof}
  Let $O=(n / (n+1))U+(1 / (n+1))J-I$, where~$U$ is the Gram matrix of~$S$, be the matrix of Theorem~\ref{thm:O-matrix}.  The entries of~$O$ are equal to~$0$ on the diagonal and at pairs of vectors with inner product~$-1/n$,  so the adjacency matrix~$A$ of~$\overline{G}$ is such that~$A_{ij} = 0$ if~$O_{ij} = 0$ and~$A_{ij} = 1$ if~$O_{ij} \neq 0$.

  If~$G$ contains a~$K_{n+1}$, then~$O$ is of the form~$O = \smallpmatrix{ 0& B\\B^{\tr} & D}$ where~$B$ and~$D$ are $(n+1)\times (n+1)$ matrices.  The condition~$O^2=I$ leads to~$BB^{\tr}=I$ and~$BD=0$. But then~$B$ is invertible and so~$D=0$, which proves~(i).

  Property~(ii) follows from the columns of~$O$ being pairwise orthogonal: if two vertices~$x_i$, $x_j$ share a single neighbor~$x_k$ in~$\overline{G}$,  then~$O_{ki}O_{kj} \neq 0$,  while~$O_{li}O_{lj} = 0$ for~$l \neq k$.  But then the columns~$i$ and~$j$ of~$O$ would not be orthogonal.

  To prove~(iii), note that~$\overline{G}$ is triangle free.  Let~$x$ be a vertex and let~$\overline{N}_x$ denote its set of neighbors in~$\overline{G}$.  Two vertices in~$\overline{N}_x$ cannot be adjacent in~$\overline{G}$, otherwise they would form a triangle with~$x$.  So~$\overline{N}_x$ is a clique in~$G$,  that is, it is a regular simplex,  which proves that the degree of~$x$ in~$\overline{G}$ is at most~$n+1$.  Moreover, if~$x$ has degree~$n+1$, then it follows from~(i) that~$G$ contains a~$\comp{n+1}$, and hence that~$S$ is a double-regular $n$-simplex.

  Next, given a vertex~$x$, let~$N_x$ be its neighborhood in~$G$.  All vertices in~$N_x$ have inner product~$-1/n$ with~$x$, and so lie in an affine hyperplane, and hence belong to an $(n-2)$-sphere~$C$.  By scaling and translating~$C$ via an affine transformation, it can be mapped to~$S^{n-2}$, and then~$N_x$ is mapped to a $t$-almost-equidistant set for some~$t \leq 0$.  It then follows from Theorem~\ref{thm:maximal-almost-equiangular-sets} that~$|N_x| \leq 2n$, and so the degree of~$x$ in~$\overline{G}$ is at least~$1$.  Moreover, if~$|N_x| = 2n$, then~$t = -1/(n-1)$.
\end{proof}

\subsection*{Uniqueness of the double-regular simplex.}
The goal in this section is to prove Theorem~\ref{thm:double-simplex-uniqueness}. For a given dimension~$n$, the theorem is false if there is an $(n, -1/n)$-realizable anti-triangle-free graph of order~$2(n+1)$ whose complement is not bipartite.  It turns out that, to prove the theorem, it is enough to show that such a graph whose complement contains a $5$-cycle is not realizable.

\begin{proof}[Proof of Theorem~\textup{\ref{thm:double-simplex-uniqueness}}]
  The distance graph of any maximum obtuse almost-equiangu\-lar set is anti-triangle free and, by Lemma~\ref{lem:props-maximum-sets}, has a quadrangular complement.  Moreover, if the set is not a double-regular $n$-simplex, then the complement is not bipartite.  The goal of the proof is then to show that, if~$G$ is an anti-triangle-free graph of order~$2(n+1)$ whose complement is quadrangular and nonbipartite, then~$G$ is not $(n, -1/n)$-realizable. For~$2 \leq n \leq 5$, this is done below.

  Let~$G$ be an anti-triangle-free graph of order~$2(n+1)$ whose complement is quadrangular.  Say that~$\overline{G}$ does not contain odd cycles of length~$3$, $5$, \dots,~$2k-1$, but contains an odd cycle of length~$2k + 1$ with vertices~$p_0$, \dots,~$p_{2k}$.  Since~$\overline{G}$ is quadrangular, every pair of vertices~$p_i$, $p_{i+2}$, with indices taken modulo~$2k+1$, has at least two common neighbors.  One of the neighbors is~$p_{i+1}$; denote the other by~$q_{i+1}$.  Since~$\overline{G}$ does not contain odd cycles of length less than~$2k+1$, the vertices~$p_i$ and~$q_i$ must all be distinct, and so the order of~$G$ is at least~$2(2k+1)$, whence~$n \geq 2k$. This settles the case~$n = 3$.
  \medbreak

  \noindent
  {\sc Dimensions~$4$ and~$5$.} It follows that, for~$n \leq 5$, if~$G$ is an anti-triangle-free graph of order~$2(n+1)$ whose complement is quadrangular and nonbipartite, then~$\overline{G}$ has an odd cycle of length~$5$, and since~$n \geq 2k$ as shown above, it is necessary that~$n \geq 4$.  So it suffices to show that such a graph~$G$ for~$n = 4$ and~$5$ is not $(n, -1/n)$-realizable.

  \begin{figure}
    \centering
    \includegraphics{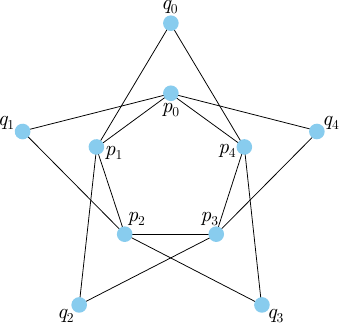}
    \caption{If $\overline{G}$ contains a $5$-cycle,  it contains this subgraph.}
    \label{fig:star}
  \end{figure}

  To this end, note that if~$p_0$, \dots,~$p_4$ is a $5$-cycle in~$\overline{G}$ and if~$q_0$, \dots,~$q_4$ are the common neighbors defined above, then~$\overline{G}$ has the graph in Figure~\ref{fig:star} as a subgraph.  Again since~$\overline{G}$ is quadrangular,  the pairs $p_i$, $q_{i+2}$ must have another common neighbor besides $p_{i+1}$.  If $n=4$,  there are no other vertices available,  so the only possibility is that~$q_0$, \dots,~$q_4$ is a cycle, that is,~$\overline{G}$ is isomorphic to~$W_5$ (see Figure~\ref{fig:W5}).  The graph $\overline{W_5}$ is not $(4,-1/4)$-realizable (see the end of Section~\ref{sec:realizability}), so the proof is finished for~$n=4$.

  The remaining case is~$n = 5$, for which~$G$ has order~$12$.  Call $x$, $y$ the two vertices of~$G$ other than the~$p_i$ and~$q_i$.   By an argument similar to the one above,~$\overline{G}$ contains as a subgraph either~$W_5$, as was the case for~$n = 4$, or, without loss of generality, the graph in Figure~\ref{fig:W5alt}.
  \medbreak

  \begin{figure}
    \centering
    \includegraphics{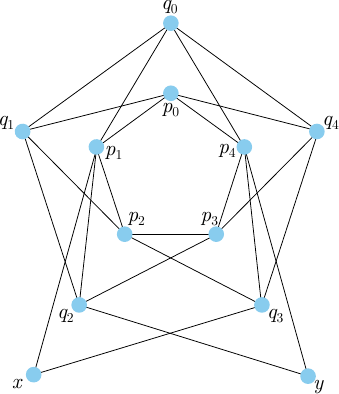}
    \caption{When $n=5$,  the graph~$\overline{G}$ may contain this graph as a subgraph.}
    \label{fig:W5alt}
  \end{figure}

  \noindent
  \textsl{Dimension~$5$ and~$\overline{G}$ contains the graph of Figure~\ref{fig:W5alt}.} If $\overline{G}$ contains the graph of Figure~\ref{fig:W5alt}, then since~$\overline{G}$ is triangle free and~$x$ is adjacent to~$p_1$ and~$q_3$ in~$\overline{G}$, it must be that~$x$ is adjacent to~$p_0$, $q_0$, $p_2$, $q_2$, $p_4$, and~$q_4$ in~$G$.  The same reasoning for~$y$ shows that~$G$ contains as a subgraph the graph~$H_{12}$ from Figure~\ref{fig:H12}.  It will turn out that~$H_{12}$ is not $(5, -1/5)$-realizable.
  \medbreak

  \begin{figure}
    \includegraphics{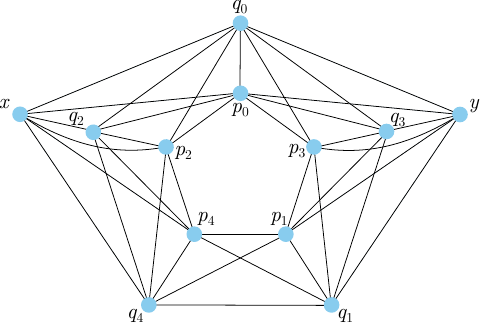}
    \caption{The graph $H_{12}$ is not $(5,-1/5)$-realizable. }
    \label{fig:H12}
  \end{figure}

  \noindent
  \textsl{Dimension~$5$ and~$\overline{G}$ contains $W_5$.} If~$\overline{G}$ contains~$W_5$, then the graph~$G$ contains a subgraph isomorphic to~$H_{12}$ as well.  Indeed, in this case, the vertices~$x$ and~$y$ must be adjacent in~$\overline{G}$ to the subgraph~$W_5$, otherwise by~(iii) of Lemma~\ref{lem:props-maximum-sets} the graph~$\overline{W_5}$ would be the distance graph of a $(-1/4)$-almost-equiangular set in~$S^3$ with~$10$ points that is not a regular double simplex, a contradiction.

  If~$v$ is a vertex of~$W_5$ in~$\overline{G}$, then the neighborhood of~$v$ in~$W_5$ is an independent set, since~$\overline{G}$ is triangle free. The neighborhood forms a clique in~$G$; call it~$C_v$. If~$x$ is adjacent to~$v$ in~$\overline{G}$, again since~$\overline{G}$ is triangle free,~$x$ is adjacent to all vertices of~$C_v$ in~$G$.

  Since~$x$ is adjacent in $\overline{G}$ to at least one vertex~$v$ of $W_5$, without loss of generality say~$x$ is adjacent to~$p_1$. Then,~$x$ is adjacent in~$G$ to~$C_{p_1} = \{p_0, q_0, p_2, q_2\}$. But then without loss of generality~$x$ is adjacent in~$G$ to~$\{p_0,q_0,p_2,q_2,p_4,q_4\}$.  Namely, if~$x$ is not adjacent to any of~$p_3$, $q_3$, $p_4$, and~$q_4$ in~$\overline{G}$, the statement follows immediately. Otherwise, if~$x$ is adjacent, say, to~$p_3$ in~$\overline{G}$, then~$x$ is adjacent in~$G$ to~$C_{p_1} \cup C_{p_3} = \{p_0, q_0, p_2, q_2, p_4, q_4\}$.

  It remains to show that~$y$ is adjacent in~$G$ to all vertices in~$\{p_1,q_1,p_3,q_3\}$; applying the previous reasoning to~$y$ shows that if this is the case,~$y$ is adjacent to all vertices in either~$\{p_4,q_4,p_1,q_1,p_3,q_3\}$ or~$\{p_1,q_1,p_3,q_3, p_0,q_0\}$,  meaning that a subgraph isomorphic to~$H_{12}$ occurs in~$G$.

  To prove that~$y$ is adjacent to all vertices in~$\{p_1, q_1, p_3, q_3\}$, consider the following. In order to arrive at a contradiction, assume~$y$ is adjacent to $p_1$ in~$\overline{G}$, again without loss of generality. Then~$y$ is connected in~$G$ to~$C_{p_1} = \{p_0,q_0,p_2,q_2\}$.  But~$x$ is also adjacent in~$G$ to these vertices,  and if~$G$ contains a~$\comp{6}$, then it is not $(5,-1/5)$-realizable ((i) of Lemma~\ref{lem:props-maximum-sets}), so $\{x, y\}$ is independent in~$G$.  Now the contradiction comes from the quadrangularity of~$\overline{G}$; indeed, if~$x$ and~$y$ are not adjacent in~$G$, then~$y$ is a common neighbor of~$x$ and~$p_1$ in~$\overline{G}$. But it is not possible that~$x$ and~$p_1$ have a second common neighbor because $x$ is not connected to any neighbor of~$p_1$ in~$\overline{G}$ other than~$y$.

  To complete the proof, it remains to show that the graph $H_{12}$ is not $(5,-1/5)$-realizable. This is a specialization of a part of the proof of the nonrealizability of~$W_k$ from the end of Section~\ref{sec:realizability}. In fact, the graph~$H_{12}$ is a subgraph of~$\overline{W_7}$, with two vertices and some edges removed. The removed edges play no role in the proof, and the two vertices only play a role for nonrealizability for~$n=6$, but for~$n=5$ they are superfluous.
\end{proof}

\section{Classification in dimensions 2 and 3}
\label{sec:low-dim}

Section~\ref{sec:realizability} gives exact conditions on the dimension~$n$ and inner product~$t$ for which simplices, rhombi, and spindles are $(n, t)$-realizable. For every integer~$m \geq 1$, dimension~$n$, and inner product~$t$, this gives sufficient conditions for the existence of $(n, t)$-realizable anti-triangle-free graphs of order~$m$.  These realizable graphs then give $t$-almost-equiangular sets of cardinality~$m$ in dimension~$n$.  In this section a converse result is obtained in low dimension: list all maximum-cardinality, almost-equiangular sets in~$S^{n-1}$, with~$n \leq 3$.

Say that an anti-triangle-free graph is \defi{minimal}\index{anti triangle free graph@anti-triangle-free graph!minimal} if the removal of any edge results in a graph that is not anti-triangle free.  Given~$n$ and~$t$, say that an anti-triangle-free graph is \defi{$(n, t)$-optimal}\index{anti triangle free graph@anti-triangle-free graph!n t optimal@$(n, t)$-optimal} if it is $(n, t)$-realizable and if it has order~$\alpha(n, t)$.  If a graph is the unique minimal $(n, t)$-optimal graph up to isomorphism, then it is called a \defi{unique optimal construction}\index{unique optimal construction}.

Finding all minimal $(n, t)$-optimal graphs for low dimension~$n$ is done by performing a graph search. The results from Section~\ref{sec:realizability} provide the conditions for this search. They also give lower bounds on~$\alpha(n, t)$. Theorem~\ref{thm:interpolation} and Theorem~\ref{thm:maximal-almost-equiangular-sets} provide an upper bound of~$\alpha(n, t) \leq 2(n+1)$  for~$t \in [-1, 0]$, which is only attained at~$t = -1/n$. There exists a global lower bound of~$\alpha(n, t) \geq 4$, given by the disjoint union of two edges.

Perform the graph search as follows. Let~$t_{k,i}$ be the~$i$th root of the polynomial~\eqref{eq:MoserSpindlePolynomial} for fixed~$k$. Given~$t$ and~$2 \leq k \leq n$, list all graphs~$G$ of a given order that do not contain a subgraph isomorphic to:
\begin{itemize}
  \item an anti-triangle;
  \item $\comp{n + 2}$;
  \item an~$n$-rhombus;
  \item a~$k$-rhombus if~$t \leq -1/k$;
  \item $\comp{k + 1}$ if~$t < -1/k$;
  \item $\comp{n + 1}$ if~$t > -1 / n$;
  \item an extended $(n - 1)$-rhombus if~$t = -1/n$;
  \item $\MS{k}$ if~$t < t_{k, 1}$;
  \item $\MS{n - 1}$ if~$t > t_{n - 1, 2}$.
\end{itemize}

The search is implemented in SageMath in a script in the supplement of~\cite{Bachoc2025ObtuseSets}. The code is a modified version of the code used in~\cite{Balko2020Almost-EquidistantSets}. Given a dimension~$n$, all anti-triangle-free graphs of cardinality at most~$2(n+1)$ not containing a~$K_{n+2}$ are generated. A second script reduces the size of these sets greatly by only taking the minimal anti-triangle-free graphs. Finally, each graph is searched for the above list of subgraphs.  The results below are summarized in Figure~\ref{fig:dim2and3}.
\medbreak

\noindent
{\sc Dimension~$2$.}
The three-point bound~$\kpb_3(H(n,t))$ for~$t\leq 0$ proves a global upper bound~$\alpha(2, t) \leq 6$, which is only achieved at~$t = -1 / 2$ by the double triangle. A graph search on order~$6$ graphs that are anti-triangle-free and do not contain~$\comp{4}$ or a~$2$-rhombus shows that this is the only minimal order~6 construction on the circle.

The Moser Spindle~$\MS{1}$ is realizable for~$t = -(1 / 4)(1 \pm \sqrt{5})$ and has order~5. Its graph is a~5-cycle, which is the unique anti-triangle-free graph of order~5 containing no~$\comp{3}$. The inner product~$t = -(1/4)(1 + \sqrt{5}) = \cos(-4\pi / 5)$ corresponds to the pentagram and~$t = -(1/4)(1-\sqrt{5}) = \cos(-2\pi / 5)$ corresponds to the regular pentagon.

Every other anti-triangle-free graph of order~$5$ satisfying the constraints above contains the disjoint union of a triangle and an edge, which is only realizable at~$t = -1 / 2$. This shows that the~5-cycle is the unique optimal construction at~$t = -(1 / 4)(1 \pm \sqrt{5})$.

At every other inner product the maximum cardinality is~$4$, attained by two disjoint edges, which is the unique optimal construction of this order.
\medbreak

\noindent
{\sc Dimension~$3$.}
The three-point bound~$\kpb_3(H(n,t))$ for~$t \leq 0$ proves a global upper bound~$\alpha(3, t) \leq 8$. This is achieved by the double tetrahedron for~$t = -1 / 3$. The graph search shows that the double tetrahedron is the only minimal construction of order~$8$.

In the region~$t_{2,1} \leq t \leq t_{2,2}$, the Moser spindle~$\MS{2}$ is~$(3, t)$-realizable and of order~7. Excluding this subgraph from the graph search shows that it is the unique optimal construction for~$t_{2,1} \leq t < -1/3$ and~$-1/3 < t \leq t_{2,2}$.

For~$t \geq -1/2$, the double triangle is realizable. For~$t = -1/2$, it is the unique optimal construction. For~$t > -1/2$, the spindle~$\spindle{1}{2}$ is realizable and of order~6. Excluding these subgraphs from the graph search shows there are no other $(3, t)$-realizable graphs of order~$6$ with~$t > -1/2$. So for~$-1 / 2 < t < t_{2,1}$ and~$t_{2,2} < t \leq 0$, there are two optimal constructions of order~6.

For all~$-(1/4)(1 - \sqrt{5}) \leq t < -1/2$, the Moser spindle~$\MS{1}$ is the unique optimal construction and has order~$5$, as described above.

\section*{Acknowledgments}
We thank Nando Leijenhorst for help with \texttt{ClusteredLowRankSolver.jl} and Willem de Muinck Keizer for helpful discussions.  Alexey Glazyrin has pointed us to references~\cite{Bilyk2024OptimizersSets,
Bilyk2023OptimalPotentials}.

\part{Euclidean space}\label{part:Euclidean-space}
\chapter[Distance-avoiding sets]{Distance-avoiding sets}%
\label{ch:distance-avoiding-sets-euclidean-space}
This chapter is part of ongoing work together with Fernando Mário de Oliveira Filho.

\sectionbreak

The compilation \textit{Problems, problems, problems} by Moser~\cite{Moser1991ProblemsProblems} contains the following question---first posed by Moser's brother, Leo Moser:
\begin{quotation}
  \textbf{LM 25} (1966) Estimate the ``size'' of the largest measurable point set in a large square, which does not determine unit distance.
\end{quotation}
This is precisely Problem~\ref{it:problem-3} from the introduction of this thesis.

Denote the Lebesgue measure on~$\R^n$ by~$\lambda$. In integrals, the notation~$dx$\index{ dx@$dx$} always means~$d\lambda(x)$. For a Lebesgue-measurable set~$S \subseteq \R^n$, define the~\defi{upper density}\index{upper density} of~$S$ by
\index{ udS@$\ud(S)$}\[
  \ud(S) = \limsup_{T \to \infty} \frac{\lambda(S \cap [-T/2,T/2]^n)}{T^n}.
\]
Problem~\ref{it:problem-3} can then be formulated as: what is the largest upper density a Lebesgue-measurable subset of~$\R^n$ not containing pairs at distance 1 can have. We denote this number by~$m_1(\R^n)$.

Erd\H{o}s~\cite{ErdosProblemsGeometry} conjectured that~$m_1(\R^n) < 2^{-n}$. This was recently confirmed for~$n = 2$ by Ambrus, Csisz\'arik, Matolcsi, Varga, and Zs\'amboki~\cite{Ambrus2024TheDistances}, who gave the upper bound~$m_1(\R^2) \leq 0.2470$. The bound was partly based on convex optimization techniques introduced by Oliveira and Vallentin~\cite{Oliveira2010FourierRn} and DeCorte, Oliveira, and Vallentin~\cite{DeCorte2022CompleteSets}, using semidefinite programming. The gap between lower and upper bounds on~$m_1(\R^n)$ is currently still quite large---the lower bound being~$m_1(\R^2) \geq 0.22936$, as the tortoise construction by Croft shows~\cite{Croft1967IncidenceIncidents}. The paper~\cite{Ambrus2024TheDistances} contains a more detailed account of the history of~$m_1(\R^n)$.

Recall that a choice of set~$D \subseteq (0,\infty)$ defines the distance graph~$G(D)$ with vertex set~$\R^n$ and edge set~$E(D)$\index{ GD@$G(D)$} with~$xy \in E(D)$ if and only if~$\|x-y\| \in D$. In this chapter, we study the problem of determining
\index{ mD@$m_D(\R^n)$}\[
  m_D(\R^n) = \sup \{\, \ud(S) : S \subseteq \R^n \text{ is measurable and avoids } D\, \}.
\]
A completely positive formulation for the general distance-avoiding-set problem on~$\R^n$ was introduced by DeCorte, Oliveira, and Vallentin~\cite{DeCorte2022CompleteSets}. They prove this bound is exact, and compute new upper bounds on~$m_1(\R^n)$ in low dimensions by including well-chosen constraints coming from the completely positive cone. They also use the full completely positive formulation to reprove a theorem by Bukh~\cite{Bukh2008MeasurableDistances} about sets avoiding many distances. The theorem says that if~$n$,~$m \geq 2$ are integers, and~$d_1$,~$\ldots$,~$d_m$ are positive numbers, then~$m_{\{d_1, \ldots, d_m\}}(\R^n)$ approaches~$m_1(\R^n)^m$ as the ratios~$d_i/d_{i+1}$ go to infinity.

In the next section, we describe a completely positive hierarchy for these problems, based on the outer approximation of~$\CP(\R^n)_{\inv}$ described in Chapter~\ref{ch:complete-positivity-under-symmetry}, and show it converges to the exact optimal upper density. We end with some notes on how we can use the results from Chapter~\ref{ch:Completely positive programming on compact spaces} to say something about other hierarchies for distance-avoiding-set problems on~$\R^n$.

\section{Lattices, tori, and densities}
Preliminaries on harmonic analysis can be found in Section~\ref{ch:complete-positivity-under-symmetry}.\ref{sec:prel-harmonic-analysis} and the appendix.

Fix an integer~$n \geq 2$. Analysis on~$\R^n$ can often be factored through its tori. It suffices to consider tori of the form~$\R^n/L\Z^n$. The necessary preliminaries on lattices are taken from~\cite[Appendix B.2]{Bekka2008KazhdanT}.

To be precise, let~$L >0$ be a real number, then~$L\Z^n$ is a lattice in~$\R^n$. The associated torus is~$\torus_L = \R^n/L\Z^n$\index{ TL@$\torus_L$}, where we omit~$n$ from the notation, as it is always fixed and clear from context. Let~$p_L: \R^n \to \torus_L$ be the quotient map. The torus~$\torus_L$ is itself a compact group under the quotient topology. Each torus~$\torus_L$ is a metric space when equipped with the metric
\index{ dL@$d_L$}\[
  d_L(p_L(x), p_L(y)) = \inf_{v \in L\Z^n} \|x - y + v\|,
\]
which is a right-invariant density metric.

A~\defi{fundamental domain}\index{fundamental domain} of a lattice~$L\Z^n$ is a Borel set~$F$ with respect to which~$\R^n = \bigcup_{v \in L\Z^n}F + v$ and~$(F + v )\cap (F + v') = \emptyset$ for all~$v$ and~$v'\in L\Z^n$ such that~$v \neq v'$. We choose for~$L > 0$ the fundamental domain~$F_L = [-L/2, L/2)^n$. Thus,~$\R^n$ is a discrete union of translates of~$F_L$ for each~$L$. By Equation~\eqref{eqn:quotient-measure-is-invariant-Radon-measure}, there is a Haar measure~$\mu_L$ on~$\torus_L$ such that
\[
  \lambda(F_L) = \int_{\R^n} \1_{F_L}(x)\, dx = \int_{\torus_L} \sum_{v \in L\Z^n}\1_{F_L}(x+v)d\mu_L(p(x)) = \mu_L(\torus_L).
\]
It is common to work with this Haar measure, so we do so as well; thus, in all that follows,~$\mu_L(\torus_L) = L^n$.

A set~$S \subseteq \R^n$ is called~\defi{periodic with period~$L$}\index{periodic set!with period L@with period $L$} if for all~$v \in L\Z^n$,~$S + v = S$. A~\defi{periodic set}\index{periodic set} is a set that is periodic with period~$L$ for some~$L$. Sets periodic with period~$L$ define a subset of~$\torus_L$ by taking the quotient. On the other hand, a subset~$S \subseteq \torus_L$ defines a subset of~$\R^n$ by the section~$s: \torus_L \to F_L$ of the quotient map, and taking~$\bigcup_{v \in LZ^n} s(S) + v$. This correspondence is bijective.

Likewise, a function~$f: \R^n \to \R$ is called~\defi{periodic with period~$L$}\index{periodic function!with period L@with period~$L$} if it is invariant under translation by elements of~$L\Z^n$, that is,~$f(x+v) = f(x)$ for all~$x \in \R^n$ and~$v \in L\Z^n$. Call~$f$~\defi{periodic}\index{periodic function} if it is a periodic function with period~$L$ for some~$L$. A function on~$\torus_L$ defines a periodic function on~$\R^n$ by composition with the quotient map. Vice versa, a periodic function on~$\R^n$ defines a function on~$\torus_L$ by composition with the section~$s: \torus_L \to F_L$.

Let for~$D \subseteq (0,\infty)$ and~$L > 0$ the graph~$G_L(D) = (\torus_L, E(D))$\index{ GLD@$G_L(D)$} be the distance graph given by~$p_L(x)p_L(y) \in E(D)$ if and only if there is a~$v \in L\Z^n$ such that~$\|x - y + v\| \in D$. Then,~$G_L$ is a homogeneous graph on~$\torus_L$ as defined in Chapter~\ref{ch:completely-positive-formulations-meas-ind-num}. To avoid confusion, we denote the measurable independence number of~$G_L(D)$ under~$\mu_L$ by~$\alpha_{\mu_L}(G_L(D))$.

The key insight for dealing with distance-avoiding sets on~$\R^n$ is that they are approximated by periodic distance-avoiding sets. For a measurable periodic set~$S$ with period~$L$ the upper density is~$\ud(S) = L^{-n}\mu_L(S)$, which relates~$m_D(\R^n)$ to the optimal densities~$\alpha_{\mu_L}(G_L(D))$.

A theorem by Furstenberg, Katznelson, and Weiss~\cite[Theorem A]{Furstenberg1990ErgodicDensity} shows that if~$D$ is unbounded,~$m_D(\R^n) = 0$. The following lemma says that if~$D$ is bounded, we may approximate a measurable $D$-avoiding set by periodic measurable $D$-avoiding sets, that is, independent sets of~$G(D)$ are approximated arbitrarily well by independent sets of~$G_L(D)$. A proof was given by DeCorte, Oliveira, and Vallentin~\cite[Lemma 6.2]{DeCorte2022CompleteSets}.
\begin{lemma}%
  \label{lem:approximate-density-by-density-on-tori}
  If~$n \geq 2$ and~$D \subseteq (0, \infty)$ is bounded, then
  \[
    m_D(\R^n) = \limsup_{L \to \infty} \frac{\alpha_{\mu_L}(G_L(D))}{L^n}.
  \]
\end{lemma}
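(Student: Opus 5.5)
Both inequalities are needed: $\limsup_{L\to\infty} L^{-n}\alpha_{\mu_L}(G_L(D)) \leq m_D(\R^n)$, and the reverse. Write $d = \sup D < \infty$.

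\emph{Lower bound on $m_D(\R^n)$.} Take an independent measurable set $I \subseteq \torus_L$ of $G_L(D)$ and lift it to the periodic set $S = p_L^{-1}(I) \subseteq \R^n$.
\begin{itemize}
  \item If $x, y \in S$ with $\|x-y\| \in D$, then taking $v = 0$ in the definition of $E(D)$ on the torus gives $p_L(x)p_L(y) \in E(D)$. This contradicts independence of $I$, so $S$ avoids $D$.
  \item One subtlety: if $\|x-y\|\in D$ then $x \neq y$, but it can happen that $p_L(x) = p_L(y)$. That pair gives a loop, and a loop cannot occur in an independent set. So for independence to be meaningful I need $L\Z^n$ to contain no vector of length in $D$. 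This holds as soon as $L > d$, since then every nonzero $v \in L\Z^n$ has $\|v\| \geq L > d$.
  \item By the density formula for periodic sets, $\ud(S) = L^{-n}\mu_L(I)$.
\end{itemize}
Taking suprema over $I$ gives $m_D(\R^n) \geq L^{-n}\alpha_{\mu_L}(G_L(D))$ for all $L > d$, and hence the $\limsup$ inequality.

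\emph{Upper bound on $m_D(\R^n)$.} Take a measurable $D$-avoiding set $S$ with $\ud(S) > m_D(\R^n) - \epsilon$. I periodize a trimmed window of $S$.
\begin{itemize}
  \item Choose $T$ large, along a sequence realizing the $\limsup$, so that $\lambda(S \cap [-T/2,T/2]^n) \geq (\ud(S) - \epsilon)T^n$.
  \item Put $L = T + d$ and $S' = S \cap [-T/2, T/2)^n$, which sits inside $F_L$ with a margin of width $d/2$ on every side.
  \item Let $I = p_L(S') \subseteq \torus_L$, which is measurable.
\end{itemize}

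\emph{Independence of $I$.} Suppose $x, y \in S'$ and $v \in L\Z^n$ satisfy $\|x-y+v\| \in D$.
\begin{itemize}
  \item If $v = 0$, then $\|x-y\| \in D$, which is impossible because $S$ avoids $D$.
  \item If $v \neq 0$, some coordinate satisfies $|v_i| \geq L$ while $|x_i - y_i| \leq T$. Hence $\|x-y+v\| \geq L - T = d$.
\end{itemize}
This equals $d$ only in the boundary case, which is excluded if I use $L = T + 2d$ instead; in that case $\|x-y+v\| \geq 2d > d$. So $I$ is independent.

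\emph{Comparing densities.} We get
\[
\frac{\alpha_{\mu_L}(G_L(D))}{L^n} \geq \frac{(\ud(S)-\epsilon)\,T^n}{(T+2d)^n},
\]
and the right-hand side tends to $\ud(S) - \epsilon$ as $T \to \infty$. Letting $\epsilon \to 0$ gives $\limsup_{L} L^{-n}\alpha_{\mu_L}(G_L(D)) \geq m_D(\R^n)$.

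\emph{Expected obstacle.} The only step needing care is the margin argument: the window must sit strictly inside the fundamental domain, with a gap of more than $d$ to its translates. That requires the boundedness of $D$, which is exactly the stated hypothesis. The remaining detail is the measure bookkeeping: $p_L$ restricted to $F_L$ is injective and measure-preserving onto $\torus_L$, so $\mu_L(I) = \lambda(S')$.
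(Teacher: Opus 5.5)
Your proof is correct, and it is the standard periodization argument that the thesis cites from DeCorte, Oliveira, and Vallentin (Lemma 6.2) rather than reproving. Lifting an independent set of $G_L(D)$ with $L > \sup D$ gives $\limsup_{L} L^{-n}\alpha_{\mu_L}(G_L(D)) \leq m_D(\R^n)$, and periodizing the window $S \cap [-T/2,T/2)^n$ with period $L = T + 2\sup D$ gives the reverse inequality. (Because $[-T/2,T/2)^n$ is half-open, $|x_i-y_i|<T$ strictly, so $L = T + \sup D$ already suffices. Also, if $S$ is only Lebesgue measurable, first replace $S'$ by a Borel subset of full measure, so that $p_L(S')$ is measurable on $\torus_L$.)
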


This behavior is reflected by the optimization upper bounds on~$m_D(\R^n)$ we consider. Define the operator
\[
  Mf = \limsup_{T \to \infty} T^{-n} \int_{[-T/2,T/2]^n} f(x)\, dx.
\]
For~$\cC \subseteq L^{\infty}(\R^n)$ a convex cone, let
\index{ theta@$\vartheta(G, \cC)$!Euclidean space}\[
  \begin{optprob}
    \vartheta(G(D), \cC) = \sup &\onerow{Mf}\\
    &\onerow{f(0) = 1}\\
    &f(x) = 0 &\text{if } x \in D\\
    &\onerow{f \in \cC\text{ is of positive type},}
  \end{optprob}
\]
where we use that a function of positive type is in particular continuous.

As usual, denote~$C_r(V) = C_r(V, 2)$ and~$\CP(V) = \CP(V, 2)$. For a homogeneous graph~$G=(V,E)$ with~$V$ compact and a convex cone~$\cC \subseteq C_{\sym}(V)$, we have~$\vartheta_{\st}(G, \cC) = \vartheta_{\bt}(G, \cC)$; denote this program by~$\vartheta(G, \cC)$. DeCorte, Oliveira, and Vallentin showed that, when~$D$ is closed,~$m_D(\R^n)$ has the completely positive formulation~$\vartheta(G(D), \CP(\R^n)_{\inv})$ by proving
\begin{equation}%
  \label{eqn:completely positive-formulation-approximated-by-tori}
  \vartheta(G(D), \CP(\R^n)_{\inv}) = \limsup_{L \to \infty}\frac{\vartheta(G_L(D), \CP(\torus_L)_{\cont})}{L^n},
\end{equation}
for all closed~$D \subseteq (0,\infty)$~\cite[Theorem 6.3]{DeCorte2022CompleteSets}. Together with Lemma~\ref{lem:approximate-density-by-density-on-tori}, Equation~\eqref{eqn:completely positive-formulation-approximated-by-tori} implies~$m_D(\R^n) = \vartheta(G(D), \CP(V)_{\inv})$.

The requirement that~$D$ is closed is unnecessary for us: it is only there to ensure that the graphs~$G_L(D)$ are locally independent. However, we managed to prove
Theorem~\ref{thm:completely-positive-is-exact-homogeneous-graph} without this assumption, and an inspection of the proof of~\cite[Theorem 6.3]{DeCorte2022CompleteSets} shows that this implies the following theorem.
\begin{theorem}%
  \label{thm:completely positive-is-exact-Rn-distance-avoiding}
  If~$n \geq 2$,~$D \subseteq (0, \infty)$ is bounded, and~$m_D(\R^n) > 0$, then
  \[
    m_D(\R^n) = \vartheta(G(D), \CP(\R^n)_{\inv}).
  \]
\end{theorem}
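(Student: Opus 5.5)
The plan is to sandwich $\vartheta(G(D), \CP(\R^n)_{\inv})$ between $m_D(\R^n)$ from both sides. Lemma~\ref{lem:approximate-density-by-density-on-tori} reduces everything to tori, and the torus statement comes from Theorem~\ref{thm:completely-positive-is-exact-homogeneous-graph}. Concretely, I will establish the analogue of~\eqref{eqn:completely positive-formulation-approximated-by-tori} for arbitrary bounded $D$, namely
\[
  \vartheta(G(D), \CP(\R^n)_{\inv}) = \limsup_{L \to \infty} L^{-n}\vartheta(G_L(D), \CP(\torus_L)_{\cont}),
\]
and then identify each term on the right.

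\emph{Step 1: the torus problems are exact.} For each $L$, the graph $G_L(D)$ is homogeneous under the compact group $\torus_L$, and $d_L$ is a right-invariant density metric. If $\alpha_{\mu_L}(G_L(D)) > 0$, Theorem~\ref{thm:completely-positive-is-exact-homogeneous-graph} with $k=2$ gives $\vartheta(G_L(D), \CP(\torus_L)_{\cont}) = \alpha_{\mu_L}(G_L(D))$. This holds with no closedness assumption on $E(D)$; this is exactly where the improvement over~\cite[Theorem 6.3]{DeCorte2022CompleteSets} enters.

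For small $L$ the independence number may be zero, or the theorem's hypotheses may fail. Since $m_D(\R^n)>0$, Lemma~\ref{lem:approximate-density-by-density-on-tori} supplies arbitrarily large $L$ with positive $\alpha_{\mu_L}$. To make sure the $\limsup$ is unaffected by the remaining $L$, I will only use the one-sided inequality $\vartheta \ge \alpha$ there.

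\emph{Step 2: the two inequalities between the $\R^n$ program and the torus programs.} I will reproduce the argument of~\cite[Theorem 6.3]{DeCorte2022CompleteSets}, checking that closedness of $D$ is never used.

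For ``$\ge$'', take a feasible $K$ on $\torus_L$ and average it to an invariant kernel, which is continuous by Lemma~\ref{lem:averaged-rank-one-is-continuous}. Lifting it to an $L\Z^n$-periodic function $f$ on $\R^n$, we have $f \in \CP(\R^n)_{\inv}$ because it is a weak* limit of autocorrelations of truncated periodic nonnegative functions. The lifted function vanishes on $D$, and $Mf$ equals $L^{-n}$ times the torus objective after normalizing $f(0)=1$.

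For ``$\le$'', take a feasible $f$ and periodize it. Multiply $f$ by $(\1_{F_L}\cor\1_{F_L})/L^n$ and wrap it onto $\torus_L$. The nonnegativity inherited from $\CP(\R^n)_{\inv}$, via Theorem~\ref{thm:Polyas-theorem-for-invariant-cones}, passes to the torus. Zeros on $D$ are preserved only up to a boundary layer of width $\sup D$, which is finite since $D$ is bounded. As $L\to\infty$, the resulting error in both the objective and the normalization tends to zero.

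\emph{Step 3: combine.} Step 1 and Lemma~\ref{lem:approximate-density-by-density-on-tori} give
\[
  \limsup_{L\to\infty} L^{-n}\vartheta(G_L(D), \CP(\torus_L)_{\cont}) = m_D(\R^n),
\]
and Step 2 identifies the left-hand side with $\vartheta(G(D), \CP(\R^n)_{\inv})$.

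The main obstacle is Step 2's ``$\le$'' direction. There, $f$ vanishes only on the pointwise set $D$, which may be non-closed and have measure zero, so the periodized kernel must still vanish exactly on $E(D)$ in $\torus_L$. I would first try forcing exact zeros by using, instead of $\1_{F_L}$, a compactly supported cutoff whose support is separated from its translates by more than $\sup D$. Then every edge of $G_L(D)$ lifts to a unique pair at distance in $D$, and vanishing transfers pointwise. The cost is a slightly reduced density factor, which tends to $1$ as $L\to\infty$.
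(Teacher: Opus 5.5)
Your route is the same as the paper's. The paper's proof is only the remark that the argument of DeCorte--Oliveira--Vallentin's Theorem~6.3 goes through once their torus exactness result (which needed local independence, hence closed $D$) is replaced by Theorem~\ref{thm:completely-positive-is-exact-homogeneous-graph}. Your Step~2 is a correct reconstruction of that inspection, including the fix with a cutoff $\1_{C_T}$, $T<L-\sup D$, whose loss factor $(T/L)^n$ tends to $1$.

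There is one slip, in Step~1. For the $L$ with $\alpha_{\mu_L}(G_L(D))=0$ you propose to use only $\vartheta\ge\alpha$, but that is the wrong direction. Step~3's identity $\limsup_L L^{-n}\vartheta(G_L(D),\CP(\torus_L)_{\cont})=m_D(\R^n)$ needs the upper bound $\vartheta(G_L(D),\CP(\torus_L)_{\cont})\le\alpha_{\mu_L}(G_L(D))$ at all large $L$. That is exactly the half that yields $\vartheta(G(D),\CP(\R^n)_{\inv})\le m_D(\R^n)$. Knowing only that good $L$ exist arbitrarily far out does not bound the $\limsup$ from above.

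The repair is easy, in either of two ways.
\begin{itemize}
\item Since $m_D(\R^n)>0$ forces $\inf D>0$ (by the Lebesgue density theorem, as the paper remarks right after the statement), take $L>\sup D+\inf D$. Then the image in $\torus_L$ of a ball of diameter less than $\inf D$ is an independent set of positive measure. So Step~1 applies to every large $L$.
\item Alternatively, your cutoff construction gives, for a feasible $f$ and every large $L$,
\[
  L^{-n}\vartheta(G_L(D),\CP(\torus_L)_{\cont}) \geq \Bigl(\frac{T}{L}\Bigr)^{n}\frac{1}{T^{2n}}\int_{C_T}\int_{C_T} f(x-y)\,dx\,dy, \qquad T=L-\sup D-1,
\]
and the right-hand side tends to $Mf$ because $f$ is of positive type. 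Evaluate this only along a sequence of $L$ with $\alpha_{\mu_L}(G_L(D))>0$, and use Lemma~\ref{lem:approximate-density-by-density-on-tori}. This gives $Mf\le m_D(\R^n)$ without needing the full $\limsup$ identity.
\end{itemize}
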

\begin{proof}
  See the discussion above.
\end{proof}

\sectionbreakafterproof

It may seem that Theorem~\ref{thm:completely positive-is-exact-Rn-distance-avoiding} misses an assumption: it is not clear that~$\vartheta(G(D), \CP(\R^n)_{\inv})$ is feasible. For example, if~$D = (0,1)$, then the conditions~$f(x) = 0$ if~$\|x\| \in D$ and~$f(0)=1$ are in contradiction with continuity of~$f$. On the other hand, this is the only thing that can prevent feasibility. Likewise, if~$m_D(\R^n) > 0$, the set~$D$ must be bounded away from~$0$. Thus, positivity of the density is equivalent to feasibility of~$\vartheta(G(D), \CP(\R^n)_{\inv})$. We already saw this in the finite-measure setting of Chapter~\ref{ch:completely-positive-formulations-meas-ind-num}.

\section{Convergence of the completely positive hierarchy}
Given Lemma~\ref{lem:approximate-density-by-density-on-tori}, we can hope that to prove convergence of a completely positive hierarchy for~$m_D(\R^n)$, we can somehow factor the hierarchy through a converging completely positive hierarchy for each~$\alpha_{\mu_L}(G_L(D))$. However, for nonthick edge sets, we only managed to prove convergence for the unit sphere and similar spaces, a result which dependent highly on the representation theory of these spaces. Particularly, in Section~\ref{ch:Completely positive programming on compact spaces}.\ref{sec:discussion-finite-measure-spaces} we saw that our method does not work for nonthick distance graphs on tori. Thus, this approach does not work.

Luckily, like for distance-avoiding sets on~$S^{n-1}$, the representation theory of~$\R^n$ saves us. We will show it is enough to consider feasible solutions that are~\defi{radial}\index{radial function}: these are the functions~$f : \R^n \to \R$ such that~$f(Tx) = f(x)$ for all~$T \in \ortho(n)$. Thus, radial functions only depend on the norm of their argument. Radial functions of positive type are given by the integral of a function in~$C_0(\R)$ under a nonnegative Borel measure, so that a weak*-converging sequence of such measures gives a sequence of radial functions of positive type that converges pointwise, except at~$0$.

As we will see, the programs~$\vartheta(G(D), \cC)$ with~$\cC = C_r(\R^n)_{\inv}^*$ or~$\CP(\R^n)_{\inv}$ are invariant under the action of~$\ortho(n)$. We can thus assume that a feasible solution~$f$ is radial and positive type. Schoenberg~\cite[Theorem 1]{Schoenberg1938MetricFunctions} showed that for any complex-valued radial function of positive type, there exists a Borel measure~$\alpha \in M(\R)_{\geq 0}$ such that
\index{ Omegan@$\Omega_n$}
\begin{multline}%
  \label{eqn:measure-of-radial-function}
  f(x) = \int_0^{\infty} \Omega_n(t \|x\|)\, d\alpha(t),\text{ with}\\
  \Omega_n(\|x\|) = 1/\omega_n \int_{S^{n-1}} e^{i x^{\tr}\xi}\, d\omega(\xi),
\end{multline}
with~$\omega$ the standard surface measure on the sphere and~$\omega_n = \omega(S^{n-1})$. The proof goes as follows: by Bochner's theorem~\cite[Theorem 4.19]{Folland2016AAnalysis}, if~$f \in \PT(\R^n)$, then there exists a measure~$\nu \in M(\R^n)_{\geq 0}$ such that for all~$x \in \R^n$ we have~$f(x) = \int_{\R^n}e^{i x^{\tr}\xi}\, d\nu(\xi)$. Then, by interchanging integrals,
\[
  \omega_n^{-1}\int_{S^{n-1}} \int_{\R^n}e^{i \|x\| y^{\tr}\xi}\, d\nu(\xi)d\omega(x) = \int_{\R^n} \Omega_n(\|x\| \|\xi\|)d\nu(\xi).
\]
Hence, define~$\alpha$ by~$\alpha(g) = \int_{\R^n} g(\|x\|)\, d\nu(x)$.

The functions~$\Omega_n$ lie in~$C_0(\R)$ for~$n \geq 2$. This follows, for example, from the expansion~$\Omega_n(t) = \Gamma(n/2)(2/t)^{(n-2)/2} J_{(n-2)/2}(t)$ for~$t > 0$, where~$J_{(n-2)/2}$ is a Bessel function of the first kind---Equation (1.8) in~\cite{Schoenberg1938MetricFunctions}---together with an asymptotic formula for the Bessel functions---equation (1) in \S 7.21 of~\cite{Watson1922AFunctions}---which says that~$J_{(n-2)/2}(t) \to 0$ as~$t \to \infty$.

\begin{theorem}%
  \label{thm:convergence-cp-hierarchy-euclidean-space}
  If~$n \geq 2$,~$D \subseteq (0, \infty)$, and~$m_D(\R^n) > 0$, then
  \[
    m_D(\R^n) = \lim_{r \to \infty} \vartheta(G(D), \bC_r(\R^n)_{\inv}^*).
  \]
\end{theorem}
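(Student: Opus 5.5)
Since $\bC_r(\R^n)_{\inv} \subseteq \CP(\R^n)_{\inv}^*$ and the cones $\bC_r(\R^n)_{\inv}^*$ decrease in $r$ to $\CP(\R^n)_{\inv}$ (Theorem~\ref{thm:Polyas-theorem-for-invariant-cones}, using that $\R^n$ is amenable, abelian, unimodular and second countable), the values $\vartheta(G(D), \bC_r(\R^n)_{\inv}^*)$ form a nonincreasing sequence bounded below by $\vartheta(G(D), \CP(\R^n)_{\inv})$. The latter equals $m_D(\R^n)$ by Theorem~\ref{thm:completely positive-is-exact-Rn-distance-avoiding}, provided $D$ is bounded; the case of unbounded $D$ is excluded by the hypothesis $m_D(\R^n)>0$, via Furstenberg--Katznelson--Weiss. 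So the task is the reverse inequality. I take an $\epsilon>0$ and, for every $r$, a feasible $f_r$ of $\vartheta(G(D), \bC_r(\R^n)_{\inv}^*)$ with $Mf_r \geq \lim_r \vartheta(G(D), \bC_r(\R^n)_{\inv}^*) - \epsilon$. The goal is to extract a limit that is feasible for the completely positive program with at least this objective value.

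\emph{Step 1 (symmetrization).} The group $\ortho(n)$ acts on $L^\infty(\R^n)$. It preserves $\PT(\R^n)$, the constraints $f(0)=1$ and $f|_{D}=0$ (the latter meaning $f(x)=0$ for $\|x\|\in D$), the functional $M$ (up to the cube-versus-rotated-cube issue; I would replace $M$ by a mean over balls, which for positive-type functions coincides with $\nu(\{0\})$ in the Bochner representation), and the cone $\bigcap_A \convol_A^{-1}C_r(A,2)^*$, since $T$ maps $\B_{\fin}$ to itself. By Lemma~\ref{lem:local-definition-bC}, positive-type elements of $\bC_r(\R^n)_{\inv}^*$ are exactly those in this intersection. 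Averaging over the compact group $\ortho(n)$ therefore keeps $f_r$ feasible, so I may assume each $f_r$ is radial.

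\emph{Step 2 (compactness via Schoenberg).} By \eqref{eqn:measure-of-radial-function}, $f_r(x)=\int_0^\infty \Omega_n(t\|x\|)\,d\alpha_r(t)$ with $\alpha_r\geq 0$ and $\alpha_r([0,\infty))=f_r(0)=1$. The objective is $Mf_r=\alpha_r(\{0\})$, because $\Omega_n\in C_0$ averages to $0$ at $t>0$. I view $\alpha_r$ as probability measures on the one-point compactification $[0,\infty]$ and extract a weak* convergent subsequence $\alpha_r\to\alpha$. I then set $\alpha'=\alpha|_{[0,\infty)}$ and $f(x)=\int \Omega_n(t\|x\|)\,d\alpha'(t)$. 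For $x\neq 0$, $t\mapsto\Omega_n(t\|x\|)$ is continuous on $[0,\infty]$ with value $0$ at $\infty$, hence $f_r(x)\to f(x)$ pointwise off $0$. In particular $f=0$ on $\{\|x\|\in D\}$. Moreover $\alpha'(\{0\})\geq\limsup_r \alpha_r(\{0\})$ by upper semicontinuity on the closed set $\{0\}$, and $\tau=f(0)=\alpha'([0,\infty))\in(0,1]$.

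\emph{Step 3 (membership in $\CP$).} Pointwise bounded convergence off a null set gives $f_r\to f$ weak* in $L^\infty(\R^n)$ by dominated convergence against $L^1$. Each $\bC_s(\R^n)_{\inv}^*$ is weak* closed and contains $f_r$ for $r\geq s$, so $f$ lies in $\bigcap_s \bC_s(\R^n)_{\inv}^*=\CP(\R^n)_{\inv}$. Then $\tau^{-1}f$ is feasible for $\vartheta(G(D),\CP(\R^n)_{\inv})$ with objective $\tau^{-1}\alpha'(\{0\})\geq \lim_r\vartheta(G(D), \bC_r(\R^n)_{\inv}^*)-\epsilon$, and letting $\epsilon\to 0$ finishes the proof. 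I expect the main obstacle to be Step~1 together with the identification $Mf=\nu(\{0\})$. Checking invariance of $\bC_r$ under rotations needs the local description of Lemma~\ref{lem:local-definition-bC}, and the mass escaping to $t=\infty$ must be shown harmless: it only lowers $\tau$, which helps the normalization.
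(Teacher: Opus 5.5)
Your proposal is correct and follows essentially the same route as the paper: reduce to radial solutions via $\Av_{\ortho(n)}$ and Lemma~\ref{lem:local-definition-bC}, pass to Schoenberg measures $\alpha_r$, extract a weak* limit, use $\Omega_n\in C_0(\R)$ for pointwise convergence off the origin, use Theorem~\ref{thm:Polyas-theorem-for-invariant-cones} for complete positivity, and renormalize by $f(0)\le 1$. Your use of probability measures on $[0,\infty]$ with the portmanteau inequality just repackages the paper's weak* limit in $M(\R)=C_0(\R)^*$, which discards the escaping mass automatically; and choosing $\epsilon$-optimal $f_r$ is, if anything, more careful than the paper's choice of $f_r$ with only $Mf_r\ge m_D(\R^n)/2$.
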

\begin{proof}
  The inequality~$\lim_r \vartheta(G(D), \bC_r(\R^n)_{\inv}) \geq \vartheta(G(D), \CP(\R^n)_{\inv})$ follows from the inclusions~$\CP(\R^n)_{\inv} \subseteq \bC_r(\R^n)_{\inv}$.

  The first step to show the other inequality, is to prove that the image of each of the cones~$\bC_r(\R^n)_{\inv}^* \cap\PT(\Gamma)$ and~$\CP(\R^n)_{\inv}$ under~$\Av_{\ortho(n)}$ is contained in the original cone. Then show that the objective and constraints are also preserved under~$\Av_{\ortho(n)}$, which implies that it suffices to consider radial functions.

  Denote the Borel sets with finite Borel measure by~$\B_{\fin}$, and recall that for all~$A \in \B_{\fin}$,~$\convol_A f(x,y) = f(x - y)$ for all~$x$ and~$y \in A$. By Lemma~\ref{lem:local-definition-bC}
  \[
    \bC_r(\R^n)_{\inv}^* \cap \PT(\R^n) = \bigcap_{A \in \B_{\fin}} \convol_A^{-1} C_r(A)^* \cap \PT(\R^n).
  \]
  Let~$\nu$ be the Haar measure on~$\ortho(n)$, and let~$\Av_{\ortho(n)}f(x) = \int_{\ortho(n)} f(Tx)\, d\nu(T)$ for all~$f \in L^{\infty}(\R^n)$ and~$x \in \R^n$. Take~$f \in \bC_r(\R^n)_{\inv}^*$, take~$A \in \B_{\fin}$, and take~$K \in C_r(A)$, then by an application of Fubini-Tonelli,
  \[
    \begin{split}
      \langle \convol_A \Av_{\ortho(n)}f, K\rangle &= \int_A\int_A\int_{\ortho(n)}f(Tx-Ty)K(x, y)\, d\nu(T) dx dy\\
      &= \int_{\ortho(n)}\int_{TA}\int_{TA} f(x-y)K(T^{-1}x, T^{-1}y)\, dx dy d\nu(T)\\
      &= \int_{\ortho(n)} \langle \convol_{TA} f, TK\rangle\, d\nu(T) \geq 0,
    \end{split}
  \]
  where~$TK(x,y) = K(T^{-1}x, T^{-1}y)$, so that~$TK \in C_r(TA)$ for all~$T \in \ortho(n)$. This uses that the Lebesgue measure is also invariant under the orthogonal group.

  With a similar calculation it follows that
  \[
    \langle \Av_{\ortho(n)}f, \rho \cor \rho\rangle = \int_{\ortho(n)} \langle f, \rho_T \cor \rho_T \rangle\, d\nu(T) \geq 0
  \]
  for all~$\rho \in L^1(\R^n)$, where~$\rho_T(x) = \rho(Tx)$. Thus,~$\Av_{\ortho(n)}f \in \PT(\R^n)$. It is similarly clear that~$\Av_{\ortho(n)}\CP(\R^n)_{\inv} \subseteq \CP(\R^n)$.

  Taking~$\omega_n^{-1} \Av_{\ortho(n)} f$ preserves the objective. Indeed, let~$f \in \PT(\R^n)$, by Bochner's theorem there exists~$\alpha \in M(\R^n)_{\geq 0}$ such that~$f(x) = \int_{\R^n} e^{ix^{\tr}\xi}\, d\alpha(\xi)$. Then,~$Mf = \alpha(0)$; to see this, let~$\phi_T = T^{-n}\int_{[-T/2,T/2]^n} e^{iu^{\tr}x}\, dx$. Then, by pointwise convergence~$\phi_T \to \1_{\{0\}}$ and the dominated convergence theorem,~$\alpha(0) = \lim_{T \to \infty} \alpha(\phi_T)$. The measure from Bochner's theorem corresponding to~$\omega_n^{-1} \Av_{\ortho(n)}f$ is given by~\eqref{eqn:measure-of-radial-function}, so~$Mf = \Omega_n(0)\alpha(0) = \alpha(0)$, since~$\Omega_n(0) = 1$. The normalization and edge are constraints also preserved under this operation. Thus, in the remainder assume that all solutions are radial.

  From here on, the proof follows steps similar to that of Theorem~\ref{thm:convergence-cp-hierarchy-sphere}. That is, take for all~$r \in \N$ a radial feasible solution~$f_r$ of~$\vartheta(G(D), \bC_r(\R^n)_{\inv})$ such that~$Mf_r \geq m_D(\R^n) / 2$. Then, show that there exists a feasible solution of~$\vartheta(G(D), \CP(\R^n)_{\inv})$ such that for all~$r_0 \in \N$ there is an~$r \geq r_0$ such that~$f(0)^{-1}Mf \geq Mf_r$. This and~$\lim_r \vartheta(G(D), \bC_r(\R^n)_{\inv}^*) \geq m_D(\R^n)$ concludes the proof.

  Since each~$f_r$ is radial, there exists for all~$r$ an~$\alpha_r \in M(\R)_{\geq 0}$ such that~$f_r(x) = \int_{\R} \Omega_n(t\|x\|)\, d\alpha_r(t)$. Then,~$1 = f_r(0) = \alpha_r(\R) = \|\alpha_r\|$, thus the sequence~$(\alpha_r)_r$ lies in the unit ball of~$M(\R)$, which is compact by Banach-Alaoglu~\cite[Theorem V.3.1]{Conway2010AAnalysis}. Since~$C_0(\R)$ is separable, the weak* topology on the unit ball is metrizable~\cite[Theorem V.5.1]{Conway2010AAnalysis}, thus it is sequentially compact, and~$(\alpha_r)_r$ has a converging subsequence; assume the sequence itself converges to~$\alpha \in M(\R)$. Then,~$\alpha$ is nonzero, since for all~$r$ and continuous functions~$\phi$ with~$0 \in \supp \phi$,~$\alpha(\phi) = \lim_r \alpha_r(\phi) \geq \lim_r \alpha_r(0) \geq m_D(\R^n)/2$.

  Define~$f \in L^{\infty}(\R^n)$ by~$f(x) = \int_{\R} \Omega_n(t\|x\|)\, d\alpha(t)$, which by Bochner's theorem is positive type, thus continuous, and~$f(0) = \|\alpha\| \leq 1$. Moreover, note that~$f_r \to f$ under the weak* topology in~$L^{\infty}(\R^n)$.

  Since~$\bC_{r+1}(\R^n)_{\inv}^* \subseteq \bC_r(\R^n)_{\inv}^*$ for all~$r \in \N$, and each cone is weak* closed, it follows that~$f \in \bigcap_r \bC_r(\R^n)_{\inv} = \CP(\R^n)$ by Theorem~\ref{thm:Polyas-theorem-for-invariant-cones}. Moreover, since~$\Omega_n \in C_0(\R)$, it follows that for all~$x \in \R^n\setminus\{0\}$ the map~$t \mapsto \Omega_n(t\|x\|)$ is in~$C_0(\R)$, so~$f(x) = \int_\R \Omega_n(t\|x\|)\, d\alpha(t) = \lim_r \int_{\R} \Omega_n(t\|x\|)\, d\alpha_r(t) = \lim_r f_r(x)$. This shows that~$f_r$ converges to~$f$ pointwise on~$\R^n$, except perhaps at~$0$. In particular, if~$x \in D$,~$f(x) = 0$.

  It follows that~$f(0)^{-1} f$ is feasible for~$\vartheta(G(D), \CP(\R^n)_{\inv})$. It is left to show that~$f(0)^{-1} Mf \leq \lim_r Mf_r$.  Moreover, with~$\phi_T =T^{-n}\int_{[-T/2,T/2]^n}e^{ix^{\tr}\xi}\, d\alpha(\xi)$ as before, since~$\phi_T(0) = 1$ for all~$T$ and~$\alpha_r \geq 0$ for all~$r$, it follows that,
  \[
    \alpha(\phi_T) = \lim_{r \in \N} \alpha_r(\phi_T) \geq \lim_{r \in \N} \alpha_r(0).
  \]
  Therefore,~$\alpha(0) = \lim_T \alpha(\phi_T) \geq \lim_{T} \lim_r \alpha_r(0) = \lim_r \alpha_r(0)$. Since~$\|\alpha\| \leq 1$, indeed~$M(f(0)^{-1}f) \geq \lim_r Mf_r$, and the result follows.
\end{proof}

\section{Some discussion of the implementation}
By analogy with Witsenhausen's problem (Chapter~\ref{ch:witsenhausen}) we might hope that an implementation of~$\vartheta(G(\{1\}), \bC_1(\R^2)^*_{\inv})$ leads to better bounds on~$m_1(\R^2)$. Although there indeed is again a closely related optimization problem that gives provable upper bounds on~$m_1(\R^2)$ which are at least as strong as the best known bounds, we have not yet found strictly better bounds. It is unclear whether this is a shortcoming in the implementation or in the bound itself.

\chapter[Sphere packing]{Sphere packing}%
\label{ch:sphere-packing}
This chapter is part of ongoing work with David de Laat and Fernando Mário de Oliveira Filho.

\sectionbreak

A \defi{sphere packing}\index{sphere packing} is a collection of congruent balls in~$\R^n$ having pairwise-disjoint interiors. Problem~\ref{it:problem-4} from the introduction asks for the largest fraction of Euclidean space that can be covered by a sphere packing; this is called the~\defi{maximal sphere-packing density}. The problem of determining this number is the~\defi{sphere-packing problem}\index{problem!sphere packing}. Linear programming bounds were introduced to the sphere-packing problem by Cohn and Elkies~\cite{Cohn2003NewI} and proved highly successful. In dimensions 8 and 24 they are even sharp~\cite{Viazovska2017The8,Cohn2017The24}.

Attention has recently moved to improving the upper bound by restricting the linear programming bound. Cohn, De Laat, and Salmon~\cite{cohn2022ThreepointPacking} introduced a three-point bound inspired by the block moment hierarchy for compact packing graphs. Cohn and Salmon~\cite{cohn2021SphereRescaling} made an in-depth study of the relation between bounds on compact packing graphs and bounds on the sphere-packing problem, leading to a definition of a moment hierarchy for sphere packing, including a proof of its convergence.

In this chapter, we complement this work by introducing a copositive formulation of the maximal sphere-packing density and proving convergence of a corresponding hierarchy. The proof that the copositive formulation is an upper bound is a straight-forward modification of the proof that Cohn and Elkies gave that their linear programming bound upper bounds the sphere packing density. Exactness then follows by a reduction to compact packing graphs.

We will optimize over functions in a vector space~$X$ that lies between the space of compactly supported smooth functions~$C_c^{\infty}(\R^n)$\index{ Cczzz@$C_c^{\infty}(\R^n)$} and~$C_0(\R^n) \cap L^1(\R^n)$. There is a good reason to treat the space~$X$ as a parameter. From the perspective of the theory it seems sufficient to only ask that the functions are integrable and continuous. On the other hand, the space of Schwartz functions has proven especially effective for calculating bounds~\cite{Cohn2003NewI, Cohn2002NewII, cohn2022ThreepointPacking}. The~\defi{Schwartz space}\index{Schwartz space}, denoted by~$\swrtz(\R^n)$\index{ S Rn@$\swrtz(\R^n)$}, is defined as the space of all smooth functions~$f(x)$ such that all partial derivatives decay faster than any negative power of~$\|x\|$. Viazovska showed~\cite{Viazovska2017The8} that for the sphere-packing problem in~$\R^8$, the Cohn-Elkies programming bound has an optimizer~$f_{\rm opt}$ such that~$f_{\rm opt} \in \swrtz(\R^n)$, and whose objective value is exactly the maximal sphere packing density~$\R^8$. This result was later also obtained for~$\R^{24}$~\cite{Cohn2017The24}.

It is an open question whether there exists an optimizer that is Schwartz for every dimension. We will see, however, that for any space~$X$ between~$C_0(\R^n)$ and~$C_c^{\infty}(\R^n)$, the maximal sphere-packing density has a copositive formulation over~$X$. Hence, there is a copositive formulation over~$\swrtz(\R^n)$.

The Fourier transform plays an important role in the optimization problems in this chapter; for~$f \in L^1(\R^n)$, let
\[
  \widehat{f}(x) = \int_{\R^n} f(y) e^{-2\pi i x^{\tr}y}\, dy
\]
be the Fourier transform of~$f$. In this chapter, the optimization bounds we will investigate are of the form
\index{ theta SP@$\vartheta_{\SP}^X(\cC)$}
\begin{equation}%
  \label{eqn:sphere-packing-bound}
  \begin{optprob}
    \vartheta^X_{\SP}(\cC) = \inf & \onerow{f(0) + g(0)}\\
    &\onerow{\widehat{f}(0) = 1,}\\
    &f(x) + g(x) \leq 0 &\text{for all }\|x\|\geq 1,\\
    &\onerow{f \in \PT(\R^n) \cap X,\ g \in \cC \cap X,}
  \end{optprob}
\end{equation}
where~$\cC \subseteq L^1(\R^n)$ is a convex cone and~$C_c^{\infty}(\R^n) \subseteq X \subseteq C_0(\R^n) \cap L^1(\R^n)$ a linear subspace. The Cohn-Elkies bound is of this form, with~$X$ a suitable space and~$\cC = \{0\}$.

\section{Sphere-packing densities}
The~\defi{maximal sphere-packing density} of~$\R^n$ is the quantity
\[
  \sup\{\, \ud(S) : S\text{ is a sphere packing}\, \}.
\]
This number is scaling invariant, in the sense that the radius of the spheres making up the packing does not matter, as long as they are all the same. In the rest of this chapter, the radius is~$1/2$, which matches much of the literature. Denote a ball of radius~$r$ and center~$p$ by~$B_r(p)$\index{ Brp@$B_r(p)$}. The volume of the ball~$B_{1/2}(0) \subseteq \R^n$ is denoted~$V_{1/2}^n = \lambda(B_{1/2}(0))$\index{ V 12@$V_{1/2}$}.

It is more practical to work with the~\defi{maximal center density}\index{maximal center density}~$\Delta_n$. Write the cube with side length~$T$ centered at~$0$ as~$C_T = [-T/2,T/2]^n$\index{ CT@$C_T$}. The maximal center density is the average number of centers of spheres in a packing per unit volume:
\begin{multline}%
  \label{eqn:sphere-packing-density}
  \Delta_n = \sup \biggl\{\, \limsup_{T \to \infty} \frac{|P \cap C_T|}{T^n} : P \subseteq \R^n \text{ such that } \|x - y\| \geq 1\\
  \text{ for all } x,\ y \in P \, \biggr\}.
\end{multline}
Then,~$V_{1/2}^n \Delta_n$ is exactly the maximal sphere-packing density of~$\R^n$.

In Chapter~\ref{ch:distance-avoiding-sets-euclidean-space} we saw that the density of a $D$-avoiding set can be approximated by the density of a periodic set, which is given by a single integral over the fundamental domain. Something similar holds for the center density, as explained by Cohn and Elkies~\cite[Appendix A]{Cohn2003NewI}.

Recall that for~$n \in \N$ and~$L > 0$ a real number,~$\torus_L = \R^n/L\Z^n$. Define the graph~$G_L = (\torus_L, E_L)$ by saying that~$xy \in E_L$ if and only if~$d_L(x,y) < 1$; then,~$G_L$ is the graph~$G_L(D)$ of Chapter~\ref{ch:distance-avoiding-sets-euclidean-space} with~$D = (0,1)$. The graphs~$G_L$ are compact packing graphs. It can be shown that
\begin{equation}%
  \label{eqn:SP-density-as-limit-over-tori}
  \Delta_n = \lim_{L \to \infty} \frac{\alpha(G_L)}{L^n}.
\end{equation}

It is sometimes easier to work with the closure of the fundamental domain~$C_L$ instead of with the torus itself. Let~$G_{C_L} = (C_L, E_L)$ be the graph with vertex set~$C_L$ and~$xy \in E_L$ if and only if~$\|x - y\| < 1$. The graphs~$G_{C_L}$ are also compact packing graphs. From~\eqref{eqn:SP-density-as-limit-over-tori}, one can obtain
\begin{equation}%
  \label{eqn:sphere-packing-density-limit-over-cubes}
  \Delta_n = \lim_{L \to \infty} \frac{\alpha(G_{C_L})}{L^n}.
\end{equation}

\section{Some more harmonic analysis}
The following lemmas describe mechanics by which we can approximate feasible solutions of~$\vartheta^{X}_{\SP}(\cC)$ by solutions with compact support, having additional properties. In Sections~\ref{sec:upper-bound-sphere-packing} and~\ref{sec:exactness-and-convergence-sphere-packing} we will use these as reduction steps to proof that there are copositive formulations for~$\Delta_n$, and that there is a converging copositive hierarchy. See Section~\ref{ch:complete-positivity-under-symmetry}.\ref{sec:prel-harmonic-analysis} for background and important inequalities.

\begin{lemma}%
  \label{lem:action-of-pt-approximate-identity}
  Let~$g \in L^1(\R^n)$. If~$\rho \in \PT(\R^n)$, then~$(g \cor g) \cor \rho \in \PT(\R^n)$. If~$\rho \in \COP(\R^n)_{\inv}$ and~$g \geq 0$, then~$(g \cor g) \cor \rho \in \COP(\R^n)_{\inv}$.
\end{lemma}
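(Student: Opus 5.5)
The plan is to reduce both claims to a single duality identity: the pairing of $(g\cor g)\cor\rho$ against an autocorrelation $\phi\cor\phi$ equals the pairing of $\rho$ against an autocorrelation of a smeared function. Concretely, for $\phi \in L^1(\R^n)$ (or $L^2$, as needed) I will show, via Fubini--Tonelli and unimodularity (translation invariance) of Lebesgue measure,
\[
  \langle (g\cor g)\cor \rho,\ \phi\cor\phi\rangle = \langle \rho,\ \psi\cor\psi\rangle, \qquad \psi = g * \phi ,
\]
possibly with $\overline{g}$ or $\overline{\phi}$ in place of $g$ or $\phi$. The reason is that cross-correlation is convolution with a reflected function, and convolution on the abelian group $\R^n$ is associative and commutative. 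Hence $(g\cor g)\cor\rho\cor(\phi\cor\phi)$ can be regrouped as $\rho$ correlated against $(g*\phi)\cor(g*\phi)$, evaluated at $0$. The first step is to write both sides as iterated integrals over $(\R^n)^4$ and match them by changes of variables $x\mapsto x+y$.

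For the positive-type claim, positive type means $\int (\phi\cor\phi)\rho \ge 0$ for all $\phi\in L^1$. By Young's inequality, $\psi = g*\phi \in L^1$ whenever $g,\phi\in L^1$. Therefore the right-hand side is nonnegative because $\rho\in\PT(\R^n)$, and the left-hand side is then nonnegative. I also need $(g\cor g)\cor\rho\in L^\infty$, which follows from $\|(g\cor g)\cor\rho\|_\infty\le \|g\|_1^2\|\rho\|_\infty$.

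For the copositive claim, $\rho\in\COP(\R^n)_{\inv}\subseteq L^1$, and the defining condition is $\langle \phi*\rho,\phi\rangle\ge0$ for all $\phi\in L^2(\R^n)_{\ge0}$. Equivalently, $\langle\rho,\phi\cor\phi\rangle\ge 0$, as used in the proof of Theorem~\ref{thm:Polyas-theorem-for-invariant-cones}. First I check that $(g\cor g)\cor\rho\in L^1$, using $\|g\cor g\|_1\le\|g\|_1^2$ together with Young's inequality. Then, for $\phi\in L^2_{\ge0}$, the smeared function $\psi = g*\phi$ is nonnegative because both $g$ and $\phi$ are, and it lies in $L^2$ by Young's inequality ($L^1 * L^2\subseteq L^2$). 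The identity then gives $\langle (g\cor g)\cor\rho,\phi\cor\phi\rangle = \langle\rho,\psi\cor\psi\rangle\ge0$.

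The main obstacle is justifying Fubini for signed integrands. In the copositive case $\rho\in L^1$ may be signed, and in the positive-type case $\phi$ may be signed. I will handle this by bounding the absolute integrand with $|g|,|\phi|,|\rho|$ and applying Tonelli. For $\rho\in L^1$ and $\phi\in L^2$ the bound is finite, since $|\phi|\cor|\phi|\in L^\infty$ by Young's inequality. For $\rho \in L^\infty$ and $\phi\in L^1$ the bound is finite, since everything is in $L^1$. Keeping track of which reflection ($g$ versus $\overline g$) appears in $\psi$ is routine and does not affect the sign arguments, because reflection preserves both nonnegativity and integrability.
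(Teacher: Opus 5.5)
Your proposal is correct and takes the same route as the paper: both reduce the two claims to the identity $\langle (g\cor g)\cor\rho, f\cor f\rangle = \langle \rho, (g*f)\cor(g*f)\rangle$. The paper gets this from $\langle g\cor\rho, f\rangle = \langle \rho, g*f\rangle$ together with $(g\cor g)*(f\cor f) = (g*f)\cor(g*f)$, rather than from a single fourfold integral, and indeed $\psi = g*f$ needs no reflection. Your positive-type case tests against $f\in L^1$ with $\rho\in L^\infty$ and no sign condition on $g$; this matches the thesis's definition of positive type more closely than its written proof, which restricts to $\rho\in L^1$, tests against $f\in L^2$, and uses $g\geq 0$.
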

\begin{proof}
  The correlation of a function~$\rho \in L^1(\R^n)$ and~$f \in L^p(\R^n)$ is in~$L^p(\R^n)$ for all~$1 \leq p \leq \infty$, see Section~\ref{ch:complete-positivity-under-symmetry}.\ref{sec:prel-harmonic-analysis}. So,~$(g \cor g) \cor \rho \in L^1(\R^n)$, and similar for the convolution. In particular, if~$f \in L^2(\R^n)$ and~$g \in L^1(\R^n)$, then~$f * g \in L^2(\R^n)$.

  Let~$\rho$ and~$g \in L^1(\R^n)$ and~$f \in L^{2}(\R^n)$, then by a change of variables and Fubini-Tonelli,
  \[
    \langle g \cor \rho, f \rangle = \int_{\R^n} \int_{\R^n} g(y) \rho(x) f(x-y)\, dydx = \langle \rho, g * f\rangle.
  \]
  Thus, for all~$\rho$ and~$g \in L^1(\R^n)$ and~$f \in L^2(\R^n)$,
  \[
    \langle (g \cor g) \cor \rho, f \cor f\rangle = \langle \rho, (g \cor g) * (f \cor f)\rangle.
  \]
  Furthermore, for all~$x \in \R^n$, by Fubini-Tonelli and the two changes of variables~$y' = y+z$ and~$w' = w+z$,
  \[
    \begin{split}
      (g \cor g) * (f \cor f)(x) &= \int_{\R^n} (g \cor g)(y) (f \cor f)(x-y)\, dy\\
      &= \int_{\R^n} \int_{\R^n} \int_{\R^n} g(z)g(y+z) f(w)f(x-y+w)\, dwdzdy\\
      &= \int_{\R^n} \int_{\R^n} \int_{\R^n} g(z)g(y) f(w - z)f(x + w -y)\, dwdydz\\
      &= \int_{\R^n} (g * f)(w) (g * f) (x+w)\, dw\\
      &= (g*f) \cor (g*f)(x).
    \end{split}
  \]

  Hence, if~$\rho \in \PT(\R^n) \cap L^1(\R^n)$ and~$g \in L^1(\R^n)_{\geq 0}$, then for every~$f \in L^2(\R^n)$ we have~$f * g \in L^2(\R^n)$, and
  \[
    \langle (g \cor g) \cor \rho, f \cor f\rangle =  \langle \rho, (g*f) \cor (g*f) \rangle \geq 0,
  \]
  thus~$(g \cor g) \cor \rho \in \PT(\R^n)$. If~$\rho \in \COP(\R^n)_{\inv}$ and~$f \geq 0$, since~$g\geq 0$ it follows in the same way that~$(g \cor g) \cor \rho \in \COP(\R^n)_{\inv}$.
\end{proof}

The next lemma is a well-known method of bounding the support of a positive-type function, while preserving the constraints that figure in the definition of~$\vartheta_{\SP}^X(\cC)$, see for example the proof of~\cite[Theorem 3.1]{DeLaat2014UpperRadii}. This approximation is used to prove that~$\vartheta_{\SP}^{C_0(\R^n) \cap L^1(\R^n)}(\COP_{\inv}(\R^n))$ is an upper bound on~$\Delta_n$. For a bounded set~$C \subseteq \R^n$, let~$\psi_C = \lambda(C)^{-1} \1_C \cor \1_C$.
\begin{lemma}%
  \label{lem:approximate-by-restrictions}
  Let~$\rho \in C_0(\R^n)$, then
  \begin{enumerate}
    \item[(i)] $\psi_{C_T}\rho \in C_c(\R^n)$ for all~$T > 0$,
    \item[(ii)] if~$\rho \in \PT(\R^n)$, then~$\psi_{C_T} \rho \in \PT(\R^n)$ for all~$T > 0$,
    \item[(iii)] if~$\rho \in \COP(\R^n)_{\inv}$, then~$\psi_{C_T} \rho  \in \COP(\R^n)_{\inv}$ for all~$T > 0$, and
    \item[(iv)] $\lim_T\|\psi_{C_T} \rho - \rho\|_{\infty} = 0$.
  \end{enumerate}
\end{lemma}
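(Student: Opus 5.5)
I will first record the basic properties of the cut-off $\psi_{C_T} = \lambda(C_T)^{-1}\1_{C_T}\cor\1_{C_T}$. Writing it out, $\psi_{C_T}(x) = T^{-n}\lambda(C_T\cap(C_T - x))$. Hence $\psi_{C_T}$ is continuous, satisfies $0\leq \psi_{C_T}\leq 1$, equals $1$ at $0$, and is supported in $C_{2T}$. Explicitly, $\psi_{C_T}(x)=\prod_{i=1}^n \max(0,1-|x_i|/T)$.

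Item (i) follows directly: $\psi_{C_T}\rho$ is a product of continuous functions and is supported in the compact set $C_{2T}$. Item (iv) uses that $\rho\in C_0(\R^n)$. Fix $\epsilon>0$ and choose $R$ with $|\rho(x)|\leq\epsilon$ for $\|x\|_\infty\geq R$. Outside $C_{2R}$ we get $|\psi_{C_T}\rho-\rho|\leq 2|\rho|\leq 2\epsilon$. On $C_{2R}$ we have $|1-\psi_{C_T}|\leq 1-(1-R/T)^n\to 0$, and this is multiplied by $\|\rho\|_\infty$. So the sup norm of the difference tends to $0$.

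For (ii) and (iii), the key observation is that $\psi_{C_T}$ is both of positive type and a nonnegative correlation. Multiplication by such a function should preserve both cones. For (ii), the product of two continuous positive-type functions is again positive type. The cleanest route is Bochner's theorem. Here $\psi_{C_T} = \lambda(C_T)^{-1}\1_{C_T}\cor\1_{C_T}$ has Fourier transform $T^{-n}|\widehat{\1_{C_T}}|^2\geq 0$, which is integrable. So $\psi_{C_T}\rho$ is the inverse Fourier transform of the convolution of two nonnegative measures, hence positive type. Alternatively, one can argue directly: $\psi_{C_T}(x-y)\rho(x-y)$ is a pointwise (Schur) product of two positive-semidefinite kernels, so it is positive semidefinite on every finite set.

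For (iii) I would use the definition $\COP(\R^n)_{\inv}=\{\rho : \langle\rho, \phi\cor\phi\rangle\geq 0 \text{ for } \phi\in L^2_{\geq 0}\}$. Writing $\psi_{C_T}(z)=T^{-n}\int \1_{C_T}(w)\1_{C_T}(w+z)\,dw$ and using Fubini,
\[
\langle \psi_{C_T}\rho, \phi\cor\phi\rangle = T^{-n}\int_{\R^n}\langle \rho, \phi_w\cor\phi_w\rangle\,dw,
\]
where $\phi_w = \phi\cdot\1_{C_T - w}$, suitably shifted. Each $\phi_w$ is nonnegative and in $L^2$, so each integrand is nonnegative, and hence so is the integral. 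The same computation also gives a self-contained proof of (ii) by replacing $\phi\geq 0$ with arbitrary $\phi$.

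The main obstacle is making this Fubini manipulation rigorous: getting the translation signs right in $\phi\cor\phi$ and justifying absolute integrability. For integrability, note that $\rho$ is bounded and $\phi\in L^1\cap L^2$ suffices by density. One must also check that $\psi_{C_T}\rho\in L^1(\R^n)$, so that it lies in the cone, which is defined on $L^1$. This follows from its compact support and boundedness.
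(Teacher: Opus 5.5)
Your proposal is correct. Items (i) and (iv) follow the paper's proof, and your explicit formula $\psi_{C_T}(x)=\prod_i\max(0,1-|x_i|/T)$ makes the uniform convergence on compact sets, which the paper only asserts, immediate. Your Schur-product alternative for (ii) is exactly the paper's argument: it writes $T^n\psi_{C_T}(x-y)\rho(x-y)=\langle \1_{C_T+x},\1_{C_T+y}\rangle\rho(x-y)$ and invokes the finite-matrix criterion for positive type.

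The genuine difference is (iii). The paper says it ``follows by a similar argument'', i.e.\ via Hadamard products of the matrices $\rho[U]$. That route tacitly needs two things.
\begin{enumerate}
\item[(a)] A continuous $\rho$ lies in $\COP(\R^n)_{\inv}$ if and only if every $\rho[U]$ is copositive. This is true but not proved there.
\item[(b)] Something stronger than positive semidefiniteness of $\psi_{C_T}[U]$. A Hadamard product of a positive-semidefinite and a copositive matrix need not be copositive: take $\smallpmatrix{1&-1\\-1&1}$ and $\smallpmatrix{0&1\\1&0}$. What saves the argument is that $\psi_{C_T}[U]$ is a Gram matrix of nonnegative functions, hence completely positive.
\end{enumerate}

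Your identity $\langle\psi_{C_T}\rho,\phi\cor\phi\rangle=T^{-n}\int\langle\rho,\phi_s\cor\phi_s\rangle\,ds$ with $\phi_s=\phi\1_{C_T-s}\geq 0$ encodes precisely this complete positivity. It checks out after the substitution $w=u+s$. It works directly with the defining inequalities of the cone and gives (ii) and (iii) at once, so it buys a more self-contained argument.

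For the Fubini step you do not need a density argument. Integrating out $s$ first gives $\int\1_{C_T}(u+s)\1_{C_T}(z+u+s)\,ds=T^n\psi_{C_T}(z)$, which vanishes off $C_{2T}$. By Cauchy--Schwarz, the absolute integral is then at most $T^n\|\rho\|_\infty\lambda(C_{2T})\|\phi\|_2^2$ for every $\phi\in L^2(\R^n)$.
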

\begin{proof}
  Let~$\rho \in C_0(\R^n)$ and~$T > 0$ a real number. The support of~$\psi_{C_T}$ is contained in~$C_T + C_T$, and the correlation of a bounded function with an integrable function is continuous, so the first statement follows immediately.

  The second and third statement can be understood as a consequence of the fact that the Hadamard product of two positive-semidefinite matrices is positive semidefinite, and likewise, the Hadamard product of a positive-semidefinite matrix and a copositive matrix is copositive. Indeed, a continuous function is positive type if and only if for all~$U \subseteq \R^n$ the matrices~$f[U] = (f(u_i - u_j))_{u_i, u_j \in U}$ are positive semidefinite~\cite[Proposition 3.35]{Folland2016AAnalysis}. Take~$x$ and~$y \in \R^n$, then by a change of variables
  \begin{multline*}
    T^n (\psi_{C_T}\rho)(x - y) = \int_{\R^n} \1_{C_T}(z) \1_{C_T}(x - y + z)\, dz \rho(x-y)\\
    = \langle \1_{C_T +x}, \1_{C_T + y} \rangle \rho(x-y),
  \end{multline*}
  and the inner product is given by a positive definite form, which proves the second statement. The third follows by a similar argument.

  Finally, for convergence, note that
  \[
    \|\psi_{C_T} \rho - \rho\|_{\infty} = \sup_{x \in \R^n} \biggl|\frac{\lambda(C_T \cap (C_T - x))}{T^n} - 1\biggr||\rho(x)|.
  \]
  Since~$\rho \in C_0(\R^n)$, for every~$\epsilon > 0$ there is a compact~$K_{\epsilon}$ such that~$|\rho(x)| \leq \epsilon$ on~$\R^n \setminus K_{\epsilon}$. Surely, since~$\lambda(C_T \cap (C_T - x)) \leq T^n$, this implies
  \[
    \sup_{x \in \R^n \setminus K_{\epsilon}} \biggl|\frac{\lambda(C_T \cap (C_T - x))}{T^n} - 1\biggr||\rho(x)| \leq \sup_{x \in \R^n \setminus K_{\epsilon}} |\rho(x)| \leq \epsilon.
  \]
  On the other hand, the sequence~$T^{-n}\lambda(C_T \cap (C_T - x))$ can be shown to converge to~$1$, uniformly for~$x$ in a compact set. Take~$T$ large enough such that~$|T^{-n}\lambda(C_T \cap (C_T - x)) - 1| \leq \epsilon / \|\rho\|_{\infty}$ for all~$x \in K_{\epsilon}$, then
  \[
    \sup_{x \in K_{\epsilon}} \biggl|\frac{\lambda(C_T \cap (C_T - x))}{T^n} - 1\biggr||\rho(x)| \leq \epsilon \|\rho\|_{\infty} / \|\rho\|_{\infty} = \epsilon,
  \]
  and the result follows.
\end{proof}

For~$r \in \R$, let~$\rho_{r}(x) = \rho(r x)$. We will use the next lemma to smoothen functions, while again preserving the constraints from~$\vartheta_{\SP}^X(\cC)$.
\begin{lemma}%
  \label{lem:approximate-by-correlation}
  If~$\rho \in C_c(\R^n)$ such that~$\rho(x) \leq 0$ for all~$x$ with~$\|x\| \geq 1$, if~$r_i = 1/(1-1/i)$ for all~$i \in \N_{\geq 2}$, and if~$(\phi_i)_{i \in \N_{\geq 2}}$ is an approximate identity such that~$\supp \phi_i \subseteq B_{1/i}(0)$ for all~$i$, then
  \begin{enumerate}
    \item[(i)] $r_i^n \phi_i \cor \rho_{r_i}$ is in~$C_c(\R^n)$ for all~$i$,
    \item[(ii)] $(r_i^n \phi_i \cor \rho_{r_i})(x) \leq 0$ for all~$x$ with~$\|x\| \geq 1$ and all~$i$,
    \item[(iii)] $\int_{\R^n} (r_i^n \phi_i \cor \rho_{r_i}) (x)\, dx = \int_{\R^n}\rho(x)\, dx$, and
    \item[(iv)]  $\lim_i|(r_i^n \phi_i \cor \rho_{r_i})(x) - \rho(x)| = 0$ uniformly on bounded sets.
  \end{enumerate}
\end{lemma}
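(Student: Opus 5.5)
The plan is to verify the four items directly from the definitions, writing $h_i = r_i^n \phi_i \cor \rho_{r_i}$, so that
\[
  h_i(x) = r_i^n\int_{\R^n} \phi_i(y)\rho(r_i(x+y))\, dy,
\]
using the convention $f \cor g(x) = \int f(y)g(x+y)\,dy$ on $\R^n$. Since $\phi_i$ is symmetric, the sign convention of the correlation is immaterial.

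For (i), $\rho_{r_i}$ is continuous with compact support $r_i^{-1}\supp\rho$. The correlation of a bounded compactly supported function with an integrable function is continuous, as recalled in Section~\ref{ch:complete-positivity-under-symmetry}.\ref{sec:prel-harmonic-analysis}. Its support lies in $\supp\phi_i + r_i^{-1}\supp\rho$, up to the sign of the first summand, and this set is compact. So $h_i \in C_c(\R^n)$.

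For (ii), take $\|x\|\geq 1$ and $y \in \supp\phi_i \subseteq B_{1/i}(0)$. Then
\[
  \|r_i(x+y)\| \geq r_i(1-1/i) = 1.
\]
Hence $\rho(r_i(x+y)) \leq 0$. Since $\phi_i\geq 0$, the integrand is nonpositive, so $h_i(x)\leq 0$. This step is exactly where the choice $r_i = 1/(1-1/i)$ is used.

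For (iii), apply Fubini. This gives
\[
  \int h_i = r_i^n \int\phi_i \int \rho_{r_i} = r_i^n\cdot 1\cdot r_i^{-n}\int\rho = \int \rho .
\]

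For (iv), which I expect to be the only step needing care, split the error with the triangle inequality:
\[
  |h_i - \rho| \leq |r_i^n - 1|\,|\phi_i\cor\rho_{r_i}| + |\phi_i\cor\rho_{r_i} - \phi_i \cor \rho| + |\phi_i\cor\rho - \rho|.
\]
The first term is at most $|r_i^n-1|\,\|\rho\|_\infty$, which tends to $0$ because $r_i\to 1$. The second term is at most $\|\rho_{r_i}-\rho\|_\infty$. Since $\rho\in C_c(\R^n)$ is uniformly continuous and $\|r_ix - x\| = (r_i-1)\|x\|$, this is small uniformly on bounded sets. In fact it is small globally: outside a large ball both $\rho$ and $\rho_{r_i}$ vanish, because $r_i\geq 1$. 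The third term is bounded by $\sup_{\|y\|\leq 1/i}|\rho(x+y)-\rho(x)|$, since $\phi_i$ is a nonnegative probability density supported in $B_{1/i}(0)$. This tends to $0$ uniformly by uniform continuity. Altogether the convergence is even uniform on all of $\R^n$, which is stronger than the claim.
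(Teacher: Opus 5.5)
Your proposal is correct and follows essentially the same route as the paper. Items (i)--(iii) are argued the same way: continuity and support of correlations, the reverse triangle inequality with $r_i(1-1/i)=1$, and Fubini with the change of variables. For (iv), the paper bounds the error by $\sup_{\|y\|\le 1/i}|r_i^n\rho(r_i(x-y))-\rho(x)|$ and splits off the factor $r_i^n-1$, which uses uniform continuity and $r_i\to 1$ just as your three-term split does; your remark that the convergence is uniform on all of $\R^n$, because $\rho$ and $\rho_{r_i}$ vanish outside a fixed ball, is a harmless strengthening of the stated claim.
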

\begin{proof}
  The first statement follows from the fact that the cross-correlation of two bounded functions with compact support is continuous, for instance by Lemma~\ref{lem:averaged-rank-one-is-continuous}, and the support of the correlation is contained in the sum of the support, which follows from a direct calculation.

  The second statement follows from the definition of the correlation and the reverse triangle inequality. Indeed, let~$\|x\|\geq 1$ and~$\|y\|\leq 1/i$, then
  \[
    \|r_i(x - y)\| \geq \frac{|\|x\| - \|y\||}{1-1/i} \geq 1.
  \]
  Then, for all~$x$ with~$\|x\| \geq 1$,
  \[
    \phi_i \cor \rho_{r_i}(x) = \int_{B_{1/i}(0)} \phi_i(-y) \rho(r_i(x-y))\, dy \leq 0.
  \]

  For the third statement, use~$\int \phi_i(x)\, dx = 1$ for all~$i$ and Fubini's theorem, invariance of the Haar measure, and the change of variables~$x' = r_i(x - y)$, which gives
  \begin{multline*}
    \int_{\R^n} r_i^n \phi_i \cor \rho_{r_i}(x)\, dx = \int_{\R^n} \int_{\R^n} r_i^n \phi_i(-y) \rho(r_i(x-y))\, dydx\\
    = \int_{\R^n} \int_{\R^n} \phi_i(y) \rho(x)\, dxdy = \int_{\R^n} \rho(x)d\, x.
  \end{multline*}

  For the fourth statement, by Hölder's inequality,
  \begin{multline*}
    r_i^n\phi_i \cor \rho_{r_i}(x) - \rho(x) = \int_{\R^n}\phi_i(-y) (r_i^n\rho(r_i(x-y)) - \rho(x))\, dy\\
    \leq \sup_{y \in B_{1/i}(0)}| r_i^n\rho(r_i(x-y)) - \rho(x) |.
  \end{multline*}
  Whence, it suffices to find for all~$\epsilon > 0$, an~$i_{\epsilon}$ such that for all~$i \geq i_{\epsilon}$
  \[
    \sup_{y \in B_{1/i}(0)} |r_i^n\rho(r_i(x-y)) - \rho(x)| \leq \epsilon,
  \]
  which only depends on~$\|x\|$.

  Fix a real number~$M \geq 0$, a vector~$x \in \R^n$ such that~$\|x\| \leq M$, and a vector~$y \in B_{1/i}(0)$. Then
  \[
    \|x - r_i(x-y)\| = \biggl\|\frac{x/i - y}{1 - 1/i}\biggr\| \leq \frac{\|x\| + i\|y\|}{i-1} \leq \frac{M+1}{i-1},
  \]
  Hence, for all~$\delta > 0$ there exists an~$i_{\delta}$ such that for all~$i \geq i_{\delta}$,~$\|x - r_i(x-y)\| \leq \delta$ for all~$y \in B_{1/i}(0)$.

  The function~$\rho$ is uniformly continuous, so for every~$\epsilon > 0$ there exists a~$\delta(\epsilon)$ such that, if~$\|x-y\| \leq \delta(\epsilon)$, then~$|\rho(x) - \rho(y)| \leq \epsilon$. In particular, for all~$x \in B_M(0)$ and~$y \in B_{1/i}(0)$ with~$i \geq i_{\delta(\epsilon)}$,~$|\rho(r_i(x-y)) - \rho(x)| \leq \epsilon$.

  Finally, let~$\epsilon > 0$,~$\bar{\epsilon} = \epsilon / \|\rho\|_{\infty} + 1$, and~$i_0 = \max\{i_{\delta(\epsilon)}, \lceil\bar{\epsilon}^{1/n} / (\bar{\epsilon}^{1/n} - 1)\rceil\}$. Then, for all~$i \geq i_0$, since~$i \geq 2$ and~$i / (i-1)$ is decreasing in~$i$,
  \[
    \begin{split}
      |r_i^n\rho(r_i(x-y)) - \rho(x)| &= \biggl|\frac{\rho(r_i(x-y)) - \rho(x)(1-1/i)^n}{(1-1/i)^n}\biggr|\\
      &\leq \frac{|\rho(r_i(x-y)) - \rho(x)|}{(1-1/i)^n} + \frac{|((1-1/i)^n - 1)\rho(x)|}{(1-1/i)^n}\\
      &\leq 2^n|\rho(r_i(x-y)) - \rho(x)| + \biggl(\biggl(\frac{i}{i-1}\biggr)^n - 1\biggr)\|\rho\|_{\infty}\\
      &\leq 2^n|\rho(r_i(x-y)) - \rho(x)| + (\bar{\epsilon} - 1)\|\rho\|_{\infty}\\
      &\leq \epsilon(2^n+1),
  \end{split}\]
  which goes to~$0$ as~$\epsilon$ goes to~$0$.
\end{proof}

\section{Upper bounds on the maximal sphere-packing density}%
\label{sec:upper-bound-sphere-packing}
Because~$\vartheta_{\SP}$ is a minimization problem, for any~$X \subseteq Y \subseteq L^1(\R^n) \cap C_0(\R^n)$ and any convex cone~$\cC \subseteq L^1(\R^n)$,
\begin{equation}%
  \label{eqn:SP-theta-under-inclusion}
  \vartheta_{\SP}^Y(\cC_Y) \leq \vartheta_{\SP}^X(\cC_X).
\end{equation}
We will use this principle to show that any subspace~$X$ of~$C_0(\R^n) \cap L^1(\R^n)$ that contains~$C^{\infty}_c(\R^n)$ satisfies~$\vartheta_{\SP}^X(\COP(\R^n)_{\inv}) = \Delta_n$. We will first show the inequality~$\Delta_n \leq \vartheta^{C_0(\R^n) \cap L^1(\R^n)}_{\SP}(\COP(\R^n)_{\inv})$.

Recall that for a compact packing graph~$G = (V, E)$ and a convex cone of continuous symmetric kernels~$\cC \subseteq C_{\sym}(V)$,
\[
  \begin{optprob}
    \vartheta^*(G, \cC) = \inf & \onerow{t}\\
    &T(x, x) \leq t-1 & \text{for all } x \in V,\\
    &T(x, y) \leq -1 & \text{for all } xy \in E,\\
    &\onerow{T \in \cC.}
  \end{optprob}
\]
Dobre, Dür, Frerick, and Vallentin~\cite{Dobre2016AGraphs} showed that~$\alpha(G) = \vartheta^*(G, \COP(V)_{\cont})$. Together with identity~\eqref{eqn:SP-density-as-limit-over-tori}, this implies
\begin{equation}%
  \label{eqn:limit-over-copositive-theta-tori}
  \Delta_n = \lim_{L \to \infty} \frac{\vartheta^*(G_L, \COP(\torus_L)_{\cont})}{L^n}.
\end{equation}

To see how the problems~$\vartheta^*(G_L, \COP(\torus_L)_{\cont})$ correspond to the problem~$\vartheta^{C_0(X) \cap L^1(\R^n)}_{\SP}(\COP(\R^n)_{\inv})$, think of the functions~$f$ and~$g$ as the image under~$\Av_{\R^n}$ of kernels on~$\R^n$. The main difference between the~$\vartheta^*$ for compact packing graphs as above and~$\vartheta_{\SP}$ is the presence of the auxiliary function~$f$ in the latter.

Fix a compact packing graph~$G = (V, E)$. Let us first bring~$\vartheta^*(G, \cC)$ into a similar form. Of course,~$\vartheta^*(G, \cC)$ is equivalent to
\[
  \begin{optprob}
    \inf & \onerow{t}\\
    &T(x, x) \leq t & \text{for all } x \in V,\\
    &T(x, y) \leq 0 & \text{for all } xy \in E,\\
    &\onerow{T - J \in \cC,}
  \end{optprob}
\]
where~$J = \1^{\otimes 2}$. If~$\cC$ is a convex cone that contains~$\PSD(V)$, then~$T - J \in \cC$ if and only if~$T = F + G$ where~$F - J\in \PSD(V)$ and~$G \in \cC$; indeed, if~$T - J \in \cC$, then take~$F = J$ and~$G = T-J$. On the other hand, if~$F - J \in \PSD(V)$ and~$G \in \cC$, then~$G + F - J \in \cC$ because~$\cC$ is convex. It follows that for every convex cone~$\cC$ containing the positive-semidefinite cone,
\[
  \begin{optprob}
    \vartheta^*(G, \cC) = \inf & \onerow{t}\\
    &F(x, x) + G(x, x) \leq t & \text{for all } x \in V,\\
    &F(x, y) + G(x, y) \leq 0 & \text{for all } xy \in E,\\
    &\onerow{F - J \in \PSD(V) \text{ and } G \in \cC.}
  \end{optprob}
\]

The split of~$T$ into~$F$ and~$G$ is important. The constraint~$F - J \in \PSD(V)$ comes down to a simple eigenvalue condition, whereas, a priori, the constraint~$T - J \in \COP(V)_{\cont}$ is a difficult one.

The identity~\eqref{eqn:limit-over-copositive-theta-tori} suggests an approach for the proof of the Theorem~\ref{thm:C0-theta-is-upper-bound}: show that for every feasible solution~$(f, g)$ of~$\vartheta_{\SP}^{C_0(\R^n) \cap L^1(\R^n)}(\COP(\R^n)_{\inv})$, for large enough~$L$, there is a feasible solution~$(t, F, G)$ of~$\vartheta^*(G_L, \COP(\torus_L)_{\cont})$ such that~$f(0) + g(0) \geq t/L^n$.

\begin{theorem}%
  \label{thm:C0-theta-is-upper-bound}
  For all~$n \geq 2$,~$\vartheta_{\SP}^{C_0(\R^n) \cap L^1(\R^n)}(\COP(\R^n)_{\inv}) \geq \Delta_n$.
\end{theorem}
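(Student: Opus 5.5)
Given a feasible pair $(f,g)$ of $\vartheta_{\SP}^{C_0(\R^n)\cap L^1(\R^n)}(\COP(\R^n)_{\inv})$, the plan is to produce, for every $\epsilon>0$ and all large $L$, a feasible solution $(t,F,G)$ of the split reformulation of $\vartheta^*(G_L,\COP(\torus_L)_{\cont})$ with $t/L^n\le f(0)+g(0)+\epsilon$. Then \eqref{eqn:limit-over-copositive-theta-tori} gives $\Delta_n\le f(0)+g(0)+\epsilon$, and hence the theorem.

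The first step is to reduce to compact support. Replace $(f,g)$ by $(\psi_{C_T}f,\psi_{C_T}g)$ for large $T$. By Lemma~\ref{lem:approximate-by-restrictions}, both functions are compactly supported and continuous, they stay in $\PT(\R^n)$ and $\COP(\R^n)_{\inv}$ respectively, and the sign condition $f+g\le0$ for $\|x\|\ge1$ is preserved because $\psi_{C_T}\ge0$. The values at $0$ are unchanged since $\psi_{C_T}(0)=1$. The only thing that moves is $\widehat f(0)=\int f$, and it moves by a little. Control of this uses $f\in L^1$ together with $\psi_{C_T}\to1$ pointwise and dominated convergence. Rescaling $f$ by $1/\int\psi_{C_T}f$, the objective changes by at most $\epsilon$. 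A small issue is that rescaling $f$ alone could break $f+g\le0$ when the factor exceeds $1$. To avoid this, I would rescale the pair jointly (both cones are closed under positive scaling) by $\max(1,1/\int\psi_{C_T} f)$; this again costs only $\epsilon$ in the objective.

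The second step is periodization. Take $L$ larger than twice the diameter of the supports, and set $F(x,y)=\sum_{v\in L\Z^n}f(x-y+v)$ and $G(x,y)=\sum_v g(x-y+v)$ on $\torus_L$; by the choice of $L$ each sum has at most one nonzero term. Then:
\begin{itemize}
\item The edge constraint is immediate: if $d_L(x,y)\ge1$, every translate $x-y+v$ has norm $\ge1$, so $F+G\le0$ there.
\item The diagonal gives $t=f(0)+g(0)$.
\item For the copositivity of $G$, take a finite $U\subseteq\torus_L$ and lift it into $C_L$. The matrix $G[U]$ coincides with $g[\tilde U]$ because supports are small, and $g[\tilde U]$ is copositive. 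Here I would use that $\COP(\R^n)_{\inv}$ membership of a continuous $g$ implies copositivity of all matrices $g[\tilde U]$. This is obtained by testing against $\rho\cor\rho$ with $\rho$ a sum of approximate identities at the points of $\tilde U$ and passing to the limit.
\end{itemize}

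The main obstacle is $F-L^{-n}\cdot$(rescaled)$J\in\PSD(\torus_L)$, with the right normalization. I would use the Fourier series of $F$ on $\torus_L$. Its coefficients are $L^{-n}\widehat f(k/L)$, which are nonnegative since $f$ is of positive type (Bochner), and the zeroth coefficient is $L^{-n}\widehat f(0)=L^{-n}$. So $F-L^{-n}J\succeq0$. Multiplying the whole solution by $L^n$ gives $(L^nF,L^nG)$ feasible with $L^nF-J\succeq0$ and $t=L^n(f(0)+g(0))$. Hence $\vartheta^*(G_L,\COP(\torus_L)_{\cont})/L^n\le f(0)+g(0)+\epsilon$, and letting $L\to\infty$ then $\epsilon\to0$ finishes the proof. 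Care is needed to confirm that $\widehat f(k/L)\ge0$ holds for a continuous compactly supported positive-type $f$, which is true since $\widehat f$ is then a continuous nonnegative function.
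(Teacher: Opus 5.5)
The reduction to compact support and the treatment of $F$ are sound. The Fourier coefficients of the periodization are $L^{-n}\widehat f(k/L)\ge 0$, so $L^nF-J$ is positive semidefinite. Your explicit factor $L^n$ is the right bookkeeping, since $J$ has eigenvalue $\mu_L(\torus_L)=L^n$ on the constants. (Joint scaling by \emph{any} positive factor preserves $f+g\le 0$, so you can simply divide both functions by $\int\psi_{C_T}f$. Your $\max$ can leave $\widehat f(0)>1$, which is harmless only because $\widehat f(0)\ge 1$ is all you use later.)

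The gap is the copositivity of $G$: the claim that $G[U]$ coincides with $g[\tilde U]$ for a lift $\tilde U\subseteq C_L$ is false. Small supports only guarantee that each entry $G(u_i,u_j)$ equals $g$ evaluated at the minimal-norm representative of $u_i-u_j$ modulo $L\Z^n$. The lifted difference $\tilde u_i-\tilde u_j$ need not be that representative. For instance, two points near opposite faces of $C_L$ are close in $\torus_L$, so $G(u_i,u_j)$ can be nonzero while $g(\tilde u_i-\tilde u_j)=0$. Choosing the lift more cleverly does not help in general. $U$ may contain a chain of pairwise close points winding once around the torus. The lifted consecutive differences then sum to $0$, whereas the minimal representatives sum to a nonzero lattice vector, so no single lift realizes them all. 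These wrap-around configurations are exactly where periodization is nontrivial, and copositivity of $G$ has to hold for every finite $U$.

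What is missing is an averaging over many periods. The paper starts from
\[
\langle G,\rho\otimes\rho\rangle_{\torus_L}=\langle g,(\rho\cor_{\torus_L}\rho)\smallcirc p\rangle
\]
and inserts the factor $T^{-n}\1_{C_T}\cor\1_{C_T}\to 1$. This expresses the quantity as a limit of averages over $z\in\torus_L$ of
\[
T^{-n}\int_{C_T}\int_{C_T}g(x-y)\rho(p(x)+z)\rho(p(y)+z)\,dx\,dy .
\]
Each of these is nonnegative by copositivity of $g$, tested against the nonnegative function $\1_{C_T}(\cdot)\,\rho(p(\cdot)+z)$.

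In your finite-matrix language the same idea reads as follows. Place the same weights $y\ge 0$ on each of the (roughly $N^n$) translates $\tilde U+v$ with $v\in L\Z^n\cap C_{NL}$, and apply copositivity of $g$ to this enlarged finite set. All but $O(N^{n-1})$ translates see the full periodized sum $\sum_{w\in L\Z^n}g(\tilde u_i-\tilde u_j+w)=G(u_i,u_j)$. Dividing by $N^n$ and letting $N\to\infty$ gives $y^{\tr}G[U]y\ge 0$.
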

\begin{proof}
  Let~$(f, g)$ be a feasible solution of~$\vartheta_{\SP}^{C_0(\R^n) \cap L^1(\R^n)}(\COP(\R^n)_{\inv})$. First, they may be assumed to have compact support: let~$\psi_T = T^{-n} \1_{C_T} \cor \1_{C_T}$. By Lemma~\ref{lem:approximate-by-restrictions}, the functions~$\psi_T f$ and~$\psi_T g$ converge uniformly to~$f$ and~$g$ respectively, have compact support, and~$\psi_T f \in \PT(\R^n)$ and~$\psi_T g \in \COP(\R^n)_{\inv}$. Moreover,~$\psi_T f(x) + \psi_T g (x) \leq 0$ if~$\|x\| \geq 1$.

  If~$\tau_T = \widehat{\psi_T f}(0) = \int_{\R^n} \psi_T(x) f(x)\, dx$, by the dominated convergence theorem,~$\lim_T \tau_T =\int_{\R^n} f(x)\, dx = \widehat{f}(0)$. So, the pair~$(\tau_T^{-1}\psi_T f, \psi_T g)$ is feasible for~$\vartheta_{\SP}^{C_0(\R^n) \cap L^1(\R^n)}(\COP(\R^n)_{\inv})$ and converges uniformly to~$(f, g)$. Whence,~$f$ and~$g$ can be assumed to have compact support.

  Let~$(f, g)$ be a feasible solution to~$\vartheta_{\SP}^{C_0(\R^n) \cap L^1(\R^n)}(\COP(\R^n)_{\inv})$ with compact support, and let~$L$ be large enough such that~$\supp f$ and~$\supp g$ lie entirely in~$[-L/2, L/2)^n$. Let~$p : \R^n \to \torus_L$ be the quotient map. Define the functions
  \[
    \phi(x) = \sum_{v \in L\Z^n} f(x + v)\qquad \text{and}\qquad \gamma(x) = \sum_{v \in L\Z^n} g(x + v);
  \]
  the sums are finite, since~$f$ and~$g$ have compact support. Since these functions are invariant under the action of~$\torus_L$, the kernels~$F(p(x), p(y)) = \phi(x - y)$ and~$G(p(x), p(y)) = \gamma(x - y)$ are well-defined on~$\torus_L$. Moreover,
  \[
    F(x,x) + G(x,x) = \phi(0) + \gamma(0) = \sum_{v \in L\Z^n} f(v) + g(v) = f(0) + g(0),
  \]
  since~$\supp f \cup \supp g \subseteq [-L/2, L/2)^n$. So, take~$t = L^n f(0) + L^n g(0)$. Since we can take~$L \geq 1$, it follows that~$F(x, x) + G(x, x) \leq t$.

  It is left to prove that~$(t, F, G)$ satisfies the edge and cone constraints of~$\vartheta^*(G_L, \COP(\torus_L)_{\cont})$. Recall that~$G_L = (\torus_L, E)$ with~$xy \in E$ if and only if~$d_L(x, y) \geq 1$. If~$d_L(x, y) \geq 1$, then for every~$v \in L\Z^n$,~$\|x - y + v\|\geq 1$. Hence,
  \[
    F(x,y) + G(x,y) = \sum_{v \in L\Z^n} f(x - y + v) + g(x - y + v) \leq 0.
  \]

  That~$F \in \PSD(\torus_L)$ and~$G \in \COP(\torus_L)$ follows from the following. Let~$\mu_L$ be the Haar measure on~$\torus_L$, normalized so that~$\mu_L(\torus_L) = L^n$. Take a function~$\rho \in L^2(\torus_L)$, and denote the inner product on~$L^2(\torus_L)$ by~$\langle \cdot, \cdot \rangle_{\torus_L}$ and the cross-correlation on~$\torus_L$ by~$\cor_{\torus_L}$.
  Then, by Equation~\eqref{eqn:quotient-measure-is-invariant-Radon-measure}, Lemma~\ref{lem:approximate-by-restrictions}, and because~$(\rho \cor_{\torus_L}\hspace{-1.5pt} \rho) \smallcirc p$ is continuous, bounded, and periodic,
  \[
    \begin{split}
      \langle F, \rho \otimes \rho\rangle_{\torus_L} &= \langle \phi, \rho \cor_{\torus_L}\hspace{-1.5pt} \rho\rangle_{\torus_L} = \langle f, (\rho \cor_{\torus_L}\hspace{-1.5pt} \rho)\smallcirc p \rangle\\
      &=\lim_{T \to \infty} \frac{1}{T^n} \langle f (\1_{C_T} \cor \1_{C_T}), (\rho \cor_{\torus_L}\hspace{-1.5pt} \rho)\smallcirc p \rangle\\
      &=\lim_{T \to \infty} \frac{1}{T^{n}} \int_{C_T^n}\int_{C_T^n} f(x-y) (\rho \cor_{\torus_L}\hspace{-1.5pt} \rho)(p(x-y))\, dydx\\
      &=\lim_{T \to \infty} \frac{1}{T^{n}} \int_{C_T^n} \int_{C_T^n} f(x-y)\\
      &\phantom{= \lim_{T \to \infty} \frac{1}{T^{n}} \int_{C_T^n}}\int_{\torus_L} \rho(p(x)+z) \rho(p(y)+z)\, d
      \mu_L(z)dydx\\
      &= \lim_{T \to \infty} \frac{1}{T^{n}} \int_{\torus_L}\\
      &\phantom{= \lim_{T \to \infty} \frac{1}{T^{n}}}\int_{C_T^n} \int_{C_T^n} f(x-y) \rho(p(x)+z) \rho(p(y)+z)\, d
      \mu_L(z)dydx\\
      &\geq 0.
  \end{split}\]
  Thus,~$F \in \PSD(\torus_L)$, and similarly~$\langle G, \rho \otimes \rho\rangle_{\torus_L} \geq 0$ for all~$\rho \in L^2(\torus_L)_{\geq 0}$, hence,~$G \in \COP(\torus_L)$.

  Finally, since~$F$ is invariant under the torus, to show that~$F - J \in \PSD(\torus_L)$ it suffices that the Fourier coefficient of~$\phi$ at~$0$ as a function on~$\torus_L$ is at least~$1$. Indeed, since~$f$ has compact support, again by Equation~\eqref{eqn:quotient-measure-is-invariant-Radon-measure},
  \[
    \begin{split}
      \int_{\torus_L} \phi(x) \, d\mu_L(p(x)) &= \int_{\torus_L} \sum_{v \in L\Z^n} f(x+v)\, d\mu_L(p(x)) = \int_{\R^n} f(x)\, dx = \widehat{f}(0) = 1.
  \end{split}\]
  This concludes the proof.
\end{proof}

\section{Exactness and convergence of the copositive hierarchy}%
\label{sec:exactness-and-convergence-sphere-packing}
We now prove exactness of the copositive bound~$\vartheta_{\SP}^X(\COP(\R^n)_{\inv})$ for any~$C_c^{\infty}(\R^n) \subset X \subseteq C_0(\R^n)$. As a consequence, we will see that if~$X$ is the space of Schwartz functions, the bound is sharp; this is Corollary~\ref{cor:cop-sp-theta-exact}. The following inequality does most of the work to show exactness of the copositive formulation of~$\Delta_n$ and convergence of the copositive hierarchy. Let the set~$\K_{\fin}(\R^n)$ be the family of compact subsets of~$\R^n$ with nonzero measure, and recall the operators~$\convol_K$ from Chapter~\ref{ch:complete-positivity-under-symmetry} given by~$\convol_Kf(x,y) = f(x-y)$ for all~$x$ and~$y \in K$.

\begin{lemma}%
  \label{lem:sp-theta-smaller-than-torus-theta}
  Let~$n \geq 2$ be an integer and~$K \in \K_{\fin}(\R^n)$. If~$\cC(K) \subseteq C_{\sym}(K)$ is a convex cone and~$\cC \subseteq C_0(\R^n)$ is a convex cone containing~$\convol_K^* \cC(K)$, then
  \[
    \vartheta_{\SP}^{C_c(\R^n)}(\cC) \leq \frac{\vartheta^*(G_{K}, \cC(K))}{\lambda(K)}.
  \]
\end{lemma}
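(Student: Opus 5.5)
The plan is to take an arbitrary feasible solution $(t, T)$ of $\vartheta^*(G_K, \cC(K))$ and turn it into a feasible pair $(f, g)$ of $\vartheta_{\SP}^{C_c(\R^n)}(\cC)$ with $f(0) + g(0) \leq t/\lambda(K)$. Taking the infimum over $(t, T)$ then gives the inequality. Here $G_K$ is the compact packing graph on $K$ defined in the same way as $G_{C_L}$. The only constraints of $\vartheta^*$ that I use are
\begin{itemize}
  \item $T(x,x) \leq t - 1$ on $K$;
  \item $T(x,y) \leq -1$ for all $x, y \in K$ with $\|x - y\| \geq 1$, which are exactly the pairs that may occur together in a packing;
  \item $T \in \cC(K)$.
\end{itemize}
This mirrors the proof of Theorem~\ref{thm:C0-theta-is-upper-bound}, run in the opposite direction: there a function on $\R^n$ was periodized to obtain a kernel on a torus, and here a kernel on $K$ is averaged to obtain a function on $\R^n$.

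The construction is as follows. Put
\[
  f = \lambda(K)^{-2}\, \1_K \cor \1_K
  \qquad\text{and}\qquad
  g = \lambda(K)^{-2}\, \convol_K^* T,
\]
where, recalling $\convol_K^* = \Av_{\R^n} \smallcirc \Res_K^*$, we have $\convol_K^* T(z) = \int_{K \cap (K - z)} T(x, x+z)\, dx$ up to the sign convention of the correlation.

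Next I check membership in the required spaces.
\begin{itemize}
  \item Since $\1_K \in L^2(\R^n)$, the function $f$ is of positive type. It is continuous by Lemma~\ref{lem:averaged-rank-one-is-continuous} and supported in the compact set $K - K$, so $f \in \PT(\R^n) \cap C_c(\R^n)$.
  \item $\int_{\R^n} \1_K \cor \1_K = \lambda(K)^2$ by Fubini, so $\widehat{f}(0) = 1$.
  \item By hypothesis $\convol_K^* \cC(K) \subseteq \cC$, and $\cC$ is a cone, so $g \in \cC$.
  \item $g$ is supported in $K - K$, hence compactly supported.
  \item For continuity of $g$, approximate the continuous kernel $T$ uniformly on the compact set $K^2$ by finite sums of products $\phi \otimes \psi$. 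On each product, $\convol_K^*$ acts as $(\phi\1_K) \cor (\psi\1_K)$, which is continuous by Lemma~\ref{lem:averaged-rank-one-is-continuous}. Young's inequality bounds $\|\convol_K^*(T - T')\|_\infty \leq \lambda(K)\, \|T - T'\|_\infty$, so uniform limits preserve continuity and $g \in C_c(\R^n)$.
\end{itemize}

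For the constraints, combine the two integrals. For every $z \in \R^n$,
\[
  f(z) + g(z) = \lambda(K)^{-2} \int_{K \cap (K - z)} \bigl(1 + T(x, x + z)\bigr)\, dx .
\]
If $\|z\| \geq 1$, then every pair $(x, x+z)$ in the domain of integration is a pair in $K$ at distance at least $1$. The edge constraint gives $1 + T(x, x+z) \leq 0$, so $f(z) + g(z) \leq 0$. At $z = 0$ the diagonal constraint gives
\[
  f(0) + g(0) = \lambda(K)^{-2} \int_K \bigl(1 + T(x,x)\bigr)\, dx \leq \lambda(K)^{-2}\, \lambda(K)\, t = t/\lambda(K).
\]
Hence $(f, g)$ is feasible for $\vartheta_{\SP}^{C_c(\R^n)}(\cC)$ with objective at most $t/\lambda(K)$, and taking the infimum over $(t, T)$ proves the lemma.

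I expect the main obstacle to be bookkeeping rather than a new idea. First, the direction and sign conventions of $\cor$ and $\convol_K^*$ must match the pairs $(x, x+z)$ used above. Second, the edge convention in $\vartheta^*$ must be the one placing $T \leq -1$ on pairs at distance at least $1$. Third, continuity of $g$ must be justified for a merely continuous $T$ restricted to a compact set $K$ that need not be a nice domain. I would handle the third point with the uniform approximation argument in the membership step, and the first two by direct inspection of the definitions.
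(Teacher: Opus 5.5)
Your proof is correct and follows the paper's argument: you push a feasible kernel forward with $\convol_K^* = \Av_{\R^n}\smallcirc\Res_K^*$ and check the constraints pointwise through the integral over $K\cap(K-z)$, using the diagonal constraint at $z=0$ and the distance-$\geq 1$ (non-edge) constraint for $\|z\|\geq 1$. The only difference is that the paper works with the split form $T = F + G$, $F - J \succeq 0$, and normalizes by $\widehat{f}(0) \geq \lambda(K)^2$, whereas you take the special split $F = J$; this makes positive type and $\widehat{f}(0)=1$ immediate and needs no assumption $\PSD(K)\subseteq \cC(K)$.
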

\begin{proof}
  If~$\vartheta^*(G_K, \cC(K))$ is infeasible, the conclusion follows. So, let~$(t, F, G)$ be a feasible solution to~$\vartheta^*(G_K, \cC(K))$, and let~$f = \convol_{K}^* F$ and~$g = \convol_{K}^* G$. Then,~$f$ and~$g$ are in~$C_c(\R^n)$. Indeed, the support of both functions is contained in~$K+K$. Since~$\convol_K^* = \Av_{\R^n} \Res_K^*$, a straightforward calculation shows that, since~$F$ and~$G$ are uniformly continuous,~$f$ and~$g$ are continuous.

  If~$\|x\| \geq 1$, then
  \[
    f(x) + g(x) = \int_{\R^n} \bigl(\Res_K^*F(y,x+y) + \Res_K^*G(y,x+y)\bigr)\, dy \leq 0.
  \]
  That~$\convol_K^* G \in \cC(K)$ follows from the assumption on~$\cC$.

  Since~$F \in \PSD(K)_{\cont}$, for all finite~$U \subseteq \R^n$ the matrix~$\Res_U \Res_K^*F$ is positive semidefinite. Since~$\convol_K^* = \Av_{\R^n} \Res_K^*$, it follows that
  \[
    \sum_{u, v\in U} f(u-v) x_u x_v = \int_{\R^n} \sum_{u, v \in U + z} \Res_K^*F(u, v) x_{u - z} x_{v - z}\, dz \geq 0
  \]
  for all finite~$U \subseteq \R^n$ and~$x \in \R^U$. By~\cite[Proposition 3.35]{Folland2016AAnalysis}, it follows that~$f \in \PT(\R^n)$.

  Moreover, since~$F - J \in \PSD(K)$,
  \[
    0 \leq \langle F - J, J\rangle = \int_{\R^n} \int_{\R^n} F(y, x+y)\, dydx - \lambda(K)^2 = \widehat{f}(0) - \lambda(K)^2,
  \]
  so that~$\widehat{f}(0) \geq \lambda(K)^2$. Finally,
  \[
    f(0) + g(0) = \int_{\R^n} F(z,z) + G(z,z)\, dz \leq t \lambda(K),
  \]
  so that~$(f(0) + g(0)) / \widehat{f}(0) \leq t / \lambda(K)$.
\end{proof}

\begin{theorem}%
  \label{thm:smooth-cop-sp-theta-is-exact}
  For all integers~$n \geq 2$,~$\Delta_n = \vartheta_{\SP}^{C^{\infty}_c(\R^n)}(\COP(\R^n)_{\inv})$.
\end{theorem}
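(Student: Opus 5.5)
We prove the two inequalities separately. For the lower bound, note that $C_c^{\infty}(\R^n) \subseteq C_0(\R^n) \cap L^1(\R^n)$. By the monotonicity principle~\eqref{eqn:SP-theta-under-inclusion} and Theorem~\ref{thm:C0-theta-is-upper-bound},
\[
  \Delta_n \leq \vartheta_{\SP}^{C_0(\R^n)\cap L^1(\R^n)}(\COP(\R^n)_{\inv}) \leq \vartheta_{\SP}^{C_c^{\infty}(\R^n)}(\COP(\R^n)_{\inv}).
\]
So everything rests on the reverse inequality $\vartheta_{\SP}^{C_c^{\infty}(\R^n)}(\COP(\R^n)_{\inv}) \leq \Delta_n$.

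For the upper bound, apply Lemma~\ref{lem:sp-theta-smaller-than-torus-theta} with $K = C_L$ and $\cC(K) = \COP(C_L)_{\cont}$. This needs $\convol_{C_L}^*\COP(C_L)_{\cont} \subseteq \COP(\R^n)_{\inv}$. This should follow from $\convol_K^* = \Av_{\R^n}\Res_K^*$ and the computation in the proof of Theorem~\ref{thm:Polyas-theorem-for-invariant-cones}: for $\phi \in L^2(\R^n)_{\geq 0}$ one has $\langle \convol_K^* G, \phi \cor \phi\rangle = \langle G, \convol_K(\phi\cor\phi)\rangle$, and $\convol_K(\phi \cor \phi)$ is completely positive on $K$ by that theorem. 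The lemma then gives
\[
  \vartheta_{\SP}^{C_c(\R^n)}(\COP(\R^n)_{\inv}) \leq \vartheta^*(G_{C_L}, \COP(C_L)_{\cont})/L^n = \alpha(G_{C_L})/L^n,
\]
using Theorem~\ref{thm:cop-theta-exact-packing-graphs} for the compact packing graph $G_{C_L}$. Letting $L\to\infty$ and using~\eqref{eqn:sphere-packing-density-limit-over-cubes} yields $\vartheta_{\SP}^{C_c(\R^n)}(\COP(\R^n)_{\inv}) \leq \Delta_n$. One caveat: the lemma's normalization is $\widehat f(0) \geq \lambda(K)^2$ rather than $=1$, so we rescale $(f,g)$ by $1/\widehat f(0)$. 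This is harmless because both cones are closed under positive scaling and the objective only improves.

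The main obstacle is passing from $C_c(\R^n)$ to $C_c^{\infty}(\R^n)$ without losing more than $\epsilon$ in the objective. Take a feasible pair $(f,g)$ in $C_c(\R^n)$ with value close to optimal, and smooth it with Lemma~\ref{lem:approximate-by-correlation}. Choose a smooth, symmetric approximate identity of the form $\phi_i = h_i \cor h_i$, where $h_i \geq 0$ is smooth with support in $B_{1/(2i)}(0)$, and replace $(f,g)$ by
\[
  (r_i^n\phi_i \cor f_{r_i},\ r_i^n\phi_i \cor g_{r_i}).
\]
These functions are smooth and compactly supported. Item~(ii) of that lemma, applied to $f+g$, preserves the condition $f+g\leq 0$ for $\|x\|\geq 1$. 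Item~(iii) preserves $\widehat f(0)=1$. Item~(iv) gives convergence of the values at $0$.

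It remains to keep the smoothed functions in the cones. Dilation preserves both $\PT(\R^n)$ and $\COP(\R^n)_{\inv}$, since $\phi\mapsto\phi(\cdot/r)$ preserves nonnegativity and maps $L^2$ to $L^2$. Correlation with $h_i\cor h_i$ preserves both cones by Lemma~\ref{lem:action-of-pt-approximate-identity}. Checking these cone-preservation properties carefully, including the scaling factor in $\rho_{r}$, is the step I expect to require the most care. Combining the three steps gives $\vartheta_{\SP}^{C_c^{\infty}(\R^n)}(\COP(\R^n)_{\inv}) \leq \Delta_n + \epsilon$ for every $\epsilon>0$, which completes the proof.
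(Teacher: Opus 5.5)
Your proposal is correct and follows essentially the same route as the paper: the lower bound comes from \eqref{eqn:SP-theta-under-inclusion} and Theorem~\ref{thm:C0-theta-is-upper-bound}, the upper bound over $C_c(\R^n)$ comes from Lemma~\ref{lem:sp-theta-smaller-than-torus-theta} with $K = C_L$ together with exactness for compact packing graphs, and smoothing by $r_i^n\phi_i \cor (\cdot)_{r_i}$ then passes to $C_c^{\infty}(\R^n)$ via Lemmas~\ref{lem:action-of-pt-approximate-identity} and~\ref{lem:approximate-by-correlation}. Two of your choices are, if anything, slightly cleaner than the paper's wording: you check $\convol_{C_L}^*\COP(C_L)_{\cont}\subseteq\COP(\R^n)_{\inv}$ directly by duality instead of appealing to $\COP(\R^n)_{\inv} = \ccone\bigcup_A \convol_A^*\COP(A)$, and you apply Lemma~\ref{lem:approximate-by-correlation}(ii) to $f+g$ rather than to $f$ and $g$ separately.
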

\begin{proof}
  The inequality~$\vartheta_{\SP}^{C^{\infty}_c(\R^n)}(\COP(\R^n)_{\inv}) \geq \Delta_n$ follows from the inequality~\eqref{eqn:SP-theta-under-inclusion} and Theorem~\ref{thm:C0-theta-is-upper-bound}.

  For the other inequality, the strategy is to use Theorem~\ref{thm:cop-theta-exact-packing-graphs} and the expression~\eqref{eqn:sphere-packing-density-limit-over-cubes}, which says that
  \[
    \Delta_n = \lim_{L \to \infty} \frac{\alpha(G_{C_L})}{L^n} =  \lim_{L \to \infty} \frac{\vartheta^*(G_{C_L}, \COP(C_L)_{\cont})}{L^n}.
  \]

  By Theorem~\ref{thm:Polyas-theorem-for-invariant-cones} and~\ref{thm:polar-identities},~$\COP(\R^n)_{\inv} = \ccone \bigcup_{A \in \B_{\fin}}\convol_A^* \COP(A)$, thus it satisfies the conditions of Lemma~\ref{lem:sp-theta-smaller-than-torus-theta}, and
  \[
    \vartheta^{C_0(\R^n)}_{\SP}(\COP(\R^n)_{\inv}) \leq \vartheta^{C_c(\R^n)}_{\SP}(\COP(\R^n)_{\inv}) \leq \frac{\vartheta^*(G_{C_L}, \COP(C_L)_{\cont})}{L^n}
  \]
  for all~$L$. Taking the limit over~$L$ shows that~$\Delta_n = \vartheta_{\SP}^{C_c(\R^n)}(\COP(\R^n)_{\inv})$.

  It remains to show that every feasible solution~$(f, g)$ of~$\vartheta_{\SP}^{C_c(\R^n)}(\R^n)$ can be approximated in a suitable way by smooth feasible solutions. To achieve this, make use of Leibniz's rule for differentiation under the integral: if~$\rho$ is smooth and~$\psi$ continuous with compact support, then for all~$i$,
  \[
    (\partial/\partial x_i) (\rho \cor \psi) = \rho \cor ((\partial / \partial x_i)\psi) = -\psi*((\partial / \partial x_i)\rho),
  \]
  whence the cross-correlation of a smooth function with a compactly supported continuous function is smooth.

  Thus, take a smooth approximate identity~$(\phi_i')_{i \in \N}$ where each~$\phi_i'$ has support in~$B_{1/(2i)}(0)$ for~$i \geq 2$. For example, the bump functions defined by
  \[
    \phi_i'(x) =
    \begin{cases}
      N_i \exp^{-1/(1 - 2i\|x\|^2)} &\text{if } \|x\|< 1/(2i),\\
      0 & \text{otherwise,}
    \end{cases}
  \]
  with~$N_i$ a suitable normalization constant, satisfy the requirements. Then, the functions~$\phi_i = \phi_i' \cor \phi_i'$ form a smooth approximate identity where every~$\phi_i$ is supported in~$B_{1/i}(0)$.

  Recall that if~$r \in \R$ and~$\rho : \R^n \to \R$, then~$\rho_r(x) = \rho(rx)$. Note that~$f_r \in \PT(\R^n)$ and~$g_r \in \COP(\R^n)_{\inv}$ for all~$r \in \R$. Let~$r_i = 1/(1-1/i)$. Then, by Lemma~\ref{lem:action-of-pt-approximate-identity} the function~$r_i^n \phi_i \cor f_{r_i}$ is in~$\PT(\R^n)$ and~$r_i^n \phi_i \cor g_{r_i}$ is in~$\COP(\R^n)_{\inv}$. Moreover, all these functions are smooth with compact support, and by Lemma~\ref{lem:approximate-by-correlation} they are nonpositive on~$x$ such that~$\|x\|\geq 1$, converge uniformly to~$f$ and~$g$ respectively, and~$\int r_i^n \phi_i \cor f_{r_i}(x)\, dx = \int f(x)\, dx$ for all~$i$. This shows the theorem.
\end{proof}

\begin{corollary}%
  \label{cor:cop-sp-theta-exact}
  Let~$n \geq 2$ be an integer. If~$X$ is a vector space such that~$C_c^{\infty}(\R^n) \subseteq X \subseteq C_0(\R^n)\cap L^1(\R^n)$, then~$\Delta_n = \vartheta_{\SP}^X(\COP(\R^n)_{\inv})$. In particular,~$\Delta_n = \vartheta_{\SP}^{\swrtz(\R^n)}(\COP(\R^n)_{\inv})$.
\end{corollary}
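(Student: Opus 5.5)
The corollary follows by sandwiching $\vartheta_{\SP}^X(\COP(\R^n)_{\inv})$ between two quantities already shown to equal $\Delta_n$. The tool is the monotonicity principle~\eqref{eqn:SP-theta-under-inclusion}: $\vartheta_{\SP}$ is a minimization problem, so enlarging the function space can only decrease the optimal value. From $C_c^{\infty}(\R^n) \subseteq X \subseteq C_0(\R^n)\cap L^1(\R^n)$ we therefore get
\[
  \vartheta_{\SP}^{C_0(\R^n)\cap L^1(\R^n)}(\COP(\R^n)_{\inv}) \leq \vartheta_{\SP}^X(\COP(\R^n)_{\inv}) \leq \vartheta_{\SP}^{C_c^{\infty}(\R^n)}(\COP(\R^n)_{\inv}).
\]

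The left end is at least $\Delta_n$ by Theorem~\ref{thm:C0-theta-is-upper-bound}. The right end equals $\Delta_n$ by Theorem~\ref{thm:smooth-cop-sp-theta-is-exact}. Both inequalities in the chain are therefore equalities, and $\vartheta_{\SP}^X(\COP(\R^n)_{\inv}) = \Delta_n$.

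The only point that needs care is that~\eqref{eqn:SP-theta-under-inclusion} really applies here. The cone constraint in $\vartheta^X_{\SP}$ is written as $f \in \PT(\R^n)\cap X$ and $g \in \cC\cap X$. Intersecting these cones with a smaller space gives a smaller feasible set, while the objective and the remaining constraints do not depend on $X$. Hence any feasible pair $(f,g)$ over the smaller space is also feasible over the larger one, with the same objective value. The normalization $\widehat f(0)=1$ makes sense throughout, since $X \subseteq L^1(\R^n)$.

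For the Schwartz case, it remains to check $C_c^{\infty}(\R^n) \subseteq \swrtz(\R^n) \subseteq C_0(\R^n)\cap L^1(\R^n)$. Compactly supported smooth functions are Schwartz. A Schwartz function is continuous and decays faster than $\|x\|^{-n-1}$, so it vanishes at infinity and is integrable. Taking $X=\swrtz(\R^n)$ in the general statement then gives the claim. There is no real obstacle: all the analytic work is contained in the two cited theorems.
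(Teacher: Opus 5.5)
Your proof is correct and is essentially the paper's argument. Monotonicity~\eqref{eqn:SP-theta-under-inclusion} places $\vartheta_{\SP}^X(\COP(\R^n)_{\inv})$ between the $C_0(\R^n)\cap L^1(\R^n)$ value, which is at least $\Delta_n$ by Theorem~\ref{thm:C0-theta-is-upper-bound}, and the $C_c^{\infty}(\R^n)$ value, which equals $\Delta_n$ by Theorem~\ref{thm:smooth-cop-sp-theta-is-exact}; the Schwartz case then follows from $C_c^{\infty}(\R^n)\subseteq\swrtz(\R^n)\subseteq C_0(\R^n)\cap L^1(\R^n)$.
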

\begin{proof}
  This follows immediately from Theorem~\ref{thm:C0-theta-is-upper-bound}, Theorem~\ref{thm:smooth-cop-sp-theta-is-exact}, and Equation~\eqref{eqn:SP-theta-under-inclusion}.
\end{proof}

We conclude with the convergence of a copositive hierarchy for~$\Delta_n$, which is a simple corollary of all the work we have done before.
\begin{theorem}%
  \label{thm:convergence-copositive-hierarchy-SP}
  If~$n \geq 2$ is an integer, then
  \[
    \Delta_n = \lim_{r \to \infty} \vartheta_{\SP}^{C_0(\R^n) \cap L^1(\R^n)}(\bC_r(\R^n)_{\inv}) = \lim_{r \to \infty}\vartheta_{\SP}^{C_c(\R^n)}(\bC_r(\R^n)_{\inv}).
  \]
\end{theorem}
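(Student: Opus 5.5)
The plan is to sandwich the hierarchy between $\Delta_n$ from below and finite-dimensional copositive hierarchies from above, then let $r\to\infty$ and $L\to\infty$. Write $X_0 = C_0(\R^n)\cap L^1(\R^n)$.

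\emph{Lower bound.} By Theorem~\ref{thm:Polyas-theorem-for-invariant-cones} and duality (Theorem~\ref{thm:polar-identities}), each cone $\bC_r(\R^n)_{\inv}$ is contained in $\COP(\R^n)_{\inv}$, and the cones increase with $r$. Since $\vartheta_{\SP}$ is a minimization, enlarging the cone can only decrease the value. I still need $\vartheta_{\SP}^{X_0}(\bC_r(\R^n)_{\inv}) \ge \Delta_n$ for each $r$, so monotonicity is not enough and I would argue directly. Given a feasible $(f,g)$ with $g\in\bC_r(\R^n)_{\inv}$, I would rerun the proof of Theorem~\ref{thm:C0-theta-is-upper-bound}. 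That proof only uses that multiplication by $\psi_{C_T}$ and periodization preserve membership of $g$ in a cone whose periodization lands in $\COP(\torus_L)$. The Hadamard-product argument of Lemma~\ref{lem:approximate-by-restrictions}(iii) also shows that multiplication by $\psi_{C_T}$ preserves $\bC_r(\R^n)_{\inv}$. Alternatively, $g\in\bC_r\subseteq \COP(\R^n)_{\inv}$, so the pair $(f,g)$ is feasible for the copositive program and Theorem~\ref{thm:C0-theta-is-upper-bound} applies directly. The second route is cleaner, so each level is $\ge\Delta_n$. By~\eqref{eqn:SP-theta-under-inclusion}, the $C_c$-version is at least the $X_0$-version, and both limits exist by monotonicity.

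\emph{Upper bound.} For fixed $L$, use Lemma~\ref{lem:sp-theta-smaller-than-torus-theta} with $K=C_L$, $\cC(K)=C_r(C_L)_{\cont}$ and $\cC=\bC_r(\R^n)_{\inv}$. The required inclusion $\convol_{C_L}^*C_r(C_L)_{\cont}\subseteq \bC_r(\R^n)_{\inv}$ should follow from the definition $\bC_r = \ccone\bigcup_A \Res_A^*C_r(A)$ together with $\convol_A^*=\Av_{\R^n}\Res_A^*$; checking this identification (dual of the description in Lemma~\ref{lem:local-definition-bC}) is the one bookkeeping step. The lemma then gives
\[
\vartheta_{\SP}^{X_0}(\bC_r(\R^n)_{\inv}) \le \vartheta_{\SP}^{C_c(\R^n)}(\bC_r(\R^n)_{\inv}) \le \frac{\vartheta^*(G_{C_L},C_r(C_L)_{\cont})}{L^n}.
\]
Since $G_{C_L}$ is a compact metrizable packing graph, Theorem~\ref{thm:convergence-copositive-hierarchy-packing-graphs} gives $\lim_r \vartheta^*(G_{C_L},C_r(C_L)_{\cont})=\alpha(G_{C_L})$. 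Hence $\lim_r \vartheta_{\SP}^{C_c}(\bC_r) \le \alpha(G_{C_L})/L^n$ for every $L$. Letting $L\to\infty$ and using~\eqref{eqn:sphere-packing-density-limit-over-cubes} yields $\le \Delta_n$, which combined with the lower bound gives both equalities.

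The main obstacle is the cone-inclusion hypothesis of Lemma~\ref{lem:sp-theta-smaller-than-torus-theta}: I must make sure $\bC_r(\R^n)_{\inv}$, as defined on the copositive side, really contains $\convol_A^*C_r(A)$ and lies inside $\COP(\R^n)_{\inv}$. I would verify both by dualizing the statement of Lemma~\ref{lem:local-definition-bC} against positive-type test functions, as in the proof of Theorem~\ref{thm:smooth-cop-sp-theta-is-exact}.
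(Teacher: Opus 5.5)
Your proposal is correct and is essentially the paper's proof. For each $L$ you apply Lemma~\ref{lem:sp-theta-smaller-than-torus-theta} with $K = C_L$, using the inclusion $\convol_{C_L}^* C_r(C_L)_{\cont} \subseteq \bC_r(\R^n)_{\inv}$ obtained by dualizing Lemma~\ref{lem:local-definition-bC} via Theorem~\ref{thm:polar-identities}; you then let $r \to \infty$ using Theorem~\ref{thm:convergence-copositive-hierarchy-packing-graphs}, and afterwards $L \to \infty$ using~\eqref{eqn:sphere-packing-density-limit-over-cubes}. The matching lower bound, from $\bC_r(\R^n)_{\inv} \subseteq \COP(\R^n)_{\inv}$ and Theorem~\ref{thm:C0-theta-is-upper-bound}, is exactly what the paper leaves implicit, so your first, Hadamard-product route to it is not needed.
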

\begin{proof}
  Convergence of the copositive hierarchy for compact packing graphs, Theorem~\ref{thm:convergence-copositive-hierarchy-packing-graphs}, says that
  \[
    \Delta_n = \lim_{L \to \infty} \lim_{r \to \infty} \frac{\vartheta^*(G_{C_L}, C_r(C_L)_{\cont})}{L^n}.
  \]
  By Lemma~\ref{lem:local-definition-bC} and Theorem~\ref{thm:polar-identities}, the inclusions~$\bC_r(\R^n)_{\inv} \supseteq \convol_K^*C_r(K)$ hold for all~$K \in \K_{\fin}(\R^n)$, hence by Lemma~\ref{lem:sp-theta-smaller-than-torus-theta}, for all~$L$,
  \[
    \vartheta_{\SP}^{C_c(\R^n)}(\bC_r(\R^n)_{\inv}) \leq \frac{\vartheta^*(G_{C_L}, C_r(C_L)_{\cont})}{L^n}.
  \]
  First taking the limit over~$r$ and then over~$L$ concludes the proof.
\end{proof}

\section{Comparison to other known bounds}%
\label{sec:comparison-with-known-SP-bounds}
Can we use these copositive hierarchies to improve known bounds? A first step would be to see whether it improves on the Cohn-Elkies bound, which in our notation is~$\vartheta_{\SP}^{C_0(\R^n) \cap L^1(\R^n)}(\{0\})$; clearly, it is at most as strong as~$\vartheta_{\SP}^{C_0(\R^n) \cap L^1(\R^n)}(\bC_r(\R^n)_{\inv})$ for each~$r$.

To improve on the Cohn-Elkies bound, it is not necessary to optimize over all of~$\bC_r(\R^n)_{\inv}$; it is enough to only take a suitable subset. However, finding feasible solutions that are not feasible for the Cohn-Elkies bound seems to be difficult, and is work in progress.

\bookmarksetup{startatroot}

\chapter*{Conclusion and discussion}
Exact completely positive and copositive formulations were introduced for four classes of extremal problems in geometry that are modeled as an independence number problem on a graph. These formulations were used to define a completely positive or copositive hierarchy, and prove they converge to the independence number. Table~\ref{tab:current-standing} shows the current status of these results. These hierarchies were then compared to extensions of the moment and block moment hierarchy for two classes of these problems. Two applications were worked out in detail.

\newcommand{\PreserveBackslash}[1]{\let\temp=\\#1\let\\=\temp}
\newcolumntype{L}[1]{>{\PreserveBackslash\raggedright}p{#1}}
\begin{table}[b]
  \centering
  \begin{tabular}{@{}L{11em}L{10em}L{10em}@{}} \toprule
    & \textsl{Exact completely positive / copositive formulation} & \textsl{Converging completely positive / copositive hierarchy}\\\midrule
    \textsl{Measurable $k$-uniform hypergraph}&&\\
    $\smallbullet$ with a thick edge set& This thesis.& If~$k=2$; this thesis.\\
    $\smallbullet$ compact homogeneous& If the group admits a right-invariant density metric; this thesis and~\cite{DeCorte2022CompleteSets}.& If~$k=2$ on a continuous, compact, two-point homogeneous space with real dimension at least~$2$; this thesis and~\cite{Bekker2026OptimizationSpaces}.\\\midrule
    \textsl{Compact packing graph}& If the vertex set is a metric space~\cite{Dobre2016AGraphs}.& If the vertex set is a metric space~\cite{Kuryatnikova2017Approximating,Kuryatnikova2019TheProblems}.\\\midrule
    \textsl{Distance-avoiding sets in~$\R^n$}& If~$n \geq 2$~\cite{DeCorte2022CompleteSets}.& If~$n \geq 2$; this thesis.\\\midrule
    \textsl{Sphere packing in~$\R^n$}.& If~$n \geq 2$; this thesis.& If~$n \geq 2$; this thesis.\\
    \bottomrule
  \end{tabular}
  \bigskip

  \caption{Known sufficient conditions for the existence of an exact completely positive or copositive formulation a corresponding hierarchy for the independence number of a (measurable) graph, and where they were first described. It is assumed everywhere that the independence number is nonzero, and that underlying measure space of a measurable graph is assumed to be finite and countably generated.}%
  \label{tab:current-standing}
\end{table}

\section*{Discussion of the theory}
\subsection*{Finite measure spaces and density}
The exactness statements of the completely positive hierarchy for measurable graphs on a finite countably generated measure space are quite complete. In both the thick as in the homogeneous setting, we used the concept of density points to move from a set that might have a zero-measure subset of edges to an independent set. The use of density seems inevitable. However, the notions of density system and thick set as presented in Chapter~\ref{ch:completely-positive-formulations-meas-ind-num} are crude, and there might be more refined definitions. This would form an interesting topic of study.

For example, consider the following argument in the homogeneous setting. Let~$\ortho(n)$ be the orthogonal group on~$\R^n$ with Haar measure~$\mu$, let it act on~$S^{n-1}$, and let~$\nu$ be the pushforward onto~$S^{n-1}$ of~$\mu$. Clearly, the edge set~$E = \{\, (x,y) : x, y \in S^{n-1}, x^{\tr} y = 0\, \}$ has measure~$\nu^2(E) = 0$, hence it is not thick.

However, we can directly a measure on~$E$ that has better properties. Let~$x_0$ and~$y_0 \in S^{n-1}$ be any choice of points such that~$x_0^{\tr} y_0 = 0$. The orbit of a pair under the diagonal action of~$\ortho(n)$ is characterized by the inner product of the coordinates, thus~$E = \ortho(n) \{ (x_0, y_0) \}$. We can then define a measure on~$E$ by
\[
  \rho(S) = \mu(\{ T : (T x_0, T y_0) \in S\}).
\]
It is fair to say that~$E$ is thick with respect to~$\rho$; Castro-Silva~\cite{Castro-Silva2023GeometricalConfigurations} used such arguments to formulate zero-measure-removal lemmas for~$S^{n-1}$ and~$\R^n$ for similar edge sets. Thus, it seems that thickness should be measured not by a measure on~$V$, but by a measure on~$E$.

\subsection*{Converging hierarchies in the thick setting}
In contrast to the completeness of the exactness statements, Table~\ref{tab:current-standing} also shows several interesting gaps. For a measurable graph on a finite and countably generated measure space, the view adopted in this thesis is that the programs essentially optimize over a set of $\mi{p}$-integrable functions. However, in neither setting does the proof of convergence of the completely positive hierarchy extend to $k$-uniform hypergraphs with~$k > 2$.

The shortcoming of the $L^p$ formulation can be understood by thinking of the normalization constraint of~$\vartheta_{\bt}$ in the thick setting. For a measurable graph~$G = (V, E)$ with thick edge set~$E$, the normalization~$\|A\|_{\B^1} = 1$ bounds the feasible region, since it bounds the $L^2$-norm, which we used to show that the feasible regions of the programs~$\vartheta_{\bt}(G, C_r(V)^*)$ lie in a common compact space. However, this result can be executed analogously in the space of trace-class operators~$\B^1(L^2(V))$, since the edge constraints can be written in terms of linear functionals in a predual of the trace class: the space of compact operators. Thus, using Banach-Alaoglu, we obtain the same compactness result under the weak* topology on~$\B^1(L^2(V))$ with respect to the duality with the space of compact operators, which is a more specialized statement. This seems to be a more natural setting, since we only consider trace class kernels to begin with.

For this reason, it seems that the correct extension to $k$-uniform hypergraphs with~$k \geq 2$ would use a normalization constraint given by a \textit{symmetric nuclear norm}
\[
  \|A\|_{\mathcal{N}_p} = \inf \biggl\{ \sum_{n \in \N} |\lambda_n| \|\phi_n\|_{p}^k : A = \sum_{n \in \N} \lambda_n \phi_n^{\otimes k}\biggr\}.
\]
The infimum is over every such expansion of~$A$ with~$\lambda_n \in \R$ and~$\phi_n \in L^{p}(V)$ for suitable~$p$. Whether bounding this norm forces all required properties requires closer investigation.

\subsection*{Converging hierarchies in the homogeneous setting}
As mentioned in Section~\ref{ch:Completely positive programming on compact spaces}.\ref{sec:discussion-finite-measure-spaces}, the convergence of the completely positive hierarchy in the homogeneous setting can be readily extended to the continuous, compact, two-point homogeneous spaces with real dimension at least~$2$: the sphere, the real, complex and quaternionic projective spaces, and the octonionic projective plane. Analogues on $k$-uniform homogeneous hypergraphs on these spaces also seem within reach, using the results on configuration-avoiding sets by Castro-Silva~\cite{Castro-Silva2023GeometricalConfigurations}.

The challenge lies in extending the convergence result to other homogeneous spaces. As explained in Section~\ref{ch:Completely positive programming on compact spaces}.\ref{sec:discussion-finite-measure-spaces} the method we used already fails for homogeneous graphs~$G$ on the circle, even though~$\vartheta_{\st}(G, C_0(S^1)) = \alpha(G)$.

If the vertex set is a compact group, we can limit the feasible regions to a space spanned by the functions of positive type on the group: the Fourier algebra. This approach is already present in the graph case: when~$V$ is compact, the image of the trace class kernels under the averaging operator lies in the Fourier algebra. It would be interesting to better understand the role of the Fourier algebra, for example to better understand the relation between the behavior of the hierarchies and the representation theory of the group. It is, however, unclear whether this will help to extend convergence of the completely positive hierarchy to a larger class of compact groups. On the positive side, there are $k$-tensor analogues of the Fourier algebra~\cite{Todorov2010MultipliersAlgebras}, which could perhaps form a setting for hierarchies for $k$-uniform hypergraphs on~$S^{n-1}$ and related spaces.

\subsection*{Compact packing hypergraphs}
How to define compact packing hypergraphs is still an open problem. There are simple conditions on a $k$-uniform hypergraph with~$k > 2$ that would imply all results on copositive programming in this thesis. For example, if~$V$ is a metric space with an edge set~$E \subseteq \Sub{V}{=k}$, asking for all~$r \geq 0$ that~$\ind_{r} = \bigsqcup_{i = 0}^r \ind_i$ as topological spaces under the restriction of the standard topology would be sufficient, as is the case for compact packing graphs. On the other hand, this definition seems too limiting, as the application studied in Chapter~\ref{ch:almost-equiangular} shows. In the case of that particular chapter, simply replacing the standard topology on~$\Sub{V}{r}$ by the disjoint union topology~$\bigsqcup_{i=0}^r \Sub{V}{i}$ seems satisfactory, but the precise details need more thought, since such a space differs topologically from~$\Sub{V}{r}$; most notably, compactness of~$\Sub{V}{r}$ under this topology is no longer implied by compactness of~$V$. See also the discussion in Section~\ref{ch:Completely positive programming on compact spaces}.\ref{sec:what-about-hypergraphs}.

\subsection*{Euclidean space}
It is likely that the results for problems on Euclidean space can be extended to problems on $k$-uniform homogeneous hypergraphs with vertex set~$\R^n$ by similar methods. Again, to do this properly, a first step would be to investigate the relation of these optimization problems and the Fourier algebra.

In Chapter~\ref{ch:sphere-packing} we saw that the optimal sphere-packing density can be approximated using copositive Schwartz functions. It is not clear that this also holds when one only considers those Schwartz functions that lie in the cone~$\bigcup_r\bC_r(\R^n)_{\inv}$. It would be interesting to know whether this is true, since these are the functions that are used in practice.

\section*{Discussion of the computations}
For distance-avoiding set on~$\R^n$, the completely positive hierarchy offers the first bound taking correlations between more than two points into account. Attempts were made to implement a version of a related three-point bound for the 1-avoiding-sets problem on~$\R^2$. Although in theory this would lead to bounds that are stronger than the optimization method introduced by DeCorte, Oliveira, and Vallentin~\cite{DeCorte2022CompleteSets}, in practice it turns out that within a reasonable runtime, there is no improvement. Whether this is inherent to this optimization problem, or whether it has to do with the way the problem is modeled is not clear. This is still work in progress.

A similar conclusion holds for the sphere-packing problem, see also the discussion in Section~\ref{ch:sphere-packing}.\ref{sec:comparison-with-known-SP-bounds}. Although new feasible solutions coming from~$\bC_r(\R^n)_{\inv}$ were found, none of these make an improvement on the objective value. This too requires more experimentation.

\appendix
\chapter[Preliminaries]{Preliminaries from analysis}%
\label{ch:Appendix}
This appendix contains the preliminaries to this thesis that I consider more elementary, or which did not have a natural place in the relevant chapters.

\section{Locally convex spaces, duality, and cones}%
\label{sec:appendix-topological-vector-spaces}
Our reference for all matters convex is the book \textit{Convexity} by Simon~\cite{Simon2011Convexity}. All vector spaces are over~$\R$, all topological vector spaces are Hausdorff.

A~\defi{locally convex space}\index{locally convex space} is a topological vector space~$X$ with a topology that is generated by a set of seminorms~$\{ \rho_j \}_{j \in J}$. This means that a net~$(x_i)_{i \in I}$ converges to~$x$ if and only if for all~$j \in J$ the net~$(\rho_j(x_i - x))_{i \in I}$ converges to~$0$. The space~$X$ is Hausdorff if and only if~$\bigcap_j \{\, x : \rho_j(x) = 0\, \} = \{0\}$.

Let~$X$ and~$Y$ be vector spaces. A~\defi{duality}\index{duality} between~$X$ and~$Y$ is a bilinear map~$\langle \cdot\, , \cdot\rangle : X \times Y \to \R$\index{ <>@$\langle \cdot\, , \cdot\rangle$} such that if~$x \neq 0$ there is a~$y$ such that~$\langle x, y \rangle \neq 0$, and if~$y \neq 0$ there is an~$x$ such that~$\langle x, y \rangle \neq 0$. If there exists a duality between~$X$ and~$Y$, we call~$(X, Y)$ a~\defi{dual pair}\index{dual pair} of vector spaces.

Let~$(X, Y)$ be a dual pair of vector spaces with duality~$\langle \cdot\, , \cdot \rangle$. Then~$X$ is a locally convex space under the topology generated by the set of seminorms~$\{|\langle \cdot\, , y\rangle|\}_{y \in Y}$. This topology is called the~\defi{$\sigma(X, Y)$-topology}\index{topology!s topology@$\sigma(X,Y)$-topology}. We will call it the~\defi{weak topology}\index{topology!weak topology} when the duality is clear. The seminorms~$\{|\langle x, \cdot\rangle|\}_{x \in X}$ make~$Y$ into a locally convex space. The induced topology on~$Y$ is called the~\defi{weak* topology}\index{topology!weak* topology}; it is the same as the~$\sigma(Y, X)$-topology when we interchange~$X$ and~$Y$ everywhere.

The map~$y \mapsto \langle \cdot\, , y\rangle$ identifies~$Y$ with a subspace of~$X'$. Then,~$X^* = Y$ under the weak topology~\cite[Proposition 5.1]{Simon2011Convexity}. Any locally convex topology on~$X$ such that~$X^* = Y$ is called an~\defi{$(X, Y)$-dual topology}\index{topology!dual topology}. The weak topology is by definition the weakest dual topology.

A linear map~$A : X_1 \to X_2$ induces a unique~\defi{linear adjoint}\index{adjoint!linear adjoint}~$A'$ from~$X_2'$ to~$X_1'$ by~$A'(l)(x) = l(A(x))$ for all~$x \in X_1$ and~$l \in X_2'$. With these notions, continuity of~$A$ under weak topologies is reduced to an algebraic condition. If~$A^*$ from the following lemma exists and is continuous, it is called the~\defi{continuous adjoint}\index{adjoint!continuous adjoint} of~$A$. In this case,~$(A^*)^* = A$.
\begin{lemma}%
  \label{lem:continuity-dual-topologies}
  Let~$(X_1, Y_1)$ and~$(X_2, Y_2)$ be dual pairs of vector spaces with respective dualities~$\langle \cdot\, , \cdot\rangle_1$ and~$\langle \cdot\, , \cdot\rangle_2$. Let~$A : X_1 \to X_2$ be a linear map.

  If~$A$ is continuous under a dual topology on~$X_1$ and any topology on~$X_2$ such that~$\langle \cdot\, , y\rangle_2$ is continuous for all~$y \in Y_2$, then~$A'(Y_2) \subseteq Y_1$.

  If~$A'(Y_2) \subseteq Y_1$,~$X_1$ and~$Y_2$ are equipped with dual topologies, and~$X_2$ and~$Y_1$ are equipped with weak topologies, then both~$A$ and the map~$A^* : Y_2 \to Y_1$ given by~$A^*(y) = A'(y)$ are continuous.
\end{lemma}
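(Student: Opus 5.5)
The plan is to prove the two parts of Lemma~\ref{lem:continuity-dual-topologies} separately, using only the definitions of weak and dual topologies and the identification of $Y_i$ with $X_i^*$ under any $(X_i,Y_i)$-dual topology. The whole argument should reduce to unwinding the formula $A'(l) = l \circ A$.

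For the first statement, fix $y \in Y_2$. The functional $\langle \cdot\,, y\rangle_2$ is continuous on $X_2$ by hypothesis, and $A$ is continuous from $X_1$ (with its dual topology) to $X_2$. Therefore the composition $A'(\langle\cdot\,,y\rangle_2) = \langle A(\cdot), y\rangle_2$ is a continuous linear functional on $X_1$. Since $X_1$ carries an $(X_1,Y_1)$-dual topology, we have $X_1^* = Y_1$. Hence this functional equals $\langle \cdot\,, y'\rangle_1$ for a unique $y' \in Y_1$, so $A'(y) \in Y_1$. Uniqueness of $y'$ comes from the nondegeneracy of the duality, which makes $Y_1 \to X_1'$ injective.

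For the second statement, assume $A'(Y_2)\subseteq Y_1$, so that $A^*$ is well defined by the defining identity $\langle Ax, y\rangle_2 = \langle x, A^*y\rangle_1$.

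To show $A$ is continuous into the weak topology $\sigma(X_2,Y_2)$, it suffices to check that each seminorm $|\langle A\cdot\,, y\rangle_2|$ is continuous on $X_1$. This holds because it equals $|\langle \cdot\,, A^*y\rangle_1|$, which is continuous under any dual topology on $X_1$, since $A^*y\in Y_1 = X_1^*$. Concretely, if $x_i \to x$ in $X_1$, then $\langle Ax_i - Ax, y\rangle_2 = \langle x_i - x, A^*y\rangle_1 \to 0$ for every $y$. A map into a space with a topology generated by seminorms is continuous exactly when each seminorm composed with it is continuous, so $A$ is continuous.

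The continuity of $A^*\colon Y_2 \to Y_1$ follows by the symmetric argument with the roles of $X$ and $Y$ exchanged. Here $Y_1$ carries the weak topology $\sigma(Y_1,X_1)$, whose seminorms are $|\langle x,\cdot\rangle_1|$. Composing with $A^*$ gives $|\langle Ax, \cdot\rangle_2|$, and this is continuous under any $(Y_2,X_2)$-dual topology on $Y_2$ because $Ax\in X_2 = Y_2^*$.

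The only point requiring care is bookkeeping: which space carries which topology, and that dual topologies are precisely those for which all functionals $\langle\cdot\,,y\rangle$ are continuous. I do not expect a genuine obstacle.
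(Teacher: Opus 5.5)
Your proposal is correct and matches the paper's proof: the first part identifies the continuous functional $x\mapsto\langle Ax,y\rangle_2$ with an element of $X_1^*=Y_1$, and the second checks weak continuity of $A$ through the functionals $\langle\cdot\,,A^*y\rangle_1$. For $A^*$ you argue directly via $Ax\in X_2=Y_2^*$, whereas the paper phrases the same step as applying the previous argument to $A''=A$; these are the same idea.
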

\begin{proof}
  Let~$A$ be continuous with respect to an $(X_1,Y_1)$-dual topology on~$X_1$ and any topology on~$X_2$. Suppose that for all~$y \in Y_1$, the functionals~$\langle \cdot, y\rangle_2$ are continuous, and fix~$y \in Y_2$. Then, the functional~$x \mapsto \langle A(x), y\rangle_2$ is continuous. The definition of~$A'$ and the identification~$Y_1 = X_1^*$ together warrant~$\langle A(x), y\rangle_2 = \langle x, A'(y)\rangle_1$, and~$A'(y) \in Y_1$.

  If~$A'(Y_2) \subseteq Y_1 = X_1^*$, the functional~$x \mapsto \langle A(x), y\rangle_2 = \langle x, A'(y)\rangle_1$ is continuous for all~$y \in Y_2$. So, if~$(x_i)_{i \in I}$ is a converging net in~$X_1$, then~$(A(x_i))_{i \in I}$ converges weakly in~$X_2$, and~$A$ is continuous under a dual topology on~$X_1$ and the weak topology on~$X_2$.

  Continuity of~$A^*: Y_2 \to Y_1$ follows by applying the previous argument to~${A'}'$ restricted to~$Y_2$, and~${A'}' = A$.
\end{proof}

Let~$X$ and~$Y$ be vector spaces, and suppose~$X$ has an $(X,Y)$-dual topology. Recall that here, all cones are convex. The closure of a convex set~$S \subseteq X$ under the chosen dual topology is equal to its closure under the weak topology~\cite[Theorem 5.2]{Simon2011Convexity}. Let~$S \subseteq X$ be any subset. The \defi{dual cone}\index{dual cone} of~$S$ is
\[
  S^* = \{\, y \in Y : \langle x, y\rangle \geq 0\text{ for all } x \in S\, \}.
\]
It is a convex cone and closed under all $(Y, X)$-dual topologies on~$Y$. Since the duality is bilinear and continuous under dual topologies, $S^* = (\ccone S)^*$. Taking the dual reverses inclusion: if~$S_1 \subseteq S_2$ then~$S_2^* \subseteq S_1^*$.

We refer to the following identities many times.
\begin{theorem}%
  \label{thm:polar-identities}
  Let~$(X_1, Y_1)$ and~$(X_2, Y_2)$ be dual pairs of vector spaces, each of~$X_1$,~$X_2$,~$Y_1$, and~$Y_2$ with dual topologies. Let~$A: X_1 \to X_2$ be a continuous linear map with continuous adjoint~$A^*$, let~$S_X \subseteq X_1$, let~$S_Y \subseteq Y_1$, and let~$\{S_i\}_{i \in I}$ be an arbitrary family of subsets of~$X_1$. Then:
  \begin{enumerate}
    \item[(i)] $(S_X^*)^* = \ccone S_X$;
    \item[(ii)] $(A S_X)^* = {A^*}^{-1} S_X^*$;
    \item[(iii)] $({A^*}^{-1} \ccone S_Y)^* = \cl A S_Y^*$;
    \item[(iv)] $\bigl(\bigcap_{i \in I} S_i^*\bigr)^* = \ccone \bigcup_{i \in I} S_i$.
  \end{enumerate}
\end{theorem}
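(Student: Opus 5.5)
These four identities are standard consequences of the bipolar theorem together with the duality between $A$ and $A^*$. I would prove them in order, with (i) as the basic tool and (ii)--(iv) reduced to it.

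For (i), first note that $S_X\subseteq (S_X^*)^*$ directly from the definition. Since $(S_X^*)^*$ is a closed convex cone, this gives $\ccone S_X\subseteq (S_X^*)^*$. For the reverse inclusion, suppose $x_0\notin \ccone S_X$. By the Hahn--Banach separation theorem in the locally convex space $X_1$, applied to the closed convex set $\ccone S_X$ and the compact set $\{x_0\}$, there is a continuous functional separating them. Because the topology on $X_1$ is dual, this functional has the form $\langle\cdot,y\rangle$ with $y\in Y_1$, and we get $\langle x,y\rangle > c > \langle x_0,y\rangle$ for all $x\in\ccone S_X$. Since $0$ lies in the cone and the cone is invariant under positive scaling, we may take $c\le 0$ and obtain $\langle x,y\rangle\ge 0$ on the cone. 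Hence $y\in S_X^*$ and $\langle x_0,y\rangle<0$, so $x_0\notin(S_X^*)^*$. The symmetric statement on the $Y$ side, $(S_Y^*)^*=\ccone S_Y$ in $Y_1$, follows in the same way.

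For (ii), we have $y\in(AS_X)^*$ exactly when $\langle Ax,y\rangle_2\ge 0$ for all $x\in S_X$. By the adjoint identity this is $\langle x,A^*y\rangle_1\ge 0$ for all $x\in S_X$, that is, $A^*y\in S_X^*$. This is pure bookkeeping. (I read the $S_X^*$ on the right-hand side as a subset of $Y_1$, so that ${A^*}^{-1}S_X^*$ makes sense as a subset of $Y_2$.)

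For (iii), I would apply (ii) with the roles of the two sides swapped, using the map $A^*$ and the set $S_Y^*$, together with $(A^*)^*=A$. This gives
\[
  (A S_Y^*)^* = {A^*}^{-1}(S_Y^*)^* = {A^*}^{-1}\ccone S_Y,
\]
where the last equality uses (i) on the $Y$ side. Taking duals once more and applying (i) to $AS_Y^*$ gives $({A^*}^{-1}\ccone S_Y)^*=\ccone(AS_Y^*)$. Since $S_Y^*$ is a convex cone and $A$ is linear, $AS_Y^*$ is already a convex cone, so its closed conic hull is just its closure $\cl AS_Y^*$.

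For (iv), note that $\bigl(\bigcup_i S_i\bigr)^* = \bigcap_i S_i^*$ directly from the definition of the dual cone. Dualizing and applying (i) gives $\bigl(\bigcap_i S_i^*\bigr)^* = \ccone\bigcup_i S_i$.

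The only step with real content is the Hahn--Banach separation in (i). The point to be careful about there is that every continuous functional on $X_1$ is represented by an element of $Y_1$, which is exactly the definition of a dual topology. A second point is that closures under any two dual topologies agree for convex sets (the cited result of Simon), so "$\ccone$" and "$\cl$" in (i) and (iii) are unambiguous.
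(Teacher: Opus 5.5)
Your proposal is correct and follows the paper's proof essentially step by step. Parts (ii) and (iii) are the same computations. For (i) you unpack via Hahn--Banach separation the bipolar theorem that the paper cites from Simon. Your (iv), via $\bigl(\bigcup_i S_i\bigr)^*=\bigcap_i S_i^*$ followed by (i), is a streamlined version of the paper's double-inclusion argument.

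One small remark on (iii): you are just applying (ii) directly with $S_X = S_Y^*\subseteq X_1$. Neither swapping the roles of the two sides nor the identity $(A^*)^*=A$ is needed.
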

\begin{proof}
  \textup{(i)} This follows from~$S_X^* = (\ccone S_X)^*$ and~\cite[Proposition 5.5, Example 5.9]{Simon2011Convexity}, since~$\ccone S_X$ is convex and contains~$0$.

  \textup{(ii)} Denote by~$\langle \cdot\, , \cdot \rangle_i$ be the duality between~$X_i$ and~$Y_i$. Let~$s \in S_X$ and~$y \in Y_2$. From~$\langle As, y\rangle_2 = \langle s, A^* y\rangle_1$ it follows immediately that~$\langle As, y\rangle_2 \geq 0$ if and only if~$A^*y \in S_X^*$. So~$(AS_X)^* = {A^*}^{-1}S_X^*$.

  \textup{(iii)} Since~$A$ is linear and~$S_Y^*$ is a convex cone,~$AS_Y^* = \cone AS_Y^*$. Then, by~\textup{(i)} and~\textup{(ii)}:
  \[
    \cl AS_Y^* = \ccone AS_Y^* = ((AS_Y^*)^*)^* = ({A^*}^{-1}(S_Y^*)^*)^* = ({A^*}^{-1}\ccone S_Y)^*.
  \]

  \textup{(iv)} The inclusion~$\ccone \bigcup_i S_i \subseteq \bigl(\bigcap_i S_i^*\bigr)^*$ holds if~$\bigcup_i S_i \subseteq \bigl(\bigcap_i S^*_i\bigr)^*$ holds. Indeed, each~$S_i^*$ is a closed convex cone, and an intersection of closed convex cones is again a closed convex cone. Then, apply~$\ccone$ to both sides.

  Let~$j \in I$,~$s \in S_j$, and~$y \in \bigcap_i S^*_i$ In particular,~$y \in S_j^*$, so~$\langle s, y\rangle_1 \geq 0$. Thus,~$\bigcup_i S_i \subseteq \bigl(\bigcap_i S_i^* \bigr)^*$ and~$\ccone \bigcup_i S_i \subseteq \bigl( \bigcap_i S_i^*\bigr)^*$.

  Suppose that~$y \in Y_1 \setminus S_j^*$ for some~$j \in I$. By the definition of the dual there exists an~$s \in S_j$ such that~$\langle s, y\rangle_1 < 0$. But~$S_j$ is contained in~$\ccone \bigcup_i S_i$, so~$y$ is not in~$\bigl( \ccone \bigcup_i S_i\bigr)^*$. Then,~$\bigl( \ccone \bigcup_i S_i \bigr)^* \subseteq \bigcap_i S_i^*$. By taking the dual again and applying~\textup{(i)}, it follows that~$\bigl(\bigcap_i S_i \bigr)^* = \ccone \bigcup_i S_i$.
\end{proof}

\section{\texorpdfstring{$p$-integrable functions}{p-integrable functions}}%
\label{subsec:spaces-of-meas-funcs}
All functions and measures here are real valued.

Let~$(V, \A, \mu)$ be a measure space with $\mi{\sigma}$-algebra~$\A$ and measure~$\mu$. The integrals~$\|f\|_p = \mu(|f|^p)^{1/p}$\index{ \textbar p @$\lVert\, \cdot\, \rVert_p$} are defined for all~$0 < p < \infty$. The \defi{essential supremum}\index{essential supremum} of~$f$ is~$\|f\|_{\infty} = \inf \{\, a : \mu(\{\, x : f(x) \geq a\,\})=0\,\}$\index{ \textbar zzz@$\lVert\, \cdot\, \rVert_{\infty}$}. It is the smallest upper bound on~$f$ outside a set of measure~$0$.

For~$0 < p < \infty$ a function~$f$ from~$V$ to~$\R$ is called \defi{$\mi{p}$-integrable}\index{p integrable@$\mi{p}$-integrable} if~$\|f\|_p$ is finite. When~$\|f\|_{\infty}$ is finite~$f$ is called \defi{essentially bounded}\index{essentially bounded}. Sometimes we will just say that~$f$ is bounded to mean the same. We say that two measurable functions are \defi{$\mi{\mu}$-equivalent}\index{mu equivalent@$\mi{\mu}$-equivalent} if they are equal \defi{almost everywhere}\index{almost everywhere}, that is, they are equal outside a set of measure~$0$. For~$0 < p \leq \infty$ the space of equivalence classes of real-valued $\mi{p}$-integrable functions on~$(V, \A, \mu)$ is denoted~$L^p(V, \A, \mu)$ or simply~$L^p(V)$\index{ L p@$L^p(V)$}.

The map~$f \mapsto \|f\|_p$\index{ \textbar p@$\lVert\, \cdot\, \rVert_p$} induces a norm on~$L^p(V)$ whenever~$p$ is at least~$1$.\index{norm!Lp norm@$L^p$ norm} The spaces~$L^p(V)$ with~$1 \leq p \leq \infty$ are Banach spaces under their respective norms. Under this norm, the dual of~$L^p(V)$ is~$L^q(V)$ with~$1 \leq p, q \leq \infty$ such that~$1/p+ 1/q = 1$, or if~$p = 1$ and~$q = \infty$. For such~$p$ and~$q$, denote the duality between~$L^p(V)$ and~$L^q(V)$ by~$\langle \cdot\, , \cdot\rangle$. We have the following inequalities.

If~$1 \leq p, q \leq \infty$ such that~$1/p + 1/q = 1$, then~$|\langle f, g\rangle| \leq \|f\|_p \|g\|_q$. This is called~\defi{Hölder's inequality}\index{inequality!Hölder's inequality}.

If~$V_1$ and~$V_2$ are two $\sigma$-finite measure spaces with respective measures~$\mu_1$ and~$\mu_2$, and~$F \in V_1 \times V_2 \to \R$ is measurable, then for all~$1 \leq p < \infty$, we have
\begin{multline*}
  \biggl(\int_{V_2} \biggl|\int_{V_1} F(v, w)\, d\mu_1(v)\biggr|^pd \mu_2(w)\biggr)^{1/p}\\
  \leq \int_{V_1}\biggl(\int_{V_2}|F(v,w)|^p\, d\mu_2(w)\biggr)^{1/p}\, d\mu_1(v),
\end{multline*}
and similar for~$p = \infty$. This is called~\defi{Minkowski's inequality}\index{inequality!Minkowski's inequality}.

Let~$V$ be a measure space with measure~$\mu$,~$W$ a set equipped with a $\sigma$-algebra, and~$q : V \to W$ a measurable function. Then~$q$ induces a measure~$q_*\mu$\index{ q*mu@$q_*\mu$} on~$W$ by~$\int_W f(w)\, dq_*\mu(w) = \int_V f(q(v))\, d\mu(v)$, called the~\defi{pushforward}\index{measure!pushforward measure}\index{pushforward} of~$\mu$ under~$q$. This is nothing more than the change of variables formula. A function~$f \in L^p(W, q_*\mu)$ if and only if~$f \smallcirc q \in L^p(V, \mu)$.

\section{Radon measures}
Cohn~\cite{Cohn2013MeasureTheory} gives a complete overview of elementary measure theory. Let~$V$ be a locally compact Hausdorff space. A~\defi{Borel measure}\index{measure!Borel measure} on~$V$ is a measure on the Borel algebra, which we denote~$\mathscr{B}$\index{ B@$\mathscr{B}$}. A~\defi{Radon measure}\index{measure!Radon measure} is a measure on~$\mathscr{B}$ that is finite on compact sets, outer regular on Borel sets, and inner regular on open sets. That is,~$\mu$ is Radon if and only if it is finite on compact sets, for every Borel set~$A$
\[
  \mu(A) = \inf\{\, \mu(U) : A \subseteq U,\, U\text{ open}\, \},
\]
and if~$U \subseteq V$ is open, then
\[
  \mu(U) = \sup\{\, \mu(K) : K \subseteq U,\, K \text{ compact}\, \}.
\]
We denote the space of Radon measures by~$M(V)$. It is isometrically isomorphic to the continuous dual of~$C_0(V)$, with duality~$\langle f, \mu\rangle = \mu(f)$ for all~$\mu \in M(V)$ and~$f \in C_0(V)$.

\section{Spaces of continuous linear operators}%
\label{sec:operator-spaces}
Suppose~$X$ and~$Y$ are topological vector spaces. Denote the set of continuous linear operators~$X \to Y$ by~$\L(X, Y)$\index{ LXY@$\L(X, Y)$}. This space has many useful topologies. Here is a list of some of them. If~$X = Y$ is a Hilbert space, and the duality is the inner product, the list goes from the weakest topology to the strongest.

When~$(Y, Z)$ is a dual pair, the~\defi{weak operator topology}\index{topology!weak operator topology} is the topology of pointwise convergence under the weak topology on~$Y$. It is the topology generated by the set of seminorms~$\{A \mapsto |\langle Ax, z\rangle|\}_{(x,z) \in X \times Z}$. Convergence under this topology is called~\defi{weak convergence}\index{convergence!weak convergence}, or we say a net of operators~\defi{converges weakly}\index{converges!weakly}.

When~$Y$ is a normed space with norm~$\| \cdot \|$, the~\defi{strong operator topology}\index{topology!strong operator topology} is the topology of pointwise convergence under the norm. It is the topology generated by the seminorms~$\{ A \mapsto \|Ax\| \}_{x \in X}$. Convergence under this topology is called~\defi{strong convergence}\index{convergence!strong convergence}, or we say that a net of operators~\defi{converges strongly}\index{converges!strongly}.

When~$X$ and~$Y$ are normed spaces with norms~$\|\,\cdot\,\|_X$ and~$\|\,\cdot\,\|_Y$, then~$A$ is in~$\L(X, Y)$ if and only if it is bounded. That is, if and only if there is a~$c \geq 0$ for which~$\|Ax\|_Y \leq c \|x\|_X$ for all~$x \in X$. Denote the infimum over such~$c$ by~$\|A\|_{\op}$\index{ \textbar op@$\lVert\, \cdot\, \rVert_{\op}$}, the~\defi{operator norm}\index{operator norm} of~$A$. Equivalently,
\[
  \|A\|_{\op} = \sup \{\, \|Ax\|_Y : x \in X, \|x\|_X = 1\, \}.
\]
The topology of the operator norm is the topology of uniform convergence on bounded sets.

These topologies are all defined by seminorms, so they turn~$\L(X, Y)$ into a locally convex space. When~$X$ and~$Y$ are normed spaces, denote~$\L(X, Y)$ by~$\B(X, Y)$\index{ BXY@$\B(X, Y)$}. This is a normed space under the operator norm. When~$Y$ is a Banach space,~$\B(X, Y)$ is a Banach space. Denote~$\B(X) = \B(X,X)$\index{ BX@$\B(X)$}.

If~$(V, \A, \mu)$ is a measure space, then~$L^2(V)$ is a Hilbert space, and the space of bounded operators~$\B(L^2(V))$ has two subspaces that are important to us. The first is the space of~\defi{Hilbert-Schmidt operators}\index{Hilbert-Schmidt operator}, which can be identified as a Hilbert space with~$L^2(V^2)$. An element of~$L^2(V^2)$ is called a~\defi{kernel}\index{kernel}.

The second space of interest is the~\defi{trace class}\index{trace class}~$\B^1(L^2(V))$\index{ B1L2@$\B^1(L^2(V))$}. We say that a kernel~$K$ is of trace-class if there are orthonormal sets~$\Phi$ and~$\Psi$ and complex numbers~$\lambda_{\phi \psi}$ such that
\[
  K = \sum_{\phi \in \Phi, \psi \in \Psi} \lambda_{\phi \psi} \phi \otimes \psi
\]
such that~$\sum_{\phi, \psi} |\lambda_{\phi \psi}| < \infty$. If finite, the latter sum is independent of~$\Phi$ and~$\Psi$, and is known as the~\defi{trace norm}\index{norm!trace norm} of~$K$; we will denote it~$\|K\|_{\B^1}$\index{ \textbar B1@$\lVert \, \cdot\, \rVert_{B^1}$}. It can be shown that~$\|K\|_2 \leq \|K\|_{\B^1}$. See~\cite{Simon2005TraceApplications} for a detailed account

\section{Invariant measures}%
\label{sec:app-invariant-measures}
We mostly follow~\cite{Folland2016AAnalysis}. A \defi{topological group}\index{group!topological group} is a group~$\Gamma$ with a Hausdorff topology such that the multiplication and inversion maps are continuous. A~\defi{locally compact group}\index{group!locally compact group} is a topological group with a locally compact topology.

Denote the Borel algebra of~$\Gamma$ by~$\mathscr{B}$. A \defi{left Haar measure}\index{measure!Haar measure} on~$\Gamma$ is a nonzero Radon measure~$\mu$ on~$\Gamma$ such that for every~$E \in \mathscr{B}$ and~$g \in \Gamma$, we have~$\mu(g E) = \mu(E)$. If~$\Gamma$ is locally compact, a left Haar measure exists and is unique up to a constant multiple. We denote a choice of left Haar measure by~$\mu$. The group is called~\defi{unimodular}\index{group!unimodular group} if the left Haar measure is also right invariant, that is, if, for all~$g \in \Gamma$ and all~$E \in \mathscr{B}$, we have~$\mu(gE) = \mu(Eg) = \mu(E)$. If~$\Gamma$ is~$\sigma$-compact,~$(\Gamma, \mathscr{B}, \mu)$ is~$\sigma$-finite.

If~$H = \Stab(v_0)$ and we identify~$V$ with~$\Gamma / H$, then~$V$ is locally compact and Hausdorff. When both~$\Gamma$ and~$H$ are unimodular, then there is a $\Gamma$-invariant Radon measure on~$\Gamma / H$. Indeed, if~$\mu$ is a Haar measure on~$\Gamma$ and~$\nu$ a Haar measure on~$H$, then~$f \mapsto \bigl(x \mapsto \int_H f(x h)\, d\nu(h)\bigr)$ is a surjective map~$C_c(\Gamma) \to C_c(\Gamma / H)$. Let~$p: \Gamma \to \Gamma / H$ be the quotient map. Theorem 2.51 of~\cite{Folland2016AAnalysis} implies that if both~$\Gamma$ and~$H$ are unimodular, there exists a $\Gamma$-invariant Radon measure~$\omega$ on~$\Gamma / H$, i.e. for all~$g \in \Gamma$ and all measurable subsets~$E \subseteq V$, we have~$\omega(gE) = \omega(Eg) = \omega(E)$. This measure is unique up to a constant multiple, and is related to~$\mu$ and~$\nu$ by the Fubini-type formula
\begin{equation*}
  \int_{\Gamma} f(g)\, d\mu(g) = \int_{\Gamma / H} \int_H f(v h)\, d\nu(h)d\omega(p(v)),
\end{equation*}
for all~$f \in C_c(\Gamma)$. The formula extends to all integrable functions~$f$, as Reiter and Stegeman~\cite[\S 3.4]{Reiter2000ClassicalGroups} discuss in detail.

On the other hand, when~$H$ is compact, the quotient map~$p$ induces the pushforward measure~$p_*\mu$, which is Radon. Since~$p_*\mu$ defines a continuous linear functional on~$C_c(\Gamma / H)$ and is $\Gamma$-invariant, it is a $\Gamma$-invariant Radon measure, hence a multiple of~$\omega$. We therefore set~$\omega = p_*\mu$ if~$H$ is compact, and call it the~\defi{quotient measure}\index{measure!quotient measure} on~$\Gamma/H$.


\backmatter

\newcommand{\arXiv}[1]{arXiv:#1}
\newcommand{\doi}[1]{}
\newcommand{\version}[1]{}
\newcommand{\publisher}[1]{}

\bibliographystyle{amsinit}
\bibliography{references}

\printindex

\chapter*{Dankwoord / Acknowledgements}%
\label{ch:acknowledgements}

De eerste regels van dit dankwoord gaan naar mijn ouders, uiteraard. Voor de liefde, het geduld en dat jullie er altijd zijn geweest. Ze gaan ook naar Bronke, ook voor je liefde en voor het luisteren, en omdat je ook op jouw beurt geduld hebt moeten hebben, zeker het afgelopen jaar. Ik ga snel weer mee om met de paarden te wandelen (zonder aan wiskunde te denken).

Roos, dankjewel voor het minutieuze werk dat je hebt gestoken in het tekenen van de mooiste kaft van een proefschrift ooit. (Merk op, lezer, dat Hallard de Schildpad niet alleen zeer gedetailleerd is, maar bovendien Crofts schildpadschild-constructie accuraat verbeelt~\cite{Croft1967IncidenceIncidents}).

Next, a thank you to those that are mathematically closest to me. Fernando, not only for your expert guidance, but also for the (genuine!) fun while editing the same section of text line-by-line multiple times until it was up to your standard, which I hope has become my standard as well. In that spirit I should also thank Jackie Daytona (regular human bartender) for telling it straight when we are talking The Bullshit. Willem and Nando, besides the many insightful discussions we have had, I am lucky to have had such good friends as my closest colleagues. Willem,  having \textsc{covid} together in Norway is in my top three favorite memories of the past four years---a magical time. David, thanks for the many discussions, mathematical and otherwise.

The DMO group is filled with wonderful people, all of whom made my life in Delft enjoyable. In fear of missing names, the following list is purposefully incomplete and in no particular order, but everyone in the group has been a pleasure to be around. My thanks to Naqi and Qiaochu, for the fond memories Casper has of you; to Yuki, for the \scalebox{2}[1]{gains}; to Niels, for maximizing shareholder value three days per day; to Nicolaas, for making the entire group believe we do not know basic Dutch---\textit{schabouwelijk}; to Esther, our family-style communal bread lunches are something to remember; and to Cindy, Lara, Ananth, and everyone else, for the lunches and coffee breaks that delayed submission of this thesis by several weeks.

Finally, a thank you to the many amazing people of the optimization community that I have been lucky to meet. In particular, Christine and Philippe were kind enough to host me during research visits to Bordeaux and Toulouse, respectively. In all, the community is a fun, friendly, and welcoming bunch, enough so to convince me to stay.

\sectionbreak

I am not the only person who worked hard on this thesis. First and foremost, Dorothée, I would not have been able to work my way through the bureaucracy that is TU Delft without you. Joffrey, thank you for the extensive support with everything computer related, and maintaining the computational cluster at TU Delft that most of the optimization problems were solved on. Nando and David, thanks for all the help with \texttt{ClusteredLowRankSolver.jl}. Fernando, Philippe, and Dion, thank you for proofreading this thesis---not an insignificant feat.

\chapter*{List of publications}
{
  \renewcommand{\arraystretch}{1.8}
  \begin{tabular}{p{0.9\linewidth}}
    B. Bekker, O. Kuryatnikova, F. M. de Oliveira Filho, and J. C. Vera Lizcano, \textit{Optimization hierarchies for distance-avoiding sets in compact spaces}, Trans. Amer. Math. Soc. \textbf{379} (2026), no. 1, 33--70\\
    C. Bachoc, B. Bekker, P. Moustrou, and F. M. de Oliveira Filho, \textit{Obtuse almost-equiangular sets}, preprint (2025). arXiv:2504.11086\\
    B. Bekker and F. M. de Oliveira Filho, \textit{On the convergence of the $k$-point bound for topological packing graphs}, preprint (2023). arXiv:2306.02725
  \end{tabular}\par
}

\chapter*{Curriculum vitae}

{
  \renewcommand{\arraystretch}{1.5}
  \centering
  \begin{tabular}{p{3cm}p{6cm}}
    \multicolumn{2}{c}{\textsc{Abraham Johannes Franciscus Bekker}}\\
    Sep. 30, 1993  & Born in Tilburg, The Netherlands\\
    2014 & Graduated VWO Atheneum \newline Staatsexamen\\
    2014--2018 & Double BSc in Physics \& Mathematics \newline Radboud University Nijmegen\\
    2018--2021 & MSc in Mathematics \newline Radboud University Nijmegen\\
    2022--2026 & PhD in Mathematics \newline Delft University of Technology
  \end{tabular}\par
}

\end{document}